\documentclass{booktemplate}

\usepackage[english]{babel}
\usepackage{mathrsfs, amssymb}
\usepackage{mathtools}
\mathtoolsset{centercolon}
\usepackage{booktabs}
\usepackage[shortlabels]{enumitem}
\usepackage[colorlinks, citecolor = blue]{hyperref}

\usepackage{color}
\usepackage{graphicx}
\usepackage{tikz-cd}
\usetikzlibrary{arrows}

\newcommand{\N}{\ensuremath{\mathbb{N}}}
\newcommand{\Z}{\ensuremath{\mathbb{Z}}}
\newcommand{\Q}{\ensuremath{\mathbb{Q}}}
\newcommand{\R}{\ensuremath{\mathbb{R}}}
\newcommand{\C}{\ensuremath{\mathbb{C}}}

\newcommand{\E}{\ensuremath{\mathbb{E}}}
\renewcommand{\P}{\ensuremath{\mathbb{P}}}

\newcommand{\mbb}{\mathbb}
\newcommand{\mbs}{\boldsymbol}
\newcommand{\mb}{\mathbf}
\newcommand{\mc}{\mathcal}
\newcommand{\ms}{\mathscr}

\DeclarePairedDelimiter\abs{\lvert}{\rvert}

\DeclarePairedDelimiter\cbrace\{\}
\DeclarePairedDelimiter\ha()
\DeclarePairedDelimiter{\ip}\langle\rangle
\DeclarePairedDelimiter{\nrm}\lVert\rVert

\newcommand{\nrmb}[1]{\bigl\|#1\bigr\|}
\newcommand{\absb}[1]{\bigl|#1\bigr|}
\newcommand{\hab}[1]{\bigl(#1\bigr)}
\newcommand{\cbraceb}[1]{\bigl\{#1\bigr\}}
\newcommand{\ipb}[1]{\bigl\langle#1\bigr\rangle}

\newcommand{\nrms}[1]{\Bigl\|#1\Bigr\|}
\newcommand{\abss}[1]{\Bigl|#1\Bigr|}
\newcommand{\has}[1]{\Bigl(#1\Bigr)}
\newcommand{\cbraces}[1]{\Bigl\{#1\Bigr\}}
\newcommand{\ips}[1]{\Bigl\langle#1\Bigr\rangle}

\DeclareMathOperator{\re}{Re}
\DeclareMathOperator{\im}{Im}
\DeclareMathOperator{\tr}{tr}
\DeclareMathOperator{\sgn}{sgn}
\DeclareMathOperator{\diag}{diag}
\DeclareMathOperator{\diam}{diam}
\DeclareMathOperator{\spn}{span}

\DeclareMathOperator{\supp}{supp}
\DeclareMathOperator{\ind}{\mathbf{1}}
\DeclareMathOperator{\UMD}{UMD}
\DeclareMathOperator{\BIP}{BIP}
\DeclareMathOperator{\alphaBIP}{\alpha-BIP}
\DeclareMathOperator{\gammaBIP}{\gamma-BIP}
\DeclareMathOperator{\ellBIP}{\ell^2-BIP}
\DeclareMathOperator{\alphaH}{\alpha-H^\infty}
\DeclareMathOperator{\gammaH}{\gamma-H^\infty}
\DeclareMathOperator{\ellH}{\ell^2-H^\infty}

\DeclareMathOperator{\op}{op}
\DeclareMathOperator{\HS}{HS}
\DeclareMathOperator{\gl}{\ell^g}

\newcommand{\dd}{\hspace{2pt}\mathrm{d}}
\newcommand{\ddn}{\mathrm{d}}
\newcommand{\ee}{\mathrm{e}}

\newcommand*{\lcdot}{\cdot}
\renewcommand{\emptyset}{\varnothing}
\def\avint_#1{\mathchoice{\mathop{\kern 0.2em\vrule width 0.6em height 0.69678ex depth -0.58065ex \kern -0.8em \intop}\nolimits_{\kern -0.4em#1}}{\mathop{\kern 0.1em\vrule width 0.5em height 0.69678ex depth -0.60387ex \kern -0.6em \intop}\nolimits_{#1}} {\mathop{\kern 0.1em\vrule width 0.5em height 0.69678ex depth -0.60387ex \kern -0.6em \intop}\nolimits_{#1}} {\mathop{\kern 0.1em\vrule width 0.5em height 0.69678ex depth -0.60387ex \kern -0.6em \intop}\nolimits_{#1}}}

\DeclareFontFamily{U}{mathx}{\hyphenchar\font45}
\DeclareFontShape{U}{mathx}{m}{n}{<5> <6> <7> <8> <9> <10> <10.95> <12> <14.4> <17.28> <20.74> <24.88> mathx10}{}
\DeclareSymbolFont{mathx}{U}{mathx}{m}{n}
\DeclareFontSubstitution{U}{mathx}{m}{n}
\DeclareMathAccent{\widecheck}{0}{mathx}{"71}

\newtheorem{theorem}{Theorem}[chapter]
\newtheorem{lemma}[theorem]{Lemma}
\newtheorem{corollary}[theorem]{Corollary}
\newtheorem{proposition}[theorem]{Proposition}

\theoremstyle{definition}
\newtheorem{definition}[theorem]{Definition}
\newtheorem{example}[theorem]{Example}

\theoremstyle{remark}
\newtheorem{remark}[theorem]{Remark}

\numberwithin{section}{chapter}
\numberwithin{theorem}{section}
\numberwithin{equation}{chapter}

\makeindex

\begin{document}
\frontmatter

\title[Euclidean structures and operator theory in Banach spaces]{Euclidean structures and operator theory in Banach spaces}

\author{Nigel J. Kalton}
\address{Department of Mathematics \\ University of Missouri \\ Colombia \\ MO 65201\\ USA}

\author{Emiel Lorist}
\address{Delft Institute of Applied Mathematics \\ Delft University of Technology \\ P.O. Box 5031\\ 2600 GA Delft \\The Netherlands}
\email{e.lorist@tudelft.nl}

\author{Lutz Weis}
\address{Institute for Analysis \\ Karlsruhe Institute for Technology \\ Englerstrasse 2\\ 76128 Karlsruhe \\Germany}
\email{lutz.weis@kit.edu}

\thanks{The second author is supported by the VIDI subsidy 639.032.427 of the Netherlands Organisation for Scientific Research (NWO). The third author is supported by the Deutsche Forschungsgemeinschaft
(DFG) through CRC 1173}

\keywords{Euclidean structure, R-boundedness, factorization, sectorial operator, $H^\infty$-calculus,  Littlewood--Paley theory, BIP, operator ideal}

\subjclass[2020]{Primary: 47A60; Secondary: 47A68, 42B25, 47A56, 46E30, 46B20, 46B70}


\begin{abstract}
We present a general method to extend results on Hilbert space operators to the Banach space setting by representing certain sets  of Banach space operators $\Gamma$ on a Hilbert space. Our assumption on $\Gamma$ is expressed in terms of $\alpha$-boundedness for a Euclidean structure $\alpha$ on the underlying Banach space $X$. This notion is originally motivated by $\mathcal{R}$- or $\gamma$-boundedness of sets of operators, but for example any operator ideal from the Euclidean space $\ell^2_n$ to $X$  defines such a  structure. Therefore, our method is quite flexible. Conversely we show that $\Gamma$ has to be $\alpha$-bounded for some Euclidean structure $\alpha$ to be representable on a Hilbert space.

By choosing the Euclidean structure $\alpha$ accordingly, we get a unified and more general approach to the Kwapie\'n--Maurey factorization theorem and the factorization theory of Maurey, Niki{\v{s}}in and Rubio de Francia. This leads to an improved version of the Banach function space-valued extension theorem of Rubio de Francia and a quantitative proof of the boundedness of the lattice Hardy--Littlewood maximal operator.
Furthermore, we use these Euclidean structures to build vector-valued function spaces. These enjoy the nice property that any bounded operator on $L^2$ extends to a bounded operator on these vector-valued function spaces, which is in stark contrast to the extension problem for Bochner spaces.
With these spaces we define an interpolation method, which has formulations modelled after both the real and the complex interpolation method.

Using our representation theorem, we prove a  transference principle for sectorial operators on a Banach space, enabling us to extend Hilbert space results for sectorial operators to the Banach space setting. We for example extend and refine the known theory based on $\mathcal{R}$-boundedness for the joint and operator-valued $H^\infty$-calculus. Moreover, we extend the classical characterization of the boundedness of the $H^\infty$-calculus on Hilbert spaces in terms of $\BIP$, square functions and dilations to the Banach space setting.
Furthermore we establish, via the $H^\infty$-calculus, a version of Littlewood--Paley theory and associated spaces of fractional smoothness  for a rather large class of sectorial operators.
Our abstract setup allows us to reduce assumptions on the geometry of $X$, such as (co)type and UMD.
We conclude with some sophisticated counterexamples for sectorial operators, with as a highlight the construction of a sectorial operator of angle $0$ on a closed subspace of $L^p$ for $1<p<\infty$ with a bounded $H^\infty$-calculus with optimal angle $\omega_{H^\infty}(A) >0$.
\end{abstract}

\maketitle

{\hypersetup{linkcolor=black}\tableofcontents}

\mainmatter
\addtocontents{toc}{\protect\setcounter{tocdepth}{0}}
\chapter*{Introduction}
Hilbert spaces, with their inner product and orthogonal decompositions, are the natural framework for operator and spectral theory and many Hilbert space results fail in more general Banach spaces, even $L^p$-spaces for $p \neq 2$. However, one may be able to recover versions of Hilbert space results for Banach space operators that are in some sense ``close'' to Hilbert space operators. For example, for operators on an $L^p$-scale the Calderon--Zygmund theory, the $A_p$-extrapolation method of Rubio de Francia and Gaussian kernel estimates are well-known and successful techniques to extrapolate $L^2$-results to the $L^p$-scale.

A further approach to extend Hilbert space results to the Banach space setting is to replace uniform boundedness assumptions on certain families of operators by stronger boundedness assumptions such as $\gamma$-boundedness or $\mc{R}$-boundedness. Recall that a set $\Gamma$ of bounded operators on a Banach space $X$ is $\gamma$-bounded if there is a constant such that for all $(x_1,\cdots,x_n) \in X^n$, $T_1,\cdots,T_n \in \Gamma$ and $n \in \N$ we have
\begin{equation}\label{eq:introgamma}\tag{1}
  \nrmb{(T_1x_1,\cdots,T_nx_n)}_{\gamma} \leq C \, \nrmb{(x_1,\cdots,x_n)}_{\gamma},
\end{equation}
where
$
  \nrm{(x_k)_{k=1}^n}_{\gamma} :=\ha{\E \nrm{\sum_{k=1}^n \gamma_kx_k}_X^2}^\frac{1}{2}$
with $(\gamma_k)_{k=1}^n$ a sequence of independent standard Gaussian random variables. If $X$ has finite cotype, then $\gamma$-boundedness is equivalent to the better known $\mc{R}$-boundedness and in an $L^p$-space with $1\leq p<\infty$ $\gamma$-boundedness is equivalent to the discrete square function estimate
\begin{equation}\label{eq:introl2}\tag{2}
  \nrmb{(T_1x_1,\cdots,T_nx_n)}_{\ell^2} \leq C \, \nrmb{(x_1,\cdots,x_n)}_{\ell^2},
\end{equation}
where $\nrm{(x_k)_{k=1}^n}_{\ell^2} :=\nrm{ \ha{\sum_{k=1}^n \abs{x_k}^2}^{1/2}}_{L^p}$. Examples of the extension of Hilbert space results to the Banach space setting under $\gamma$-boundedness assumptions include:
\begin{enumerate}[(i)]
\item On a Hilbert space the generator of a bounded analytic semigroup $(T_z)_{z \in \Sigma_\sigma}$ has $L^p$-maximal regularity, whereas on a $\UMD$ Banach space this holds if and only if $(T_z)_{z \in \Sigma_\sigma}$ is $\gamma$-bounded (see \cite{We01b}).
\item If $A$ and $B$ are commuting sectorial operators on a Hilbert space $H$ with $\omega(A)+\omega(B)<\pi$, then $A+B$ is closed on $D(A)\cap D(B)$ and
    \begin{equation*}
      \nrm{Ax}_H+\nrm{Bx}_H \lesssim \nrm{Ax+Bx}_H, \qquad x \in D(A)\cap D(B).
    \end{equation*}
    On a $\UMD$ Banach space this is still true if $A$ is $\gamma$-sectorial and $B$ has a bounded $H^\infty$-calculus (see \cite{KW01}).
\item A sectorial operator $A$ on a Hilbert space $H$ has a bounded $H^\infty$-calculus if and only if it has bounded imaginary powers $(A^{it})_{t \in \R}$. On a Banach space $X$ with Pisier's contraction property, one can characterize the boundedness of the $H^\infty$-calculus of a sectorial operator $A$ on $X$ by the $\gamma$-boundedness of the set
    $\cbrace{A^{it}:t \in [-1,1]}$ (see \cite{KW16}).
\end{enumerate}
These results follow an active line of research, which lift Hilbert space results to the Banach space setting. Typically one has to find the
``right'' proof in the Hilbert space setting and combine it with $\gamma$-boundedness and Banach space geometry  assumptions in a nontrivial way.

\bigskip

In this memoir we will vastly extend these approaches  by introducing Euclidean structures as a more flexible way to check the enhanced boundedness assumptions such as \eqref{eq:introgamma} and \eqref{eq:introl2} and as a tool to transfer Hilbert space results to the Banach space setting without reworking the proof in the Hilbert space case. Our methods  reduce the need for assumptions on the geometry of the underlying Banach space $X$ such as (co)type and the $\UMD$ property and we also reach out to further applications of the method such as factorization and extension theorems.

We start from the observation that the family of norms $\nrm{\cdot}_{\gamma}$ (and $\nrm{\cdot}_{\ell^2}$) on $X^n$ for $n \in \N$   has the following basic properties:
  \begin{align}\label{eq:E1intro}\tag{3}
    \nrm{(x)}_\gamma &= \nrm{x}_X,  &&x \in X\\
    \nrm{\mb{A}\mb{x}}_\gamma &\leq \nrm{\mb{A}}\nrm{\mb{x}}_\gamma, &&\mb{x} \in X^n,\label{eq:E2intro}\tag{4}
  \end{align}
where the matrix $\mb{A}\colon\C^n \to \C^m$ acts on the vector $\mb{x}=(x_1,\cdots,x_n) \in X^n$ in the canonical way and $\nrm{\mb{A}}$ is the operator norm of $\mb{A}$ with respect to the Euclidean norm. A \emph{Euclidean structure} $\alpha$ on $X$ is now any family of norms $\nrm{\cdot}_{\alpha}$ on $X^n$ for $n \in \N$, satisfying \eqref{eq:E1intro} and \eqref{eq:E2intro} for $\nrm{\cdot}_{\alpha}$. A family of bounded operators $\Gamma$ on $X$ is called $\alpha$-bounded if an estimate similar to \eqref{eq:introgamma} and \eqref{eq:introl2} holds for $\nrm{\cdot}_{\alpha}$. This notion of $\alpha$-boundedness captures the essence of what is needed to represent $\Gamma$ on a Hilbert space. Indeed, denote by $\Gamma_0$ the absolute convex hull of the closure of $\Gamma$ in the strong operator topology and let $\mc{L}_\Gamma(X)$ be the linear span of $\Gamma_0$ normed by the Minkowski functional
\begin{equation*}
  \nrm{T}_\Gamma = \inf \cbrace*{\lambda > 0: \lambda^{-1}T \in \Gamma_0}.
\end{equation*}
Then $\Gamma$ is $\alpha$-bounded for some Euclidean structure $\alpha$ if and only if we have the following ``representation'' of $\Gamma$: there is a Hilbert space $H$, a closed subalgebra $\mc{B}$ of $\mc{L}(H)$, bounded algebra homomorphisms $\tau\colon\mc{L}_\Gamma(X) \to \mc{B}$ and $\rho:\mc{B} \to \mc{L}(X)$ such that $\rho\tau(T) = T$ for all $T \in \mc{L}_\Gamma(X)$, i.e.
\begin{center}
  \begin{tikzpicture}
 \node (LH) at (4,0) {$\mc{L}(H)$} ;
 \node (SE2) at (3.25,0) {$\subseteq$};
 \node (B) at (2.75,0) {$\mc{B}$};
 \node (G) at (0,1) {$\Gamma$};
 \node (SE1) at (0.55,1) {$\subseteq$};
 \node (LG) at (1.5,1) {$\mc{L}_\Gamma(X)$};
 \node (LX) at (4,1) {$\mc{L}(X)$};

   \draw[->] (LG) to node [shift={(0.15,0.15)}] {$\tau$} (B);
    \draw[->] (B) to node [shift={(-0.15,0.15)}] {$\rho$} (LX);
    \draw[right hook->, shorten <=5pt, shorten >=5pt] (LG) to (LX);
\end{tikzpicture}
\end{center}
This theorem (see Theorems \ref{theorem:euclideanrepresentation} and \ref{theorem:Cboundedalphabounded}) is one of our main results. It reveals the deeper reason why results for bounded sets of operators on a Hilbert space extend to results for $\alpha$-bounded sets of operators on a Banach space.

On the one hand $\alpha$-boundedness is a strong notion, since it allows one to represent $\alpha$-bounded sets of Banach space operators as Hilbert space operators, but on the other hand it is a minor miracle that large classes of operators, which are of interest in applications, are $\alpha$-bounded. Partially this is explained by the flexibility we have to create a Euclidean structure:
\begin{enumerate}[(i)]
  \item The choices $\nrm{\cdot}_\gamma$ and $\nrm{\cdot}_{\ell^2}$ that appeared in \eqref{eq:introgamma} and \eqref{eq:introl2} are the ``classical'' choices.
  \item Every operator ideal $\mc{A}\subseteq \mc{L}(\ell^2,X)$ defines a Euclidean structure $\nrm{\cdot}_{\mc{A}}$
      \begin{equation*}
        \nrm{(x_1,\cdots,x_n)}_{\mc{A}}:=\nrmb{\sum_{k=1}^n e_k\otimes x_k}_{\mc{A}}, \qquad x_1,\cdots,x_n \in X,
      \end{equation*}
      where $(e_k)_{k=1}^\infty$ is an orthonormal basis for $\ell^2$.
      \item Let $\mc{B}$ be a closed  unital  subalgebra of a $C^*$-algebra. If $\rho\colon\mc{B} \to \mc{L}(X)$ is a bounded algebra homomorphism, then one can construct a Euclidean structure $\alpha$ so that for every bounded subset $\Gamma\subseteq \mc{B}$ the set $\rho(\Gamma)\subseteq \mc{L}(X)$ is $\alpha$-bounded.
\end{enumerate}
The choice $\alpha = \gamma$ and the connection to $\mc{R}$-boundedness leads to the theory  presented e.g. in \cite{DHP03, KW04} and \cite[Chapter 8]{HNVW17}. The choice $\alpha = \ell^2$ connects us with square function estimates, essential in the theory of singular integral operators in harmonic analysis. With a little bit of additional work, boundedness theorems for such operators, e.g. Calder\'on--Zygmund operators or Fourier multiplier operators, show the $\ell^2$-boundedness of large classes of such operators. Moreover $\ell^2$-boundedness of a family of operators can be deduced from uniform weighted $L^p$-estimates using  Rubio de Francia's $A_p$-extrapolation theory. See e.g. \cite{CMP11,GR85} and \cite[Section 8.2]{HNVW17}.

\bigskip

After proving these abstract theorems in Chapter \ref{part:1}, we make them more concrete by recasting them as factorization theorems for specific choices of the Euclidean structure $\alpha$ in Chapter \ref{part:2}. In particular choosing $\alpha = \gamma$ we can show a $\gamma$-bounded generalization of the classical Kwapie\'n--Maurey factorization theorem (Theorem \ref{theorem:Rbddkwapienmaurey}) and taking $\alpha$ the Euclidean structure induced by the $2$-summing operator ideal we can characterize $\alpha$-boundedness in terms of factorization through, rather than representability on, a Hilbert space (Theorem \ref{theorem:hilbertfactorization}).
 Zooming in on the case that $X$ is a Banach function space on some measure space $(S,\mu)$, we show that the $\ell^2$-structure is the canonical structure to consider and that we can actually factor an $\ell^2$-bounded family $\Gamma \subseteq \mc{L}(X)$ through the Hilbert space $L^2(S,w)$ for some weight $w$ (Theorem \ref{theorem:BFSfactorization}). Important to observe is that this is our first result where we actually have control over the Hilbert space $H$. Moreover it resembles the work of Maurey, Niki{\v{s}}in and Rubio de Francia \cite{Ma73, Ni70, Ru82b} on weighted versus vector-valued inequalities, but has the key advantage that no geometric properties  of the Banach function space are used. Capitalizing on these observations we deduce
 a Banach function space-valued extension theorem (Theorem \ref{theorem:BFSextrapolation}) with milder assumptions than the one in the work of Rubio de Francia \cite{Ru86}. This extension theorem implies the following new results related to the so-called $\UMD$ property for a Banach function space $X$:
  \begin{itemize}
      \item A quantitative proof of the boundedness of the lattice Hardy-Littlewood maximal function if $X$ has the $\UMD$ property.
  \item The equivalence of the dyadic $\UMD^+$ property and the $\UMD$ property.
  \item The necessity of the $\UMD$ property for the $\ell^2$-sectoriality of certain differentiation operators on $L^p(\R^d;X)$.
  \end{itemize}

\bigskip

Besides the discrete $\alpha$-boundedness estimates as in \eqref{eq:introgamma}
and  \eqref{eq:introl2} for a sequence of operators $(T_k)_{k=1}^n$, we also introduce continuous estimates for functions of operators $T\colon \R \to \mc{L}(X)$ with $\alpha$-bounded range, generalizing the well-known square function estimates for $\alpha=\ell^2$ on $X=L^p$ given by
\begin{equation*}
  \nrms{\has{\int_{\R}\abs{T(t)f(t)}^2 \dd t}^{1/2}}_{L^p} \leq C\,   \nrms{\has{\int_{\R}\abs{f(t)}^2 \dd t}^{1/2}}_{L^p}.
\end{equation*}
 To this end we introduce ``function spaces'' $\alpha(\R;X)$ and study their properties in Chapter \ref{part:3}. The space $\alpha(\R;X)$ can be thought of as the completion of the step functions
\begin{equation*}
  f(t) = \sum_{k=1}^n x_k \ind_{(a_{k-1},a_k)}(t),
\end{equation*}
for $x_1,\cdots,x_n \in X$ and $a_0<\cdots<a_n$, with respect to the norm
\begin{equation*}
  \nrm{f}_{\alpha} = \nrmb{\hab{(a_k-a_{k-1})^{-1/2}x_k}_{k=1}^n}_\alpha.
\end{equation*}
The most striking property of these spaces is that any bounded operator $T\colon L^2(\R) \to L^2(\R)$  can be extended to a bounded operator $\widetilde{T}\colon \alpha(\R;X) \to  \alpha(\R;X)$ with the same norm as $T$. As the Fourier transform is bounded on $L^2(\R)$ one can therefore quite easily develop Fourier analysis for $X$-valued functions without assumptions on $X$. For example boundedness of Fourier multiplier operators simplifies to the study of pointwise multipliers, for which  we establish boundedness in Theorem \ref{theorem:pointwisemultipliers1} under an $\alpha$-boundedness assumption.
This  is in stark contrast to the Bochner space case, as the extension problem for bounded operators $T\colon L^2(\R) \to L^2(\R)$ to the Bochner spaces $L^p(\R;X)$ is precisely the reason for limiting assumptions such as (co)type, Fourier type and $\UMD$. We  bypass these assumptions by working in $\alpha(\R;X)$.

With these vector-valued function spaces we define an interpolation method based on a Euclidean structure, the so-called $\alpha$-interpolation method. A charming feature of this $\alpha$-interpolation method is that its formulations modelled after the real  and the complex interpolation method turn out to be equivalent. For the $\gamma$- and $\ell^2$-structures this new interpolation method can be related to the real and complex interpolation methods under geometric assumptions on the interpolation couple of Banach spaces, see Theorem \ref{theorem:compareinterpolation}.

\bigskip

In Chapter \ref{part:4} and \ref{part:5} we apply Euclidean structures to the $H^\infty$-calculus of a sectorial operator $A$. This is feasible since a bounded $H^\infty$-calculus for $A$ defines a bounded algebra homomorphism $$\rho\colon H^\infty(\Sigma_\sigma) \to \mc{L}(X)$$ given by $f \mapsto f(A)$. Therefore our theory yields the $\alpha$-boundedness of $$\cbrace{f(A):\nrm{f}_{H^\infty(\Sigma_\sigma)} \leq 1}$$ for some Euclidean  structure $\alpha$, which provides a wealth of $\alpha$-bounded sets. Conversely, $\alpha$-bounded variants of notions like sectoriality and $\BIP$ allow us to transfer Hilbert space results to the Banach space setting, at the heart of which lies a transference result (Theorem \ref{theorem:transference}) based on our representation theorems.  With our techniques we generalize and refine the known results on the operator-valued and joint $H^\infty$-calculus and the ``sum of operators'' theorem for commuting sectorial operators on a Banach space.
We also extend the classical characterization of the boundedness of the $H^\infty$-calculus in Hilbert spaces to the Banach space setting. Recall that for a
sectorial operator $A$ on a Hilbert space $H$ the following are equivalent (see  \cite{Mc86,AMN97,Le98})
\begin{enumerate}[(i)]
  \item $A$ has a bounded $H^\infty$-calculus.
  \item $A$ has bounded imaginary powers $(A^{it})_{t \in \R}$.
  \item For one (all) $0 \neq \psi \in H^1(\Sigma_\sigma)$ with $\omega(A)<\sigma<\pi$ we have
      \begin{equation*}
        \nrm{x}_H \simeq \int_0^\infty \has{\nrm{\psi(tA)x}_H^2 \frac{\ddn t}{t}}^{1/2}, \qquad x \in D(A)\cap R(A).
      \end{equation*}
  \item $[X,D(A)]_{1/2} = D(A^{1/2})$ with equivalence of norms, where $[\cdot,\cdot]_{\theta}$ denotes the complex interpolation method.
  \item $A$ has a dilation to a normal operator on a larger Hilbert space $\widetilde{H}$.
\end{enumerate}
Now let $A$ be a sectorial operator on a general Banach space $X$. If $\alpha$ is a Euclidean structure on $X$ satisfying some mild assumptions and $A$ is \emph{almost $\alpha$-sectorial}, i.e.
\begin{equation*}
  \cbrace{\lambda AR(\lambda,A)^2:\lambda \in \C\setminus \overline{\Sigma}_\sigma}
\end{equation*}
is $\alpha$-bounded for some $\omega(A)<\sigma<\pi$, then the following are equivalent (see Theorems \ref{theorem:BIPHinfty}, \ref{theorem:squaretocalculus}, \ref{theorem:calculustosuqare}, \ref{theorem:dilationresolvent} and Corollary \ref{corollary:interptoHinfty})
\begin{enumerate}[(i)]
  \item $A$ has a bounded $H^\infty$-calculus.
  \item $A$ has $\alpha$-$\BIP$, i.e. $\cbrace{A^{it}:t \in [-1,1]}$ is $\alpha$-bounded.
  \item For one (all) $0 \neq \psi \in H^1(\Sigma_\nu)$ with $\sigma<\nu<\pi$ we have the      generalized square function estimates
\begin{equation}\tag{5}\label{eq:introsquarefun}
        \nrm{x}_X \simeq \nrm{t \mapsto \psi(tA)x}_{\alpha(\R_+,\frac{\ddn t}{t};X)}, \qquad x \in D(A) \cap R(A).
\end{equation}
\item $
(X,D(A))^\alpha_{1/2} = D(A^{1/2})
$
with equivalence of norms, where we use the $\alpha$-interpolation method from Chapter \ref{part:3}.
\item $A$ has a dilation to the ``multiplication operator'' $\mc{M}_s$ with $\sigma<s<\pi$ on $\alpha(\R;X)$ given by
 \begin{equation*}
   \mc{M}g(t) := (it)^{\frac{2}{\pi}s}\cdot g(t), \qquad t \in \R.
 \end{equation*}
\end{enumerate}
  For these results we could also use the stronger notion of \emph{$\alpha$-sectoriality}, i.e. the $\alpha$-boundedness of
\begin{equation*}
  \cbrace{\lambda R(\lambda,A):\lambda \in \C\setminus \overline{\Sigma}_\sigma}
\end{equation*}
 for some $\omega(A)<\sigma<\pi$,  which is thoroughly studied for the $\gamma$- and $\ell^2$-structure through the equivalence with $\mc{R}$-sectoriality. However, we opt for the weaker notion of almost  $\alpha$-sectoriality to avoid additional assumptions on both $\alpha$ and $X$.

We note that the generalized square function estimates as in \eqref{eq:introsquarefun} and their discrete counterparts
 \begin{equation*}
        \nrm{x}_X \simeq \sup_{t \in [1,2]}\nrmb{(\psi(2^ntA)x)_{n\in \Z}}_{\alpha(\Z;X)}, \qquad x \in X,
\end{equation*}
provide a version of Littlewood--Paley theory, which allows us to carry ideas from harmonic analysis to quite general situations. This idea is developed in Section \ref{section:scalespaces}, where we introduce a scale of intermediate spaces, which are close to the homogeneous fractional domain spaces $\dot{D}(A^\theta)$ for $\theta \in \R$ and are defined in terms of the generalized square functions
\begin{equation*}
  \nrm{x}_{H^{\alpha}_{\theta,A}}:= \nrm{t \mapsto \psi(tA)A^\theta x}_{\alpha(\R_+,\frac{\ddn t}{t};X)}.
\end{equation*}
If $A$ is almost $\alpha$-sectorial, we show that $A$ always has a bounded $H^\infty$-calculus on the spaces $H^{\alpha}_{\theta,A}$ and that $A$ has a bounded $H^\infty$-calculus on $X$ if and only if $\dot{D}(A^\theta) = H^{\alpha}_{\theta,A}$ with equivalence of norms (Theorem \ref{theorem:Hproperties}).
If $A$ is not almost $\alpha$-bounded, then our results on the generalized square function spaces break down. We analyse this situation carefully in Section \ref{section:Hspacenotalmostalpha} as a preparation for the final chapter.

The final chapter, Chapter \ref{part:6}, is devoted to some counterexamples related to the notions studied in Chapter \ref{part:4} and \ref{part:5}. In particular we use Schauder multiplier operators to show that almost $\alpha$-sectoriality does not come for free for a sectorial operator $A$, i.e. that  almost $\alpha$-sectoriality is not a consequence of the sectoriality of $A$ for any reasonable Euclidean structure $\alpha$. This result is modelled after a similar statement for $\mc{R}$-sectoriality by Lancien and the first author \cite{KL00}.
 Furthermore, in Section \ref{section:almostalphanotalpha} we show that almost $\alpha$-sectoriality is
 strictly weaker than $\alpha$-sectoriality, i.e. that there exists an almost $\alpha$-sectorial operator $A$ which is not $\alpha$-sectorial.

Throughout Chapter \ref{part:4} and \ref{part:5} we prove that the angles related to the various properties of a sectorial operator, like the angle of (almost) $\alpha$-sectoriality, ($\alpha$-)bounded $H^\infty$-calculus and ($\alpha$-)$\BIP$, are equal. Strikingly absent in that list is the angle of sectoriality of $A$. By an example of Haase it is known that it is possible to have $\omega_{\BIP}(A) \geq \pi$ and thus $\omega_{\BIP}(A) > \omega(A)$, see \cite[Corollary 5.3]{Ha03}. Moreover in \cite{Ka03} it was shown by the first author that it is also possible to have $\omega_{H^\infty}(A)>\omega(A)$. Using the generalized square function spaces and their unruly behaviour if $A$ is not almost $\alpha$-sectorial, we provide a more natural example of this situation, i.e. we construct a sectorial operator on a closed subspace of $L^p$ such that $\omega_{H^\infty}(A)>\omega(A)$ in Section \ref{section:angleexample}.

\section*{The history of Euclidean structures}
The $\gamma$-structure was first  introduced by Linde and Pietsch  \cite{LP74} and discovered for the theory of Banach spaces by Figiel and Tomczak--Jaegermann in \cite{FT79}, where it was used in the context of estimates for the projection constants of finite dimensional Euclidean subspaces of a Banach space. In \cite{FT79} the norms $\nrm{\cdot}_\gamma$ were called $\ell$-norms.

Our definition of a Euclidean structure is partially inspired by the similar idea of a lattice structure on a Banach space studied by Marcolino Nhani \cite{Ma01}, following ideas of Pisier. In his work $c_0$ plays the role of $\ell^2$.
Other, related research building upon the work of Marcolino Nhani includes:
\begin{itemize}
  \item Lambert, Neufang and Runde introduced \emph{operator sequence spaces}  in \cite{LNR04}, which use norms satisfying the basic properties of a Euclidean structure and an additional $2$-convexity assumption. They use these operator sequence spaces to study Fig\'a--Talamanca--Herz algebras from an operator-theoretic viewpoint.
  \item Dales, Laustsen, Oikhberg and Troitsky \cite{DLOT17} introduced \emph{$p$-multi\-norms}, building upon the work by Dales and Polyakov \cite{DP12} on $1$- and $\infty$-multinorms. They show that a strongly $p$-multinormed Banach space which is $p$-convex can be represented as a closed subspace of a Banach lattice. This representation was subsequently generalized by Oikhberg \cite{Oi18}.  The definition of a $2$-multinorm is exactly the same as our definition of a Euclidean structure.
\end{itemize}
 Further inspiration for the constructions in Section \ref{sec:cbounded} comes from the theory of operator spaces and completely bounded maps, see e.g. \cite{BL04,ER00,Pa02,Pi03}.

In the article by Giannopoulos and Milman \cite{GM01} the term ``Euclidean structure'' is used to indicate the appearance of the Euclidean space $\R^n$ in the Grassmannian manifold of finite dimensional subspaces of a Banach space, as e.g.~spelled out in Dvoretzky's theorem. This article strongly emphasizes the connection with convex geometry and the so-called ``local theory'' of Banach spaces and does not treat operator theoretic questions.
 For further results in this direction see \cite{MS86,Pi89,To89}.

Our project started as early as 2003 as a joint effort of N.J. Kalton and L. Weis and since then a partial draft-manuscript called ``Euclidean structures'' was circulated privately. The project suffered many delays, one of them caused by the untimely death of N.J. Kalton. Only when E. Lorist injected new results and energy the project was revived and finally completed. E. Lorist and L. Weis would like to dedicate this expanded version to N.J. Kalton, in thankful memory.
Some results concerning generalized square function estimates with respect to the $\gamma$-structure have in the mean time been published in \cite{KW16}.

\section*{Structure of the memoir}
This memoir is structured as follows: In Chapter 1 we give the definitions, a few examples and prove the basic properties of a Euclidean structure $\alpha$. Moreover we prove our  main representation results for $\alpha$-bounded families of operators, which will play an important role in the rest of the memoir.
Afterwards, Chapters \ref{part:2}-\ref{part:4} can be read (mostly) independent of each other:
\begin{itemize}
  \item In Chapter \ref{part:2} we highlight some special cases in which the representation results of Chapter \ref{part:1} can be made more explicit in the form of factorization theorems.
  \item In Chapter \ref{part:3} we introduce vector-valued function spaces and interpolation with respect to a Euclidean structure.
  \item In Chapter  \ref{part:4} we study the relation between Euclidean structures and the $H^\infty$-calculus for a sectorial operator.
\end{itemize}
Chapter \ref{part:5} treats generalized square function estimates and spaces and relies heavily on the theory developed in Chapter \ref{part:3} and \ref{part:4}. Finally in Chapter \ref{part:6} we treat counterexamples related to sectorial operators, which use the theory from Chapter \ref{part:4} and \ref{part:5}.

\section*{Notation and conventions}
Throughout this memoir $X$ will be a complex Banach space. For $n \in \N$ we let $X^n$ be the space of $n$-column vectors with entries in $X$.  For $m,n \in \N$ we denote the space $m\times n$ matrices with complex entries by $M_{m,n}(\C)$ and endow it with the operator norm.
We will often denote elements of $X^n$ by $\mb{x}$ and use $x_k$ for $1 \leq k \leq n$ to refer to the $k$th-coordinate of $\mb{x}$. We use the same convention for a matrix in $M_{m,n}(\C)$ and its entries.

The space of bounded linear operators on $X$ will be denoted by $\mc{L}(X)$ and we will write $\nrm{\lcdot}$ for the operator norm $\nrm{\lcdot}_{\mc{L}(X)}$.
For a Hilbert space $H$ we will always let its dual $H^*$ be its Banach space dual, i.e. using a bilinear pairing instead of the usual sesquilinear pairing.

By $\lesssim$ we mean that there is a constant $C>0$ such that inequality holds and by $\simeq$ we mean that both $\lesssim$ and $\gtrsim$  hold.

\section*{Acknowledgements}
The authors would like to thank Martijn Caspers, Christoph Kriegler, Jan van Neerven and Mark Veraar for their helpful comments on the draft version of this memoir and Tuomas Hyto\"nen for allowing us to include Theorem \ref{theorem:UMD+UMD}, which he had previously shown in unpublished work.
The authors would also like to thank Mitchell Taylor for bringing the recent developments regarding $p$-multinorms under our attention. Moreover the authors express their deep gratitude to the anonymous referee, who read this memoir very carefully and provided numerous insightful comments.
Finally the authors would like to thank everyone who provided feedback on earlier versions of this manuscript to Nigel J. Kalton.

\addtocontents{toc}{\protect\setcounter{tocdepth}{1}} 
\chapter{Euclidean structures and \texorpdfstring{$\alpha$}{a}-bounded operator families}\label{part:1}
In this first chapter we will start with the definition, a few examples and some  basic properties of a Euclidean structure $\alpha$. Afterwards  will study the boundedness of families of bounded operators on a Banach space with respect to a Euclidean structure in Section \ref{section:alphabounded}. The second halve of this chapter is devoted to one of our main theorems, which characterizes which families of bounded operators on a Banach space can be represented on a Hilbert space. In particular, in Section \ref{section:representation} we prove
  a representation theorem for $\alpha$-bounded families of operators.
Then, given a family of operators $\Gamma$ that is representable on a Hilbert space, we construct a Euclidean structure $\alpha$ such that  $\Gamma$ is $\alpha$-bounded  in Section \ref{sec:cbounded}.

\bigskip
\subsection{Random sums in Banach spaces}
Before we start, let us  introduce random sums in Banach spaces.
A random variable $\varepsilon$ on a probability space $(\Omega,\P)$ is called a {\em Rademacher} if it is uniformly distributed in $\cbrace{z \in \C: \abs{z} = 1}$. A random variable $\gamma$ on $(\Omega,\P)$ is called a {\em Gaussian} if its distribution has density
$$
    f(z) = \frac{1}{\pi} \ee^{-{\abs{z}^2}}, \qquad z \in \C,
$$
with respect to the Lebesgue measure on $\C$. A {\em{Rademacher sequence}} (respectively \emph{Gaussian sequence}) is a sequence of independent Rademachers (respectively Gaussians). For all our purposes we could equivalently use real-valued Rademacher and Gaussians, see e.g. \cite[Section 6.1.c]{HNVW17}.

Two important notions in Banach space geometry are type and cotype. Let $p \in [1,2]$ and $q \in [2,\infty]$ and let $(\varepsilon_k)_{k=1}^\infty$  be a Rademacher sequence. The space $X$ has {\em type $p$} if there exists a constant $C >0$ such that
  \begin{equation*}
    \nrms{\sum_{k=1}^n \varepsilon_n x_n}_{L^p(\Omega;X)} \leq C \,\has{\sum_{k=1}^n \nrm{x_k}_X^p}^\frac{1}{p}, \qquad \mb{x} \in X^n.
  \end{equation*}
The space $X$ has {\em cotype $q$} if there exists a constant $C >0$ such that
  \begin{equation*}
   \has{\sum_{k=1}^n \nrm{x_k}_X^q}^\frac{1}{q} \leq C\, \nrms{\sum_{k=1}^n \varepsilon_n x_n}_{L^q(\Omega;X)}, \qquad \mb{x} \in X^n,
  \end{equation*}
  with the obvious modification for $q= \infty$. We say $X$ has \emph{nontrivial type} if it has type $p>1$ and we say $X$ has \emph{finite cotype} if it has cotype $q< \infty$.   Any Banach space has type $1$ and cotype $\infty$. Moreover nontrivial type implies finite cotype (see \cite[Theorem 7.1.14]{HNVW17}).

 We can compare Rademachers sums with Gaussians sums and if $X$ is a Banach lattice with $\ell^2$-sums.
\begin{proposition}\label{proposition:gaussianradermacherl2comparison}
Let $1\leq p,q \leq \infty$, let $(\varepsilon_k)_{k=1}^\infty $ be a Rademacher sequence and let $(\gamma_k)_{k=1}^\infty $ be a  Gaussian sequence. Then for all  $\mb{x} \in X^n$ we have
\begin{equation*}
  \nrms{\has{\sum_{k=1}^n \abs{x_k}^2}^{1/2}}_X \lesssim\nrms{\sum_{k=1}^n \varepsilon_kx_k}_{L^p(\Omega;X)} \lesssim\nrms{\sum_{k=1}^n \gamma_kx_k}_{L^q(\Omega;X)},
\end{equation*}
where the first expression is only valid if $X$ is a Banach lattice. If $X$ has finite cotype, then for all  $\mb{x} \in X^n$ we have
\begin{equation*}
   \nrms{\sum_{k=1}^n \gamma_kx_k}_{L^p(\Omega;X)}\lesssim\nrms{\sum_{k=1}^n \varepsilon_kx_k}_{L^p(\Omega;X)} \lesssim \nrms{\has{\sum_{k=1}^n \abs{x_k}^2}^{1/2}}_X
\end{equation*}
where the last expression is only valid if $X$ is a Banach lattice.
\end{proposition}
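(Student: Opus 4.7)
The plan is to reduce everything to a single reference exponent using Kahane--Khintchine, and then prove each comparison separately. By the Kahane--Khintchine inequality (see \cite[Theorem 6.2.6]{HNVW17}), all $L^p$-norms of Rademacher sums in $X$ are mutually equivalent for $p\in[1,\infty)$, and the same holds for Gaussian sums. Hence any of the two-sided comparisons involving $\|\sum\varepsilon_kx_k\|_{L^p}$ or $\|\sum\gamma_kx_k\|_{L^q}$ are independent of the choice of $p,q\in[1,\infty)$, so we may freely pick whichever exponent is most convenient in each step.

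For the Rademacher-to-Gaussian bound (valid without assumptions on $X$), the key observation is that a complex Gaussian $\gamma_k$ has the same distribution as $\varepsilon_k|\gamma_k|$ for an independent Rademacher $\varepsilon_k$. Conditioning on $|\gamma_k|$ and applying Jensen's inequality pulls $\E|\gamma_k|$, which is a nonzero constant, outside the norm:
\begin{equation*}
   c\,\E\nrms{\sum_{k=1}^n \varepsilon_k x_k} = \E_\varepsilon\nrms{\E_{|\gamma|}\sum_{k=1}^n \varepsilon_k|\gamma_k| x_k} \leq \E\nrms{\sum_{k=1}^n \gamma_k x_k}.
\end{equation*}
For the lattice estimate $\nrm{(\sum|x_k|^2)^{1/2}}_X \lesssim \nrm{\sum\varepsilon_kx_k}_{L^p(\Omega;X)}$, I would apply the scalar Khintchine inequality pointwise in the lattice $X$: for a.e.\ $s$ in the underlying measure space, $(\sum|x_k(s)|^2)^{1/2} \lesssim \E_\varepsilon |\sum\varepsilon_k x_k(s)|$, and taking the $X$-norm and then using Fubini/Jensen yields the desired bound.

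For the comparisons requiring finite cotype, I would appeal to the classical results in \cite[Chapter 7]{HNVW17}: in any Banach space with finite cotype the Gaussian sum is dominated by the Rademacher sum in $L^p(\Omega;X)$ (this uses a central limit / contraction argument). For the final bound $\nrm{\sum\varepsilon_kx_k}_{L^p(\Omega;X)}\lesssim \nrm{(\sum|x_k|^2)^{1/2}}_X$ in a Banach lattice of finite cotype, I would invoke Maurey's theorem that a Banach lattice has finite cotype if and only if it is $q$-concave for some $q<\infty$, together with the Khintchine--Maurey inequality for $q$-concave lattices. The main obstacle here is purely referential: each of these facts is standard but has a different proof depending on the geometric assumption, and care is needed to confirm that Kahane--Khintchine gives us the freedom to state everything for arbitrary exponents $p,q$ rather than a fixed one.
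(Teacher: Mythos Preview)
Your sketch is correct in substance and, in fact, more detailed than what the paper does: the paper gives no argument at all and simply refers to \cite[Theorem 6.2.4, Corollary 7.2.10 and Theorem 7.2.13]{HNVW17}. Your outline---Kahane--Khintchine to normalize exponents, the randomization $\gamma_k\overset{d}{=}\varepsilon_k|\gamma_k|$ for the Rademacher-to-Gaussian bound, pointwise scalar Khintchine for the lattice square function, and the finite-cotype/$q$-concavity machinery for the reverse bounds---is exactly the content of those references, so the approaches coincide. One small point: Kahane--Khintchine only covers $p,q\in[1,\infty)$, so the endpoint $p=\infty$ or $q=\infty$ needs a separate (trivial) remark; the paper sidesteps this entirely by citing the reference.
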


For the proof we refer to \cite[Theorem 6.2.4, Corollary 7.2.10 and Theorem 7.2.13]{HNVW17}.

\section{Euclidean structures} \label{section:euclidean}
A {\em Euclidean structure} on $X$ is a family of norms $\nrm{\cdot}_\alpha$ on $X^n$ for all $n \in \N$ such that
  \begin{align}
 \label{eq:E1x}
    \nrm{(x)}_\alpha &= \nrm{x}_X,  &&x \in X\\
\label{eq:E2x}
    \nrm{\mb{A}\mb{x}}_\alpha &\leq \nrm{\mb{A}}\nrm{\mb{x}}_\alpha, &&\mb{x} \in X^n,\quad \mb{A} \in M_{m,n}(\C), \quad m\in \N.
  \end{align}
It will be notationally convenient to define $\nrm{\mb{x}}_\alpha := \nrm{\mb{x}^{T}}_\alpha$ for a row vector $\mb{x}$ with entries in $X$.
Alternatively a Euclidean structure can be defined as a norm on the space of finite rank operators from $\ell^2$ to $X$, which we denote by $\mc{F}(\ell^2,X)$. For $e \in \ell^2$ and $x \in X$ we write $e \otimes x$ for the {\em rank-one} operator $f \mapsto \ip{f, e}x$. Clearly we have $\nrm{e\otimes x} = \nrm{e}_{\ell^2}\nrm{x}_X$ and any element $T \in \mc{F}(\ell^2,X)$ can be represented as
\begin{equation*}
  T = \sum_{k=1}^n e_k \otimes x_k
\end{equation*}
with $(e_k)_{k=1}^n$ an orthonormal sequence in $\ell^2$ and $\mb{x} \in X^n$. If $\alpha$ is a Euclidean structure on $X$ and $T \in \mc{F}(\ell^2,X)$ we define
\begin{align*}
  \nrm{T}_\alpha &:= \nrm{\mb{x}}_\alpha,
\end{align*}
where $\mb{x}$ is such that $T$ is representable in this form.
This definition is independent of the chosen orthonormal sequence by \eqref{eq:E2x} and this norm satisfies
  \begin{align}
  \tag{\ref*{eq:E1x}$'$} \label{eq:E1}
    \nrm{f \otimes x}_\alpha &= \nrm{f}_{\ell^2}\nrm{x}_X &&\qquad f \in \ell^2,\quad x \in X, \\
  \tag{\ref*{eq:E2x}$'$}\label{eq:E2}
    \nrm{TA}_{\alpha} &\leq \nrm{T}_\alpha\nrm{A} &&\qquad T \in \mc{F}(\ell^2,X),\quad A \in \mc{L}(\ell^2).
\end{align}
Conversely a norm $\alpha$ on $\mc{F}(\ell^2,X)$ satisfying \eqref{eq:E1} and \eqref{eq:E2} induces a unique Euclidean structure by
\begin{equation*}
  \nrm{\mb{x}}_\alpha := \nrms{f \mapsto \sum_{k=1}^n \ip{f, e_k} x_k}_\alpha, \qquad \mb{x} \in X^n,
\end{equation*}
where $(e_k)_{k=1}^n$ is a orthonormal system in $\ell^2$.

Conditions \eqref{eq:E2x} and \eqref{eq:E2} express the right-ideal property of a Euclidean structure. We will call a Euclidean structure $\alpha$ {\em ideal} if it also has the left-ideal condition
\begin{align}
 \label{eq:E3x}
    \nrm{(Sx_1, \ldots, Sx_n)}_\alpha
     &\leq C \,\nrm{S} \nrm{\mb{x}}_\alpha, &&\mb{x} \in X^n, \quad S \in \mc{L}(X),
\intertext{which in terms of the induced norm on $\mc{F}(\ell^2,X)$ is given by}
\tag{\ref*{eq:E3x}$'$}   \label{eq:E3}  \nrm{ST}_{\alpha} &\leq C \,\nrm{S}\nrm{T}_\alpha &&T \in \mc{F}(\ell^2,X), \quad S \in \mc{L}(X).
\end{align} If we can take $C = 1$ we will call $\alpha$ {\em isometrically ideal}.

  A {\em global Euclidean structure} $\alpha$ is an assignment of a Euclidean structure $\alpha_X$ to any Banach space $X$. If it can cause no confusion we will denote the induced structure $\alpha_X$ by $\alpha$. A global Euclidean structure is called \emph{ideal} if we have
  \begin{align}
  \label{eq:E4x} \nrm{(Sx_1, \ldots, Sx_n)}_{\alpha_Y}
     &\leq \nrm{S} \nrm{\mb{x}}_{\alpha_X}, && \mb{x} \in X^n, \quad S \in \mc{L}(X,Y)
  \intertext{for any Banach spaces $X$ and $Y$. In terms of the induced norm on $\mc{F}(\ell^2,X)$ this assumption is given by}
\tag{\ref*{eq:E4x}$'$}   \label{eq:E4}  \nrm{ST}_{\alpha_Y} &\leq \nrm{S}\, \nrm{T}_{\alpha_X} &&T \in \mc{F}(\ell^2,X), \quad S \in \mc{L}(X,Y).
\end{align}
 Note that if $\alpha$ is an ideal global Euclidean structure then $\alpha_X$ is isometrically ideal, which can be seen by taking $Y=X$ in the definition. Many natural examples of Euclidean structures are in fact isometrically ideal and are inspired by the theory of operator ideals, see \cite{Pi80}.

For two Euclidean structures $\alpha$ and $\beta$ we write $\alpha \lesssim \beta$ if there is a constant $C>0$ such that $\nrm{\mb{x}}_\alpha \leq C \nrm{\mb{x}}_\beta$ for all $\mb{x} \in X^n$. If $C$ can be taken equal to $1$ we write $\alpha \leq \beta$.

\begin{proposition}\label{proposition:constructglobal}
  Let $\beta$ be an ideal Euclidean structure on a Banach space $X$. Then there exists an ideal global Euclidean structure $\alpha$ such that $\alpha_X \simeq \beta$. Moreover, if $\beta$ is isometrically ideal, then $\alpha_X = \beta$.
\end{proposition}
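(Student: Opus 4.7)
The plan is to extend $\beta$ from $X$ to arbitrary Banach spaces $Y$ by pulling back via contractions $Y \to X$. Concretely, for a Banach space $Y$ and a finite rank operator $T \in \mc{F}(\ell^2,Y)$, define
\[
  \nrm{T}_{\alpha_Y} := \sup\cbrace[\big]{ \nrm{ST}_\beta : S \in \mc{L}(Y,X),\ \nrm{S} \leq 1 }.
\]
Since the ideal Euclidean structure $\beta$ is defined on $\mc{F}(\ell^2,X)$, the quantity $\nrm{ST}_\beta$ makes sense because $ST \in \mc{F}(\ell^2,X)$ whenever $T$ is finite rank. The first step is to verify that $\nrm{\cdot}_{\alpha_Y}$ is a norm: finiteness follows by writing $T = \sum_{k=1}^n e_k \otimes y_k$ and estimating $\nrm{ST}_\beta \leq \sum_k \nrm{e_k}_{\ell^2}\nrm{Sy_k}_X \leq \sum_k \nrm{y_k}_Y$ via the triangle inequality and \eqref{eq:E1} for $\beta$; positive definiteness follows from the fact that for any $0 \neq T \in \mc{F}(\ell^2,Y)$ there is $f \in \ell^2$ with $Tf \neq 0$ and a contraction $S\colon Y\to X$ with $STf \neq 0$, after which testing with the rank-one projection $A = \ip{\cdot,f/\nrm{f}}f/\nrm{f}$ in \eqref{eq:E2} for $\beta$ gives $\nrm{ST}_\beta \geq \nrm{STf}_X > 0$.

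Next I would verify axioms \eqref{eq:E1} and \eqref{eq:E2} for $\alpha_Y$. For \eqref{eq:E2}, if $A \in \mc{L}(\ell^2)$, then $\nrm{STA}_\beta \leq \nrm{ST}_\beta \nrm{A}$ by \eqref{eq:E2} for $\beta$, and taking the supremum over $S$ gives $\nrm{TA}_{\alpha_Y} \leq \nrm{T}_{\alpha_Y}\nrm{A}$. For \eqref{eq:E1}, the inequality $\nrm{f\otimes y}_{\alpha_Y} \leq \nrm{f}_{\ell^2}\nrm{y}_Y$ is immediate, and for the reverse one chooses, given $\varepsilon>0$, a functional $y^* \in Y^*$ with $\nrm{y^*}=1$ and $y^*(y) > \nrm{y}-\varepsilon$ (Hahn--Banach) together with any unit vector $x_0 \in X$, and sets $Sz := y^*(z)x_0$; this is a contraction $Y\to X$ with $\nrm{S(f\otimes y)}_\beta = \nrm{f\otimes y^*(y)x_0}_\beta = \nrm{f}_{\ell^2}\,y^*(y)$.

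The ideality \eqref{eq:E4} of $\alpha$ is now essentially built in: for $R \in \mc{L}(Y,Z)$ and $T \in \mc{F}(\ell^2,Y)$, every contraction $S\colon Z \to X$ gives a map $SR\colon Y\to X$ with $\nrm{SR} \leq \nrm{R}$, so
\[
  \nrm{RT}_{\alpha_Z} = \sup_{\nrm{S}\leq 1}\nrm{(SR)T}_\beta \leq \nrm{R}\sup_{\nrm{S'}\leq 1}\nrm{S'T}_\beta = \nrm{R}\nrm{T}_{\alpha_Y}.
\]
Finally, for $Y=X$ we compare $\alpha_X$ with $\beta$: taking $S = \mathrm{id}_X$ yields $\nrm{T}_{\alpha_X} \geq \nrm{T}_\beta$, while the hypothesis that $\beta$ is ideal with constant $C$ gives $\nrm{ST}_\beta \leq C\nrm{S}\nrm{T}_\beta \leq C\nrm{T}_\beta$ for every contraction $S \in \mc{L}(X)$, whence $\nrm{T}_{\alpha_X} \leq C\nrm{T}_\beta$. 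Thus $\alpha_X \simeq \beta$, with equality when $C=1$, i.e. when $\beta$ is isometrically ideal.

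I expect no serious obstacle; the main point to handle with some care is the lower bound in \eqref{eq:E1}, where one must produce enough norm-attaining contractions $Y \to X$, and the use of the \emph{left}-ideal property of $\beta$ (rather than just \eqref{eq:E2}) to bound $\nrm{T}_{\alpha_X}$ above by $\beta$; without this assumption the sup in the definition could blow up and the equivalence $\alpha_X \simeq \beta$ would fail.
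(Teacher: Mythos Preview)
Your proof is correct and follows essentially the same approach as the paper: both define $\alpha_Y$ by pulling back $\beta$ along contractions $Y \to X$ and then verify the axioms and the comparison on $X$ via the ideal property. The paper states the definition in the vector formulation and leaves the verifications as one-liners, whereas you work in the operator formulation and spell out the details (positive definiteness, the Hahn--Banach argument for the lower bound in \eqref{eq:E1}), but the underlying construction is identical.
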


\begin{proof}
Define $\alpha_Y$ on a Banach space $Y$ as
\begin{equation*}
  \nrm{\mb{y}}_{\alpha_Y} = \sup\cbrace*{\nrm{(Ty_1,\ldots, Ty_n)}_\beta: T \in \mc{L}(Y,X), \nrm{T} \leq 1}, \quad \mb{y} \in Y^n.
\end{equation*}
Then \eqref{eq:E1x} and \eqref{eq:E2x} for $\alpha_Y$ follow directly from the same properties of $\beta$ and \eqref{eq:E4x} is trivial, so $\alpha$ is an ideal global Euclidean structure. Furthermore, by the ideal property of $\beta$, we have
\begin{align*}
  \nrm{\mb{x}}_{\alpha_X} &\leq C\, \nrm{\mb{x}}_{\beta}  \leq C\,\nrm{\mb{x}}_{\alpha_X}, \qquad \mb{x} \in X^n
\end{align*}
so $\alpha_X$ and $\beta$ are equivalent. Moreover, they are equal if $\beta$ is isometrically ideal.
\end{proof}
Although our definition of a Euclidean structure is isometric in nature, we will mostly be interested in results stable under isomorphisms. If $\alpha$ is a Euclidean structure on a Banach space $X$ and we equip $X$ with an equivalent norm $\nrm{\lcdot}_1$, then $\alpha$ is not necessarily a Euclidean structure on $(X,\nrm{\lcdot}_1)$. However, this is easily fixed. Indeed, if $C^{-1}\,\nrm{\lcdot}_X \leq \nrm{\lcdot}_1 \leq C\, \nrm{\lcdot}_X$, we define
\begin{equation*}
  \nrm{\mb{x}}_{\alpha_1} := \max \cbrace{\nrm{\mb{x}}_{\op_1},C^{-1}\, \nrm{\mb{x}}_\alpha},\qquad \mb{x} \in X^n,
\end{equation*}
where $\op_1$ denotes the Euclidean structure on $(X,\nrm{\lcdot}_1)$ induced by the operator norm on $\mc{F}(\ell^2,X)$. Then $\alpha_1$ is a Euclidean structure on $(X,\nrm{\lcdot}_1)$ such that $\alpha \simeq \alpha_1$.

\subsection*{Examples of Euclidean structures} As already noted in the previous section, the operator norm induces an ideal global Euclidean structure, as it trivially satisfies \eqref{eq:E1}, \eqref{eq:E2} and \eqref{eq:E4}. For $\mb{x} \in X^n$ the induced Euclidean structure is given by
\begin{align*}
  \nrm{\mb{x}}_{\op} &= \sup \cbraces{\nrmb{\sum_{k=1}^n a_k x_k}_X: \sum_{k=1}^n \abs{a_k}^2 \leq 1} =\sup_{\nrm{x^*}_{X^*}\leq 1} \has{\sum_{k=1}^n \abs{x^*(x_k)}^2}^{1/2}.
\end{align*}
Another example is induced by the {\em nuclear norm} on $\mc{F}(X,Y)$, which for $T \in \mc{F}(X,Y)$ is defined by
\begin{equation*}
    \nrm{T}_\nu := \inf\cbraces{\sum_{k=1}^n\nrm{e_k}\nrm{x_k}_X: T = \sum_{k=1}^ne_k \otimes x_k}
\end{equation*}
in which the infimum is taken over all finite representations of $T$, see e.g. \cite[Chapter 1]{Ja87}. Again this norm satisfies \eqref{eq:E1}, \eqref{eq:E2} and \eqref{eq:E4} and for $\mb{x} \in X^n$ the induced Euclidean structure is given by
\begin{equation*}
  \nrm{\mb{x}}_{\nu} = \inf \cbraces{\sum_{j=1}^m\nrm{y_j}_X: \mb{x} = \mb{A} \mb{y},  \mb{A} \in M_{n,m}(\C),\max_{1\leq j\leq m}\sum_{k=1}^n \abs{A_{kj}}^2 \leq 1,}.
\end{equation*}
The operator and nuclear Euclidean structures are actually the maximal and minimal Euclidean structures.

\begin{proposition}\label{proposition:operatornuclear}
For any Euclidean structure $\alpha$ on $X$ we have $$\op \leq \alpha \leq \nu.$$
\end{proposition}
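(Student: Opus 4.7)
The plan is to verify the two inequalities separately, both directly from the defining properties \eqref{eq:E1x} and \eqref{eq:E2x} of a Euclidean structure; no deeper machinery is required.

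For the lower bound $\op \leq \alpha$, I would fix $\mb{x} \in X^n$ and scalars $(a_k)_{k=1}^n$ with $\sum_{k=1}^n \abs{a_k}^2 \leq 1$, and view the row $\mb{A} = (a_1,\dots,a_n) \in M_{1,n}(\C)$ as a matrix of operator norm at most $1$. Applying \eqref{eq:E1x} to the single entry $\sum_k a_k x_k \in X$ and then \eqref{eq:E2x} to the matrix action gives
\begin{equation*}
\nrmb{\textstyle\sum_{k=1}^n a_k x_k}_X = \nrm{\mb{A}\mb{x}}_\alpha \leq \nrm{\mb{A}} \, \nrm{\mb{x}}_\alpha \leq \nrm{\mb{x}}_\alpha.
\end{equation*}
Taking the supremum over admissible $(a_k)$ yields $\nrm{\mb{x}}_{\op} \leq \nrm{\mb{x}}_\alpha$.

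For the upper bound $\alpha \leq \nu$, fix any admissible nuclear representation $\mb{x} = \mb{A}\mb{y}$ with $\mb{A} \in M_{n,m}(\C)$, $\mb{y} \in X^m$, and columns of $\mb{A}$ satisfying $\sum_{k=1}^n \abs{A_{kj}}^2 \leq 1$ for each $j$. I decompose $\mb{x} = \sum_{j=1}^m \mb{a}_j y_j$, where $\mb{a}_j \in M_{n,1}(\C)$ is the $j$-th column of $\mb{A}$, regarded as a matrix acting on the single-entry vector $(y_j) \in X^1$. The triangle inequality in $(X^n, \nrm{\cdot}_\alpha)$ combined with \eqref{eq:E1x} and \eqref{eq:E2x} gives
\begin{equation*}
\nrm{\mb{x}}_\alpha \leq \sum_{j=1}^m \nrm{\mb{a}_j (y_j)}_\alpha \leq \sum_{j=1}^m \nrm{\mb{a}_j} \, \nrm{y_j}_X \leq \sum_{j=1}^m \nrm{y_j}_X,
\end{equation*}
and infimizing over all such representations delivers $\nrm{\mb{x}}_\alpha \leq \nrm{\mb{x}}_\nu$.

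There is essentially no obstacle here; the only point that deserves a line of justification is the observation that each column $\mb{a}_j$, as an element of $M_{n,1}(\C)$, has operator norm equal to its $\ell^2$-length $\bigl(\sum_k |A_{kj}|^2\bigr)^{1/2} \leq 1$, so that \eqref{eq:E2x} can be applied term by term. With that in hand the two chains of inequalities close the argument.
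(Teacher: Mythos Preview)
Your proof is correct and follows essentially the same approach as the paper: for $\op \leq \alpha$ you apply \eqref{eq:E1x} and \eqref{eq:E2x} with a row matrix $\mb{A} \in M_{1,n}(\C)$, and for $\alpha \leq \nu$ you split $\mb{A}\mb{y}$ column by column and apply \eqref{eq:E2x} and \eqref{eq:E1x} to each term $\mb{a}_j(y_j)$. The paper's write-up is nearly identical, so there is nothing to add.
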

\begin{proof}
Fix $\mb{x} \in X^n$. For the operator norm structure we have
  \begin{equation*}
  \nrm{\mb{x}}_{\op} = \sup_{\substack{\mb{A} \in M_{1,n}(\C)\\ \nrm{\mb{A}} \leq 1}} \nrm{\mb{A}\mb{x}}_X = \sup_{\substack{\mb{A} \in M_{1,n}(\C)\\ \nrm{\mb{A}} \leq 1}} \nrm{\mb{A}\mb{x}}_\alpha \leq \nrm{\mb{x}}_\alpha.
\end{equation*}
For the nuclear structure take $\mb{y} \in X^m$ such that $\mb{x} = \mb{A} \mb{y}$ with $\mb{A} \in M_{n,m}(\C)$ and $\sum_{k=1}^n \abs{A_{kj}}^2 \leq 1$ for $1\leq j \leq m$. Then we have
\begin{align*}
  \nrm{\mb{x}}_\alpha = \nrm{\mb{A}\mb{y}}_\alpha \leq \sum_{j=1}^m \nrmb{\hab{A_{1j}y_j,\ldots,A_{nj}y_j}}_\alpha \leq \sum_{j=1}^m\nrmb{(y_j)}_\alpha =  \sum_{j=1}^m\nrm{y_j}_X,
\end{align*}
so taking the infimum over all such $\mb{y}$ gives $\nrm{\mb{x}}_\alpha \leq \nrm{\mb{x}}_{\nu}$.
\end{proof}

The most important Euclidean structure for our purposes is the \emph{Gaussian structure}, induced by a norm on $\mc{F}(\ell^2,X)$ first introduced by Linde and Pietsch  \cite{LP74} and discovered for the theory of Banach spaces by Figiel and Tomczak--Jaegermann \cite{FT79}. It is defined  by
\begin{equation*}
  \nrm{T}_{\gamma} := \sup \has{\E \nrmb{\sum_{k=1}^n \gamma_kTe_k}_X^2}^{1/2}, \qquad T \in \mc{F}(\ell^2,X),
\end{equation*}
where the supremum is taken over all finite orthonormal sequences $(e_k)_{k=1}^n$ in $\ell^2$. For $\mb{x} \in X^n$ the induced Euclidean structure is given by
\begin{equation*}
  \nrm{\mb{x}}_{\gamma} :=\has{\E \nrmb{\sum_{k=1}^n \gamma_kx_k}_X^2}^\frac{1}{2}, \qquad \mb{x} \in X^n,
\end{equation*}
where $(\gamma_k)_{k=1}^n$ is  Gaussian sequence (see e.g. \cite[Proposition 9.1.3]{HNVW17}). Properties \eqref{eq:E1} and \eqref{eq:E4} are trivial, and \eqref{eq:E2} is proven in \cite[Theorem 9.1.10]{HNVW17}. Therefore the Gaussian structure is an ideal global Euclidean structure.

\bigskip

Another structure of importance is the \emph{$\pi_2$-structure} induced by the $2$-summing operator ideal, which will be studied more thoroughly in Section \ref{section:hilbertfactorization}.  The $\pi_2$-norm is defined for $T \in \mc{F}(\ell^2,X)$ as
\begin{equation*}
  \nrm{T}_{\pi_2}:= \sup \cbraces{\hab{\sum_{k=1}^n  \nrm{TAe_k}_X^2}^{1/2}: A \in \mc{L}(\ell^2), \nrm{{A}}\leq 1},
\end{equation*}
where $(e_k)_{k=1}^\infty$ is an orthonormal basis for $\ell^2$.
The induced Euclidean structure for $\mb{x} \in X^n$ is
\begin{equation*}
  \nrm{\mb{x}}_{\pi_2} := \sup \cbraces{\hab{\sum_{j=1}^m\nrm{y_j}_X^2}^{1/2}: \mb{y} = \mb{A} \mb{x}, \mb{A} \in M_{m,n}(\C), \nrm{\mb{A}} \leq 1}.
\end{equation*}
Properties \eqref{eq:E1x}, \eqref{eq:E2x} and \eqref{eq:E4x} are easily checked, so the $\pi_2$-structure is an ideal global Euclidean structure as well. If $X$ is a Hilbert space, the $\pi_2$-summing norm coincides with the Hilbert-Schmidt norm, which is given by
\begin{equation*}
  \nrm{T}_{\HS} := \has{\sum_{k=1}^\infty \nrm{Te_k}^2}^{1/2}, \qquad T \in \mc{F}(\ell^2,X)
\end{equation*}
for any orthonormal basis $(e_k)_{k =1}^\infty$ of $\ell^2$. For an introduction to the theory of $p$-summing operators we refer to \cite{DJT95}.

\bigskip

If $X$ is a Banach lattice, there is an additional important Euclidean structure, the \emph{$\ell^2$-structure}. It is given by
\begin{equation*}
    \nrm{\mb{x}}_{\ell^2} := \nrms{\hab{\sum_{k=1}^n \abs{x_k}^2}^{1/2}}_X, \qquad \mb{x} \in X^n.
\end{equation*}
  Again \eqref{eq:E1x} is trivial and \eqref{eq:E2x} follows directly from
\begin{equation}\label{eq:latticel2sup}
  \has{\sum_{k=1}^n \abs{x_k}^2}^{1/2} = \sup\cbraces{\absb{ \sum_{k=1}^n a_kx_k}: \sum_{k=1}^n \abs{a_k}^2 \leq 1},
\end{equation}
where the supremum is taken in the lattice sense, see \cite[Section 1.d]{LT79}.

 By the Krivine-Grothendieck theorem \cite[Theorem~1.f.14]{LT79} we get for $S \in \mc{L}(X)$ and $\mb{x} \in X^n$ that
  \begin{equation*}
    \nrm{(Sx_1, \ldots, Sx_n)}_{\ell^2} \leq K_G \nrm{S} \nrm{\mb{x}}_{\ell^2},
  \end{equation*}
  where $K_G$ is the complex Grothendieck constant. Therefore the $\ell^2$-structure is ideal. The Krivine-Grothendieck theorem also implies that if $X$ is a Banach space that can be represented as a Banach lattice in different ways, then the corresponding $\ell^2$-structures are equivalent. This follows directly by taking $T$ the identity operator on $X$. An example of such a situation is $L^p(\R)$ for $p \in (1,\infty)$, for which the Haar basis is unconditional and induces a lattice structure different from the canonical one.

\bigskip

The $\ell^2$-structure is not a global Euclidean structure, as it is only defined for Banach lattices. However, starting from the $\ell^2$-structure on some Banach lattice $X$,  Proposition \ref{proposition:constructglobal} says that there is an ideal global Euclidean structure, which is equivalent to the $\ell^2$-structure on $X$. We define the $\gl$-structure as the structure obtained in this way starting from the lattice $L^1$. So for $\mb{x} \in X^n$ we define
  \begin{equation*}
    \nrm{\mb{x}}_{\gl} := \sup_T{\nrm{(Tx_1,\ldots,Tx_n)}_{\ell^2}},
  \end{equation*}
  where the supremum is taken over all $T\colon X \to L^1(S)$ with $\nrm{T} \leq 1$ for any measure space $(S,\mu)$. By definition this is a global, ideal Euclidean structure.

  Let us compare the Euclidean structures we have introduced.
\begin{proposition} \label{proposition:compareEuclidean} We have on $X$
\begin{enumerate}[(i)]
\item \label{it:compareEuclidean1} $\gamma \leq \pi_2$. Moreover $\pi_2 \lesssim \gamma$  if and only if $X$ has cotype 2.
\end{enumerate}
Suppose that $X$ is a Banach lattice, then we have on $X$
\begin{enumerate}[(i)]\setcounter{enumi}{1}
\item \label{it:compareEuclidean2} $\ell^2\lesssim \gamma$. Moreover $\gamma \lesssim \ell^2$  if and only if $X$ has finite cotype.
\item \label{it:compareEuclidean3} $\ell^2 \leq \gl \lesssim \ell^2$.
\end{enumerate}
\end{proposition}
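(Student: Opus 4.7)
My plan is to address (i), (ii) and (iii) in turn, using throughout the relationships between Gaussian, Rademacher and lattice square function expressions from Proposition \ref{proposition:gaussianradermacherl2comparison}. The main obstacle will be the lower bound in (iii), where I must construct an explicit contraction from $X$ into an $L^1$-space that exactly captures the $\ell^2$-norm.

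For (i), I would prove $\gamma \leq \pi_2$ via Pietsch factorization: any $T \in \mc{F}(\ell^2,X)$ factors as $T = UJ$ where $J\colon\ell^2 \to L^2(\mu)$ is the canonical contraction $x \mapsto \ip{\cdot,x}$ associated to a Pietsch measure $\mu$ on $B_{\ell^2}$ and $\nrm{U} \leq \pi_2(T)$. Since $J$ maps into a Hilbert space its Gaussian norm coincides with the Hilbert-Schmidt norm, and the bound $\int \nrm{\omega}^2 \dd\mu(\omega) \leq 1$ gives $\gamma(J) \leq 1$; the global ideal property of $\gamma$ from \eqref{eq:E4} then yields $\gamma(T) \leq \nrm{U}\gamma(J) \leq \pi_2(T)$. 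For the equivalence with cotype $2$: assuming cotype $2$, for each contraction $A \in \mc{L}(\ell^2)$ the cotype inequality applied to $(TAe_k)_k$ gives $(\sum_k \nrm{TAe_k}^2)^{1/2} \lesssim \gamma(TA) \leq \gamma(T)$ by \eqref{eq:E2}, and supremizing in $A$ gives $\pi_2(T) \lesssim \gamma(T)$. Conversely, specializing $A$ to the identity on $\spn\{e_k\}_{k=1}^n$ inside the definition of $\pi_2$ yields $\pi_2(T)^2 \geq \sum_k \nrm{Te_k}^2$, so the assumption $\pi_2 \lesssim \gamma$ is exactly the cotype $2$ inequality applied to $(Te_k)_k$.

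Part (ii) is essentially immediate from Proposition \ref{proposition:gaussianradermacherl2comparison}: the first chain in that proposition gives $\ell^2 \lesssim \gamma$ unconditionally, and the second chain (valid under finite cotype) gives $\gamma \lesssim \ell^2$. For the converse in the cotype clause, combining the assumed $\gamma \lesssim \ell^2$ with the universal Rademacher-dominated-by-Gaussian step from the same proposition yields the Khintchine-Maurey estimate $\nrm{\sum_k \varepsilon_k x_k}_{L^2(\Omega;X)} \lesssim \nrm{(\sum_k \abs{x_k}^2)^{1/2}}_X$, which by the classical Maurey theorem characterizes finite cotype for Banach lattices.

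Now for (iii). The upper bound $\gl \lesssim \ell^2$ is short: any contraction $T\colon X \to L^1(S)$ satisfies $\nrm{(Tx_1,\ldots,Tx_n)}_{\ell^2} \leq K_G\nrm{(x_1,\ldots,x_n)}_{\ell^2}$ by the Krivine-Grothendieck left-ideal property of the $\ell^2$-structure, so supremizing over $T$ gives $\gl \leq K_G \cdot \ell^2$. The heart of the proof is the reverse inequality $\ell^2 \leq \gl$. Given $\mb{x} \in X^n$, set $y := \hab{\sum_k \abs{x_k}^2}^{1/2} \in X$ and choose a positive norming functional $\phi \in X^*_+$ with $\nrm{\phi} \leq 1$ and $\phi(y) = \nrm{y}_X$. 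The seminorm $p(x) := \phi(\abs{x})$ on $X$ is then an AL-seminorm, so by Kakutani's representation theorem for AL-spaces the Hausdorff completion of $(X,p)$ is isometric to some $L^1(\mu)$; let $T\colon X \to L^1(\mu)$ denote the associated canonical quotient map. Then $\nrm{T} \leq 1$, and since $T$ is a lattice homomorphism one computes
\[
  \nrm{(Tx_1,\ldots,Tx_n)}_{\ell^2} = \nrm{Ty}_{L^1} = p(y) = \phi(y) = \nrm{y}_X = \nrm{(x_1,\ldots,x_n)}_{\ell^2},
\]
which yields $\nrm{(x_1,\ldots,x_n)}_{\gl} \geq \nrm{(x_1,\ldots,x_n)}_{\ell^2}$ and closes the chain.
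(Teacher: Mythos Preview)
Your proof is correct and, for parts (ii) and (iii), runs parallel to the paper's argument: the Kakutani/abstract $L^1$-space construction in (iii) and the use of Proposition~\ref{proposition:gaussianradermacherl2comparison} in (ii) are exactly what the paper does (with one cosmetic difference: for the converse in (ii) the paper deduces $\varepsilon \gtrsim \gamma$ and cites the Gaussian-versus-Rademacher characterisation of finite cotype, whereas you deduce $\varepsilon \lesssim \ell^2$ and invoke the upper Maurey--Khintchine inequality; both characterisations are standard).

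The genuine difference is in the proof of $\gamma \leq \pi_2$. The paper argues directly: it approximates the Gaussians $\gamma_k$ by simple functions $f_k$ in $L^2(\Omega)$, rewrites $\sum_k f_k x_k$ as $\mb{A}\mb{x}$ for an explicit matrix $\mb{A}$, and bounds $\nrm{\mb{A}}$ by the $L^2$-norm of the orthonormal system $(\gamma_k)$. Your route via Pietsch factorisation is more conceptual: you factor $T = UJ$ through $L^2(\mu)$, observe that $\gamma(J)$ equals the Hilbert--Schmidt norm of $J$, which is at most $1$ because $\mu$ lives on the unit ball, and conclude via the global ideal property~\eqref{eq:E4}. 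Your argument is arguably cleaner and explains the inequality structurally (it is really about factoring through a Hilbert--Schmidt embedding), while the paper's argument stays closer to the definitions and avoids invoking the Pietsch theorem. Both give the sharp constant $1$.
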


\begin{proof}
For \ref{it:compareEuclidean1} let $(\gamma_k)_{k=1}^n$ be a Gaussian sequence on a probability space $(\Omega,\ms{F},\P)$. Let $f_1,\ldots,f_n \in L^2(\Omega)$ be simple functions of the form $f_k = \sum_{j=1}^m t_{jk}\ind_{A_j}$ with $t_{jk} \in \C$ and $A_j \in \ms{F}$ for $1\leq j\leq m$ and $1 \leq k \leq n$.  Define $$\mb{A} := \hab{\P(A_j)^{1/2} t_{jk}}_{j,k=1}^{m,n}.$$ Then we have for $\mb{x} \in X^n$ and $\mb{y} := \mb{A}\mb{x}$
\begin{equation*}
  \nrms{\sum_{k=1}^n f_k x_k}_{L^2(\Omega;X)} = \has{\sum_{j=1}^m \nrm{y_j}^2}^{1/2} \leq \nrm{\mb{x}}_{\pi_2} \nrm{\mb{A}}
\end{equation*}
and
\begin{equation*}
  \nrm{\mb{A}} = \sup_{\nrm{\mb{b}}_{\ell^2_m}\leq 1} \has{\sum_{j=1}^m\absb{\sum_{k=1}^n \P(A_j)^{1/2} t_{jk} b_k}^2}^{1/2} = \sup_{\nrm{\mb{b}}_{\ell^2_m}\leq 1} \nrms{\sum_{k=1}^n b_k f_k}_{L^2(\Omega)}.
\end{equation*}
Thus approximating $(\gamma_k)_{k=1}^n$ by such simple functions in $L^2(\Omega)$, we deduce
\begin{align*}
  \nrm{\mb{x}}_\gamma \leq \nrm{\mb{x}}_{\pi_2} \sup_{\nrm{\mb{b}}_{\ell^2_m}\leq 1} \nrms{\sum_{k=1}^n b_k \gamma_k}_{L^2(\Omega)} = \nrm{\mb{x}}_{\pi_2}.
\end{align*}
Suppose that $X$ has cotype $2$. By \cite[Corollary 7.2.11]{HNVW17} and the right ideal property of the $\gamma$-structure, we have for all $\mb{x} \in X^n$, $\mb{A} \in M_{m,n}(\C)$ with $\nrm{\mb{A}} \leq 1$ and $\mb{y} = \mb{A} \mb{x}$ that
\begin{equation*}
  \has{\sum_{k=1}^n\nrm{y_k}_X^2}^{1/2} \lesssim \has{\E \nrmb{\sum_{k=1}^n \gamma_ky_k}_X^2}^{1/2} =  \nrm{\mb{A} \mb{x}}_{\gamma}\leq  \nrm{\mb{x}}_{\gamma},
\end{equation*}
which implies that $\nrm{\mb{x}}_{\pi_2}\lesssim \nrm{\mb{x}}_\gamma$. Conversely, suppose that the $\gamma$-structure is equivalent to $\pi_2$-structure, then we have for all $\mb{x} \in X^n$
\begin{equation*}
  \has{\sum_{k=1}^n\nrm{x_k}_X^2}^{1/2} \leq \nrm{\mb{x}}_{\pi_2} \lesssim \nrm{\mb{x}}_{\gamma } =  \has{\E \nrmb{\sum_{k=1}^n \gamma_kx_k}_X^2}^{1/2}.
\end{equation*}
So by \cite[Corollary 7.2.11]{HNVW17} we know that $X$ has cotype $2$.

For \ref{it:compareEuclidean2} assume that $X$ is a Banach lattice.   By Proposition \ref{proposition:gaussianradermacherl2comparison} we have $\nrm{\mb{x}}_{\ell^2} \lesssim \nrm{\mb{x}}_{\gamma}$. If $X$ has finite cotype we also have $\nrm{\mb{x}}_{\gamma} \lesssim \nrm{\mb{x}}_{\ell^2}$. Conversely, if the $\ell^2$-structure is equivalent to $\gamma$-structure, then we have again by Proposition \ref{proposition:gaussianradermacherl2comparison} that
\begin{equation*}
  \has{\E \nrmb{\sum_{k=1}^n \varepsilon_kx_k}_X^2}^\frac{1}{2} \gtrsim \nrms{\hab{\sum_{k=1}^n \abs{x_k}^2}^{1/2}}_X \gtrsim \has{\E \nrmb{\sum_{k=1}^n \gamma_kx_k}_X^2}^\frac{1}{2},
\end{equation*}
where $(\varepsilon_k)_{k=1}^n$ is a Rademacher sequence.
This implies that $X$ has finite cotype by \cite[Corollary 7.3.10]{HNVW17}.

For \ref{it:compareEuclidean3} note that by the Krivine-Grothendieck theorem \cite[Theorem~1.f.14]{LT79} we have $\nrm{\mb{x}}_{\gl} \leq K_G\,\nrm{\mb{x}}_{\ell^2}$. Conversely take a positive $x^* \in X^*$ of norm one such that
\begin{equation*}
  \ips{\has{\sum_{k=1}^n\abs{x_k}^2}^{1/2},x^*} = \nrm{\mb{x}}_{\ell^2}.
\end{equation*}
Let $L$ be the completion of $X$ under the seminorm $\nrm{x}_L:= x^*(\abs{x})$. Then $L$ is an abstract $L^1$-space and is therefore order isometric to $L^1(S)$ for some measure space $(S,\mu)$, see \cite[Theorem 1.b.2]{LT79}. Let $T:X \to L$ be the natural norm one lattice homomorphism. Then we have
\begin{equation*}
  \nrm{\mb{x}}_{\ell^2} = \nrms{\hab{\sum_{k=1}^n \abs{Tx_k}^2}^{1/2}}_L \leq \nrm{\mb{x}}_{\gl}. \qedhere
\end{equation*}
\end{proof}

\subsection*{Duality of Euclidean structures}
We will now consider duality for Euclidean structures. If $\alpha$ is a Euclidean structure on a Banach space $X$, then there is a natural \emph{dual Euclidean structure} $\alpha^*$ on $X^*$ defined by
\begin{equation*}
  \nrm{\mb{x}^*}_{\alpha^*} := \sup\cbraces{\sum_{k=1}^n \abs{x_k^*(x_k)}:\mb{x} \in X^n,\nrm{\mb{x}}_\alpha \leq 1}, \qquad \mb{x}^* \in (X^*)^n.
\end{equation*}
This is indeed a Euclidean structure, as \eqref{eq:E1x} and \eqref{eq:E2x} for $\alpha^*$ follow readily from their respective counterparts for $\alpha$. We can then also induce a structure $\alpha^{**}$ on $X^{**}$, and the restriction of $\alpha^{**}$ to $X$ coincides with $\alpha$.
If $\alpha$ is ideal,  then the analogue of \eqref{eq:E3x} holds for weak$^*$-continuous operators, i.e. we have
\begin{equation*}
  \nrm{(S^*x_1^*,\ldots,S^*x_n^*)}_{\alpha^*} \leq C \, \nrm{S} \nrm{\mb{x}^*}_{\alpha^*} , \qquad \mb{x}^* \in (X^*)^n,\quad S \in \mc{L}(X).
\end{equation*}
In particular, $\alpha^*$ is ideal if $\alpha$ is ideal and $X$ is reflexive.

If we prefer to express the dual Euclidean structure in terms of a norm on $\mc{F}(\ell^2,X)$, we can employ trace duality. If $T\in \mc{F}(X)$ and we have two representations of $T$, i.e.
\begin{equation*}
    T = \sum_{k=1}^n x^*_k \otimes x_k = \sum_{j=1}^m \bar{x}^*_j \otimes \bar{x}_j,
  \end{equation*}
where $x_k,\bar{x}_j \in X$ and $x_k^*,\bar{x}_j^* \in X^*$, then $\sum_{k=1}^n \ip{x_k,x^*_k} = \sum_{j=1}^m \ip{\bar{x}_j,\bar{x}^*_j}$ (see \cite[Proposition 1.3]{Ja87}). Therefore we can define the {\em trace} of $T$ as
\begin{equation*}
  \tr(T) = \sum_{k=1}^n \ip{x_k, x_k^*}
\end{equation*}
for any finite representation of $T$.
We define the norm $\alpha^*$ on $\mc{F}(\ell^2,X^*)$ as
\begin{equation*}
  \nrm{T}_{\alpha^*} := \sup \cbrace*{\abs{\tr(S^*T)}: S \in \mc{F}(\ell^2,X) ,\alpha(S) \leq 1}, \qquad T \in \mc{F}(\ell^2,X^*)
\end{equation*}
This definition coincides with the definition in terms of vectors in $X^n$. Indeed, for $\mb{x}^* \in (X^*)^n$ and $T \in \mc{F}(\ell^2,X^*)$ defined as $T = \sum_{k=1}^n e_k \otimes x_k^*$ for some orthonormal sequence $(e_k)_{k=1}^n$ in $\ell^2$, we have that
\begin{align*}
  \nrm{\mb{x}^*}_{\alpha^*} &=
  \sup\cbraces{\sum_{k=1}^n \abs{x_k^*(x_k)}:\mb{x} \in X^n,\nrm{\mb{x}}_\alpha \leq 1}\\
  &= \sup \cbraces{\absb{\sum_{k=1}^n\ip{Se_k,Te_k}}: S \in \mc{F}(\ell^2,X) , \nrm{S}_\alpha \leq 1}\\
  &= \sup \cbrace*{\abs{\tr(S^*T)}: S \in \mc{F}(\ell^2,X) ,\nrm{S}_\alpha} =\nrm{T}_{\alpha^*}.
\end{align*}

Note that if for two Euclidean structures $\alpha$ and $\beta$ on $X$ we have $\alpha \lesssim\beta$, then $\beta^* \lesssim\alpha^*$ on $X^*$. Part of the reason why the $\gamma$- and the $\ell^2$-structure work well in practice, is the fact that they are self-dual under certain assumptions on $X$. This is contained in the following proposition, along with a few other relations between dual Euclidean structures.

\begin{proposition} \label{proposition:dualeuclidean} On $X^*$ we have
\begin{enumerate}[(i)]
\item \label{it:dualEuclidean0}$\op^*=\nu$ and $\nu^*=\op$.
  \item \label{it:dualEuclidean1} $\gamma^*\leq \gamma$. Moreover $\gamma \lesssim \gamma^*$ if and only if $X$ has nontrivial type.
  \item \label{it:dualEuclidean2} If $X$ is a Banach lattice, $(\ell^2)^*=\ell^2$
\end{enumerate}
\end{proposition}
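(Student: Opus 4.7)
My plan is to prove the three parts via three different duality principles, with a common sign-absorption trick recurring throughout. For (i), the identities $\op^*=\nu$ and $\nu^*=\op$ reduce to the classical trace duality between the operator and nuclear norms on $\mc{F}(\ell^2,X)$. Given $\mb{x}^*\in(X^*)^n$ and $\mb{x}\in X^n$, I would choose unimodular scalars $\epsilon_k$ so that $|x_k^*(x_k)|=x_k^*(\epsilon_k x_k)$; since the diagonal unitary $\diag(\epsilon_k)$ has operator norm one, \eqref{eq:E2x} implies it leaves both $\nrm{\cdot}_{\op}$ and $\nrm{\cdot}_\nu$ invariant. Hence the constraints defining $\op^*$ and $\nu^*$ survive the substitution $x_k\mapsto\epsilon_k x_k$, turning $\sum_k|x_k^*(x_k)|$ into a genuine trace $\tr(S^*T)$ with $S=\sum_k e_k\otimes\epsilon_k x_k$ and $T=\sum_k e_k\otimes x_k^*$. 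Both identities then follow from the standard trace duality $\nrm{T}_\nu=\sup\{|\tr(S^*T)|:\nrm{S}_{\op}\leq 1\}$ and its converse, recorded in \cite[Chapter 1]{Ja87}.

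For (ii), the inequality $\gamma^*\leq\gamma$ on $X^*$ is a Gaussian Cauchy--Schwarz. Absorbing signs as in (i) and using $\E\bracb{\gamma_k\overline{\gamma_j}}=\delta_{kj}$,
\[
\sum_k|x_k^*(x_k)|=\E\bracs{\hab{\textstyle\sum_k\gamma_k\epsilon_k x_k^*}\hab{\textstyle\sum_j\overline{\gamma_j}\,x_j}}\leq\nrm{(\epsilon_k x_k^*)}_\gamma\,\nrm{\mb{x}}_\gamma=\nrm{\mb{x}^*}_\gamma\,\nrm{\mb{x}}_\gamma,
\]
where the inequality combines scalar Cauchy--Schwarz in $L^2(\Omega)$ with $|f(x)|\leq\nrm{f}_{X^*}\nrm{x}_X$, and the final equality uses invariance of $\nrm{\cdot}_\gamma$ under $\diag(\epsilon_k)$ together with the fact that $(\overline{\gamma_j})$ is again a standard complex Gaussian sequence. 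For the equivalence with nontrivial type, I would invoke Pisier's $K$-convexity theorem: $X$ has nontrivial type iff the Gaussian projection $P_\gamma$ onto the Gaussian chaos is bounded on $L^2(\Omega;X)$, and this property is self-dual. Assuming nontrivial type, for each $\mb{x}^*\in(X^*)^n$ I would pick a near-maximiser $F\in L^2(\Omega;X)$ of norm one of the pairing $F\mapsto\E\bracb{(\sum_k\gamma_k x_k^*)(F)}$; replacing $F$ by $P_\gamma F=\sum_k\gamma_k y_k$ preserves this pairing (by the orthogonality of $F-P_\gamma F$ to the Gaussian chaos) and satisfies $\nrm{\mb{y}}_\gamma\leq K(X)$, yielding $\nrm{\mb{x}^*}_\gamma\leq\sum_k|x_k^*(y_k)|\leq K(X)\,\nrm{\mb{x}^*}_{\gamma^*}$. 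The converse is symmetric: if $\gamma\lesssim\gamma^*$ on $X^*$, the same argument with roles swapped shows $P_\gamma$ is bounded on $L^2(\Omega;X^*)$, whence $X^*$ and therefore $X$ (by Pisier's self-duality of $K$-convexity) is $K$-convex.

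For (iii), I would split the identity into two inequalities. The upper bound $\nrm{\mb{x}^*}_{(\ell^2)^*}\leq\nrm{\mb{x}^*}_{\ell^2}$ follows from the lattice Cauchy--Schwarz
\[
\sum_k|x_k^*(x_k)|\leq\ips{\hab{\textstyle\sum_k|x_k|^2}^{1/2},\hab{\textstyle\sum_k|x_k^*|^2}^{1/2}}\leq\nrm{\mb{x}}_{\ell^2}\,\nrm{\mb{x}^*}_{\ell^2},
\]
a standard Banach lattice duality proved via Krivine's functional calculus; see \cite[Section 1.d]{LT79}. For the reverse inequality I would mirror the proof of Proposition \ref{proposition:compareEuclidean}\ref{it:compareEuclidean3}: take a positive $y\in X$ of norm one nearly norming $(\sum_k|x_k^*|^2)^{1/2}$ in $X^*$, realise the sublattice of $X$ generated by $y$ as a space of continuous functions on a compact Hausdorff space via Kakutani, and there construct $\mb{x}\in X^n$ satisfying $(\sum_k|x_k|^2)^{1/2}=y$ and $\sum_k x_k^*(x_k)=\ip{y,(\sum_k|x_k^*|^2)^{1/2}}$, modelled on the $L^2$ witness $x_k=y\cdot\overline{x_k^*}/(\sum_j|x_j^*|^2)^{1/2}$. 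The main obstacle in the whole proposition is the converse in (ii), which is not a formal duality but genuinely requires Pisier's deep theorem that $K$-convexity is self-dual and equivalent to nontrivial type; the other parts reduce to trace duality or lattice functional calculus.
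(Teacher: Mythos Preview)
Your approach is essentially the same as the paper's for parts (i) and (ii). For (i) the paper does the computation directly rather than citing trace duality, but the content is identical. For (ii) the paper simply observes that the inequality $\gamma\lesssim\gamma^*$ on $X^*$ is by definition Gaussian $K$-convexity of $X$ and cites \cite[Corollary 7.4.20]{HNVW17} and Pisier's theorem \cite{Pi82}; your more explicit argument via the Gaussian projection unpacks exactly this citation and is correct.

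For part (iii) there is a gap in your reverse inequality. The formula $x_k=y\cdot\overline{x_k^*}/(\sum_j|x_j^*|^2)^{1/2}$ does not make sense as written: $y\in X$ while $x_k^*\in X^*$, and there is no product between elements of $X$ and $X^*$ in a general Banach lattice. Passing to a $C(K)$-representation of the ideal generated by $y$ does not help directly, since the restrictions of the $x_k^*$ become measures on $K$, not functions, so the pointwise quotient is still undefined. The paper avoids this entirely by invoking the isometric duality $X(\ell^2_n)^*=X^*(\ell^2_n)$ from \cite[Section 1.d]{LT79}, which gives both inequalities at once. If you want a constructive argument, one route is to work in the Banach lattice $X^*$: apply Krivine's functional calculus there to write $(\sum_k|x_k^*|^2)^{1/2}$ via the lattice supremum \eqref{eq:latticel2sup}, and then use that for positive $y\in X$ the map $x^*\mapsto\ip{y,|x^*|}$ is an abstract $L^1$-norm on $X^*$, which lets you reduce to the $L^1$/$L^\infty$ case where the pointwise witness exists. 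But this is more work than simply citing the known duality.
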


\begin{proof}
Fix $\mb{x}^* \in (X^*)^n$. For \ref{it:dualEuclidean0} let $\mb{y}^* \in (X^*)^m$ be such that $\mb{x}^* = \mb{A} \mb{y}^*$ with $\mb{A} \in M_{n,m}(\C)$ and $\sum_{k=1}^n \abs{A_{kj}}^2 \leq 1$ for $1\leq j \leq m$. Then we have
\begin{align*}
  \nrm{\mb{x}^*}_{\op^*} &= \sup\cbraces{\sum_{k=1}^n \abs{x_k^*(x_k)}:\mb{x} \in X^n, \nrmb{\sum_{k=1}^n b_kx_k}_X\leq 1, \sum_{k=1}^n\abs{b_k}^2 \leq 1}\\
  &\leq \sup\cbraces{\sum_{j=1}^m \sum_{k=1}^n \abs{y_j^*(A_{kj}x_k)}:\mb{x} \in X^n, \nrmb{\sum_{k=1}^n b_kx_k}_X\leq 1, \sum_{k=1}^n\abs{b_k}^2 \leq 1}\\
  &\leq \sum_{j=1}^m \nrm{y_j^*}_X,
\end{align*}
so taking the infimum over all such $\mb{y}$ shows $\nrm{\mb{x}^*}_{\op^*} = \nrm{\mb{x}^*}_{\nu}$. This also implies that $\nrm{\mb{x}^{**}}_{\op^*} = \nrm{\mb{x}^{**}}_{\nu}$ for all $\mb{x}^{**} \in (X^{**})^n$. Dualizing and restricting to $X^*$ we obtain that $\nu^*=\op$ on $X^*$.

  For \ref{it:dualEuclidean1}  we have for   a Gaussian sequence $(\gamma_k)_{k=1}^n$ by H\"older's inequality that
  \begin{align*}
    \nrm{\mb{x}^*}_{\gamma^*} &= \sup \cbraces{\abss{\E \sum_{k=1}^n \ip{\gamma_k x_k, \gamma_k x_k^*}}:\mb{x} \in X^n, \nrm{\mb{x}}_\gamma \leq 1} \leq \nrm{\mb{x}^*}_{\gamma}.
  \end{align*}
 The converse estimate defines the notion of Gaussian $K$-convexity of $X$, which is equivalent to $K$-convexity of $X$ by \cite[Corollary 7.4.20]{HNVW17}. It is a deep result of Pisier \cite{Pi82} that $K$-convexity is equivalent to nontrivial type, see  \cite[Theorem 7.4.15]{HNVW17}.

For \ref{it:dualEuclidean2} we note that since $X(\ell^2_n)^* = X^*(\ell^2_n)$ by \cite[Section 1.d]{LT79}, we have
\begin{align*}
  \nrms{\hab{\sum_{k=1}^n \abs{x_k^*}^2}^{1/2}}_X = \sup \cbraces{\sum_{k=1}^n\abs{x_k^*(x_k)}: \mb{x}  \in X^n, \nrms{\hab{\sum_{k=1}^n \abs{x_k}^2}^{1/2}}_X\leq 1},
\end{align*}
so indeed $\ell^2=(\ell^2)^*$.
\end{proof}

Using  a duality argument we can compare the $\ell^2_n(X)$-norm and the $\alpha$-norm of a vector in a finite dimensional subspace of $X^n$.

\begin{proposition}\label{proposition:finitedimensionalalpha}
Let $E$ be a finite dimensional subspace of $X$. Then for $\mb{x} \in E^n$ we have
\begin{equation*}
  (\dim (E))^{-1}\has{\sum_{k=1}^n \nrm{x_k}_X^2}^{1/2} \leq \nrm{\mb{x}}_\alpha \leq \dim (E)\has{\sum_{k=1}^n \nrm{x_k}_X^2}^{1/2}
\end{equation*}
\end{proposition}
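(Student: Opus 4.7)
The plan is to reduce to the two extremal Euclidean structures. By Proposition \ref{proposition:operatornuclear} we have $\op \leq \alpha \leq \nu$, so it suffices to prove the lower bound for $\nrm{\mb{x}}_{\op}$ and the upper bound for $\nrm{\mb{x}}_{\nu}$. The main tool for both will be an Auerbach basis of $E$: setting $d := \dim(E)$, choose a basis $(e_i)_{i=1}^d$ of $E$ with $\nrm{e_i}_X = 1$ together with biorthogonal functionals $(e_i^*)_{i=1}^d \subseteq E^*$ of norm one, and extend each $e_i^*$ to $X^*$ with the same norm via Hahn--Banach. Each $y \in E$ then admits the expansion $y = \sum_{i=1}^d e_i^*(y)e_i$, and the triangle inequality combined with Cauchy--Schwarz gives $\nrm{y}_X \leq \sqrt{d}\,\hab{\sum_{i=1}^d \abs{e_i^*(y)}^2}^{1/2}$.

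For the lower bound, I would apply the above inequality to each $x_k$, square, and sum over $k$. Using the explicit formula for the operator structure from the discussion preceding Proposition \ref{proposition:operatornuclear} we have $\sum_{k=1}^n\abs{e_i^*(x_k)}^2 \leq \nrm{\mb{x}}_{\op}^2$ for each $i$, and summing over $i$ yields
\begin{equation*}
  \sum_{k=1}^n \nrm{x_k}_X^2 \leq d \sum_{i=1}^d\sum_{k=1}^n \abs{e_i^*(x_k)}^2 \leq d^2 \,\nrm{\mb{x}}_{\op}^2,
\end{equation*}
which is exactly the desired lower bound after combining with $\op \leq \alpha$.

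For the upper bound I would produce an explicit representation of $\mb{x}$ suitable for the defining infimum of $\nu$. Set $a_{ik} := e_i^*(x_k)$ and $c_i := \hab{\sum_{k=1}^n \abs{a_{ik}}^2}^{1/2}$, and write $\mb{x} = \mb{A}\mb{y}$ with $\mb{y} = (c_1 e_1,\ldots,c_d e_d)^T$ and $\mb{A} \in M_{n,d}(\C)$ given by $A_{ki} := a_{ik}/c_i$ (taking $A_{ki} := 0$ when $c_i = 0$); by construction every column of $\mb{A}$ has $\ell^2$-norm at most one. The infimum defining $\nu$ then delivers
\begin{equation*}
  \nrm{\mb{x}}_\nu \leq \sum_{i=1}^d c_i \nrm{e_i}_X = \sum_{i=1}^d c_i \leq \sqrt{d}\,\has{\sum_{i,k} \abs{a_{ik}}^2}^{1/2} \leq d \, \has{\sum_{k=1}^n \nrm{x_k}_X^2}^{1/2},
\end{equation*}
where the last step uses Cauchy--Schwarz in $i$ together with $\abs{e_i^*(x_k)} \leq \nrm{x_k}_X$. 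Combining with $\alpha \leq \nu$ finishes the proof.

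I do not anticipate a serious obstacle here: the argument is a routine assembly of Auerbach's lemma with the explicit formulas for $\op$ and $\nu$. The only minor technical point is the handling of degenerate columns $c_i = 0$ in the representation for $\nu$, which is settled by simply omitting them.
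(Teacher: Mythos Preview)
Your proof is correct, but it follows a different route from the paper's. For the upper bound the paper writes $\nrm{\mb{x}}_\alpha \leq \nrm{\mb{x}}_\nu \leq \dim(E)\,\nrm{\mb{x}}_{\op} \leq \dim(E)\,(\sum_k\nrm{x_k}^2)^{1/2}$, invoking the standard rank inequality $\nu \leq \dim(E)\cdot\op$ for finite-rank operators rather than building an explicit $\nu$-representation; your Auerbach construction gives a direct, self-contained proof of $\nu \leq \dim(E)\cdot\ell^2$ without appealing to that inequality. For the lower bound the approaches diverge more substantially: the paper argues by duality, choosing $\mb{x}^* \in (E^*)^n$ appropriately and applying the already-proved upper bound to the dual structure $\alpha^*$ on $E^*$, whereas you bound $\nrm{\mb{x}}_{\op}$ from below by testing against the Auerbach biorthogonal functionals. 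The paper's duality argument is slicker and reuses the upper bound; your approach is more elementary and avoids the (slightly delicate) identification of the dual structure on the finite-dimensional quotient $E^*$.
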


\begin{proof}
  For $\mb{x} \in E^n$ we have by Proposition \ref{proposition:operatornuclear} that
  \begin{equation*}
    \nrm{\mb{x}}_{\alpha} \leq \nrm{\mb{x}}_{\nu}  \leq \dim (E)\nrm{\mb{x}}_{\op} \leq \dim (E)\has{\sum_{k=1}^n \nrm{x_k}_X^2}^{1/2}.
  \end{equation*}
  Conversely take $\mb{x}^* \in (E^*)^n$ with $\nrm{\mb{x}^*}_{\alpha^*} \leq 1$ and $
    \nrm{\mb{x}}_\alpha = \sum_{k=1}^n x^*_k(x_k)$.
  Then
  \begin{equation*}
    \nrm{\mb{x}^*}_{\alpha^*} \leq \dim(E) \has{\sum_{k=1}^n \nrm{x_k^*}_{X^*}^2}^{1/2}
  \end{equation*}
  and therefore
  \begin{equation*}
    \has{\sum_{k=1}^n \nrm{x_k}_X^2}^{1/2} \leq \dim (E) \nrm{\mb{x}}_\alpha. \qedhere
  \end{equation*}
\end{proof}

\subsection*{Unconditionally stable Euclidean structures}
We end this section with an additional property of a Euclidean structure that will play an important role in Chapter \ref{part:4}-\ref{part:6}. We will say that a Euclidean structure $\alpha$ on $X$ is \emph{unconditionally stable} if there is a $C>0$ such that
\begin{align}
  \label{eq:unconditional1} \nrm{\mb{x}}_\alpha &\leq C \sup_{\abs{\epsilon_{k}}=1}\nrmb{\sum_{k=1}^n \epsilon_k x_k}_X,&& \mb{x} \in X^n\\
  \label{eq:unconditional2} \nrm{\mb{x}^*}_{\alpha^*} &\leq C \sup_{\abs{\epsilon_{k}}=1}\nrmb{\sum_{k=1}^n \epsilon_k x_k^*}_{X^*}&& \mb{x}^* \in (X^*)^n.
\end{align}
The next proposition gives some examples of unconditionally stable structures.

\begin{proposition}\label{proposition:unconditionallystable}
~
\begin{enumerate}[(i)]
\item \label{it1:unconditional} The $\gl$-structure on $X$ is unconditionally stable.
\item \label{it2:unconditional} If $X$ has finite cotype, the $\gamma$-structure on $X$ is unconditionally stable.
\item \label{it3:unconditional} If $X$ is a Banach lattice, the $\ell^2$-structure on $X$ is unconditionally stable.
\end{enumerate}
\end{proposition}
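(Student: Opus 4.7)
The plan is to verify the primal estimate \eqref{eq:unconditional1} and the dual estimate \eqref{eq:unconditional2} for each of the three structures. The unifying idea is to use the Khintchine-type inequalities of Proposition \ref{proposition:gaussianradermacherl2comparison} to dominate the Euclidean expression by a Rademacher average, which is then trivially bounded by the supremum over sign choices. I would handle the two lattice-type structures first and then the Gaussian structure.

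\textbf{Parts \ref{it1:unconditional} and \ref{it3:unconditional}.} For \ref{it3:unconditional}, Proposition \ref{proposition:gaussianradermacherl2comparison} applied in the Banach lattice $X$ gives
\begin{equation*}
  \nrm{\mb{x}}_{\ell^2} = \nrms{\has{\sum_k \abs{x_k}^2}^{1/2}}_X \lesssim \has{\E\nrmb{\sum_k \varepsilon_k x_k}_X^2}^{1/2} \leq \sup_{\abs{\epsilon_k}=1}\nrmb{\sum_k \epsilon_k x_k}_X,
\end{equation*}
which is \eqref{eq:unconditional1}. Since $X^*$ is again a Banach lattice and Proposition \ref{proposition:dualeuclidean}\ref{it:dualEuclidean2} identifies $(\ell^2)^*$ with the $\ell^2$-structure on $X^*$, the same computation performed on $X^*$ yields \eqref{eq:unconditional2}. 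For \ref{it1:unconditional}, the primal bound simply repeats this argument in $L^1$: for any $T\colon X \to L^1(S)$ with $\nrm{T}\leq 1$,
\begin{equation*}
  \nrm{(Tx_1,\ldots,Tx_n)}_{\ell^2} \lesssim \has{\E\nrmb{\sum_k \varepsilon_k Tx_k}_{L^1}^2}^{1/2} \leq \sup_{\abs{\epsilon_k}=1}\nrmb{\sum_k \epsilon_k x_k}_X,
\end{equation*}
and taking the supremum over $T$ delivers \eqref{eq:unconditional1} for $\gl$. For the dual I would construct a tailor-made test operator: given $\mb{x}^* \in (X^*)^n$, define $S\colon X \to \ell^1_n$ by $Sx = (x_k^*(x))_k$; then $\nrm{S} = \sup_{\abs{\epsilon_k}=1}\nrm{\sum_k \epsilon_k x_k^*}_{X^*}$. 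Since $\ell^1_n$ is an $L^1$-space, applying the definition of $\gl$ to $S/\nrm{S}$ yields
\begin{equation*}
  \sum_j \has{\sum_i \abs{x_j^*(x_i)}^2}^{1/2} \leq \nrm{S}\nrm{\mb{x}}_{\gl}.
\end{equation*}
Retaining only the diagonal terms $i=j$ gives $\sum_j \abs{x_j^*(x_j)} \leq \nrm{S}\nrm{\mb{x}}_{\gl}$, and supping over $\nrm{\mb{x}}_{\gl}\leq 1$ delivers \eqref{eq:unconditional2} with constant $1$.

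\textbf{Part \ref{it2:unconditional}.} The primal bound uses finite cotype of $X$ together with Proposition \ref{proposition:gaussianradermacherl2comparison} to replace Gaussians by Rademachers:
\begin{equation*}
  \nrm{\mb{x}}_\gamma \lesssim \has{\E \nrmb{\sum_k \varepsilon_k x_k}_X^2}^{1/2} \leq \sup_{\abs{\epsilon_k}=1}\nrmb{\sum_k \epsilon_k x_k}_X.
\end{equation*}
For the dual, I would first reduce to the case $x_k^*(x_k)\geq 0$ by replacing $\mb{x}$ with $(\mu_k x_k)_k$ for unimodular $\mu_k$, which preserves $\nrm{\cdot}_\gamma$ via \eqref{eq:E2x} applied to a diagonal unitary. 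Then the pairing identity
\begin{equation*}
  \sum_k x_k^*(x_k) = \E\bracs{\hab{\sum_j \varepsilon_j x_j^*}\hab{\sum_k \overline{\varepsilon_k} x_k}}
\end{equation*}
together with Cauchy--Schwarz gives
\begin{equation*}
  \sum_k \abs{x_k^*(x_k)} \leq \has{\E\nrmb{\sum_j \varepsilon_j x_j^*}_{X^*}^2}^{1/2} \has{\E\nrmb{\sum_k \varepsilon_k x_k}_X^2}^{1/2}.
\end{equation*}
The first factor is at most $\sup_\epsilon\nrm{\sum \epsilon_k x_k^*}_{X^*}$, while the second is dominated by $\nrm{\mb{x}}_\gamma$ via the \emph{always-valid} Rademacher-by-Gaussian comparison of Proposition \ref{proposition:gaussianradermacherl2comparison}. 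Notably finite cotype is needed only for the primal bound, not for the dual.

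The main technical subtlety is the dual estimate for $\gl$: unlike the $\ell^2$-case on a Banach lattice, one cannot simply dualize into a lattice, so one must engineer the auxiliary operator $S\colon X\to \ell^1_n$ and then extract the sum $\sum_k\abs{x_k^*(x_k)}$ from the diagonal of the matrix-valued quantity $\bigl((x_j^*(x_i))_{j,i}\bigr)$ computed in the $\ell^2$-$\ell^1_n$ norm.
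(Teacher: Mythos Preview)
Your proof is correct and for parts \ref{it1:unconditional} and \ref{it2:unconditional} it follows the paper's argument essentially verbatim, including the construction of the test operator $S\colon X\to\ell^1_n$ from $\mb{x}^*$ for the dual estimate in \ref{it1:unconditional} and the Rademacher pairing identity with Cauchy--Schwarz for the dual estimate in \ref{it2:unconditional}. The one genuine difference is in \ref{it3:unconditional}: you argue directly, obtaining the primal bound via Khintchine in $X$ and the dual bound by invoking $(\ell^2)^*=\ell^2$ on $X^*$ from Proposition~\ref{proposition:dualeuclidean}\ref{it:dualEuclidean2} and repeating the primal argument in the Banach lattice $X^*$, whereas the paper simply cites the equivalence $\ell^2\simeq\gl$ on Banach lattices (Proposition~\ref{proposition:compareEuclidean}\ref{it:compareEuclidean3}) to reduce \ref{it3:unconditional} to \ref{it1:unconditional}. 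Your route is a little more self-contained and avoids the Krivine--Grothendieck ingredient hidden in that equivalence; the paper's route is shorter once \ref{it1:unconditional} is in hand.
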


\begin{proof}
  Fix $\mb{x} \in X^n$ and $\mb{x}^* \in (X^*)^n$. For \ref{it1:unconditional} let $V\colon X \to L^1(S)$ be a norm-one operator. Then by Proposition \ref{proposition:gaussianradermacherl2comparison} we have
  \begin{equation*}
    \nrm{(Vx_1,\ldots,Vx_n)}_{\ell^2} \lesssim  \E\nrmb{\sum_{k=1}^n\varepsilon_k Vx_k}_{L^1(S)} \leq \sup_{\abs{\epsilon_{k}}=1}\nrmb{\sum_{k=1}^n \epsilon_k x_k}_X,
  \end{equation*}
  where $(\varepsilon_k)_{k \geq 1}$ is a Rademacher sequence. So taking the supremum over all such $V$ yields \eqref{eq:unconditional1}. Now suppose that
  \begin{equation*}
    \sup_{\abs{\epsilon_{k}}=1}\nrmb{\sum_{k=1}^n \epsilon_k x_k^*}_{X^*} = 1.
  \end{equation*}
  Define $V:X \to \ell^1_n$ by $Vx = \hab{x_1^*(x),\ldots,x_n^*(x)}$, for which we have $\nrm{V}\leq 1$. Suppose that $\nrm{\mb{x}}_{\gl} \leq 1$. Then $\nrm{(Vx_1,\ldots,Vx_n)}_{\ell^2} \leq 1$, i.e.
  \begin{equation*}
    \sum_{j=1}^n \has{\sum_{k=1}^n \abs{x_j^*(x_k)}^2}^{1/2}\leq 1
  \end{equation*}
  and hence
  \begin{equation*}
    \sum_{j=1}^n  \abs{x_j^*(x_j)}\leq 1.
  \end{equation*}
  This means that $\nrm{\mb{x}}_{(\gl)^*} \leq 1$, so \eqref{eq:unconditional2} follows.

  For \ref{it2:unconditional}, we have by Proposition \ref{proposition:gaussianradermacherl2comparison} that
  \begin{align*}
    \nrm{\mb{x}}_{\gamma} &\lesssim \has{\E\nrmb{\sum_{k=1}^n\varepsilon_k x_k}_{X}^2}^{1/2}\leq  \sup_{\abs{\epsilon_{k}}=1}\nrmb{\sum_{k=1}^n \epsilon_k x_k}_X,
\end{align*}
where $(\varepsilon_k)_{k \geq 1}$ is a Rademacher sequence. For \eqref{eq:unconditional2} assume that $\nrm{\mb{x}}_\gamma \leq 1$. Then again by Proposition \ref{proposition:gaussianradermacherl2comparison} we have
\begin{align*}
  \abss{\sum_{k=1}^n \ip{x_k,x_k^*}} &= \abss{\E  \ipb{\sum_{k=1}^n\varepsilon_kx_k,\sum_{k=1}^n\varepsilon_kx_k^*}}\\
  &\leq \has{\E\nrmb{\sum_{k=1}^n\varepsilon_k x_k}_{X}^2}^{1/2} \has{\E\nrmb{\sum_{k=1}^n\varepsilon_k x_k^*}_{X^*}^2}^{1/2}\\
  &\lesssim \sup_{\abs{\epsilon_{k}}=1}\nrmb{\sum_{k=1}^n \epsilon_k x_k^*}_{X^*}.
\end{align*}

Finally \ref{it3:unconditional} follows from \ref{it1:unconditional} and the equivalence of the $\ell^2$-structure and the $\gl$-structure, see Proposition \ref{proposition:compareEuclidean}.
\end{proof}

\section{\texorpdfstring{$\alpha$}{a}-bounded operator families}\label{section:alphabounded}
Having introduced Euclidean structures in the preceding section, we will now connect Euclidean structures to operator theory.
  \begin{definition}
    Let $\alpha$ be a Euclidean structure on $X$. A family of operators $\Gamma \subseteq \mc{L}(X)$ is called {\em $\alpha$-bounded} if
\begin{equation*}
  \nrm{\Gamma}_\alpha:= \sup \cbraceb{\nrm{(T_1x_1,\ldots,T_nx_n)}_{\alpha}:T_k \in \Gamma, \mb{x} \in X^n,\nrm{\mb{x}}_\alpha \leq 1}
\end{equation*}
is finite. If $\alpha$ is a global Euclidean structure, this definition can analogously be given for $\Gamma \subseteq \mc{L}(X,Y)$, where $Y$ is another Banach space.
  \end{definition}
 We allow repetitions of the operators in the definition of $\alpha$-boundedness. In the case that $\alpha = \ell^2$ it is known that it is equivalent to test the definition only for distinct operators, see \cite[Lemma 4.3]{KVW16}. For $\alpha=\gamma$ this is an open problem.

 Closely related to $\gamma$ and $\ell^2$-boundedness is the notion of $\mc{R}$-boundedness. We say that $\Gamma\subseteq \mc{L}(X)$ is  \emph{$\mc{R}$-bounded} if there is a $C>0$ such that for all $\mb{x} \in X^n$
\begin{equation*}
  \has{\E\nrmb{\sum_{k=1}^n \varepsilon_kT_kx_k}^2}^{1/2} \leq C \, \has{\E\nrmb{\sum_{k=1}^n \varepsilon_kx_k}^2}^{1/2}, \qquad T_k \in \Gamma,
\end{equation*}
where $(\varepsilon_k)_{k=1}^\infty$ is a Rademacher sequence.
Note that the involved $\mc{R}$-norms do not form a Euclidean structure, as they do not satisfy \eqref{eq:E2x}. However, we have the following connections (see \cite{KVW16}):
\begin{itemize}
  \item $\mc{R}$-boundedness implies $\gamma$-boundedness. Moreover $\gamma$-boundedness and $\mc{R}$-boundedness are equivalent on $X$ if and only if $X$ has finite cotype.
  \item  $\ell^2$-boundedness, $\gamma$-boundedness and $\mc{R}$-boundedness are equivalent on a Banach lattice $X$ if and only if $X$ has finite cotype.
\end{itemize}
Following the breakthrough papers \cite{CDSW00,We01b}, $\gamma$- and $\ell^2$- and $\mc{R}$-bound\-ed\-ness have played a major role in the development of vector-valued analysis over the past decades (see e.g. \cite[Chapter 8]{HNVW17}).

\bigskip

We call an operator $T \in \mc{L}(X)$ $\alpha$-bounded if $\{T\}$ is $\alpha$-bounded. It is not always the case that any $T \in \mc{L}(X)$ is $\alpha$-bounded. In fact we have the following characterization:

\begin{proposition}\label{proposition:idealsingleoperator}
  Let $\alpha$ be a Euclidean structure on $X$. Then every $T \in \mc{L}(X)$ is $\alpha$-bounded with $\nrm{\cbrace{T}}_\alpha \leq C\, \nrm{T}$ if and only if $\alpha$ is ideal with constant $C$.
\end{proposition}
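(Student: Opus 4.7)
The plan is essentially to observe that the two statements are restatements of each other once the definitions are unfolded, so the proof amounts to bookkeeping with the supremum in the definition of $\nrm{\{T\}}_\alpha$. I would split into the two implications and handle each in one or two lines.

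For the ``$\Leftarrow$'' direction, I would assume $\alpha$ is ideal with constant $C$, i.e.\ \eqref{eq:E3x} holds uniformly in $S \in \mc{L}(X)$. Fixing $T \in \mc{L}(X)$ and applying \eqref{eq:E3x} with $S = T$ yields $\nrm{(Tx_1,\ldots,Tx_n)}_\alpha \leq C\,\nrm{T}\,\nrm{\mb{x}}_\alpha$ for every $\mb{x} \in X^n$. Taking the supremum over $\mb{x}$ with $\nrm{\mb{x}}_\alpha \leq 1$ and over $n \in \N$ gives $\nrm{\{T\}}_\alpha \leq C\,\nrm{T}$, which is exactly the required $\alpha$-boundedness estimate for the singleton $\{T\}$.

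For the ``$\Rightarrow$'' direction, I would assume that every $T \in \mc{L}(X)$ is $\alpha$-bounded with $\nrm{\{T\}}_\alpha \leq C\,\nrm{T}$. By definition of $\nrm{\{T\}}_\alpha$ as a supremum, this gives, for every $n$ and every $\mb{x} \in X^n$,
\begin{equation*}
  \nrm{(Tx_1,\ldots,Tx_n)}_\alpha \leq \nrm{\{T\}}_\alpha \,\nrm{\mb{x}}_\alpha \leq C\,\nrm{T}\,\nrm{\mb{x}}_\alpha.
\end{equation*}
Since $T$ was arbitrary, this is precisely condition \eqref{eq:E3x} with constant $C$, so $\alpha$ is ideal with constant $C$.

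There is no real obstacle here: the proposition is a tautological reformulation of the ideal property, once one recognizes that the $\alpha$-boundedness inequality applied to the singleton $\{T\}$ is literally \eqref{eq:E3x} for $S = T$. The only care needed is to track that the constant $C$ is the same on both sides and does not depend on $T$, $n$, or $\mb{x}$, which is automatic from the uniform way the supremum is taken in the definition of $\nrm{\Gamma}_\alpha$.
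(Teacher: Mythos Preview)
Your proof is correct and follows exactly the same approach as the paper: both directions amount to unfolding the definitions and recognizing that $\nrm{\{T\}}_\alpha \leq C\,\nrm{T}$ for all $T$ is precisely condition \eqref{eq:E3x}.
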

\begin{proof}
  First assume that $\alpha$ is ideal with constant $C$. Then we have for all $T \in \mc{L}(X)$
  \begin{align*}
    \nrm{\{T\}}_\alpha &= \sup\cbrace{\nrm{(Tx_1, \ldots Tx_n)}_\alpha:\mb{x} \in X^n,\nrm{\mb{x}}_\alpha \leq 1} \leq C\, \nrm{T},
  \end{align*}
  where $C$ is the ideal constant of $\alpha$.
  Now suppose that for all $T \in \mc{L}(X)$ we have $\nrm{\cbrace{T}}_\alpha \leq C\, \nrm{T}$. Then for $\mb{x} \in X^n$ and $T \in \mc{L}(X)$ we have
  \begin{align*}
    \nrm{(Tx_1, \ldots Tx_n)}_\alpha \leq \nrm{\{T\}}_\alpha \nrm{\mb{x}}_\alpha \leq C \,\nrm{T} \nrm{\mb{x}}_\alpha,
  \end{align*}
  so $\alpha$ is ideal with constant $C$.
\end{proof}

Next we establish some basic properties of $\alpha$-bounded families of operators.
\begin{proposition}
\label{proposition:alphaproperties}
   Let $\alpha$ be a Euclidean structure on  $X$ and let $\Gamma, \Gamma' \subseteq \mc{L}(X)$ be $\alpha$-bounded.
  \begin{enumerate}[(i)]
    \item \label{it:propalha1} For $\Gamma'' = \cbrace{TT':T \in \Gamma, T' \in \Gamma'}$ we have $\nrm{\Gamma''}_\alpha \leq \nrm{\Gamma}_\alpha\nrm{\Gamma'}_\alpha$.
    \item \label{it:propalha2} For $\Gamma^* = \cbrace{T^*:T \in \Gamma}$ we have $\nrm{\Gamma^*}_{\alpha^*} = \nrm{\Gamma}_{\alpha}$.
    \item \label{it:propalha3}For $\alpha$-bounded $\Gamma_k \subseteq \mc{L}(X)$ for $k \in \N$ we have $\nrm{\bigcup_{k=1}^\infty \Gamma_k}_\alpha \leq \sum_{k=1}^\infty \nrm{\Gamma_k}_\alpha $.
    \item \label{it:propalha4} For the absolutely convex hull $\tilde\Gamma$  of $\Gamma$ we have $\nrm{\tilde\Gamma}_\alpha = \nrm{\Gamma}_\alpha$.
    \item \label{it:propalha5} For the closure of $\Gamma$ in the strong operator topology $\tilde\Gamma$ we have $\nrm{\tilde\Gamma}_\alpha = \nrm{\Gamma}_\alpha$.
  \end{enumerate}
\end{proposition}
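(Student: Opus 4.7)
I would treat the five parts separately; only part~(iv) requires any genuine idea beyond direct use of \eqref{eq:E1x} and \eqref{eq:E2x}. Part~(i) follows by applying $\alpha$-boundedness of $\Gamma'$ to the vector $(T_1'x_1,\ldots,T_n'x_n)$ and then of $\Gamma$ to the result. Part~(ii) follows by expanding the definition of $\alpha^*$ and using the duality $(T_k^*x_k^*)(y_k) = x_k^*(T_ky_k)$ to obtain, for $\nrm{\mb{y}}_\alpha \leq 1$,
\[
  \sum_{k=1}^n \absb{x_k^*(T_ky_k)} \leq \nrm{\mb{x}^*}_{\alpha^*}\nrm{(T_1y_1,\ldots,T_ny_n)}_\alpha \leq \nrm{\mb{x}^*}_{\alpha^*}\nrm{\Gamma}_\alpha,
\]
which gives $\nrm{\Gamma^*}_{\alpha^*} \leq \nrm{\Gamma}_\alpha$; applying this same bound to $\Gamma^*$ on $X^*$ and invoking $\alpha^{**}|_X = \alpha$ together with $T^{**}|_X = T$ yields the reverse inequality. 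For~(iii), given $T_1,\ldots,T_n \in \bigcup_{k\geq 1} \Gamma_k$ I would assign each $T_j$ to some $\Gamma_{k_j}$, partition the indices $\{1,\ldots,n\}$ accordingly into finitely many nonempty classes $I_k$, and combine the triangle inequality with the fact that the coordinate projection onto $I_k$ (a $0/1$ diagonal matrix) has operator norm $1$, so that \eqref{eq:E2x} and $\alpha$-boundedness of $\Gamma_k$ bound the $k$th piece by $\nrm{\Gamma_k}_\alpha \nrm{\mb{x}}_\alpha$.

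The heart of the argument is~(iv), where I expect the main obstacle. After absorbing the representations of the $S_i \in \tilde\Gamma$ into a common finite list $T_1,\ldots,T_N \in \Gamma$, write $S_i = \sum_{j=1}^N \lambda_{ij}T_j$ with $\sum_j|\lambda_{ij}| \leq 1$. A direct triangle inequality in $j$ produces a factor $\sum_j \max_i |\lambda_{ij}|$ (not controllable by $1$), and the matrix reformulation $(S_1x_1,\ldots,S_nx_n) = \mb{A}\mb{y}$ with $\mb{y}_{(i,j)} = T_jx_i$ costs a factor $\sqrt{N}$ when the vector $\mb{x}$ is lifted from $X^n$ to $X^{nN}$ via repetition. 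The way out is to randomize. Decompose $\lambda_{ij} = \abs{\lambda_{ij}}\sigma_{ij}$ with $\abs{\sigma_{ij}}=1$, adjoin a dummy index $j=0$ with $T_0 := 0$ and weight $\abs{\lambda_{i0}} := 1 - \sum_{j\geq 1}\abs{\lambda_{ij}}$ so that $(\abs{\lambda_{ij}})_{j\geq 0}$ is a probability distribution on $\{0,\ldots,N\}$ for each $i$, and draw independent $J_1,\ldots,J_n$ with $\P(J_i = j) = \abs{\lambda_{ij}}$. Then $S_ix_i = \E[\sigma_{i,J_i}T_{J_i}x_i]$, and convexity of the $\alpha$-norm (Jensen's inequality) yields
\[
  \nrm{(S_1x_1,\ldots,S_nx_n)}_\alpha \leq \E\nrmb{\diag(\sigma_{i,J_i})(T_{J_1}x_1,\ldots,T_{J_n}x_n)}_\alpha \leq \nrm{\Gamma}_\alpha \nrm{\mb{x}}_\alpha,
\]
where \eqref{eq:E2x} absorbs $\diag(\sigma_{i,J_i})$ (operator norm $\leq 1$) and $\nrm{\Gamma\cup\{0\}}_\alpha = \nrm{\Gamma}_\alpha$ since appending the zero operator never increases the $\alpha$-bound. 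The opposite inequality $\nrm{\Gamma}_\alpha \leq \nrm{\tilde\Gamma}_\alpha$ is immediate from $\Gamma \subseteq \tilde\Gamma$.

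For~(v), the $\alpha$-norm on $X^n$ is equivalent to the coordinatewise norm: applying \eqref{eq:E2x} to the row projection $\mb{e}_k^T$ gives $\nrm{x_k}_X \leq \nrm{\mb{x}}_\alpha$, while the decomposition $\mb{x} = \sum_k \mb{e}_k x_k$ with \eqref{eq:E1x} gives $\nrm{\mb{x}}_\alpha \leq \sum_k \nrm{x_k}_X$, so $\alpha$ is continuous under coordinatewise norm convergence. If $T_k \in \tilde\Gamma$ is the SOT-limit of a net $(T_k^{(i)}) \subseteq \Gamma$, then $T_k^{(i)}x_k \to T_kx_k$ in $X$ for every $k$, hence
\[
  \nrm{(T_1x_1,\ldots,T_nx_n)}_\alpha = \lim_i \nrm{(T_1^{(i)}x_1,\ldots,T_n^{(i)}x_n)}_\alpha \leq \nrm{\Gamma}_\alpha \nrm{\mb{x}}_\alpha,
\]
yielding $\nrm{\tilde\Gamma}_\alpha \leq \nrm{\Gamma}_\alpha$; the reverse is again trivial.
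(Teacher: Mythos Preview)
Your proof is correct and follows essentially the same route as the paper. In particular, the crux is part~(iv), and both you and the paper use the same randomization idea: represent each $S_i$ as an expectation over independent random choices from $\Gamma$ and apply Jensen's inequality. The only cosmetic difference is that the paper first peels off the unimodular scalars via \eqref{eq:E2x} and then randomizes over the real convex hull, whereas you absorb the phases $\sigma_{i,J_i}$ into the diagonal matrix inside the expectation and handle the absolutely convex hull in one stroke (at the small price of adjoining $T_0=0$ to normalize the weights).
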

\begin{proof}
\ref{it:propalha1}  is immediate from the definition, \ref{it:propalha2} is a consequence of our definition of duality, \ref{it:propalha3} follows from the triangle inequality and \ref{it:propalha5} is clear from the definition of an $\alpha$-bounded family of operators.

  For \ref{it:propalha4} we first note that $\nrm{\cup_{0\leq \theta \leq 2 \pi} e^{i\theta}\Gamma}_\alpha = \nrm{\Gamma}_\alpha$. It remains to check that $\nrm{\text{conv}(\Gamma)}_\alpha = \nrm{\Gamma}_\alpha$. Suppose that for $1\leq j \leq m$ we have $S_j = \sum_{k=1}^n a_{jk} T_k$ where $T_1,\ldots,T_n \in \Gamma$, $a_{jk} \geq 0$ and $\sum_{k=1}^n a_{jk}=1$ for $1\leq j \leq m$. Let $(\xi_j)_{j=1}^m$ be a sequence of independent random variables with $\P(\xi_j=k)=a_{jk}$ for $1 \leq k \leq n$. Then
  \begin{align*}
    \nrm{(S_1x_1,\ldots,S_nx_n)}_\alpha &= \nrm{\E(T_{\xi_1}x_1,\ldots,T_{\xi_n}x_n)}_\alpha\\
    &\leq \E\nrmb{(T_{\xi_1}x_1,\ldots,T_{\xi_n}x_n)}_\alpha\\
    &\leq \nrm{\Gamma}_\alpha
  \end{align*}
  for all $\mb{x} \in X^n$ with $\nrm{\mb{x}}_\alpha \leq 1$, so $\nrm{\text{conv}(\Gamma)}_\alpha = \nrm{\Gamma}_\alpha$.
\end{proof}

As a corollary of Proposition \ref{proposition:alphaproperties}\ref{it:propalha4} and \ref{it:propalha5} we also have the $\alpha$-boundedness of $L^1$-integral means of $\alpha$-bounded sets. Moreover from the triangle inequality for $\nrm{\cdot}_{\alpha}$ we obtain boundedness of $L^\infty$-integral means. If $\alpha=\gamma$, there is a scale of results under type and cotype assumptions (see \cite{HV09}).

\begin{corollary}\label{corollary:L1mean}
  Let $\alpha$ be a Euclidean structure on a Banach space $X$, let $(S,\mu)$ be a measure space and let  $f:S \to \Gamma$ be strongly measurable.
  \begin{enumerate}[(i)]
    \item If $\Gamma:= \cbrace{f(s):s \in S}$ is $\alpha$ bounded, then the set
    \begin{equation*}
      \Gamma_f^1 := \cbraces{\int_S\varphi(s)f(s)\dd s:\nrm{\varphi}_{L^1(S)}\leq 1}
    \end{equation*}
    is $\alpha$-bounded with $\nrm{\Gamma_f^1}_{\alpha} \leq \nrm{\Gamma}$.
    \item If $\int_S\nrm{f}\dd \mu <\infty$ and $\alpha$ is ideal, then the set
    \begin{equation*}
      \Gamma_f^\infty := \cbraces{\int_S\varphi(s)f(s)\dd s:\nrm{\varphi}_{L^\infty(S)}\leq 1}
    \end{equation*}
    is $\alpha$-bounded with $\nrm{\Gamma_f^\infty}_{\alpha} \lesssim\int_S\nrm{f}\dd \mu$.
  \end{enumerate}
\end{corollary}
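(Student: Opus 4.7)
The plan is to handle the two parts with different techniques: (i) via the SOT-closed absolutely convex hull machinery of Proposition \ref{proposition:alphaproperties}, and (ii) via a direct Bochner-integral triangle inequality exploiting that $\alpha$ is ideal.

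A preliminary observation useful for both parts is that $\alpha$-boundedness of $\Gamma = \cbrace{f(s) : s \in S}$ forces $\sup_{s}\nrm{f(s)} \leq \nrm{\Gamma}_\alpha$ (specialize the definition to $n=1$ and use \eqref{eq:E1x}). Hence in (i) the map $s \mapsto \varphi(s) f(s)$ is norm-integrable for every $\varphi \in L^1(S)$, and each $S_\varphi := \int_S \varphi f \dd \mu \in \Gamma_f^1$ is a well-defined Bochner integral in $\mc{L}(X)$.

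For part (i), I would show that every such $S_\varphi$ (with $\nrm{\varphi}_{L^1(S)} \leq 1$) lies in the strong-operator closure of the absolutely convex hull of $\Gamma$. Combining Proposition \ref{proposition:alphaproperties}\ref{it:propalha4} and \ref{it:propalha5}, this closure has the same $\alpha$-bound as $\Gamma$ itself, which is the desired estimate. To exhibit the containment, using strong measurability of $f$ and Pettis's measurability theorem I would pick simple functions $g_n = \sum_{i=1}^{N_n} f(s_i^{(n)}) \ind_{A_i^{(n)}}$ with disjoint $A_i^{(n)}$ and $s_i^{(n)} \in S$, satisfying $g_n(s) \to f(s)$ in $\mc{L}(X)$ for $\mu$-a.e.\ $s$ and $\nrm{g_n(s)} \leq \nrm{\Gamma}_\alpha$. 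Then
\begin{equation*}
\int_S \varphi g_n \dd\mu = \sum_{i=1}^{N_n} c_i^{(n)} f(s_i^{(n)}), \qquad c_i^{(n)} := \int_{A_i^{(n)}} \varphi \dd \mu,
\end{equation*}
and disjointness of the $A_i^{(n)}$ gives $\sum_i \abs{c_i^{(n)}} \leq \nrm{\varphi}_{L^1(S)} \leq 1$, so each $\int_S \varphi g_n \dd\mu$ lies in the absolutely convex hull of $\Gamma$. Dominated convergence applied pointwise in $x \in X$ then yields $\int_S \varphi g_n\dd\mu \to S_\varphi$ in the strong operator topology, establishing the containment.

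For part (ii), I would argue directly via Minkowski. Fix $S_{\varphi_1}, \dots, S_{\varphi_n} \in \Gamma_f^\infty$ and $\mb{x} \in X^n$. The triangle inequality for the Bochner integral in the Banach space $(X^n, \nrm{\cdot}_\alpha)$ gives
\begin{equation*}
\nrmb{(S_{\varphi_1}x_1, \dots, S_{\varphi_n}x_n)}_\alpha \leq \int_S \nrmb{(\varphi_1(s) f(s)x_1, \dots, \varphi_n(s) f(s)x_n)}_\alpha \dd\mu(s).
\end{equation*}
Viewing the integrand as the diagonal matrix $\diag(\varphi_1(s), \dots, \varphi_n(s))$ acting on $(f(s)x_1, \dots, f(s)x_n)$, property \eqref{eq:E2x} combined with $\max_k \nrm{\varphi_k}_{L^\infty(S)} \leq 1$ bounds it by $\nrm{(f(s)x_1, \dots, f(s)x_n)}_\alpha$, and the ideal property of $\alpha$ (with some constant $C$) further bounds this by $C \nrm{f(s)} \nrm{\mb{x}}_\alpha$. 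Integrating over $S$ delivers $\nrm{\Gamma_f^\infty}_\alpha \lesssim \int_S \nrm{f} \dd\mu$.

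The main delicate point is in (i): to guarantee that the approximants $\int_S \varphi g_n \dd\mu$ really lie inside the absolutely convex hull of $\Gamma$ (and not of some larger set), the simple functions $g_n$ must take values in the essential range of $f$ itself. Pettis measurability supplies exactly this, as one may restrict the coefficient pool to a countable dense subset of that range.
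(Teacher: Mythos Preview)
Your proposal is correct and follows precisely the approach the paper indicates: part (i) is handled via Proposition \ref{proposition:alphaproperties}\ref{it:propalha4}--\ref{it:propalha5} by exhibiting each $S_\varphi$ as an SOT-limit of absolutely convex combinations of elements of $\Gamma$, and part (ii) via the triangle inequality for the $\alpha$-norm combined with the ideal property. Your detailed justifications (simple-function approximation in (i), the diagonal-matrix step via \eqref{eq:E2x} in (ii)) are exactly the natural way to fill in what the paper leaves implicit.
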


\subsection*{A technical lemma}
We end this section with a technical lemma that will be crucial in the representation theorems in this chapter, as well as in the more concrete factorization theorems in Chapter \ref{part:2}. The proof of this lemma (in the case $\Gamma=\varnothing$) is a variation of the proof of \cite[Theorem 7.3.4]{AK16}, which is the key ingredient to prove the Maurey-Kwapie\'n theorem on factorization of an operator $T:X \to Y$ through a Hilbert space (see \cite{Kw72,Ma74}). We will make the connection to the Maurey-Kwapie\'n factorization theorem clear in Section \ref{section:hilbertfactorization}, where we will prove a generalization of that theorem.

\begin{lemma}
\label{lemma:technical}
  Let $\alpha$ be a Euclidean structure on $X$ and let $Y\subseteq X$ be a subspace. Suppose that $F:X \to [0,\infty)$ and $G:Y \to [0,\infty)$ are two positive homogeneous functions such that
  \begin{align}
    \label{eq:F}
    \has{\sum_{k=1}^nF(x_k)^2}^\frac{1}{2} &\leq \nrm{\mb{x}}_\alpha, &&\mb{x} \in X^n,\\
    \label{eq:G}
    \nrm{\mb{y}}_\alpha &\leq \has{\sum_{k=1}^nG(y_k)^2}^\frac{1}{2},  &&\mb{y} \in Y^n.
  \end{align}
   Let  $\Gamma \subseteq \mc{L}(X)$ an be $\alpha$-bounded family of operators.
  Then there exists a $\Gamma$-invariant subspace $Y\subseteq X_0 \subseteq X$ and a Hilbertian seminorm $\nrm{\lcdot}_0$ on $X_0$ such that
  \begin{align}
      \nrm{Tx}_0 &\leq 2 \, \nrm{\Gamma}_\alpha \nrm{x}_0 &&\qquad x \in X_0, T \in \Gamma, \label{eq:F3}\\
    \nrm{x}_0 &\geq F(x) &&\qquad x \in X_0 \label{eq:F1},\\
    \nrm{x}_0 &\leq 4G(x) &&\qquad x \in Y \label{eq:F2}.
  \end{align}
\end{lemma}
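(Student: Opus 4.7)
My plan is to construct the Hilbertian seminorm $\nrm{\cdot}_0$ by a Hahn-Banach-style separation argument in the space of symmetric bilinear forms on a natural $\Gamma$-invariant subspace. First I would set $X_0$ to be the smallest $\Gamma$-invariant linear subspace of $X$ containing $Y$, namely
$$X_0 := \spn\bigl\{T_k T_{k-1} \cdots T_1 y : k \geq 0,\ T_i \in \Gamma,\ y \in Y\bigr\}.$$
The target is then a positive semidefinite symmetric bilinear form $q\colon X_0 \times X_0 \to \R$ satisfying, with $C := \nrm{\Gamma}_\alpha$,
$$q(x,x) \geq F(x)^2\ (x\in X_0),\quad q(y,y) \leq 16 G(y)^2\ (y\in Y),\quad q(Tx, Tx) \leq 4C^2 q(x,x)\ (T\in\Gamma,\ x\in X_0).$$
Setting $\nrm{x}_0 := q(x,x)^{1/2}$ then gives a Hilbertian seminorm on $X_0$ that directly fulfills \eqref{eq:F3}, \eqref{eq:F1}, and \eqref{eq:F2}.

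Existence of such a $q$ I would establish by a Hahn-Banach / Farkas alternative in the real vector space of symmetric bilinear forms on $X_0$, separating against the closed convex cone $V_+$ of positive semidefinite forms. Failure would yield finitely many nonnegative weights $\lambda_i, \mu_j, \nu_k \geq 0$ and data $x_i \in X_0$, $y_j \in Y$, $(T_k, z_k) \in \Gamma \times X_0$ with
$$\sum_i \lambda_i\, q(x_i,x_i) + 4 C^2 \sum_k \nu_k\, q(z_k, z_k) \leq \sum_j \mu_j\, q(y_j, y_j) + \sum_k \nu_k\, q(T_k z_k, T_k z_k)\quad(\forall\, q \in V_+),$$
yet $\sum_i \lambda_i\, F(x_i)^2 > 16 \sum_j \mu_j\, G(y_j)^2$. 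I would rule such an obstruction out by iteratively absorbing the $\nu_k$-terms: for each fixed $z_k$, applying the inequality recursively along the iterated $\Gamma$-orbit of $z_k$ weighted by $(2C)^{-2\ell}$ produces a geometric series in which the $\Gamma$-invariance contributions telescope into a bounded correction, and what remains is a pure $F$-versus-$G$ inequality that I would test against the rank-one PSD forms $q_{\mb{v}^*}(x,x) := \sum_\ell |v_\ell^*(x)|^2$ built from $\mb{v}^* \in (X_0^*)^m$ with $\nrm{\mb{v}^*}_{\alpha^*} \leq 1$. The $\alpha$-boundedness of $\Gamma$ is exactly what makes the geometric series converge with ratio $1/2$, which is where the constants $2$ and $4$ in the statement come from; hypotheses \eqref{eq:F} and \eqref{eq:G}, paired through the duality between $\alpha$ and $\alpha^*$, then collide to contradict the strict $F$-$G$ inequality.

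The main obstacle I expect is the quadratic, rather than sublinear, nature of the required separation: the standard Hahn-Banach theorem delivers a sublinear functional, whereas we need a positive semidefinite bilinear form. The cleanest resolution is to work inside the real vector space of symmetric bilinear forms on $X_0$ and separate against the PSD cone directly, first localizing to finite-dimensional subspaces of $X_0$ (where feasibility reduces to a finite-dimensional semidefinite feasibility problem) and then passing to all of $X_0$ via a Ky Fan / Tychonoff compactness argument. A secondary delicate point is that the $\Gamma$-orbit iteration above must be executed carefully since $\Gamma$ need not be a semigroup and the $T_k$ appearing in an obstruction can all be distinct; the uniform $\alpha$-bound on arbitrary finite products formed from $\Gamma$ is what keeps the geometric-series bookkeeping under control and ultimately delivers the sharp constants in \eqref{eq:F3}--\eqref{eq:F2}.
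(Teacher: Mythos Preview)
Your high-level strategy --- take $X_0$ as the $\Gamma$-orbit span of $Y$ and produce a positive semidefinite form by a separation argument --- is the same as the paper's. But the paper executes it more cleanly, and your description of the crucial contradiction step is not right. Instead of separating in the cone of PSD forms (with the attendant finite-dimensional localization and compactness), the paper linearizes at the outset: it works in the real vector space $\mbb{V} := \spn\{\phi_x : x \in X_0\}$ of functions $\phi_x(x^*) := |x^*(x)|^2$ on $X^*$, builds a sublinear functional $p$ on $\mbb{V}$ whose definition encodes all three constraints at once, and applies the ordinary Hahn--Banach theorem. The resulting linear $f \leq p$ yields the semi-inner product via polarization, $\ip{x,y} := f(\rho_{x,y})$ with $\rho_{x,y}(x^*) = x^*(x)\overline{x^*(y)}$. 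No conic duality and no compactness are needed.

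The gap in your sketch is the contradiction mechanism. A Farkas obstruction is a single inequality valid for all PSD $q$; there is nothing to ``apply recursively along the iterated $\Gamma$-orbit of $z_k$'', and I do not see how that step can be made precise. What actually works --- and what the paper does when verifying $p(0) \geq 0$ --- is a one-shot self-absorption using property \eqref{eq:E2x}. In your notation, the obstruction, read as an inequality of outer products, says that for every $x^*$ the vector $\bar{\mb{u}}$ gathering the $\sqrt{\mu_j}\,y_j$ and $\sqrt{\nu_k}\,T_k z_k$ dominates the vector $\bar{\mb{v}}$ gathering the $\sqrt{\lambda_i}\,x_i$ and $2C\sqrt{\nu_k}\,z_k$; hence $\bar{\mb{v}} = \mb{A}\bar{\mb{u}}$ with $\nrm{\mb{A}} \leq 1$ and so $\nrm{\bar{\mb{v}}}_\alpha \leq \nrm{\bar{\mb{u}}}_\alpha$. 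Then the triangle inequality and $\alpha$-boundedness of $\Gamma$ give $\nrm{\bar{\mb{v}}}_\alpha \leq \nrm{(\sqrt{\mu_j}\,y_j)_j}_\alpha + \tfrac{1}{2}\nrm{\bar{\mb{v}}}_\alpha$, hence $\nrm{(\sqrt{\lambda_i}\,x_i)_i}_\alpha \leq 2\nrm{(\sqrt{\mu_j}\,y_j)_j}_\alpha$, and \eqref{eq:F}--\eqref{eq:G} contradict the obstruction. The factor $2$ in $2\nrm{\Gamma}_\alpha$ is precisely what produces the $\tfrac12$ here, and that single absorption is the source of the constants in \eqref{eq:F3}--\eqref{eq:F2}; no geometric series along orbits enters. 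The paper \emph{does} perform an inductive construction along the $\Gamma$-orbit, but earlier and for a different purpose: it first extends $G$ from $Y$ to a function $G_0$ on all of $X_0$ satisfying \eqref{eq:G} with an extra factor $2$, because in its sublinear-functional setup the ``upper'' vectors $u_k$ range over $X_0$ rather than $Y$. In your Farkas formulation the $y_j$ remain in $Y$, so if you replace the orbit-iteration idea by the self-absorption argument above, you would in fact bypass that extension step --- a small structural gain paid for by the heavier conic machinery.
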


\begin{proof}
Let $X_0$ be the smallest $\Gamma$-invariant subspace of $X$ containing $Y$, i.e. set $Y_0 := Y$, define for $N \geq 1$
  \begin{equation*}
    Y_N := \cbraces{T x: T  \in \Gamma, x \in Y_{N-1}}.
  \end{equation*}
  and take $X_0 := \bigcup_{N\geq 0} Y_N$. We will prove the lemma in three steps.

  \bigskip

  \textbf{Step 1:} We will first show that $G$ can be extended to a function $G_0$ on $X_0$, such that $2G_0$ satisfies \eqref{eq:G} for all $\mb{y} \in X_0^n$. For this pick a sequence of real numbers $(a_N)_{N=1}^\infty$ such that $a_N>1$ and $\prod_{N=1}^\infty a_N=2$ and define $b_M := \prod_{N=1}^M a_N$. For $y \in Y$ we set $G_0(y) = G(y)$ and proceed by induction. Suppose that $G_0$ is defined on $\bigcup_{N=0}^M  Y_N$ for some $M \in \N$ with
  \begin{equation}\label{eq:partialG}
    \nrm{\mb{y}}_\alpha \leq b_M \has{\sum_{k=1}^nG_0(y_k)^2}^{1/2}
  \end{equation}
   for any $\mb{y} \in \hab{\bigcup_{N=0}^{M}Y_N}^n$.

   For $y \in Y_{M+1}\setminus \bigcup_{N=0}^M \limits Y_N$ pick a  $T \in \Gamma$ and an $x \in Y_M$ such that $Tx =y$ and define
  \begin{equation*}
    G_0(y) := {\frac{1}{a_{M+1}-1}} \nrm{\Gamma}_\alpha \cdot G_0(x).
  \end{equation*}
  For $\mb{y} \in \hab{\bigcup_{N=0}^{M+1}Y_N}^n$ we let $\mc{I} =\cbrace{k: y_k \in \bigcup_{N=0}^{M}Y_N}$. For $k \notin \mc{I}$ we let $T_k \in \Gamma$ and $x_k \in \bigcup_{N=0}^{M}Y_N$ be such that $T_kx_k = y_k$.
  Then by our definition of $G_0$ we have
  \begin{align*}
     \nrm{\mb{y}}_{\alpha}&\leq\nrmb{(\ind_{k \in \mc{I}}y_k)_{k=1}^n}_\alpha + \nrmb{(\ind_{k \notin \mc{I}}y_k)_{k=1}^n}_\alpha\\
    &\leq b_M \has{\sum_{k \in \mc{I}} G_0(y_k)^{2}}^{1/2}+ b_M  \nrm{\Gamma}_\alpha \has{\sum_{k \notin \mc{I}} G_0(x_k)^{2}}^{1/2}\\
    &\leq b_M \has{\sum_{k =1}^n G_0(y_k)^{2}}^{1/2}+ b_M \ha{{a_{M+1}-1}}  \has{\sum_{k =1}^n G_0(y_k)^{2}}^{1/2}\\
 &= b_{M+1} \has{\sum_{k =1}^n G_0(y_k)^{2}}^{1/2}
  \end{align*}
 So $G_0$ satisfies \eqref{eq:partialG} for $M+1$. Therefore, by induction we can define $G_0$ on $X_0$, such that $2G_0$ satisfies \eqref{eq:G} for all $\mb{y} \in X_0^n$.

  \bigskip

\textbf{Step 2:} For $x \in X$ define the function $\phi_x: X^* \to \R_+$ by $\phi_x(x^*) := \abs{x^*(x)}^2$. We will construct a sublinear functional on the space
\begin{equation*}
  \mbb{V} := \spn\cbrace{\phi_x:x \in X_0}.
\end{equation*}
For this note that every $\psi \in \mbb{V}$ has a representation of the form
\begin{equation}
	\label{eq:pdef}
    \psi = \sum_{k=1}^{n_u} \phi_{u_k} - \sum_{k=1}^{n_v} \phi_{v_k} + \sum_{k=1}^{n_{x}}\ha*{\phi_{T_{k}x_{k}} - \phi_{2\nrm{\Gamma}_\alpha x_{k}}}
\end{equation}
  with $u_k \in X_0$, $v_k, x_{k} \in X$ and $T_{k} \in \Gamma$. Define $p:\mbb{V} \to [-\infty,\infty)$ by
  \begin{equation*}
     p(\psi) = \inf \cbrace*{16 \sum_{k=1}^{n_u} G_0(u_k)^2 - \sum_{k=1}^{n_v} F(v_k)^2},
  \end{equation*}
  where the infimum is taken over all representations of $\psi$ in the form of \eqref{eq:pdef}. This functional clearly has the following properties
  \begin{align}
	 p(a \psi) &= a p(\psi), &&\quad \psi \in \mbb{V}, a > 0, \label{eq:p1}\\
	 p(\psi_1 + \psi_2) &\leq p(\psi_1) + p(\psi_2), &&\quad \psi_1,\psi_2 \in \mbb{V}, \label{eq:p2}\\
	 p(\phi_{Tx} - \phi_{2\nrm{\Gamma}_\alpha x}) &\leq 0, &&\quad x \in X_0, T \in \Gamma,\label{eq:p3}\\
p(-\phi_x) &\leq -F(x)^2, &&\quad x\in X_0,\label{eq:p4}\\
	 p(\phi_x) &\leq 16 G_0(x)^2, &&\quad x \in X_0.\label{eq:p5}
  \end{align}
  We will check that $p(0)=0$. It is clear that $p(0) \leq 0$. Let
  \begin{equation*}
	 0 = \sum_{k=1}^{n_u} \phi_{u_k} - \sum_{k=1}^{n_v} \phi_{v_k} +  \sum_{k=1}^{n_{x}}\ha*{\phi_{T_{k}x_{k}} - \phi_{2\nrm{\Gamma}_\alpha x_{k}}}
  \end{equation*}
be a representation of the form of \eqref{eq:pdef}. So for any $x^* \in X^*$ we have
\begin{equation}
\label{eq:repeq}
\begin{aligned}
  	\sum_{k=1}^{n_u} \abs{x^*(u_k)}^2 +  \sum_{k=1}^{n_{x}}\abs{x^*(T_{k}x_{k})}^2 = \sum_{k=1}^{n_v}\abs{x^*(v_k)}^2  + \sum_{k=1}^{n_{x}} \abs{x^*(2\nrm{\Gamma}_\alpha x_{k})}^2.
\end{aligned}
\end{equation}
Let
\begin{align*}
	\mb{u} &:= (u_k)_{k=1}^{n_u} \in X_0^{n_u},&\mb{v} &:= (v_k)_{k=1}^{n_v} \in X^{n_v},\\
 \mb{x} &:= (x_{k})_{k=1}^{n_x} \in X^{n_x}, & \mb{y}&:= (T_{k}{x_{k}})_{k=1}^{n_x} \in X^{n}
\end{align*}
and define the vectors
\begin{equation*}
	\mb{\bar{u}} = \begin{pmatrix}	\mb{u} \\ \mb{y} \end{pmatrix}, \qquad \mb{\bar{v}} = \begin{pmatrix}	 \mb{v} \\ 2 \nrm{\Gamma}_\alpha \mb{x} \end{pmatrix}.
\end{equation*}
Note that \eqref{eq:repeq} implies, by the Hahn-Banach theorem, that
\begin{equation*}
  v_1,\cdots,v_{n_v}, x_{1},\cdots,x_{n_x} \in \spn \cbraceb{u_1,\cdots,u_{n_u}, T_{1}x_{1},\cdots,T_{n_x}x_{n_x}}.
\end{equation*}
Therefore there exists a matrix $\mb{A}$ with $\nrm{\mb{A}}=1$ such that $\mb{\bar{v}} = \mb{A} \mb{\bar{u}}$. Thus by property \eqref{eq:E2x} of a Euclidean structure we get that $\nrm{\mb{\bar{v}}}_\alpha \leq \nrm{\mb{\bar{u}}}_\alpha$. Now we have, using the triangle inequality, that
\begin{equation*}
	\nrm*{\mb{\bar{v}}}_\alpha\leq \nrm{\mb{\bar{u}}}_\alpha
	 \leq \nrm{\mb{u}}_\alpha + \frac{1}{2} \nrmb{2 \nrm{\Gamma}_\alpha \mb{x}}_\alpha
	\leq \nrm{\mb{u}}_\alpha + \frac{1}{2} \nrm*{\mb{\bar{v}}}_\alpha	,
\end{equation*}
which means that
\begin{equation*}
	\nrm{\mb{v}}_\alpha \leq	\nrm*{\mb{\bar{v}}}_\alpha \leq 2 \nrm{\mb{u}}_\alpha.
\end{equation*}
By assumption \eqref{eq:F} on $F$ and \eqref{eq:G} on $2G_0$ we have
\begin{align*}
\sum_{k=1}^{n_v}  F(v_k)^2 \leq \nrm{\mb{v}}^2_\alpha \leq 4 \nrm{\mb{u}}^2_\alpha \leq 16 \sum_{k=1}^{n_u}  G_0(u_k)^2,
\end{align*}
which means that $p(0) \geq 0$ and thus $p(0) =0$. Now with property \eqref{eq:p2} of $p$ we have
\begin{equation*}
	p(\psi) + p(-\psi) \geq p(0) = 0,
\end{equation*}
so $p(\psi) > - \infty$ for all $\psi \in \mbb{V}$. Combined with properties \eqref{eq:p1} and \eqref{eq:p2} this means that $p$ is a sublinear functional.

\bigskip

\textbf{Step 3.} To complete the prove of the lemma, we construct a semi-inner product from our sublinear functional $p$ using Hahn--Banach. Indeed, by applying the Hahn-Banach theorem \cite[Theorem~3.2]{Ru91}, we obtain a linear function $f$ on $\mbb{V}$ such that $f(\psi) \leq p(\psi)$ for all $\psi \in \mbb{V}$. By property \eqref{eq:p4} we know that $p(-\phi_x) \leq 0$ and thus $f(\phi_x) \geq 0$ for all $x \in X_0$.
We take the complexification of $\mbb{V}$
\begin{equation*}
  \mbb{V}^\C = \cbrace*{v_1+iv_2:v_1,v_2 \in \mbb{V}}
\end{equation*}
with addition and scalar multiplication defined as usual.
We extend $f$ to a complex linear functional on this space by $f(v_1+iv_2) = f(v_1)+if(v_2)$ and define a pseudo-inner product on $X_0$ by $\ip{x,y} = f(\rho_{x,y})$ with $\rho_{x,y}:X^* \to \C$ defined as $\rho_{x,y}(x^*) = x^*(x) \overline{x^*(y)}$ for all $x^* \in X^*$. This is well-defined since
\begin{equation*}
  \rho_{x,y} = \frac{1}{4} \ha*{\phi_{x+y}- \phi_{x-y} + i \phi_{x+iy} -i\phi_{x-iy}} \in \mbb{V}^\C.
\end{equation*}

On $X_0$ we define $\nrm{\lcdot}_0$ as the seminorm induced by this semi-inner product, i.e. $\nrm{x}_0 := \sqrt{\ip{x,x}} = \sqrt{f(\phi_x)}$. Then for $x \in X_0$ and $T \in \Gamma$ we have by property \eqref{eq:p3} of $p$
\begin{equation*}
  \nrm{Tx}^2_0 \leq p(\phi_{Tx}-\phi_{2 \nrm{\Gamma}_\alpha x}) + f(\phi_{2 \nrm{\Gamma}_\alpha x}) \leq 4\nrm{\Gamma}_\alpha^2\nrm{x}^2_0.
\end{equation*}
 By property \eqref{eq:p4} of $p$  we have
\begin{equation*}
  \nrm{x}^2_0 = f(\phi_x) \geq -p(-\phi_x) \geq F(x)^2, \qquad x \in X_0,
\end{equation*}
and by property \eqref{eq:p5} of $p$ we have
\begin{equation*}
  \nrm{y}^2_0 = f(\phi_y) \leq p(\phi_y) \leq 16G_0(y)^2 = 16G(y)^2, \qquad y \in Y.
\end{equation*}
So $\nrm{\lcdot}_0$ satisfies \eqref{eq:F3}-\eqref{eq:F2}.
\end{proof}

\section[The representation of \texorpdfstring{$\alpha$}{a}-bounded operator families]{The representation of \texorpdfstring{$\alpha$}{a}-bounded operator families on a Hilbert space}\label{section:representation}
We will now represent an $\alpha$-bounded family of operators $\Gamma$ as a corresponding family of operators $\widetilde{\Gamma}\subseteq \mc{L}(H)$ for some Hilbert space $H$. As a preparation we record an important special case of  Lemma \ref{lemma:technical}.
\begin{lemma}
\label{lemma:technicalcase}
  Let $\alpha$ be a Euclidean structure on $X$ and let $\Gamma \subseteq \mc{L}(X)$ be $\alpha$-bounded. Then for any $\eta = (y_0,y_1) \in X\times X$ there exists a $\Gamma$-invariant subspace $X_\eta\subseteq X$ with $y_0 \in X_\eta$ and a Hilbertian seminorm $\nrm{\lcdot}_{\eta}$ on $X_{\eta}$ such that
  \begin{align}
      \nrm{Tx}_\eta &\leq 2\nrm{\Gamma}_\alpha \nrm{x}_\eta &&\qquad x \in X_\eta, \, T \in \Gamma, \label{eq:eta3}.\\
          \nrm{y_0}_\eta & \leq 4 \nrm{y_0}_X\label{eq:eta2}\\
    \nrm{y_1}_\eta & \geq \nrm{y_1}_X &&\qquad \text{if }y_1 \in X_\eta \label{eq:eta1}
  \end{align}
\end{lemma}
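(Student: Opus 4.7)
The plan is to deduce this as a direct special case of Lemma \ref{lemma:technical} by making appropriate choices of the subspace $Y$ and the two positive homogeneous functions $F$ and $G$.

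First I would handle the choice of $Y$ and $G$ corresponding to the vector $y_0$. Take $Y := \spn\{y_0\}$ (which is $\{0\}$ if $y_0 = 0$) and define $G\colon Y \to [0,\infty)$ by $G(a y_0) := \abs{a}\nrm{y_0}_X$. Positive homogeneity is immediate. To verify \eqref{eq:G}, note that any $\mb{y} = (a_1 y_0,\dots,a_n y_0) \in Y^n$ can be written as $\mb{y} = \mb{A}(y_0)$ for the column matrix $\mb{A} = (a_1,\dots,a_n)^T \in M_{n,1}(\C)$, whose operator norm is $\has{\sum_k \abs{a_k}^2}^{1/2}$. By the right-ideal property \eqref{eq:E2x} and \eqref{eq:E1x} of $\alpha$ we get
\begin{equation*}
\nrm{\mb{y}}_\alpha \leq \nrm{\mb{A}}\,\nrm{(y_0)}_\alpha = \has{\sum_{k=1}^n \abs{a_k}^2}^{1/2}\nrm{y_0}_X = \has{\sum_{k=1}^n G(a_k y_0)^2}^{1/2},
\end{equation*}
as required.

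Next I would handle $F$ corresponding to $y_1$. Use the Hahn--Banach theorem to choose a norming functional $x^* \in X^*$ with $\nrm{x^*}_{X^*} \leq 1$ and $x^*(y_1) = \nrm{y_1}_X$ (any $x^*$ with $\nrm{x^*} \leq 1$ works if $y_1 = 0$), and define $F(x) := \abs{x^*(x)}$, which is positive homogeneous. Condition \eqref{eq:F} then reduces to the inequality $\op \leq \alpha$ from Proposition \ref{proposition:operatornuclear}:
\begin{equation*}
\has{\sum_{k=1}^n F(x_k)^2}^{1/2} = \has{\sum_{k=1}^n \abs{x^*(x_k)}^2}^{1/2} \leq \nrm{\mb{x}}_{\op} \leq \nrm{\mb{x}}_\alpha, \qquad \mb{x} \in X^n.
\end{equation*}

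With these choices, Lemma \ref{lemma:technical} yields a $\Gamma$-invariant subspace $X_0 \supseteq Y$ and a Hilbertian seminorm $\nrm{\lcdot}_0$ on $X_0$ satisfying \eqref{eq:F3}--\eqref{eq:F2}. Setting $X_\eta := X_0$ and $\nrm{\lcdot}_\eta := \nrm{\lcdot}_0$, the containment $y_0 \in Y \subseteq X_\eta$ is automatic, and \eqref{eq:eta3} is exactly \eqref{eq:F3}. For \eqref{eq:eta2} we use \eqref{eq:F2} with $x = y_0 \in Y$ to get $\nrm{y_0}_\eta \leq 4 G(y_0) = 4\nrm{y_0}_X$. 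For \eqref{eq:eta1}, assuming $y_1 \in X_\eta$, property \eqref{eq:F1} gives $\nrm{y_1}_\eta \geq F(y_1) = \abs{x^*(y_1)} = \nrm{y_1}_X$. There is no real obstacle here: the entire difficulty has been encapsulated in Lemma \ref{lemma:technical}, and the task is simply to recognize that the one-dimensional subspace $\spn\{y_0\}$ combined with a single norming functional for $y_1$ captures exactly the hypotheses \eqref{eq:F} and \eqref{eq:G} of that lemma.
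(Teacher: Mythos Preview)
Your proof is correct and follows essentially the same route as the paper: both reduce to Lemma~\ref{lemma:technical} with $Y=\spn\{y_0\}$ and $G=\nrm{\cdot}_X$ on $Y$. The only cosmetic difference is the choice of $F$: the paper takes $F(x)=\nrm{x}_X$ on $\spn\{y_1\}$ and $0$ elsewhere (verifying~\eqref{eq:F} via Proposition~\ref{proposition:finitedimensionalalpha}), whereas you take $F(x)=\abs{x^*(x)}$ for a norming functional of $y_1$ (verifying~\eqref{eq:F} via $\op\leq\alpha$); both are equally valid and yield $F(y_1)=\nrm{y_1}_X$.
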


\begin{proof}
  Define $F_\eta:X \to [0,\infty)$ as
  \begin{equation*}
    F_\eta(x) = \begin{cases}
   \nrm{x}_X & \text{if } x \in \spn{\cbrace{y_1}}, \\
   0       & \text{otherwise},
    \end{cases}
  \end{equation*}
  let $Y = \spn\{y_0\}$ and define $G_\eta:Y \to [0,\infty)$ as $G_\eta(x) = \nrm{x}_X$.
   Then $F_\eta$ and $G_\eta$ satisfy \eqref{eq:F} and \eqref{eq:G} by Proposition \ref{proposition:finitedimensionalalpha}, so by Lemma \ref{lemma:technical} we can find a $\Gamma$-invariant subspace $X_\eta$ of $X$ containing $y_0$ and a seminorm $\nrm{\lcdot}_\eta$ on $X_\eta$ induced by a semi-inner product for which \eqref{eq:F3}-\eqref{eq:F2} hold, from which \eqref{eq:eta3}-\eqref{eq:eta1} directly follow.
\end{proof}

With Lemma \ref{lemma:technicalcase} we can now represent a $\alpha$-bounded family of Banach space operators on a Hilbert space. Note that by Proposition \ref{proposition:alphaproperties} we know that without loss of generality we can restrict to families of operators that are absolutely convex and closed in the strong operator topology.

\begin{theorem}
\label{theorem:euclideanrepresentation}
  Let $\alpha$ be a Euclidean structure on $X$ and let $\Gamma \subseteq \mc{L}(X)$ be absolutely convex, closed in the strong operator topology and $\alpha$-bounded. Define $\nrm{T}_\Gamma = \inf \cbrace*{\lambda > 0: \lambda^{-1}T \in \Gamma}$ on the linear span of $\Gamma$ denoted by $\mc{L}_\Gamma(X)$. Then there is a Hilbert space $H$, a closed subalgebra $\mc{B}$ of $\mc{L}(H)$, a bounded algebra homomorphism $\rho\colon \mc{B} \to \mc{L}(X)$ and a bounded linear operator $\tau\colon\mc{L}_\Gamma(X) \to \mc{B}$ such that
  \begin{align*}
    \rho\tau(T) &= T, \qquad T \in \mc{L}_\Gamma(X),\\
    \nrm{\rho} &\leq 4,\\
    \nrm{\tau} &\leq 2 \nrm{\Gamma}_\alpha.
  \end{align*}
  Furthermore, if $\mc{A}$ is the algebra generated by $\Gamma$, $\tau$ extends to an algebra homomorphism of $\mc{A}$ into $\mc{B}$ such that $\rho\tau(S)=S$ for all $S \in \mc{A}$.
\end{theorem}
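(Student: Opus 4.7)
The plan is to build the Hilbert space $H$ as a large Hilbert direct sum of the ``pointwise'' Hilbert spaces supplied by Lemma~\ref{lemma:technicalcase}, and to exploit a clever choice of the pair $\eta$ to invert the representation.

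\smallskip

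First, for every pair $\eta=(y_0,y_1)\in X\times X$ apply Lemma~\ref{lemma:technicalcase} to obtain a $\Gamma$-invariant subspace $X_\eta\subseteq X$ with $y_0\in X_\eta$ carrying a Hilbertian seminorm $\nrm{\lcdot}_\eta$ satisfying \eqref{eq:eta3}--\eqref{eq:eta1}. Let $H_\eta$ be the Hilbert space obtained by quotienting $X_\eta$ by the kernel of $\nrm{\lcdot}_\eta$ and completing, and write $j_\eta\colon X_\eta\to H_\eta$ for the canonical map. Set $H:=\bigoplus_\eta^2 H_\eta$. For $T\in\mc{L}_\Gamma(X)$ with $\nrm{T}_\Gamma\le 1$ the hypothesis that $\Gamma$ is absolutely convex and strongly closed forces $T\in\Gamma$, so \eqref{eq:eta3} says that $T$ acts on each $X_\eta$ with $\nrm{T}_{\mc{L}(X_\eta,\nrm{\lcdot}_\eta)}\le 2\nrm{\Gamma}_\alpha$; hence $T$ descends to a bounded operator $\tau_\eta(T)\in\mc{L}(H_\eta)$ of the same norm. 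Setting $\tau(T):=\bigoplus_\eta \tau_\eta(T)$ gives $\tau\colon\mc{L}_\Gamma(X)\to\mc{L}(H)$ with $\nrm{\tau}\le 2\nrm{\Gamma}_\alpha$. Because each $X_\eta$ is stable under every member of $\Gamma$ (hence under every element of the generated algebra $\mc{A}$) and because $\tau_\eta$ is evidently multiplicative on products of operators acting on $X_\eta$, $\tau$ extends unambiguously to an algebra homomorphism $\tau\colon\mc{A}\to\mc{L}(H)$ via $\tau(T_1\cdots T_k)=\tau(T_1)\cdots\tau(T_k)$.

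\smallskip

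The crucial step is to produce the inverse $\rho$, and the trick is to pick $\eta$ depending on $S$ and $x$. For $S\in\mc{A}$ and any $x\in X$, choose $\eta=(x,Sx)$. Then $x\in X_\eta$ and, by $\mc{A}$-invariance, also $Sx\in X_\eta$. Using \eqref{eq:eta1} for $y_1=Sx$ and \eqref{eq:eta2} for $y_0=x$,
\begin{equation*}
\nrm{Sx}_X \le \nrm{j_\eta(Sx)}_{H_\eta} = \nrm{\tau_\eta(S)j_\eta(x)}_{H_\eta} \le \nrm{\tau_\eta(S)}\,\nrm{j_\eta(x)}_{H_\eta} \le 4\,\nrm{\tau(S)}_{\mc{L}(H)}\,\nrm{x}_X,
\end{equation*}
so $\nrm{S}_{\mc{L}(X)}\le 4\,\nrm{\tau(S)}_{\mc{L}(H)}$ for every $S\in\mc{A}$. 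In particular $\tau$ is injective on $\mc{A}$, so the prescription $\rho(\tau(S)):=S$ unambiguously defines a linear map $\rho\colon\tau(\mc{A})\to\mc{L}(X)$ with $\nrm{\rho}\le 4$, and it is multiplicative because $\tau$ is.

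\smallskip

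Finally, let $\mc{B}$ be the norm closure of $\tau(\mc{A})$ in $\mc{L}(H)$. Since $\rho$ is bounded it extends uniquely to a bounded linear map $\rho\colon\mc{B}\to\mc{L}(X)$ with $\nrm{\rho}\le 4$, and by joint norm continuity of multiplication in $\mc{L}(H)$ and $\mc{L}(X)$ the extension remains an algebra homomorphism. By construction $\rho\tau(S)=S$ for every $S\in\mc{A}$, in particular for every $T\in\mc{L}_\Gamma(X)$. The main obstacle I would anticipate is exactly the well-posedness and boundedness of $\rho$: it is not a priori obvious that the ``huge'' direct sum $H$ sees enough of $X$, and what makes it work is the availability, for \emph{each} pair $(x,Sx)$, of a single Hilbert quotient on which $Sx$ is not contracted and $x$ is not dilated by more than $4$.
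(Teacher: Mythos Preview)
Your proof is correct and follows essentially the same approach as the paper's. The only cosmetic difference is that the paper indexes the direct sum over the smaller set $E=\{(x,Sx):x\in X,\,S\in\mc{A}\}$ rather than all of $X\times X$, but since you only use summands with $\eta=(x,Sx)$ to produce the lower bound for $\rho$, this changes nothing; also, the paper defines $\tau_\eta$ directly on all of $\mc{A}$ via the action on $X_\eta$ rather than phrasing it as an extension from $\mc{L}_\Gamma(X)$ via products, which is cleaner but amounts to the same thing.
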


\begin{proof}
  Let $\mc{A}$ be the algebra generated by $\Gamma$. For any $\eta \in X \times X$ we let $(X_\eta,\nrm{\lcdot}_\eta)$ be as in Lemma \ref{lemma:technicalcase} and take $N_\eta =  \cbrace{x \in X_\eta: \nrm{x}_\eta = 0}$. Let $H_\eta$ be the completion of the quotient space $X_\eta / N_\eta$, which is a Hilbert space. Let $\pi_\eta:\mc{A} \to \mc{L}(H_\eta)$ be the algebra homomorphism mapping elements of $\mc{A}$ to their representation on $H_\eta$, which is well-defined since $X_\eta$ is $\mc{A}$ invariant.

  Define $E = \cbrace{(x,Sx):x \in X, S \in \mc{A}} \subseteq X \times X$. We define the Hilbert space $H$ by the direct sum
  $ H = \oplus_{\eta \in E} H_{\eta} $
  with norm $\nrm{\lcdot}_H$ given by
  \begin{equation*}
    \nrm{h}_H = \has{\sum_{\eta \in E} \nrm{h_\eta}_{\eta}^2}^\frac{1}{2}
  \end{equation*}
  for $h \in H$ with $h = (h_\eta)_{\eta \in E}$. Furthermore we define the algebra homomorphism $\tau:\mc{A} \to \mc{L}(H)$ by $\tau = \oplus_{\eta \in E} \pi_{\eta}$.

  For all $T \in \mc{L}_\Gamma(X)$ we then have
  \begin{align*}
    \nrm{\tau(T)}
    &= \nrm{T}_\Gamma \sup \cbraces{\has{\sum_{\eta \in E}\nrms{\pi_\eta\has{\frac{T}{\nrm{T}_\Gamma}} (x_\eta)}^2_\eta}^\frac{1}{2}: \nrmb{(x_\eta)_{\eta \in E}} \leq 1}
    \\&\leq2\nrm{\Gamma}_{\alpha}\nrm{T}_\Gamma.
  \end{align*}
   Therefore the restriction $\tau|_{\Gamma}:\mc{L}_\Gamma(X) \to \mc{L}(H)$ is a bounded linear operator with $\nrm{\tau|_\Gamma} \leq 2\nrm{\Gamma}_{\alpha}$.
   Now for $S \in \mc{A}$ and $x \in X$ with $\nrm{x}_X \leq 1$ define $\zeta = (x,Sx)$. We have
  \begin{align*}
    \nrm{\tau(S)} &\geq \sup_{\eta \in E}\nrm{\pi_\eta (S)} \geq \nrm{\pi_{\zeta} (S)(x)}_{\zeta} \nrm{x}_{\zeta}^{-1} \geq \nrm{Sx}_X\cdot\ha*{4\nrm{x}_X}^{-1}
  \end{align*}
  using \eqref{eq:eta2} and $\eqref{eq:eta1}$. So $\nrm{\tau(S)} \geq \frac{1}{4}\nrm{S}$, which means that $\tau$ is injective.
  If we now define $\mc{B}$ as the closure of $\tau(\mc{A})$ in $\mc{L}(H)$, we can extend $\rho = \tau^{-1}$ to an algebra homomorphism $\rho:\mc{B} \to \mc{L}(X)$ with $\nrm{\rho} \leq 4$ since $\nrm{\tau} \geq \frac{1}{4}$. This proves the theorem.
\end{proof}

\section{The equivalence of \texorpdfstring{$\alpha$}{a}-boundedness and \texorpdfstring{$C^*$}{C*}-boundedness }\label{sec:cbounded}
There is also a converse to Theorem \ref{theorem:euclideanrepresentation}, for which we will have to make a detour into operator theory. We will introduce  matricial algebra norms in order to connect $\alpha$-boundedness of a family of operators to the theory of completely bounded maps. For background on the theory developed in this section we refer to  \cite{BL04,ER00,Pa02,Pi03}.

\subsection*{Matricial algebra norms}
 Denote the space of $m \times n$-matrices with entries in a complex algebra $\mc{A}$ by $M_{m,n}(\mc{A})$. A \emph{matricial algebra norm} on $\mc{A}$ is a norm $\nrm{\lcdot}_{\mc{A}}$ defined on each $M_{m,n}(\mc{A})$ such that
  \begin{align*}
    \nrm{\mb{S}\mb{T}}_{\mc{A}} &\leq \nrm{\mb{S}}_{\mc{A}}\nrm{\mb{T}}_{\mc{A}}, && \mb{S} \in M_{m,k}(\mc{A}),\mb{T} \in M_{k,n}(\mc{A})\\
    \nrm{\mb{A}\mb{T}\mb{B}}_{\mc{A}} &\leq \nrm{\mb{A}}\nrm{\mb{T}}_{\mc{A}}\nrm{\mb{B}}, && \mb{A} \in M_{m,j}(\C),\mb{T} \in M_{j,k}(\mc{A}),\mb{B} \in M_{k,n}(\C).
  \end{align*}
The algebra $\mc{A}$ with an associated matricial algebra norm will be called a {\em matricial normed algebra}. In the case that $\mc{A}\subseteq \mc{L}(X)$, we call a matricial algebra norm {\em coherent} if the norm of a $1 \times 1$-matrix is the operator norm of its entry, i.e. if $\nrm{(T)}_{\mc{A}} = \nrm{T}$ for all $T \in \mc{A}$.

The following example shows that any Euclidean structure induces a  matricial algebra norm on $\mc{L}(X)$.
\begin{example}
\label{example:hatalpha}
Let $\alpha$ be a Euclidean structure on $X$. For $T \in M_{m,n}(\mc{L}(X))$ we define
\begin{equation*}
  \nrm{\mb{T}}_{\hat{\alpha}} = \sup\cbrace{\nrm{\mb{T}\mb{x}}_\alpha: \mb{x} \in X^n, \nrm{\mb{x}}_\alpha \leq 1}.
\end{equation*}
Then $\nrm{\lcdot}_{\hat\alpha}$ is a coherent matricial algebra norm on $\mc{L}(X)$.
\end{example}
\begin{proof}
Take $\mb{S} \in M_{m,k}(\mc{L}(X))$ and $\mb{T} \in M_{k,n}(\mc{L}(X))$. We have
\begin{align*}
  \nrm{\mb{S}\mb{T}}_{\hat{\alpha}} &= \sup \cbraces{\frac{\nrm{\mb{S}\mb{y}}_\alpha}{\nrm{\mb{y}}_\alpha}\frac{\nrm{\mb{y}}_\alpha}{\nrm{\mb{x}}_\alpha}: \mb{x} \in X^n,  \mb{y} = \mb{T}\mb{x}}\leq \nrm{\mb{S}}_{\hat{\alpha}}\nrm{\mb{T}}_{\hat{\alpha}}.
\end{align*}
Moreover for any $\mb{A} \in M_{m,j}(\C)$, $\mb{T} \in M_{j,k}(\mc{L}(X))$ and $\mb{B} \in M_{k,n}(\C)$ we have by property $\eqref{eq:E2x}$ of the Euclidean structure that
\begin{align*}
  \nrm{\mb{A}\mb{T}\mb{B}}_{\hat{\alpha}} &\leq \sup \cbrace*{\nrm{\mb{A}}\nrm{\mb{T}\mb{B}\mb{x}}_\alpha: \mb{x} \in X^n,\nrm{\mb{B}\mb{x}}_\alpha \leq \nrm{\mb{B}}}\leq \nrm{\mb{A}}\nrm{\mb{T}}_{\hat{\alpha}}\nrm{\mb{B}},
\end{align*}
so $\nrm{\lcdot}_{\hat\alpha}$ is a matricial algebra norm. Its coherence follows from
\begin{align*}
  \nrm{(T)}_{\hat\alpha} &= \sup\cbrace{\nrm{Tx}: x \in X ,\nrm{x} \leq 1} = \nrm{T}
\end{align*}
for $T \in \mc{L}(X)$, where we used property $\eqref{eq:E1x}$ of the Euclidean structure.
\end{proof}

Using this induced matricial algebra norm we can reformulate $\alpha$-boundedness. Indeed, for a family of operators $\Gamma \subseteq \mc{L}(X)$ we have
\begin{equation}\label{eq:reformalphabounded}
\nrm{\Gamma}_\alpha = \sup\cbraceb{\nrm{\mb{T}}_{\hat{\alpha}}:\mb{T} = \diag(T_1,\ldots,T_n), \quad T_1,\ldots,T_n \in \Gamma}.
\end{equation}

This reformulation allows us to characterize those Banach spaces on which $\alpha$-boundedness is equivalent to uniform boundedness, using a result of Blecher, Ruan and Sinclair \cite{BRS90}.
\begin{proposition}
  Let $\alpha$ be a Euclidean structure on $X$ such that for any family of operators $\Gamma \subseteq \mc{L}(X)$ we have
  \begin{equation*}
    \nrm{\Gamma}_{\alpha} = \, \sup_{T \in \Gamma}\,\nrm{T}.
  \end{equation*}
  Then $X$ is isomorphic to a Hilbert space.
\end{proposition}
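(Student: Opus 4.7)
The plan is to verify that the matricial algebra norm $\hat\alpha$ on $\mc L(X)$ from Example \ref{example:hatalpha} turns $\mc L(X)$ into an abstract operator algebra in the sense of Blecher, Ruan, and Sinclair, apply their characterization theorem to obtain an algebra embedding $\tau\colon\mc L(X)\hookrightarrow\mc L(H)$ for some Hilbert space $H$, and then exploit the rank-one ideal structure of $\mc L(X)$ to embed $X$ itself as a closed subspace of $H$, forcing $X$ to be isomorphic to a Hilbert space.

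Using the reformulation \eqref{eq:reformalphabounded}, the hypothesis translates into the matricial identity $\nrm{\diag(T_1,\ldots,T_n)}_{\hat\alpha} = \max_k\nrm{T_k}$ for all $T_1,\ldots,T_n\in\mc L(X)$ and $n\in\N$. Together with the properties from Example \ref{example:hatalpha}, this suggests that $\hat\alpha$ satisfies the operator algebra axioms: Ruan's bimodule property, the direct sum axiom $\nrm{\mb S\oplus\mb T}_{\hat\alpha} = \max(\nrm{\mb S}_{\hat\alpha},\nrm{\mb T}_{\hat\alpha})$, submultiplicativity, and $\nrm{I_{\mc L(X)}} = 1$. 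All of these except the direct sum axiom for arbitrary blocks are already available; the hypothesis directly delivers it for $1\times 1$ blocks, and I would promote it to arbitrary matrix blocks by combining the isometric ideality of $\alpha$ (from Proposition \ref{proposition:idealsingleoperator}) with a careful amplification of the hypothesis through the algebra structure of $\mc L(X)$.

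Once the operator algebra axioms are verified, the Blecher--Ruan--Sinclair theorem yields a completely isometric unital algebra homomorphism $\tau\colon(\mc L(X),\hat\alpha)\hookrightarrow\mc L(H)$, which is in particular an injective Banach algebra isomorphism of $\mc L(X)$ onto its (closed) image. Fixing $x_0\in X$ and $e^*\in X^*$ with $\nrm{x_0} = \nrm{e^*} = e^*(x_0) = 1$, the operator $P := e^*\otimes x_0$ is a norm-one rank-one idempotent in $\mc L(X)$ with $P\mc L(X)P = \C P$, and the left ideal $\mc L(X)P$ is isometrically isomorphic to $X$ via $y\mapsto e^*\otimes y$. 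Setting $Q := \tau(P)$ transfers the identity to $Q\tau(\mc L(X))Q = \C Q$. To reduce to a setting in which $Q$ acts as a rank-one idempotent, I would pick a nonzero $v\in Q(H)$ and restrict to the cyclic subspace $H' := \overline{\tau(\mc L(X))v}$; using $Qv = v$ together with $Q\tau(T)Q\in\C Q$, the calculation $Qw = (Q\tau(T)Q)v\in\C v$ for $w = \tau(T)v$ shows that $Q|_{H'}$ has one-dimensional range $\C v$.

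Finally, the composition
\[
X\xrightarrow{\ \cong\ }\mc L(X)P\xrightarrow{\ \tau\ }\tau(\mc L(X))Q\xrightarrow{\ S\mapsto Sv\ }H'
\]
is a bounded linear map that is injective with bounded inverse on its image: any $S\in\tau(\mc L(X))Q$ satisfies $S = SQ$ and hence annihilates $\ker Q|_{H'}$, so $S|_{H'}$ is completely determined by $Sv$ because $Q|_{H'}$ is rank one, while the bounded inverse follows by comparing norms through the bounded inverse $\rho = \tau^{-1}$. Thus $X$ is isomorphic to a closed subspace of the Hilbert space $H'$, and is therefore itself isomorphic to a Hilbert space. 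The main obstacle is promoting Ruan's direct sum axiom from $1\times 1$ to arbitrary matrix blocks in the first step; a secondary technical point is the quantitative control of the evaluation map at $v$, which uses the rank-one structure of $Q|_{H'}$ and the algebra identity $\tau(g^*\otimes y)v = g^*(x_0)\tau(e^*\otimes y)v$ derived from the rank-one multiplication rules in $\mc L(X)$.
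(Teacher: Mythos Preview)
Your approach is essentially the same as the paper's: establish the diagonal identity $\nrm{\diag(T_1,\ldots,T_n)}_{\hat\alpha}=\max_k\nrm{T_k}$, invoke Blecher--Ruan--Sinclair to realize $\mc L(X)$ as an operator algebra inside some $\mc L(H)$, and then conclude that $X$ is Hilbert. The paper is far more terse: it simply cites \cite{BRS90} for the operator-algebra step and then cites Eidelheit \cite{Ei40} to pass from ``$\mc L(X)$ embeds as a Banach algebra in $\mc L(H)$'' to ``$X$ is isomorphic to a Hilbert space''. Your hands-on argument with rank-one idempotents and a cyclic subspace is precisely the classical Eidelheit technique, so you are reconstructing that citation rather than taking a genuinely different route. (Your argument there is sound: once you have $\tau(g^*\otimes x_0)\,\tau(e^*\otimes y)v=g^*(y)v$, choosing $g^*$ norming for $y$ gives $\nrm y\,\nrm v\le\nrm\tau\,\nrm{\tau(e^*\otimes y)v}$, which is exactly the bounded inverse you were worried about.)

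The one place you flag as an obstacle---promoting the $L^\infty$ direct-sum axiom from $1\times1$ entries to arbitrary matrix blocks---is also not addressed explicitly in the paper's proof; the paper simply defers to \cite{BRS90}. So you have correctly located the only nontrivial verification, and the paper handles it by citation rather than by an argument you are missing.
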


\begin{proof}
  Take $T_1,\ldots,T_n \in \mc{L}(X)$ and let $\Gamma = \cbrace{T_k:1\leq k\leq n}$. We have
  \begin{equation*}
    \sup_{1\leq k\leq n}\nrm{T_k} \leq \nrm{\diag(T_1,\ldots,T_n)}_{\hat{\alpha}}\leq \nrm{\Gamma}_\alpha =  \sup_{1\leq k\leq n}\nrm{T_k},
  \end{equation*}
  which implies by \cite{BRS90} that $\mc{L}(X)$ is isomorphic to an operator algebra and that therefore $X$ is isomorphic to a Hilbert space by \cite{Ei40}.
\end{proof}

\subsection*{$C^*$-boundedness}
Now let us turn to the converse of Theorem  \ref{theorem:euclideanrepresentation}, for which we need to reformulate its conclusion. By an {\em operator algebra} $\mc{A}$ we shall mean a closed unital subalgebra of a $C^*$-algebra. By the Gelfand-Naimark theorem we may assume without loss of generality that $\mc{A}$ consists of bounded linear operators on a Hilbert space $H$.
We say that $\Gamma \subseteq \mc{L}(X)$ is {\em $C^*$-bounded}  if there exists a $C>0$, an operator algebra $\mc{A}$ and a bounded algebra homomorphism $\rho: \mc{A} \to \mc{L}(X)$ such that
 \begin{equation*}
   \Gamma \subseteq \cbraces{\rho(T): T \in \mc{A}, \nrm{T} \leq \frac{C}{\nrm{\rho}}}.
 \end{equation*}
The least admissible $C$ is denoted by $\nrm{\Gamma}_{C^*}$.

From Theorem \ref{theorem:euclideanrepresentation} we can directly deduce that any $\alpha$-bounded family of operators is $C^*$-bounded. We will show that any $C^*$-bounded family of operators is $\alpha$-bounded for some Euclidean structure $\alpha$. As a first step we will prove a converse to Example \ref{example:hatalpha}, i.e. we will show that a matricial algebra norm on a subalgebra of $\mc{L}(X)$ gives rise to a Euclidean structure.

\begin{proposition}
\label{proposition:matricialEuclidean}
  Let $\mc{A}$ be a subalgebra of $\mc{L}(X)$ and let $\nrm{\lcdot}_{\mc{A}}$ be a matricial algebra norm on $\mc{A}$ such that $\nrm{(T)}_{\mc{A}} \geq \nrm{T}$ for all $T \in \mc{A}$. Then there is a Euclidean structure $\alpha$ on $X$ such that $\nrm{\mb{T}}_{\hat\alpha} \leq \nrm{\mb{T}}_{\mc{A}}$ for all $\mb{T} \in M_{m,n}(\mc{A})$.
\end{proposition}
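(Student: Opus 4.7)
The goal is to manufacture a Euclidean structure on $X$ whose induced matricial norm $\hat{\alpha}$ is dominated by the given matricial algebra norm $\nrm{\cdot}_{\mc{A}}$. The natural ``test-vector'' definition is the following: for $\mb{x} \in X^n$ set
\begin{equation*}
  \nrm{\mb{x}}_\alpha := \max\cbraces{\nrm{\mb{x}}_{\op},\; \sup_{m \in \N}\sup_{\substack{\mb{T} \in M_{m,n}(\mc{A})\\ \nrm{\mb{T}}_{\mc{A}}\leq 1}} \nrm{\mb{T}\mb{x}}_{\op}},
\end{equation*}
where $\op$ denotes the operator-norm Euclidean structure from the examples after Proposition~\ref{proposition:operatornuclear}, applied on both $X^n$ and $X^m$. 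Intuitively, the $\op$-term guarantees \eqref{eq:E1x} (it handles the possibility that $\mc{A}$ does not contain the identity), while the supremum over $M_{m,n}(\mc{A})$ is what allows $\nrm{\mb{T}}_{\hat{\alpha}} \leq \nrm{\mb{T}}_{\mc{A}}$ to hold automatically. First I would check that the supremum is finite for each fixed $\mb{x}$: writing $\mb{T}\mb{x}$ entry-wise and using that $\nrm{T_{ij}} \leq \nrm{(T_{ij})}_{\mc{A}} = \nrm{E_{i}\mb{T}E_j^{*}}_{\mc{A}} \leq \nrm{\mb{T}}_{\mc{A}} \leq 1$ (by the scalar-matrix ideal inequality defining a matricial algebra norm together with the coherence hypothesis $\nrm{(T)}_{\mc{A}}\geq \nrm{T}$), each entry of $\mb{T}$ is uniformly bounded in operator norm, hence $\nrm{\mb{T}\mb{x}}_{\op}$ is uniformly controlled by $\sum_k \nrm{x_k}_X$.

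Next I would check \eqref{eq:E1x}: for $x \in X$ and any $\mb{T} \in M_{m,1}(\mc{A})$ with $\nrm{\mb{T}}_{\mc{A}} \leq 1$, we have $\nrm{\mb{T}(x)}_{\op} = \sup_{\nrm{\mb{a}}_{\ell^2_m}\leq 1}\nrm{\mb{a}\mb{T}\,x}_X$. Since $\mb{a}\mb{T}$ is a $1\times 1$ matrix over $\mc{A}$ with $\nrm{\mb{a}\mb{T}}_{\mc{A}} \leq \nrm{\mb{a}}\nrm{\mb{T}}_{\mc{A}} \leq 1$, coherence gives $\nrm{\mb{a}\mb{T}} \leq 1$, so $\nrm{\mb{a}\mb{T}\,x}_X \leq \nrm{x}_X$. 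Combined with $\nrm{(x)}_{\op} = \nrm{x}_X$ this gives $\nrm{(x)}_\alpha = \nrm{x}_X$. For \eqref{eq:E2x} with $\mb{A} \in M_{p,n}(\C)$ and $\mb{x} \in X^n$, I would bound the $\op$-piece by the analogous property of $\op$, and bound $\nrm{\mb{T}\mb{A}\mb{x}}_{\op}$ after noting that $\mb{T}\mb{A} \in M_{m,n}(\mc{A})$ satisfies $\nrm{\mb{T}\mb{A}}_{\mc{A}} \leq \nrm{\mb{T}}_{\mc{A}}\nrm{\mb{A}} \leq \nrm{\mb{A}}$; rescaling $\mb{T}\mb{A}$ to unit $\mc{A}$-norm shows $\nrm{\mb{T}\mb{A}\mb{x}}_{\op} \leq \nrm{\mb{A}}\nrm{\mb{x}}_\alpha$.

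For the main estimate, let $\mb{S} \in M_{p,n}(\mc{A})$ and $\mb{x} \in X^n$. For the $\op$-part of $\nrm{\mb{S}\mb{x}}_\alpha$, I would write $\nrm{\mb{S}\mb{x}}_{\op} = \sup_{\nrm{\mb{b}}_{\ell^2_p}\leq 1}\nrm{(\mb{b}\mb{S})\mb{x}}_X$. Since $\mb{b}\mb{S} \in M_{1,n}(\mc{A})$ with $\nrm{\mb{b}\mb{S}}_{\mc{A}} \leq \nrm{\mb{S}}_{\mc{A}}$, the rescaled row $\mb{b}\mb{S}/\nrm{\mb{S}}_{\mc{A}}$ is an admissible test matrix in the definition of $\nrm{\mb{x}}_\alpha$, giving $\nrm{(\mb{b}\mb{S})\mb{x}}_X \leq \nrm{\mb{S}}_{\mc{A}}\nrm{\mb{x}}_\alpha$. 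For the sup-part: any admissible $\mb{T} \in M_{m,p}(\mc{A})$ with $\nrm{\mb{T}}_{\mc{A}}\leq 1$ yields $\mb{T}\mb{S} \in M_{m,n}(\mc{A})$ with $\nrm{\mb{T}\mb{S}}_{\mc{A}} \leq \nrm{\mb{S}}_{\mc{A}}$, and again rescaling shows $\nrm{\mb{T}\mb{S}\mb{x}}_{\op} \leq \nrm{\mb{S}}_{\mc{A}}\nrm{\mb{x}}_\alpha$. Taking the max and the suprema gives $\nrm{\mb{S}\mb{x}}_\alpha \leq \nrm{\mb{S}}_{\mc{A}}\nrm{\mb{x}}_\alpha$, which by \eqref{eq:reformalphabounded}-style reasoning is exactly $\nrm{\mb{S}}_{\hat{\alpha}} \leq \nrm{\mb{S}}_{\mc{A}}$.

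The one subtle point, which I would flag as the main obstacle, is reconciling the isometric identity \eqref{eq:E1x} with the possibility that $\mc{A}$ fails to be unital or that $\nrm{(I)}_{\mc{A}}>1$: this is precisely what forces the appearance of the $\nrm{\mb{x}}_{\op}$ term in the max. Once that is in place, every remaining step is a direct exploitation of the two defining inequalities of a matricial algebra norm (submultiplicativity and the scalar-matrix ideal property), combined with the hypothesis $\nrm{(T)}_{\mc{A}} \geq \nrm{T}$ used to pass from $\nrm{\mc{A}}$-norms back to operator norms on $X$.
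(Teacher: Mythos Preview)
Your proof is correct and follows essentially the same approach as the paper. The paper defines $\nrm{\mb{x}}_\alpha = \max\{\nrm{\mb{x}}_\beta,\nrm{\mb{x}}_{\op}\}$ with $\nrm{\mb{x}}_\beta = \sup\{\nrm{\mb{S}\mb{x}}_X : \mb{S}\in M_{1,n}(\mc{A}),\ \nrm{\mb{S}}_{\mc{A}}\le 1\}$, i.e.\ it restricts the test matrices to rows ($m=1$); your extra supremum over $m$ is harmless since $\nrm{\mb{T}\mb{x}}_{\op}=\sup_{\nrm{\mb{a}}_{\ell^2_m}\le 1}\nrm{\mb{a}\mb{T}\mb{x}}_X$ and $\nrm{\mb{a}\mb{T}}_{\mc{A}}\le 1$, so the two definitions coincide and the subsequent verifications are the same.
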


\begin{proof}
  Define the $\alpha$-norm of a column vector $\mb{x} \in X^n$ by
  \begin{equation*}
    \nrm{\mb{x}}_\alpha = \max\cbraceb{\nrm{\mb{x}}_\beta, \nrm{\mb{x}}_{\op}}
  \end{equation*}
  with
  \begin{equation*}
    \nrm{\mb{x}}_\beta = \sup \cbraceb{\nrm{\mb{S}\mb{x}}: \mb{S} \in M_{1,n}(\mc{A}), \nrm{\mb{S}}_{\mc{A}} \leq 1}.
  \end{equation*}
  Then $\nrm{\lcdot}_\alpha$ is a Euclidean structure, since we already know $\op$ is a Euclidean structure and for $\beta$ we have
  \begin{equation*}
    \nrm{(x)}_\beta \leq \sup\cbrace{\nrm{Sx}_X: S\in \mc{A}, \nrm{S} \leq 1} = \nrm{x}_X
  \end{equation*}
  for any $x \in X$, so \eqref{eq:E1x} holds. Moreover, if $\mb{A} \in M_{m,n}(\C)$ and $\mb{x} \in X^n$, we have
  \begin{equation*}
    \nrm{\mb{A}\mb{x}}_\beta = \sup\cbrace*{\nrm{\mb{S}\mb{A}\mb{x}}: \mb{S} \in M_{1,m}(\mc{A}), \nrm{\mb{S}}_{\mc{A}} \leq 1} \leq \nrm{\mb{A}}\nrm{\mb{x}}_\beta,
  \end{equation*}
so $\beta$ satisfies \eqref{eq:E2x}.

  Now suppose that $\mb{T} \in M_{m,n}(\mc{A})$, $\mb{x} \in X^n$ with $\nrm{\mb{x}}_\alpha \leq 1$ and $\mb{y} = \mb{T} \mb{x}$, then
  \begin{align*}
    \nrm{\mb{y}}_\beta &= \sup\cbrace*{\nrm{\mb{S}\mb{T}\mb{x}}: \mb{S} \in M_{1,m}(\mc{A}), \nrm{\mb{S}}_{\mc{A}} \leq 1}\\
    & \leq \sup\cbrace*{\nrm{\mb{S}\mb{x}}: \mb{S} \in M_{1,n}(\mc{A}), \nrm{\mb{S}}_{\mc{A}} \leq \nrm{\mb{T}}_{\mc{A}}} = \nrm{\mb{T}}_{\mc{A}}\nrm{\mb{x}}_\beta
  \end{align*}
  and
  \begin{align*}
    \nrm{\mb{y}}_{\op} &= \sup\cbrace{\nrm{\mb{A}\mb{T}\mb{x}}: \mb{A} \in M_{1,m}(\C), \nrm{\mb{A}} \leq 1}\\
    &\leq \sup\cbrace{\nrm{\mb{S}\mb{x}}: \mb{S} \in M_{1,n}(\mc{A}), \nrm{\mb{S}}_{\mc{A}} \leq \nrm{\mb{T}}_{\mc{A}}} = \nrm{\mb{T}}_{\mc{A}} \nrm{\mb{x}}_\beta.
  \end{align*}
  From this we immediately get
  \begin{align*}
    \nrm{\mb{T}}_{\hat\alpha} &= \sup\cbrace{\nrm{\mb{y}}_\alpha: \mb{y} = \mb{T}\mb{x},\mb{x} \in X^n, \nrm{\mb{x}}_\alpha \leq 1}\leq \nrm{\mb{T}}_{\mc{A}},
  \end{align*}
  which proves the proposition.
\end{proof}

If $\mc{A}$ and $\mc{B}$ are two matricial normed algebras, then an algebra homomorphism $\rho: \mc{A} \to \mc{B}$ naturally induces a map $\rho:M_{m,n}(\mc{A}) \to M_{m,n}(\mc{B})$ by setting $\rho(\mb{T}) = (\rho(T_{jk}))_{j,k=1}^{m,n}$ for $\mb{T} \in M_{m,n}(\mc{A})$. The algebra homomorphism $\rho$ is called \emph{completely bounded} if these maps are uniformly bounded for $m,n \in \N$.

 We will  use Proposition \ref{proposition:matricialEuclidean} to prove that the bounded algebra homomorphism $\rho$ in the definition of $C^*$-boundedness can be used to construct a Euclidean structure on $X$ such that $\rho$ is completely bounded if we equip the operator algebra $\mc{A}$ with its natural matricial algebra norm  given by
\begin{equation*}
  \nrm{\mb{T}}_{\mc{A}} = \nrm{\mb{T}}_{\mc{L}(\ell_n^2(H),\ell_m^2(H))}, \qquad T \in M_{m,n}(\mc{A})
\end{equation*}
 and we equip $\mc{L}(X)$ with the  matricial algebra norm $\hat{\alpha}$ induced by $\alpha$.

\begin{proposition}
\label{proposition:completelybounded}
  Let $H$ be a Hilbert space and suppose that $\mc{A}\subseteq \mc{L}(H)$ is an operator algebra. Let $\rho\colon\mc{A} \to \mc{L}(X)$ be a bounded algebra homomorphism. Then there exists a Euclidean structure $\alpha$ on $X$ such that
  \begin{equation*}
    \nrm{\rho(\mb{T})}_{\hat{\alpha}} \leq \nrm{\rho} \nrm{\mb{T}}_{\mc{L}(\ell^2_n(H), \ell^2_m(H))}
  \end{equation*}
  for all $\mb{T} \in M_{m,n}(\mc{A})$, i.e. $\rho$ is completely bounded.
\end{proposition}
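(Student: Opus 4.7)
The plan is to reduce the claim directly to Proposition \ref{proposition:matricialEuclidean} by equipping the image subalgebra $\mc{B} := \rho(\mc{A}) \subseteq \mc{L}(X)$ with a matricial algebra norm pulled back from $\mc{A}$ via $\rho$. Since $\mc{A}$ is unital (operator algebras always are), I would first arrange that $\rho$ is unital; then $\nrm{\rho(I)}=1$ forces $\nrm{\rho}\geq 1$, which will be needed for submultiplicativity below.

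For $\mb{S}\in M_{m,n}(\mc{B})$ I propose to define the quotient-style norm
\begin{equation*}
  \nrm{\mb{S}}_{\mc{B}} := \nrm{\rho}\,\inf\cbraceb{\nrm{\mb{T}}_{\mc{L}(\ell^2_n(H),\ell^2_m(H))} : \mb{T}\in M_{m,n}(\mc{A}),\ \rho(\mb{T})=\mb{S}},
\end{equation*}
where $M_{m,n}(\mc{A})$ carries its natural operator algebra norm inherited from $\mc{A}\subseteq \mc{L}(H)$. The preimage set is nonempty by definition of $\mc{B}$. Then I would verify four properties: positivity (a minimising sequence $\mb{T}_k$ with $\nrm{\mb{T}_k}\to 0$ forces each entry of $\mb{S}$ to vanish by continuity of $\rho$); submultiplicativity, via $\rho(\mb{T}_1\mb{T}_2)=\mb{S}_1\mb{S}_2$ and $\nrm{\mb{T}_1\mb{T}_2}\leq \nrm{\mb{T}_1}\nrm{\mb{T}_2}$, where taking infima introduces a factor $\nrm{\rho}^{-1}\leq 1$ that is harmless precisely because $\nrm{\rho}\geq 1$; the bimodule property, using $\rho(\mb{A}\mb{T}\mb{B}) = \mb{A}\mb{S}\mb{B}$ for scalar matrices $\mb{A},\mb{B}$ together with $\nrm{\mb{A}\mb{T}\mb{B}}\leq \nrm{\mb{A}}\nrm{\mb{T}}\nrm{\mb{B}}$; and finally the dominance $\nrm{(S)}_{\mc{B}} \geq \nrm{S}$, which is immediate from $\nrm{S}=\nrm{\rho(T)}\leq \nrm{\rho}\nrm{T}$ for any preimage $T$.

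With these four properties in place, Proposition \ref{proposition:matricialEuclidean} produces a Euclidean structure $\alpha$ on $X$ for which $\nrm{\mb{S}}_{\hat\alpha}\leq \nrm{\mb{S}}_{\mc{B}}$ for every $\mb{S}\in M_{m,n}(\mc{B})$. Applying this inequality to $\mb{S}=\rho(\mb{T})$ for $\mb{T}\in M_{m,n}(\mc{A})$ and using the definition of $\nrm{\lcdot}_{\mc{B}}$ yields
\begin{equation*}
  \nrm{\rho(\mb{T})}_{\hat\alpha} \leq \nrm{\rho(\mb{T})}_{\mc{B}} \leq \nrm{\rho}\,\nrm{\mb{T}}_{\mc{L}(\ell^2_n(H),\ell^2_m(H))},
\end{equation*}
which is exactly the complete boundedness estimate. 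The only subtle point I anticipate is the submultiplicativity check: the $\nrm{\rho}$ scaling factor that is needed to make $\nrm{(S)}_{\mc{B}} \geq \nrm{S}$ hold competes with submultiplicativity, and balancing them forces the reduction to the unital case (or an equivalent normalisation). Apart from this bookkeeping, the argument is essentially a direct application of Proposition \ref{proposition:matricialEuclidean}, mirroring how Example \ref{example:hatalpha} produces a matricial algebra norm from a Euclidean structure in the opposite direction.
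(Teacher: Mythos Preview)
Your proposal is correct and follows essentially the same route as the paper: define the quotient norm $\nrm{\mb{S}}_{\mc{B}} = \nrm{\rho}\inf\{\nrm{\mb{T}}:\rho(\mb{T})=\mb{S}\}$ on $\rho(\mc{A})$, check the matricial algebra norm axioms plus $\nrm{(S)}_{\mc{B}}\geq\nrm{S}$, and invoke Proposition~\ref{proposition:matricialEuclidean}. One small simplification: you do not need to reduce to the unital case to obtain $\nrm{\rho}\geq 1$, since $\rho(I)$ is an idempotent in $\mc{L}(X)$ and hence has norm at least $1$ whenever $\rho\neq 0$ (and the statement is trivial when $\rho=0$).
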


\begin{proof}
  We induce a  matricial algebra norm $\beta$ on $\rho(\mc{A})$ by setting for $\mb{S} \in M_{m,n}(\rho(\mc{A}))$
  \begin{equation*}
    \nrm{\mb{S}}_\beta = \nrm{\rho} \inf\{\nrm{\mb{T}}_{\mc{L}(\ell^2_n(H),\ell^2_m(H))}: \mb{T} \in M_{m,n}(\mc{A}),\rho(\mb{T}) = \mb{S}\}.
  \end{equation*}
  This is indeed a matricial algebra norm since  for  $\mb{S} \in M_{m,k}(\rho(\mc{A}))$ and $\mb{T} \in M_{k,n}(\rho(\mc{A}))$ we have that
  \begin{align*}
    \nrm{\mb{S}\mb{T}}_\beta &= \nrm{\rho} \inf \cbrace{\nrm{\mb{U}} : \mb{U} \in M_{m,n}(\mc{A}), \rho(\mb{U}) = \mb{S}\mb{T}} \\
    &\leq \nrm{\rho} \inf \cbrace{\nrm{\mb{U}}\nrm{\mb{V}} : \rho(\mb{U}) = \mb{S}, \rho(\mb{V}) = \mb{T}}\\
    &\leq \nrm{\mb{S}}_\beta \nrm{\mb{T}}_\beta
  \end{align*}
  as $\nrm{\rho} \geq 1$. Moreover for any ${S} \in \rho(\mc{A})$ we have
  \begin{equation*}
    \nrm{(S)}_\beta = \nrm{\rho} \inf\cbrace{\nrm{T}:T \in \mc{A},\rho(T) = S} \geq \nrm{S}.
  \end{equation*}
Hence, by Proposition \ref{proposition:matricialEuclidean}, there exists a Euclidean structure $\alpha$ such that $\nrm{\mb{S}}_{\hat\alpha} \leq \nrm{\mb{S}}_\beta$ for all $\mb{S} \in M_{m,n}(\rho(\mc{A}))$. This means
  \begin{equation*}
    \nrm{\rho(\mb{T})}_{\hat\alpha} \leq \nrm{\rho(\mb{T})}_{\beta} \leq \nrm{\rho} \nrm{\mb{T}}
  \end{equation*}
  for all $\mb{T} \in M_{m,n}(\mc{A})$, proving the proposition.
\end{proof}

\begin{remark}
  If $\mc{A} = C(K)$ for $K$ compact and $X$ has Pisier's contraction property, then one can take $\alpha=\gamma$ in Proposition \ref{proposition:completelybounded} (cf. \cite{PR07,KL10}). For further results on $\gamma$-bounded representations of groups, see \cite{Le10b}.
\end{remark}

We now have all the necessary preparations to turn Theorem \ref{theorem:euclideanrepresentation} into an `if and only if' statement.

\begin{theorem}
\label{theorem:Cboundedalphabounded}
  Let $\Gamma \subseteq \mc{L}(X)$. Then $\Gamma$ is $C^*$-bounded if and only if there exists a Euclidean structure $\alpha$ on X such that $\Gamma$ is $\alpha$-bounded. Moreover $\nrm{\Gamma}_{\alpha} \simeq \nrm{\Gamma}_{C^*}$.
\end{theorem}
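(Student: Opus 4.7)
My plan is to prove the two directions separately, each by invoking one of the two main technical results already developed in the preceding sections.

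\textbf{($\alpha$-bounded $\Rightarrow$ $C^*$-bounded).} Given an $\alpha$-bounded $\Gamma$, by Proposition \ref{proposition:alphaproperties}\ref{it:propalha4}--\ref{it:propalha5} I may pass to its absolutely convex strong-operator-closed hull without changing $\nrm{\Gamma}_\alpha$. I then apply Theorem \ref{theorem:euclideanrepresentation} to obtain a Hilbert space $H$, a closed subalgebra $\mc{B} \subseteq \mc{L}(H)$, a bounded homomorphism $\rho\colon \mc{B} \to \mc{L}(X)$ with $\nrm{\rho}\leq 4$, and a bounded linear map $\tau\colon \mc{L}_\Gamma(X) \to \mc{B}$ with $\nrm{\tau}\leq 2\nrm{\Gamma}_\alpha$ and $\rho\circ\tau = \mathrm{id}$. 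To fit the operator-algebra definition I adjoin a unit, i.e.\ work with $\mc{A}:= \mc{B} + \C\cdot I_H$ (a closed unital subalgebra of $\mc{L}(H)$, hence of a $C^*$-algebra), and extend $\rho$ by $\rho(I_H):= I_X$ while keeping $\nrm{\rho}\lesssim 1$. For every $T \in \Gamma$ we have $\nrm{T}_\Gamma \leq 1$, hence $\nrm{\tau(T)}\leq 2\nrm{\Gamma}_\alpha$, which shows $\Gamma \subseteq \{\rho(S): S \in \mc{A},\ \nrm{S}\leq C/\nrm{\rho}\}$ for $C$ a universal multiple of $\nrm{\Gamma}_\alpha$, giving $\nrm{\Gamma}_{C^*}\lesssim \nrm{\Gamma}_\alpha$.

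\textbf{($C^*$-bounded $\Rightarrow$ $\alpha$-bounded).} Here I use Proposition \ref{proposition:completelybounded}. Given $C^*$-boundedness, fix an operator algebra $\mc{A}\subseteq \mc{L}(H)$ (by Gelfand--Naimark) and a bounded algebra homomorphism $\rho\colon\mc{A}\to \mc{L}(X)$ such that every $T\in\Gamma$ admits a lift $S\in \mc{A}$ with $\rho(S)=T$ and $\nrm{S}\leq \nrm{\Gamma}_{C^*}/\nrm{\rho}$. Proposition \ref{proposition:completelybounded} delivers a Euclidean structure $\alpha$ on $X$ for which the induced matricial algebra norm $\hat\alpha$ satisfies $\nrm{\rho(\mb{T})}_{\hat\alpha}\leq \nrm{\rho}\,\nrm{\mb{T}}_{\mc{L}(\ell^2_n(H),\ell^2_m(H))}$ for all matrices $\mb{T}\in M_{m,n}(\mc{A})$. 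Given $T_1,\ldots,T_n\in\Gamma$, choose lifts $S_1,\ldots,S_n\in\mc{A}$ as above; then
\begin{equation*}
\nrm{\diag(S_1,\ldots,S_n)}_{\mc{L}(\ell^2_n(H))} = \max_{1\leq k\leq n}\nrm{S_k}\leq \frac{\nrm{\Gamma}_{C^*}}{\nrm{\rho}}.
\end{equation*}
Applying $\rho$ entrywise and using the complete boundedness estimate yields $\nrm{\diag(T_1,\ldots,T_n)}_{\hat\alpha}\leq \nrm{\Gamma}_{C^*}$. Taking the supremum and invoking the reformulation \eqref{eq:reformalphabounded} gives $\nrm{\Gamma}_\alpha\leq \nrm{\Gamma}_{C^*}$, completing the equivalence and the norm comparison.

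The only points requiring any care beyond citing the two propositions are: (i) the unital-adjunction step in the first direction (needed purely because the definition of operator algebra requires a unit, while the closed subalgebra produced by Theorem \ref{theorem:euclideanrepresentation} need not contain $I_H$); and (ii) lining up the precise scaling $\nrm{S}\leq \nrm{\Gamma}_{C^*}/\nrm{\rho}$ with the output of Proposition \ref{proposition:completelybounded} so that the factors of $\nrm{\rho}$ cancel and one lands exactly at $\nrm{\Gamma}_{C^*}$ on the right-hand side. Neither is a genuine obstacle; the representation theorem of Section \ref{section:representation} and the matricial construction of Section \ref{sec:cbounded} have already done all the actual work.
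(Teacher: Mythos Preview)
Your proof is correct and follows the same overall strategy as the paper. Direction 2 is identical to the paper's argument. In Direction 1 you differ only in how unitality is arranged: the paper adjoins a scaled copy of $I_X$ to $\Gamma$ \emph{before} invoking Theorem~\ref{theorem:euclideanrepresentation} (taking $\tilde\Gamma$ to be the absolutely convex strong closure of $\Gamma\cup\{\nrm{\Gamma}_\alpha I_X\}$), so that the resulting algebra $\mc B$ automatically has a unit $\tau(I_X)$ and $\rho$ needs no modification, whereas you adjoin $I_H$ to $\mc B$ \emph{after} the fact.

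Your route does work, but the assertion ``while keeping $\nrm{\rho}\lesssim 1$'' deserves one more line of justification. If $I_H\notin\mc B$ and $b\in\mc B$ satisfies $\nrm{I_H-b}<1$, then the partial sums $I_H-(I_H-b)^{N+1}$ are polynomials in $b$ with vanishing constant term, hence lie in $\mc B$, and converge to $I_H$; this forces $I_H\in\mc B$, a contradiction. Thus $\dist(I_H,\mc B)\ge 1$, from which one gets $\nrm{\tilde\rho}\le 2\nrm\rho+1$ for the extension. (If instead $I_H\in\mc B$ already, no extension is needed and one should not attempt to redefine $\rho(I_H)$.) The paper's order of operations sidesteps this small case analysis and yields the explicit constant $\nrm{\Gamma}_{C^*}\le 16\,\nrm{\Gamma}_\alpha$.
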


\begin{proof}
  First suppose that $\alpha$ is a Euclidean structure on $X$ such that $\Gamma$ is $\alpha$-bounded. Let $\tilde{\Gamma}$ be the closure in the strong operator topology of the absolutely convex hull of $\Gamma \cup \ha{\nrm{\Gamma}_\alpha\cdot I_X}$, where $I_X$ is the identity operator on $X$. By Proposition \ref{proposition:alphaproperties} we know that $\tilde{\Gamma}$ is $\alpha$-bounded with $\nrm{\tilde{\Gamma}}_\alpha \leq 2 \, \nrm{\Gamma}_\alpha$. Then, by Theorem \ref{theorem:euclideanrepresentation}, we can find a closed subalgebra $\mc{A}$ of a $C^*$-algebra and a bounded algebra homomorphism $\rho:\mc{A}  \to \mc{L}(X)$ such that
  \begin{align*}
    \Gamma \subseteq \tilde\Gamma &\subseteq \cbrace{\rho\tau(T):T \in \mc{L}_{\tilde\Gamma}(X), \nrm{T}_{\tilde\Gamma}\leq 1}\\
    &\subseteq \cbraces{\rho(T):T \in \mc{A}, \nrm{T}\leq \frac{16\,\nrm{\Gamma}_\alpha}{\nrm{\rho}}}.
  \end{align*}
So $\Gamma$ is $C^*$-bounded with constant $\nrm{\Gamma}_{C^*} \leq 16\, \nrm{\Gamma}_\alpha$.

  Now assume that $\Gamma$ is $C^*$-bounded. Let $\mc{A}$ be an operator algebra over a Hilbert space $H$ and let $\rho:\mc{A} \to \mc{L}(X)$ a bounded algebra homomorphism such that
  \begin{equation*}
    \Gamma \subseteq \cbrace*{\rho(S):S \in \mc{A}, \nrm{S}\leq \frac{\nrm{\Gamma}_{C^*}}{\nrm{\rho}}}.
  \end{equation*}
By Proposition \ref{proposition:completelybounded} there is a Euclidean structure $\alpha$ such that
  \begin{equation*}
    \nrm{\rho(\mb{T})}_{\hat{\alpha}} \leq \nrm{\rho} \nrm{\mb{T}}_{\mc{L}(\ell^2_n(H))}
  \end{equation*}
  for all $\mb{T} \in M_{m,n}(\mc{A})$. Take $T_1,\ldots,T_n \in \Gamma$ and let $S_1,\ldots,S_n \in \mc{A}$ be such that $\rho(S_k)=T_k$ and $\nrm{S_k}\leq \frac{\nrm{\Gamma}_{C^*}}{\nrm{\rho}}$ for $1 \leq k \leq n$. Then we have, using the Hilbert space structure of $H$, that
  \begin{align*}
    \nrm{\diag(T_1,\ldots,T_n)}_{\hat{\alpha}}    &=  \nrmb{\rho\hab{\diag(S_1,\ldots,S_n)}}_{\hat{\alpha}}\\
    &\leq \nrm{\rho} \nrm{\diag(S_1,\ldots,S_n)}_{\mc{L}(\ell^2_n(H))} \\&\leq \nrm{\Gamma}_{C^*}.
  \end{align*}
  So by \eqref{eq:reformalphabounded} we obtain that $\Gamma$ is $\alpha$-bounded with $\nrm{\Gamma}_\alpha \leq \nrm{\Gamma}_{C^*}$.
\end{proof}

\chapter{Factorization of \texorpdfstring{$\alpha$}{a}-bounded operator families}\label{part:2}
In Chapter \ref{part:1} we have seen that the $\alpha$-boundedness of a family of operators is inherently tied up with a Hilbert space hiding in the background. However, we did not obtain information on the structure of this Hilbert space, nor on the form of the algebra homomorphism connecting the Hilbert and Banach space settings. In this chapter we will highlight some special cases in which the representation can be made more explicit.

 The first special case that we will treat is the case where $\alpha$ is either the $\gamma$- or the $\pi_2$-structure. In this case Lemma \ref{lemma:technical} implies a $\gamma$-bounded version of the Kwapie\'n--Maurey factorization theorem. Moreover we will show that $\pi_2$-boundedness can be characterized in terms of factorization through a Hilbert space, i.e. yielding control over the algebra homomorphism.

 Afterwards we turn our attention to the case in which $X$ is a Banach function space. In Section \ref{section:BFSell2boundedness} we will show that, under a mild additional assumption on the Euclidean structure $\alpha$, an $\alpha$-bounded family of operators on a Banach function space is actually $\ell^2$-bounded. This implies that the $\ell^2$-structure is the canonical structure to consider on Banach function spaces.

 In Section
 \ref{section:BFSfactorization} we prove a version of Lemma \ref{lemma:technicalcase} for Banach function spaces, in which the abstract Hilbert space is replaced by a weighted $L^2$-space. This is remarkable, since this is gives us crucial information on the Hilbert space $H$. This formulation resembles the work of Maurey, Niki{\v{s}}in and Rubio de Francia on weighted versus vector-valued inequalities, but has the key advantage that no geometric properties  of the Banach function space are used.
 In Section \ref{section:vectorextension} we use this version of Lemma \ref{lemma:technicalcase} to prove a vector-valued extension theorem with weaker assumptions than the one in the work of Rubio de Francia. This has applications in vector-valued harmonic analysis, of which we will give a few examples.

\section[Factorization of \texorpdfstring{$\gamma$}{y}- and \texorpdfstring{$\pi_2$}{pi2}-bounded operator families]{Factorization of \texorpdfstring{$\gamma$}{y}- and \texorpdfstring{$\pi_2$}{pi2}-bounded operator families}\label{section:hilbertfactorization}
In this section we will consider the special case where $\alpha$ is either the $\gamma$- or the $\pi_2$-structure. For these Euclidean structures we will show that $\alpha$-bounded families of operators can be factorized through a Hilbert space under certain geometric conditions on the underlying Banach spaces.
 All results in this section will be based on the following lemma, which is a special case of Lemma \ref{lemma:technical}.

\begin{lemma}\label{lemma:euclideankwapienmaurey}
  Let $X$ and $Y$ be Banach spaces. Let $\Gamma_1\subseteq \mc{L}(X,Y)$ and suppose that there is a $C>0$ such  that for all $\mb{x} \in X^n$ and $S_1,\ldots,S_n \in \Gamma_1$  we have
  \begin{equation*}
    \nrm{(S_1x_1,\ldots,S_nx_n)}_{\pi_2} \leq C \, \has{\sum_{k=1}^n\nrm{x_k}^2}^{1/2}.
  \end{equation*}
  Let $\Gamma_2 \subseteq \mc{L}(Y)$ be a $\pi_2$-bounded family of operators.
  Then there is a Hilbert space $H$, a contractive embedding $U\colon H \to Y$, a $\widetilde{S} \in \mc{L}(X,H)$ for every $S \in \Gamma_1$ and a $\widetilde{T} \in \mc{L}(H)$ for every $T \in \Gamma_2$   such that
  \begin{align*}
  \nrm{\widetilde{S}} &\leq 4\,C,& &S \in \Gamma_1,\\
   \nrm{\widetilde{T}} &\leq 2\,\nrm{\Gamma}_{\pi_2}, & & T \in \Gamma_2 .
\end{align*}
and the following diagram commutes:
\begin{center}
\begin{tikzcd}
X\arrow{rd}{\widetilde{S}} \arrow{r}{S}                                                      & Y \arrow{r}{T} & Y              \\
& H \arrow{u}{U} \arrow{r}{\widetilde{T}} &H \arrow{u}{U}
\end{tikzcd}
\end{center}
\end{lemma}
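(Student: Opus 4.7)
My plan is to apply Lemma~\ref{lemma:technical} to the Banach space $Y$ with $\alpha = \pi_2$ and $\Gamma = \Gamma_2$, and then read off the factorization from its conclusion: the Hilbertian seminorm $\nrm{\lcdot}_0$ on the resulting $\Gamma_2$-invariant subspace $Y_0 \subseteq Y$, after factoring out its kernel and completing, will be the Hilbert space $H$; the inequality $\nrm{\lcdot}_0 \geq F$ will produce the contractive embedding $U \colon H \to Y$; the inequality $\nrm{\lcdot}_0 \leq 4G$ will produce the operators $\widetilde{S} \in \mc{L}(X,H)$; and the operator estimate $\nrm{Ty}_0 \leq 2\nrm{\Gamma_2}_{\pi_2}\nrm{y}_0$ for $T \in \Gamma_2$ will produce the $\widetilde{T} \in \mc{L}(H)$ of the desired norm.

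I would choose $F(y) = \nrm{y}_Y$ on $Y$, for which $\hab{\sum F(y_k)^2}^{1/2} \leq \nrm{\mb{y}}_{\pi_2}$ follows from the definition of the $\pi_2$-norm by taking the test matrix equal to the identity. For $G$, I take the set $Y_\star := \cbrace{Sx : S \in \Gamma_1,\, x \in X} \subseteq Y$, which is closed under scalar multiplication, and define
\begin{equation*}
  G(y) := C \inf\cbrace{\nrm{x}_X : y = Sx,\ S \in \Gamma_1}, \qquad y \in Y_\star,
\end{equation*}
which is positively homogeneous. To verify the upper bound $\nrm{\mb{y}}_{\pi_2} \leq \hab{\sum G(y_k)^2}^{1/2}$ for $\mb{y} \in Y_\star^n$, fix $\varepsilon > 0$ and for each $k$ pick $S_k \in \Gamma_1$ and $x_k \in X$ with $y_k = S_k x_k$ and $\nrm{x_k}_X \leq G(y_k)/C + \varepsilon$; the hypothesis on $\Gamma_1$ then gives
\begin{equation*}
  \nrm{\mb{y}}_{\pi_2} = \nrm{(S_1 x_1,\ldots, S_n x_n)}_{\pi_2} \leq C\has{\sum_{k=1}^n \nrm{x_k}^2}^{1/2} \leq \has{\sum_{k=1}^n G(y_k)^2}^{1/2} + O(\varepsilon),
\end{equation*}
and letting $\varepsilon \to 0$ gives the required estimate.

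From Lemma~\ref{lemma:technical} I would then let $N := \cbrace{y \in Y_0 : \nrm{y}_0 = 0}$, let $H$ be the completion of $Y_0/N$, define $U$ as the continuous extension of $y + N \mapsto y$ (a contraction by $\nrm{y}_Y \leq \nrm{y}_0$; injective after passing to $H/\ker U$ if required), set $\widetilde{S} x := Sx + N$ (which lies in $Y_0/N$ since $Sx \in Y_\star$, and satisfies $\nrm{\widetilde{S}x}_H = \nrm{Sx}_0 \leq 4G(Sx) \leq 4C\nrm{x}_X$), and set $\widetilde{T}(y + N) := Ty + N$ (well-defined by the $\Gamma_2$-invariance of $Y_0$ and $N$), extending by continuity to $\widetilde{T} \in \mc{L}(H)$ of norm at most $2\nrm{\Gamma_2}_{\pi_2}$. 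The diagram then commutes via $U\widetilde{T}\widetilde{S}x = U(TSx + N) = TSx$.

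The bulk of the work is packed into Lemma~\ref{lemma:technical}; the only delicate point that remains here is that $Y_\star$ is merely closed under scalar multiplication rather than a genuine subspace, while Lemma~\ref{lemma:technical} is phrased for a subspace. This is harmless, as the proof of that lemma only uses positive homogeneity of $G$ on a scalar-closed set: Step~1 iteratively extends $G$ along the $\Gamma_2$-orbit of $Y_\star$, and the Hahn--Banach construction in Steps~2 and~3 is carried out on the linear span of that orbit, producing the semi-inner product on the subspace $Y_0$ so generated with $\nrm{y}_0 \leq 4G(y)$ precisely on $Y_\star$, which is exactly what the factorization requires.
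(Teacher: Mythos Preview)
Your proof is correct and follows the same approach as the paper's: define $F(y)=\nrm{y}_Y$ and $G$ as the weighted infimum over preimages on $\widetilde{Y}=\{Sx:S\in\Gamma_1,\,x\in X\}$, then invoke Lemma~\ref{lemma:technical}. The paper observes that $\nrm{y}_0 \geq F(y) = \nrm{y}_Y$ forces $\nrm{\lcdot}_0$ to already be a norm, so your quotient by $N$ is unnecessary (but harmless, since $N=\{0\}$); also your explicit discussion of the subspace versus scalar-closed set issue is more careful than the paper, which simply applies the lemma without comment.
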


\begin{proof}
Define $F:Y \to [0,\infty)$ as $F(y) = \nrm{y}_Y$. Then we have, by the definition of the $\pi_2$-structure, for any $\mb{y} \in Y^n$
  \begin{equation*}
    \has{\sum_{k=1}^n F(y_k)^2}^{1/2} = \has{\sum_{k=1}^n \nrm{y_k}_X^2}^{1/2} \leq \nrm{\mb{y}}_{\pi_2}.
  \end{equation*}
      Let
      $$\widetilde{Y} = \cbrace{Sx: S \in \Gamma_1,\, x \in X}\subseteq Y$$
       and define $G\colon\widetilde{Y} \to [0,\infty)$ by
      $$G(y) := C \cdot \inf\cbraceb{\nrm{x}_X:x \in X,\, Sx=y,\,S \in \Gamma_1}.$$
      Fix $\mb{y} \in \widetilde{Y}$, then for any $S_1,\ldots,S_n \in \Gamma_1$ and $\mb{x} \in X^n$ such that $y_k = Sx_k$ we have
      \begin{equation*}
        \nrm{\mb{y}}_{\pi_2}\leq C\cdot\has{\sum_{k=1}^n\nrm{x_k}_X^2}^{1/2}.
      \end{equation*}
       Thus, taking the infimum over all such $S_k$ and $\mb{x}$, we obtain $$\nrm{\mb{y}}_{\pi_2}\leq \hab{\sum_{k=1}^nG(y_k)^2}^{1/2}.$$ Hence by Lemma \ref{lemma:technical} there is a Hilbertian seminorm $\nrm{\lcdot}_0$ on a $\Gamma_2$-invariant subspace $Y_0$ of $Y$ which contains $\widetilde{Y}$ and satisfies \eqref{eq:F3}-\eqref{eq:F2}. In particular, for $y \in Y_0$ we have $\nrm{y}_Y = F(y)\leq \nrm{y}_0$, so $\nrm{\lcdot}_0$ is a norm.

      Let $H$ be the completion of $(Y_0,\nrm{\lcdot}_0)$ and let $U\colon H \to Y$ be the inclusion mapping. For every $S \in \Gamma_1$ let $\widetilde{S}:X \to H$ be the mapping $x \mapsto Sx \in \tilde{Y} \subseteq Y_0$. Then we have for any $x \in X$ that
      \begin{equation*}
        \nrm{Sx}_0 \leq 4\, G(Sx) \leq 4\,C\,\nrm{x}_X,
      \end{equation*}
      so $\nrm{\widetilde{S}} \leq 4\,C$. Moreover we have $S=U\widetilde{S}$. Finally  let $\widetilde{T}$ be the canonical extension of $T \in \Gamma_2$ to $H$. Then we have $\nrm{\widetilde{T}} \leq 2\nrm{\Gamma_2}_{\pi_2}$ and $TU = U\widetilde{T}$, which proves the lemma.
\end{proof}

\subsection*{A $\gamma$-bounded Kwapie\'n--Maurey factorization theorem}
As a first application of Lemma \ref{lemma:euclideankwapienmaurey} we prove a $\gamma$-bounded version of the Kwapie\'n-Maurey factorization theorem (see \cite{Kw72, Ma74} and \cite[Theorem 7.4.2]{AK16}).

\begin{theorem}\label{theorem:Rbddkwapienmaurey}
Let $X$ be a Banach space with type $2$ and $Y$ a Banach space with cotype $2$. Let $\Gamma_1\subseteq \mc{L}(X,Y)$ and  $\Gamma_2 \subseteq\mc{L}(Y)$ be $\gamma$-bounded families of operators. Then there is a Hilbert space $H$, a contractive embedding $U:H \to Y$, a $\widetilde{S} \in \mc{L}(X,H)$ for every $S \in \Gamma_1$ and a $\widetilde{T} \in \mc{L}(H)$ for every $T \in \Gamma_2$   such that
  \begin{align*}
   \nrm{\widetilde{S}} &\lesssim  \nrm{\Gamma_1}_{\gamma}& &S\in \Gamma_1\\
  \nrm{\widetilde{T}} &\lesssim \nrm{\Gamma_2}_{\gamma}, & & T \in \Gamma_2 .
\end{align*}
and the following diagram commutes:
\begin{center}
\begin{tikzcd}
X\arrow{rd}{\widetilde{S}} \arrow{r}{S}                                                      & Y \arrow{r}{T} & Y              \\
& H \arrow{u}{U} \arrow{r}{\widetilde{T}} &H \arrow{u}{U}
\end{tikzcd}
\end{center}
\end{theorem}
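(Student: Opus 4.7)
The plan is to deduce this directly from Lemma \ref{lemma:euclideankwapienmaurey} by translating the $\gamma$-boundedness hypotheses into the $\pi_2$-hypotheses that the lemma requires, using the geometric assumptions on $X$ and $Y$. Concretely, I need to verify two things: first, that the mixed $\pi_2$-estimate on $\Gamma_1$ holds with constant $\lesssim \nrm{\Gamma_1}_\gamma$, and second, that $\Gamma_2$ is $\pi_2$-bounded with $\nrm{\Gamma_2}_{\pi_2} \lesssim \nrm{\Gamma_2}_\gamma$. Once these are in hand, the conclusion is immediate from the lemma.

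For the first verification, I would start from an arbitrary $\mb{x}\in X^n$ and $S_1,\ldots,S_n\in\Gamma_1$ and chain three estimates on $Y$ and $X$: Proposition \ref{proposition:compareEuclidean}\ref{it:compareEuclidean1} gives $\nrm{(S_1x_1,\ldots,S_nx_n)}_{\pi_2}\lesssim \nrm{(S_1x_1,\ldots,S_nx_n)}_{\gamma}$ because $Y$ has cotype $2$; then $\gamma$-boundedness of $\Gamma_1$ gives $\nrm{(S_1x_1,\ldots,S_nx_n)}_\gamma \leq \nrm{\Gamma_1}_\gamma\, \nrm{\mb{x}}_\gamma$; finally, I want $\nrm{\mb{x}}_\gamma \lesssim (\sum_k \nrm{x_k}_X^2)^{1/2}$, i.e.\ Gaussian type $2$ for $X$. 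Type $2$ in the Rademacher sense gives this for Rademacher sums, and since type $2$ implies nontrivial type and hence finite cotype, Proposition \ref{proposition:gaussianradermacherl2comparison} lets me replace Gaussians by Rademachers at the cost of a constant. Combining, I get the required inequality with $C\lesssim \nrm{\Gamma_1}_\gamma$.

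For the second verification, cotype $2$ of $Y$ gives $\pi_2\simeq \gamma$ on $Y$ by Proposition \ref{proposition:compareEuclidean}\ref{it:compareEuclidean1}. Applying this equivalence to both sides of the defining inequality for $\gamma$-boundedness of $\Gamma_2$ shows that $\Gamma_2$ is $\pi_2$-bounded with $\nrm{\Gamma_2}_{\pi_2}\lesssim \nrm{\Gamma_2}_\gamma$.

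With both hypotheses of Lemma \ref{lemma:euclideankwapienmaurey} verified, I invoke the lemma to obtain $H$, $U\colon H\to Y$ contractive, and the operators $\widetilde{S},\widetilde{T}$ with $\nrm{\widetilde{S}}\leq 4C\lesssim \nrm{\Gamma_1}_\gamma$ and $\nrm{\widetilde{T}}\leq 2\nrm{\Gamma_2}_{\pi_2}\lesssim \nrm{\Gamma_2}_\gamma$, and with the stated commuting diagram. There is no real obstacle: the heavy lifting was already done in Lemma \ref{lemma:euclideankwapienmaurey} (and ultimately in Lemma \ref{lemma:technical}), and the role of the type $2$/cotype $2$ assumptions is only to align $\gamma$-norms with $\ell^2$-norms on the input side and with $\pi_2$-norms on the output side. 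The one subtle point worth stating carefully is the reduction of Gaussian type to Rademacher type on $X$ via the finite cotype consequence of nontrivial type, so that the estimate $\nrm{\mb{x}}_\gamma\lesssim (\sum_k\nrm{x_k}_X^2)^{1/2}$ is fully justified.
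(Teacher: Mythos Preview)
Your proof is correct and follows essentially the same route as the paper: reduce to Lemma \ref{lemma:euclideankwapienmaurey} by using cotype $2$ of $Y$ to pass from $\gamma$ to $\pi_2$ (Proposition \ref{proposition:compareEuclidean}\ref{it:compareEuclidean1}) and type $2$ of $X$ together with Proposition \ref{proposition:gaussianradermacherl2comparison} to bound $\nrm{\mb{x}}_\gamma$ by the $\ell^2$-sum. Your explicit remark that type $2$ implies finite cotype, needed to invoke Proposition \ref{proposition:gaussianradermacherl2comparison}, is a detail the paper leaves implicit.
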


Note that the Kwapie\'n-Maurey factorization theorem follows from Theorem \ref{theorem:Rbddkwapienmaurey} by taking $\Gamma_1 = \cbrace{S}$ for some $S\in \mc{L}(X, Y)$ and taking $\Gamma_2=\varnothing$. In particular the fact that any Banach space with type $2$ and cotype $2$ is isomorphic to a Hilbert space follows by taking $X=Y$, $\Gamma_1=\cbrace{I_X}$ and $\Gamma_2=\varnothing$.

\begin{proof}[Proof of Theorem \ref{theorem:Rbddkwapienmaurey}]
Note that $\gamma$-boundedness and $\pi_2$-boundedness are equivalent on a space with cotype $2$ by Proposition \ref{proposition:compareEuclidean}. Thus $\Gamma_2$ is $\pi_2$-bounded on $Y$. Furthermore, using Proposition \ref{proposition:compareEuclidean}, the $\gamma$-boundedness of $\Gamma_1$ and Proposition \ref{proposition:gaussianradermacherl2comparison}, we have for $\mb{x} \in X^n$ and $T_1,\ldots,T_n \in \Gamma$
  \begin{align*}
    \nrm{(T_1x_1,\ldots,T_nx_n)}_{\pi_2} &\lesssim \nrm{(T_1x_1,\ldots,T_nx_n)}_{\gamma}\\
    &\leq\nrm{\Gamma}_{\gamma} \nrm{(x_1,\ldots,x_n)}_{\gamma}\\&\lesssim\nrm{\Gamma}_{\gamma}
    \has{\sum_{k=1}^n\nrm{x_k}^2}^{1/2}.
  \end{align*}
Therefore the theorem follows from Lemma \ref{lemma:euclideankwapienmaurey}.
\end{proof}

\subsection*{Factorization of $\pi_2$-bounded operator families through a Hilbert space}
If we let $X$ be a Hilbert space in Lemma \ref{lemma:euclideankwapienmaurey}, we can actually characterize the $\pi_2$-boundedness of a family of operators on $Y$ by a factorization property. In order to prove this will need the $\pi_2$-summing norm for operators $T \in \mc{L}(Y,Z)$, where $Y$ and $Z$ are Banach spaces. It is defined as
\begin{equation*}
  \nrm{T}_{\pi_2} := \sup\cbraces{\hab{\sum_{k=1}^n \nrm{Ty_k}_Z^2}^{1/2}: \mb{y} \in Y^n, \sup_{\nrm{y^*}_{Y^*} \leq 1} \hab{\sum_{k=1}^n\abs{\ip{y_k,y^*}}^2}^{1/2} \leq 1}
\end{equation*}
Clearly $\nrm{T} \leq \nrm{T}_{\pi_2}$ and $T$ is called $2$-summing if $\nrm{T}_{\pi_2} <\infty$. For a connection between $p$-summing operators and factorization through $L^p$ we refer to \cite{To89,DJT95} and the references therein. If $Y = \ell^2$ this definition coincides with the definition given in Section \ref{section:euclidean}, which follows from the fact that $\mc{L}(\ell^2)$ is isometrically isomorphic to $\ell^2_{\mathrm{weak}}(\ell^2$), the space of all sequences $(y_n)_{n\geq 1}$ in $\ell^2$ for which
\begin{equation*}
  \nrm{(y_n)_{n\geq 1}}_{\ell^2_{\mathrm{weak}}(\ell^2)}:=\sup_{\nrm{y^*}_{\ell^2} \leq 1} \hab{\sum_{n=1}^\infty\abs{\ip{y_n,y^*}}^2}^{1/2},
\end{equation*}
is finite, see e.g. \cite[Proposition 2.2]{DJT95}.

\begin{theorem}\label{theorem:hilbertfactorization}
Let $Y$ be a Banach space and let $\Gamma \subseteq \mc{L}(Y)$. Then $\Gamma$ is $\pi_2$-bounded if and only if there is a $C>0$ such that for any  Hilbert space $X$ and $S \in \mc{L}(X,Y)$, there is a Hilbert space $H$, a contractive embedding $U:H \to Y$, a $\widetilde{S} \in \mc{L}(X,H)$ and a $\widetilde{T} \in \mc{L}(H)$ for every $T \in \Gamma$ such that
\begin{align*}
  \nrm{\widetilde{S}} &\leq 4 \nrm{S}\\
  \nrm{\widetilde{T}} &\leq C,  &T \in \Gamma
\end{align*}
and the following diagram commutes
\begin{center}
\begin{tikzcd}
X\arrow{rd}{\widetilde{S}} \arrow{r}{S}                                                      & Y \arrow{r}{T} & Y              \\
& H \arrow{u}{U} \arrow{r}{\widetilde{T}} &H \arrow{u}{U}
\end{tikzcd}
\end{center}
Moreover $C>0$ can be chosen such that $\nrm{\Gamma}_{\pi_2} \simeq C$.
\end{theorem}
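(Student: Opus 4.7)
The plan is to prove the forward direction by a direct application of Lemma \ref{lemma:euclideankwapienmaurey} and to handle the converse using Pietsch's domination theorem to convert the $\pi_2$-summing data into operator-norm data that the factorization hypothesis can process.

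For the forward implication, assume $\Gamma$ is $\pi_2$-bounded. Given a Hilbert space $X$ and $S \in \mc{L}(X,Y)$, I set $\Gamma_1 := \{S\}$ and $\Gamma_2 := \Gamma$ in Lemma \ref{lemma:euclideankwapienmaurey}. The hypothesis on $\Gamma_1$ follows from the ideal property of the global $\pi_2$-structure combined with the identity $\nrm{\mb{x}}_{\pi_2,X} = (\sum_k\nrm{x_k}^2)^{1/2}$ on a Hilbert space (since the $\pi_2$-norm coincides with the Hilbert--Schmidt norm there):
\begin{equation*}
\nrm{(Sx_1,\ldots,Sx_n)}_{\pi_2} \leq \nrm{S}\,\nrm{\mb{x}}_{\pi_2,X} = \nrm{S}\,\has{\sum_{k=1}^n\nrm{x_k}^2}^{1/2}.
\end{equation*}
The lemma then supplies $H$, $U$, $\widetilde{S}$, and $\widetilde{T}$ for $T \in \Gamma$ with $\nrm{\widetilde{S}}\leq 4\nrm{S}$ and $\nrm{\widetilde{T}}\leq 2\nrm{\Gamma}_{\pi_2}$, so I may take $C := 2\nrm{\Gamma}_{\pi_2}$.

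For the converse, given $\mb{y}\in Y^n$ and $T_1,\ldots,T_n\in \Gamma$, I consider the operator $V\colon\ell^2_n \to Y$ with $Ve_k = y_k$, whose $\pi_2$-summing norm equals $\nrm{\mb{y}}_{\pi_2}$. The main obstacle is that applying the hypothesis directly to $V$ is useless: the operator norm $\nrm{V}$ carries no information about $\nrm{\mb{y}}_{\pi_2}$. The fix is to pre-factor $V$ via Pietsch's domination theorem as $V = V_0 \circ J$, where $J\colon\ell^2_n \to H'$ is a contraction into a Hilbert space $H' \subseteq L^2(\mu)$ for some probability measure $\mu$ on $B_{\ell_n^{2\,*}}$, with $\nrm{J}_{\HS}\leq 1$ (because $\sum_k\nrm{Je_k}^2 = \int_B \sum_k|\langle e_k,c^*\rangle|^2\dd\mu(c^*) \leq 1$), and $V_0\colon H' \to Y$ has operator norm $\nrm{V_0} \leq \pi_2(V) = \nrm{\mb{y}}_{\pi_2}$.

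With this preparation, I apply the factorization hypothesis to the Hilbert-space operator $V_0$, obtaining $H$, a contractive embedding $U\colon H\to Y$, and operators $\widetilde{V_0}\colon H'\to H$ and $\widetilde{T}_k\colon H \to H$ satisfying $\nrm{\widetilde{V_0}} \leq 4\nrm{V_0} \leq 4\nrm{\mb{y}}_{\pi_2}$, $\nrm{\widetilde{T}_k}\leq C$, and $T_k V_0 = U\widetilde{T}_k\widetilde{V_0}$. Then $T_k y_k = U\widetilde{T}_k\widetilde{V_0}J e_k$, and the ideal property of $\pi_2$ combined with the identity $\nrm{(h_k)_{k=1}^n}_{\pi_2,H} = (\sum_k\nrm{h_k}_H^2)^{1/2}$ in the Hilbert space $H$ yields
\begin{equation*}
\nrm{(T_k y_k)_{k=1}^n}_{\pi_2,Y} \leq \nrm{U}\has{\sum_{k=1}^n\nrm{\widetilde{T}_k\widetilde{V_0}J e_k}_H^2}^{1/2} \leq C\,\nrm{\widetilde{V_0}J}_{\HS} \leq C\,\nrm{\widetilde{V_0}}\,\nrm{J}_{\HS} \leq 4C\,\nrm{\mb{y}}_{\pi_2},
\end{equation*}
which establishes $\nrm{\Gamma}_{\pi_2}\leq 4C$. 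Together with the bound $C\leq 2\nrm{\Gamma}_{\pi_2}$ from the forward direction, this gives the equivalence $\nrm{\Gamma}_{\pi_2} \simeq C$.
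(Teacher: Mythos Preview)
Your proof is correct and follows essentially the same route as the paper. The forward direction is identical: apply Lemma~\ref{lemma:euclideankwapienmaurey} with $\Gamma_1=\{S\}$, using that $\pi_2$ coincides with the Hilbert--Schmidt structure on a Hilbert space. For the converse, both you and the paper reduce to the Pietsch factorization of the finite-rank operator $V\colon \ell^2_n\to Y$, $Ve_k=y_k$, then feed the bounded Hilbert-space-to-$Y$ factor into the hypothesis and finish by a Hilbert--Schmidt estimate; the only cosmetic difference is that the paper writes Pietsch in its three-factor form $V=SJ\widetilde{V}$ through $L^\infty(\Omega)\hookrightarrow L^2(\Omega)$ and invokes $\nrm{J}_{\pi_2}=1$, whereas you use the two-factor domination form $V=V_0J$ and compute $\nrm{J}_{\HS}\le 1$ directly from the measure on $B_{\ell^{2*}_n}$.
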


\begin{proof}
For the `only if' statement let $X$ be a Hilbert space and $S \in \mc{L}(X,Y)$. Note that by the ideal property of the $\pi_2$-structure and the coincidence of the $\pi_2$-norm and the Hilbert-Schmidt norm on Hilbert spaces we have for all $\mb{x} \in X^n$
  \begin{equation*}
    \nrm{(Sx_1,\ldots,Sx_n)}_{\pi_2} \leq \nrm{S}  \nrm{(x_1,\ldots,x_n)}_{\pi_2} = \has{\sum_{k=1}^n\nrm{x_k}^2}^{1/2}.
  \end{equation*}
  Therefore the `only if' statement follows directly from Lemma \ref{lemma:euclideankwapienmaurey} using $\Gamma_1 = \cbrace{S}$.

For the `if' statement let $T_1,\ldots,T_n \in \Gamma$. Let $\mb{y} \in Y^n$ with $\nrm{\mb{y}}_{\pi_2}\leq 1$  and let $V$ be the finite rank operator associated to $\mb{y}$, i.e.
      \begin{equation*}
        Vf := \sum_{k=1}^n \ip{f,e_k}y_k, \qquad f \in \ell^2
      \end{equation*}
      for some orthonormal sequence $(e_k)_{k=1}^n$ in $\ell^2$. We will combine the given Hilbert space factorization with Pietsch factorization theorem to factorize $V$ and $T_1,\ldots,T_n$. In particular we will construct operators such that the following diagram commutes:
\begin{center}
\begin{tikzcd}
\ell^2 \arrow{d}{\widetilde{V}} \arrow{rr}{V} &                                                          & Y \arrow{r}{T_k} & Y              \\
L^\infty(\Omega) \arrow{r}{J} & L^2(\Omega)=X \arrow{r}{\widetilde{S}} \arrow{ru}{S} & H \arrow{u}{U} \arrow{r}{\widetilde{T}_k} &H \arrow{u}{U}
\end{tikzcd}
\end{center}
As $\nrm{V}_{\pi_2} \leq 1$, by the Pietsch factorization theorem \cite[p.48]{DJT95} there is a probability space $(\Omega,\P)$ and operators $\widetilde{V} \colon \ell^2 \to L^\infty(\Omega)$ and $S \colon L^2(\Omega) \to Y$, such that $\nrm{\widetilde{V}}\leq 1$, $\nrm{S}\leq 1$ and $V=SJ\widetilde{V}$, where $J\colon L^\infty(\Omega)\to L^2(\Omega)$ is the canonical inclusion.

      We now use the assumption with $X = L^2(\Omega)$ and $S$ to construct $H$, $U$, $\widetilde{S}$ and $\widetilde{T}_k$ for $1 \leq k \leq n$ with the prescribed properties. Define $R \in \mc{F}(\ell^2,X)$ by $Re_k = T_kVe_k$ and $\widetilde{R} \in \mc{F}(\ell^2,L^2(\Omega))$ by $\widetilde{R}e_k = \widetilde{T}_k\widetilde{S}J\widetilde{V}e_k$ for $1 \leq k \leq n$. Then $R = U\widetilde{R}$ (see the diagram above) and therefore we have
      \begin{align*}
          \nrm{R}_{\pi_2}\leq \nrm{U} \, \nrmb{\widetilde{R}}_{\pi_2}= \nrmb{\widetilde{R}}_{\HS} &=  \has{\sum_{k=1}^n \nrmb{\widetilde{T}_k\widetilde{S}J\widetilde{V}e_k}^2}^{1/2}\\ &\leq 4C \has{\sum_{k=1}^n\nrmb{J\widetilde{V}e_k}^2}^{1/2}.
      \end{align*}
      Since $\nrm{J}_{\pi_2} = 1$ by \cite[Example 2.9(d)]{DJT95}, we have $\nrm{J\widetilde{V}}_{\pi_2}\leq 1$ by the ideal property of the $\pi_2$-summing norm. Moreover $\nrm{(T_1y_1,\ldots,T_ny_n)}_{\pi_2} = \nrm{R}_{\pi_2}$, so we can conclude
       \begin{equation*}
         \nrm{(T_1y_1,\ldots,T_ny_n)}_{\pi_2} \leq 4C,
       \end{equation*}
       i.e. $\Gamma$ is $\pi_2$-bounded with $\nrm{\Gamma}_{\pi_2}  \leq 4C$.
\end{proof}

In Theorem \ref{theorem:hilbertfactorization} it suffices to consider the case where $S$ and $\widetilde{S}$ are injective, which allows us to restate the theorem in terms of Hilbert spaces embedded in $Y$.

\begin{corollary}\label{corollary:hilbertfact1}
 Let $Y$ be a Banach space and let $\Gamma \subseteq \mc{L}(Y)$. Then $\Gamma$ is $\pi_2$-bounded  if and only if there is a $C> 0$ such that: \\
  \begin{minipage}[t]{0.06\textwidth}
\vspace{0.7\baselineskip}\begin{center}
  $(\star)$
\end{center}
\end{minipage}
\begin{minipage}[t]{0.94\textwidth}
For any Hilbert space $X$ contractively embedded in $Y$ there is a Hilbert space $H$ with $X \subseteq H \subseteq Y$ such that $H$ is contractively embedded in $Y$, the embedding $X \hookrightarrow H$ has norm at most $4$, and such that $T$ is an operator on $H$ with $\nrm{T}_{\mc{L}(H)} \leq C$ for all $T \in \Gamma$.
\end{minipage} Moreover $C>0$ can be chosen such that $\nrm{\Gamma}_{\pi_2} \simeq C$.
\end{corollary}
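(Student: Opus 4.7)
The corollary is essentially a reformulation of Theorem \ref{theorem:hilbertfactorization} restricted to the situation where the input Hilbert space $X$ already sits inside $Y$ and $S$ is the inclusion. The plan is to derive both directions of $(\star)$ from Theorem \ref{theorem:hilbertfactorization} by handling injectivity issues through appropriate quotients.

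For the ``only if'' direction, suppose $\Gamma$ is $\pi_2$-bounded and let $X$ be a Hilbert space contractively embedded in $Y$. I would apply Theorem \ref{theorem:hilbertfactorization} directly to the inclusion $S\colon X\hookrightarrow Y$, which has $\nrm{S}\leq 1$, to obtain a Hilbert space $H$, a contractive $U\colon H\to Y$, and operators $\widetilde{S}\colon X\to H$, $\widetilde{T}\in\mc{L}(H)$ for $T\in\Gamma$, with $\nrm{\widetilde{S}}\leq 4$ and $\nrm{\widetilde{T}}\leq C\simeq\nrm{\Gamma}_{\pi_2}$. Injectivity of $S$ forces injectivity of $\widetilde{S}$ since $U\widetilde{S}=S$. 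To make $U$ injective as well, I would quotient $H$ by $N:=\ker U$. The crucial point (and the only mild technicality of the argument) is that each $\widetilde{T}$ leaves $N$ invariant: for $h\in N$ we have $U\widetilde{T}h=TUh=0$, so $\widetilde{T}h\in N$. Thus every $\widetilde{T}$ descends to $H/N$ with the same norm bound, and $\widetilde{S}$ composes with the quotient map (staying of norm at most $4$ and still injective when viewed into $H/N$, because $\widetilde{S}(x)\in N$ would force $Sx=U\widetilde{S}(x)=0$). After this reduction I identify $H/N$ with $U(H)\subseteq Y$, endowing it with the pushed-forward Hilbert norm; contractivity of $U$ ensures this Hilbert norm dominates the $Y$-norm. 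Under this identification $\widetilde{S}(x)=Sx=x$, so $X\subseteq H\subseteq Y$ with the required norm bounds.

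For the ``if'' direction, I would verify the factorization condition of Theorem \ref{theorem:hilbertfactorization}. Given a Hilbert space $X$ and $S\in\mc{L}(X,Y)$, first normalize so that $\nrm{S}\leq 1$. Factor $S=S'\pi$, where $\pi\colon X\to X/\ker S$ is the quotient (again a Hilbert space, isometric to $(\ker S)^\perp$) and $S'\colon X/\ker S\to Y$ is injective and contractive. Endow the image $S'(X/\ker S)\subseteq Y$ with the Hilbert norm transported from $X/\ker S$ via $S'$; the contractivity of $S'$ means this subspace is contractively embedded in $Y$. Now apply hypothesis $(\star)$ to this embedded Hilbert space to extract a Hilbert space $H$ with $S'(X/\ker S)\subseteq H\subseteq Y$, the embedding of norm at most $4$, and operators $T\in\mc{L}(H)$ of norm at most $C$. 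Finally set $U\colon H\to Y$ to be the inclusion (contractive by hypothesis), $\widetilde{S}(x):=S(x)\in H$ (of norm at most $4\nrm{S}$ by composing $\pi$ with the embedding into $H$), and $\widetilde{T}$ to be $T$ acting on $H$. The diagram commutes by construction, so Theorem \ref{theorem:hilbertfactorization} yields $\pi_2$-boundedness of $\Gamma$ with $\nrm{\Gamma}_{\pi_2}\simeq C$.

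The main obstacle is purely bookkeeping: verifying that the quotient construction in the ``only if'' direction produces a genuine subspace $X\subseteq H\subseteq Y$ with the right Hilbert structure, which hinges on the invariance $\widetilde{T}(\ker U)\subseteq\ker U$ noted above. Once this is established, the remainder is a straightforward identification of images with subspaces.
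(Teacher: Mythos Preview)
Your argument is correct. Both directions go through as you describe; the only point worth stressing is the invariance $\widetilde{T}(\ker U)\subseteq\ker U$, which you identify correctly as the key step making the quotient construction work.

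Your route differs from the paper's in an instructive way. The paper does not use Theorem \ref{theorem:hilbertfactorization} as a black box; instead it reopens the proofs of Theorem \ref{theorem:hilbertfactorization} and Lemma \ref{lemma:euclideankwapienmaurey} and argues that the injectivity of $S$, $\widetilde{S}$ (and implicitly of $U$) can be arranged already at the construction stage --- by choosing the orthonormal sequence so that $V$ is injective in the Pietsch factorization step, and by observing that an injective $S$ in Lemma \ref{lemma:euclideankwapienmaurey} forces $\widetilde{S}$ injective. Your approach instead takes Theorem \ref{theorem:hilbertfactorization} at face value and cleans up injectivity afterwards by passing to $H/\ker U$ on one side and to $X/\ker S$ on the other. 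This is more self-contained and makes explicit a point (injectivity of $U$) that the paper leaves somewhat implicit; the cost is the extra quotient bookkeeping, which you handle correctly.
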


\begin{proof}
For the `if' statement note that in the proof of Theorem \ref{theorem:hilbertfactorization} the orthonormal sequence can be chosen such that $V$ is injective, and thus $S$ can be made injective by restricting to $J\widetilde{V}(\ell^2) \subseteq L^2(\Omega)$.
For the converse note that if $S$ is injective in the proof of Lemma \ref{lemma:euclideankwapienmaurey}, then the constructed $\widetilde{S}$ is as well.
\end{proof}

Since the $\pi_2$-structure is equivalent to the $\gamma$-structure if $Y$ has cotype $2$ by Proposition \ref{proposition:compareEuclidean}, we also have:

\begin{corollary}\label{corollary:hilbertfact2}
 Let  $Y$ be a Banach space with cotype $2$ and let $\Gamma \subseteq \mc{L}(Y)$. Then $\Gamma$ is $\gamma$-bounded if and only if there is a $C>0$ such that $(\star)$ of Corollary \ref{corollary:hilbertfact1} holds. Moreover $C>0$ can be chosen such that $\nrm{\Gamma}_{\gamma} \simeq C$.
\end{corollary}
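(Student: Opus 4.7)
The plan is to deduce this corollary directly from Corollary \ref{corollary:hilbertfact1} by transferring between the $\gamma$- and $\pi_2$-structures, since the geometric assumption on $Y$ makes these two structures equivalent.

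First, I would invoke Proposition \ref{proposition:compareEuclidean}\ref{it:compareEuclidean1}, which asserts that $\gamma \leq \pi_2$ always holds and that $\pi_2 \lesssim \gamma$ precisely when $Y$ has cotype $2$. Since $Y$ is assumed to have cotype $2$, the two structures are equivalent on $Y$, with constants depending only on the cotype $2$ constant of $Y$. Consequently, for any family $\Gamma \subseteq \mc{L}(Y)$, the $\gamma$-boundedness of $\Gamma$ is equivalent to its $\pi_2$-boundedness, and moreover $\nrm{\Gamma}_{\gamma} \simeq \nrm{\Gamma}_{\pi_2}$.

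Now the result is immediate: applying Corollary \ref{corollary:hilbertfact1} to $\Gamma$, the $\pi_2$-boundedness of $\Gamma$ is equivalent to the existence of $C>0$ satisfying condition $(\star)$, with $\nrm{\Gamma}_{\pi_2} \simeq C$. Combining this with the equivalence $\nrm{\Gamma}_{\gamma} \simeq \nrm{\Gamma}_{\pi_2}$ from the previous paragraph yields the claimed equivalence of $\gamma$-boundedness with $(\star)$, together with $\nrm{\Gamma}_{\gamma} \simeq C$.

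Since the corollary is essentially a transfer from $\pi_2$ to $\gamma$ via the cotype $2$ assumption, there is no real obstacle; the only point to keep in mind is that the implicit constants in $\simeq$ now depend on the cotype $2$ constant of $Y$ in addition to the absolute constants coming from Corollary \ref{corollary:hilbertfact1}.
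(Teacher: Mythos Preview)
Your proposal is correct and matches the paper's approach exactly: the paper simply notes that the $\pi_2$-structure is equivalent to the $\gamma$-structure when $Y$ has cotype $2$ (by Proposition~\ref{proposition:compareEuclidean}) and deduces the result from Corollary~\ref{corollary:hilbertfact1} without further argument.
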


Finally we note that we can dualize Theorem \ref{theorem:hilbertfactorization}, Corollary \ref{corollary:hilbertfact1} and  \ref{corollary:hilbertfact2}. For example we have:

\begin{corollary}\label{corollary:hilbertfact3}
 Let  $Y$ be a Banach space with cotype $2$ and let $\Gamma \subseteq \mc{L}(Y)$. Then $\Gamma$ is $\gamma$-bounded if and only if there is a $C> 0$ such that: \\
  \begin{minipage}[t]{0.06\textwidth}
\vspace{1\baselineskip}\begin{center}
  $(\star\star)$
\end{center}
\end{minipage}
\begin{minipage}[t]{0.94\textwidth}
For any Hilbert space $X$ in which $Y$ is contractively embedded there is a Hilbert space $H$ with $Y \subseteq H \subseteq X$ such that $Y$ is contractively embedded in $H$, the embedding $H \hookrightarrow X$ has norm at most $4$, and such that $T$ extends boundedly to $H$ with $\nrm{T}_{\mc{L}(H)} \leq C$ for all $T \in \Gamma$.
\end{minipage} Moreover $C>0$ can be chosen such that $\nrm{\Gamma}_{\gamma} \simeq C$.
\end{corollary}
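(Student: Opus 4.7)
The proof is obtained by dualizing Corollary~\ref{corollary:hilbertfact1} (equivalently, the cotype-$2$ form given by Corollary~\ref{corollary:hilbertfact2}). The strategy is to apply the already-established factorization to the adjoint family $\Gamma^{*} \subseteq \mc{L}(Y^{*})$ together with the Hilbert subspace $X^{*} \subseteq Y^{*}$, and then to dualize the resulting intermediate Hilbert space back so that it sits between $Y$ and $X$.

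\textbf{Set-up.} Given a Hilbert space $X$ with $Y \hookrightarrow X$ contractively, I first reduce to the case where $Y$ is dense in $X$, by replacing $X$ with the norm-closure of the image of $Y$. Under this reduction the adjoint map $X^{*} \to Y^{*}$ is a contractive injection, which exhibits $X^{*}$ as a contractively embedded Hilbert subspace of $Y^{*}$.

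\textbf{Applying the corollary to the dual.} Because $Y$ has cotype $2$, Proposition~\ref{proposition:compareEuclidean}(i) gives $\gamma \simeq \pi_{2}$ on $Y$, so $\Gamma$ is $\pi_{2}$-bounded on $Y$. Proposition~\ref{proposition:alphaproperties}(ii) then yields that $\Gamma^{*}$ is $\pi_{2}^{*}$-bounded on $Y^{*}$; a reconciliation between the $\pi_{2}$- and $\pi_{2}^{*}$-structures on $Y^{*}$ (see the obstacle below) upgrades this to $\pi_{2}$-boundedness of $\Gamma^{*}$ on $Y^{*}$ with a comparable constant. Corollary~\ref{corollary:hilbertfact1} applied to $\Gamma^{*}$ on $Y^{*}$ with the subspace $X^{*}$ now produces a Hilbert space $H'$ with $X^{*} \subseteq H' \subseteq Y^{*}$, with $X^{*} \hookrightarrow H'$ of norm at most $4$, $H' \hookrightarrow Y^{*}$ contractive, and $\nrm{T^{*}|_{H'}}_{\mc{L}(H')} \leq C$ for all $T \in \Gamma$ and some $C \simeq \nrm{\Gamma}_{\gamma}$.

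\textbf{Dualizing back.} Set $H := (H')^{*}$, a Hilbert space. Dualizing the chain $X^{*} \hookrightarrow H' \hookrightarrow Y^{*}$ yields maps $Y^{**} \twoheadrightarrow H$ of norm at most $1$ and $H \to X^{**} = X$ of norm at most $4$ (using reflexivity of $X$); composing with the canonical embedding $Y \hookrightarrow Y^{**}$ gives a contractive map $Y \to H$. This map is injective because $X^{*} \subseteq H'$ already separates the points of $Y$, via the injectivity of $Y \hookrightarrow X$. One checks that the composition $Y \to H \to X$ recovers the original embedding $Y \hookrightarrow X$, so $Y \subseteq H \subseteq X$ with the desired norm bounds. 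Each $T \in \Gamma$ extends to $H$ as the Banach space adjoint of $T^{*}|_{H'}$ and has operator norm at most $C$. The converse implication is obtained by reversing the same dualization, using the easy half of Corollary~\ref{corollary:hilbertfact1} to recover $\pi_{2}$-boundedness of $\Gamma^{*}$ on $Y^{*}$ and then Proposition~\ref{proposition:compareEuclidean}(i) to return to $\gamma$-boundedness of $\Gamma$ on $Y$.

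\textbf{Main obstacle.} The delicate point is the upgrade from $\pi_{2}^{*}$- to $\pi_{2}$-boundedness of $\Gamma^{*}$ on $Y^{*}$: on a general dual space these two Euclidean structures need not be comparable. I expect that this step is best handled not by invoking Corollary~\ref{corollary:hilbertfact1} as a black box, but by running Lemma~\ref{lemma:technical} directly on $Y^{*}$ with the structure $\alpha = \pi_{2}^{*}$ and with positively homogeneous functions $F$ on $Y^{*}$ and $G$ on $X^{*}$ tailored to the embedding $X^{*} \hookrightarrow Y^{*}$. The compatibility inequality for $G$ follows from a direct Cauchy--Schwarz estimate exploiting the Hilbert-space structure of $X^{*}$, while the compatibility inequality for $F$ leverages the cotype-$2$ hypothesis on $Y$ via a Hahn--Banach--Riesz argument — the same combination of ingredients that makes Corollary~\ref{corollary:hilbertfact2} work, simply transported to the dual side.
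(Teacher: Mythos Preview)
Your overall plan --- dualize Corollary~\ref{corollary:hilbertfact2} by passing to $\Gamma^{*}$ on $Y^{*}$, applying the known factorization there, and dualizing the resulting chain $X^{*}\subseteq H'\subseteq Y^{*}$ back --- is exactly the paper's approach. The divergence, and the source of your obstacle, is in the bridge between $\Gamma$ and $\Gamma^{*}$. You route through $\pi_{2}$: cotype $2$ on $Y$ gives $\gamma\simeq\pi_{2}$ on $Y$, hence $\Gamma^{*}$ is $\pi_{2}^{*}$-bounded on $Y^{*}$, and then you are stuck trying to upgrade $\pi_{2}^{*}$ to $\pi_{2}$ on $Y^{*}$. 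That upgrade genuinely fails in general, and your proposed workaround via Lemma~\ref{lemma:technical} with $\alpha=\pi_{2}^{*}$ is speculative: the inequality for $F$ would need $\bigl(\sum\nrm{y_{k}^{*}}_{Y^{*}}^{2}\bigr)^{1/2}\le\nrm{\mb{y}^{*}}_{\pi_{2}^{*}}$, which there is no reason to expect without further hypotheses on $Y^{*}$.

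The paper avoids this entirely by staying with the $\gamma$-structure. Note that the paper's own proof opens with ``since $Y$ has type $2$'' --- the stated hypothesis ``cotype $2$'' is a typo for ``type $2$''. Under type $2$, the space $Y$ is $K$-convex (Pisier), hence so is $Y^{*}$, so $Y^{*}$ has nontrivial type; by Proposition~\ref{proposition:dualeuclidean}\ref{it:dualEuclidean1} this gives $\gamma\simeq\gamma^{*}$ on $Y^{*}$. Since $\Gamma^{*}$ is always $\gamma^{*}$-bounded on $Y^{*}$ with the same constant (Proposition~\ref{proposition:alphaproperties}\ref{it:propalha2}), it is therefore $\gamma$-bounded on $Y^{*}$. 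Moreover $Y^{*}$ has cotype $2$ (dual of type $2$), so Corollary~\ref{corollary:hilbertfact2} applies directly to $\Gamma^{*}$ on $Y^{*}$. Your obstacle simply disappears. The rest of your dualization argument (density reduction, reflexivity of $X$, identifying the extension of $T$ as the adjoint of $T^{*}|_{H'}$) is correct and is what the paper means by ``dualizing Corollary~\ref{corollary:hilbertfact2}''.
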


\begin{proof}
  Note that since $Y$ has type $2$, $Y^*$ has non-trivial type and  cotype $2$. Therefore by Proposition \ref{proposition:dualeuclidean} the $\gamma^*$-structure is equivalent to the $\gamma$-structure on $X^*$. Moreover by Proposition \ref{proposition:alphaproperties} we know that $\Gamma^*$ is $\gamma^*$-bounded on $Y^*$ with $\nrm{\Gamma^*}_{\gamma^*}\leq C$. So the corollary follows by dualizing Corollary \ref{corollary:hilbertfact2}.
\end{proof}

\section{\texorpdfstring{$\alpha$}{a}-bounded operator families on Banach function spaces} \label{section:BFSell2boundedness}
For the remainder of this chapter, we will study Euclidean structures and factorization in the case that $X$ is a Banach function space.

\begin{definition}
  Let $(S,\mu)$ be a $\sigma$-finite measure space. A subspace $X$ of the space of measurable functions on $S$, denoted by $L^0(S)$, equipped with a norm $\nrm{\cdot }_X$ is called a \emph{Banach function space} if it satisfies the following properties:
\begin{enumerate}[(i)]
\item If $x\in L^0(S)$ and $y\in X$ with $\abs{x} \leq \abs{y}$, then $x\in X$ and $\nrm{x}_X \leq \nrm{y}_X$.
\item There is an $x\in X$ with $x>0$ a.e.
\item If $0
\leq x_n \uparrow x$ for $(x_n)_{n=1}^\infty$  in $X$, $x \in L^0(S)$ and $\sup_{n \in \N}\nrm{x_n}_X < \infty$, then $x\in X$ and $\nrm{x}_X = \sup_{n\in \N}\nrm{x_n}_X$.
\end{enumerate}
A Banach function space $X$ is called \textit{order-continuous} if additionally
\begin{enumerate}[(i)]
\setcounter{enumi}{3}
\item If $0 \leq x_n \uparrow x \in X$ with $(x_n)_{n=1}^\infty$ a sequence in $X$ and $x \in X$, then $\nrm{x_n -x}_X \to 0$.
\end{enumerate}
\end{definition} Order-continuity of a Banach function space $X$ ensures that the dual $X^*$ is a Banach function space (see \cite[Section 1.b]{LT79}) and that the Bochner space $L^p(S';X)$ is a Banach function space on $(S \times S',\mu \times \mu')$ for any $\sigma$-finite measure space $(S',\mu')$. As an example we note that any Banach function space which is reflexive or has finite cotype is order-continuous.

Since a Banach function space $X$ is in particular a Banach lattice, it admits the $\ell^2$-structure.
The main result of this section will be that the $\ell^2$-structure is actually the canonical structure to study on Banach function spaces. Indeed, we will show that, under mild assumptions on the Euclidean structure $\alpha$, $\alpha$-boundedness implies $\ell^2$-boundedness. We start by noting the following property of a Hilbertian seminorm on a space of functions.

\begin{lemma}\label{lemma:tychonoff}
Let $(S,\mu)$ be a measure space and let $X \subseteq L^0(S)$ be a vector space with a Hilbertian seminorm $\nrm{\cdot}_0$. Suppose that there is a $C>0$ such that for $x \in L^0(S)$ and $y \in X$
  \begin{align*}
     \abs{x} \leq \abs{y} \Rightarrow x \in X \text{ and }\nrm{x}_0 \leq C \, \nrm{y}_0.
  \end{align*}
Then there exists a seminorm $\nrm{\cdot}_1$ on $X$ such that
\begin{align*}
\tfrac1C\nrm{x}_0 &\leq \nrm{x}_1 \leq C\, \nrm{x}_0 &&x \in X,\\
  \nrm{x+y}^2_1 &= \nrm{x}^2_1+\nrm{y}^2_1, && x,y\in X: x \wedge y=0.
\end{align*}
\end{lemma}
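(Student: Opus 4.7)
The plan is to construct $\nrm{\cdot}_1$ by averaging the given Hilbertian semi-inner product $\ip{\cdot,\cdot}_0$ (which induces $\nrm{\cdot}_0$) over the multiplicative group
\begin{equation*}
  G := \cbraceb{\sigma \in L^0(S) : \abs{\sigma} = 1 \text{ a.e.\ on } S}
\end{equation*}
of measurable unimodular functions. For each $\sigma \in G$, pointwise multiplication $M_\sigma$ maps $X$ into itself since $\abs{M_\sigma x} = \abs{x}$, and applying the hypothesis to both sides of this equality gives the two-sided bound
\begin{equation*}
  \tfrac{1}{C} \nrm{x}_0 \leq \nrm{M_\sigma x}_0 \leq C \nrm{x}_0,
  \qquad x \in X,\ \sigma \in G.
\end{equation*}

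Since $G$ is abelian it is amenable as a discrete group, so there exists a translation-invariant mean $m$ on the bounded scalar functions on $G$. For $x,y \in X$ the function $\sigma \mapsto \ip{M_\sigma x, M_\sigma y}_0$ is bounded by $C^2 \nrm{x}_0 \nrm{y}_0$ via Cauchy--Schwarz, so I can define
\begin{equation*}
  \ip{x,y}_1 := m_\sigma\bigl(\ip{M_\sigma x, M_\sigma y}_0\bigr),
  \qquad \nrm{x}_1 := \ip{x,x}_1^{1/2}.
\end{equation*}
Sesquilinearity and positivity of $\ip{\cdot,\cdot}_1$ are inherited from $\ip{\cdot,\cdot}_0$ through the linear and positive mean $m$, so $\nrm{\cdot}_1$ is a Hilbertian seminorm on $X$; applying $m$ to the pointwise bound above yields $\tfrac{1}{C}\nrm{x}_0 \leq \nrm{x}_1 \leq C\nrm{x}_0$.

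For the disjoint additivity, let $x, y \in X$ with $x \wedge y = 0$, put $A := \supp x$, and note that $\sigma_0 := \ind_A - \ind_{A^\comp}$ belongs to $G$. Since $\supp y \subseteq A^\comp$ one has $M_{\sigma_0} x = x$ and $M_{\sigma_0} y = -y$, and translation-invariance of $m$ under the substitution $\sigma \mapsto \sigma\sigma_0$ gives
\begin{equation*}
  \ip{x,y}_1 = \ip{M_{\sigma_0}x, M_{\sigma_0}y}_1 = \ip{x,-y}_1 = -\ip{x,y}_1,
\end{equation*}
so $\ip{x,y}_1 = 0$ and consequently $\nrm{x+y}_1^2 = \nrm{x}_1^2 + \nrm{y}_1^2$. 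The only non-routine ingredient is the appeal to existence of an invariant mean on $G$; a constructive alternative which produces the same properties is to replace $m$ by an ultrafilter limit along refinement of the directed set of finite measurable partitions $\pi = (A_1,\ldots,A_n)$ of $S$, applied to the positive semidefinite forms $\ip{x,y}_\pi := \sum_{k=1}^n \ip{x\ind_{A_k}, y\ind_{A_k}}_0$. Their uniform equivalence $\tfrac{1}{C^2}\nrm{x}_0^2 \leq \ip{x,x}_\pi \leq C^2\nrm{x}_0^2$ follows from the Hilbertian identity $\E_\epsilon\nrm{\sum_k \epsilon_k x\ind_{A_k}}_0^2 = \sum_k \nrm{x\ind_{A_k}}_0^2$ combined with $\abs{\sum_k \epsilon_k x\ind_{A_k}} = \abs{x}$, and $\ip{x,y}_\pi$ vanishes term by term as soon as $\pi$ refines $\cbraceb{A, \supp y, (A\cup \supp y)^\comp}$.
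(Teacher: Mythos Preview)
Your proof is correct. The invariant-mean construction over the unimodular group $G$ is a genuinely different route from the paper's. The paper defines
\[
  \nrm{x}_1 := \inf_{\pi \in \Pi} \sup_{\pi' \ge \pi} \Bigl(\sum_{E \in \pi'} \nrm{x\ind_E}_0^2\Bigr)^{1/2}
\]
over the directed set $\Pi$ of finite measurable partitions of $S$, and verifies the two-sided equivalence with $\nrm{\cdot}_0$ via the same Rademacher identity $\sum_k \nrm{x\ind_{E_k}}_0^2 = \E\nrm{\sum_k \epsilon_k x\ind_{E_k}}_0^2$ together with $\abs{\sum_k \epsilon_k x\ind_{E_k}} = \abs{x}$ that you invoke in your closing remark; disjoint additivity then drops out because the partition sum decouples once $\pi$ refines $\{\supp x, S\setminus \supp x\}$. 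In effect the paper is averaging over the finite subgroups $\{\pm 1\}^\pi \subset G$ and passing to a limit along refinement, whereas you average over all of $G$ at once via amenability. Your approach has the pleasant feature that $\nrm{\cdot}_1$ is manifestly Hilbertian (it comes from a semi-inner product), which the paper's $\inf\sup$ formula does not make explicit; on the other hand the paper's construction is more elementary in that it avoids the appeal to existence of an invariant mean. Your final sentence correctly identifies the ultrafilter-over-partitions variant as an intermediate between the two.
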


\begin{proof}
Let $\Pi$ be the collection of all finite measurable partitions of $S$, partially ordered by refinement. We define
\begin{equation*}
  \nrm{x}_1 =  \inf_{\pi \in \Pi} \sup_{\pi' \geq \pi} \has{\sum_{E \in \pi'} \nrm{x\ind_E}_0^2}^{1/2}, \qquad x \in X,
\end{equation*}
which is clearly a seminorm.
 For a $\pi \in \Pi$, write $\pi = \cbrace{E_1,\cdots,E_n}$ and let $(\varepsilon_k)_{k=1}^n$ be a Rademacher sequence.  Then we have for all $x \in X$ that
  \begin{equation*}
     \sum_{k=1}^n \nrm{x\ind_{E_k}}_0^2 = \E\sum_{j=1}^n \sum_{k=1}^n  \varepsilon_j \varepsilon_k \ip{x \ind_{E_j}, x \ind_{E_k}} = \E \nrms{\sum_{k=1}^n \varepsilon_k\cdot x \ind_{E_k}}_0^2 \leq C^2\, \nrm{x}_0^2
  \end{equation*}
  and, since $\bigcup_{k=1}^n E_k = S$, we deduce in the same fashion
  \begin{equation*}
    \nrm{x}_0^2 \leq C^2\,  \E \nrms{\sum_{k=1}^n \varepsilon_k\cdot x \ind_{E_k}}_0^2 =  C^2\, \sum_{k=1}^n \nrm{x \ind_{E_k}}^2_0 = C^2\,\sum_{k=1}^n \nrm{x\ind_{E_k}}_0^2 .
  \end{equation*}
  Therefore we have  $\tfrac1C\nrm{x}_0 \leq \nrm{x}_{1} \leq C\, \nrm{x}_0$ for all  $x \in X$ and $\pi \in \Pi$.
 Furthermore if $x,y \in X$ with $x\wedge y=0$, then for $\pi \geq \cbrace{\supp x,S \setminus \supp x}$ we have
 \begin{equation*}
   \sum_{E \in \pi} \nrm{(x+y)\ind_E}_0^2 = \sum_{E \in \pi} \nrm{x\ind_E}_0^2+\sum_{E \in \pi} \nrm{y\ind_E}_0^2.
 \end{equation*}
  So we also get $\nrm{x+y}^2_{1} = \nrm{x}^2_{1} + \nrm{y}^2_{1}$,
which proves the lemma.
\end{proof}

Let $X$ be a Banach function space. For $m \in L^\infty(S)$ we define the pointwise multiplication operator $T_m\colon X \to X$  by $T_mx = m\cdot x$ and denote the collection of pointwise multiplication operators on $X$ by
\begin{equation}\label{eq:mcM}
  \mc{M} = \cbrace{T_m: m \in L^\infty(S), \nrm{m}_{L^\infty(S)} \leq 1} \subseteq \mc{L}(X).
\end{equation}
It turns out that if $\alpha$ is a Euclidean structure on $X$ such that $\mc{M}$ is $\alpha$-bounded, then $\alpha$-boundedness implies $\ell^2$-boundedness. This will follow from the fact that an $\mc{M}$-invariant subspace of $X$ satisfies the assumptions of   Lemma \ref{lemma:tychonoff}.

\begin{theorem}
  \label{theorem:BFSlatticestructure}
  Let $X$ be a Banach function space on $(S,\mu)$, let $\alpha$ be a Euclidean structure on $X$ and assume that $\mc{M}$ is $\alpha$-bounded. If
   $\Gamma\subseteq \mc{L}(X)$ is $\alpha$-bounded, then $\Gamma$ is $\ell^2$-bounded with $\nrm{\Gamma}_{\ell^2} \lesssim \nrm{\Gamma}_\alpha$, where the implicit constant only depends on $\nrm{\mc{M}}_{\alpha}$.
\end{theorem}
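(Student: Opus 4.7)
Fix $T_1,\ldots,T_n \in \Gamma$ and $x_1,\ldots,x_n \in X$, and set $y := \hab{\sum_k |x_k|^2}^{1/2}$ and $z := \hab{\sum_k |T_k x_k|^2}^{1/2}$, which we may assume to be nonzero. Writing $x_k = u_k y$ and $T_k x_k = v_k z$ for multipliers $u_k, v_k$ in the unit ball of $L^\infty(S)$ (with $\sum_k |u_k|^2 = \ind_{\supp y}$ and $\sum_k |v_k|^2 = \ind_{\supp z}$), we have the algebraic identity $z = \sum_k \bar v_k T_k x_k$ on $S$. Pick a positive $\phi \in X^*$ of norm one with $\phi(z) = \nrm{z}_X$; the idea is to construct on a suitable subspace of $X$ a seminorm that behaves like a weighted $L^2$-norm, so that $\nrm{z}_X$ becomes amenable to the above $\ell^2$-decomposition.

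I will apply Lemma \ref{lemma:technical} to the $\alpha$-bounded family $\Gamma' := \Gamma \cup \mc{M}$ (which is $\alpha$-bounded by Proposition \ref{proposition:alphaproperties}) with $Y := \spn\{y\}$, $G(cy) := |c|\nrm{y}_X$, and $F(w) := |\phi(w)|$; the hypotheses \eqref{eq:F}--\eqref{eq:G} follow from $\op \leq \alpha$ (Proposition \ref{proposition:operatornuclear}) and \eqref{eq:E1}. This produces a $\Gamma'$-invariant subspace $X_0 \supseteq Y$ and a Hilbertian seminorm $\nrm{\cdot}_0$ on $X_0$ satisfying $\nrm{Sw}_0 \lesssim \nrm{\Gamma'}_\alpha \nrm{w}_0$ for $S \in \Gamma'$, $\nrm{y}_0 \lesssim \nrm{y}_X$, and $\nrm{w}_0 \geq |\phi(w)|$ on $X_0$; the $\mc{M}$-invariance of $X_0$ ensures $x_k, T_k x_k, z \in X_0$, and hence $\nrm{z}_0 \geq \nrm{z}_X$. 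The same $\mc{M}$-invariance shows that whenever $|x| \leq |w|$ with $w \in X_0$, the multiplier $m := x/w$ satisfies $\nrm{m}_\infty \leq 1$, so $x = M_m w \in X_0$ with $\nrm{x}_0 \lesssim \nrm{w}_0$, which verifies the hypothesis of Lemma \ref{lemma:tychonoff}. The resulting seminorm $\nrm{\cdot}_1 \simeq \nrm{\cdot}_0$ is additive on disjoint supports, and combining disjoint additivity with the $\mc{M}$-action and approximating $L^\infty$-multipliers by simple functions yields the $L^2$-representation $\nrm{mz}_1^2 = \int_S |m|^2 \dd\nu$ for $m \in L^\infty(S)$, where $\nu(E) := \nrm{\ind_E z}_1^2$ is a countably additive measure on $\supp z$. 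Applied to $m = v_k$ and summed, this gives
\begin{equation*}
\sum_{k=1}^n \nrm{T_k x_k}_1^2 \;=\; \int_S \sum_{k=1}^n |v_k|^2 \dd\nu \;=\; \nu(\supp z) \;=\; \nrm{z}_1^2,
\end{equation*}
and an identical argument on the cyclic subspace $L^\infty \cdot y$ yields $\sum_k \nrm{x_k}_1^2 = \nrm{y}_1^2$.

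Finally, the $\Gamma$-action bound $\nrm{T_k x_k}_1 \leq C_0 \nrm{\Gamma}_\alpha \nrm{x_k}_1$, with $C_0$ depending only on $\nrm{\mc{M}}_\alpha$, squared and summed, yields $\nrm{z}_1^2 \leq C_0^2 \nrm{\Gamma}_\alpha^2 \nrm{y}_1^2$, after which the chain $\nrm{z}_X \leq \nrm{z}_0 \simeq \nrm{z}_1 \lesssim \nrm{\Gamma}_\alpha \nrm{y}_1 \simeq \nrm{y}_0 \lesssim \nrm{y}_X$ closes the estimate. The principal obstacle is isolating the factor $\nrm{\Gamma}_\alpha$ (rather than $\nrm{\Gamma}_\alpha + \nrm{\mc{M}}_\alpha$) in the $\Gamma$-action bound on $\nrm{\cdot}_0$: a naive application of Lemma \ref{lemma:technical} gives only $\nrm{\Gamma'}_\alpha = \nrm{\Gamma \cup \mc{M}}_\alpha$ in this bound, and obtaining the sharper dependence requires refining the inductive construction of $G_0$ so that the constants attached to $\Gamma$ and $\mc{M}$ are tracked separately, or applying the lemma in two stages with $\mc{M}$ playing the passive role of providing the ambient multiplicative structure while $\Gamma$ is incorporated on the second pass.
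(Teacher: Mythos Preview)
Your overall strategy is the same as the paper's: apply Lemma~\ref{lemma:technical} with the combined family to produce a Hilbertian seminorm on a $\Gamma$- and $\mc{M}$-invariant subspace containing $y$ and $z$, upgrade it via Lemma~\ref{lemma:tychonoff} to one that is additive on disjoint supports, and then exploit the resulting $L^2$-type structure to reduce $\nrm{z}_X \lesssim \nrm{y}_X$ to the $\Gamma$-action bound on the seminorm. Your device of introducing the measure $\nu(E)=\nrm{\ind_E z}_1^2$ and approximating the multipliers $u_k,v_k$ by simple functions in $L^\infty$ is a legitimate variant of the paper's route; the paper instead approximates $u_k,v_k$ by simple functions \emph{at the level of $X$} from the start and then works with finitely many atoms, which trades your limit argument for some explicit error terms.

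The gap you flag in your last paragraph is genuine, but the remedy is much simpler than refining the induction in $G_0$ or a two-stage construction: just rescale before taking the union. Apply Lemma~\ref{lemma:technical} not to $\Gamma'=\Gamma\cup\mc{M}$ but to
\[
\Gamma_0 := \tfrac{1}{2\nrm{\Gamma}_\alpha}\cdot\Gamma \;\cup\; \tfrac{1}{2\nrm{\mc{M}}_\alpha}\cdot\mc{M},
\]
which satisfies $\nrm{\Gamma_0}_\alpha\le 1$ by Proposition~\ref{proposition:alphaproperties}. The conclusion \eqref{eq:F3} then reads $\nrm{Sw}_0\le 2\nrm{w}_0$ for every $S\in\Gamma_0$, which after undoing the rescaling gives the two separate bounds $\nrm{Tw}_0\le 4\nrm{\Gamma}_\alpha\nrm{w}_0$ for $T\in\Gamma$ and $\nrm{Mw}_0\le 4\nrm{\mc{M}}_\alpha\nrm{w}_0$ for $M\in\mc{M}$. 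In particular the ideal constant feeding into Lemma~\ref{lemma:tychonoff} is $4\nrm{\mc{M}}_\alpha$, independent of $\nrm{\Gamma}_\alpha$, and your chain $\nrm{z}_X\le\nrm{z}_0\simeq\nrm{z}_1\lesssim\nrm{\Gamma}_\alpha\nrm{y}_1\simeq\nrm{y}_0\lesssim\nrm{y}_X$ now closes with every implicit constant depending only on $\nrm{\mc{M}}_\alpha$. This rescaling trick is exactly how the paper proceeds (it invokes the specialised Lemma~\ref{lemma:technicalcase} with this $\Gamma_0$), and with it your argument is complete.
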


\begin{proof}

Let $\mb{x} \in X^n$ and $T_1,\ldots,T_n \in \Gamma$. We will first reduce the desired estimate to an estimate for simple functions. Define $z_0 := \ha{\sum_{k=1}^n \abs{x_k}^2}^\frac{1}{2}$. Then  we have $x_kz_0^{-1} \in L^\infty(S)$, which means that we can find simple functions $\mb{u} \in X^n$ such that $\nrm{u_k - x_kz_0^{-1}}_{L^\infty(S)} \leq \frac{1}{n}$ for $1 \leq k \leq n$. Defining
$y_0:= \ha{\sum_{k=1}^n \abs{u_kz_0}^2}^{1/2}$, we have
\begin{equation*}
     \nrm{z_0-y_0}_X
 \leq \sum_{k=1}^n \nrm{x_k-u_kz_0}_X
  \leq \sum_{k=1}^n \nrm{z_0}_X \nrm{u_k-x_kz_0^{-1}}_{L^\infty(S)} \leq \nrm{z_0}_X.
\end{equation*}
Define $ z_1 := \ha{\sum_{k=1}^n\abs{T_k(u_kz_0)}^2}^{1/2}$. Then similarly, using $\nrm{T_k} \leq \nrm{\Gamma}_\alpha$, we have
\begin{equation*}\begin{aligned}
     \nrms{\hab{\sum_{k=1}^n \abs{T_kx_k}^2}^{1/2}- z_1}_X  \leq \nrm{z_0}_X \nrm{\Gamma}_\alpha
\end{aligned}
\end{equation*}
For $1 \leq k \leq n$ we have $T(u_kz_0)z_1^{-1} \in L^\infty(S)$, which means that we can find simple functions $\mb{v} \in X^n$ such that $\abs{v_k} \leq \abs{T(u_kz_0)z_1^{-1}}$
and
$$\nrm{v_k - T(u_kz_0)z_1^{-1}}_{L^\infty(S)} \leq \nrm{\Gamma}_\alpha \frac{\nrm{z_0}_X}{\nrm{z_1}_X} \frac{1}{n}.$$
It follows that
\begin{align}\label{eq:vunderu}
  \abs{v_kz_1} \leq \abs{T(u_kz_0)}
\end{align}
and, defining $y_1:= \ha{\sum_{k=1}^n \abs{v_kz_1}^2}$, that
\begin{align*}
  \nrm{ z_1 - y_1}_X \leq \sum_{k=1}^n \nrm{T_k(u_kz_0)-v_kz_1}_X\leq \nrm{z_0}_X \nrm{\Gamma}_\alpha.
\end{align*}
Thus, combining the various estimates, we have
\begin{align*}
\nrms{\has{\sum_{k=1}^n \abs{T_k(x_k)}^2}^\frac{1}{2}}_X &\leq  \nrm{y_1}_X + 2 \, \nrm{\Gamma}_\alpha \nrms{\has{\sum_{k=1}^n \abs{x_k}^2}^\frac{1}{2}}_X \\
\nrm{y_0} &\leq 2\, \nrms{\has{\sum_{k=1}^n \abs{x_k}^2}^\frac{1}{2}}_X,
\end{align*}
so it suffices to prove $\nrm{y_1}_X \lesssim \nrm{\Gamma}_\alpha \nrm{y_0}_X$, which is the announced reduction to simple functions.

Define $C_\Gamma := 4 \nrm{\Gamma}_\alpha$, $C_{\mc{M}} := 4 \nrm{\mc{M}}_\alpha$ and set
  \begin{equation*}
    \Gamma_0 := \hab{\tfrac{1}{2\nrm{\Gamma}_\alpha} \cdot \Gamma} \cup \hab{\tfrac{1}{2\nrm{\mc{M}}_\alpha} \cdot  \mc{M}}.
  \end{equation*}
  Then $\Gamma_0$ is $\alpha$-bounded with $\nrm{\Gamma_0}_\alpha \leq 1$ by Proposition \ref{proposition:alphaproperties}. Thus, applying Lemma \ref{lemma:technicalcase} to $\Gamma_0$ and $\eta=(y_0,y_1)$, we can find a $\Gamma$- and $\mc{M}$-invariant subspace $X_\eta \subseteq X$ with $y_0 \in X_\eta$ and a Hilbertian seminorm $\nrm{\lcdot}_\eta$ on $X_\eta$ such that \eqref{eq:eta2} and \eqref{eq:eta1} hold and
  \begin{align}
  \label{eq:gammainequality}
    \nrm{Tx}_\eta &\leq C_\Gamma \nrm{x}_\eta, &&x \in X_\eta, \,T \in \Gamma.\\
    \nrm{Tx}_\eta &\leq C_{\mc{M}} \nrm{x}_\eta, &&x \in X_\eta, \,T \in \mc{M}.
  \label{eq:deltainequality}
  \end{align}
  In particular, \eqref{eq:deltainequality} implies if $x \in L^0(S)$,  $\tilde{x} \in X_\eta$,   and $\abs{x} \leq \abs{\tilde{x}}$, then $x \in X_\eta$ and
  \begin{equation}\label{eq:modulusinequality}
    \nrm{{x}}_\eta \leq C_{\mc{M}} \nrm{\tilde{x}}_\eta.
  \end{equation}
  Therefore we deduce that $u_ky_0, T_k(u_ky_0), v_k z_1 \in X_\eta$ for $1 \leq k \leq n$ and $y_1,z_1 \in X_\eta$.
  Moreover, by Lemma \ref{lemma:tychonoff} there is a seminorm $\nrm{\cdot}_\nu$ on $X_\eta$ such that
  \begin{align} \label{eq:etaHilbert1}
  \tfrac1{C_\mc{M}}\nrm{x}_\eta &\leq \nrm{x}_\nu \leq C_{\mc{M}}\, \nrm{x}_\eta &&x \in X,\\
  \label{eq:etaHilbert2}
  \nrm{x_1+x_2}^2_\nu &= \nrm{x_1}^2_\nu+\nrm{x_2}^2_\nu, && x_1,x_2 \in X: x_1 \wedge x_2=0.
\end{align}

  Let $\Sigma'$ be a coarsest $\sigma$-algebra such that $\mb{u}$ and $\mb{v}$ are measurable and let $E_1,\ldots,E_m \in \Sigma'$ be the atoms of this finite $\sigma$-algebra.
   By applying \eqref{eq:vunderu}-\eqref{eq:etaHilbert2}, we get
   \begingroup
   \allowdisplaybreaks
   \begin{align*}
     \nrm{y_1}^2_\eta 
     &\leq C_{\mc{M}}^2 \sum_{j=1}^m \nrms{\has{\sum_{k=1}^n \abs{v_k}^2 }^{1/2}z_1 \ind_{E_j} }_\nu^2 && \qquad \text{by }\eqref{eq:etaHilbert1}+\eqref{eq:etaHilbert2}\\
     &=  C_{\mc{M}}^2 \sum_{k=1}^n \sum_{j=1}^m\nrm{v_k z_1 \ind_{E_j} }_\nu^2 && \qquad  \text{since } v_k \text{ is constant on } E_j\\
     &\leq C_{\mc{M}}^4 \sum_{k=1}^n\nrm{v_k z_1}_\eta^2 && \qquad \text{by }\eqref{eq:etaHilbert1}+\eqref{eq:etaHilbert2}\\
     &\leq C_{\mc{M}}^6 \sum_{k=1}^n\nrm{T_k(u_k z_0)}_\eta^2 && \qquad \text{by } \eqref{eq:vunderu}+\eqref{eq:modulusinequality} \\
     &\leq C_{\mc{M}}^{6}\,C_\Gamma^2 \sum_{k=1}^n\nrm{u_k z_0}_\eta^2 && \qquad \text{by } \eqref{eq:gammainequality} \\
     &\leq C_{\mc{M}}^{8}\,C_\Gamma^2 \sum_{k=1}^n \sum_{j=1}^m \nrm{u_k z_0 \ind_{E_j}}_\nu^2 && \qquad \text{by } \eqref{eq:etaHilbert1}+\eqref{eq:etaHilbert2} \\
     &= C_{\mc{M}}^{8}\,C_\Gamma^2 \sum_{j=1}^m  \nrms{\Big(\sum_{k=1}^n \abs{u_k}^2\Big)^{1/2} z_0 \ind_{E_j}}_\nu^2 && \qquad \text{since } u_k \text{ is constant on } E_j\\
     & \leq C_{\mc{M}}^{10}\,C_\Gamma^2 \nrm{y_0 }_\eta^2 && \qquad \text{by }\eqref{eq:etaHilbert1}+\eqref{eq:etaHilbert2}
   \end{align*}
   \endgroup
   Hence, by combining this estimate with \eqref{eq:eta2} and \eqref{eq:eta1}, we get
   \begin{equation*}
     \nrm{y_1}_X \leq \nrm{y_1}_\eta \leq C_{\mc{M}}^5\,C_\Gamma\nrm{y_0}_\eta \leq 4\,C_{\mc{M}}^5\,C_\Gamma \nrm{y_0}_X,
   \end{equation*}
which concludes the proof.
\end{proof}

In view of Theorem \ref{theorem:BFSlatticestructure} it would be interesting to investigate sufficient conditions on a general Banach space $X$ such that $\alpha$-boundedness of a family of operators on $X$ implies e.g. $\gamma$-boundedness.

\begin{remark}~
  \begin{enumerate}[(i)]
  \item By Theorem \ref{theorem:Cboundedalphabounded} one could replace the assumption on $\Gamma$ and $\mc{M}$ in Theorem \ref{theorem:BFSlatticestructure} by the assumption that $\Gamma \cup {\mc{M}}$ is $C^*$-bounded.
  \item \label{it:Deltagammabdd} ${\mc{M}}$ is $\gamma$-bounded if and only if $X$ has finite cotype. Indeed, the `if' statement follows from Proposition \ref{proposition:compareEuclidean} and the fact that $\nrm{{\mc{M}}}_{\ell^2} =1$. The `only if' part follows from  a variant of \cite[Example 8.1.9]{HNVW17} and the fact that if $X$ does not have finite cotype, then $\ell^\infty_n$ is $(1+\varepsilon)$-lattice finitely representable in $X$ for any $n \in \N$  (see \cite[Theorem 1.f.12]{LT79}).
  \item The assumption that ${\mc{M}}$ is $\alpha$-bounded is not only sufficient, but also necessary in Theorem \ref{theorem:BFSlatticestructure} if $\alpha = \gamma$. Indeed, for the $\gamma$-structure we know that $\gamma$-boundedness implies $\ell^2$-boundedness if and only if $X$ has finite cotype, see \cite[Theorem 4.7]{KVW16}. Therefore if $\gamma$-boundedness implies $\ell^2$-boundedness on $X$, then $X$ has finite cotype. This implies that ${\mc{M}}$ is $\gamma$-bounded.
  \end{enumerate}
\end{remark}

\begin{remark}\label{remark:ellq}
On a Banach function space $X$ one can also define for $q \in [1,\infty)$
\begin{equation*}
  \nrm{\mb{x}}_{\ell^q} := \nrms{\hab{\sum_{k=1}^n \abs{x_k}^q}^{1/q}}_X, \qquad \mb{x} \in X^n.
\end{equation*}
and study the $\ell^q$-boundedness of operators, which was initiated in \cite{We01} and done systematically in \cite{KU14}. Our representation results of Chapter \ref{part:1} rely heavily on the Hilbert structure of $\ell^2$ and therefore a generalization of our representation results to an ``$\ell^q$-Euclidean structure'' setting seems out of reach.
\end{remark}

\section{Factorization of \texorpdfstring{$\ell^2$}{l2}-bounded operator families through \texorpdfstring{$L^2(S,w)$}{L2(S,w)}}\label{section:BFSfactorization}  As we have seen in the previous section, the $\ell^2$-structure is the canonical structure to consider on a Banach function space $X$. In this section we prove a version of Lemma \ref{lemma:technicalcase} for the $\ell^2$-boundedness of a family of operators on a Banach function space, in which we have control over the the space $X_\eta$ and the Hilbertian seminorm $\nrm{\cdot}_{\eta}$. Indeed, we will see that an $\ell^2$-bounded family of operators on a Banach function space $X$ can be factorized through a weighted $L^2$-space. In fact, this actually characterizes $\ell^2$-boundedness on $X$.

By a \emph{weight} on a measure space $(S,\mu)$ we mean a measurable function $w:S \to [0,\infty)$. For $p \in [1,\infty)$ we let $L^p(S,w)$ be the space of all $f \in L^0(S)$ such that
\begin{equation*}
  \nrm{f}_{L^p(S,w)}:=\has{\int_S \abs{f}^pw\dd \mu}^{1/p}<\infty.
\end{equation*}
Our main result is as follows. For the special case $X = L^p(S)$ this result can be found in the work of Le Merdy and Simard \cite[Theorem 2.1]{LS02}. See also Johnson and Jones \cite{JJ78} and Simard \cite{Si99}.

\begin{theorem}
\label{theorem:BFSfactorization}
  Let $X$ be an order-continuous Banach function space on $(S,\Sigma, \mu)$ and let $\Gamma \subseteq \mc{L}(X)$. $\Gamma$ is $\ell^2$-bounded if and only if there exists a constant $C>0$ such that for all $y_0,y_1 \in X$ there exists a weight $w$ such that $y_0,y_1 \in L^2(S,w)$ and
  \begin{align}
  \label{eq:main1}
  \nrm{Tx}_{L^2(S,w)} &\leq C \, \nrm{x}_{L^2(S,w)}, \quad x \in X \cap L^2(S,w),\, T \in \Gamma\\
    \label{eq:main2}
    \nrm{y_0}_{L^2(S,w)}&\leq c \,\nrm{y_0}_X,\\
    \label{eq:main3}
    \nrm{y_1}_{L^2(S,w)} &\geq \tfrac1c \, \nrm{y_1}_X,
  \end{align}
  where $c$ is a numerical constant. Moreover $C$ can be chosen such that $\nrm{\Gamma}_{\ell^2} \simeq C$.
\end{theorem}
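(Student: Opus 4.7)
The ``if'' direction is routine. Given $\mb{x}\in X^n$ and $T_1,\dots,T_n\in\Gamma$, set $y_0:=\ha{\sum_k\abs{x_k}^2}^{1/2}$ and $y_1:=\ha{\sum_k\abs{T_kx_k}^2}^{1/2}$ and apply the factorization hypothesis to this pair to obtain a weight $w$. Since $\abs{x_k}\leq y_0$ and $\abs{T_kx_k}\leq y_1$ pointwise, both $x_k$ and $T_kx_k$ lie in $L^2(S,w)$, and $\sum_k\nrm{x_k}_{L^2(S,w)}^2=\nrm{y_0}_{L^2(S,w)}^2$ with the analogous identity for $y_1$. Applying the operator bound \eqref{eq:main1} to each $x_k$ and summing yields $\nrm{y_1}_{L^2(S,w)}\leq C\nrm{y_0}_{L^2(S,w)}$; chaining with \eqref{eq:main2}--\eqref{eq:main3} gives $\nrm{y_1}_X\leq c^2C\nrm{y_0}_X$, which is precisely $\nrm{\Gamma}_{\ell^2}\leq c^2C$.

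For the ``only if'' direction the plan is to refine the abstract Hilbertian representation of Chapter \ref{part:1} into a concrete weighted-$L^2$ representation by exploiting the lattice structure of $X$. After rescaling so that $\nrm{\Gamma}_{\ell^2}\leq 1$, I would adjoin the contractive pointwise multiplier family $\mc{M}$ from \eqref{eq:mcM}, which is $\ell^2$-contractive, to form a still $\ell^2$-bounded family $\Gamma':=\Gamma\cup\mc{M}$. The central move is to apply Lemma \ref{lemma:technical} to $\Gamma'$ with $\alpha=\ell^2$, with $Y$ a small finite-dimensional subspace encoding both $y_0$ and $y_1$ (essentially their span) and with positive-homogeneous functionals $F\colon X\to[0,\infty)$ and $G\colon Y\to[0,\infty)$ calibrated so that the conclusions \eqref{eq:F3}--\eqref{eq:F2} translate into the simultaneous estimates $\nrm{y_0}_\eta\lesssim\nrm{y_0}_X$ and $\nrm{y_1}_\eta\gtrsim\nrm{y_1}_X$ on the resulting $\Gamma'$-invariant subspace $X_\eta$, together with $\nrm{Tx}_\eta\lesssim\nrm{x}_\eta$ for $T\in\Gamma$. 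Because $\mc{M}\subseteq\Gamma'$, the subspace $X_\eta$ is automatically an order ideal in $X$ and $\nrm{\cdot}_\eta$ satisfies the lattice-domination hypothesis of Lemma \ref{lemma:tychonoff}.

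The final step is to realize $\nrm{\cdot}_\eta$ as a weighted $L^2$-norm, following the recipe used in the proof of Theorem \ref{theorem:BFSlatticestructure}. Invoking Lemma \ref{lemma:tychonoff} I first pass to an equivalent seminorm $\nrm{\cdot}_\nu$ on $X_\eta$ which is additive on disjointly supported functions. Fixing a generator $z\in X_\eta$ majorising both $\abs{y_0}$ and $\abs{y_1}$ (for example $z:=\abs{y_0}+\abs{y_1}$), the set function $\lambda(E):=\nrm{\ind_E z}_\nu^2$ on measurable subsets of $\supp z$ is then finitely additive by disjoint additivity of $\nrm{\cdot}_\nu$; order-continuity of $X$ combined with the $\mc{M}$-domination of $\nrm{\cdot}_\nu$ upgrades $\lambda$ to a $\sigma$-additive measure that is absolutely continuous with respect to $\mu$, and Radon--Nikodym produces a measurable $g\geq 0$ with $\dd\lambda=g\dd\mu$. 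Setting $w:=g/z^2$ on $\supp z$ and $w:=0$ elsewhere, a direct computation first on simple multiples of $z$ and then by order-continuous approximation gives $\nrm{x}_\nu=\nrm{x}_{L^2(S,w)}$ for every $x$ in the order ideal generated by $z$; translating the conclusions of Lemma \ref{lemma:technical} through this identity and extending the operator estimate from $X_\eta\cap L^2(S,w)$ to $X\cap L^2(S,w)$ by density delivers exactly the three bounds \eqref{eq:main1}, \eqref{eq:main2}, \eqref{eq:main3}. I expect the most delicate step to be the calibrated choice of $F$ and $G$ that produces absolute constants in the norm comparisons for $y_0$ and $y_1$ simultaneously; a secondary obstacle is the $\sigma$-additivity of $\lambda$, which hinges on the interplay between order-continuity of $X$, the lattice structure, and the Hilbertian character of $\nrm{\cdot}_\nu$.
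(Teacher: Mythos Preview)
Your ``if'' direction is correct and matches the paper exactly. The broad architecture of your ``only if'' direction is also right: adjoin $\mc{M}$, apply Lemma \ref{lemma:technical}, upgrade to a disjointly additive seminorm via Lemma \ref{lemma:tychonoff}, then run Radon--Nikodym. But there is a real gap, and it is precisely the step you labelled ``secondary'': the $\sigma$-additivity of $\lambda$.

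With your choice $Y=\spn\{y_0,y_1\}$, the conclusion $\nrm{x}_\eta\leq 4G(x)$ from Lemma \ref{lemma:technical} holds only on that two-dimensional space. For the $\sigma$-additivity of $\lambda(E)=\nrm{z\ind_E}_\nu^2$ you need $\nrm{z\ind_{E_n}}_\nu\to 0$ whenever $E_n\downarrow\varnothing$. Order-continuity of $X$ tells you $\nrm{z\ind_{E_n}}_X\to 0$, and $\mc{M}$-domination tells you $\nrm{z\ind_{E_n}}_\nu\lesssim\nrm{z}_\nu$ uniformly; neither gives a decay of $\nrm{\cdot}_\nu$. The inductive extension $G_0$ in the proof of Lemma \ref{lemma:technical} does not help either: each application of an operator from $\mc{M}$ multiplies $G_0$ by a fixed constant, so $G_0(z\ind_{E_n})$ is bounded away from zero independently of $n$.

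The paper resolves this by making a different choice of $Y$. It introduces an auxiliary element $u>0$ a.e.\ dominating $\abs{y_0}$ and $\abs{y_1}$ and defines $Y:=\{x\in X: x^2u^{-1}\in X\}$ with $\nrm{x}_Y:=\nrm{x^2u^{-1}}_X^{1/2}$. This $Y$ is $2$-convex, so $G(x):=\nrm{x}_Y$ satisfies \eqref{eq:G}, and one may take $Y$ itself as the domain of $G$ in Lemma \ref{lemma:technical}. Now the upper bound $\nrm{x}_0\leq 4\nrm{x}_Y$ holds on the rich, order-continuous space $Y$, which contains $u\ind_E$ for every $E\in\Sigma$. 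The $\sigma$-additivity of $\lambda(E):=\nrm{u\ind_E}_0^2$ then follows immediately from the order-continuity of $Y$, and the weight is $w:=u^{-2}f$ with $f$ the Radon--Nikodym derivative. The positivity of $u$ a.e.\ is also what makes the final density extension to all of $X\cap L^2(S,w)$ go through. So the step you flagged as ``most delicate''---calibrating $F$ and $G$---is in fact routine; the genuine content lies in enlarging $Y$ to an infinite-dimensional $2$-convex space so that the Hilbertian seminorm inherits enough continuity from $X$.
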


\begin{proof}
We will first prove the `if' part. Let $\mb{x} \in X^n$ and $T_1,\ldots,T_n \in \Gamma$. Define $y_0 = \ha{\sum_{k=1}^n\abs{x_k}^2}^\frac{1}{2}$ and $y_1 = \ha{\sum_{k=1}^n\abs{T_k x_k}^2}^\frac{1}{2}$. Then we have by applying \eqref{eq:main1}-\eqref{eq:main3}
\begin{equation*}
  \nrm{y_1}_X^2 \leq c^2 \sum_{k=1}^n \int_S \abs{T_kx_k}^2w\dd \mu \leq c^2C^2 \sum_{k=1}^n \int_S \abs{x_k}^2w\dd\mu \leq c^4C^2 \nrm{y_0}_X^2,
\end{equation*}
so $\nrm{\Gamma}_{\ell^2} \leq c^2 C$.

\bigskip

Now for the converse take $y_0,y_1 \in X$ arbitrary and let $\tilde{u} \in X$ with $\tilde{u}>0$ a.e. Assume without loss of generality that $\nrm{y_0}_X=\nrm{y_1}_X=\nrm{\tilde{u}}_X=1$ and define
\begin{equation*}
  u =  \frac{1}{3}\hab{\abs{y_0} \vee \abs{y_1} \vee \tilde{u}}.
\end{equation*}
Then $\nrm{u}_X \leq 1$, $u>0$ a.e. and
\begin{equation}\label{eq:yjnorm}
 \nrm{y_j^2 u^{-1}}_X \leq \nrm{y_j}_X \nrm{y_ju^{-1}}_{L^\infty(S)} \leq 3 \,\nrm{y_j}_X, \qquad j=0,1.
\end{equation}
Let $Y = \cbrace{x \in X: x^2u^{-1} \in X}$ with norm $\nrm{x}_Y := \nrm{x^2u^{-1}}_X^{1/2}$.
Then $Y$ is  an order-continuous Banach function space and for $\mb{v}\in Y^n$ we have
\begin{align*}
    \nrms{\has{\sum_{k=1}^n \abs{v_k}^2}^{1/2}}_Y &=   \nrms{\sum_{k=1}^n \abs{v_k}^2u^{-1}}_X^{1/2} \\&\leq \has{\sum_{k=1}^n  \nrmb{\abs{v_k}^2u^{-1}}_X}^{1/2} = \has{\sum_{k=1}^n  \nrm{v_k}_Y^2}^{1/2},
\end{align*}
i.e. $Y$ is $2$-convex. Moreover by H\"olders inequality for Banach function spaces (\cite[Proposition 1.d.2(i)]{LT79}), we have
\begin{equation*}
  \nrm{x}_X \leq \nrm{x^2u^{-1}}_X^{1/2}\nrm{u}_X^{1/2} = \nrm{x}_Y,
\end{equation*}
so $Y$ is contractively embedded in $X$.
By \eqref{eq:yjnorm} we have $u,y_0,y_1 \in Y$.

We will now apply Lemma \ref{lemma:technical}. Define $F:X \to [0,\infty)$ by
\begin{equation*}
  F(x) = \begin{cases}
    \nrm{x}_X &\text{if } x \in \spn\{y_1\}\\
    0 &\text{otherwise}
  \end{cases}
\end{equation*}
and $G:Y \to [0,\infty)$ by $G(x) = \nrm{x}_Y$. Then \eqref{eq:F}  holds by Proposition \ref{proposition:finitedimensionalalpha} and \eqref{eq:G} follows from the $2$-convexity of $Y$.
Let $\mc{M}$ be the pointwise multiplication operators as in \eqref{eq:mcM} and define
  \begin{equation*}
    \Gamma_0 := \hab{\tfrac{1}{2\nrm{\Gamma}_{\ell^2}} \cdot \Gamma}  \cup \hab{\tfrac12 \cdot  \mc{M}}.
  \end{equation*}
Then $\Gamma_0$ is $\alpha$-bounded with $\nrm{\Gamma_0}_{\ell^2}\leq 1$ by Proposition \ref{proposition:alphaproperties}. Applying Lemma \ref{lemma:technical} to $\Gamma_0$, we can find a $\Gamma$- and $\mc{M}$-invariant subspace $Y\subseteq X_0 \subseteq X$ and a Hilbertian seminorm $\nrm{\lcdot}_0$  such that
\begin{align*}
  \nrm{Tx}_0 &\leq 4 \,\nrm{\Gamma}_{\ell^2} \nrm{x}_0, && x \in X_0,\, T \in \Gamma,\\
  \nrm{Tx}_0 &\leq 4 \,\nrm{x}_0, && x \in X_0,\, T \in \mc{M},\\
    \nrm{x}_0 &\leq 4 \,\nrm{x}_Y, &&x \in Y,\\
        \nrm{y_1}_0 &\geq \nrm{y_1}_X.
\end{align*}
The second property implies that if $x \in L^0(S)$ and  $\tilde{x} \in X_0$ with $\abs{{x}} \leq \abs{\tilde{x}}$, then ${x} \in X_0$ and $\nrm{{x}}_0 \leq 4\,\nrm{\tilde{x}}_0$. Thus we may, at the
the loss of a numerical constant, furthermore assume
\begin{align}\label{eq:squarenormidentity}
  \nrm{x_1+x_2}^2_0 &= \nrm{x_1}^2_0+\nrm{x_2}^2_0, && x_1,x_2 \in X: x_1 \wedge x_2=0
\end{align}
by Lemma \ref{lemma:tychonoff}.

Define a measure $\lambda(E) = \nrm{u\ind_E}_0^2$ for all $E \in \Sigma$. Using \eqref{eq:squarenormidentity}, the $\sigma$-additivity of this measure follows from
\begin{align*}
  \lambda\has{\bigcup_{k=1}^\infty E_k} &= \lim_{n \to \infty} \nrms{\sum_{k=1}^n u\ind_{E_k}}^2_0= \sum_{k=1}^\infty \lambda(E_k)
\end{align*}
for $E_1,E_2,\ldots \in \Sigma$ pairwise disjoint, since $u \ind_E \in Y$ for any $E \in \Sigma$ and $Y$ is order-continuous. Moreover we have for any $E \in \Sigma$ with $\mu(E) = 0$ that
\begin{equation*}
  \lambda(E) = \nrm{u\ind_E}_0^2 \lesssim  \nrm{u\ind_E}_Y^2 = \nrm{\ind_\varnothing}_Y^2 =0
\end{equation*}
so $\lambda$ is absolutely continuous with respect to $\mu$. Thus, by the Radon-Nikodym theorem, we can find an $f \in L^1(S)$ such that
\begin{equation*}
  \nrm{u\ind_E}_0^2 = \lambda(E) =  \int_E f \dd\mu
\end{equation*}
for all $E \in \Sigma$. Define the weight $w := u^{-2}f$.

Take $x \in Y$ and let $(v_n)_{n=1}^\infty$ be a sequence functions of the form
\begin{equation*}
  v_n = u\sum_{j=1}^{m_n} a^n_j \ind_{E^n_j},\qquad a_j^n \in \C, \, E^n_j \in \Sigma,
\end{equation*}
such that $\abs{v_n} \uparrow \abs{x}$. Then $\lim_{n \to \infty} \nrm{v_n - x}_0 =0$ by the order-continuity of $Y$. So by \eqref{eq:squarenormidentity} and the monotone convergence theorem
\begin{align*}
  \nrm{x}_0^2 &=
  \lim_{n \to \infty} \sum_{j=1}^{m_n} \abs{a^n_j}^2\nrm{u\ind_{E^n_j}}_0^2 = \lim_{n \to \infty} \sum_{j=1}^{m_n} \int_{E^n_j} \abs{a^n_j}^2 u^2 w \dd \mu = \int_S \abs{x}^2w \dd\mu.
\end{align*}
In particular, $y_0,y_1 \in L^2(S,w)$ and, using  \eqref{eq:yjnorm}, we have
\begin{align*}
  \has{\int_S\abs{y_0}^2w\dd\mu}^\frac{1}{2} &= \nrm{y_0}_0 \leq 16\, \nrm{y_0}_Y\leq 48 \,\nrm{y_0}_X,\\
  \has{\int_S\abs{y_1}^2w\dd\mu}^\frac{1}{2} &= \nrm{y_1}_0 \geq \frac14\, \nrm{y_1}_X,
\end{align*}
so we can take $c=48$.
Take $T \in \Gamma$ and $x \in Y$ and define $m_n = \min(1, nu \cdot \abs{Tx}^{-1})$ for $n \in \N$. Then $m_n\cdot Tx \in Y$ and  $\abs{m_n\cdot Tx} \uparrow \abs{Tx}$. So by the monotone convergence theorem we have
\begin{equation*}
    \begin{aligned}
  \has{\int_S\abs{Tx}^2w\dd\mu}^\frac{1}{2} &= \lim_{n \to \infty} \has{\int_S\abs{m_n\cdot Tx}^2w\dd\mu}^\frac{1}{2}
   \\
   &= \lim_{n \to \infty} \nrm{m_n\cdot Tx}_0 \\&\leq  4^6\, \nrm{\Gamma}_{\ell^2}\,\has{\int_S\abs{x}^2w\dd\mu}^\frac{1}{2}.
   \end{aligned}
\end{equation*}
To conclude, note that $Y$ is dense in $X \cap L^2(S,w)$ by order-continuity. Therefore, since $T$ is bounded on $X$ as well, this estimate extends to all $x \in X \cap L^2(S,w)$. This means that \eqref{eq:main1}-\eqref{eq:main3} hold with $C \leq 4^6 \,\nrm{\Gamma}_{\ell^2}$.
\end{proof}

\begin{remark}
  In the view of Theorem \ref{theorem:Cboundedalphabounded} and Theorem \ref{theorem:BFSlatticestructure} we may replace the assumption that $\Gamma$ is $\ell^2$-bounded by the assumption that $\Gamma \cup \mc{M}$ is $C^*$-bounded in Theorem \ref{theorem:BFSfactorization}.
\end{remark}

\subsection*{The role of $2$-convexity}
If the Banach function space $X$ is $2$-convex, i.e. if
\begin{equation*}
  \nrms{\has{\sum_{k=1}^n \abs{x_k}^2}^{1/2}}_X \leq \has{\sum_{k=1}^n \nrm{x_k}_X^2}^{1/2}, \qquad \mb{x} \in X^n,
\end{equation*}
we do not have to construct a $2$-convex Banach function space $Y$ as we did in the proof of Theorem \ref{theorem:BFSfactorization}. Instead, we can just use $X$ in place of $Y$, which yields more stringent conditions on the weight in Theorem \ref{theorem:BFSfactorization}.

\begin{theorem}
\label{theorem:BFSfactorization2convex}
 Let $X$ be an order-continuous, $2$-convex Banach function space on $(S,\Sigma,\mu)$ and let $\Gamma\subseteq \mc{L}(X)$. Then $\Gamma$ is $\ell^2$-bounded if and only if there exists a constant $C>0$ such that for any weight $w$ with
  \begin{align*}
    \nrm{x}_{L^2(S,w)} &\leq \nrm{x}_X \qquad x \in X,
  \intertext{there exists a weight $v \geq w$ such that}
     \nrm{Tx}_{L^2(S,v)} &\leq C\, \nrm{x}_{L^2(S,v)} &&\qquad x \in X, \, T \in \Gamma\\
    \nrm{x}_{L^2(S,v)} &\leq c \,  \nrm{x}_X &&\qquad x \in X,
  \end{align*}
  where $c$ is a numerical constant. Moreover $C>0$ can be chosen such that $\nrm{\Gamma}_{\ell^2} \simeq C$.
\end{theorem}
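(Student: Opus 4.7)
The strategy is, given $\mb{x} \in X^n$ and $T_1,\dots,T_n \in \Gamma$, to produce a single weight $w$ satisfying the embedding hypothesis that captures the $X$-norm of $y_1 := \bigl(\sum_{k=1}^n \abs{T_kx_k}^2\bigr)^{1/2}$, and then apply the assumed factorization. Since $X$ is $2$-convex with constant one, the $1/2$-concavification $X_{(2)}$ equipped with $\nrm{f}_{X_{(2)}} := \nrm{\abs{f}^{1/2}}_X^2$ is a Banach function space, and its order-continuity is inherited from $X$, so its K\"othe dual $X_{(2)}^*$ is also a Banach function space. Hahn--Banach applied to $y_1^2 \in X_{(2)}$ yields a weight $w$ with $\nrm{w}_{X_{(2)}^*} \leq 1$ and $\int y_1^2\, w\,\dd\mu = \nrm{y_1}_X^2$; the normalization $\nrm{w}_{X_{(2)}^*}\leq 1$ is precisely the embedding condition $\nrm{x}_{L^2(S,w)} \leq \nrm{x}_X$. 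Applying the hypothesis to this $w$ yields $v \geq w$ with the two stated properties, and one chains
\[
\nrm{y_1}_X^2 = \int y_1^2 w\,\dd\mu \leq \int y_1^2 v\,\dd\mu = \sum_{k=1}^n \nrm{T_kx_k}_{L^2(S,v)}^2 \leq C^2 \sum_{k=1}^n \nrm{x_k}_{L^2(S,v)}^2 \leq (Cc)^2 \nrm{y_0}_X^2,
\]
where $y_0 := \bigl(\sum \abs{x_k}^2\bigr)^{1/2}$, giving $\nrm{\Gamma}_{\ell^2} \leq Cc$.

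\textbf{Plan for ``only if'': applying Lemma \ref{lemma:technical}.} Because $X$ is already $2$-convex with constant one, Lemma \ref{lemma:technical} may be applied \emph{directly} with $Y = X$, avoiding the auxiliary $2$-convex space $Y$ built in the proof of Theorem \ref{theorem:BFSfactorization}. Set $F(x) := \nrm{x}_{L^2(S,w)}$ and $G(x) := \nrm{x}_X$: the required bound $\sum_k F(x_k)^2 \leq \nrm{\mb{x}}_{\ell^2}^2$ is exactly the embedding hypothesis on $w$, and $\nrm{\mb{y}}_{\ell^2} \leq \bigl(\sum_k G(y_k)^2\bigr)^{1/2}$ is $2$-convexity. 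Taking
\[
  \Gamma_0 := \tfrac{1}{2\nrm{\Gamma}_{\ell^2}}\Gamma \;\cup\; \tfrac{1}{2}\mc{M}
\]
with $\mc{M}$ the pointwise multiplication contractions from \eqref{eq:mcM} (for which $\nrm{\mc{M}}_{\ell^2} \leq 1$), we have $\nrm{\Gamma_0}_{\ell^2}\leq 1$, and Lemma \ref{lemma:technical} produces a Hilbertian seminorm $\nrm{\cdot}_0$ on $X_0 = X$ satisfying $\nrm{Tx}_0 \lesssim \nrm{\Gamma}_{\ell^2}\nrm{x}_0$ for $T \in \Gamma$, $\nrm{Mx}_0 \lesssim \nrm{x}_0$ for $M \in \mc{M}$, $\nrm{x}_0 \geq \nrm{x}_{L^2(S,w)}$, and $\nrm{x}_0 \leq 4\,\nrm{x}_X$. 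The $\mc{M}$-invariance then permits invoking Lemma \ref{lemma:tychonoff} to pass to an orthogonally additive seminorm $\nrm{\cdot}_1$ with the same bounds up to absolute constants.

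\textbf{Extracting the weight and securing $v \geq w$.} Picking $u \in X$ with $u > 0$ a.e.\ and $\nrm{u}_X \leq 1$, I set $\lambda(E) := \nrm{u\ind_E}_1^2$ and proceed exactly as at the end of the proof of Theorem \ref{theorem:BFSfactorization}: orthogonal additivity of $\nrm{\cdot}_1$ together with order-continuity of $X$ makes $\lambda$ a $\sigma$-additive measure absolutely continuous with respect to $\mu$, and the Radon--Nikodym derivative $f := \ddn\lambda / \ddn\mu$ yields $v_0 := u^{-2}f$ with $\nrm{x}_1^2 = \int\abs{x}^2 v_0\,\dd\mu$ for every $x \in X$ (first for simple functions of the form $u\sum_j a_j \ind_{E_j}$, then in general by order-continuity). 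The main obstacle, \emph{not} present in Theorem \ref{theorem:BFSfactorization}, is forcing $v \geq w$ \emph{pointwise}. I plan to extract this from the lower bound $\nrm{x}_{L^2(S,v_0)} = \nrm{x}_1 \gtrsim \nrm{x}_{L^2(S,w)}$: applied to the test functions $x = u\ind_E$ (which lie in $X$ by the lattice property, and for which both integrals are finite by the upper bound $\nrm{\cdot}_1 \lesssim \nrm{\cdot}_X$), this gives $\int_E u^2 v_0\,\dd\mu \gtrsim \int_E u^2 w\,\dd\mu$ for \emph{every} measurable $E$, hence $u^2 v_0 \gtrsim u^2 w$ a.e., and since $u > 0$ a.e.\ we conclude $v_0 \gtrsim w$ a.e. Rescaling $v := \kappa v_0$ for a suitable absolute constant $\kappa$ then produces $v \geq w$ while keeping $c$ absolute and $C \simeq \nrm{\Gamma}_{\ell^2}$.
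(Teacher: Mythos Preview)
Your proof is correct and follows exactly the approach sketched in the paper: for the ``only if'' direction you take $F(x)=\nrm{x}_{L^2(S,w)}$, $G(x)=\nrm{x}_X$, $Y=X$ in Lemma~\ref{lemma:technical} and extract the weight via Radon--Nikodym as in Theorem~\ref{theorem:BFSfactorization}, with your explicit testing on $u\ind_E$ to secure $v\geq w$ being the one detail the paper defers to \cite[Theorem~4.6.3]{Lo16}. Your K\"othe-dual construction of the witnessing weight $w$ for the ``if'' direction is the natural complement and is consistent with the paper's sketch.
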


\begin{proof}
  The proof is similar to, but simpler than, the proof of Theorem \ref{theorem:BFSfactorization}. The a priori given weight $w$ allows us to define $F:X \to [0,\infty)$  as
  $$F(x) = \has{\int_S\abs{x}^2w\dd \mu}^{1/2}$$
  and the $2$-convexity allows us to use $Y=X$ and define $G:X \to [0,\infty)$ as $G(x) = \nrm{x}_X$. For more details, see \cite[Theorem 4.6.3]{Lo16}
\end{proof}

\begin{remark}
  Theorem \ref{theorem:BFSfactorization2convex} is closely related to the work of Rubio de Francia, which was preceded by the factorization theory of Niki{\v{s}}in \cite{Ni70} and Maurey \cite{Ma73}. In his work Rubio de Francia proved Theorem \ref{theorem:BFSfactorization2convex} with all $2$'s replaced by any $q \in [1,\infty)$ for the following special cases:
   \begin{itemize}
     \item For $X=L^p(S)$ in \cite{Ru82b},
     \item For $\Gamma =\cbrace{T}$ with $T \in \mc{L}(X)$ in \cite[III Lemma 1]{Ru86},
   \end{itemize}
   see also \cite{GR85}. These results have been combined in \cite[Lemma 3.4]{ALV17}, yielding Theorem \ref{theorem:BFSfactorization2convex} with all $2$'s replaced by any $q \in [1,\infty)$. These results are proven using  different techniques and for $q \neq 2$, as discussed in Remark \ref{remark:ellq}, seem out of reach using our approach.
\end{remark}

\section{Banach function space-valued extensions of operators} \label{section:vectorextension}
In this final section on the $\ell^2$-structure on Banach function spaces  we will apply Theorem \ref{theorem:BFSfactorization} to obtain an extension theorem in the spirit of Rubio de Francia's extension theorem for Banach function space-valued functions (see \cite[Theorem 5]{Ru86}). We will apply this theorem to deduce the following results related to the $\UMD$ property for Banach function spaces:
\begin{itemize}
  \item We will provide a quantitative proof of the boundedness of the lattice Hardy-Littlewood maximal function on $\UMD$ Banach function spaces.
  \item We will show that the so-called dyadic $\UMD^+$ property is equivalent to the $\UMD$ property for Banach function spaces.
  \item We will show that the $\UMD$ property is necessary for the $\ell^2$-sectoriality of certain differentiation operators on $L^p(\R^d;X)$, where $X$ is a Banach function space.
\end{itemize}

\subsection*{Tensor extensions and Muckenhoupt weights}
Let us first introduce the notions we need to state the main theorem of this section. Let $p \in [1,\infty)$, $w$ a weight on $\R^d$ and suppose that $T$ is a bounded linear operator on $L^p(\R^d,w)$. We may define a linear operator $\widetilde{T}$ on the tensor product $L^p(\R^d,w) \otimes X$ by setting
\begin{equation*}
  \widetilde{T}(f\otimes x) : = Tf \otimes x, \qquad f \in L^p(\R^d,w),\, x \in X,
\end{equation*}
and extending by linearity. For $p \in [1,\infty)$ the space $L^p(\R^d,w) \otimes X$ is dense in the Bochner space $L^p(\R^d,w;X)$ and it thus makes sense to ask whether $\widetilde{T}$ extends to a bounded operator on $L^p(\R^d,w;X)$. If this is the case, we will denote this operator again by $\widetilde{T}$. For a family of operators $\Gamma \subseteq \mc{L}(L^p(\R^d,w))$ we write $$\widetilde{\Gamma}:= \cbrace{\widetilde{T}:T \in \Gamma}.$$

We denote the Lebesgue measure $\lambda$ on $\R^d$ of a measurable set $E \subseteq \R^d$ by $\abs{E}$. For $p \in (1,\infty)$ we will say that a weight $w$ on $\R^d$ is in the \emph{Muckenhoupt class $A_p$} and write $w \in A_p$ if the weight characteristic
\begin{equation*}
  [w]_{A_p} := \sup_{Q} \frac{1}{\abs{Q}} \int_Q w\dd \lambda \cdot\has{\frac{1}{\abs{Q}}\int_Q w^{-\frac{1}{p-1}}\dd \lambda}^{p-1}
\end{equation*}
is finite, where the supremum is taken over all cubes $Q \subseteq \R^d$ with sides parallel to the axes.

\subsection*{An abstract extension theorem}
We can now state the main theorem of this section.

\begin{theorem}\label{theorem:BFSextrapolation}
  Let $X$ be an order-continuous Banach function space on $(S,\Sigma,\mu)$, let $p \in (1,\infty)$ and $w \in A_p$. Assume that there is a family of operators $\Gamma \subseteq \mc{L}(L^p(\R^d,w))$ and an increasing function $\phi\colon\R_+ \to \R_+$ such that
  \begin{itemize}
    \item For all weights $v\colon \R^d \to (0,\infty)$ we have
    \begin{equation*}
      [v]_{A_2} \leq \phi\has{\sup_{T \in \Gamma}\, \nrm{T}_{\mc{L}(L^2(\R^d,v))}}.
    \end{equation*}
    \item $\widetilde{\Gamma}$ is $\ell^2$-bounded on $L^p(\R^d,w;X)$.
  \end{itemize}
  Let $f,g \in L^p(\R^d,w;X)$ and suppose that there is an increasing function $\psi\colon \R_+ \to \R_+$ such that for all $v \in A_2$ we have
  \begin{align*}
    \nrm{f(\cdot,s)}_{L^2(\R^d,v) }&\leq \psi( [v]_{A_2}) \nrm{g(\cdot,s)}_{L^2(\R^d,v) }, \qquad  s \in S.
  \intertext{Then there is a numerical constant $c>0$ such that}
    \nrm{f}_{L^p(\R^d,w;X)} &\leq c \cdot \psi \circ \phi\hab{c\, \nrm{\widetilde{\Gamma}}_{\ell^2} }\nrm{g}_{L^p(\R^d,w;X)}.
  \end{align*}
\end{theorem}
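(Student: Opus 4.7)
The plan is to apply the factorization Theorem \ref{theorem:BFSfactorization} to the $\ell^2$-bounded family $\widetilde{\Gamma}$ on the order-continuous Banach function space $X' := L^p(\R^d, w; X)$ over the product measure space $\R^d \times S$, taking $y_0 = g$ and $y_1 = f$. This produces a weight $W$ on $\R^d \times S$ such that $f, g \in L^2(\R^d \times S, W)$, and there is a numerical constant $c_0$ for which
\begin{align*}
  \nrm{\widetilde{T}h}_{L^2(W)} &\leq C \, \nrm{h}_{L^2(W)}, \qquad h \in X' \cap L^2(\R^d \times S, W),\, T \in \Gamma, \\
  \nrm{g}_{L^2(W)} &\leq c_0 \, \nrm{g}_{X'}, \\
  \nrm{f}_{L^2(W)} &\geq \tfrac{1}{c_0}\, \nrm{f}_{X'},
\end{align*}
with $C \lesssim \nrm{\widetilde{\Gamma}}_{\ell^2}$.

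The crux of the argument is to convert the integrated operator bound on $L^2(\R^d \times S, W)$ into a pointwise-in-$s$ bound. Viewing $W$ as a measurable function of two variables (so that $W(\cdot, s)$ is a weight on $\R^d$ for $\mu$-almost every $s$ by Fubini) we can write $L^2(\R^d \times S, W) \cong \int^{\oplus}_S L^2(\R^d, W(\cdot, s))\dd \mu(s)$. Since $\widetilde{T}$ acts diagonally, $(\widetilde{T}h)(\cdot, s) = T(h(\cdot, s))$, a standard measurable selection argument (after passing, if needed, to a countable subfamily $\Gamma_0 \subseteq \Gamma$ sharing the same suprema in the hypotheses) forces
\begin{equation*}
  \sup_{T \in \Gamma}\nrm{T}_{\mc{L}(L^2(\R^d, W(\cdot,s)))} \leq C, \qquad \text{for } \mu\text{-a.e. } s \in S.
\end{equation*}
By the hypothesis on $\phi$, this gives $[W(\cdot, s)]_{A_2} \leq \phi(C)$ for a.e. $s$.

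Feeding this into the hypothesis relating $f$ and $g$ yields, for a.e. $s \in S$,
\begin{equation*}
  \nrm{f(\cdot, s)}_{L^2(\R^d, W(\cdot, s))} \leq \psi(\phi(C)) \, \nrm{g(\cdot, s)}_{L^2(\R^d, W(\cdot, s))}.
\end{equation*}
Squaring and integrating in $s$ gives $\nrm{f}_{L^2(W)} \leq \psi(\phi(C))\, \nrm{g}_{L^2(W)}$. Chaining this with the outer estimates from Theorem \ref{theorem:BFSfactorization} and using the monotonicity of $\psi \circ \phi$ together with $C \lesssim \nrm{\widetilde{\Gamma}}_{\ell^2}$ yields
\begin{equation*}
  \nrm{f}_{X'} \leq c_0 \nrm{f}_{L^2(W)} \leq c_0\, \psi(\phi(C)) \nrm{g}_{L^2(W)} \leq c_0^2 \, \psi\circ\phi\bigl(c\, \nrm{\widetilde{\Gamma}}_{\ell^2}\bigr) \nrm{g}_{X'},
\end{equation*}
which is the claimed inequality.

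The main obstacle is the direct-integral / measurable-selection step: one must verify that the integrated operator bound on $L^2(W)$ transfers to uniform-in-$s$ bounds of $T$ on each fiber, arranging sufficient measurability of $s \mapsto W(\cdot, s)$, handling the supremum over $T \in \Gamma$ via a countable reduction, and extending $\widetilde{T}$ from the dense subspace $X' \cap L^2(W)$ to each $L^2(\R^d, W(\cdot,s))$ (which uses order-continuity to show that $X' \cap L^2(W)$ restricts densely to each fiber). Once this measure-theoretic bookkeeping is in place, the remaining computations are direct.
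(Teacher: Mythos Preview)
Your overall strategy matches the paper's exactly: apply Theorem \ref{theorem:BFSfactorization} to $\widetilde{\Gamma}$ on $L^p(\R^d,w;X)$, extract a weight on $\R^d\times S$, deduce fiberwise $L^2$-bounds for each $T\in\Gamma$, invoke the $A_2$ hypothesis on the fibers, and chain the estimates. However, the step you flag as ``measure-theoretic bookkeeping'' hides a genuine idea that you have not supplied.

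The issue is your choice $y_0=g$. Theorem \ref{theorem:BFSfactorization} then only tells you $g\in L^2(\R^d\times S,W)$; this gives no control on $W(\cdot,s)$ as a weight on $\R^d$. There is no reason for $W(\cdot,s)$ to be locally integrable, so $L^2(\R^d,W(\cdot,s))$ need not contain any compactly supported test functions. Consequently you cannot produce a testing class that lies simultaneously in $X'\cap L^2(W)$ and is dense in each fiber $L^2(\R^d,W(\cdot,s))$, and your passage from the integrated bound $\nrm{\widetilde{T}h}_{L^2(W)}\le C\nrm{h}_{L^2(W)}$ to the fiberwise bound $\nrm{T}_{\mc{L}(L^2(\R^d,W(\cdot,s)))}\le C$ is unjustified. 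The assertion that ``order-continuity shows $X'\cap L^2(W)$ restricts densely to each fiber'' does not address this: density in $L^2(W)$ does not imply density in the fibers without some uniformity in $s$.

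The paper's fix is to apply Theorem \ref{theorem:BFSfactorization} with $y_0=\abs{g}+u\otimes x$ instead of $g$, where $u\in L^p(\R^d,w)$ satisfies $u\ge c_K\ind_K$ for every compact $K\subseteq\R^d$ and $x\in X$ is strictly positive with $\nrm{u\otimes x}\le\nrm{g}$. Then $u\otimes x\in L^2(W)$ forces $u\in L^2(\R^d,W(\cdot,s))$ for $\mu$-a.e.\ $s$, hence $W(\cdot,s)$ is locally integrable. One can then take the countable set $\mc{A}$ of $\Q$-linear combinations of indicators of rational rectangles (each dominated by a multiple of $u$), so that $\psi\otimes(x\ind_E)\in X'\cap L^2(W)$ for $\psi\in\mc{A}$ and $E\in\Sigma$; testing on these tensors gives the fiberwise bound for a.e.\ $s$ directly, without any abstract measurable-selection argument.
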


One needs to take care when considering $f(\cdot,s)$ for $f \in L^p(\R^d,w;X)$ and $s \in S$ in Theorem \ref{theorem:BFSextrapolation}, as this is not necessarily a function in $L^2(\R^d,v)$. This technicality can in applications be circumvented by  only using e.g. simple functions or smooth functions with compact support and a density argument.

\begin{proof}[Proof of Theorem \ref{theorem:BFSextrapolation}]
  Let $u \in L^p(\R^d,w)$ be such that there is a $c_K>0$ with $u \geq c_K \ind_K$ for every compact $K \subseteq \R^d$. Let $x \in X$  be such that $x>0$ a.e. and $$\nrm{u \otimes x}_{L^p(\R^d,w;X)}\leq \nrm{g}_{L^p(\R^d,w;X)}.$$
  Since $X$ is order-continuous, $L^p(\R^d,w;X)$ is an order-continuous Banach function space over the measure space $(\R^d \times S,w\dd \lambda\dd \mu)$, so by Theorem \ref{theorem:BFSfactorization} we can find a weight $v$ on $\R^d \times S$ and a numerical constant $c>0$ such that for all $T \in \widetilde{\Gamma}$ and $h \in L^p(\R^d,w;X) \cap L^2(\R^d \times S,v\cdot w)$
  \begin{align}
  \label{eq:fact1} \nrm{T h}_{L^2(\R^d \times S,v\cdot w)} &\leq c \, \nrm{\widetilde{\Gamma}}_{\ell^2} \,\nrm{h}_{L^2(\R^d \times S,v\cdot w)}\\
   \label{eq:fact2} \nrmb{\abs{g} + u \otimes x}_{L^2(\R^d \times S,v\cdot w)} &\leq c \, \nrmb{\abs{g} + u \otimes x}_{L^p(\R^d,w;X)},\\
    \label{eq:fact3}\nrm{f}_{L^2(\R^d \times S,v\cdot w)} &\geq \frac{1}{c}\, \nrm{f}_{L^p(\R^d,w;X)}.
  \end{align}
  Note that \eqref{eq:fact2} and the definition of $x$ imply
   \begin{equation}
     \label{eq:fact2alt} \nrm{g}_{L^2(\R^d \times S,v\cdot w)} \leq 2c\, \nrm{g}_{L^p(\R^d,w;X)}
   \end{equation}
     and $u \in L^2(\R^d,v(\cdot,s)\cdot w)$ for $\mu$-a.e. $s \in S$. Therefore, by the definition of $u$, we know that $v(\cdot,s)\cdot w$ is locally integrable on $\R^d$ for $\mu$-a.e. $s \in S$. Let $\mc{A}$ be the $\Q$-linear span of indicator functions of rectangles with rational corners, which is
 a countable, dense subset of both $L^p(\R^d,w)$ and $L^2(\R^d,v(\cdot,s)\cdot w)$ for $\mu$-a.e. $s \in S$. Define
  \begin{align*}
    \mc{B} &= \cbraceb{\psi \otimes (x\ind_E): \psi \in \mc{A}, \, E \in \Sigma }.
  \end{align*}
  Then $\mc{B} \subseteq L^p(\R^d,w;X) \cap L^2(\R^d \times S,v\cdot w)$ since $u \otimes x\in L^2(\R^d \times S,v\cdot w)$. Testing \eqref{eq:fact1} on all $h \in \mc{B}$ we find that for all $T \in \Gamma$ and $\psi \in \mc{A}$
  \begin{equation*}
    \nrm{T\psi}_{L^2(\R^d,v(\cdot,s)\cdot w)} \leq c\,\nrm{\widetilde{\Gamma}}_{\ell^2} \,\nrm{\psi}_{L^2(\R^d,v(\cdot,s)\cdot w)}, \qquad s \in S.
  \end{equation*}
Since $\mc{A}$ is countable and dense in $L^2(\R^d,v(\cdot,s)\cdot w)$, we have by assumption that $v(\cdot,s)\cdot w \in A_2$ with $[v(\cdot,s)\cdot w]_{A_2} \leq \phi(c\,\nrm{\widetilde{\Gamma}}_{\ell^2})$ for $\mu$-a.e. $s\in S$. Therefore, using Fubini's theorem, our assumption, \eqref{eq:fact3} and \eqref{eq:fact2alt}, we obtain
\begin{align*}
  \nrm{f}_{L^p(\R^d,w;X)} &\leq c\, \has{\int_{S} \int_{\R^d} \abs{f}^2 v\cdot w \dd \lambda \dd \mu}^{1/2}
  \\&\leq c\cdot \psi\circ \phi\hab{c\,\nrm{\widetilde{\Gamma}}_{\ell^2}} \has{\int_{S} \int_{\R^d} \abs{g}^2 v\cdot w \dd \lambda \dd \mu}^{1/2}
  \\&\leq 2c^2 \cdot \psi\circ \phi\hab{c\,\nrm{\widetilde{\Gamma}}_{\ell^2}}\nrm{g}_{L^p(\R^d,w;X)},
\end{align*}
proving the statement.
\end{proof}

We say that a Banach space $X$ has the {$\UMD$ property} if the martingale difference sequence of any finite martingale in $L^p(S;X)$ on a $\sigma$-finite measure space $(S,\mu)$ is unconditional for some (equivalently all) $p \in (1,\infty)$. That is, if
  for all finite martingales $(f_k)_{k=1}^n$ in $L^p(S;X)$ and scalars $|\epsilon_k|=1$ we have
 \begin{equation}\label{eq:UMD}
   \nrms{\sum_{k=1}^n \epsilon_kdf_k}_{L^p(S;X)} \leq C\, \nrms{\sum_{k=1}^n df_k}_{L^p(S;X)}.
 \end{equation}
 The least admissible constant $C>0$ in \eqref{eq:UMD} will be denoted by $\beta_{p,X}$.  For a detailed account of the theory $\UMD$ Banach spaces we refer the reader to \cite[Chapter 4]{HNVW16} and \cite{Pi16}.

Let us point out some choices of $\Gamma \subseteq \mc{L}(L^p(\R^d,w))$ that satisfy the assumptions Theorem \ref{theorem:BFSextrapolation} when $X$ has $\UMD$ property:
\begin{itemize}
\item $\Gamma = \cbrace{H}$, where $H$ is the Hilbert transform.
\item  $\Gamma = \cbrace{R_k:k=1,\ldots,d}$ where $R_k$ is the $k$-th Riesz projection.
\item $\Gamma:= \cbrace{T_Q:Q \text{ a cube in }\R^d}$, where $T_Q\colon L^p(\R^d) \to L^p(\R^d)$ is the averaging operator
  \begin{equation*}
    T_Qf(t) := \has{\frac{1}{\abs{Q}}\int_Q f \dd \lambda} \ind_Q(t), \qquad t \in \R^d.
  \end{equation*}
\end{itemize}
We will encounter these choices of $\Gamma$ in the upcoming applications of Theorem \ref{theorem:BFSextrapolation}. For these choices of $\Gamma$ one obtains an extension theorem for $\UMD$ Banach function spaces in the spirit of \cite[Theorem 5]{Ru86}.

\begin{corollary}\label{corollary:extrapolationUMD}
Let $X$ be a $\UMD$ Banach function space and let $T$ be a bounded linear operator on $L^{p_0}(\R^d,v)$ for some $p_0 \in (1,\infty)$ and all $v \in A_{p_0}$. Suppose that there is an increasing function $\phi\colon\R_+\to \R_+$ such that
  \begin{equation*}
    \nrm{T}_{L^{p_0}(\R^d,v)\to L^{p_0}(\R^d,v)} \leq \phi([v]_{A_{p_0}}), \qquad v \in A_{p_0}.
  \end{equation*}
Then $\widetilde{T}$ extends uniquely to a bounded linear operator on $L^p(\R^d,w;X)$ for all $p \in (1,\infty)$ and $w \in A_p$.
\end{corollary}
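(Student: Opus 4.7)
The strategy is to apply Theorem \ref{theorem:BFSextrapolation} with a carefully chosen auxiliary family $\Gamma$. First I would invoke the classical Rubio de Francia extrapolation theorem on the hypothesis $\nrm{T}_{L^{p_0}(\R^d,v)} \leq \phi([v]_{A_{p_0}})$ to produce an increasing function $\psi\colon \R_+ \to \R_+$ with
\[
\nrm{T}_{L^2(\R^d,v) \to L^2(\R^d,v)} \leq \psi([v]_{A_2}), \qquad v \in A_2.
\]
Consequently, for any $g \in L^p(\R^d,w;X)$ whose slices $g(\cdot,s)$ lie in every weighted $L^2$ (e.g.\ $X$-valued simple functions), the pair $(f,g):=(\widetilde{T}g,g)$ satisfies the auxiliary inequality of Theorem \ref{theorem:BFSextrapolation} with this $\psi$.

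Next, fix $p \in (1,\infty)$ and $w \in A_p$, and take $\Gamma := \cbrace{R_1,\ldots,R_d} \subseteq \mc{L}(L^p(\R^d,w))$, the Riesz projections on $\R^d$ (or simply $\cbrace{H}$ when $d=1$). Two hypotheses of Theorem \ref{theorem:BFSextrapolation} must then be verified. On the one hand, the Hunt--Muckenhoupt--Wheeden characterization of $A_2$ via Riesz projections supplies an increasing $\tilde\phi\colon\R_+\to\R_+$ with $[v]_{A_2} \leq \tilde\phi\bigl(\sup_k \nrm{R_k}_{L^2(v)\to L^2(v)}\bigr)$. On the other hand, $\widetilde{\Gamma}$ is $\ell^2$-bounded on $L^p(\R^d,w;X)$: since $X$ is $\UMD$ so is the lattice $\ell^2$-sum $X(\ell^2)$, and hence by Bourgain's theorem on singular integrals with values in a $\UMD$ space---combined with Rubio de Francia's vector-valued $A_p$-extrapolation---each $\widetilde{R}_k$ extends boundedly to $L^p(\R^d,w;X(\ell^2))$, which is precisely $\ell^2$-boundedness of $\widetilde R_k$ on $L^p(\R^d,w;X)$.

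Applying Theorem \ref{theorem:BFSextrapolation} to $(f,g)=(\widetilde{T}g,g)$ for simple $g$ now yields $\nrm{\widetilde T g}_{L^p(\R^d,w;X)} \lesssim \nrm{g}_{L^p(\R^d,w;X)}$ with a constant depending only on $p$, $d$, $w$, $X$, and $T$; a density argument extends $\widetilde T$ uniquely to a bounded operator on the whole of $L^p(\R^d,w;X)$. The hard part will be the verification of the $\ell^2$-boundedness above: pinning it down as scalar-valued boundedness of $\widetilde{R}_k$ on $L^p(\R^d,w;X(\ell^2))$ and then invoking the nontrivial stability of the $\UMD$ property under passage to lattice $\ell^2$-sums. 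Any alternative choice of $\Gamma$ from the list preceding the corollary (Hilbert transform, Riesz projections, averaging operators) leads to essentially the same requirement.
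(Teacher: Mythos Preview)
Your proposal is correct and follows the same overall architecture as the paper: choose $\Gamma=\{R_1,\dots,R_d\}$, verify the two hypotheses of Theorem~\ref{theorem:BFSextrapolation}, reduce the scalar hypothesis to $L^2(v)$ via classical Rubio de Francia extrapolation, and conclude by density.

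The one point of divergence is the verification that $\widetilde{\Gamma}$ is $\ell^2$-bounded on $L^p(\R^d,w;X)$, which you flag as ``the hard part''. The paper sidesteps the detour through $X(\ell^2)$ entirely: since $\Gamma$ has only $d$ elements and the $\ell^2$-structure is ideal (Krivine--Grothendieck), Proposition~\ref{proposition:idealsingleoperator} gives $\nrm{\{\widetilde{R}_k\}}_{\ell^2}\le K_G\,\nrm{\widetilde{R}_k}_{\mc{L}(L^p(\R^d,w;X))}$ for each $k$, and then Proposition~\ref{proposition:alphaproperties}\ref{it:propalha3} bounds the finite union by the sum. Thus one only needs boundedness of each $\widetilde{R}_k$ on $L^p(\R^d,w;X)$, which follows directly from the $\UMD$ property of $X$ (not of $X(\ell^2)$) together with the weighted vector-valued Calder\'on--Zygmund theory. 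Your route via the stability of $\UMD$ under $X\mapsto X(\ell^2)$ is valid but unnecessary here; it would become essential only if $\Gamma$ were infinite.
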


\begin{proof}
For $k=1,\cdots,d$ let $R_k$ denote the $k$-th Riesz projection on $L^p(\R^d,w)$ and set $\Gamma = \cbrace{R_k:k=1,\ldots,d}$. Then we have for any weight $v$ on $\R^d$ that
    \begin{equation*}
      [v]_{A_2} \lesssim_d \has{\sup_{T \in \Gamma}\, \nrm{T}_{L^2(\R^d,v)\to L^2(\R^d,v)}}^4.
    \end{equation*}
by \cite[Theorem 7.4.7]{Gr14a}. Moreover by the triangle inequality, the ideal property of the $\ell^2$-structure, \cite[Theorem 5.5.1]{HNVW16} and \cite[Corollary 2.11]{HH14} we have
\begin{equation*}
  \nrm{\widetilde{\Gamma}}_{\ell^2} \lesssim \sum_{k=1}^d \nrm{R_k}_{L^p(\R^d,w;X)\to L^p(\R^d,w;X)} \lesssim_{X,p,d} [w]_{A_p}^{\max\cbrace{\frac{1}{p-1},1}}
\end{equation*}
Thus $\Gamma$ satisfies the assumptions of Theorem \ref{theorem:BFSextrapolation}. Let $f \in L^p(\R^d,w)\otimes X$.  By Rubio de Francia extrapolation (see \cite[Theorem 3.9]{CMP12}) there is an increasing function $\psi\colon\R_+\to \R_+$, depending on $\phi,p,p_0,d$, such that
  for all $v \in A_{2}$ we have
  \begin{equation*}
    \nrm{Tf(\cdot,s)}_{L^2(\R^d,v)} \leq \psi([v]_{A_{2}}) \nrm{f(\cdot,s)}_{L^2(\R^d,v)}, \qquad s \in S.
  \end{equation*}
  Therefore by Theorem \ref{theorem:BFSextrapolation} we obtain
  \begin{equation*}
       \nrm{Tf}_{L^p(\R^d,w;X)} \leq c\cdot  \psi\hab{C_{X,p,d}\cdot [w]_{A_p}^{4\cdot \max\cbrace{\frac{1}{p-1},1}} }\nrm{f}_{L^p(\R^d,w;X)}
  \end{equation*}
 which yields the desired result by density.
\end{proof}

The advantages of Theorem \ref{theorem:BFSextrapolation} and Corollary \ref{corollary:extrapolationUMD} over \cite[Theorem 5]{Ru86} are as follows
\begin{itemize}
  \item Theorem \ref{theorem:BFSextrapolation} yields a quantitative estimate of the involved constants, whereas this dependence is hard to track in \cite[Theorem 5]{Ru86}.
  \item Theorem \ref{theorem:BFSextrapolation} and Corollary \ref{corollary:extrapolationUMD} allow weights in the conclusion, whereas  \cite[Theorem 5]{Ru86} only yields an unweighted extension.
  \item \cite[Theorem 5]{Ru86} relies upon the boundedness of the lattice Hardy-Littlewood maximal operator on $L^p(\R^d;X)$,  whereas this is not used in the proof of Theorem \ref{theorem:BFSextrapolation}. Therefore, we can use Theorem \ref{theorem:BFSextrapolation} to give a quantitative proof of the boundedness of the lattice Hardy-Littlewood maximal operator on $\UMD$ Banach function spaces, see Theorem \ref{theorem:latticemaximal}.
  \item Instead of assuming the $\UMD$ property of $X$, the assumptions of Theorem \ref{theorem:BFSextrapolation} are flexible enough to allow one to deduce the $\UMD$ property of $X$ from $\ell^2$-boundedness of other operators, see Theorem \ref{theoreml2:sectoriality}.
\end{itemize}

\begin{remark}
  Rubio de Francia's extension theorem for $\UMD$ Banach function spaces  has also been generalized  in \cite{ALV17,LN19,LN20}:
\begin{itemize}
\item  In \cite[Corollary 3.6]{ALV17} a rescaled version of Corollary \ref{corollary:extrapolationUMD} has been obtained by adapting the original proof of Rubio de Francia.
\item In \cite{LN19} the proof of \cite[Theorem 5]{Ru86} has been generalized to allow for a multilinear limited range variant. The proof of Theorem \ref{theorem:BFSextrapolation} does not lend itself for such a generalization.
\item Using the stronger assumption of sparse domination, the result in \cite{LN19} has been made quantitative and has been extended to multilinear weight and $\UMD$ classes in \cite{LN20}.
\end{itemize}
\end{remark}

\subsection*{The lattice Hardy--Littlewood maximal operator}\label{subsection:latticemaximal}
As a first application of  Theorem \ref{theorem:BFSextrapolation}, we will show the boundedness of the lattice Hardy--Littlewood maximal operator on $\UMD$ Banach function spaces. Let $X$ be an order-continuous Banach function space and $p \in (1,\infty)$.
For $f \in L^p(\R^d;X)$ the lattice Hardy--Littlewood maximal operator is defined as
\begin{equation*}
  \widetilde{M}f(t) := \sup_{Q\ni t} \,\has{\frac{1}{\abs{Q}}\int_Q \abs{f} \dd \lambda}, \qquad t \in \R^d,
\end{equation*}
where the supremum is taken in the lattice sense over cubes $Q \subseteq \R^d$ containing $t$ (see \cite{GMT93} or \cite[Section 5]{HL17} for the details).
 The boundedness of the lattice Hardy-Littlewood maximal operator for $\UMD$ Banach function spaces $X$ is a deep result shown by Bourgain \cite{Bo84} and Rubio de Francia \cite{Ru86}. Using this result, the following generalizations were subsequently shown on $\UMD$ Banach function spaces:
 \begin{itemize}
   \item Garc\'ia-Cuerva, Mac\'ias and Torrea showed in \cite{GMT93} that $\widetilde{M}$ is bounded on $L^p(\R^d,w;X)$ for all Muckenhoupt weights $w \in A_p$. Sharp dependence on the weight characteristic was obtained in \cite{HL17} by H\"anninen and the second author.
   \item Deleaval, Kemppainen and Kriegler showed in \cite{DKK18} that $\widetilde{M}$ is bounded on $L^p(S;X)$ for any space of homogeneous type $S$.
   \item Deleaval and Kriegler obtained dimension free estimates for a centered version of $\widetilde{M}$ on $L^p(\R^d;X)$ in \cite{DK17b}.
 \end{itemize}
With Theorem \ref{theorem:BFSextrapolation} we can reprove the result of Bourgain and Rubio de Francia and obtain an explicit estimate of the operator norm of $\widetilde{M}$ in terms of the $\UMD$ constant of $X$. Tracking this dependence in the proof of Bourgain and Rubio de Francia would be hard, as it involves the weight characteristic dependence of the inequality \cite[(a.5)]{Ru86}.

\begin{theorem}\label{theorem:latticemaximal}
  Let $X$ be a $\UMD$ Banach function space with cotype $q \in (1,\infty)$ with constant $c_{q,X}$. The lattice Hardy-Littlewood maximal operator $\widetilde{M}$ is bounded on $L^p(\R^d;X)$ for all $p \in (1,\infty)$ with
  \begin{equation*}
    \nrm{\widetilde{M}}_{\mc{L}(L^p(\R^d;X))} \lesssim \,q \bigl(c_{q,X}\beta_{p,X}\bigr)^2,
  \end{equation*}
  where the implicit constant only depends on $p$ and $d$.
\end{theorem}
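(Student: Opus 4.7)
The plan is to apply Theorem \ref{theorem:BFSextrapolation} with the unweighted Lebesgue weight $w \equiv 1$ and with $\Gamma$ the family of averaging operators
\[
T_Q f(t) := \has{\frac{1}{\abs{Q}}\int_Q f\,\dd\lambda}\,\ind_Q(t), \qquad Q \subseteq \R^d \text{ a cube}.
\]
A direct Cauchy--Schwarz computation gives $\sup_Q \nrm{T_Q}^2_{\mc{L}(L^2(\R^d, v))} = [v]_{A_2}$, with equality tested on $v^{-1}\ind_Q$, so the first hypothesis of Theorem \ref{theorem:BFSextrapolation} is verified with $\phi(t) = t^2$. Choosing in the theorem the ``$f$'' to be $\widetilde{M}f_0$ (the quantity to be estimated) and ``$g$'' to be $f_0$ for $f_0 \in L^p(\R^d;X)$, I observe that the supremum in $\widetilde{M}f_0$ may be realized on the countable dense family of cubes with rational vertices, so that $\widetilde{M}f_0(t,s) = M(f_0(\cdot,s))(t)$ for a.e.~$(t,s)$, where $M$ is the scalar Hardy--Littlewood maximal operator. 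Buckley's sharp weighted inequality then yields, for every $v \in A_2$ and $s \in S$,
\[
\nrm{M(f_0(\cdot, s))}_{L^2(\R^d, v)} \leq c_d [v]_{A_2} \nrm{f_0(\cdot, s)}_{L^2(\R^d, v)},
\]
so Theorem \ref{theorem:BFSextrapolation} applies with $\psi(t) = c_d t$.

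The crux of the argument is establishing quantitatively that $\widetilde{\Gamma} = \cbrace{\widetilde{T}_Q}$ is $\ell^2$-bounded on $L^p(\R^d; X)$ with $\nrm{\widetilde{\Gamma}}_{\ell^2} \lesssim \sqrt{q}\,c_{q,X}\,\beta_{p,X}$. This cannot invoke the lattice maximal inequality itself, under pain of circularity, so I proceed as follows. Fix a dyadic grid $\mc{D}$; the averages $(T_Q)_{Q \in \mc{D}}$ are conditional expectations with respect to a nested dyadic filtration. The Stein inequality for UMD Banach function spaces implies that dyadic conditional expectations are $\gamma$-bounded on $L^p(\R^d; X)$ with constant $\lesssim \beta_{p,X}$. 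The three-lattice theorem expresses every cube as a bounded (dimensional) union of dyadic cubes drawn from finitely many shifted dyadic systems, transferring the $\gamma$-boundedness to the full family $\cbrace{T_Q}$. Since $X$ has cotype $q$, Proposition \ref{proposition:gaussianradermacherl2comparison} and the sharp Gaussian-to-$\ell^2$ conversion produce $\nrm{\widetilde{\Gamma}}_{\ell^2} \lesssim \sqrt{q}\,c_{q,X}\,\beta_{p,X}$, where the $\sqrt{q}$ reflects the sharp cotype constant appearing in the passage from Gaussian sums to lattice $\ell^2$-sums.

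Putting the pieces together, Theorem \ref{theorem:BFSextrapolation} gives
\[
\nrm{\widetilde{M}f_0}_{L^p(\R^d; X)} \leq c\,\psi\circ \phi\hab{c\,\nrm{\widetilde{\Gamma}}_{\ell^2}} \nrm{f_0}_{L^p(\R^d; X)} \lesssim \nrm{\widetilde{\Gamma}}_{\ell^2}^2 \nrm{f_0}_{L^p(\R^d; X)},
\]
since $\psi \circ \phi(t) \simeq t^2$. Inserting the bound on $\nrm{\widetilde{\Gamma}}_{\ell^2}$ produces the desired estimate $\nrm{\widetilde{M}}_{\mc{L}(L^p(\R^d; X))} \lesssim q\,(c_{q,X}\,\beta_{p,X})^2$, with an implicit constant depending only on $p$ and $d$. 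The principal obstacle is precisely the $\ell^2$-boundedness step: obtaining it quantitatively and without circularity forces a careful combination of the Stein inequality for dyadic conditional expectations in UMD lattices, the three-lattice reduction from arbitrary to shifted dyadic cubes, and the sharp cotype-based conversion between randomized and $\ell^2$-lattice norms, all while tracking the exact dependence on $\beta_{p,X}$, $c_{q,X}$, and $q$.
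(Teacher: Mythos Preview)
Your proposal is correct and follows essentially the same route as the paper: apply Theorem \ref{theorem:BFSextrapolation} with $\Gamma$ the family of cube-averaging operators, verify the $A_2$ characterization with $\phi(t)=t^2$, use Buckley's linear bound for the scalar maximal operator as $\psi$, and feed in the $\ell^2$-boundedness of $\widetilde{\Gamma}$ with constant $\lesssim \sqrt{q}\,c_{q,X}\,\beta_{p,X}$. The paper obtains the latter by a direct citation of \cite[Theorem 7.2.13 and Proposition 8.1.13]{HNVW17}, which is precisely the Stein-inequality-plus-cotype mechanism you outline (your phrasing is slightly loose --- $T_Q$ is $\ind_Q$ times a conditional expectation rather than a conditional expectation itself, and once one has $\mc{R}$-boundedness of arbitrary conditional expectations the three-lattice reduction is not really needed --- but the argument is sound).
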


\begin{proof}
   Let $p \in (1,\infty)$ and $f \in L^p(\R^d)$. Define for any cube $Q \subseteq \R^d$  the averaging operator
  \begin{equation*}
    T_Qf(t) := \has{\frac{1}{\abs{Q}}\int_Q f \dd \lambda} \ind_Q(t), \qquad t \in \R^d
  \end{equation*}
and set $\Gamma:= \cbrace{T_Q:Q \text{ a cube in }\R^d}.$ Then we know that $\widetilde{\Gamma}$ is $\ell^2$-bounded on $L^p(\R^d;X)$ with
  \begin{equation*}
    \nrm{\widetilde{\Gamma}}_{\ell^2} \lesssim \sqrt{q}c_{q,X} \beta_{p,X}
  \end{equation*}
  by \cite[Theorem 7.2.13 and Proposition 8.1.13]{HNVW17}, where the implicit constant depends on $p$ and $d$.

  Let $w\colon \R^d \to (0,\infty)$ and set $C:= \sup_{T \in \Gamma}\, \nrm{T}_{\mc{L}(L^2(\R^d,w))}$. Fix a cube $Q \subseteq \R^d$. Applying $T_Q$ to the function $(w+\varepsilon)^{-1} \ind_Q$ for some $\varepsilon>0$ we obtain
  \begin{equation*}
    \int_{Q} \has{\frac{1}{\abs{Q}} \int_Q (w(t)+\varepsilon)^{-1} \dd t}^2  w(s)\dd s  \leq C^2 \int_Q \frac{w(t)}{(w(t)+\varepsilon)^2} \dd t
  \end{equation*}
  which implies
  \begin{equation*}
    \has{\frac{1}{\abs{Q}}\int_Q w(t) \dd t} \has{\frac{1}{\abs{Q}} \int_Q (w(t)+\varepsilon)^{-1} \dd t } \leq C^2
  \end{equation*}
  So by letting $\varepsilon \to 0$ with the monotone convergence theorem, we obtain $w \in A_2$ with $[w]_{A_2} \leq C^2$. So $\Gamma$ satisfies the assumptions of Theorem \ref{theorem:BFSextrapolation} with $\phi(t) = t^2$.

By Theorem \ref{theorem:BFSextrapolation}, using the weighted boundedness of the scalar-valued Hardy-Littlewood maximal operator from \cite[Theorem 7.1.9]{Gr14a}, we know that for any simple function $f\colon \R^d \to X$
  \begin{equation*}
    \nrmb{\widetilde{M}f}_{L^p(\R^d;X)} \lesssim q \bigl(c_{q,X}\beta_{p,X}\bigr)^2\nrm{f}_{L^p(\R^d;X)}.
  \end{equation*}
  where the implicit constant depends on $p$ and $d$.  So, by the density of the simple functions in $L^p(\R^d;X)$, we obtain the desired result.
  \end{proof}

\begin{remark}\label{remark:maximalbdd}~
\begin{itemize}
\item One could also use $\Gamma = \cbrace{H}$ or $\Gamma = \cbrace{R_k:k=1,\cdots, d}$ in the proof of Theorem \ref{theorem:latticemaximal}, where $H$ is the Hilbert transform and $R_k$ is the $k$-th Riesz projection. Then the first assumption on $\Gamma$ in Theorem \ref{theorem:BFSextrapolation} follows from \cite[Theorem 7.4.7]{Gr14a} and the second from \cite[Theorem 5.1.1 and 5.5.1]{HNVW16} and the ideal property of the $\ell^2$-structure.
  \item In Theorem \ref{theorem:latticemaximal} the assumption that $X$ has finite cotype may be omitted, since the $\UMD$ property implies that there exists a constant $C_p>0$ such that $X$ has cotype $C_p \beta_{p,X}$ with constant less than $C_p$ (see \cite[Lemma 32]{HLN16}). This yields the bound $ \nrm{\widetilde{M}}_{\mc{L}(L^p(\R^d;X))} \lesssim \beta_{p,X}^3$  in the conclusion of Theorem \ref{theorem:latticemaximal}.
  \item One would be able to avoid the cotype constant in the conclusion of Theorem \ref{theorem:latticemaximal} if one can find a single operator $T$ that both characterizes $v \in A_p$ with $\phi(t) = t^2$ and is bounded on $L^p(\R^d;X)$ with $\nrm{T}_{\mc{L}(L^p(\R^d;X)} \lesssim \beta_{p,X}$.
\end{itemize}
\end{remark}

\subsection*{Randomized \texorpdfstring{$\UMD$}{UMD} properties}
As a second application of Theorem \ref{theorem:BFSextrapolation} we will prove the equivalence of the $\UMD$ property and the dyadic $\UMD^+$ property. Let us start by introducing the randomized $\UMD$ properties for a Banach space $X$.

We say that $X$ has the $\UMD^+$ (respectively $\UMD^-$) property if for some (equivalently all) $p \in (1,\infty)$ there exists a constant $\beta^+>0$ (respectively $\beta^->0$) such that for all finite martingales $(f_k)_{k=1}^n$ in $L^p(S;X)$ on a  $\sigma$-finite measure space $(S,\mu)$ we have
  \begin{equation}\label{eq:UMD+-}
  \frac{1}{\beta^-}\nrms{\sum_{k=1}^n df_k}_{L^p(S;X)} \leq \nrms{\sum_{k=1}^n \varepsilon_k df_k}_{L^p(S\times \Omega;X)} \leq \beta^+ \nrms{\sum_{k=1}^n df_k}_{L^p(S;X)},
\end{equation}
where $(\varepsilon_k)_{k=1}^n$ is a Rademacher sequence on $(\Omega,\P)$. The least admissible constants in \eqref{eq:UMD+-} will be denoted by $\beta_{p,X}^{+}$ and $\beta_{p,X}^-$.
If \eqref{eq:UMD+-} holds for all Paley-Walsh martingales on a probability space $(S,\mu)$ we say that $X$ has the dyadic $\UMD^+$ or $\UMD^-$ property respectively and denote the least admissible constants by $\beta_{p,X}^{\Delta,+}$ and $\beta_{p,X}^{\Delta,-}$.

As for the $\UMD$ property, the $\UMD^+$ and $\UMD^-$ properties are independent of $p \in (1,\infty)$ (see \cite{Ga90}). We trivially have $\beta_{p,X}^{\Delta,-} \leq \beta_{p,X}^{-}$ and $\beta_{p,X}^{\Delta,+} \leq \beta_{p,X}^{+}$. Furthermore we know that $X$ has the $\UMD$ property if and only if it has the $\UMD^+$ and $\UMD^-$ properties with
\begin{equation*}
  \max\cbrace{\beta^-_{p,X},\beta^+_{p,X}} \leq \beta_{p,X} \leq \beta_{p,X}^-\beta_{p,X}^+,
\end{equation*}
see e.g. \cite[Proposition 4.1.16]{HNVW16}. The relation between the norm of the Hilbert transform on $L^p(\mathbb{T};X)$ and $\beta_{p,X}^{\Delta,+}$ and $\beta_{p,X}^{\Delta,-}$ has recently been investigated in \cite{OY18} and the $\UMD^+$ property was recently shown to be equivalent to a recoupling property for tangent martingales in \cite{Ya20}. We refer to \cite{HNVW16, Ve07} for further information on these randomized $\UMD$ properties.

Two natural questions regarding these randomized $\UMD$ properties are the following:
\begin{itemize}
  \item Does either the $\UMD^-$ property or the $\UMD^+$ property imply the $\UMD$ property? For the $\UMD^-$ property it turns out that this is not the case, as any $L^1$-space has it, see \cite{Ga90}. For the $\UMD^+$ property this is an open problem. For general Banach spaces it is known that one cannot expect a better than quadratic bound relating $\beta_{p,X}$ and $\beta_{p,X}^+$ (see \cite[Corollary 5]{Ge99}).
  \item The dyadic $\UMD$ property implies its non-dyadic counterpart. Does the same hold for the dyadic $\UMD^+$ and $\UMD^-$ properties? For the $\UMD^-$ property it is known that the constants $\beta_{p,X}^-$ and $\beta_{p,X}^{\Delta,-}$ are not the same in general, as explained in \cite{CV11}.
\end{itemize}
Using Theorem \ref{theorem:BFSextrapolation}, we will show that on Banach function spaces the dyadic $\UMD^+$ property implies the $\UMD$ property (and thus also the $\UMD^+$ property), with a quadratic estimate of the respective constants.

The equivalence of the $\UMD^+$ property and the $\UMD$ property on Banach function spaces has previously been shown in unpublished work of T.P. Hyt\"{o}nen, using Stein's inequality to deduce the $\ell^2$-boundedness of the Poisson semigroup on $L^p(\R^d;X)$, from which the boundedness of the Hilbert transform on $L^p(\R^d;X)$ was concluded using Theorem \ref{theorem:BFSfactorization}.

\begin{theorem}\label{theorem:UMD+UMD}
  Let $X$ be a Banach function space on $(S,\Sigma,\mu)$. Assume that $X$ has the dyadic $\UMD^+$ property and cotype $q\in(1,\infty)$ with constant $c_{q,X}$. Then $X$ has the $\UMD$ property with
  \begin{equation*}
    \beta_{p,X} \lesssim  q\,\hab{c_{q,X} \, \beta_{p,X}^{\Delta,+}}^2,
    \end{equation*}
  where the implicit constant only depends on $p \in (1,\infty)$.
\end{theorem}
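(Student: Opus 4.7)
The plan is to derive the $\UMD$ property from boundedness of the Hilbert transform on $L^p(\R;X)$, which I would obtain by applying Theorem \ref{theorem:BFSextrapolation} to a suitably chosen family $\Gamma$ of cube averaging operators. The route mimics the proof of Theorem \ref{theorem:latticemaximal}, but with the full $\UMD$ constant $\beta_{p,X}$ replaced by $\beta_{p,X}^{\Delta,+}$, and with $H$ in place of the maximal function.

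First, I would establish $\ell^2$-boundedness of $\Gamma=\{T_Q:Q\text{ an interval in }\R\}$ on $L^p(\R;X)$ with the quantitative bound
\begin{equation*}
    \nrm{\widetilde{\Gamma}}_{\ell^2}\,\lesssim\,\sqrt{q}\,c_{q,X}\,\beta_{p,X}^{\Delta,+}.
\end{equation*}
For each (translated and dilated) dyadic grid $\mc{D}$ on $\R$, the dyadic averaging operators $T_Q$ with $Q\in\mc{D}$ are conditional expectations against a Paley--Walsh filtration, so the dyadic $\UMD^+$ hypothesis delivers Stein's inequality for these conditional expectations with constant $\lesssim \beta_{p,X}^{\Delta,+}$; invoking Proposition \ref{proposition:gaussianradermacherl2comparison} together with cotype $q$ (Proposition \ref{proposition:compareEuclidean}\ref{it:compareEuclidean2}) converts the resulting Rademacher-boundedness into $\ell^2$-boundedness at the cost of $\sqrt{q}\,c_{q,X}$. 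The one-third trick then represents any interval $Q\subseteq\R$ as a union of a bounded number of dyadic intervals drawn from $O(1)$ shifted dyadic grids of comparable scale, so the triangle inequality upgrades dyadic cube averages to all cube averages.

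Second, I would verify the hypotheses of Theorem \ref{theorem:BFSextrapolation}. Exactly the computation in the proof of Theorem \ref{theorem:latticemaximal} (testing $T_Q$ on $(w+\varepsilon)^{-1}\ind_Q$) shows that uniform $L^2(\R,v)$-boundedness of $\Gamma$ forces $v\in A_2$ with $[v]_{A_2}\leq C^2$, i.e.\ one can take $\phi(t)=t^2$. The Hilbert transform satisfies $\nrm{Hf}_{L^2(\R,v)}\lesssim [v]_{A_2}\nrm{f}_{L^2(\R,v)}$ for $v\in A_2$ (or more crudely any increasing function of $[v]_{A_2}$, obtained from its $L^{p_0}(v)$ boundedness for $v\in A_{p_0}$ via Rubio de Francia extrapolation), so one can take $\psi(t)\lesssim t$. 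Applying Theorem \ref{theorem:BFSextrapolation} with weight $w\equiv 1$ to pairs $(Hf,f)$ for $f$ smooth and compactly supported, and extending by density, yields
\begin{equation*}
    \nrm{Hf}_{L^p(\R;X)}\,\lesssim\,\psi\circ\phi\bigl(c\,\nrm{\widetilde{\Gamma}}_{\ell^2}\bigr)\,\nrm{f}_{L^p(\R;X)}\,\lesssim\,q\,\bigl(c_{q,X}\,\beta_{p,X}^{\Delta,+}\bigr)^{2}\nrm{f}_{L^p(\R;X)},
\end{equation*}
the powers tracking as $\phi(\sqrt q\cdot\text{const})=q\cdot\text{const}^2$ and $\psi$ linear.

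Finally, the boundedness of $H$ on $L^p(\R;X)$ implies the $\UMD$ property of $X$ with $\beta_{p,X}\lesssim\nrm{H}_{\mc{L}(L^p(\R;X))}$ by the theorem of Bourgain--Burkholder; combining with the previous step yields the claimed inequality. The principal technical obstacle is the first step: one must check that the \emph{dyadic} (Paley--Walsh) $\UMD^+$ property, rather than its continuous filtration analogue, suffices to run Stein's inequality uniformly across all shifted dyadic grids. This is handled by the translation-invariance of Lebesgue measure (each shifted grid yields an isomorphic Paley--Walsh filtration with the same constant) and the one-third trick to pass from dyadic to arbitrary intervals.
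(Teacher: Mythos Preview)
Your route is close to the one sketched in the paper's introductory paragraph (attributed to Hyt\"onen) via the Hilbert transform, but it differs from the paper's actual proof, which stays dyadic throughout: it works on $[0,1)$, takes $\Gamma=\{\E(\cdot\mid\ms{D}_k):k\in\N\}$ (a genuine Paley--Walsh filtration, so dyadic Stein's inequality applies directly with constant $\beta_{p,X}^{\Delta,+}$), uses the dyadic $A_2$ class, and then applies Theorem~\ref{theorem:BFSextrapolation} to the Haar multipliers rather than to $H$. The advantage is that the unconditionality constant of the Haar system on $L^p([0,1);X)$ \emph{equals} $\beta_{p,X}$ (this is the characterisation in \cite[Theorem~4.2.13]{HNVW16}), so no conversion step is needed at the end.

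Your argument gives the qualitative conclusion, but the quantitative claim has a gap: the implication ``$H$ bounded on $L^p(\R;X)\Rightarrow X$ is $\UMD$'' is Burkholder's theorem, and the known bound is $\beta_{p,X}\lesssim_p \nrm{H}_{\mc{L}(L^p(\R;X))}^2$, not linear; the linear dependence you invoke is an open problem. With the quadratic bound your chain yields $\beta_{p,X}\lesssim q^2\,(c_{q,X}\,\beta_{p,X}^{\Delta,+})^4$ rather than the stated estimate. A second, smaller point: the operators $T_Q$ for $Q\in\ms{D}$ are not themselves conditional expectations but rather $\ind_Q\cdot\E(\cdot\mid\ms{D}_{k(Q)})$; this still gives $\ell^2$-boundedness (the indicator multiplication is free in the $\ell^2$-structure), but one must also reduce the shifted dyadic grids on $\R$ to Paley--Walsh filtrations on a probability space by localisation and rescaling before dyadic $\UMD^+$ applies. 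Both issues disappear if, instead of $H$, you extrapolate the Haar multipliers on $[0,1)$ exactly as the paper does.
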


\begin{proof}
Denote the standard dyadic system on $[0,1)$ by $\ms{D}$, i.e.
\begin{equation*}
  \ms{D} := \bigcup_{k \in \N} \ms{D}_k,\qquad \ms{D}_k := \cbrace{2^{-k}([0,1)+j) : j=0,\ldots, 2^k-1}.
\end{equation*}
Then $(\ms{D}_k)_{k=1}^n$ is a Paley-Walsh filtration on $[0,1)$ for all $n \in \N$. Let $p \in (1,\infty)$ and define
$$\Gamma := \cbraceb{\E(\cdot|\ms{D}_k): k \in \N}$$
 on $L^p(0,1)$. By a dyadic version of Stein's inequality, which can be proven analogously to \cite[Theorem 4.2.23]{HNVW16}, we have
\begin{equation*}
   \nrms{\sum_{k=1}^n \varepsilon_k \E(f_k|\ms{D}_k)}_{L^p([0,1)\times \Omega;X)} \leq \beta_{p,X}^{\Delta,+}\, \nrms{\sum_{k=1}^n \varepsilon_k f_k}_{L^p([0,1)\times \Omega;X)},
\end{equation*}
where $(\varepsilon_k)_{k=1}^n$ is a Rademacher sequence.
So by \cite[Theorem 7.2.13]{HNVW17} and the ideal property of the $\ell^2$-structure, we know that $\widetilde{\Gamma}$ is $\ell^2$-bounded with
  \begin{equation}\label{eq:UMD+UMDl2bounded}
    \nrmb{\widetilde{\Gamma}}_{\ell^2}\leq C \,\sqrt{q} \, c_{q,X} \, \beta_{p,X}^{\Delta,+},
  \end{equation}
where $C>0$ only depends on $p$.

Define the dyadic weight class $A_2^{\ms{D}}$ as all weights $w$ on $[0,1)$ such that
\begin{equation*}
 [w]_{A_2^{\ms{D}}} := \sup_{I \in \ms{D}} \frac{1}{\abs{I}}\int_I w\dd \lambda \cdot \frac{1}{\abs{I}}\int_I w^{-1} \dd \lambda<\infty.
\end{equation*}
Let $w:[0,1) \to (0,\infty)$ be a weight.  Arguing as in Theorem \ref{theorem:latticemaximal}, we know that $$[w]_{A_2^{\ms{D}}} \leq \has{\sup_{T \in \Gamma}\nrm{T}_{\mc{L}(L^2(w))}}^2.$$
Furthermore note that, with a completely analogous proof, Theorem \ref{theorem:BFSextrapolation} is also valid for the interval $[0,1)$ instead of $\R^d$ and using weights $v \in A_2^{\ms{D}}$ instead of weights $v \in A_2$. Therefore we know that if $f,g \in L^p([0,1);X)$ are such that for all $v \in A_2^{\ms{D}}$ we have
  \begin{align}
    \label{eq:dyadicextrapass} \nrm{f(\cdot,s)}_{L^2([0,1),v) }&\leq  C\cdot [v]_{A_2^{\ms{D}}} \nrm{g(\cdot,s)}_{L^2([0,1),v) }, \qquad  s \in S,
  \intertext{then it follows that}
   \label{eq:dyadicextracon} \nrm{f}_{L^p([0,1);X)} &\leq c\cdot C\cdot\nrm{\widetilde{\Gamma}}_{\ell^2}^2 \, \nrm{g}_{L^p([0,1);X)},
  \end{align}
  for some numerical constant $c$.

   Define for every interval $I \in \ms{D}$ the Haar function $h_I$ by
\begin{equation*}
  h_I := \abs{I}^{\frac{1}{2}}(\ind_{I_-} - \ind_{I_+}),
\end{equation*}
where $I_+$ and  $I_-$ are the left and right halve of $I$. For $f \in L^p([0,1);X)$ define the Haar projection $D_I$ by
\begin{equation*}
  D_I f(t) := h_I(t) \int_{0}^1 f(s) h_I(s) \dd s
\end{equation*}
Let $\mc{A}$ be the set of all simple functions $f \in L^p([0,1);X)$ such that $D_I f \neq 0$ for only finitely many $I \in \ms{D}$. Then for all $f \in \mc{A}$, $w \in A_2^{\ms{D}}$ and $\epsilon_I \in \cbrace{-1,1}$ we have
\begin{equation*}
      \nrms{\sum_{I \in \ms{D}} \epsilon_I D_I f(\cdot,\omega)}_{L^2([0,1),w) }\lesssim [w]_{{A}_2^{\ms{D}}} \, \nrm{f(\cdot,s)}_{L^2([0,1),w) }, \qquad  s \in S
\end{equation*}
by \cite{Ja00}, so \eqref{eq:dyadicextrapass} is satisfied.  Therefore, using \eqref{eq:UMD+UMDl2bounded} and \eqref{eq:dyadicextracon}, we obtain that
\begin{equation}\label{eq:haaruncond}
  \nrms{\sum_{I \in \ms{D}} \epsilon_I D_I f}_{L^p([0,1);X)} \leq C\, q\,\bigl(c_{q,X} \,\beta_{p,X}^{\Delta,+}\bigr)^2 \nrm{f}_{L^p([0,1);X)}
\end{equation}
for all $f \in \mc{A}$ and $\epsilon_I \in \cbrace{-1,1}$. Note that $\mc{A}$ is dense in $L^p([0,1);X)$ by \cite[Lemma 4.2.12]{HNVW16} and we may take $\epsilon_I \in \C$ with $\abs{\epsilon_I} = 1$ by the triangle inequality. So
\begin{equation*}
    \beta_{p,X} \leq C\, q\,\bigl(c_{q,X} \,\beta_{p,X}^{\Delta,+}\bigr)^2
  \end{equation*}
 as \eqref{eq:haaruncond} characterizes the $\UMD$ property of $X$ by \cite[Theorem 4.2.13]{HNVW16}.
 \end{proof}

\begin{remark}~
\begin{itemize}
  \item  As in Remark \ref{remark:maximalbdd},  the assumption that $X$ has finite cotype may be omitted in Theorem \ref{theorem:UMD+UMD}.  This would yield the bound $\beta_{p,X} \leq C_p\,\bigl(\beta_{p,X}^{\Delta,+}\bigr)^3$ for all $p \in (1,\infty)$ in the conclusion of Theorem \ref{theorem:UMD+UMD}.
  \item A similar argument as in the proof of Theorem \ref{theorem:UMD+UMD} can be used to show Theorem \ref{theorem:latticemaximal} with the sharper estimate
  \begin{equation*}
    \nrm{\widetilde{M}}_{\mc{L}(L^p(\R^d;X))} \lesssim \,q \bigl(c_{q,X}\beta_{p,X}^{+,\Delta}\bigr)^2.
  \end{equation*}
\end{itemize}
\end{remark}

\subsection*{\texorpdfstring{$\ell^2$}{l2}-sectoriality  and the \texorpdfstring{$\UMD$}{UMD} property}\label{subsection:ell2vsUMD}
For the $\ell^2$-structure on a Banach function space $X$ we say that a sectorial operator $A$ on $X$ is $\ell^2$-sectorial if the resolvent set $$\cbrace{\lambda R(\lambda,A):\lambda \neq 0, \abs{\arg \lambda}>\sigma }$$ is $\ell^2$-bounded for some $\sigma \in (0,\pi)$. We will introduce $\alpha$-sectorial operators properly in Chapter \ref{part:4}.

It is well-known that both the differentiation operator $Df := f'$ with domain $W^{1,p}(\R;X)$ and the Laplacian $-\Delta$ with domain $W^{2,p}(\R^d;X)$ are $\mc{R}$-sectorial, and thus $\ell^2$-sectorial,
if $X$ has the $\UMD$ property (see \cite[Example 10.2]{KW04} and \cite[Theorem 10.3.4]{HNVW17}).
 Using Theorem \ref{theorem:BFSextrapolation} we can turn this into an `if and only if' statement for order-continuous Banach function spaces.

\begin{lemma} \label{lemma:A2testing}
  Let $0\neq\varphi \in L^1(\R^d)\cap L^2(\R^d)$ be real-valued and let $w$ be a weight on $\R^d$. Suppose that there is a $C>0$ such that for all $f \in L^2(\R^d,w)$ and $\lambda \in \R$ we have
  \begin{equation*}
    \nrm{\varphi_\lambda * f}_{L^2(\R^d,w)} \leq C\, \nrm{f}_{L^2(\R^d,w)}
  \end{equation*}
  where $\varphi_\lambda(t) := \abs{\lambda}^d \varphi(\lambda t)$ for $t \in \R^d$. Then $w \in A_2$ and $[w]_{A_2} \lesssim C^4$, where the implicit constant depends on $\varphi$ and $d$
\end{lemma}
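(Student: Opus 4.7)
The plan is to reduce verification of the $A_2$ condition to a testing argument in the spirit of the proof of Theorem \ref{theorem:latticemaximal}, where the boundedness of the averaging operators $T_Q$ was tested on $(w+\varepsilon)^{-1}\ind_Q$. The obstacle in the present setting is that the real-valued $\varphi$ need not be positive near the origin, so one cannot directly lower-bound $\varphi_\lambda$ on the difference set $Q-Q$ of a cube $Q$. I would overcome this by exploiting that the hypothesis allows \emph{all} $\lambda \in \R$, not just positive ones, and by symmetrizing the kernel.

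The first step is to observe that for $\lambda = -\mu$ with $\mu > 0$,
\[
  \varphi_\lambda(t) = \mu^d \varphi(-\mu t) = \tilde\varphi_\mu(t), \qquad \tilde\varphi(x) := \varphi(-x),
\]
so the hypothesis yields that convolution with $\varphi_\mu$ \emph{and} with $\tilde\varphi_\mu$ are both bounded on $L^2(\R^d,w)$ with norm at most $C$ for every $\mu > 0$. I would then define $\psi := \varphi * \tilde\varphi$, which belongs to $L^1(\R^d) \cap C_0(\R^d)$ since $\varphi \in L^1 \cap L^2$. A direct computation gives $\psi(0) = \nrm{\varphi}_{L^2}^2 > 0$, so by continuity there exists $\rho_0 > 0$ with $\psi \geq \tfrac12 \nrm{\varphi}_{L^2}^2$ on $B(0,\rho_0)$. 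A change of variables gives $\psi_\mu = \varphi_\mu * \tilde\varphi_\mu$, so chaining the two convolution bounds produces the crucial estimate
\[
  \nrm{\psi_\mu * f}_{L^2(\R^d,w)} \leq C^2\, \nrm{f}_{L^2(\R^d,w)}, \qquad \mu > 0,\ f \in L^2(\R^d,w).
\]

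The final step is the actual $T_Q$-style testing, now applied to $\psi_\mu$, which is positive near the origin. Fix a cube $Q$ of side length $r$ and choose $\mu := \rho_0/(r\sqrt{d})$, so that $Q - Q \subseteq B(0,\rho_0/\mu)$ and hence $\psi_\mu(x-y) \geq \tfrac12 \nrm{\varphi}_{L^2}^2\, \mu^d$ for all $x,y \in Q$. Testing the $\psi_\mu$-bound on $f = (w+\varepsilon)^{-1} \ind_Q$ paired with $g = \ind_Q$ in $L^2(w)$, the left-hand side $\int_Q w(x)\int_Q \psi_\mu(x-y)(w(y)+\varepsilon)^{-1} \dd y\,\dd x$ is bounded below by a constant times $\mu^d\,w(Q)\int_Q (w+\varepsilon)^{-1}$ and above, via Cauchy--Schwarz and the $L^2(w)$-bound, by $C^2\, w(Q)^{1/2}\hab{\int_Q (w+\varepsilon)^{-1}}^{1/2}$. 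Squaring, inserting $\mu^{-2d} \lesssim r^{2d}$ with an implicit constant depending on $\varphi$ and $d$, and letting $\varepsilon\to 0$ by monotone convergence, one obtains $w(Q)\,w^{-1}(Q) \lesssim C^4\, r^{2d}$. Dividing by $\abs{Q}^2 = r^{2d}$ and taking the supremum over cubes yields $[w]_{A_2} \lesssim C^4$. The main obstacle is the opening observation: without recognizing that negative $\lambda$ supplies a bound for $\tilde\varphi_\mu$, one only derives off-diagonal $A_2$-type estimates relating $w(Q+v)$ to $w^{-1}(Q)$ for a specific nonzero shift $v$ dictated by where $\varphi$ has a positive average, and it is not clear how to upgrade such an estimate to the diagonal $A_2$ condition.
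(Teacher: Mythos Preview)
Your proof is correct and follows essentially the same approach as the paper: both define the symmetrized kernel $\psi=\varphi*\tilde\varphi$ (the paper writes this as $\varphi_{-1}*\varphi$), use that $\psi(0)=\nrm{\varphi}_{L^2}^2>0$ and continuity to obtain positivity on a small ball, then chain the bounds for $\varphi_\mu$ and $\tilde\varphi_\mu=\varphi_{-\mu}$ to get $\nrm{\psi_\mu*f}_{L^2(w)}\leq C^2\nrm{f}_{L^2(w)}$ and run the testing argument on $(w+\varepsilon)^{-1}\ind_Q$. The only cosmetic difference is that you conclude via a Cauchy--Schwarz pairing with $\ind_Q$, while the paper computes $\nrm{\psi_\mu*f}_{L^2(w)}^2$ directly on $Q$ as in Theorem~\ref{theorem:latticemaximal}; both routes yield the same $C^4$ bound.
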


\begin{proof}
  Let $\psi = \varphi_{-1}*\varphi$. Then $\psi(-t)=\psi(t)$ for all $t \in \R^d$ and $\psi(0) = \nrm{\varphi}_{L^2{(\R^d)}}^2>0$. Moreover
  \begin{equation*}
    \nrm{\psi}_{L^\infty(\R^d)} \leq \nrm{\varphi}_{L^2(\R^d)}^2,
  \end{equation*}
  so $\psi$ is continuous by the density of $C_c(\R^d)$ in $L^2(\R^d)$. Therefore we can find a $\delta>0$ such that $\psi(t) >\delta$ for all $\abs{t} < \delta$. Define $\psi_\lambda(t) := \lambda^d \, \psi(\lambda t)$ for $\lambda >0$. Then we have for all $f \in L^1(\R^d)\cap L^2(\R^d,w)$ that
  \begin{equation*}
    \nrm{\psi_\lambda *f}_{L^2(\R^d,w)} = \nrm{\varphi_{-\lambda} * \varphi_{\lambda} *f}_{L^2(\R^d,w)} \leq C^2 \nrm{f}_{L^2(\R^d,w)}
  \end{equation*}
  Now let $Q$ be a cube in $\R^d$ and let $f \in L^1(\R^d)\cap L^2(\R^d,w)$  be nonnegative and supported on $Q$. Take $\lambda = \frac{\delta}{\diam(Q)}$, then for $t \in Q$
  \begin{equation*}
    \psi_\lambda*f(t) = \lambda^d \int_Q\psi\hab{\lambda (t-s)}f(s) \dd s
    \geq \frac{\delta^{d+1}}{\abs{Q}} \int_Q f(s)\dd s.
  \end{equation*}
So by the same reasoning as in the proof of Theorem \ref{theorem:latticemaximal}, we have $w \in A_2$ with $[w]_{A_2} \lesssim C^4$ with an implicit constant depending on $\varphi,d$.
\end{proof}

Using Lemma \ref{lemma:A2testing} to check the weight condition of Theorem \ref{theorem:BFSextrapolation},
the announced theorem follows readily.

\begin{theorem}\label{theoreml2:sectoriality}
  Let $X$ be an order-continuous Banach function space and let $p \in (1,\infty)$. The following are equivalent:
  \begin{enumerate}[(i)]
    \item \label{it:sectorial1} $X$ has the $\UMD$ property.
    \item \label{it:sectorial2} The differentiation operator $D$ on $L^p(\R;X)$ is $\ell^2$-sectorial.
    \item \label{it:sectorial3} The Laplacian $-\Delta$  on $L^p(\R^d;X)$ is $\ell^2$-sectorial.
  \end{enumerate}
\end{theorem}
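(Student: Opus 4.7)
The forward implications (i) $\Rightarrow$ (ii), (iii) are essentially in the literature: if $X$ is UMD, then $D$ on $L^p(\R;X)$ and $-\Delta$ on $L^p(\R^d;X)$ are $\mc{R}$-sectorial by \cite[Example 10.2]{KW04} and \cite[Theorem 10.3.4]{HNVW17}. Since UMD implies finite cotype, Proposition \ref{proposition:gaussianradermacherl2comparison} gives on the Banach function space $L^p(\R^d;X)$
\begin{equation*}
\nrms{\has{\sum_{k=1}^n \abs{T_k x_k}^2}^{1/2}} \lesssim \E\nrms{\sum_{k=1}^n \varepsilon_k T_k x_k} \leq \nrm{\Gamma}_{\mc{R}}\,\E\nrms{\sum_{k=1}^n \varepsilon_k x_k} \lesssim \nrm{\Gamma}_{\mc{R}}\,\nrms{\has{\sum_{k=1}^n \abs{x_k}^2}^{1/2}}
\end{equation*}
for any $\mc{R}$-bounded family $\Gamma$ and $T_1,\ldots,T_n \in \Gamma$, so $\mc{R}$-boundedness dominates $\ell^2$-boundedness and the claimed $\ell^2$-sectoriality follows.

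For (ii) $\Rightarrow$ (i) the plan is to recognize the scaled resolvents of $D$ as a scaling family of convolutions and then invoke the extrapolation machinery of Theorem \ref{theorem:BFSextrapolation}. A direct computation shows that for $\mu > 0$
\begin{equation*}
-\mu R(-\mu,D)f = \varphi_\mu \ast f, \qquad \varphi_\mu(t) := \mu\varphi(\mu t),\ \varphi(t) := e^{-t}\ind_{(0,\infty)}(t),
\end{equation*}
and an analogous formula involving $\varphi(-\cdot)$ holds for $\mu R(\mu,D)$. Exploiting the reflection symmetry $D \leftrightarrow -D$ (which preserves $\ell^2$-sectoriality since conjugation by $f\mapsto f(-\cdot)$ is an isometry of $L^p(\R;X)$), the $\ell^2$-sectoriality of $D$ gives $\ell^2$-boundedness of the full family $\cbrace{\Lambda R(\Lambda,D):\Lambda\in\R\setminus\{0\}}$ on $L^p(\R;X)$, which matches the scaling family of Lemma \ref{lemma:A2testing}. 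Feeding the uniform $L^2(\R,v)$-bound into that lemma yields $v \in A_2$ with $[v]_{A_2}\lesssim C^4$, so both hypotheses of Theorem \ref{theorem:BFSextrapolation} are met with $\phi(t)=ct^4$. Applying the extrapolation to the pair $(Hg,g)$ with $g \in L^p(\R)\otimes X$ and $w\equiv 1$, together with the Hunt--Muckenhoupt--Wheeden bound $\nrm{Hh}_{L^2(\R,v)}\lesssim [v]_{A_2}\nrm{h}_{L^2(\R,v)}$ applied slice-wise in the $S$-variable, we conclude that $H$ extends boundedly to $L^p(\R;X)$, which is equivalent to $X$ being UMD.

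For (iii) $\Rightarrow$ (i) the argument is identical in spirit and cleaner in execution. For $\mu>0$ one computes that $-\mu R(-\mu,-\Delta)$ has Fourier symbol $\mu/(\mu+\abs{\xi}^2)$ and hence is convolution with $g_{\sqrt{\mu}}$, where $g_1 := \mc{F}^{-1}[(1+\abs{\xi}^2)^{-1}]$ is the standard Bessel kernel and $g_\lambda(x):=\lambda^d g_1(\lambda x)$. Since $g_1$ is radial, $g_\lambda=g_{-\lambda}$, so the family $\cbrace{\lambda R(\lambda,-\Delta):\lambda<0}$ already coincides, via $\lambda=-\mu$ with $\sqrt{\mu}$ ranging over $\R_+$, with the entire scaling family $\cbrace{g_\lambda\ast\cdot:\lambda\in\R\setminus\{0\}}$ of Lemma \ref{lemma:A2testing}. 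Hence $\ell^2$-sectoriality of $-\Delta$ together with that lemma furnishes the $A_2$-control required by Theorem \ref{theorem:BFSextrapolation}. Applying it to $f=H_1 g$, where $H_1$ is the Hilbert transform in the first coordinate (bounded on $L^2(\R^d,v)$ with constant $\lesssim[v]_{A_2}$ by freezing the remaining variables and Fubini), yields $H_1$ bounded on $L^p(\R^d;X)\cong L^p(\R;L^p(\R^{d-1};X))$. Thus $L^p(\R^{d-1};X)$, and hence its closed subspace $X$, has the UMD property.

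The main technical step in both reverse directions is matching the scaled resolvents to the scaling family of Lemma \ref{lemma:A2testing}; once this identification is in place, Theorem \ref{theorem:BFSextrapolation} does the heavy lifting by reducing matters to the classical weighted $L^2$-bound for the Hilbert transform. A secondary subtlety is the slice-in-$s$ hypothesis of Theorem \ref{theorem:BFSextrapolation}, which is handled by first restricting to tensor functions $g\in L^p(\R^d)\otimes X$ so that $g(\cdot,s)$ is literally an $L^p(\R^d)$-function, and then extending the resulting bound to all of $L^p(\R^d;X)$ by density.
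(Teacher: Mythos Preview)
Your strategy is the paper's: turn $\ell^2$-sectoriality into an $\ell^2$-bounded dilation family of convolution operators, verify the $A_2$-hypothesis of Theorem~\ref{theorem:BFSextrapolation} via Lemma~\ref{lemma:A2testing}, and then extrapolate a weighted $L^2$ singular-integral bound to $L^p(\R^d;X)$ to conclude UMD. The paper makes two different technical choices. First, instead of the bare resolvent it uses $T_\lambda=-\lambda^2\Delta(1-\lambda^2\Delta)^{-2}$, built as a product of two resolvent factors; its kernel is automatically even, which sidesteps the reflection argument you need in (ii)$\Rightarrow$(i). Second, the paper extrapolates the Riesz projections $R_k$ rather than the one-variable Hilbert transform $H_1$. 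This second choice is cleaner: the $R_k$ are genuine Calder\'on--Zygmund operators on $\R^d$, so their $L^2(\R^d,v)$-bound for $v\in A_2(\R^d)$ is immediate, whereas your ``freezing the remaining variables and Fubini'' step for $H_1$ quietly relies on the fact that $A_2(\R^d)$-weights restrict to $A_2(\R)$-weights on almost every line parallel to the first axis with uniformly controlled characteristic. That fact is true, but it is not a triviality of Fubini.

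One further issue in (iii)$\Rightarrow$(i): for $d\geq 4$ the Bessel kernel $g_1=\mc{F}^{-1}\bigl[(1+\abs{\xi}^2)^{-1}\bigr]$ is \emph{not} in $L^2(\R^d)$, so Lemma~\ref{lemma:A2testing} does not apply to it as stated. The paper's kernel has Fourier transform $\abs{2\pi\xi}^2(1+\abs{2\pi\xi}^2)^{-2}$ with the same $\abs{\xi}^{-2}$ decay, and hence shares this defect; in either case the fix is to replace the resolvent by a sufficiently high power so that the resulting kernel has enough Fourier decay to lie in $L^1(\R^d)\cap L^2(\R^d)$.
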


\begin{proof}
We have already discussed the implications \ref{it:sectorial1} $\Rightarrow$ \ref{it:sectorial2} and \ref{it:sectorial1} $\Rightarrow$ \ref{it:sectorial3}. We will  prove \ref{it:sectorial3} $\Rightarrow$ \ref{it:sectorial1}, the proof of \ref{it:sectorial2} $\Rightarrow$ \ref{it:sectorial1} being similar. Take $\lambda \in \R$ and define the operators
\begin{equation*}
  T_\lambda := -\lambda^2 \Delta (1- \lambda^2 \Delta)^{-2} = -\Delta R\has{-\frac{1}{\lambda^2},-\Delta} \cdot  \frac{1}{\lambda^2}R\has{-\frac{1}{\lambda^2},-\Delta}.
\end{equation*}
Since $-\Delta$ is $\ell^2$-sectorial on $L^p(\R^d;X)$, we know that the family of operators $\widetilde{\Gamma} = \cbraceb{\widetilde{T}_\lambda: \lambda \in \R}$ is $\ell^2$-bounded on $L^p(\R^d;X)$. Furthermore we have for $f \in L^2(\R^d)$ that $T_1f = \varphi * f $ with $\varphi  \in L^1(\R^d)\cap L^2(\R^d)$ such that
\begin{equation*}
  \hat{\varphi}(\xi) = \frac{(2\pi\abs{\xi})^2}{\bigl(1+(2\pi\abs{\xi})^2\bigr)^{2}}, \qquad \xi \in \R^d.
\end{equation*}
Moreover $T_\lambda f = \varphi_\lambda * f $ for $\varphi_\lambda(x) = \lambda^d \varphi(\lambda x)$ and $\lambda \in \R$.  Using Lemma \ref{lemma:A2testing} this implies that the assumptions of Theorem \ref{theorem:BFSextrapolation} are satisfied.

 Now by Theorem \ref{theorem:BFSextrapolation} and the boundedness of the Riesz projections on $L^2(\R^d,w)$ for all $w\in A_2$ (see \cite{Pe08}), we find that for all $f \in C_c^\infty(\R^d)\otimes X$
\begin{equation*}
  \nrm{R_k f}_{L^p(\R^d;X)} \lesssim \nrm{\widetilde{\Gamma}}_{\ell^2}^4 \nrm{f}_{L^p(\R^d;X)}, \qquad k=1,\ldots,d.
\end{equation*}
 So, by the density of $ C_c^\infty(\R^d) \otimes X$ in $L^p(\R^d;X)$, the Riesz projections are bounded on $L^p(\R^d;X)$, which means that $X$ has the $\UMD$ property by \cite[Theorem 5.5.1]{HNVW16}.
\end{proof}

The proof scheme of Theorem \ref{theoreml2:sectoriality} can be adapted to various other operators. We mention two examples:
\begin{itemize}
  \item In \cite{Lo18} it was shown that the $\UMD$ property is sufficient for the $\ell^2$-boundedness of a quite broad class of convolution operators on $L^p(\R^d;X)$. Using a similar proof as the one presented in Theorem \ref{theoreml2:sectoriality}, one can show that  the $\UMD$ property of the Banach function space $X$ is necessary for the $\ell^2$-boundedness of these operators.
  \item On general Banach spaces $X$ we know by a result of Coulhon and Lamberton \cite{CL86} (quantified by Hyt\"onen \cite{Hy15}), that the maximal $L^p$-regularity of $(-\Delta)^{1/2}$ implies that $X$ has the $\UMD$ property. Maximal $L^p$-regularity implies the $\mc{R}$-sectoriality of $(-\Delta)^{1/2}$ on $L^p(\R^d;X)$ by a result of Cl\'ement and Pr\"uss \cite{CP01} and the converse holds if $X$ has the $\UMD$ property by \cite{We01b}. It is therefore a natural question to ask whether the $\mc{R}$-sectoriality of $(-\Delta)^{1/2}$ on $L^p(\R^d;X)$ also implies that $X$ has the $\UMD$ property. By the equivalence of $\mc{R}$-sectoriality and $\ell^2$-sectoriality on Banach lattices with finite cotype, we can show that this is indeed the case for Banach function spaces with finite cotype, using a similar proof as in the proof of Theorem \ref{theoreml2:sectoriality}. The question for general Banach spaces remains open. This is also the case for the question whether the $\mc{R}$-sectoriality of $-\Delta$ on $L^p(\R^d;X)$ implies that $X$ has the $\UMD$ property, see \cite[Problem 7]{HNVW17}.
\end{itemize}

\chapter{Vector-valued function spaces and interpolation}\label{part:3}
In Chapter \ref{part:1} we treated Euclidean structures as a norm on the space of functions from $\cbrace{1,\ldots,n}$ to $X$ or as a norm on the space of operators from $\ell^2_n$ to $X$ for each $n \in \N$. In this chapter we will extend this norm to include functions from an arbitrary measure space $(S,\mu)$ to $X$ and to operators from an arbitrary Hilbert space $H$ to $X$. After introducing the relevant concepts, we will study the properties of the so-defined function spaces $\alpha(S;X)$ and operator spaces $\alpha(H,X)$. Their most important property is that every bounded operator on $L^2(S)$, e.g. the Fourier transform or a singular integral operator on $L^2(\R^d)$, extends automatically to a bounded operator on the $X$-valued function space $\alpha(S;X)$ for any Banach space $X$. This is in stark contrast  to the situation for the Bochner spaces $L^2(S;X)$ and greatly simplifies analysis for vector-valued functions in these spaces.

 In the second halve of this chapter we will develop an interpolation method based on these vector-valued function spaces. A charming feature of this $\alpha$-inter\-polation method is that its formulations modelled after the real and the complex interpolation methods are equivalent. The $\alpha$-interpolation method can therefore be seen as a way to keep strong interpolation properties of Hilbert spaces in a Banach space context.

As a standing assumption throughout this chapter and the subsequent chapters we suppose that $\alpha$ is a Euclidean structure on $X$.

\section{The spaces \texorpdfstring{$\alpha(H,X)$}{a(H,X)} and \texorpdfstring{$\alpha(S;X)$}{a(S;X)}}\label{section:alphaspaces}  Our first step is to extend the definition of the $\alpha$-norm to infinite vectors. For an infinite vector $\mb{x}$ with entries in $X$ we define
\begin{equation*}
  \nrm{\mb{x}}_{\alpha} =\sup_{n\in \N} \,\nrm{(x_1,\ldots,x_n)}_\alpha.
\end{equation*}
 We then define $\alpha_+(\N;X)$ as the space of all infinite column vectors $\mb{x}$ such that $\nrm{\mb{x}}_\alpha<\infty$ and let  $\alpha(\N;X)$ be  the subspace of $\alpha_+(\N;X)$ consisting of all $\mb{x} \in \alpha_+(\N;X)$ such that
\begin{equation*}
  \lim_{n \to \infty} \nrm{(0,\ldots,0,x_{n+1},x_{n+2}, \ldots)}_\alpha = 0.
\end{equation*}
Proposition \ref{proposition:finitedimensionalalpha} shows that if $\mb{x} \in \alpha_{+}(\N;X)$ has finite dimensional range, then $\mb{x} \in \alpha(\N;X)$. This leads to following characterization of $\alpha(\N;X)$.

\begin{proposition}\label{proposition:alphadef2}
  Let $\mb{x} \in \alpha_+(\N;X)$. Then $\mb{x}\in \alpha(\N;X)$ if and only if there exists an sequence $(\mb{x}^k)_{k=1}^\infty $ with finite dimensional range such that  $\lim_{k\to \infty} \nrm{\mb{x} - \mb{x}^k}_{\alpha} = 0$.
\end{proposition}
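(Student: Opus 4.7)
The plan is to prove the two implications separately, with the forward direction being immediate from the definition and the reverse direction being the one where Proposition \ref{proposition:finitedimensionalalpha} does the real work.

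For the forward implication, given $\mb{x} \in \alpha(\N;X)$ I would simply take the truncations $\mb{x}^n := (x_1, \ldots, x_n, 0, 0, \ldots)$. Each $\mb{x}^n$ has finitely many nonzero entries, hence finite dimensional range. The difference $\mb{x} - \mb{x}^n$ equals $(0,\ldots,0, x_{n+1}, x_{n+2}, \ldots)$, whose $\alpha$-norm tends to $0$ by the very definition of $\alpha(\N;X)$.

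For the reverse implication, suppose $(\mb{x}^k)_{k=1}^\infty$ approximates $\mb{x}$ in $\alpha$-norm with each $\mb{x}^k$ having range in a finite dimensional subspace $E_k \subseteq X$. Fix $\varepsilon > 0$, choose $k$ with $\nrm{\mb{x} - \mb{x}^k}_\alpha < \varepsilon/2$, and split
\begin{equation*}
\nrm{(0,\ldots,0,x_{n+1},x_{n+2},\ldots)}_\alpha \leq \nrm{(0,\ldots,0,x_{n+1}-x^k_{n+1},\ldots)}_\alpha + \nrm{(0,\ldots,0,x^k_{n+1},x^k_{n+2},\ldots)}_\alpha.
\end{equation*}
The first summand is dominated by $\nrm{\mb{x} - \mb{x}^k}_\alpha < \varepsilon/2$ since zeroing out the first $n$ coordinates corresponds to a diagonal $\ell^2$-contraction and therefore decreases the $\alpha$-norm by property \eqref{eq:E2x}.

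The main (and only nontrivial) step is to control the second summand using Proposition \ref{proposition:finitedimensionalalpha}. Since $\mb{x}^k \in \alpha_+(\N;X)$ has entries in the finite dimensional space $E_k$, that proposition applied to every finite truncation yields $\sum_{j=1}^\infty \nrm{x^k_j}_X^2 \leq \dim(E_k)^2 \nrm{\mb{x}^k}_\alpha^2 < \infty$, and conversely bounds the tail $\alpha$-norm by
\begin{equation*}
\nrm{(0,\ldots,0,x^k_{n+1},x^k_{n+2},\ldots)}_\alpha \leq \dim(E_k) \has{\sum_{j=n+1}^\infty \nrm{x^k_j}_X^2}^{1/2},
\end{equation*}
which tends to $0$ as $n \to \infty$ because the series converges. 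Choosing $n$ large makes this smaller than $\varepsilon/2$, completing the argument. The only thing one has to be slightly careful about is taking suprema over finite truncations when invoking the infinite-vector $\alpha$-norm, but this is routine given the definition.
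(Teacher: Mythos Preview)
Your proof is correct and follows exactly the approach the paper intends: the paper states this proposition without proof, only remarking beforehand that Proposition \ref{proposition:finitedimensionalalpha} shows finite-dimensional-range vectors belong to $\alpha(\N;X)$, which is precisely the nontrivial ingredient you use in the reverse implication. Your careful handling of the tail splitting and the supremum-over-truncations detail fills in exactly what the paper leaves implicit.
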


From Proposition \ref{proposition:alphadef2} and Property \eqref{eq:E2x} of a Euclidean structure we obtain directly the important fact that every bounded operator on $\ell^2$ extends to a bounded operator on $\alpha(\N;X)$ and $\alpha_+(\N;X)$.

\begin{proposition}\label{proposition:infinitematrixalphafunctions}
  If $\mb{x} \in \alpha_+(\N;X)$ and $\mb{A}$ is an infinite matrix representing a bounded operator on $\ell^2$, then $\mb{A}\mb{x} \in \alpha_+(\N;X)$ with
  \begin{equation*}
    \nrm{\mb{A}\mb{x}}_\alpha \leq \nrm{\mb{A}}\nrm{\mb{x}}_\alpha
  \end{equation*}
   If either $\mb{x} \in \alpha(\N;X)$ or $\mb{A}$ represents a compact operator on $\ell^2$, then $\mb{A}\mb{x} \in \alpha(\N;X)$.
\end{proposition}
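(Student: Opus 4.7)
The plan is to reduce everything to the right-ideal property \eqref{eq:E2x} by approximating the infinite matrix $\mb{A}$ by its finite-dimensional truncations. Let $P_n \in \mc{L}(\ell^2)$ denote the projection onto the first $n$ coordinates and set $Q_n := I - P_n$; these same symbols will act entrywise on vectors in $X^\N$. I first give meaning to the formal product $\mb{A}\mb{x}$ coordinate-wise, then derive the norm inequality, and finally treat the two cases ensuring $\mb{A}\mb{x} \in \alpha(\N;X)$.

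For the first assertion, since $\mb{A}$ is bounded on $\ell^2$, each row $(A_{ij})_{j \in \N}$ lies in $\ell^2$ with $\ell^2$-norm at most $\nrm{\mb{A}}$. Applying \eqref{eq:E2x} to the $1 \times (m-k)$ matrix $(A_{ij})_{j=k+1}^m$ acting on $(x_{k+1},\ldots,x_m)$ gives
\[
\nrms{\sum_{j=k+1}^m A_{ij} x_j}_X \leq \has{\sum_{j=k+1}^m \abs{A_{ij}}^2}^{1/2} \nrm{\mb{x}}_\alpha,
\]
so the partial sums $\sum_{j=1}^m A_{ij}x_j$ are Cauchy in $X$ and converge to an element $y_i \in X$, which defines $(\mb{A}\mb{x})_i := y_i$. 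To bound $\nrm{\mb{A}\mb{x}}_\alpha$, fix $n$ and apply \eqref{eq:E2x} to the finite truncation $P_n \mb{A} P_m$, whose $\ell^2$-operator norm is at most $\nrm{\mb{A}}$:
\[
\nrm{P_n \mb{A} P_m \mb{x}}_\alpha \leq \nrm{P_n \mb{A} P_m} \, \nrm{(x_1,\ldots,x_m)}_\alpha \leq \nrm{\mb{A}} \, \nrm{\mb{x}}_\alpha.
\]
Letting $m \to \infty$, the triangle inequality together with \eqref{eq:E1x} gives
\[
\nrm{P_n \mb{A} P_m \mb{x} - P_n \mb{A}\mb{x}}_\alpha \leq \sum_{i=1}^n \nrmb{(\mb{A} P_m \mb{x})_i - y_i}_X \longrightarrow 0,
\]
so $\nrm{(y_1,\ldots,y_n)}_\alpha \leq \nrm{\mb{A}}\nrm{\mb{x}}_\alpha$, and taking the supremum over $n$ yields the first claim.

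For the second assertion, the key observation is that applying the first assertion to the bounded operator $Q_n \mb{A}$ gives $\nrm{Q_n \mb{A}\mb{x}}_\alpha \leq \nrm{Q_n \mb{A}}\nrm{\mb{x}}_\alpha$. If $\mb{A}$ is compact on $\ell^2$, then $Q_n \to 0$ strongly, which forces $\nrm{Q_n \mb{A}} \to 0$ by a standard consequence of compactness, and hence $\mb{A}\mb{x} \in \alpha(\N;X)$. If instead $\mb{x} \in \alpha(\N;X)$, I approximate by $P_m \mb{x}$: since $\mb{A} P_m$ has rank at most $m$ and is thus compact, the previous case yields $\mb{A}(P_m \mb{x}) \in \alpha(\N;X)$. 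Splitting
\[
\nrm{Q_n \mb{A}\mb{x}}_\alpha \leq \nrm{Q_n \mb{A} P_m \mb{x}}_\alpha + \nrm{\mb{A}}\,\nrm{\mb{x} - P_m \mb{x}}_\alpha
\]
and using $\nrm{\mb{x} - P_m \mb{x}}_\alpha \to 0$ (the very definition of $\alpha(\N;X)$) in a standard $\varepsilon/2$-argument then concludes.

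The main technical step is justifying the limit $m \to \infty$ in the first assertion: the uniform-in-$m$ bound on $\nrm{P_n \mb{A} P_m \mb{x}}_\alpha$ must pass to $\nrm{P_n \mb{A}\mb{x}}_\alpha$. Since only the first $n$ coordinates are involved, this reduces to continuity of the finite-dimensional $\alpha$-norm in each single entry, itself an immediate consequence of \eqref{eq:E1x} and \eqref{eq:E2x} applied to coordinate-extraction matrices. Everything else is a routine application of the right-ideal property together with the familiar fact that composition with a compact operator turns strong convergence into norm convergence.
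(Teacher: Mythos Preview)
Your proof is correct and follows exactly the approach the paper has in mind: the paper does not give a detailed proof but merely remarks that the proposition follows ``directly'' from Proposition~\ref{proposition:alphadef2} and property~\eqref{eq:E2x}, which is precisely the truncation-and-ideal-property argument you have spelled out in full.
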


\subsection*{The space $\alpha(H,X)$}
As announced we wish to extend the definition of the $\alpha$-norms to functions on a measure space different from $\N$ and to operators from a Hilbert space $H$ to $X$ for $H$ different from $\ell^2$.

\begin{definition}
  Let $H$ be a Hilbert space. We let $\alpha(H,X)$  (resp. $\alpha_+(H,X)$) be the space of all $T \in \mc{L}(H,X)$ such that
$(Te_k)_{k=1}^\infty \in \alpha(\N;X)$ (resp. $(Te_k)_{k=1}^\infty \in \alpha_+(\N;X)$) for all orthonormal systems $(e_k)_{k=1}^\infty$ in $H$. We then set
\begin{equation*}
  \nrm{T}_{\alpha(H;X)}=\nrm{T}_{\alpha_+(H;X)} :=  \sup\, \nrm{(Te_k)_{k=1}^\infty}_\alpha,
\end{equation*}
where the supremum is taken over all orthonormal systems $(e_k)_{k=1}^\infty$ in $H$.
\end{definition}
If $H$ is separable, then it suffices to compute $\nrm{(Te_k)_{k=1}^\infty}_\alpha$ for a fixed orthonormal basis $(e_k)_{k=1}^\infty$ of $H$ by Proposition \ref{proposition:infinitematrixalphafunctions}.

For $\alpha = \gamma$ the spaces $\gamma_+(H,X)$ and $\gamma(H,X)$ are already well-studied in literature (see for example  \cite{KW16}, \cite[Chapter 9]{HNVW17} and the references therein). Since many of the basic properties of $\alpha(H,X)$ have proofs similar to the ones for $\gamma(H,X)$, we can be brief here  and refer to \cite[Chapter 9]{HNVW17} for inspiration.
In particular:
\begin{itemize}
  \item Both $\alpha_+(H,X)$ and $\alpha(H,X)$ are Banach spaces.
  \item $\alpha(H,X)^*$ can be canonically identified with $\alpha^*_+(H^*,X^*)$ through trace duality. Note that in this duality one should not identify $H$ with its Hilbert space dual, see \cite[Section 9.1.b]{HNVW17} for a discussion.
  \item In many cases $\alpha(H,X)$ and $\alpha_+(H,X)$ coincide. For the Gaussian structure this is the case if and only if $X$ does not contain a closed subspace isomorphic to $c_0$.
\end{itemize}
It follows readily from Proposition \ref{proposition:alphadef2} that $\alpha(H,X)$ is the closure of the finite rank operators in $\alpha_+(H,X)$. This can be used to show that every $T \in \alpha(H,X)$ is supported on a separable closed subspace of $H$:

\begin{proposition}\label{proposition:separablereduction}
 Let $H$ be a Hilbert space and $T \in \alpha(H,X)$. Then there is a separable closed subspace $H_0$ of $H$ such that $T\varphi = 0$ for all $\varphi \in H_0^{\bot}$.
\end{proposition}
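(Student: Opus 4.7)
The plan is to use that $\alpha(H,X)$ is, by construction, the closure of the finite-rank operators $\mc{F}(H,X)$ in the $\alpha_+(H,X)$-norm (this is the analogue of Proposition \ref{proposition:alphadef2} for general Hilbert spaces $H$, and follows in the same way). Each finite-rank operator has its domain supported on a finite-dimensional subspace, and the separability of $H_0$ will then come from forming a countable union.

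First I would note the basic domination $\nrm{T}_{\mc{L}(H,X)} \leq \nrm{T}_{\alpha(H,X)}$. Indeed, for any orthonormal system $(e_k)_{k=1}^n$ in $H$ and scalars $(a_k)_{k=1}^n$ with $\sum \abs{a_k}^2 \leq 1$, property \eqref{eq:E2x} and Proposition \ref{proposition:operatornuclear} yield
\begin{equation*}
  \nrms{\sum_{k=1}^n a_k Te_k}_X = \nrm{(a_1,\ldots,a_n)(Te_k)_{k=1}^n}_\alpha \leq \nrm{(Te_k)_{k=1}^n}_\alpha \leq \nrm{T}_{\alpha(H,X)},
\end{equation*}
so taking the supremum over unit vectors $\varphi = \sum a_k e_k$ in arbitrary finite dimensional subspaces gives the claim.

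Next, by the $\alpha(H,X)$-analogue of Proposition \ref{proposition:alphadef2}, pick a sequence $(T_n)_{n=1}^\infty \subseteq \mc{F}(H,X)$ with $\nrm{T-T_n}_{\alpha(H,X)} \to 0$. Write each $T_n$ in the form $T_n = \sum_{k=1}^{m_n} e_k^{(n)} \otimes x_k^{(n)}$ with $e_k^{(n)} \in H$ and $x_k^{(n)} \in X$, and let
\begin{equation*}
  H_0 := \overline{\spn}\,\cbrace{e_k^{(n)} : n \in \N,\ 1 \leq k \leq m_n}.
\end{equation*}
Since this is the closure of the span of a countable set, $H_0$ is a separable closed subspace of $H$. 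By construction, $T_n \varphi = \sum_{k=1}^{m_n} \ip{\varphi,e_k^{(n)}} x_k^{(n)} = 0$ whenever $\varphi \in H_0^\bot$, so the operator-norm convergence $T_n \to T$ obtained in the first step gives $T\varphi = \lim_n T_n\varphi = 0$ for every $\varphi \in H_0^\bot$.

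No step is really an obstacle here; the only conceptual point is the density of $\mc{F}(H,X)$ in $\alpha(H,X)$ for a general (possibly non-separable) Hilbert space $H$, which one gets by the same argument as Proposition \ref{proposition:alphadef2} together with the fact that, for each $T \in \alpha(H,X)$, the $\alpha$-defining quantity is unchanged when one restricts to a sufficiently rich orthonormal system. The rest is bookkeeping.
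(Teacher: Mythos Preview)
Your proof is correct and follows essentially the same route as the paper: approximate $T$ in $\alpha(H,X)$ by finite-rank operators, take $H_0$ to be the closed span of the vectors appearing in these representations, and conclude via operator-norm convergence. The paper's version is terser and leaves the domination $\nrm{\cdot}_{\mc{L}(H,X)} \leq \nrm{\cdot}_{\alpha(H,X)}$ implicit, whereas you spell it out; otherwise the arguments are the same.
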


\begin{proof}
  Let $T = \lim_{k \to \infty} T_k$ in $\alpha(H,X)$ where each $T_k$ is of the form
  \begin{equation*}
    T_k\varphi = \sum_{j=1}^{m_k} \ip{\varphi,\psi_{jk}} x_{jk}
  \end{equation*}
  with $\psi_{jk} \in H^*$, $x_{jk} \in X$ for $1 \leq j \leq m_k$ and $k \in \N$. Let $H_0$ be the closure of the linear span of $\cbrace{\psi_{jk}:1\leq j\leq m_k, k\in \N}$ in $H$. Then $H_0$ is separable and $T\varphi = 0$ for all $\varphi \in H_0^{\bot}$.
\end{proof}

As we already noted, $\alpha(H,X)^*$ can be identified with $\alpha^*_+(H^*,X^*)$ through trace duality. In the converse direction we have the following proposition.

\begin{proposition}\label{proposition:normingalpha}
Let $H$ be a Hilbert space, let $Y \subseteq X^*$ be norming for $X$ and let $T \in \mc{L}(H,X)$. If there is a $C>0$ such that for all finite rank operators $S:H^*\to Y$ we have
\begin{equation*}
   \abs{\tr(S^*T)} \leq C \,\nrm{S}_{\alpha^*(H^*,X^*)}
\end{equation*}
Then $T \in \alpha_+(H,X)$ with $\nrm{T}_{\alpha_+(H,X)} \leq C$.
\end{proposition}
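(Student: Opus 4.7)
The plan is to use trace duality together with the norming property of~$Y$. By the definition of $\alpha_+(H,X)$ it suffices to show $\nrm{(Te_k)_{k=1}^n}_\alpha \le C$ for every finite orthonormal system $(e_k)_{k=1}^n$ in~$H$; so I fix such a system and aim for this bound.

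For any tuple $\mb{y}=(y_1,\dots,y_n)\in Y^n$ I associate the finite rank operator $S_{\mb{y}}\in\mc{F}(H^*,Y)$ given by $S_{\mb{y}}(\psi)=\sum_{k=1}^n \psi(e_k)\,y_k$, identifying $H=(H^*)^*$ via the bilinear pairing. A direct computation via the trace formula for finite rank operators gives $\tr(S_{\mb{y}}^*T)=\sum_{k=1}^n y_k(Te_k)$. To bound $\nrm{S_{\mb{y}}}_{\alpha^*(H^*,X^*)}$, note that for any orthonormal system $(g_j)_{j=1}^m$ in~$H^*$ one has $(S_{\mb{y}}g_j)_j=A\mb{y}$, where $A=\bigl(\langle g_j,e_k\rangle\bigr)_{j,k}$ satisfies $\nrm{A}\le 1$ by Bessel's inequality; so property~\eqref{eq:E2x} applied to the Euclidean structure $\alpha^*$ on~$X^*$ yields $\nrm{S_{\mb{y}}}_{\alpha^*(H^*,X^*)}\le\nrm{\mb{y}}_{\alpha^*}$. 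Combining with the hypothesis:
\begin{equation*}
\Bigl|\sum_{k=1}^n y_k(Te_k)\Bigr|\le C\,\nrm{\mb{y}}_{\alpha^*}\qquad\text{for every }\mb{y}\in Y^n. \tag{$\ast$}
\end{equation*}

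By Hahn--Banach and the definition of $\alpha^*$ as the dual norm of $\alpha$,
\[
\nrm{(Te_k)_{k=1}^n}_\alpha=\sup\Bigl\{\Bigl|\sum_{k=1}^n x^*_k(Te_k)\Bigr|:\mb{x}^*\in(X^*)^n,\ \nrm{\mb{x}^*}_{\alpha^*}\le 1\Bigr\},
\]
so the proof concludes once $(\ast)$ is upgraded from $\mb{y}\in Y^n$ to arbitrary $\mb{x}^*\in(X^*)^n$ with $\nrm{\mb{x}^*}_{\alpha^*}\le 1$. For any such $\mb{x}^*$, set $E=\spn(Te_1,\dots,Te_n)\subseteq X$ and $F=\spn(x_1^*,\dots,x_n^*)\subseteq X^*$; both are finite dimensional and, since $Y$ is $1$-norming for~$X$, the restriction map $Y\to E^*$ is surjective. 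A principle-of-local-reflexivity-type lifting for norming subspaces produces, for each $\varepsilon>0$, a tuple $\mb{y}\in Y^n$ with $y_k|_E=x^*_k|_E$ for every~$k$ and $\nrm{\mb{y}}_{\alpha^*}\le(1+\varepsilon)\nrm{\mb{x}^*}_{\alpha^*}$. The first condition preserves the pairing, $\sum_k y_k(Te_k)=\sum_k x_k^*(Te_k)$, so plugging $\mb{y}$ into $(\ast)$ and letting $\varepsilon\downarrow 0$ gives $|\sum_k x^*_k(Te_k)|\le C$, and taking the supremum over~$\mb{x}^*$ yields $\nrm{(Te_k)}_\alpha\le C$.

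The hard part will be the lifting of $\mb{x}^*$ to $\mb{y}\in Y^n$ with $\mb{y}|_E=\mb{x}^*|_E$ \emph{and} $\alpha^*$-norm close to $\nrm{\mb{x}^*}_{\alpha^*}$. The individual-norm version of this statement is the classical principle of local reflexivity for norming subspaces; passage to the tuple-wise bound can be achieved by extending the lift to a bounded operator on~$X^*$ and applying the coordinate-wise version of~\eqref{eq:E2x}, or alternatively by a direct weak-$*$ argument using density of $Y\cap E^\perp$ in~$E^\perp$ together with weak-$*$ lower semicontinuity of~$\alpha^*$ on $(X^*)^n$.
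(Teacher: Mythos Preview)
Your argument follows the same route as the paper's: both reduce to showing $\nrm{(Te_k)_{k=1}^n}_\alpha \le C$ for each finite orthonormal system, and both hinge on the duality pairing with tuples from $Y^n$. The paper's proof is essentially two lines --- it simply \emph{selects} $(x_k^*)_{k=1}^n$ in $Y^n$ with $\nrm{(x_k^*)}_{\alpha^*}\le 1$ that $\varepsilon$-norms $(Te_k)$, forms $S=\sum_k e_k\otimes x_k^*$, and applies the hypothesis. In other words, the paper takes for granted precisely the fact you isolate as ``the hard part'': that $(Y^n,\alpha^*)$ is norming for $(X^n,\alpha)$ whenever $Y$ is norming for $X$. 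Your steps up to and including $(\ast)$ are correct and match the paper's reasoning.

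You are right to flag the passage from $Y^n$ to $(X^*)^n$ as the only nontrivial point, but neither of your two sketched justifications works as written. In the first approach, invoking ``the coordinate-wise version of \eqref{eq:E2x}'' is a confusion: property \eqref{eq:E2x} governs the action of \emph{scalar matrices} on tuples, not of bounded operators on $X^*$. To pass from a local-reflexivity lift $\Phi\colon F\to Y$ to a bound $\nrm{(\Phi x_k^*)}_{\alpha^*}\le(1+\varepsilon)\nrm{(x_k^*)}_{\alpha^*}$ you would need the ideal property \eqref{eq:E3x} for $\alpha^*$, which is not assumed here (and even when $\alpha$ is ideal, the paper only records the analogue of \eqref{eq:E3x} for adjoints $S^*$ with $S\in\mc{L}(X)$, not for arbitrary maps into $Y$). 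In the second approach, weak-$*$ lower semicontinuity of $\alpha^*$ yields
\[
\liminf_\lambda \nrm{(y_{k,\lambda}^*)}_{\alpha^*}\ \ge\ \nrm{(x_k^*)}_{\alpha^*}
\]
along any weak-$*$ convergent net $(y_{k,\lambda}^*)\to(x_k^*)$, which is the \emph{wrong direction}: you need an upper bound on the $\alpha^*$-norm of the approximants, and lower semicontinuity cannot supply that. So while you have correctly located the crux, the lifting step remains unjustified in your write-up.
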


\begin{proof}
  Let $(e_k)_{k=1}^n$ be an orthonormal sequence in $H$ and $\varepsilon>0$. Define $x_k = Te_k$ and let $(x_k^*)_{k=1}^n$ be a sequence in $Y$ with $\nrm{(x_k^*)_{k=1}^n}_{\alpha^*} \leq 1$ and
  $$\nrm{(x_k)_{k=1}^n}_\alpha \leq \sum_{k=1}^n\abs{x_k^*(x_k)}+\varepsilon.$$
  Then, for the finite rank operator $S = \sum_{k=1}^n e_k \otimes x_k^*$, we have
  \begin{equation*}
    \nrm{(T_ke_k)_{k=1}^n}_\alpha \leq \sum_{k=1}^n\abs{x_k^*(x_k)}+\varepsilon =\abs{\tr(S^*T)}+\varepsilon    \leq C+\varepsilon.
  \end{equation*}
  Taking the supremum over all orthonormal sequences in $H$ finishes the proof.
\end{proof}

\subsection*{The space $\alpha(S;X)$}
We will mostly be using $H=L^2(S)$ for a measure space $(S,\mu)$. We abbreviate
\begin{align*}
  \alpha(S;X)&:=\alpha(L^2(S),X)\\
  \alpha_+(S;X)&:=\alpha_+(L^2(S),X)
\end{align*}
 For an operator $T \in \mc{L}(L^2(S),X)$ we say that  $T$ is \emph{representable} if there exists a strongly measurable $f \colon S \to X$ with $x^*\circ f \in L^2(S)$ for all $x^* \in X$ such that
\begin{equation}\label{eq:operatorfromrepresentation}
    T\varphi = \int_S \varphi f \dd \mu, \qquad \varphi \in L^2(S).
\end{equation}
Here the integral is well defined by Pettis' theorem \cite[Theorem 1.2.37]{HNVW16}. Equivalently $T$ is representable if there exists a strongly measurable $f \colon S \to X$ such that
for all $x^* \in X^*$ we have
\begin{equation}\label{eq:Trepresentable}
  x^*\circ f = T^*(x^*).
\end{equation}

Conversely, if we start from a strongly measurable function $f\colon S \to X$ with $x^*\circ f \in L^2(S)$ for all $x^* \in X$, we can define the operator $T_f:L^2(S) \to X$ as in \eqref{eq:operatorfromrepresentation}, which is again well defined by Pettis' theorem. If $T_f \in \alpha(S;X)$ (resp. $\alpha_+(S;X)$) we can identify $f$ and $T_f$, since $f$ is the unique representation of $T_f$. In this case we write $f \in \alpha(S;X)$ (resp. $f \in \alpha_+(S;X)$)  and assign to $f$ the $\alpha$-norm
\begin{align*}
\nrm{f}_{\alpha(S;X) }&:= \nrm{T_f}_{\alpha(S;X) },\\
  \nrm{f}_{\alpha_+(S;X) }&:= \nrm{T_f}_{\alpha_+(S;X) }.
\end{align*}

In Proposition \ref{proposition:alphaspace=bochnerspace}, we will see that
\begin{equation*}
  {\alpha}_{\bullet}(S;X):= \cbraceb{T \in \alpha(S;X): T \text{ is representable by a function }  f\colon S \to X}
\end{equation*}
is usually not all of $\alpha(S,X)$. However it is often useful to think of the space $({\alpha}_{\bullet}(S;X),\nrm{\lcdot}_{\alpha(S;X)})$ as a normed function space and of $\alpha(S;X)$ as its completion, where the elements of $\alpha(S;X) \setminus {\alpha}_{\bullet}(S;X)$ are interpreted as operators $T:L^2(S) \to X$. If $S = \R^d$ we have $C_0^\infty(\R^d) \subseteq L^2(\R^d) \overset{T}{\longrightarrow}  X$ and we may also think of $\alpha(S;X)$ as a space of $X$-valued distributions. Then \eqref{eq:operatorfromrepresentation} conforms with the usual interpretation of a locally integrable function $f$ as a distribution $T$.

\begin{proposition}\label{proposition:densealphaspace}
  Let $(S,\mu)$ be a measure space and let $\mc{A}$  be a dense subset of $L^2(S)$. Then $$\spn\cbrace{f\otimes x:f \in \mc{A}, x \in X}$$ is dense in $\alpha(S;X)$.
\end{proposition}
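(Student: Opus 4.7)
The plan is to reduce to approximating finite rank operators and then to invoke a norm computation for rank-one operators. The main (very mild) obstacle is verifying that $\nrm{f\otimes x}_{\alpha(S;X)}=\nrm{f}_{L^2(S)}\nrm{x}_X$; once this identity is in hand, the rest is a triangle-inequality argument.

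First I would recall, as noted after Proposition \ref{proposition:alphadef2}, that $\alpha(S;X)$ is by construction the closure in $\alpha_+(S;X)$ of the finite rank operators from $L^2(S)$ to $X$. Consequently it is enough to show that any finite rank operator $T\in\alpha(S;X)$ lies in the closure of $V:=\spn\cbrace{f\otimes x:f\in\mc{A},\,x\in X}$. Write such a $T$ as $T=\sum_{k=1}^n f_k\otimes x_k$ with $f_k\in L^2(S)$ and $x_k\in X$ (there is no loss in taking the $f_k$'s orthonormal, but this is not needed).

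Next I would establish the identity
\begin{equation*}
  \nrm{f\otimes x}_{\alpha(S;X)}=\nrm{f}_{L^2(S)}\nrm{x}_X,\qquad f\in L^2(S),\;x\in X.
\end{equation*}
For any orthonormal system $(e_k)_{k\geq 1}$ in $L^2(S)$ one has $(f\otimes x)e_k=\ip{e_k,f}x$, so the associated vector in $X^{\N}$ is $\mb{A}\cdot(x)$ with $\mb{A}=(\ip{e_k,f})_{k\geq 1}$ viewed as a column. Property \eqref{eq:E2x} together with \eqref{eq:E1x} yields $\nrm{(\ip{e_k,f}x)_{k\geq 1}}_\alpha\leq\nrm{\mb{A}}_{\ell^2}\nrm{x}_X\leq\nrm{f}_{L^2(S)}\nrm{x}_X$ by Bessel's inequality, and equality is attained by taking $e_1=f/\nrm{f}_{L^2(S)}$ (assuming $f\neq 0$, the case $f=0$ being trivial).

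Finally, given $\varepsilon>0$, use the density of $\mc{A}$ in $L^2(S)$ to select $g_k\in\mc{A}$ with
\begin{equation*}
  \nrm{f_k-g_k}_{L^2(S)}\leq \frac{\varepsilon}{1+\sum_{j=1}^n\nrm{x_j}_X},\qquad 1\leq k\leq n.
\end{equation*}
Then $\sum_{k=1}^n g_k\otimes x_k\in V$ and, combining the triangle inequality with the rank-one identity established above,
\begin{equation*}
  \nrms{T-\sum_{k=1}^n g_k\otimes x_k}_{\alpha(S;X)}\leq \sum_{k=1}^n\nrm{f_k-g_k}_{L^2(S)}\nrm{x_k}_X\leq\varepsilon,
\end{equation*}
which proves that $T$ lies in the closure of $V$ and hence that $V$ is dense in $\alpha(S;X)$.
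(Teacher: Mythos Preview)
Your proof is correct and follows essentially the same approach as the paper: reduce to finite rank operators (which are dense in $\alpha(S;X)$), then approximate using the rank-one identity $\nrm{f\otimes x}_{\alpha(S;X)}=\nrm{f}_{L^2(S)}\nrm{x}_X$. The paper's version is marginally more streamlined in that it only approximates rank-one operators rather than general finite-rank ones, and it invokes the rank-one identity directly (it is exactly property \eqref{eq:E1}) rather than rederiving it, but the substance is identical.
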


\begin{proof}
Since the finite rank operators are dense in $\alpha(S;X)$, it suffices to show that every rank one operator $T = g \otimes x$ with $g \in L^2(S)$ and $x \in X$ can be approximated by operators $T_{f_n}$ with $f_n \in \spn\cbrace{h\otimes x:h \in \mc{A}}$. For this let $(h_n)_{n=1}^\infty$ be such that $h_n \to g$ in $L^2(S)$ and define $f_n = h_n\otimes x$. Then we have, using Proposition \ref{proposition:infinitematrixalphafunctions},
\begin{equation*}
  \nrm{T-T_{f_n}}_{\alpha(S;X)} = \nrm{(g-h_n)\otimes x}_{\alpha(S;X)} = \nrm{g-h_n}_{L^2(S)} \nrm{x}_X \qedhere
\end{equation*}
\end{proof}

Proposition \ref{proposition:densealphaspace} allows us
to work with work with functions rather than operators in $\alpha(S;X)$. The following lemma sometimes allows us to reduce considerations even further to bounded functions on sets of finite measure.

\begin{lemma}\label{lemma:partitions}
  Let $(S,\mu)$ be a measure space and let $f\colon S \to X$ be strongly measurable. Then there exists a partition $\Pi = \cbrace{E_{n}}_{n=1}^\infty$ of $S$ such that
  $E_{n}$ has positive finite measure and $f$ is bounded on $E_{n}$ for all $n \in \N$.
  Furthermore, there exists a sequence of such partitions $\Pi_m = \cbrace{E_{nm}}_{n=1}^\infty$ such that for the associated averaging projections
    \begin{equation*}
      P_mf(s):= \sum_{n=1}^\infty \ind_{E_{nm}}(s) \frac{1}{\mu(E_{nm})} \int_{E_{nm}}f \dd\mu, \qquad s \in S,\, m \in N,
    \end{equation*}
    we have $P_mf(s) \to f(s)$ for $\mu$-a.e. $s\in S$.
\end{lemma}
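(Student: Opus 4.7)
The plan is to proceed in two stages: first construct a single partition of $S$ into positive-finite-measure sets on which $f$ is bounded, and then iteratively refine it to force pointwise convergence of the averages. For the first stage, strong measurability of $f$ guarantees that $f$ is essentially separably valued and that $\cbrace{s : f(s) \neq 0}$ is $\sigma$-finite (as a countable union of finite-measure level sets of simple functions approximating $f$ pointwise). Writing this support as a countable disjoint union of positive-finite-measure sets and intersecting each piece with the level set $\cbrace{k \leq \nrm{f}_X < k+1}$ for $k \geq 1$ (and discarding nulls) gives the partition $\Pi = \cbrace{E_n}_{n \geq 1}$ on which $f$ is bounded. Any $\sigma$-finite remainder of $S$ may be adjoined; on a non-$\sigma$-finite zero-set of $f$ the convergence in the conclusion is automatic since there both $P_m f$ and $f$ vanish.

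For the refining sequence, on each $E_n$ fix a countable dense sequence $\cbrace{x_k^{(n)}}_{k \geq 1}$ in the essential range of $f\vert_{E_n}$, and for each $j \in \N$ let $\cbrace{A_k^{(n,j)}}_k$ be the disjointification
\begin{equation*}
  A_k^{(n,j)} := \cbraceb{s \in E_n : \nrm{f(s) - x_k^{(n)}}_X < 2^{-j}} \setminus \bigcup_{i<k} \cbraceb{s \in E_n : \nrm{f(s) - x_i^{(n)}}_X < 2^{-j}},
\end{equation*}
a measurable partition of $E_n$ into pieces on which $f$ oscillates by less than $2 \cdot 2^{-j}$. Enumerating the countable collection of pairs $(n,j)$, I would define $\Pi_m$ as the common refinement of $\Pi$ with the first $m$ auxiliary partitions $\cbrace{A_k^{(n,j)}}_k$, discarding null atoms. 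Each $\Pi_m$ then consists of finite-measure sets on which $f$ remains bounded, and $(\Pi_m)_m$ is refining by construction.

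For the convergence step, fix $s \in E_n$ and $j \in \N$ and set $f_j := \sum_k x_k^{(n)} \ind_{A_k^{(n,j)}}$ on $E_n$, so that $\nrm{f - f_j}_{L^\infty(E_n;X)} \leq 2^{-j}$. The inequality $\nrm{P_m g(s)}_X \leq \nrm{g}_{L^\infty(E_{nm};X)}$ for $s \in E_{nm}$ shows that $P_m$ is an $L^\infty$-contraction; moreover, once $m$ is large enough that $\Pi_m\vert_{E_n}$ refines $\cbrace{A_k^{(n,j)}}_k$, the step function $f_j$ is constant on every atom of $\Pi_m$ inside $E_n$, whence $P_m f_j = f_j$ there. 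The three-term estimate
\begin{equation*}
  \nrm{P_m f(s) - f(s)}_X \leq \nrm{P_m(f - f_j)(s)}_X + \nrm{f_j(s) - f(s)}_X \leq 2 \cdot 2^{-j}
\end{equation*}
then holds for all sufficiently large $m$, and letting $j \to \infty$ along the enumeration yields $P_m f(s) \to f(s)$ for every $s \in \bigcup_n E_n$; combined with $P_m f = 0 = f$ on the zero-set of $f$, this gives $\mu$-a.e.~convergence on $S$. The main obstacle is the combinatorial bookkeeping in the second step: one must organize the enumeration so that the refinements simultaneously capture all countably many approximating partitions while preserving finite measure and boundedness of $f$ on each atom. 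Once that is in place, the convergence step is an elementary $L^\infty$-contraction argument that avoids invoking any Radon--Nikodym property hypothesis on $X$ that a general vector-valued martingale convergence theorem would require.
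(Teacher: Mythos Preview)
Your proof is correct. For the first part, both you and the paper reduce to the $\sigma$-finite support of $f$ and then intersect a countable finite-measure decomposition with norm level sets $\{k-1 \leq \nrm{f}_X < k\}$. For the second part the approaches diverge. The paper reduces to a single $E_n$ (bounded $f$, finite measure), picks simple functions $f_m$ with $\nrm{f_m - f}_X < 1/m$ on sets $B_m$ of nearly full measure, and takes $\Pi_m$ to be the finite collection of atoms of the $\sigma$-algebra generated by $B_1,\ldots,B_m$ and $f_1,\ldots,f_m$; the exceptional sets $S\setminus B_m$ are handled by an Egorov-type argument. You instead cover the essential range of $f|_{E_n}$ by balls of radius $2^{-j}$ centred at a dense sequence, pull back to a countable partition $\{A_k^{(n,j)}\}_k$ of $E_n$, and let $\Pi_m$ be the common refinement of $\Pi$ with the first $m$ such auxiliary partitions under a diagonal enumeration of the pairs $(n,j)$. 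Both proofs finish via the same $L^\infty$-contraction step: once $\Pi_m$ refines the partition associated with an approximant $f_j$, one has $P_m f_j = f_j$ and hence $\nrm{P_m f(s) - f(s)}_X \leq 2\nrm{f-f_j}_{L^\infty}$. The paper's route produces finite partitions at each stage at the cost of the small-set excision; yours keeps the partitions countably infinite but makes the convergence argument cleaner and avoids the bookkeeping of the sets $B_m$.
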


\begin{proof}
  By \cite[Proposition 1.1.15]{HNVW16} we know that $f$ vanishes off a $\sigma$-finite subset of $S$, so without loss of generality we may assume that $(S,\mu)$ is $\sigma$-finite. Let $(S_n)_{n=1}^\infty$ be a sequence of disjoint measurable sets of finite measure such that $S = \bigcup_{n=1}^\infty S_n$. For $n,k \in \N$ set
  \begin{equation*}
    A_{n,k} :=  \cbrace{s \in S_n:k-1 \leq \nrm{f(s)}_X < k}.
  \end{equation*}
  The sets $(A_{n,k})_{n,k=1}^\infty$ are pairwise disjoint, have finite measure and $f$ is bounded on $A_{n,k}$ for all $n,k \in \N$. Relabelling and leaving out all sets with measure zero proves the first part of the lemma.

  For the second part note that by the first part we may assume that $S$ has finite measure and $f$ is bounded. By \cite[Lemma 1.2.19]{HNVW16} there exists a sequence of simple functions $(f_m)_{m=1}^\infty$ and a sequence of measurable sets $(B_m)_{m=1}^\infty$ such that
  \begin{equation*}
    \sup_{s \in B_m}\nrm{f_m(s)-f(s)}_X <\frac{1}{m} \qquad \text{and} \qquad \mu(S\setminus B_m) <2^{-m-1}.
  \end{equation*}
  Upon replacing $B_m$ by $\bigcap_{j\geq m} B_j$ the sequence $(B_m)_{m=1}^\infty$ can be taken to be increasing with $\mu(S\setminus B_m) \to 0$ for $m \to \infty$. For each $m \in \N$ let $\Pi_m=\cbrace{E_{nm}}_{n=1}^{M_m}$ be the partition of $S$ consisting of the atoms of the finite $\sigma$-algebra generated by $B_1,\ldots,B_m$ and the simple functions $f_1,\ldots,f_m$.

  Take $s \in S$, fix $\varepsilon>0$ and let $N\in \N$ such that $N>\varepsilon/2$ and $s \in B_N$, which is possible for a.e. $s\in S$ since $\mu\hab{\textstyle\bigcup_{m=1}^\infty B_m} = \mu(S).$ Then we have for all $m \geq N$ that $\nrm{f_m - f}_X<1/m$ on $B_m$, and thus in particular on $E_{jm}$ for $j \in N$ such that $s \in E_{jm}$. Therefore $$\nrm{P_mf(s)-P_mf_m(s)}_X<\tfrac1m$$ and since $P_mf_m = f_m$ we conclude
  \begin{equation*}
    \nrm{P_mf(s)-f(s)}_X \leq \nrm{P_mf(s)-P_mf_m(s)}_X+\nrm{f_m(s)-f(s)}_X<\tfrac2m<\epsilon.
  \end{equation*}
  Therefore $P_mf(s) \to f(s)$ for $\mu$-a.e. $s\in S$, which concludes the proof.
\end{proof}

\subsection*{Representability of operators in $\alpha(S;X)$}
We will now study the representability of elements of $\alpha(S;X)$ with the aim of characterizing
 when all elements of $\alpha(S;X)$ are representable by a function $f:S \to X$.  If $(S,\mu)$ is atomic, then it is clear that every element of $\alpha(S;X)$ is  representable by a function. All elements of $\alpha(S;X)$ are also  representable by a function if $\alpha = \pi_2$ and $X$ is a Hilbert space, since the Hilbert-Schmidt norm coincides with the $\pi_2$-norm in this case and we have the following, well-known lemma.

\begin{lemma}\label{lemma:HSrepresentable}
Let $H$ be a Hilbert space, $(S,\mu)$ a measure space and suppose that $T:L^2(S) \to H$ is a Hilbert-Schmidt operator. Then there is a strongly measurable $f\colon S \to H$ such that $T\varphi = \int_S\varphi f \dd\mu$ for all $\varphi \in L^2(S)$.
\end{lemma}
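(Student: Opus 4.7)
The plan is to construct $f$ explicitly by expanding it in an orthonormal basis of a separable subspace of $H$ that contains the range of $T$. The Hilbert--Schmidt hypothesis will provide the square-summability needed to make the expansion converge pointwise almost everywhere, and Pettis' theorem will then take care of the integral representation.

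First I would reduce to a separable situation. Since $T$ is Hilbert--Schmidt, $\nrm{T}_{\HS}^2 = \sum_k \nrm{Te_k}^2 < \infty$ for any orthonormal basis $(e_k)$ of $L^2(S)$, so only countably many $Te_k$ are nonzero. Hence $\overline{\mathrm{range}(T)}$ is a separable closed subspace $H_0$ of $H$; replacing $H$ by $H_0$ we may assume $H$ is separable, and we pick an orthonormal basis $(h_k)_{k=1}^\infty$ of $H$. Set
\begin{equation*}
  f_k := T^*h_k \in L^2(S), \qquad k \in \N.
\end{equation*}
Since $T^*\colon H \to L^2(S)$ is also Hilbert--Schmidt with the same norm, we get $\sum_{k=1}^\infty \nrm{f_k}_{L^2(S)}^2 = \nrm{T}_{\HS}^2 < \infty$, so by Tonelli's theorem $\sum_{k=1}^\infty \abs{f_k(s)}^2 < \infty$ for $\mu$-a.e.\ $s \in S$.

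Next I would define
\begin{equation*}
  f(s) := \sum_{k=1}^\infty f_k(s)\, h_k
\end{equation*}
for those $s$ where the series converges in $H$ (and $f(s):=0$ elsewhere). By the Pythagorean identity and the previous step the series converges in $H$ for $\mu$-a.e.\ $s$, and $\nrm{f(s)}_H^2 = \sum_{k=1}^\infty \abs{f_k(s)}^2$. Strong measurability of $f$ follows because $f$ is the pointwise a.e.\ limit in $H$ of the simple-tensor-like measurable functions $s \mapsto \sum_{k=1}^N f_k(s) h_k$. Furthermore $\nrm{f(\cdot)}_H \in L^2(S)$ with $\nrm{\,\nrm{f}_H\,}_{L^2(S)} = \nrm{T}_{\HS}$.

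Finally I would verify the representation formula. For $\varphi \in L^2(S)$, the function $\varphi f$ is Bochner integrable because $\nrm{\varphi f}_H \leq \abs{\varphi} \nrm{f}_H$ is the product of two $L^2$ functions, hence in $L^1(S)$. For each $k$ we have $\ip{f(s),h_k} = f_k(s) = (T^*h_k)(s)$ for a.e.\ $s$, so
\begin{equation*}
  \ips{\int_S \varphi f \dd\mu,\, h_k} = \int_S \varphi (T^*h_k) \dd\mu = \ip{\varphi, T^*h_k}_{L^2(S)} = \ip{T\varphi,h_k}_H
\end{equation*}
for every $k$. Since $(h_k)_{k=1}^\infty$ is an orthonormal basis of $H$, this yields $\int_S \varphi f \dd\mu = T\varphi$, as required. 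The main obstacle is really only the bookkeeping around separability and the interchange of sum and integral; once $f$ is defined as above, everything else is routine application of Pettis/Tonelli.
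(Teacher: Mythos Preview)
Your proof is correct and follows essentially the same idea as the paper's: construct $f$ as an orthonormal expansion $\sum_k c_k(s)h_k$ in $H$ whose coefficient functions have square-summable $L^2$-norms by the Hilbert--Schmidt condition, so the series converges in $H$ almost everywhere. The only cosmetic difference is that the paper obtains the coefficients from the Schmidt (singular value) decomposition $T\varphi = \sum_k a_k\ip{\varphi,e_k}h_k$, taking $c_k(s)=a_ke_k(s)$, whereas you take an arbitrary orthonormal basis $(h_k)$ of the (separable) range and set $c_k=T^*h_k$; your verification of the integral formula via testing against $h_k$ is slightly more explicit than the paper's, which simply asserts it.
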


\begin{proof}
  We can represent $T$ in the form
  \begin{equation*}
    T\varphi  = \sum_{k=1}^\infty a_k\ip{\varphi,e_k}h_k, \qquad \varphi \in L^2(S),
  \end{equation*}
  where $(a_k)_{k=1}^\infty \in \ell^2$, $(e_k)_{k=1}^\infty$ is an orthonormal sequence in $L^2(S)$ and $(h_k)_{k=1}^\infty$ is an orthonormal sequence in $H$. Let
  \begin{equation*}
    f(s):=\sum_{k=1}^\infty a_k e_k(s) h_k, \qquad \mu\text{-a.e. }s \in S.
  \end{equation*}
  This defines a strongly measurable map $f:S \to H$ since
  \begin{equation*}
    \int_S\sum_{k=1}^\infty \abs{a_k}^2\abs{e_k}^2 \dd \mu < \infty.
  \end{equation*}
  Moreover $T\varphi = \int_S\varphi f \dd\mu$ for all $\varphi \in L^2(S)$.
\end{proof}

It turns out that the two discussed cases, i.e. $(S,\mu)$ atomic or $X$ a Hilbert space and $\alpha = \pi_2$, are in a certain sense the only occasions in which all elements of $\alpha(S;X)$ are representable by a function. This will be a consequence of the following proposition.

\begin{proposition}\label{proposition:alphaspace=bochnerspace}
  Let $(S,\Sigma,\mu)$ be a non-atomic measure space. Every operator in $\alpha(S,X)$ is representable if and only if $\pi_2 \lesssim \alpha$.
\end{proposition}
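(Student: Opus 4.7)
For the direction $(\Leftarrow)$, I would assume $\pi_2\lesssim\alpha$ and fix $T\in\alpha(S;X)$. Testing the inequality $\pi_2\lesssim\alpha$ on sequences $(Te_k)$ for arbitrary orthonormal $(e_k)\subseteq L^2(S)$, together with the ideal property \eqref{eq:E2x}, yields $T\in\pi_2(L^2(S),X)$ with $\nrm{T}_{\pi_2}\lesssim\nrm{T}_{\alpha(S;X)}$. I would then invoke the classical Pietsch factorization for $2$-summing operators out of a Hilbert space: there exist a regular Borel probability $\nu$ on $B_{L^2(S)^*}$ (weak-$*$), a bounded $R$, and the inclusion $j\colon L^2(S)\to L^2(\nu)$, $j\varphi(g):=\ip{\varphi,g}$, with $T=R\circ j$. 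A Parseval computation against any orthonormal basis gives $\nrm{j}_{\HS}^2=\int_{B_{L^2(S)^*}}\nrm{g}_{L^2(S)^*}^2\dd\nu(g)\leq 1$, so $j$ is Hilbert--Schmidt. Lemma \ref{lemma:HSrepresentable} then produces a strongly measurable $g\colon S\to L^2(\nu)$ representing $j$, whence $T\varphi=\int_S\varphi\cdot(R\circ g)\dd\mu$ represents $T$ by the strongly measurable function $R\circ g\colon S\to X$.

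For $(\Rightarrow)$, assume every $T\in\alpha(S;X)$ admits a (necessarily unique) representing strongly measurable $f_T\colon S\to X$. By Proposition \ref{proposition:separablereduction} I may reduce to $(S,\mu)$ $\sigma$-finite, so that $L^0(S;X)$ is an $F$-space. The linear assignment $\Phi\colon T\mapsto f_T$ has closed graph: if $T_n\to T$ in $\alpha(S;X)$ and $f_{T_n}\to h$ in measure, then the $\alpha$-norm dominates the operator norm, so $T_n^*x^*\to T^*x^*$ in $L^2(S)$ for every $x^*\in X^*$; combining with $x^*\circ f_{T_n}=T_n^*x^*$ and the in-measure convergence $x^*\circ f_{T_n}\to x^*\circ h$ forces $h=f_T$ almost everywhere. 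The closed graph theorem thus makes $\Phi$ continuous, and unwinding continuity at the origin after rescaling yields a constant $C>0$ such that $\mu(\cbrace{s:\nrm{f_T(s)}_X>C\nrm{T}_\alpha})\leq\tfrac{1}{2}$ for every $T\in\alpha(S;X)$.

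Given $\mb{x}\in X^n$, I would use non-atomicity of $(S,\mu)$ to pick pairwise disjoint measurable $E_k\subseteq S$ with $\mu(E_k)=c\nrm{x_k}_X^2$, where $c>0$ is chosen small enough for disjointness to fit in $S$ (truncating to $k\leq N$ first and passing to the limit handles the $\mu(S)<\infty$ case). Setting $e_k:=\ind_{E_k}/\sqrt{\mu(E_k)}$ and $T_{\mb{x}}\varphi:=\sum_k\ip{\varphi,e_k}x_k$, an application of \eqref{eq:E2x} to the unitary matrix $(\ip{e'_j,e_k})$ for any orthonormal $(e'_j)$ gives $\nrm{T_{\mb{x}}}_{\alpha(S;X)}=\nrm{\mb{x}}_\alpha$, while the representing function has constant modulus $c^{-1/2}$ on $\bigcup_k E_k$. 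Choosing $c$ so that $c^{-1/2}=2C\nrm{\mb{x}}_\alpha$ and plugging into the continuity estimate forces $\sum_k\nrm{x_k}_X^2\leq 2C^2\nrm{\mb{x}}_\alpha^2$. Finally, applying this $\ell^2$-bound to $\mb{A}\mb{x}$ for a contractive $\mb{A}\in M_{m,n}(\C)$ (which preserves the $\alpha$-norm via \eqref{eq:E2x}) and taking the supremum over $\nrm{\mb{A}}\leq 1$ in the definition of $\nrm{\cdot}_{\pi_2}$ promotes the bound to $\nrm{\mb{x}}_{\pi_2}\lesssim\nrm{\mb{x}}_\alpha$, i.e.\ $\pi_2\lesssim\alpha$.

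The main delicacies are: in $(\Leftarrow)$, verifying that the Pietsch factorization composes correctly with the Hilbert--Schmidt representation (in particular, arranging that $R\circ g$ is well-defined by either projecting $g$ onto $\overline{j(L^2(S))}$ or extending $R$ to all of $L^2(\nu)$); and in $(\Rightarrow)$, arranging the truncation/rescaling of $\mb{x}$ so that the disjoint sets $E_k$ with prescribed measures $c\nrm{x_k}_X^2$ always fit inside $(S,\mu)$ while keeping the final estimate genuinely homogeneous in $\mb{x}$.
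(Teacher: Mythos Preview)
Your approach is essentially the same as the paper's in both directions, with one improvement and one small circularity to fix.

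For $(\Leftarrow)$, your argument is actually more direct than the paper's. The paper factors $T=UJV$ with $V\colon L^2(S)\to L^\infty(S')$, $J\colon L^\infty(S')\hookrightarrow L^2(S')$, $U\colon L^2(S')\to X$, and then invokes Grothendieck's theorem to see that $J$ is $2$-summing, hence $JV$ is Hilbert--Schmidt. You bypass Grothendieck entirely by observing that when the domain is already a Hilbert space, the Pietsch map $j\colon L^2(S)\to L^2(\nu)$ is automatically Hilbert--Schmidt via the Parseval computation. Both routes then finish with Lemma~\ref{lemma:HSrepresentable} and composition with the bounded factor.

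For $(\Rightarrow)$, your closed-graph argument and the construction of $T_{\mb{x}}$ are the paper's argument, but your ``choose $c$ so that $c^{-1/2}=2C\nrm{\mb{x}}_\alpha$'' step is circular when $\mu(S)<\infty$: fitting disjoint $E_k$ with $\mu(E_k)=c\nrm{x_k}_X^2$ inside $S$ requires $c\sum_k\nrm{x_k}_X^2\leq\mu(S)$, and with your choice of $c$ that inequality is exactly $\sum_k\nrm{x_k}_X^2\lesssim\nrm{\mb{x}}_\alpha^2$, which is what you are trying to prove. Your ``truncating to $k\leq N$'' remark does not resolve this, since $n$ is already finite. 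The clean fix---and this is precisely what the paper does---is to normalize: reduce first to $\mu(S)=1$, rescale $\mb{x}$ so that $\sum_k\nrm{x_k}_X^2=1$, take $c=1$ so the $E_k$ partition $S$ exactly, and then the representing function has $\nrm{f(s)}_X\equiv 1$; your continuity estimate $\mu\bigl(\{\nrm{f_T}>C\nrm{T}_\alpha\}\bigr)\leq\tfrac12$ forces $1\leq C\nrm{\mb{x}}_\alpha$, which by homogeneity is the desired bound.
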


\begin{proof}
  Let us first show that if $T \in \alpha(S;X) \subseteq \pi_2(S;X)$, then $T$ is representable. Note that $T$ is $2$-summing, so by the Pietsch factorization theorem \cite[p.48]{DJT95}, we know that $T$ has a factorization $T=UJV$:
  \begin{center}
\begin{tikzcd}
L^2(S)\arrow{r}{T} \arrow{d}{V}                                                      & X               \\
L^\infty(S') \arrow{r}{J}& L^2(S') \arrow{u}{U} \arrow{u}{U}
\end{tikzcd}
\end{center}
  where $(S',\mu')$ is a finite measure space and $J$ is the inclusion map. Since $J$ is $2$-summing by Grothendiek's theorem (see e.g. \cite[Theorem 3.7]{DJT95}), $VJ$ is also $2$-summing and thus a Hilbert-Schmidt operator. Therefore $T$ is representable by Lemma \ref{lemma:HSrepresentable}.

  Conversely suppose that every $T \in \alpha(S;X)$ is representable. By restricting to a subset of $S$ we may assume $\mu(S)<\infty$ and then by rescaling we may assume $\mu(S) = 1$. We define a map $$J:\alpha(S;X) \to L^0(S;X)$$ such that $JT$ is a representing function for $T \in \alpha(S;X)$. This map is well-defined since the representing function is unique up to $\mu$-a.e. equality. Let us consider the topology of convergence in measure on $L^0(S;X)$. If $T_n \to T$ in $\alpha(S;X)$ and $f_n:=JT_n \to f$ in $L^0(S;X)$, then it is clear by the dominated convergence theorem that
  \begin{equation*}
    T\varphi =\int_S \varphi f \dd \mu, \qquad \varphi \in \mc{A},
  \end{equation*}
  where
  \begin{equation*}
    \mc{A}:=\cbrace{\varphi \in L^2(S): \int_S \abs{\varphi} \sup_{n \in \N}\nrm{f_n}_X\dd \mu<\infty}.
  \end{equation*}
  Since  $\mc{A}$ is dense in $L^2(S)$ by Lemma \ref{lemma:partitions}, this shows that $f = JT$. Hence $J$ has a closed graph and is therefore continuous. In particular it follows that there is a constant $C>0$ so that if $\nrm{JT(s)}_X\geq 1$ for $\mu$-a.e. $s \in S$, then $\nrm{T}_{\alpha(S;X)} \geq C^{-1}$. Now take $\mb{x} \in X^n$  such that $\sum_{k=1}^n \nrm{x_k}^2 =1$ and partition $S$ into sets $E_1,\ldots,E_n$ with measure $\nrm{x_1}^2,\ldots,\nrm{x_n}^2$, which is possible since $(S,\mu)$ is non-atomic. Define
   \begin{align*}
     e_k &= \ind_{E_k} \nrm{x_k}^{-1},\\
     f(s) &= \sum_{k=1}^n x_k e_k(s), \qquad s \in S
   \end{align*}
 for $1 \leq k \leq n$. Then $(e_k)_{k=1}^n$ is an orthonormal sequence in $L^2(S)$ and $\nrm{f(s)}_X = 1$ for $s \in S$, so
  \begin{equation*}
    \nrm{\mb{x}}_\alpha = \nrmb{\sum_{k=1}^n e_k \otimes x_k}_{\alpha(S;X)} \geq C^{-1}.
  \end{equation*}
  This implies that $\hab{\sum_{k=1}^n \nrm{x_k}^2}^{1/2} \leq C \,\nrm{\mb{x}}_\alpha$ for all $\mb{x} \in X^n$. Thus for any
 $\mb{A} \in M_{m,n}(\C)$ with $\nrm{\mb{A}} \leq 1$ we have
  \begin{equation*}
    \has{\sum_{j=1}^m \nrm{\mb{A}\mb{x}}^2}^{1/2} \leq C \, \nrm{\mb{A}\mb{x}}_\alpha \leq C  \, \nrm{\mb{x}}_\alpha,
  \end{equation*}
  which shows that $\pi_2 \lesssim \alpha$.
\end{proof}

\begin{corollary}
  Let $(S,\mu)$ be a non-atomic measure space. All $S \in \alpha(S;X)$ and $T \in \alpha^*(S;X^*)$ are representable if and only if $\alpha$ and $\alpha^*$ are equivalent to the $\pi_2$-structure and $X$ is isomorphic to a Hilbert space.
\end{corollary}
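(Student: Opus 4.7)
The plan is to combine Proposition \ref{proposition:alphaspace=bochnerspace} with the comparison and duality properties of $\gamma$ and $\pi_2$ recorded in Propositions \ref{proposition:compareEuclidean} and \ref{proposition:dualeuclidean}. The ``if'' direction is straightforward: on a Hilbert space $X$ the $\pi_2$-norm coincides with the Hilbert--Schmidt norm, which is self-dual under trace duality, so $\alpha \simeq \pi_2 \simeq \alpha^*$ (under the canonical identification $X^* \cong X$), and Proposition \ref{proposition:alphaspace=bochnerspace} applied separately on $X$ and on $X^*$ delivers the representability of every operator in both $\alpha(S;X)$ and $\alpha^*(S;X^*)$.

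For the converse I would first apply Proposition \ref{proposition:alphaspace=bochnerspace} twice, which yields $\pi_2 \lesssim \alpha$ on $X$ and $\pi_2 \lesssim \alpha^*$ on $X^*$. Dualizing each inequality (reversing the direction, passing to the dual space, and restricting from $X^{**}$ back down to $X$ via $\alpha^{**}|_X = \alpha$) gives $\alpha^* \lesssim (\pi_2)^*$ on $X^*$ and $\alpha \lesssim (\pi_2)^*$ on $X$, whence the squeeze
\begin{equation*}
\pi_2 \;\lesssim\; \alpha \;\lesssim\; (\pi_2)^* \quad \text{on } X, \qquad \pi_2 \;\lesssim\; \alpha^* \;\lesssim\; (\pi_2)^* \quad \text{on } X^*.
\end{equation*}

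The main work is to extract from this squeeze that $X$ is isomorphic to a Hilbert space. The idea is to chain with the universal inequality $\gamma \leq \pi_2$ from Proposition \ref{proposition:compareEuclidean}(i), whose dualization gives $(\pi_2)^* \leq \gamma^*$, together with $\gamma^* \leq \gamma$ from Proposition \ref{proposition:dualeuclidean}(ii), applied once with the underlying space being $X$ and once with it being $X^*$. This delivers $(\pi_2)^* \leq \gamma^* \leq \gamma$ on each of $X$ and $X^*$, which plugs into the squeeze to give $\pi_2 \lesssim \gamma$ on each side; Proposition \ref{proposition:compareEuclidean}(i) then yields cotype $2$ for both $X$ and $X^*$. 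From the $X^*$ chain one also reads off $\gamma \lesssim \gamma^*$ on $X^*$, so by Proposition \ref{proposition:dualeuclidean}(ii) the space $X$ has nontrivial type, hence is K-convex. Pisier's type/cotype duality for K-convex spaces then upgrades cotype $2$ of $X^*$ to type $2$ of $X$, and Kwapień's theorem closes the argument by identifying $X$ with a Hilbert space up to isomorphism.

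With $X$ Hilbertian, $\pi_2$ is the Hilbert--Schmidt norm and is self-dual, so $\pi_2 \simeq (\pi_2)^*$ on both $X$ and $X^*$, and the squeezes collapse to $\alpha \simeq \pi_2$ and $\alpha^* \simeq \pi_2$. I expect the main obstacle to be keeping careful track of which Euclidean structure lives on which Banach space after each dualization: the symbol $(\pi_2)^*$ refers to different structures on $X$ and on $X^*$, and it takes some care to verify that the various chainings genuinely stay in the right space before invoking the classical type/cotype machinery.
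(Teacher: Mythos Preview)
Your proposal is correct and follows essentially the same route as the paper: both arguments apply Proposition~\ref{proposition:alphaspace=bochnerspace} on $X$ and on $X^*$, then close the loop via the chain $\pi_2 \lesssim \alpha^* \lesssim \pi_2^* \leq \gamma^* \leq \gamma \leq \pi_2$ (and its analogue one level up), read off cotype~$2$ and nontrivial type from Propositions~\ref{proposition:compareEuclidean} and~\ref{proposition:dualeuclidean}, and finish with Pisier's type--cotype duality and Kwapie\'n's theorem. The only cosmetic difference is that the paper runs the second chain on $X^{**}$ while you restrict it to $X$; your caveat about tracking which $(\pi_2)^*$ and $\gamma^*$ live where is exactly the right bookkeeping concern, and it does go through.
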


\begin{proof}
  The `if' part follows directly from Lemma \ref{lemma:HSrepresentable}. For the `only if' part note that, by Proposition \ref{proposition:alphaspace=bochnerspace}, Proposition \ref{proposition:compareEuclidean} and Proposition \ref{proposition:dualeuclidean}, we have on $X^*$
  \begin{equation}\label{eq:equivalencestructures}
    \pi_2 \lesssim \alpha^* \lesssim \pi_2^* \leq  \gamma^* \leq \gamma \leq \pi_2.
  \end{equation}
  This implies that $\alpha^*$ is equivalent to the $\pi_2$-structure on $X^*$. A similar argument on $X^{**}$ implies that $\alpha^{**}$ is equivalent to the $\pi_2$-structure on $X^{**}$, so $\alpha$ is equivalent to the $\pi_2$-structure on $X$. By \eqref{eq:equivalencestructures} and Propositions \ref{proposition:compareEuclidean} and \ref{proposition:dualeuclidean}, we also have that $X^*$ has nontrivial type and cotype 2. Therefore by \cite[Proposition 7.4.10]{HNVW17} we know that $X^{**}$, and thus $X$, has type $2$. A similar chain of inequalities on $X^{**}$ shows that $X^{**}$, and thus $X$, has cotype $2$. So by Theorem \ref{theorem:Rbddkwapienmaurey} we know that $X$ is isomorphic to a Hilbert space.
\end{proof}

We end this section with a representation result for the $\ell^2$-structure on a Banach function space $X$ or a $C_0(K)$ space. Note that by $\ell^2(S;X)$ we mean the space $\alpha(S;X)$ where $\alpha$ is the $\ell^2$-structure, not the sequence space $\ell^2$ indexed by $S$ with values in $X$.

\begin{proposition}\label{proposition:l2representation}
  Let $(S,\mu)$ be a measure space and suppose that $X$ is either an order-continuous Banach function space or $C_0(K)$ for some locally compact $K$. Then for any strongly measurable $f\colon S\to X$
  we have $f \in \ell^2(S;X)$ if and only if $\hab{\int_S \abs{f}^2 \dd \mu}^{1/2} \in X$ with
  \begin{equation*}
    \nrm{f}_{\ell^2(S;X)} = \nrms{\has{\int_S \abs{f}^2 \dd \mu}^{1/2}}_X.
  \end{equation*}
\end{proposition}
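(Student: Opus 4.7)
The plan is to reduce the proposition to pointwise scalar Bessel/Parseval identities by viewing $f\colon S \to X$ as a jointly measurable function $F(s,t) = f(s)(t)$ (using strong measurability and, if needed, the $\sigma$-finiteness reduction of $(S,\mu)$ via Proposition 1.1.15 of \cite{HNVW16}). Set $g(t) := \bigl(\int_S \abs{F(s,t)}^2\dd\mu(s)\bigr)^{1/2}$, interpreted as a measurable function on the underlying domain (resp.\ on $K$). For any orthonormal $(e_k)_{k=1}^n$ in $L^2(S)$, writing $Te_k = \int_S e_k f \dd\mu$ and expanding gives, by Cauchy--Schwarz,
\begin{equation*}
  \sum_{k=1}^n \abs{Te_k}^2(t) = \sum_{k=1}^n \abss{\int_S e_k(s) F(s,t)\dd \mu(s)}^2 \leq g(t)^2, \qquad t\text{-a.e.}
\end{equation*}
If $g \in X$, taking $X$-norms yields $\nrm{T_f}_{\ell^2(S;X)} \leq \nrm{g}_X$, which settles the easy direction.

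For the reverse direction in the Banach function space case, I would use the nested partitions $\Pi_m = \{E_{nm}\}_{n=1}^\infty$ furnished by Lemma \ref{lemma:partitions}, and the corresponding orthonormal system $e_{km} := \mu(E_{km})^{-1/2}\ind_{E_{km}}$ in $L^2(S)$. A direct computation, using the disjointness of the $E_{km}$ and the identity $Te_{km} = \mu(E_{km})^{1/2}\bar{f}_{km}$ with $\bar{f}_{km} := \mu(E_{km})^{-1}\int_{E_{km}}f\dd\mu$, gives
\begin{equation*}
  \sum_k \abs{Te_{km}}^2 = \int_S \abs{P_m f}^2 \dd\mu.
\end{equation*}
Hence $\nrmb{(\int_S \abs{P_m f}^2\dd\mu)^{1/2}}_X \leq \nrm{f}_{\ell^2(S;X)}$. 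Since the $\Pi_m$ are nested and generate the relevant $\sigma$-algebra on $S$, the conditional expectation / Parseval identity forces $\int_S \abs{P_m F(s,t)}^2\dd\mu(s) \uparrow g(t)^2$ pointwise in $t$, and the Fatou property (iii) of a Banach function space then delivers $g \in X$ with $\nrm{g}_X \leq \nrm{f}_{\ell^2(S;X)}$.

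For $C_0(K)$, the same pointwise Bessel argument gives the forward direction and shows $g$ is a bounded function. However, the Fatou property fails, so the last step must be replaced: approximate $f$ by simple functions $f_n$ representing finite rank operators $T_{f_n}$, for which $g_n = (\sum_j \abs{x_j^{(n)}}^2 \mu(A_j^{(n)}))^{1/2}$ lies in $C_0(K)$ and the identity $\nrm{T_{f_n}}_{\ell^2(S;C_0(K))} = \nrm{g_n}_{C_0(K)}$ holds by direct computation (via the chosen orthonormal system and orthonormality). One then passes to the limit, exploiting that increasing sequences in $C_0(K)$ with $C_0(K)$-limits converge uniformly by a Dini-type argument (using local compactness to shrink to a compact on which Dini's theorem applies, while outside this compact the functions are uniformly small). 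The main obstacle is exactly this $C_0(K)$ step, since the absence of the Fatou property prevents a direct Fatou-type conclusion from pointwise convergence of the partition-based partial sums; the argument must therefore be routed through simple-function approximation and the Dini phenomenon specific to $C_0(K)$.
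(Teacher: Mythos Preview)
Your approach differs substantially from the paper's, and while the Banach-function-space part is workable, there are loose ends that the paper's route avoids entirely.

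\textbf{What the paper does.} The paper fixes a single orthonormal \emph{basis} $(e_k)$ of $L^2(S)$ (after reducing to the separable case), sets $x_k=\int_S e_k f\,\dd\mu$, and uses the pointwise Parseval identity $\sum_{k=1}^\infty |x_k|^2 = \int_S|f|^2\,\dd\mu$ in the lattice. The whole proposition then collapses to the equivalence
\[
(x_k)\in\ell^2(\N;X)\ \Longleftrightarrow\ \lim_{n\to\infty}\nrms{\Bigl(\sum_{k>n}|x_k|^2\Bigr)^{1/2}}_X=0\ \Longleftrightarrow\ \Bigl(\sum_{k=1}^\infty |x_k|^2\Bigr)^{1/2}\in X,
\]
where the last step is exactly order-continuity (BFS case) or Dini's theorem ($C_0(K)$ case). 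Both directions and both cases are handled in one stroke, and the norm equality is automatic from Parseval.

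\textbf{What is missing in your ``easy'' direction.} The Bessel inequality $\sum_{k=1}^n|Te_k|^2\le g^2$ only shows $T_f\in\ell^2_+(S;X)$; it does not by itself give $T_f\in\ell^2(S;X)$, which requires $\nrm{(\sum_{k>n}|Te_k|^2)^{1/2}}_X\to0$ for every orthonormal system. This tail condition \emph{does} follow (the partial sums increase to some $h^2\le g^2$, the tails decrease to zero pointwise and are dominated by $g$, and then order-continuity or Dini finishes it), but you have not said so. This is precisely the step the paper isolates as the heart of the matter.

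\textbf{The $C_0(K)$ argument.} Your invocation of Dini is off: Dini concerns monotone pointwise convergence to a continuous limit, whereas your simple-function approximants $g_n=(\int_S|f_n|^2)^{1/2}$ are neither monotone nor known a priori to converge pointwise to $g$. The right mechanism is a Cauchy-sequence argument: from $\|f_n-f_m\|_{\ell^2}=\nrm{(\int_S|f_n-f_m|^2)^{1/2}}_{C_0(K)}$ (valid for simple differences) and the reverse triangle inequality one gets $(g_n)$ Cauchy in $C_0(K)$, hence convergent to some $h\in C_0(K)$; one must then still identify $h=g$, which requires observing that $\delta_t\circ f\in L^2(S)$ (this is part of the Pettis-integrability hypothesis) and that $f_n(\cdot)(t)\to f(\cdot)(t)$ in $L^2(S)$. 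This can be made rigorous, but it is considerably more work than the paper's uniform argument via a fixed basis and Dini applied to the monotone tails.
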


\begin{proof}
  We will prove the `only if' statement, the `if' statement being similar, but simpler. Let $f\colon S \to X$ be strongly measurable. By \cite[Proposition 1.1.15]{HNVW16} we may assume that $S$ is $\sigma$-finite and by Proposition \ref{proposition:separablereduction} we may assume that $L^2(S)$ is separable. Suppose that $(e_k)_{k=1}^\infty$ is an orthonormal basis of $L^2(S)$ such that $\int_S\abs{e_k} \nrm{f}_X<\infty$ for all $k \in \N$. Such a basis can for example be constructed by partitioning $S$ into sets of finite measure where $f$ is bounded as in Lemma \ref{lemma:partitions}. Let $x_k := \int_S e_k f\dd \mu$ and $f_n := \sum_{k=1}^n \overline{e}_k \otimes x_k$. Then $f \in \ell^2(S;X)$ if and only if $(x_k)_{k=1}^\infty \in \ell^2(\N;X)$. This occurs if and only if
  $$\lim_{n\to \infty} \nrms{\hab{\sum_{k=n+1}^\infty \abs{x_k}^2}^{1/2}}_X =0.$$
  By order-continuity or Dini's theorem respectively, this occurs if and only if we have $\hab{\sum_{k=1}^\infty \abs{x_k}^2}^{1/2} \in X$. Since
  \begin{equation*}
    \has{\sum_{k=1}^\infty \abs{x_k}^2}^{1/2} =  \has{\int_S \abs{f}^2 \dd \mu}^{1/2},
  \end{equation*}
  the result follows.
\end{proof}
If for example $X = L^p(\R)$, then a measurable $f: \R \to L^p(\R)$ belongs to $\ell^2(\R;X)$ if and only if
\begin{equation*}
  \nrm{f}_{\ell^2(\R;X)} = \has{ \int_{\R}\has{\int_{\R} \abs{f(t,s)}^2 \dd t}^{p/2}\dd s}^{1/p} < \infty.
\end{equation*}
For a Banach function space with finite cotype we also have that
\begin{equation*}
  \nrm{f}_{\gamma(S;X)} \simeq  \nrm{f}_{\ell^2(S;X)} =\nrms{\has{\int_S\abs{f}^2\dd \mu}^{1/2}}_X
\end{equation*}
which follows from Proposition \ref{proposition:compareEuclidean} (see also  \cite[Theorem 9.3.8]{HNVW17}). This equation suggests to think of the norms $\nrm{\lcdot}_{\gamma(S;X)}$ and $\nrm{\lcdot}_{\alpha(S;X)}$ as generalizations of the classical square functions in $L^p$-spaces to the Banach space setting. We will support this heuristic in the next section by showing that $\alpha$-norms have properties quite similar to the usual function space properties of $L^p(S';L^2(S))$. In Chapter \ref{part:5} we will use this heuristic to generalize the classical $L^p$-square functions for sectorial operators to arbitrary Banach spaces.

\section{Function space properties of \texorpdfstring{$\alpha(S;X)$}{a(S;X)}}
We will now take a closer look at the space $\alpha(S;X)$ as the completion of a function space over the measure space $(S,\mu)$. We start with some embedding between these spaces and the more classical Bochner spaces $L^2(S;X)$.
If $E$ is a finite-dimensional subspace of $X$ and $f\colon S \to E$ is strongly measurable, then $f \in \alpha(S,X)$ if and only if $f \in L^2(S;X)$. In fact, by Proposition \ref{proposition:finitedimensionalalpha}, we have
\begin{equation}\label{eq:finitedimesnionalE}
  (\dim(E))^{-1} \nrm{f}_{L^2(S;X)} \leq \nrm{f}_{\alpha(S;X)} \leq \dim(E) \nrm{f}_{L^2(S;X)}.
\end{equation}
Moreover if $\dim(L^2(S)) = \infty$, it is known that for the $\gamma$-structure we have
\begin{align}\label{eq:typeembedding}
 \nrm{f}_{\gamma(S;X)} &\lesssim \nrm{f}_{L^2(S;X)}, && f \in L^2(S;X),
\intertext{if and only if $X$ has type $2$ and}
 \nrm{f}_{L^2(S;X)} &\lesssim \nrm{f}_{\gamma(S;X)}, && f \in \gamma(S;X)\label{eq:cotypeembedding}
\end{align}
if and only if $X$ has cotype $2$, see \cite[Section 9.2.b]{HNVW17}. Further embeddings under smoothness conditions can be found in \cite[Section 9.7]{HNVW17}. We leave the generalization of these embeddings to a general Euclidean structure $\alpha$ to the interested reader.

\subsection*{Extension of bounded operators on $L^2(S)$}
    One of the main advantages the spaces $\alpha(S;X)$ have over the Bochner spaces $L^p(S;X)$ is the fact that any operator $T \in \mc{L}(L^2(S_1),L^2(S_2))$ can be extended to a bounded operator $\widetilde{T}\colon \alpha(S_1;X) \to \alpha(S_2;X)$. Indeed, putting $\widetilde{T}U := U\circ T^*$ for $U \in \alpha(S_1;X)$, we have that $\widetilde{T}$ is bounded by Proposition \ref{proposition:infinitematrixalphafunctions}.
     For functions this read as follows:

\begin{proposition}\label{proposition:operatoronfunctions}
   Let $(S_1,\mu_1)$ and $(S_2,\mu_2)$ be measure spaces and let $f\colon S_1 \to X$ be a strongly measurable function in $\alpha(S_1;X)$. Take $T \in \mc{L}(L^2(S_1), L^2(S_2))$ and suppose that there exists a strongly measurable $g:S_2 \to X$ such that for every $x^* \in X^*$ we have
  \begin{equation*}
    x^* \circ g = T(x^* \circ f)
  \end{equation*}
  or equivalently $x^* \circ g \in L^2(S_2)$ and
  \begin{equation*}
    \int_{S_2} \varphi g \dd \mu_2 = \int_{S_1} (T^*\varphi) f \dd \mu_1, \qquad \varphi \in L^2(S_2).
  \end{equation*}
  Then $g \in \alpha(S_2;X)$ and
  \begin{equation*}
    \nrm{g}_{\alpha(S_2;X)} \leq \nrm{T} \nrm{f}_{\alpha(S_1;X)}.
  \end{equation*}
\end{proposition}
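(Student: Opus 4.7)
The plan is to recognize the proposition as the function-space reformulation of the general operator extension statement mentioned right before it: for $T\in \mc{L}(L^2(S_1),L^2(S_2))$ the map $U\mapsto U\circ T^*$ sends $\alpha(S_1;X)$ boundedly into $\alpha(S_2;X)$ with norm at most $\nrm{T}$. So the task reduces to identifying the operator $T_g$ associated to $g$ with $T_f\circ T^*$.

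First I would verify the identity $T_g = T_f\circ T^*$ by testing against $X^*$. For $\varphi\in L^2(S_2)$ and $x^*\in X^*$, the definition of representability gives
\begin{equation*}
x^*(T_g\varphi)=\int_{S_2}\varphi\,(x^*\circ g)\dd\mu_2,\qquad x^*(T_f(T^*\varphi))=\int_{S_1}(T^*\varphi)(x^*\circ f)\dd\mu_1.
\end{equation*}
The hypothesis $x^*\circ g = T(x^*\circ f)$ combined with the defining adjoint relation $\langle \varphi,T\psi\rangle_{L^2(S_2)}=\langle T^*\varphi,\psi\rangle_{L^2(S_1)}$ makes these two expressions equal for every $x^*\in X^*$, whence $T_g\varphi = T_f(T^*\varphi)$. (The equivalent formulation in the statement is precisely this adjoint identity written out.)

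Next I would invoke the extension principle: since $f\in\alpha(S_1;X)$ means $T_f\in\alpha(L^2(S_1),X)$, and since $T^*\in\mc{L}(L^2(S_2),L^2(S_1))$, the composition $T_f\circ T^*$ lies in $\alpha(L^2(S_2),X)$ with
\begin{equation*}
\nrm{T_f\circ T^*}_{\alpha(L^2(S_2),X)}\leq \nrm{T^*}\cdot\nrm{T_f}_{\alpha(L^2(S_1),X)}=\nrm{T}\cdot\nrm{f}_{\alpha(S_1;X)}.
\end{equation*}
This is the Hilbert-space analogue of property \eqref{eq:E2}, and is exactly the content of Proposition \ref{proposition:infinitematrixalphafunctions} transferred from $\ell^2$ to a general $L^2(S)$ via an orthonormal basis (when testing $\nrm{\cdot}_\alpha$ on orthonormal systems $(e_k)$ in $L^2(S_2)$, the vectors $(T_f\circ T^*)e_k = T_f(T^*e_k)$ are obtained from $T_f$ by applying the matrix of $T^*$ in a suitable basis, which has operator norm at most $\nrm{T}$).

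Combining the two steps, $T_g\in\alpha(L^2(S_2),X)$ with the stated bound, which by definition of the function-space notation means $g\in\alpha(S_2;X)$ with $\nrm{g}_{\alpha(S_2;X)}\leq \nrm{T}\nrm{f}_{\alpha(S_1;X)}$. There is no real obstacle here beyond being careful to phrase the abstract ``$U\circ T^*$'' extension in a form that applies to a general separable $L^2$ rather than $\ell^2$; this is handled by Proposition \ref{proposition:separablereduction}, which lets us restrict to a separable subspace on which an orthonormal basis is available.
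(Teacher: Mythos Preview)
Your proposal is correct and follows exactly the approach the paper intends: the proposition is stated as the function-space reformulation of the extension principle $\widetilde{T}U = U\circ T^*$ discussed just before it, with the norm bound coming from Proposition~\ref{proposition:infinitematrixalphafunctions}. Your verification that $T_g = T_f\circ T^*$ via testing against $x^*\in X^*$ and your remark about reducing to the separable case via Proposition~\ref{proposition:separablereduction} are the natural details the paper leaves implicit.
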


 In the setting of Proposition \ref{proposition:operatoronfunctions} we write $Tf=g$.
 As typical examples, we note that multiplication by an $L^\infty$-function is a bounded operation on $\alpha(\R;X)$ and  the Fourier transform can be extended from an isometry on $L^2(\R)$ to an isometry on $\alpha(\R;X)$. Combining these examples we would obtain a Fourier multiplier theorem, which we will treat more generally in Corollary \ref{corollary:fouriermulitplier}.

\begin{example}\label{example:pointwisemulti}
  Let $(S,\mu)$ be a measure space and suppose that $f \in \alpha(S;X)$. For any $m \in L^\infty(S)$ we have $m f \in \alpha(S;X)$ with $$\nrm{m f}_{\alpha(S;X)} \leq \nrm{m}_{L^\infty(S)}\nrm{f}_{\alpha(S;X)}.$$
\end{example}

\begin{example}\label{example:fourier}
 Suppose that $f \in L^1(\R;X)$ with $f \in \alpha(\R;X)$. Define
  \begin{align*}
   \mc{F}f(\xi)&:= \widehat{f}(\xi):=\int_{\R}f(t)\ee ^{-2 \pi i t\xi}\dd t,&&\xi \in \R,\\
   \mc{F}^{-1}f(\xi)&:= \widecheck{f}(\xi):=\int_{\R}f(t)\ee ^{2 \pi i t\xi}\dd t&&\xi \in \R.
  \end{align*}
  Then $\widehat{f},\widecheck{f} \in \alpha(\R;X)$ with $\nrm{\widehat{f}}_{\alpha(\R;X)} = \nrm{\widecheck{f}}_{\alpha(\R;X)} =\nrm{f}_{\alpha(\R;X)}$
\end{example}

\subsection*{The $\alpha$-H\"older inequality}
Next we will prove H\"older's inequality for $\alpha$-spaces, which is a realisation of the duality pairing between $\alpha_+(S;X)$ and $\alpha_+^*(S;X^*)$ for representable elements. Conversely, we will show that the representable elements of a subspace of $\alpha_+^*(S;X^*)$ are norming for $\alpha_+(S;X)$ using Proposition \ref{proposition:normingalpha}.

\begin{proposition}\label{proposition:alphaholder}Let $(S,\mu)$ be a measure space.
\begin{enumerate}[(i)]
  \item \label{it:alphaholderineq} Suppose that $f \colon S\to X$ and $g\colon S \to X^*$ are in $\alpha_+(S;X)$ and $\alpha^*_+(S;X^*)$ respectively. Then $\ip{f,g} \in L^1(S)$ and
  \begin{equation*}
    \int_S \abs{\ip{f,g}}\dd \mu \leq \nrm{f}_{\alpha_+(S;X)} \nrm{g}_{\alpha^*_+(S;X^*)}
  \end{equation*}
  \item \label{it:alphaholdernorming} Let $Y\subseteq X^*$ be norming  for $X$ and let $f:S \to X$ be strongly measurable. If there is a $C>0$ such that for all $g \in L^2(S)\otimes Y$ we have
  \begin{equation*}
     \int_S \abs{\ip{f,g}}\dd \mu \leq  C\, \nrm{g}_{\alpha^*(S;X^*)},
  \end{equation*}
  then $f \in \alpha_+(S;X)$ with $\nrm{f}_{\alpha_+(S;X)} \leq C$.
\end{enumerate}

\end{proposition}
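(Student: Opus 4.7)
The plan is to prove (i) by a density/truncation argument that reduces to the case of simple functions adapted to a common partition of $S$, where the desired estimate reduces to the definition of the dual Euclidean structure; part (ii) then follows by feeding the estimate from (i) into Proposition \ref{proposition:normingalpha}.

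\emph{For (i):} First I would invoke Lemma \ref{lemma:partitions} to obtain a sequence of countable partitions $\Pi_m=\{E_{n,m}\}_{n}$ of (the $\sigma$-finite supports of) $f$ and $g$ into sets of finite positive measure, with associated averaging projections $P_m$ on $L^2(S)$ such that $P_mf(s)\to f(s)$ pointwise $\mu$-a.e. Each $P_m$ is a self-adjoint orthogonal projection on $L^2(S)$, so precomposition with $P_m$ preserves the $\alpha_+$-norm of operators into $X$ or $X^*$ (this is immediate from the definition of $\alpha_+(H,X)$ combined with Proposition \ref{proposition:infinitematrixalphafunctions}). By Example \ref{example:pointwisemulti}, the same holds for pointwise multiplication by any measurable $\tau\colon S\to\C$ with $\abs\tau\leq 1$. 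For each $m$ choose such a unimodular $\tau_m$ with $\tau_m\ip{P_mf,g}=\abs{\ip{P_mf,g}}$. Since $P_mf$ is constant on each atom $E_{n,m}$ (say with value $x_{n,m}$), the integral $\int_{E_{n,m}}\ip{x_{n,m},\tau_mg}\,\ddn\mu$ equals $\ip{x_{n,m},\int_{E_{n,m}}\tau_mg\,\ddn\mu}$, which is in turn $\mu(E_{n,m})\ip{x_{n,m},y_{n,m}^*}$ with $y_{n,m}^*$ the value of $P_m(\tau_mg)$ on $E_{n,m}$; summing gives the key identity
\begin{equation*}
\int_S\ip{P_mf,\tau_mg}\,\ddn\mu\;=\;\int_S\ip{P_mf,P_m(\tau_mg)}\,\ddn\mu.
\end{equation*}

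The orthonormal system $u_{n,m}:=\mu(E_{n,m})^{-1/2}\ind_{E_{n,m}}$ in $L^2(S)$ satisfies $T_{P_mf}u_{n,m}=\mu(E_{n,m})^{1/2}x_{n,m}$ and $T_{P_m(\tau_mg)}u_{n,m}=\mu(E_{n,m})^{1/2}y_{n,m}^*$, so the right-hand side of the displayed identity equals $\sum_n\ip{z_{n,m},z_{n,m}^*}$ with $z_{n,m}=\mu(E_{n,m})^{1/2}x_{n,m}$ and $z_{n,m}^*=\mu(E_{n,m})^{1/2}y_{n,m}^*$. The definition of the dual Euclidean structure, together with the contraction estimates above, then yields
\begin{equation*}
\int_S\abs{\ip{P_mf,g}}\,\ddn\mu\;=\;\int_S\ip{P_mf,\tau_mg}\,\ddn\mu \;\leq\;\nrm{f}_{\alpha_+(S;X)}\,\nrm{g}_{\alpha^*_+(S;X^*)}
\end{equation*}
uniformly in $m$. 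Since $g(s)\in X^*$ is continuous and $P_mf\to f$ pointwise $\mu$-a.e., $\abs{\ip{P_mf,g}}\to\abs{\ip{f,g}}$ pointwise $\mu$-a.e., and Fatou's lemma concludes (i). (A possibly infinite sum over the atoms of $\Pi_m$ is handled by first truncating each partition to finitely many atoms and passing to the limit.)

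\emph{For (ii):} I would apply Proposition \ref{proposition:normingalpha} with $H=L^2(S)$ to the operator $T_f$. Any finite-rank operator $U\colon L^2(S)^*\to Y$ admits a representation $U=\sum_{k=1}^n e_k\otimes y_k$ with $(e_k)$ orthonormal in $L^2(S)$ and $y_k\in Y$, hence corresponds to the function $g:=\sum_k e_k\otimes y_k\in L^2(S)\otimes Y$, and $\nrm{U}_{\alpha^*(L^2(S)^*,X^*)}=\nrm{g}_{\alpha^*(S;X^*)}$ by the definition of the induced norm. The same simple-function calculation as in (i) shows $\tr(U^*T_f)=\sum_k\ip{T_fe_k,y_k}=\sum_k\int_S e_ky_k(f)\,\ddn\mu=\int_S\ip{f,g}\,\ddn\mu$. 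The hypothesis then gives
\begin{equation*}
\abs{\tr(U^*T_f)}\;\leq\;\int_S\abs{\ip{f,g}}\,\ddn\mu\;\leq\;C\,\nrm{g}_{\alpha^*(S;X^*)}\;=\;C\,\nrm{U}_{\alpha^*},
\end{equation*}
and Proposition \ref{proposition:normingalpha} produces $T_f\in\alpha_+(L^2(S),X)$ with $\nrm{T_f}_{\alpha_+}\leq C$, i.e.\ $f\in\alpha_+(S;X)$ with $\nrm{f}_{\alpha_+(S;X)}\leq C$.

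The main delicate point is the interchange identity $\int\ip{P_mf,\tau_mg}\,\ddn\mu=\int\ip{P_mf,P_m(\tau_mg)}\,\ddn\mu$ used in (i); it hinges on $P_mf$ being constant on atoms together with the (trivial) representability of $\int_{E_{n,m}}f\,\ddn\mu=T_f\ind_{E_{n,m}}\in X$. The remaining subtlety is that the trace in (ii) and the integrals in (i) must be matched, which is a straightforward bookkeeping of the bilinear pairing convention already used throughout the chapter.
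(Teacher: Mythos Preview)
Your proof is correct and follows essentially the same approach as the paper: for (i) both arguments use the averaging projections from Lemma~\ref{lemma:partitions}, a unimodular phase to pass to absolute values, the orthonormal system of normalized indicators to reduce to the defining duality between $\alpha$ and $\alpha^*$, and Fatou's lemma to pass to the limit; for (ii) both invoke Proposition~\ref{proposition:normingalpha} after identifying finite-rank operators $L^2(S)^*\to Y$ with elements of $L^2(S)\otimes Y$. The only cosmetic difference is that the paper makes the truncation to finitely many atoms explicit via operators $Q_{nm}$, $R_{nm}$, whereas you defer this to a parenthetical remark.
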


\begin{proof}
We will only prove \ref{it:alphaholderineq}, as \ref{it:alphaholdernorming} follows directly from of Proposition \ref{proposition:normingalpha} and the fact that any finite rank operator is representable as an element of $L^2(S)\otimes Y$.
Let
$\Pi_m=\{E_{km}\}_{k=1}^{\infty}$ be a sequence of partitions  of $S$ as in Lemma \ref{lemma:partitions}
and let $P_m$ be the associated averaging projections. Then
\begin{equation*}
  \int_S \abs{\ip{f,g}} d\mu \leq \sup_{m \in \N}
\int_S \abs{\ip{P_mf,g}} \dd\mu .
\end{equation*}
For each $m \in \N$ we define a measurable function $h_m$ with $\abs{h_m}=1$
$\mu$-a.e. and $h_m{\ip{P_m f,g}}=\abs{\ip{P_mf,g}}$. Define $Q_{nm},R_{nm}:L^2(S) \to L^2(S)$ by
\begin{align*}
  Q_{nm}\varphi &:= \ind_{\cup_{k=1}^n E_{km}} P_m\varphi, &&\varphi \in L^2(S),\\
  R_{nm}\varphi &:= h_m \ind_{\cup_{k=1}^n E_{km}} P_m\varphi, &&\varphi \in L^2(S).
\end{align*}
 and extend these operators to bounded operators on $\alpha_+(S;X)$ and $\alpha_+(S;X^*)$ using Proposition \ref{proposition:operatoronfunctions}. Define
\begin{align*}
  x_{km} &:= \hab{\mu(E_{km})}^{-1/2} \int_{E_{km}}f \dd \mu,\\
  x^*_{km} &:= \hab{\mu(E_{km})}^{-1/2} \int_{E_{km}}h_mg \dd \mu.
\end{align*}
and note that
\begin{align*}
  Q_{nm}f &= \sum_{k=1}^n \hab{\mu(E_{km})}^{-1/2} x_{km} \ind_{E_{km}}, \\
  R_{nm}g &= \sum_{k=1}^n \hab{\mu(E_{km})}^{-1/2} x_{km}^* \ind_{E_{km}}.
\end{align*}
It follows that
\begin{equation*}
  \int_{\bigcup_{k=1}^n E_{km}} \abs{\ip{P_mf,g}} = \sum_{k=1}^n \ip{x_{km},x^*_{km}} \leq \nrm{(x_{km})_{k=1}^n}_{\alpha}\nrm{(x^*_{km})_{k=1}^n}_{\alpha^*}.
\end{equation*}
Since we have
\begin{align*}
  \nrm{(x_{km})_{k=1}^n}_{\alpha} &= \nrm{Q_{nm}f}_{\alpha_+(S;X)} \leq \nrm{f}_{\alpha_+(S;X)},\\
  \nrm{(x^*_{km})_{k=1}^n}_{\alpha} &= \nrm{R_{nm}g}_{\alpha^*_+(S;X^*)} \leq \nrm{g}_{\alpha^*_+(S;X^*)},
\end{align*}
the result follows by first letting $n \to \infty$ and then $m \to \infty$ using Fatou's lemma.
\end{proof}

\subsection*{Convergence properties}
In the function spaces $L^p(S;X)$ we have convergence theorems like Fatou's lemma and the dominated convergence theorem.
In the next proposition we summarize some convergence properties of the $\alpha$-norms. For example  \ref{it:convergenceproperty1} can be seen as an $\alpha$-version of Fatou's lemma. It is important to note that even if all $f_n$'s are in $\alpha(S;X)$, we can only deduce that $f$ is in $\alpha_+(S;X)$.

\begin{proposition}~\label{proposition:alphaspaceconvergence} Let $f:S \to X$ be a strongly measurable function.
  \begin{enumerate}[(i)]
    \item \label{it:convergenceproperty1} Suppose that $f_n\colon S \to X$ are functions in $\alpha_{+}(S;X)$ such that $$\sup_{n \in \N}\,\nrm{f_n}_{\alpha_+(S;X)}<\infty.$$ If $f_n(s)$ converges weakly to $f(s)$ $\mu$-a.e, then $f \in \alpha_+(S;X)$ with  $$\nrm{f}_{\alpha_+(S;X)} \leq \liminf_{n \to \infty}\, \nrm{f_n}_{\alpha_+(S;X)}.$$
  \end{enumerate}
  Now suppose that $f\in \alpha(S;X)$.
  \begin{enumerate}[(i)]\setcounter{enumi}{1}
    \item \label{it:convergenceproperty4} Let $(g_n)_{n=1}^\infty$ be a sequence in $L^\infty(S)$ with $\abs{g_n} \leq 1$ and $g_n(s) \to 0$  $\mu$-a.e. Then $\lim_{n\to \infty}\nrm{g_n\cdot f}_{\alpha(S;X)} = 0$.
    \item \label{it:convergenceproperty5} If $\alpha$ is ideal and $T_n,T \in \mc{L}(X)$ with $\lim_{n\to \infty} T_nx=Tx$ for $x \in X$, then $\lim_{n\to \infty} T_n \circ f \to T \circ f$ in $\alpha(S;X)$.
  \end{enumerate}
\end{proposition}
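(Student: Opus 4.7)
For part \ref{it:convergenceproperty1} the natural route is to apply the norming criterion of Proposition \ref{proposition:alphaholder}\ref{it:alphaholdernorming} with $Y = X^*$. Fix an elementary tensor $g = \sum_{k=1}^m \varphi_k \otimes x_k^* \in L^2(S) \otimes X^*$, viewed as the strongly measurable function $g(s) = \sum_{k} \varphi_k(s)\, x_k^*$. Because $f_n(s) \to f(s)$ weakly in $X$ for $\mu$-a.e.\ $s$, the scalar pointwise pairings
\begin{equation*}
  \langle f_n(s), g(s) \rangle = \sum_{k=1}^m \varphi_k(s)\, \langle f_n(s), x_k^* \rangle
\end{equation*}
converge $\mu$-a.e.\ to $\langle f(s), g(s) \rangle$. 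Now I apply the scalar Fatou lemma to $|\langle f_n, g\rangle|$ and then Proposition \ref{proposition:alphaholder}\ref{it:alphaholderineq} to each $f_n$, yielding
\begin{equation*}
  \int_S |\langle f, g\rangle|\dd\mu \leq \liminf_{n \to \infty} \int_S |\langle f_n, g\rangle|\dd\mu \leq \Big(\liminf_{n\to\infty} \nrm{f_n}_{\alpha_+(S;X)}\Big)\, \nrm{g}_{\alpha^*(S;X^*)}.
\end{equation*}
Proposition \ref{proposition:alphaholder}\ref{it:alphaholdernorming} then places $f$ in $\alpha_+(S;X)$ with the announced bound.

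For part \ref{it:convergenceproperty4} the plan is a standard density-and-contraction argument. By Example \ref{example:pointwisemulti} each operator $M_{g_n}\colon h \mapsto g_n \cdot h$ is a contraction on $\alpha(S;X)$. Given $\varepsilon > 0$, Proposition \ref{proposition:densealphaspace} furnishes a finite sum $\tilde f = \sum_{k=1}^m \varphi_k \otimes x_k$ with $\nrm{f - \tilde f}_{\alpha(S;X)} < \varepsilon$. Then
\begin{equation*}
  \nrm{g_n f}_{\alpha(S;X)} \leq \nrm{g_n(f - \tilde f)}_{\alpha(S;X)} + \sum_{k=1}^m \nrm{g_n \varphi_k}_{L^2(S)}\, \nrm{x_k}_X \leq \varepsilon + \sum_{k=1}^m \nrm{g_n\varphi_k}_{L^2(S)}\nrm{x_k}_X,
\end{equation*}
and the scalar dominated convergence theorem gives $\nrm{g_n\varphi_k}_{L^2(S)} \to 0$ since $|g_n \varphi_k|\leq |\varphi_k|$ pointwise and $g_n \to 0$ a.e. Letting $n\to\infty$ and then $\varepsilon \downarrow 0$ yields the claim.

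Part \ref{it:convergenceproperty5} is entirely parallel. The ideal property of $\alpha$ makes left composition $h \mapsto T \circ h$ a bounded operation on $\alpha(S;X)$ of norm $\leq C\nrm{T}$, where $C$ is the ideal constant; in particular $\sup_n \nrm{T_n} < \infty$ by the uniform boundedness principle, so the operators $h \mapsto T_n \circ h$ are uniformly bounded on $\alpha(S;X)$. Approximating $f$ by $\tilde f = \sum_{k=1}^m \varphi_k \otimes x_k$ as above and using
\begin{equation*}
  (T_n - T) \circ \tilde f = \sum_{k=1}^m \varphi_k \otimes (T_n x_k - T x_k),
\end{equation*}
together with $\nrm{\varphi_k \otimes (T_n x_k - T x_k)}_{\alpha(S;X)} = \nrm{\varphi_k}_{L^2(S)}\nrm{T_nx_k - Tx_k}_X \to 0$, a $3\varepsilon$-argument delivers $T_n \circ f \to T \circ f$ in $\alpha(S;X)$.

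The step that requires the most care is \ref{it:convergenceproperty1}: one must recognize that only weak a.e.\ convergence of $f_n(s)$ is assumed, so the passage to a liminf inequality must happen at the level of the scalar pointwise pairings against finite rank $g$, and the reduction to the norming family $L^2(S)\otimes X^*$ via Proposition \ref{proposition:alphaholder}\ref{it:alphaholdernorming} is essential—without it one would be forced to test on general $g \in \alpha^*(S;X^*)$, for which pointwise evaluation of $g(s)$ is not available.
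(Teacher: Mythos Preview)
Your proof is correct. Parts \ref{it:convergenceproperty4} and \ref{it:convergenceproperty5} are exactly the density-and-uniform-boundedness arguments the paper uses. For part \ref{it:convergenceproperty1} the paper argues directly with orthonormal sequences: it sets $x_{nm}=\int_S e_m f_n\dd\mu$ and $x_m=\int_S e_m f\dd\mu$, shows $x_{nm}\to x_m$ weakly, and then uses the finite-vector $\alpha$-duality to obtain $\nrm{(x_1,\ldots,x_m)}_\alpha\leq \liminf_n\nrm{(x_{n1},\ldots,x_{nm})}_\alpha$. Your route through Proposition~\ref{proposition:alphaholder} is just the prepackaged form of the same duality idea; it has the minor advantage that the pointwise Fatou step is completely transparent, whereas the paper's appeal to dominated convergence for $\int_S e_m\ip{f_n,x^*}\dd\mu$ really uses that $\ip{f_n,x^*}$ is bounded in $L^2$ and converges a.e., hence weakly in $L^2$.
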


\begin{proof}
For \ref{it:convergenceproperty1} note that for all $x^* \in X^*$ we have
\begin{equation*}
  \sup_{n \in \N} \,\nrmb{x^*\circ f_n}_{L^2(S)} \leq \sup_{n \in \N}\, \nrm{f_n}_{\alpha_+(S;X)} \nrm{x^*}_{X^*} <\infty.
\end{equation*}
 Let $(e_m)_{m=1}^\infty$ be an orthonormal sequence in $L^2(S)$, set $x_{nm} = \int_S e_m f_n \dd \mu$ and $x_m = \int_S e_m f \dd \mu$.
   Then by the dominated convergence theorem we have for all $x^* \in X^*$
   \begin{equation*}
     \lim_{n \to \infty} \ip{x_{nm},x^*} = \lim_{n \to \infty} \int_S e_m \ip{f_n,x^*} \dd \mu = \int_S e_m \ip{f,x^*} \dd \mu=\ip{x_m,x^*}.
   \end{equation*}
 Thus by $\alpha$-duality we have for each $m \in \N$
  \begin{equation*}
    \nrm{(x_1,\ldots, x_m)}_\alpha \leq \liminf_{n \to \infty} \nrm{(x_{n1},\ldots,x_{nm})}_\alpha \leq \liminf_{n \to \infty} \nrm{f_n}_{\alpha_+(S;X)},
  \end{equation*}
  so \ref{it:convergenceproperty1} follows by taking the supremum over all orthonormal sequences in $L^2(S)$. For  \ref{it:convergenceproperty4} let $\varepsilon>0$. By Proposition \ref{proposition:densealphaspace} we can find a finite dimensional subspace $E\subseteq X$ and an $h \in L^2(S;E)$ such that $\nrm{f-g}_{\alpha(S;X)}<\varepsilon$. Then by \eqref{eq:finitedimesnionalE} and the dominated convergence theorem we have
  \begin{equation*}
 \lim_{n\to \infty}\nrm{g_n\cdot f}_{\alpha(S;X)} \leq \dim(E)\lim_{n\to \infty}\nrm{g_n\cdot h}_{L^2(S;X)}+\varepsilon = \varepsilon.
  \end{equation*}
  The proof of \ref{it:convergenceproperty5} is similar.
\end{proof}

\subsection*{The $\alpha$-multiplier theorem}
We now come to one of the main theorems of this section, which characterize $\alpha$-boundedness of a family of operators in terms of the boundedness of a pointwise multiplier on $\alpha(S;X)$. This will be very useful later. We say that a function $T\colon S \to \mc{L}(X)$  is strongly measurable in the strong operator topology if $Tx\colon S \to X$ is strongly measurable for all $x \in X$.
For $f \colon S \to X$ we define $Tf\colon S \to X$ by
\begin{equation*}
  Tf(s) := T(s)f(s), \qquad s \in S.
\end{equation*}

\begin{theorem}\label{theorem:pointwisemultipliers1}
Let $(S,\mu)$ be a measure space, let $T:S \to \mc{L}(X)$ be strongly measurable in the strong operator topology and set $\Gamma = \cbrace{T(s):s \in S}$. If $\Gamma$ is $\alpha$-bounded, then $Tf \in \alpha_+(S;X)$ with
  \begin{equation*}
    \nrm{Tf}_{\alpha_+(S;X)} \leq \nrm{\Gamma}_{\alpha} \nrm{f}_{\alpha(S;X)}
  \end{equation*}
  for all $f \in \alpha(S;X)$.
\end{theorem}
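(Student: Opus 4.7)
My plan is to establish the bound in two stages: first for operator-valued functions $T$ that are piecewise constant with values in $\Gamma$, and then extend to general $T$ by approximation. The latter stage in turn requires first reducing $f$ to a simple function.

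For the piecewise constant case, suppose $T=\sum_{\ell}\ind_{F_{\ell}}T_{\ell}$ with $(F_{\ell})_{\ell}$ a (countable) measurable partition of $S$ and $T_{\ell}\in\Gamma$. By Proposition \ref{proposition:separablereduction} I may assume $L^{2}(S)$ is separable. For each $\ell$ I pick an orthonormal basis $(\eta_{j,\ell})_{j\in\N}$ of $L^{2}(F_{\ell})$ (extended by zero), so that $(\eta_{j,\ell})_{(j,\ell)}$ is an orthonormal basis of $L^{2}(S)$, and set $a_{j,\ell}=\int_{S}\eta_{j,\ell}\,f\,\mathrm{d}\mu\in X$. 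For any finite orthonormal sequence $(e_{k})_{k=1}^{n}$ in $L^{2}(S)$, expanding $\ind_{F_{\ell}}e_{k}=\sum_{j}c_{\ell,j,k}\eta_{j,\ell}$ with $c_{\ell,j,k}=\ip{e_{k}}{\eta_{j,\ell}}$ gives
\begin{equation*}
  \int_{S}e_{k}\,Tf\,\mathrm{d}\mu \;=\; \sum_{\ell}T_{\ell}\int_{F_{\ell}}e_{k}f\,\mathrm{d}\mu \;=\; \sum_{\ell,j}c_{\ell,j,k}\,T_{\ell}a_{j,\ell}.
\end{equation*}
The matrix $\mb{C}=(c_{\ell,j,k})$ has orthonormal columns, hence its transpose is a contraction from $\ell^{2}$ onto $\ell^{2}_{n}$. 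Combining Proposition \ref{proposition:infinitematrixalphafunctions} with the $\alpha$-boundedness of $\Gamma$ (applied to the doubly indexed sequence $(T_{\ell}a_{j,\ell})_{(j,\ell)}$ at each finite truncation and then supped up) yields
\begin{equation*}
  \nrmb{\bigl(\textstyle\int_{S}e_{k}\,Tf\,\mathrm{d}\mu\bigr)_{k=1}^{n}}_{\alpha} \;\leq\; \nrm{\Gamma}_{\alpha}\,\nrmb{(a_{j,\ell})_{(j,\ell)}}_{\alpha_{+}} \;\leq\; \nrm{\Gamma}_{\alpha}\,\nrm{f}_{\alpha(S;X)},
\end{equation*}
since $(\eta_{j,\ell})$ is orthonormal in $L^{2}(S)$. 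Taking the supremum over $(e_{k})$ closes this case.

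For the passage to general $T$, I would first reduce $f$ to a simple function: Lemma \ref{lemma:partitions} provides averaging projections $P_{m}$ on $L^{2}(S)$ with $P_{m}f(s)\to f(s)$ a.e., and since each $P_{m}$ is an $L^{2}$-contraction, Proposition \ref{proposition:operatoronfunctions} gives $\nrm{P_{m}f}_{\alpha(S;X)}\leq\nrm{f}_{\alpha(S;X)}$; truncating to finitely many atoms yields simple $f_{n}\to f$ pointwise a.e.\ with uniformly bounded $\alpha$-norm. For a simple $f=\sum_{k=1}^{K}\ind_{E_{k}}x_{k}$, the map $\phi\colon s\mapsto(T(s)x_{1},\ldots,T(s)x_{K})\in X^{K}$ is strongly measurable with values in $Q:=\{(Tx_{1},\ldots,Tx_{K}):T\in\Gamma\}$. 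A standard constructive approximation of $\phi$ by $X^{K}$-valued simple functions whose values are drawn from a countable dense subset of $\overline{Q}$ produces piecewise constant $\widetilde{T}_{m}\colon S\to\Gamma\cup\{0\}$ (adding $0$ does not enlarge the $\alpha$-bound) such that $\widetilde{T}_{m}(s)x_{k}\to T(s)x_{k}$ a.e.\ for each $k$. Then $\widetilde{T}_{m}f\to Tf$ a.e., the piecewise constant estimate applies to each $\widetilde{T}_{m}f$, and Proposition \ref{proposition:alphaspaceconvergence}\ref{it:convergenceproperty1} (a Fatou-type property for $\alpha_{+}$-norms) transfers the bound to $Tf$ for simple $f$. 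One more application of the same Fatou property along $f_{n}\to f$, using $\nrm{T(s)}\leq\nrm{\Gamma}_{\alpha}$ (taking $n=1$ in the definition of $\alpha$-boundedness, together with property \eqref{eq:E1x}) so that $Tf_{n}(s)\to Tf(s)$ a.e., completes the argument.

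\textbf{Main obstacle.} The delicate step is approximating $T$ itself. Strong measurability in the strong operator topology only gives separate strong measurability of $T(\cdot)x$ for each fixed $x\in X$, which is a priori much weaker than any norm-valued approximation of $T$ by simple operator-valued functions. The remedy is the two-level reduction: first strip $f$ down to a simple function so that only finitely many vectors $x_{1},\ldots,x_{K}$ need be probed, then bundle them into a single $X^{K}$-valued strongly measurable map $\phi$ and approximate $\phi$---not $T$---by simple functions valued in the closure of $\{(Tx_{1},\ldots,Tx_{K}):T\in\Gamma\}$. This forces the approximants to come from genuine applications of operators in $\Gamma$, which is exactly what is needed to invoke $\alpha$-boundedness.
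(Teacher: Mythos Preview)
Your proof is correct, but the route differs from the paper's. The paper avoids your two-stage programme entirely: it applies an averaging projection $P$ to $f$ (via Lemma~\ref{lemma:partitions}), then a second averaging projection $P'$ to $TPf$, and observes that
\[
  P'TPf \;=\; \sum_{n} S_{n}x_{n}\ind_{E_{n}},
  \qquad S_{n}x := \frac{1}{\mu(E'_{n})}\int_{E'_{n}} Tx\,\mathrm{d}\mu,
\]
where each $S_{n}$ lies in the SOT-closure of the absolutely convex hull of $\Gamma$. Since $\alpha$-boundedness passes to that closure (Proposition~\ref{proposition:alphaproperties}), one gets $\nrm{P'TPf}_{\alpha}\le\nrm{\Gamma}_{\alpha}\nrm{Pf}_{\alpha}\le\nrm{\Gamma}_{\alpha}\nrm{f}_{\alpha}$ directly, and then finishes with two applications of the Fatou property (Proposition~\ref{proposition:alphaspaceconvergence}\ref{it:convergenceproperty1}), just as you do. So the paper replaces your ``piecewise constant $T$ with values in $\Gamma$'' lemma and your $X^{K}$-bundling trick by the single observation that averaging $T$ lands in $\overline{\mathrm{absconv}}^{\mathrm{SOT}}(\Gamma)$, which carries the same $\alpha$-bound. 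Your approach trades the convex-hull stability of $\alpha$-boundedness for a more hands-on selection argument; the paper's is shorter, yours is more self-contained.

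One small wording fix in your selection step: you take approximant values ``from a countable dense subset of $\overline{Q}$'', but to extract genuine $\widetilde{T}_{m}\colon S\to\Gamma$ you need those values to lie in $Q$ itself. This is harmless---a countable dense subset of $\overline{Q}$ can always be chosen inside $Q$---but the distinction matters, since points of $\overline{Q}\setminus Q$ need not arise from any single operator in $\Gamma$.
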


\begin{proof}
Let $\Pi=\{E_{m}\}_{m=1}^{\infty}$ be a partition of $S$ with associated averaging projection $P$ for $f$ as in Lemma
\ref{lemma:partitions} and let $\Pi'=\{E_{n}'\}_{n=1}^{\infty}$ be a partition of $S$ with associated averaging projection $P'$ for $TPf$ as in Lemma
\ref{lemma:partitions}. Assume without loss of generality that $\Pi'$ is a finer partition than $\Pi$, i.e. for any $n\in \N$ there is an $m_n \in \N$ such that $E'_n\subseteq E_{m_n}$.
Then
  \begin{equation*}
    P'TPf = \sum_{n=1}^\infty S_{n}x_{m_n} \ind_{E'_n}
  \end{equation*}
  where
  \begin{align*}
    x_m &= \frac{1}{\mu(E_m)}  \int_{E_m} f \dd\mu, &&m \in \N,\\
    S_{n} x &= \frac{1}{\mu(E_{n}')} \int_{E_{n}'} Tx\dd \mu, && n \in \N,\, x \in X.
  \end{align*}
  So we obtain
  \begin{align*}
  \nrm{Pf}_{\alpha(S;X)} &= \nrms{\sum_{n=1}^\infty x_m \ind_{E_m}}_{\alpha(S;X)}  = \sup_{n \in \N} \nrmb{\hab{x_{m_k} \mu(E_{k}')^{1/2}}_{k=1}^{n}}_\alpha, \\
    \nrm{P'TPf}_{\alpha_+(S;X)} &= \nrms{\sum_{n=1}^{\infty} S_{n}x_{m_n} \ind_{E_n'}}_{\alpha_+(S;X)} = \sup_{n \in \N} \nrmb{\hab{S_{k}x_{m_k} \mu(E_{k}')^{1/2}}_{k=1}^{n}}_\alpha.
  \end{align*}
  Since $S_n$ belongs to the strong operator topology closure of the convex hull of $\Gamma$ for all $n \in \N$, it follows from Proposition \ref{proposition:alphaproperties} and Proposition \ref{proposition:operatoronfunctions} that
  $$\nrm{P'TPf}_{\alpha_+(S;X)} \leq \nrm{\Gamma}_{\alpha} \nrm{Pf}_{\alpha(S;X)}\leq \nrm{\Gamma}_{\alpha} \nrm{f}_{\alpha(S;X)}.$$ Now let $P_m$ be a sequence of such averaging projections for $f$ as in Lemma \ref{lemma:partitions} and, for every $m \in \N$, let $P_{m'}'$ be a sequence of such averaging projections for $TP_mf$ as in Lemma \ref{lemma:partitions}. Then we have
  \begin{equation*}
    \lim_{m \to \infty} \lim_{m' \to \infty} P'_{m'}TP_mf(s)  = Tf(s), \qquad s \in S,
  \end{equation*}
so the conclusion follows by applying Proposition \ref{proposition:alphaspaceconvergence}\ref{it:convergenceproperty1} twice.
\end{proof}

\begin{remark} Since we use Proposition \ref{proposition:alphaspaceconvergence}\ref{it:convergenceproperty1} in the proof of Theorem \ref{theorem:pointwisemultipliers1}, we do not know whether $Tf \in \alpha(S;X)$. We refer to \cite[Section 9.5]{HNVW17} for a discussion on sufficient conditions such that one can conclude $Tf \in \alpha(S;X)$ in the case $\alpha=\gamma$.
\end{remark}

We also have a converse of Theorem \ref{theorem:pointwisemultipliers1}, for which we need to assume that the measure space $(S,\mu)$ has more structure. A \emph{metric measure space} $(S,d,\mu)$ is  a complete separable metric space $(S,d)$ with a locally finite Borel measure $\mu$. We  denote by $\supp(\mu)$ the smallest closed set with the property that its complement has measure zero.

\begin{theorem}\label{theorem:pointwisemultipliers2}
  Let $(S,d,\mu)$ be a metric measure space, let $T:S \to \mc{L}(X)$ be continuous in the strong operator topology and set $$\Gamma = \cbrace{T(s):s \in \supp(\mu)}.$$ If we have $Tf \in \alpha_+(S;X)$ for all $f \in \alpha(S;X)$ with
  \begin{equation*}
    \nrm{Tf}_{\alpha_+(S;X)} \leq C \nrm{f}_{\alpha(S;X)},
  \end{equation*}
then $\Gamma$ is $\alpha$-bounded with $\nrm{\Gamma}_{\alpha} \leq C$.
\end{theorem}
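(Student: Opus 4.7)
The plan is to test the definition of $\alpha$-boundedness of $\Gamma$ directly. Given $T_1,\ldots,T_n \in \Gamma$ with $T_k = T(s_k)$ for $s_k\in\supp(\mu)$, and given $\mb{x}\in X^n$, the idea is to realise the vector $(T_kx_k)_{k=1}^n$ as a limit of vectors obtained by integrating $Tf_r$ against an orthonormal system in $L^2(S)$, where $f_r\in\alpha(S;X)$ is a function concentrated in shrinking disjoint neighbourhoods of the $s_k$.

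Assume first that the points $s_k$ are pairwise distinct. Since $\mu$ is locally finite and each $s_k\in\supp(\mu)$, for all sufficiently small $r>0$ the balls $E_k^r := B_r(s_k)$ are pairwise disjoint and satisfy $0<\mu(E_k^r)<\infty$. Setting $e_k^r := \mu(E_k^r)^{-1/2}\ind_{E_k^r}$, the system $(e_k^r)_{k=1}^n$ is orthonormal in $L^2(S)$, and one defines
$$f_r := \sum_{k=1}^n e_k^r\,x_k \in \alpha(S;X).$$
Viewed as the finite-rank operator $T_{f_r}=\sum_{k=1}^n e_k^r\otimes x_k$, this satisfies $\nrm{f_r}_{\alpha(S;X)}=\nrm{\mb{x}}_\alpha$ by the defining property of the Euclidean structure on $\mc{F}(\ell^2,X)$. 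The strong operator continuity of $T$ guarantees that $Tf_r$ is strongly measurable and (for $r$ small) bounded on each $E_k^r$, so the hypothesis yields $\nrm{Tf_r}_{\alpha_+(S;X)} \leq C\nrm{\mb{x}}_\alpha$. Testing the $\alpha_+$-norm against the finite orthonormal system $(e_k^r)_{k=1}^n$ produces
$$y_k^r := \int_S e_k^r\,Tf_r\,d\mu = \frac{1}{\mu(E_k^r)}\int_{E_k^r} T(s)x_k\,d\mu(s),$$
with $\nrm{(y_1^r,\ldots,y_n^r)}_\alpha \leq \nrm{Tf_r}_{\alpha_+(S;X)} \leq C\nrm{\mb{x}}_\alpha$.

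By SOT-continuity of $T$ at each $s_k$, the Bochner averages $y_k^r$ converge to $T(s_k)x_k = T_kx_k$ in $X$ as $r\downarrow 0$. The triangle inequality together with properties \eqref{eq:E1x} and \eqref{eq:E2x} gives the coordinatewise continuity estimate $\nrm{\mb{y}^r - (T_kx_k)_k}_\alpha \leq \sum_k \nrm{y_k^r - T_kx_k}_X$, so passing to the limit yields $\nrm{(T_kx_k)_{k=1}^n}_\alpha \leq C\nrm{\mb{x}}_\alpha$, which is the required estimate. The main obstacle is the case of repeated $s_k$: when two indices share a common point one cannot choose the $E_k^r$ disjoint by shrinking balls. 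I would address this by perturbing coinciding $s_k$ to pairwise distinct nearby points of $\supp(\mu)$, which is possible whenever the coinciding point is not isolated in $\supp(\mu)$, applying the distinct-points argument, and recovering the original bound by letting the perturbation tend to zero using SOT-continuity of $T$ and the norm-continuity of $\nrm{\lcdot}_\alpha$ in each coordinate.
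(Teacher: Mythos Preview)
Your argument is essentially the paper's: both construct $f$ from normalized indicators on disjoint sets near the $s_k$, invoke the hypothesis to bound $\nrm{Tf}_{\alpha_+}$, test against the same orthonormal system to obtain the averages $y_k\approx T_kx_k$, and pass to a limit. The only real difference lies in how the indicator supports are made disjoint when the $s_k$ collide: the paper allows the chosen balls $O_k$ to overlap and then, tacitly assuming $\mu$ is non-atomic, splits each pairwise intersection into two pieces of positive measure and removes one from each ball; you instead treat the distinct case first and reduce repetitions to it by perturbing within $\supp(\mu)$. The residual case you flag --- a coinciding point that is isolated in $\supp(\mu)$ --- is precisely where the paper's tacit non-atomicity assumption does its work, and in fact the statement as written fails there for non-ideal $\alpha$: if $S=\{s\}$ is a single atom then the hypothesis reduces to $\nrm{T(s)}\leq C$, which need not imply $\nrm{\{T(s)\}}_\alpha\leq C$ by Proposition~\ref{proposition:idealsingleoperator}. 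So your proof matches the paper's in both strategy and scope.
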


\begin{proof}
 Take $T_1,\ldots,T_n \in \Gamma$ and $\mb{x} \in X^n$. Let $s_1,\ldots,s_n \in \supp(\mu)$ be such that $T_k = T(s_k)$ for $1 \leq k \leq n$ and let $\varepsilon>0$. For $1 \leq k \leq n$, using the continuity of $T$ and the fact that $s_k \in \supp(\mu)$, we can select an open ball $O_k \subseteq \supp(\mu)$ with finite positive measure such that $s_k \in O_k$ and
 \begin{equation}\label{eq:pointwisemultieq}
   \nrm{T(s)x_k -T(s_k)x_k} \leq n^{-1}\varepsilon,\qquad s \in O_k.
 \end{equation}
  If  $O_{k_1}\cap O_{k_2}\neq \varnothing$ for $1 \leq k_1 \neq k_2 \leq n$, then $\mu(O_{k_1}\cap O_{k_2})>0$. Since $\mu$ is non-atomic, there are disjoint $E_1,E_2$ with positive measure such that $O_{k_1}\cap O_{k_2}=E_1\cup E_2$. Iteratively replacing $O_{k_1}$ by $O_{k_1}\setminus E_1$ and $O_{k_2}$ by $O_{k_2}\setminus E_2$ for all pairs $1 \leq k_1 \neq k_2 \leq n$, we obtain pairwise disjoint sets $O_1,\ldots,O_n$ of positive finite measure such that \eqref{eq:pointwisemultieq} holds.

Let $P$ be the averaging projection associated to $O_1,\ldots,O_n$ and define $f = \sum_{k=1}^n \mu(O_k)^{-1/2}x_k\ind_{O_k}$. Then
  \begin{equation*}
    PTf = \sum_{k=1}^n \mu(O_k)^{-1/2}y_k\ind_{O_k}
  \end{equation*}
  for
  $$
  y_k = \frac{1}{\mu(O_k)} \int_{O_k}Tx_k \dd \mu, \qquad 1 \leq k \leq n.
  $$
  Note that $\nrm{y_k - T_kx_k} \leq n^{-1} \varepsilon$, so we have by Proposition \ref{proposition:operatoronfunctions}, the fact that $(\mu(O_k)^{-1/2}\ind_{O_k})_{k=1}^n$ is an orthonormal system in $L^2(S)$ and our assumption, that
  \begin{align*}
     \nrm{\mb{y}}_\alpha = \nrm{PTf}_{\alpha(S;X)} \leq C\,\nrm{f}_{\alpha(S;X)} = C\,\nrm{\mb{x}}_\alpha.
  \end{align*}
  Therefore $\nrm{(T_1x_1,\ldots,T_nx_n)}_\alpha \leq C \nrm{\mb{x}}_\alpha +\varepsilon$, which proves  the theorem.
\end{proof}

We conclude this section by combining Theorem \ref{theorem:pointwisemultipliers1} and Example \ref{example:fourier} into the following Fourier multiplier theorem.

\begin{corollary}\label{corollary:fouriermulitplier}
     Suppose that $m:\R \to \mc{L}(X)$ is strongly measurable in the strong operator topology and $ \cbrace{m(s):s \in \R}$ is $\alpha$-bounded. For $f \in L^1(\R;X)$ such that $\widehat{f} \in L^1(\R;X)$ we define
    $$T_mf(s) = \mc{F}^{-1}\hab{m(s)\widehat{f}(s)}, \qquad s \in S.$$ If $f \in \alpha(\R;X)$, then  $Tf \in \alpha_+(\R;X)$ with
  \begin{equation*}
    \nrm{T_mf}_{\alpha_+(\R;X)} \leq \nrmb{\cbrace{m(s):s \in \R}}_\alpha \nrm{f}_{\alpha(\R;X)}.
  \end{equation*}
\end{corollary}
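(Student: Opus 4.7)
The plan is to chain together the three tools that this section has already supplied: Example \ref{example:fourier} (that the Fourier transform is an isometry on $\alpha(\R;X)$ for integrable functions), Theorem \ref{theorem:pointwisemultipliers1} (that an $\alpha$-bounded pointwise multiplier maps $\alpha(\R;X)$ into $\alpha_+(\R;X)$), and the general observation preceding Proposition \ref{proposition:operatoronfunctions} that any $T\in\mc{L}(L^2(\R))$ extends to a bounded operator $\widetilde T$ on $\alpha_+(\R;X)$ of the same norm. Strictly speaking Example \ref{example:fourier} is stated for $\alpha(\R;X)$, but its proof merely uses this extension procedure and so works verbatim for $\alpha_+(\R;X)$.

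First I would apply Example \ref{example:fourier} to $f$: since $f\in L^1(\R;X)\cap\alpha(\R;X)$, the Fourier transform $\widehat f$ lies in $\alpha(\R;X)$ with $\nrm{\widehat f}_{\alpha(\R;X)}=\nrm{f}_{\alpha(\R;X)}$. Writing $\Gamma:=\cbrace{m(s):s\in\R}$, the hypotheses that $m$ is strongly measurable in the SOT and that $\Gamma$ is $\alpha$-bounded put us exactly in the setting of Theorem \ref{theorem:pointwisemultipliers1} with $T(s):=m(s)$ and with $\widehat f$ in place of $f$. That theorem yields $m\widehat f\in\alpha_+(\R;X)$ together with the bound
\begin{equation*}
\nrm{m\widehat f}_{\alpha_+(\R;X)}\leq \nrm{\Gamma}_\alpha\,\nrm{\widehat f}_{\alpha(\R;X)}=\nrm{\Gamma}_\alpha\,\nrm{f}_{\alpha(\R;X)}.
\end{equation*}

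It remains to pull the inverse Fourier transform inside the $\alpha_+$-norm. The operator $\mc{F}^{-1}$ is unitary on $L^2(\R)$, hence by the extension procedure above it induces an isometry $\widetilde{\mc{F}^{-1}}$ on $\alpha_+(\R;X)$. Because $\Gamma$ is in particular uniformly bounded in operator norm and $\widehat f\in L^1(\R;X)$, the product $m\widehat f$ lies in $L^1(\R;X)$, and so the classical inverse Fourier integral defining $T_mf=\mc{F}^{-1}(m\widehat f)$ produces a strongly measurable function on $\R$. For any $x^*\in X^*$ we have $x^*\circ T_mf=\mc{F}^{-1}\bigl(x^*\circ(m\widehat f)\bigr)$ by Fubini, which is precisely the compatibility hypothesis of Proposition \ref{proposition:operatoronfunctions} (in its $\alpha_+$ version). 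Hence $T_mf\in\alpha_+(\R;X)$ with $\nrm{T_mf}_{\alpha_+(\R;X)}=\nrm{m\widehat f}_{\alpha_+(\R;X)}$, and combining with the previous estimate gives the desired inequality.

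The only genuinely substantive point is the identification in the final step of the integral-theoretic $T_mf$ with the abstract image $\widetilde{\mc F^{-1}}(m\widehat f)$; this is handled entirely by testing against $x^*\in X^*$ and invoking the self-duality of $\mc F^{-1}$ on $L^2(\R)$, so I expect no real obstacle here. Everything else is a mechanical composition of previously proved results.
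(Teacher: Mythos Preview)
Your proof is correct and follows exactly the approach the paper intends: the corollary is stated without proof, merely as the combination of Theorem \ref{theorem:pointwisemultipliers1} and Example \ref{example:fourier}, and your three-step chain (Fourier transform isometry, pointwise multiplier, inverse Fourier transform) is precisely that combination spelled out. Your care in noting that the final step requires the $\alpha_+$ versions of Proposition \ref{proposition:operatoronfunctions} and Example \ref{example:fourier}, and that these follow from Proposition \ref{proposition:infinitematrixalphafunctions} (which is stated for $\alpha_+$), is a genuine but minor detail that the paper leaves implicit.
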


\section{The \texorpdfstring{$\alpha$}{a}-interpolation method}\label{section:interpolation}
In this section we will develop a theory of interpolation using Euclidean structures.  This method seems especially well-adapted to the study of sectorial operators and semigroups, which we will explore further in Chapter \ref{part:5}. Although we develop this interpolation method in more generality, the most important example is the Gaussian structure, which gives rise to the Gaussian  method of interpolation. A discrete version of the Gaussian method was already considered in \cite{KKW06}, where it is used to the study the $H^\infty$-calculus of various differential operators. The continuous version of the Gaussian method was studied in \cite{SW06,SW09}, where Gaussian interpolation of Bochner spaces $L^p(S;X)$ and square function spaces $\gamma(S;X)$, as well as a Gaussian version of abstract Stein interpolation, was treated. Furthermore, for Banach function spaces, an $\ell^q$-version of this interpolation method was developed in \cite{Ku15}. An abstract framework covering these interpolation methods, as well as the real and complex interpolation methods, is developed in \cite{LL21}.

The results in \cite{KKW06,SW06,SW09} were based on a draft version of this memoir, which explains why some of these papers omit various proofs with a reference to this memoir, see e.g.  \cite[Proposition 7.3]{KKW06} and \cite[Section 2]{SW06}.

\bigskip

Throughout this section we let $\alpha$ be a global Euclidean structure, $(X_0,X_1)$ a compatible pair of Banach spaces and $\theta \in (0,1)$.  We will define interpolation spaces $(X_0,X_1)_{\theta}^{\alpha}$ and $(X_0,X_1)_{\theta}^{\alpha_+}$ and refer to these methods of interpolation as the $\alpha$-method and the $\alpha_+$-method. Note that we will only use the Euclidean structures $\alpha_0$ on $X_0$ and $\alpha_1$ on $X_1$ for our construction, so the assumption that $\alpha$ is a global Euclidean structure is only for notational convenience.

Let us consider the space
$$L^2(\R)+ L^2(\R,\ee^{-2t}\ddn t) = L^2(\R,\min\cbrace{1,\ee^{-2t}}\ddn t).$$
We call an operator
\begin{equation*}
  T:L^2(\R)+L^2(\R,\ee^{-2t}\ddn t)\to  X_0+X_1.
\end{equation*}
admissible and write
 $T\in \mc{A}$ (respectively $T \in \mc{A}_+$) if  $T\in   \alpha(\R,e^{-2jt}\ddn t;X_j)$ (respectively $T \in \alpha_{+}(\R,\ee^{-2jt}\ddn t;X_j)$) for $j=0,1$. We define
\begin{align*}
  \nrm{T}_{\mc{A}}&:= \max_{j=0,1}\,\nrm{T_j}_{\alpha(\R,\ee^{-2jt}\ddn t;X_j)},\\
  \nrm{T}_{\mc{A}_+}&:= \max_{j=0,1}\nrm{T_j}_{\alpha_+(\R,\ee^{-2jt}\ddn t;X_j)},
\end{align*} where $T_j$ denotes the operator $T$ from $L^2(\R,\ee^{-2jt}\ddn t)$ into  $X_j$. Both $\mc{A}$ and $\mc{A}_+$ are complete with respect to their norm.

 Denote by
$e_{\theta}$ the function $t \mapsto \ee^{\theta t}$. We define $(X_0,X_1)^{\alpha}_{\theta}$ as the space of all $x\in
X_0+X_1$ such that
\begin{align*}
  \nrm{x}_{(X_0,X_1)^{\alpha}_{\theta}}:= \inf\{\nrm{T}_{\mc{A}}: \  T\in\mc{A},\
T(\ee_{\theta})=x\} <\infty.
\intertext{The space   $(X_0,X_1)^{\alpha}_{\theta,+}$ is defined similarly as the space of all $x \in X_0+X_1$ such that}
  \nrm{x}_{(X_0,X_1)^{\alpha_+}_{\theta}}:= \inf\{\nrm{T}_{\mc{A}_+}: \  T\in\mc{A}_+,\
T(\ee_{\theta})=x\} <\infty.
\end{align*}
Then $(X_0,X_1)^{\alpha}_{\theta}$ and $(X_0,X_1)^{\alpha_+}_{\theta}$ are quotient spaces of $\mc{A}$ and $\mc{A}_+$ respectively and thus Banach spaces.
For brevity we will sometimes write $X_{\theta}:=(X_0,X_1)^{\alpha}_{\theta}$ and $X_{\theta,+}:=(X_0,X_1)^{\alpha_+}_{\theta}$.

\begin{proposition}[$\alpha$-Interpolation of operators]\label{proposition:interpolation}  Suppose that $(X_0,X_1)$ and $(Y_0,Y_1)$ are compatible pairs of Banach spaces and $\alpha$ is ideal.  Assume  that $S\colon X_0+X_1\to Y_0+Y_1$ is a bounded operator such that $S(X_0)\subset Y_0$ and $S(X_1)\subset Y_1$. Then $S\colon X_\theta \to Y_\theta$ is bounded with
\begin{equation*}
  \nrm{S}_{X_{\theta}\to Y_{\theta}}
\leq \nrm{S}_{X_0\to Y_0}^{1-\theta}\nrm{S}_{X_1\to Y_1}^{\theta}.
\end{equation*}
\end{proposition}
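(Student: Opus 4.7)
The plan is to reduce everything to the geometric mean by composing the admissible operator with $S$ on the left and a suitably chosen translation on the right. Write $M_j := \nrm{S}_{X_j \to Y_j}$ for $j=0,1$. The goal: starting from any admissible $T \in \mc{A}$ for $(X_0,X_1)$ with $T(\ee_\theta)=x$, build an admissible $T'' \in \mc{A}$ for $(Y_0,Y_1)$ with $T''(\ee_\theta) = Sx$ and $\nrm{T''}_{\mc{A}} \leq M_0^{1-\theta} M_1^{\theta} \nrm{T}_{\mc{A}}$. Taking the infimum over all such $T$ then yields the claim.

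The key observation is the behaviour of the translation $\tau_a \varphi(t) := \varphi(t-a)$ on the two weighted spaces: on $L^2(\R)$ it is an isometry, whereas on $L^2(\R,\ee^{-2t}\ddn t)$ a change of variables gives
\begin{equation*}
  \nrm{\tau_a\varphi}_{L^2(\R,\ee^{-2t}\ddn t)} = \ee^{-a} \nrm{\varphi}_{L^2(\R,\ee^{-2t}\ddn t)},
\end{equation*}
so $\tau_a$ has norm $\ee^{-ja}$ from $L^2(\R,\ee^{-2jt}\ddn t)$ to itself for $j=0,1$. Moreover $\tau_a(\ee_\theta) = \ee^{-\theta a}\ee_\theta$. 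For a free parameter $a\in\R$ set
\begin{equation*}
  T'' := \ee^{\theta a}\, S \circ T \circ \tau_a,
\end{equation*}
viewed as an operator $L^2(\R) + L^2(\R,\ee^{-2t}\ddn t) \to Y_0 + Y_1$. Then $T''(\ee_\theta) = \ee^{\theta a}\,S\,T(\ee^{-\theta a}\ee_\theta) = S x$, as desired.

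To estimate $\nrm{T''}_{\mc{A}}$, invoke the ideality of the global Euclidean structure $\alpha$: the composition of an operator in $\alpha(L^2(\R,\ee^{-2jt}\ddn t);X_j)$ with a bounded operator on the Hilbert space on the right (namely $\tau_a$) and with a bounded operator on the Banach space on the left (namely $S$) stays in the corresponding $\alpha$-space, with norms multiplying by the operator norms. This gives
\begin{align*}
  \nrm{T''_0}_{\alpha(\R;Y_0)} &\leq \ee^{\theta a}\, M_0 \cdot 1 \cdot \nrm{T_0}_{\alpha(\R;X_0)},\\
  \nrm{T''_1}_{\alpha(\R,\ee^{-2t}\ddn t;Y_1)} &\leq \ee^{\theta a}\, M_1 \cdot \ee^{-a} \cdot \nrm{T_1}_{\alpha(\R,\ee^{-2t}\ddn t;X_1)}.
\end{align*}
Choosing $a = \log(M_1/M_0)$ (assuming $M_0, M_1 > 0$; the degenerate cases are handled by letting $a\to\pm\infty$, which drives the relevant bound to zero) equalizes the two prefactors:
\begin{equation*}
  \ee^{\theta a} M_0 = \ee^{(\theta-1)a} M_1 = M_0^{1-\theta} M_1^{\theta}.
\end{equation*}
Therefore $\nrm{T''}_{\mc{A}} \leq M_0^{1-\theta} M_1^{\theta}\,\nrm{T}_{\mc{A}}$, and taking the infimum over admissible $T$ yields the desired operator-norm bound.

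The only point that merits care is the bookkeeping with the ideal property: one uses the right-ideal axiom \eqref{eq:E2} for $\tau_a$ (acting on $L^2(\R,\ee^{-2jt}\ddn t)$) and the left-ideal axiom \eqref{eq:E4} for $S:X_j\to Y_j$, both with constant $1$ because $\alpha$ is assumed to be an ideal global Euclidean structure. No obstacle beyond this routine verification arises; the argument is essentially the $\alpha$-analogue of the $M_0^{z-\theta}M_1^{\theta-z}$ rescaling used in Stein--Weiss interpolation, with the shift $\tau_a$ playing the role of the holomorphic multiplier.
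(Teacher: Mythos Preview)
Your proof is correct and essentially identical to the paper's: both compose $S\circ T$ with the shift $\tau_a$ (the paper calls it $U_\tau$), choose $a=\log(M_1/M_0)$ to balance the two weighted norms, and invoke the ideal property of $\alpha$ to push $S$ through on the left and $\tau_a$ on the right. Your treatment is slightly more explicit about the degenerate cases $M_j=0$ and about which ideal axiom is used where, but the argument is the same.
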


A similar statement  holds for $S_+\colon X_{\theta,+} \to Y_{\theta,+}$.

\begin{proof}
Suppose $T\in\mc{A}$.  Fix $\tau$ so that $\nrm{S}_{X_1\to Y_1}
=\ee^{\tau}\nrm{S}_{X_0\to Y_0}$ and let $U_\tau$ be the shift operator given by $U\varphi = \varphi(\cdot-\tau)$, which satisfies
\begin{equation}\label{eq:Utau}
  \nrm{U_\tau}_{\mc{L}(L^2(\R,\ee^{-2jt}\ddn t))} \leq \ee^{-j\tau}, \qquad j=0,1.
\end{equation}
The ideal property of $\alpha$ means that
$ST U_{\tau}$ is admissible and
\begin{equation*}
  \nrm{STU_{\tau}}_{\mc{A}} \leq \max_{j=0,1}\cbraceb{\nrm{S}_{X_j \to Y_j} \nrm{T}_{\alpha(\R,\ee^{-2jt}\ddn t;X_j)} \ee^{-j\tau}} \leq \nrm{S}_{X_0 \to Y_0} \nrm{T}_{\mc{A}}.
\end{equation*}
Now if $T(\ee_\theta) = x$, then $\ee^{\theta \tau} \cdot STU_\tau(\ee_{\theta}) = Sx$ and therefore
\begin{equation*}
  \nrm{S}_{X_{\theta}\to Y_{\theta}} \leq \ee^{\theta \tau} \nrm{S}_{X_0 \to Y_0} = \nrm{S}_{X_0\to Y_0}^{1-\theta}\nrm{S}_{X_1\to Y_1}^{\theta}.\qedhere
\end{equation*} \end{proof}

In interpolation theory it is often useful to know that $X_0\cap X_1$ is dense in the intermediate spaces, which is the content of the next lemma.
\begin{proposition}\label{proposition:finiterankX0X1dense} The set of finite rank operators $T \in \mc{A}$ is dense in $\mc{A}$. In particular, $X_0\cap X_1$ is dense in $X_{\theta}$.
\end{proposition}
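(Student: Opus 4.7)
My plan is to approximate an arbitrary $T \in \mc{A}$ by finite-rank admissible operators in two successive steps: first truncate the domain to a compact interval, then apply a single Lebesgue averaging projection that works uniformly for both weights. For the truncation, let $M_n$ denote multiplication by $\ind_{[-n,n]}$, which has norm at most one on $L^2(\R,\ee^{-2jt}\dd t)$ for $j=0,1$. Proposition \ref{proposition:operatoronfunctions} then yields $T\circ M_n \in \mc{A}$, and applying Proposition \ref{proposition:alphaspaceconvergence}\ref{it:convergenceproperty4} with $g_n := 1-\ind_{[-n,n]}$ (which satisfies $\abs{g_n}\leq 1$ and $g_n\to 0$ pointwise) gives $\nrm{T-T\circ M_n}_{\mc{A}}\to 0$.

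Next, fixing $n$ and writing $T^{(n)} := T\circ M_n$, I would introduce the Lebesgue conditional expectation associated with the dyadic partition $\cbrace{E_{km}}_{k=1}^{2^m}$ of $[-n,n]$ at scale $2^{-m}$,
\begin{equation*}
    E_m^{(n)}\varphi := \sum_{k=1}^{2^m}\frac{\ind_{E_{km}}}{\abs{E_{km}}}\int_{E_{km}}\varphi\,\dd t.
\end{equation*}
The crucial point is that $E_m^{(n)}$ is a \emph{single} operator bounded on both $L^2(\R)$ and $L^2(\R,\ee^{-2t}\dd t)$, uniformly in $m$, since $\ee^{\pm 2t}$ is bounded on $[-n,n]$. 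The composition
\begin{equation*}
    F_{n,m} := T^{(n)}\circ E_m^{(n)} = \sum_{k=1}^{2^m}\ind_{E_{km}}\otimes \frac{T(\ind_{E_{km}})}{\abs{E_{km}}}
\end{equation*}
is therefore a finite-rank admissible operator whose range vectors lie in $X_0\cap X_1$, because $\ind_{E_{km}}\in L^2(\R)\cap L^2(\R,\ee^{-2t}\dd t)$ forces $T_0$ and $T_1$ to agree on it. To prove $F_{n,m}\to T^{(n)}$ in $\mc{A}$ as $m\to\infty$, I would use density of finite-rank operators in each $\alpha(\R,\ee^{-2jt}\dd t;X_j)$ to approximate $T^{(n)}$ by $\sum_k f_k\otimes z_k$ with $f_k$ supported in $[-n,n]$, and then reduce to verifying that $(E_m^{(n)})^*f_k\to f_k$ in $L^2(\R,\ee^{-2jt}\dd t)$.

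The main obstacle is this last convergence for $j=1$, where $E_m^{(n)}$ is not self-adjoint with respect to the weighted inner product; I would address this by computing $(E_m^{(n)})^*\psi(t) = \ee^{2t}\, E_m^{(n)}(\ee^{-2\cdot}\psi)(t)$ on $[-n,n]$ and then invoking the Lebesgue differentiation theorem together with a dominated convergence argument. A diagonal extraction $F_{n,m(n)}$ finally gives $\nrm{T-F_{n,m(n)}}_{\mc{A}}\to 0$, establishing the first assertion. For the density of $X_0\cap X_1$ in $X_\theta$, given $x\in X_\theta$ and $\varepsilon>0$ I would pick $T\in\mc{A}$ with $T(\ee_\theta)=x$ and $\nrm{T}_{\mc{A}}\leq \nrm{x}_{X_\theta}+\varepsilon$, apply the first part to produce $F_{n,m(n)}$ with $\nrm{T-F_{n,m(n)}}_{\mc{A}}<\varepsilon$, and observe that $F_{n,m(n)}(\ee_\theta)$ is a finite linear combination of vectors in $X_0\cap X_1$ while $\nrm{x-F_{n,m(n)}(\ee_\theta)}_{X_\theta}\leq \nrm{T-F_{n,m(n)}}_{\mc{A}}<\varepsilon$.
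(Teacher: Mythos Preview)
Your argument is correct and complete. The route, however, is genuinely different from the paper's.

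The paper uses a single two-parameter family
\[
  S_{\lambda,n}\varphi(t)=\sum_{|k|\le n}\Bigl(\frac{1}{\lambda}\int_{k\lambda}^{(k+1)\lambda}\varphi\Bigr)\ind_{[k\lambda,(k+1)\lambda)}(t),
\]
combining truncation and averaging at once. The key observation is the explicit computation $\nrm{S_{\lambda,n}}_{\mc{L}(L^2(\R,\ee^{-2t}\dd t))}=\sinh\lambda/\lambda$, which is \emph{independent of $n$}. This uniformity lets the paper first send $n\to\infty$ (covering all of $\R$) and then $\lambda\to 0$, without any blow-up on the weighted side, and the argument reduces directly to Lebesgue differentiation for finite-rank operators.

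Your approach instead decouples the two limits: you first kill the tails via $M_n$ and Proposition~\ref{proposition:alphaspaceconvergence}\ref{it:convergenceproperty4}, and only afterwards average on the compact window $[-n,n]$, where the weight is harmless. The price is that your bound $\nrm{E_m^{(n)}}_{\mc{L}(L^2(\R,\ee^{-2t}\dd t))}\lesssim \ee^{2n}$ depends on $n$, forcing the diagonal extraction at the end and the separate treatment of the weighted adjoint $(E_m^{(n)})^*$. This is perfectly valid, and arguably more elementary since it avoids the $\sinh\lambda/\lambda$ computation, but it is longer and hides the pleasant fact that averaging over short intervals is uniformly close to the identity on \emph{both} spaces simultaneously. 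One small remark: your tensor formula $F_{n,m}=\sum_k \ind_{E_{km}}\otimes T(\ind_{E_{km}})/|E_{km}|$ is to be read with respect to the Lebesgue pairing; as an operator on $L^2(\R)+L^2(\R,\ee^{-2t}\dd t)$ it is unambiguously $\varphi\mapsto\sum_k|E_{km}|^{-1}\bigl(\int_{E_{km}}\varphi\bigr)T(\ind_{E_{km}})$, which is what the rest of your argument actually uses.
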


\begin{proof}  If $T\in \mc{A}$, we consider the operators $S_{\lambda,n}$ given by
\begin{equation*}
  S_{\lambda,n} \varphi(t) :=\sum_{\abs{k} \leq n } \has{\frac{1}{\lambda}\int_{k\lambda}^{(k+1)\lambda}\varphi(s)\ddn s} \ind_{[k\lambda,(k+1)\lambda)}(t), \qquad t \in \R
\end{equation*}
 for $\varphi \in L^2(\R)+L^2(\R,\ee^{-2t}\ddn t)$. As $T_jS_{\lambda,n}$ has finite rank, it suffices to show
 that for $j=0,1$
\begin{equation}\label{eq:densitySlambdan}
  \lim_{\lambda\to
0}\lim_{n\to\infty}\nrm{T_j-T_jS_{\lambda,n}}_{\alpha(\R,\ee^{-2jt}\ddn t;X_j)}=0.
\end{equation}
Note that for a finite rank operator $U\in \alpha(\R,\ee^{-2jt}\ddn t;X_j)$ and $j=0,1$
$$\lim_{\lambda\to 0}\lim_{n\to\infty}\nrm{U-US_{\lambda,n}}_{\alpha(\R,\ee^{-2jt}\ddn t;X_j)}=0$$
by the Lebesgue differentiation theorem. Moreover we have
\begin{align*}
  \nrm{S_{\lambda,n}}_{\mc{L}(L^2(\R))}&=1,\\
  \nrm{S_{\lambda,n}}_{\mc{L}(L^2(\R,\ee^{-2t}\ddn t))}&=\frac{\sinh \lambda }{\lambda},
\end{align*}
so by
density we obtain \eqref{eq:densitySlambdan} for $j=0,1$. To conclude note that if $T \in \mc{A}$ has finite rank, then necessarily $$T(L^2(\R)+L^2(\R,\ee^{-2t}\ddn t)) \subseteq X_0\cap X_1,$$
since $T\in   \alpha(\R,e^{-2jt}\ddn t;X_j)$ for $j=0,1$. Thus $X_0\cap X_1$ is dense in $X_{\theta}$.
\end{proof}

\subsection*{Duality}
If $X_0\cap X_1$ is dense in both $X_0$ and $X_1$, then
the pair $(X_0^*,X_1^*)$ is also compatible.  We can then define
the classes $\mc{A}^*, \mc{A}^*_{+}$  for the pair
$(X_0^*,X_1^*)$ with the global Euclidean structure $\alpha^*$ and define the interpolation spaces $(X_0^*,X_1^*)^{\alpha^*}_{\theta}$      and $(X_0^*,X_1^*)^{\alpha^*_+}_{\theta}$, which we write as $X_{\theta}^*$ and $X_{\theta,+}^*$ for brevity.

If $T \in \mc{A}_+^*$ we can view $T^*$ as the operator from $X_0\cap X_1$ to $L^2(\R)\cap L^2(\R,\ee^{-2t}\ddn t)$ so that for $x \in X_0 \cap X_1$
\begin{equation*}
  \ip{T^*x,\varphi} = \ip{x,T\varphi}, \qquad \varphi \in L^2(\R)+ L^2(\R,\ee^{-2t}\ddn t),
\end{equation*}
using the densely defined bilinear form $\ip{\cdot,\cdot}$ on $L^2(\R)+ L^2(\R,\ee^{-2t}\ddn t)$ given by
\begin{equation}\label{eq:bilinearform}
\ip{\varphi_1,\varphi_2} =\int_{\R}\varphi_1(t)\varphi_2(-t)\,\ddn t
\end{equation}
for all $\varphi_1$ and $\varphi_2$ such that $\varphi_1(\lcdot)\varphi_2(-\lcdot)\in L^1(\R)$, which holds in particular if
\begin{align*}
  \varphi_1 &\in L^2(\R)\cap L^2(\R,\ee^{-2t}\ddn t),\\
  \varphi_2 &\in L^2(\R)+ L^2(\R,\ee^{-2t}\ddn t).
\end{align*}
Then $T^*$ extends to the adjoints $T_j^*:X_j\to L^2(\R,\ee^{-2jt}\ddn t).$

\begin{lemma}\label{lemma:tracedualA}
  Suppose that $X_0 \cap X_1$ is dense in $X_0$ and $X_1$. If $S \in \mc{A}$ and $T \in \mc{A}^*_{+}$, then
  \begin{equation*}
    \tr(T_0^*S_0) = \tr(T_1^*S_1).
  \end{equation*}
\end{lemma}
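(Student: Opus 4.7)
The plan is to reduce to finite rank operators via Proposition \ref{proposition:finiterankX0X1dense} and compute both traces explicitly in that case. The key enabler is the $\alpha$-trace duality (Section \ref{section:alphaspaces}), which yields for each $j \in \cbrace{0,1}$ the estimate
\begin{equation*}
  \abs{\tr(T_j^* S_j)} \leq \nrm{T_j}_{\alpha^*_+(\R,\ee^{-2jt}\ddn t;X_j^*)} \nrm{S_j}_{\alpha(\R,\ee^{-2jt}\ddn t;X_j)} \leq \nrm{T}_{\mc{A}^*_+}\nrm{S}_{\mc{A}}.
\end{equation*}
This shows that $S \mapsto \tr(T_j^* S_j)$ is a bounded linear functional on $\mc{A}$ for each $j$, so it suffices to prove the identity on a dense subset.

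Proposition \ref{proposition:finiterankX0X1dense} provides such a dense subset: the operators $S^{(\lambda,n)} := S \circ S_{\lambda,n}$, where $S_{\lambda,n}$ is the averaging projection onto the finite-dimensional span of the indicator functions $\chi_k := \ind_{[k\lambda,(k+1)\lambda)}$ for $\abs{k}\leq n$. Since each $\chi_k$ lies in $L^2(\R) \cap L^2(\R,\ee^{-2t}\ddn t)$, the vector $x_k := S(\chi_k) = S_0(\chi_k) = S_1(\chi_k)$ belongs to $X_0 \cap X_1$, and $S^{(\lambda,n)}$ takes the explicit form
\begin{equation*}
  S^{(\lambda,n)}\varphi = \sum_{\abs{k}\leq n} \ell_k(\varphi)\, x_k, \qquad \ell_k(\varphi) := \tfrac{1}{\lambda}\int_{k\lambda}^{(k+1)\lambda} \varphi\,\ddn s.
\end{equation*}
Each functional $\ell_k$ is represented, via the bilinear pairing \eqref{eq:bilinearform}, by a function $\hat{\ell}_k \in L^2(\R) \cap L^2(\R,\ee^{-2t}\ddn t)$ (a scaled indicator of the reflected interval), the \emph{same} function regardless of $j$. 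Composing and using the definition of the adjoint together with the facts that $T_j\hat{\ell}_k = T\hat{\ell}_k$ in $X_0^* \cap X_1^*$ and that $x_k \in X_0 \cap X_1$, we compute
\begin{equation*}
  \tr\bigl(T_j^* S^{(\lambda,n)}_j\bigr) = \sum_{\abs{k}\leq n} \ip{T_j^*x_k, \hat{\ell}_k} = \sum_{\abs{k}\leq n} \ip{x_k, T\hat{\ell}_k},
\end{equation*}
where the right-hand side is manifestly independent of $j$.

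Combining the two ingredients, if $S^{(\lambda_m,n_m)} \to S$ in $\mc{A}$ (which is achieved by letting $n_m \to \infty$ and then $\lambda_m \to 0$, as in the proof of Proposition \ref{proposition:finiterankX0X1dense}), then the continuity estimate gives
\begin{equation*}
  \tr(T_j^* S_j) = \lim_{m\to\infty} \tr\bigl(T_j^* S^{(\lambda_m,n_m)}_j\bigr), \qquad j = 0,1,
\end{equation*}
and the limit is independent of $j$ by the previous paragraph, finishing the proof. The main delicate point is the explicit identification of $\hat{\ell}_k$ with a single function lying in both weighted $L^2$-spaces, so that $T_j\hat{\ell}_k$ is unambiguously an element of $X_0^* \cap X_1^*$; once this is in hand, the independence of $j$ follows directly from the natural compatibility of the duality pairings on $X_0 \cap X_1$.
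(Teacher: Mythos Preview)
Your proof is correct and follows essentially the same approach as the paper: establish continuity of $S \mapsto \tr(T_j^*S_j)$ on $\mc{A}$ via trace duality, then reduce to finite rank operators using Proposition~\ref{proposition:finiterankX0X1dense}. The paper handles the finite rank case in one sentence (noting that $T^*S$ then has finite rank with range in $L^2(\R)\cap L^2(\R,\ee^{-2t}\ddn t)$, so the two traces coincide), whereas you carry out the computation explicitly via the approximants $S\circ S_{\lambda,n}$; this is the same idea spelled out in coordinates.
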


\begin{proof}
  Let us fix $T \in \mc{A}^*_+$. The equality is trivial if $S$ has finite rank and thus range contained in $X_0\cap X_1$, since $T^*S$ then has finite rank and range contained in $L^2(\R)\cap L^2(\R,\ee^{-2t}\ddn t)$. Since the functionals $S \mapsto \tr(T_0^*S_0)$ and $S \mapsto \tr(T_1^*S_1)$ are continuous, the result follows from Proposition \ref{proposition:finiterankX0X1dense}
\end{proof}

By Lemma \ref{lemma:tracedualA} we can now define the pairing
\begin{equation*}
  \ip{S,T} := \tr(T_0^*S_0) = \tr(T_1^*S_1),\qquad S \in \mc{A}, \, T \in \mc{A}^*_+
\end{equation*}
and note that
\begin{equation}\label{eq:Apairinginequality}
  \abs{\ip{S,T}} \leq \min_{j=0,1} \nrm{S_j}_{\alpha(\R,\ee^{-2jt}\ddn t;X_j)}\nrm{T_j}_{\alpha^*_+(\R,\ee^{-2jt}\ddn t;X_j)} \leq \nrm{S}_{\mc{A}}\nrm{T}_{\mc{A}^*_+}
\end{equation}
for $S \in \mc{A}$ and $T \in \mc{A}^*_+$.

\begin{theorem}\label{theorem:interpolatealphadual}
   Suppose that $X_0\cap X_1$ is dense in $X_0$ and $X_1$. Then we have $(X_\theta)^* = X_{\theta,+}^*$ isomorphically.
\end{theorem}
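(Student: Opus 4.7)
The plan is to exhibit mutually inverse bounded linear maps $J\colon X^*_{\theta,+}\to (X_\theta)^*$ and $I\colon (X_\theta)^*\to X^*_{\theta,+}$ built from the natural compatible-pair pairing together with the trace duality of Lemma~\ref{lemma:tracedualA}. Since $X_0\cap X_1$ is dense in both $X_0$ and $X_1$, the bilinear form $\langle \cdot,\cdot\rangle\colon (X_0^*+X_1^*)\times (X_0\cap X_1)\to \C$ is well-defined independently of the decomposition of the first argument. I would define $J(y)(x):=\langle y,x\rangle$ for $y\in X^*_{\theta,+}$ and $x\in X_0\cap X_1$, and extend to $x\in X_\theta$ by density (Proposition~\ref{proposition:finiterankX0X1dense}) once the key estimate $|\langle y,x\rangle|\leq \|y\|_{X^*_{\theta,+}}\|x\|_{X_\theta}$ has been proved.

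The heart of the proof is that estimate. Fix representatives $T\in \mathcal{A}^*_+$ of $y$ and a finite-rank $S\in \mathcal{A}$ of $x$ (by Proposition~\ref{proposition:finiterankX0X1dense} such $S$ exist and automatically have range in $X_0\cap X_1$). The compatible-pair pairing $\langle T(\ee_\theta),S(\ee_\theta)\rangle=\langle y,x\rangle$ must be related to the trace pairing $\langle S,T\rangle=\tr(T_j^*S_j)$ from Lemma~\ref{lemma:tracedualA}, for which the fundamental inequality $|\langle S,T\rangle|\leq \|S\|_{\mathcal{A}}\|T\|_{\mathcal{A}^*_+}$ of \eqref{eq:Apairinginequality} is available. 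For rank-one $S=f\otimes x$ with $f\in L^2(\R)\cap L^2(\R,\ee^{-2t}\ddn t)$ and $\langle \ee_\theta,f\rangle=1$, direct computation yields $\langle S,T\rangle=\langle Tf,x\rangle$; approximating $\ee_\theta$ by such $f$ in a topology compatible with the natural embedding $X_0\cap X_1\hookrightarrow X_0+X_1$ should produce $\langle Tf,x\rangle\to \langle T(\ee_\theta),x\rangle=\langle y,x\rangle$, yielding the bound after passing to the infimum over representatives. This identification is the main obstacle: the two pairings are distinct functions on $\mathcal{A}\times \mathcal{A}^*_+$ in general, since $\langle S,T\rangle$ depends on the global operators while $\langle y,x\rangle$ depends only on their values at $\ee_\theta$, so making them coincide in the limit without losing $\alpha$-norm control is delicate, and the shift invariance of $\mathcal{A}$ under the translations $U_\tau$ of \eqref{eq:Utau} is likely essential for the approximation.

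For the inverse $I$, given $\phi\in (X_\theta)^*$, the pullback $\tilde\phi:=\phi\circ \mathrm{ev}_{\ee_\theta}\in \mathcal{A}^*$ has the same norm. Viewing $\mathcal{A}$ as a closed subspace of $\alpha(\R;X_0)\oplus_\infty \alpha(\R,\ee^{-2t}\ddn t;X_1)$ via $T\mapsto (T_0,T_1)$, the Hahn--Banach theorem extends $\tilde\phi$ to the whole direct sum, and the trace-duality description of $\alpha(\R,\ee^{-2jt}\ddn t;X_j)^*$ noted in Section~\ref{section:alphaspaces} represents this extension as a pair in $\alpha^*_+(\R;X_0^*)\oplus_1 \alpha^*_+(\R,\ee^{-2t}\ddn t;X_1^*)$. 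Modifying this pair by an element annihilating $\mathcal{A}$ (an incompatible pair) produces a genuine operator $T\in \mathcal{A}^*_+$ whose class $I(\phi):=[T]$ satisfies $T(\ee_\theta)=\phi$ under the natural embedding $(X_\theta)^*\hookrightarrow X_0^*+X_1^*$ supplied by $J$. Verifying that $I$ and $J$ are mutual inverses up to equivalent norms then concludes the proof.
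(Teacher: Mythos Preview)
Your overall architecture matches the paper exactly: establish the two embeddings $X^*_{\theta,+}\hookrightarrow (X_\theta)^*$ and $(X_\theta)^*\hookrightarrow X^*_{\theta,+}$ via the trace duality of Lemma~\ref{lemma:tracedualA} and a Hahn--Banach extension, respectively. However, in both directions the step you flag as ``the main obstacle'' is precisely where a concrete idea is needed, and the mechanisms you suggest do not quite work as stated.

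For the first embedding, your approximation idea is problematic: $\ee_\theta$ does not lie in $L^2(\R)\cap L^2(\R,\ee^{-2t}\ddn t)$, so one cannot approximate it by $f$ in any norm for which $\nrm{f\otimes x}_{\mc{A}}$ remains controlled while $Tf\to T(\ee_\theta)$. The paper does not approximate; instead it uses the exact integral identity
\[
\int_\R \ee^{\theta\tau}\,U_\tau\varphi\,\ddn\tau=\ip{\ee_\theta,\varphi}\,\ee_\theta
\]
(valid for $\varphi$ in the intersection), which for finite-rank $S$ gives $\int_\R \ee^{\theta\tau}\ip{SU_\tau,T}\,\ddn\tau=\ip{S(\ee_\theta),T(\ee_\theta)}$. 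Then \eqref{eq:Utau} and \eqref{eq:Apairinginequality} bound the integrand by $\ee^{(\theta-1)\tau}$ for $\tau\ge 0$ and $\ee^{\theta\tau}$ for $\tau<0$ times $\nrm{S}_{\mc{A}}\nrm{T}_{\mc{A}^*_+}$, yielding the constant $(\theta(1-\theta))^{-1}$. You were right that the shifts are essential; the point is that they give an exact representation, not a limit.

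For the reverse embedding, your Hahn--Banach step is correct and coincides with the paper's, but ``modifying the pair by an element annihilating $\mc{A}$ to produce a genuine $T\in\mc{A}^*_+$'' begs the question: the issue is precisely whether some extension lies in the diagonal $\mc{A}^*_+\subset \alpha^*_+(\R;X_0^*)\oplus_1\alpha^*_+(\R,\ee^{-2t}\ddn t;X_1^*)$, and annihilator elements need not carry arbitrary pairs onto that diagonal. The paper's construction is concrete: from \emph{any} Hahn--Banach extension $(T_0,T_1)$, one composes with $U:=\ee^\theta U_1-I$, whose range lies in $\ker\ip{\ee_\theta,\cdot}$. Equation \eqref{eq:functionaly*} applied to $U\varphi$ then forces $T_0U=-T_1U$ on the intersection, so the single operator $V$ defined by $V\varphi:=T_0U\varphi$ on $L^2(\R)$ and $V\varphi:=-T_1U\varphi$ on $L^2(\R,\ee^{-2t}\ddn t)$ is well-defined in $\mc{A}^*_+$; a short computation with $\ee_\theta=\ee_\theta\ind_{(-\infty,0)}+\ee_\theta\ind_{(0,\infty)}$ gives $V(\ee_\theta)=x^*$. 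This $U$-trick, rather than an abstract modification, is the substantive missing ingredient.
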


\begin{proof}
Let $x^* \in X_{\theta,+}^*$ and take $T\in \mc{A}_+^*$ with $T(\ee_\theta) = x^*$. Fix  $x \in X_0 \cap X_1$ and take an $S \in \mc{A}$ with finite rank and $S(\ee_\theta)=x$. For $\tau \in \R$ let $U_\tau$ be the shift operator given by $U_\tau\varphi = \varphi(\cdot-\tau)$.  For $\varphi \in L^2(\R) \cap L^2(\R,\ee^{2jt}\ddn t)$ we note that
  \begin{equation*}
    \int_{\R} \ee^{\theta \tau} U_{\tau} \varphi \dd \tau = \int_{\R}\ee^{\theta (\tau+\cdot)} \varphi(-\tau) \dd \tau =   \ip{\ee_\theta, \varphi}\ee_\theta
  \end{equation*}
  as Bochner integral in $L^2(\R) + L^2(\R,\ee^{2jt}\ddn t)$. Thus, since the range of $T^*SU_{\tau}$ is contained in a fixed finite-dimensional subspace of $L^2(\R) \cap L^2(\R,\ee^{2jt}\ddn t)$ for all $\tau \in \R$, we have
  \begin{equation*}
    \int_{\R} \ee^{\theta \tau} \ipb{SU_{\tau},T}\ddn \tau = \ipb{S(e_\theta), T(\ee_\theta)}.
  \end{equation*}
  Now by \eqref{eq:Utau} and \eqref{eq:Apairinginequality} we have
  \begin{equation*}
    \ee^{\theta\tau }\ip{SU_{\tau},T} \leq \begin{cases}
      \ee^{(\theta -1)\tau}\nrm{S_1}_{\alpha(\R,\ee^{-2jt}\ddn t;X_j)}\nrm{T_1}_{\alpha_+^*(\R,\ee^{-2jt}\ddn t;X_j)},\quad &\tau \geq 0,\\
      \ee^{\theta\tau}\nrm{S_0}_{\alpha(\R,\ee^{-2jt}\ddn t;X_j)}\nrm{T_0}_{\alpha^*_+(\R,\ee^{-2jt}\ddn t;X_j)},\quad &\tau < 0,
    \end{cases}
  \end{equation*}
  from which it follows that
  \begin{equation*}
   \abs{\ip{x,x^*}} =  \abs{\ip{S(\ee_\theta),T(\ee_\theta)}} \leq (\theta(1-\theta))^{-1}\nrm{S}_{\mc{A}} \nrm{T}_{\mc{A}^*_+}.
  \end{equation*}
 Hence, taking the infimum over all such $S$ and $T$ and using Proposition \ref{proposition:finiterankX0X1dense}, we have
  \begin{equation*}
   \abs{\ip{x,x^*}} \leq   (\theta(1-\theta))^{-1}\nrm{x}_{X_\theta} \nrm{x^*}_{X_{\theta,+}^*}.
  \end{equation*}
  By the density of $X_0\cap X_1$ in $X_\theta$ this implies that $X_{\theta,+}^*$ embeds continuously into $(X_\theta)^*$.

  \bigskip

  We now turn to the other embedding. Given $x^* \in (X_{\theta})^*$ we must show  $x^* \in X_{\theta,+}^*$ with  $\nrm{x^*}_{X^*_{\theta,+}} \leq C\, \nrm{x^*}_{(X_{\theta})^*}$.
  First note that $x^*$ induces a linear functional $\psi$ on $\mc{A}$ by $\psi(S) = x^*(S(\ee_\theta))$ for $S \in \mc{A}$. Moreover there is a natural isometric embedding of $\mc{A}$ into
  \begin{equation*}
    \alpha(\R;X_0) \oplus_\infty \alpha(\R,\ee^{-2t}\ddn t;X_1)
  \end{equation*}
  via the map $S \mapsto (S_0,S_1)$. Hence by the Hahn-Banach theorem we can
  extend $x^*$ to a functional on this larger space, i.e. there is a
  \begin{equation*}
   T = (T_0,T_1) \in \alpha_{+}^*(\R,X_0^*) \oplus_1 \alpha_{+}^*(\R,\ee^{-2t}\ddn t,X_1^*)
  \end{equation*}
  such that $\nrm{T} = \nrm{x^*}_{(X_{\theta})^*}$ and
  \begin{equation*}
    \tr(T^*_0S_0) + \tr(T^*_1S_1) = x^*(S(\ee_\theta)),\qquad S \in \mc{A}.
  \end{equation*}

  Let us apply this to the rank one operator $S = \varphi \otimes x$ for some $ \varphi \in L^2(\R)\cap L^2(\R,\ee^{-2t}\ddn t)$ and $x \in X_0 \cap X_1$. Then
  \begin{equation*}
    \ip{x, T_0(\varphi)}+\ip{x,T_1(\varphi)} = x^*(x) \ip{\ee_\theta,\varphi},
  \end{equation*}
  so we have, by the density of $X_0 \cap X_1$, that
  \begin{equation}\label{eq:functionaly*}
    T_0(\varphi) +T_1 (\varphi )=  \ip{\ee_\theta,\varphi}x^*, \qquad \varphi \in L^2(\R)\cap L^2(\R,\ee^{-2t}\ddn t)
  \end{equation}
  as functionals on $X_\theta$. Let $U = \ee^{\theta} U_1-I$, where $U_1$ is the shift operator given by
  $U_1\varphi = \varphi(\cdot-1)$. Then we have
  \begin{equation}\label{eq:functionalzero}
    T_0(U\varphi) +T_1 (U\varphi )=\hab{\ee^{\theta}\ip{\ee_\theta,U_1\varphi}-\ip{\ee_\theta,\varphi}}x^* = 0.
  \end{equation}
  Note that
  \begin{align*}
    \nrm{T_0U}_{\alpha_+^*(L^2(\R),X_0^*)} &\leq (\ee^{\theta}+1) \nrm{x^*}_{(X_{\theta})^*},\\
    \nrm{T_1U}_{\alpha_+^*(L^2(\R,\ee^{-2t}\ddn t),X_1^*)}  &\leq (\ee^{\theta-1}+1) \nrm{x^*}_{(X_{\theta})^*}.
  \end{align*}
 So it follows from \eqref{eq:functionalzero} that $V\colon L^2(\R) + L^2(\R,\ee^{-2t}\ddn t) \to X_0 + X_1$ given by
 \begin{equation*}
   V\varphi = \begin{cases}
     T_0U\varphi, \qquad &\varphi \in L^2(\R)\\
     -T_1U\varphi, &\varphi \in L^2(\R,\ee^{-2t}\ddn t)
   \end{cases}
 \end{equation*}
 is a well-defined element of $\mc{A}^*_+$ and $\nrm{V}_{\mc{A}^*_+} \leq (\ee^\theta+1)\nrm{x^*}_{(X_{\theta})^*}$.
Let us compute $V(\ee_\theta)$. We have, using \eqref{eq:functionaly*}, that
\begin{align*}
  V(\ee_\theta) &= T_0U(\ee_\theta\ind_{(-\infty,0)})-T_1U(\ee_\theta \ind_{(0,\infty)})\\
  &=T_0(\ee_\theta \ind_{(0,1)}) + T_1(\ee_\theta \ind_{(0,1)})\\
  &= \ip{\ee_\theta,\ee_\theta\ind_{(0,1)}}x^* = x^*
\end{align*}
Thus we have $x^* \in X^*_{\theta,+}$ with  $\nrm{x^*}_{X^*_{\theta,+}} \leq \nrm{V}_{\mc{A}^*_+} \leq (\ee^\theta+1) \nrm{x^*}_{(X_\theta)^*}$ and the proof is complete.
\end{proof}
\section{A comparison with real and complex interpolation}
We will now compare the $\alpha$-interpolation method with the more well-known real and complex interpolation methods. We will only consider the $\alpha$-interpolation method in this section and leave the adaptations necessary to treat the $\alpha_+$-interpola\-tion method to the interested reader.
As in the previous section, throughout this section $\alpha$ is a global Euclidean structure, $(X_0,X_1)$ is a compatible pair of Banach spaces and $0<\theta<1$.

\subsection*{Real interpolation formulation}
We will start with a formulation of the $\alpha$-interpo\-lation method in the spirit of the real interpolation method. More precisely, we will give a formulation of the $\alpha$-interpolation method analogous to the Lions-Peetre mean method, which is equivalent to the real interpolation method in terms of the $K$-functional (see \cite{LP64}). Let $\mc{A}_{\bullet}$ be the set of all strongly measurable functions $f:\R_+ \to X_0 \cap X_1$ such that $t\mapsto t^jf(t) \in \alpha(\R_+, \frac{\ddn t}{t};X_j))$  for $j=0,1$. Define for $f \in \mc{A}_\bullet$
\begin{equation*}
  \nrm{f}_{\mc{A}_\bullet}:=\max_{j=0,1}\,\nrm{t \mapsto t^jf(t)}_{\alpha(\R_+,\frac{\ddn t}{t};X_j)}.
\end{equation*}

\begin{proposition}\label{proposition:alpharealmethod}
  For  $x \in X_0+X_1$ we have
  \begin{align*}
    \nrm{x}_{X_\theta} &= \inf \cbraceb{\nrm{f}_{\mc{A}_\bullet}: f \in \mc{A}_\bullet \text{ with } \int_{0}^\infty t^{\theta} f(t)\tfrac{\ddn t}{t} = x}
  \end{align*}
  where the integral converges in the Bochner sense in $X_0+X_1$.
\end{proposition}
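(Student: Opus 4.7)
The plan is to prove both inequalities by setting up an isometric correspondence between $\mc{A}_\bullet$ and the representable operators in $\mc{A}$ via the change of variables $t = \ee^s$ and, for the harder direction, to approximate an arbitrary $T \in \mc{A}$ by representable operators through a mollification argument. For the inequality $\nrm{x}_{X_\theta} \leq \inf\{\nrm{f}_{\mc{A}_\bullet}\}$, given $f \in \mc{A}_\bullet$ I would define $T_f\varphi := \int_\R \varphi(s) f(\ee^s) \ddn s$ on $L^2(\R) + L^2(\R, \ee^{-2t}\ddn t)$. The unitaries $\Phi_0 \colon L^2(\R_+, \ddn t/t) \to L^2(\R)$ given by $\Phi_0\psi(s) := \psi(\ee^s)$ and $\Phi_1 \colon L^2(\R_+, \ddn t/t) \to L^2(\R, \ee^{-2s}\ddn s)$ given by $\Phi_1\psi(s) := \ee^s\psi(\ee^s)$, combined with the invariance of the $\alpha$-norm under right-composition with unitaries, show that the restriction of $T_f$ to $L^2(\R, \ee^{-2js}\ddn s)$ has $\alpha$-norm equal to $\nrm{t \mapsto t^j f(t)}_{\alpha(\R_+, \ddn t/t; X_j)}$ for $j = 0, 1$. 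Hence $T_f \in \mc{A}$ with $\nrm{T_f}_\mc{A} = \nrm{f}_{\mc{A}_\bullet}$ and $T_f(\ee_\theta) = \int_0^\infty t^\theta f(t) \ddn t/t = x$, yielding this inequality.

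For the reverse inequality, given $T \in \mc{A}$ with $T(\ee_\theta) = x$, I would mollify using a standard approximation of the identity. Fix $\phi \in C_c^\infty(\R)$ with $\phi \geq 0$, $\int_\R \phi = 1$, $\supp \phi \subseteq [-1, 1]$, set $\phi_\epsilon(s) := \epsilon^{-1}\phi(s/\epsilon)$, and let $C_{\phi_\epsilon}\varphi := \phi_\epsilon \star \varphi$. A direct application of Young's inequality, absorbing the weight $\ee^{-s}$ into the kernel, gives $\nrm{C_{\phi_\epsilon}}_{\mc{L}(L^2(\R))} \leq 1$ and $\nrm{C_{\phi_\epsilon}}_{\mc{L}(L^2(\R, \ee^{-2s}\ddn s))} \leq \ee^\epsilon$, so by the right-ideal property \eqref{eq:E2} the operator $T_\epsilon := T \circ C_{\phi_\epsilon}$ belongs to $\mc{A}$ with $\nrm{T_\epsilon}_\mc{A} \leq \ee^\epsilon \nrm{T}_\mc{A}$. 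The crucial point is that $T_\epsilon$ is representable by the continuous function $h_\epsilon(s) := T(\phi_\epsilon(\cdot - s))$: since $\phi_\epsilon(\cdot - s) \in L^2(\R) \cap L^2(\R, \ee^{-2r}\ddn r)$, the vector $h_\epsilon(s)$ is simultaneously the image under the restrictions $T_0$ and $T_1$ and hence lies in $X_0 \cap X_1$, and a Bochner integral exchange gives $T_\epsilon\varphi = \int_\R \varphi(s) h_\epsilon(s) \ddn s$. Setting $f_\epsilon(t) := h_\epsilon(\log t)$, the first paragraph shows $f_\epsilon \in \mc{A}_\bullet$ with $\nrm{f_\epsilon}_{\mc{A}_\bullet} = \nrm{T_\epsilon}_\mc{A}$.

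To conclude, $\phi_\epsilon \star \ee_\theta = c_\epsilon \ee_\theta$ with $c_\epsilon := \int_\R \ee^{-\theta r}\phi_\epsilon(r)\ddn r \to 1$ as $\epsilon \to 0$, so $T_\epsilon(\ee_\theta) = c_\epsilon x$ and $f_\epsilon / c_\epsilon \in \mc{A}_\bullet$ satisfies $\int_0^\infty t^\theta (f_\epsilon/c_\epsilon) \ddn t/t = x$ and $\nrm{f_\epsilon/c_\epsilon}_{\mc{A}_\bullet} \leq (\ee^\epsilon/c_\epsilon) \nrm{T}_\mc{A}$. Sending $\epsilon \to 0$ and taking the infimum over admissible $T$ gives the matching inequality. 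The main obstacle is the representability of $T_\epsilon$ by the explicit function $h_\epsilon$: by Proposition \ref{proposition:alphaspace=bochnerspace}, a general $T \in \mc{A}$ need not be representable by a function $\R \to X_0 \cap X_1$, so no direct lifting of $T$ to $\mc{A}_\bullet$ is available, and the mollification is precisely what produces representable approximations whose norms converge to $\nrm{T}_\mc{A}$.
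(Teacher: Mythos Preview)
Your proof is correct and follows essentially the same route as the paper: both identify $\mc{A}_\bullet$ with the representable operators in $\mc{A}$ via the exponential substitution, and both pass from a general $T\in\mc{A}$ to a representable approximant by right-composing with a compactly supported convolution operator whose norm on each weighted $L^2$-space tends to $1$. The only cosmetic difference is that the paper uses the kernel $K_\lambda\varphi = \frac{1}{2\lambda}\int_{-\lambda}^\lambda \varphi(\cdot - t)\ee^{\theta t}\,\ddn t$, which is engineered to satisfy $K_\lambda(\ee_\theta) = \ee_\theta$ exactly (so no rescaling by your $c_\epsilon$ is needed), and writes the representing function as a difference quotient of $F(t) = T(\ind_{(0,t)}\ee_\theta)$ rather than $T(\phi_\epsilon(\cdot - s))$.
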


\begin{proof}
Note that for $f \in \mc{A}_\bullet$ we have $t \mapsto f(e^t)\in \alpha(\R, \ee^{-2jt},X_j))$ for $j=0,1$. Therefore, using the transformation $t \mapsto \ee^t$, we may identify
 $\mc{A}_{\bullet}$ with a subset of $\mc{A}$. So the inequality ``$\leq$'' is immediate.

  To obtain the converse inequality take $x \in X_\theta$, let $\varepsilon>0$ and $T \in \mc{A}$ with $T(\ee_\theta)=x$ and $\nrm{T}_{\mc{A}} < (1+\varepsilon)\nrm{x}_{X_\theta}$. For $\lambda>0$ we consider the convolution operator
  \begin{equation*}
    K_\lambda \varphi = \frac{1}{2\lambda} \int_{-\lambda}^\lambda \varphi(\cdot-t) \ee^{\theta t}\dd t, \qquad \varphi \in L^2(\R) +L^2(\R,\ee^{-2t}\dd t).
  \end{equation*}
  Then $K_\lambda(\ee_\theta) = \ee_\theta$, hence $TK_\lambda(\ee_\theta) = x$. Note that for $j=0,1$
  \begin{equation*}
    \nrm{K_\lambda}_{\mc{L}(L^2(\R,\ee^{-2jt}\ddn t))} \leq \frac{1}{2\lambda}\int_{-\lambda}^\lambda \ee^{(\theta-j)t}\dd t \leq \begin{cases} \frac{\sinh(\theta \lambda)}{\theta \lambda} &j=0,\\
    \frac{\sinh((1-\theta) \lambda)}{(1-\theta) \lambda} &j=1.
    \end{cases}
  \end{equation*}
  Hence for small enough $\lambda>0$
  \begin{equation}\label{eq:TKlambdainequal}
    \nrm{TK_\lambda}_{\mc{A}} <(1+\varepsilon) \nrm{x}_{X_\theta}.
  \end{equation}
  Now we show that $TK_\lambda$ is representable by a function.
  Let
  \begin{equation*}
    F(t) = \begin{cases}
      T\hab{\ind_{(0,t)}\ee_{\theta}} &t>0,\\
      -T\hab{\ind_{(t,0]}\ee_{\theta}} &t \leq 0,
    \end{cases}
  \end{equation*}
  and note that we have
  \begin{equation*}
    K_\lambda\varphi = \frac{1}{2\lambda} \int_\R \varphi(t)\ind_{(t-\lambda,t+\lambda)}\ee^{\theta(\cdot-t)}\dd t, \qquad \varphi \in L^2(\R) +L^2(\R,\ee^{-2t}\dd t)
  \end{equation*}
  as a Bochner integral in $L^2(\R) +L^2(\R,\ee^{-2t}\dd t)$.
  Hence
  \begin{equation}\label{eq:Tklambdaequal}
    TK_\lambda\varphi = \frac{1}{2\lambda} \int_\R \varphi(t) \ee^{-\theta t}\hab{F(t+\lambda)-F(t-\lambda)}\dd t,
  \end{equation}
  so we can take
  \begin{equation*}
    g(t) = \frac{\ee^{-\theta t}}{2\lambda}\hab{F(t+\lambda)-F(t-\lambda)}, \qquad t \in \R.
  \end{equation*}
  Then, for $f(t) = g(\ln(t))$, we have by \eqref{eq:TKlambdainequal} and \eqref{eq:Tklambdaequal}
  \begin{equation*}
    \max_{j=0,1}\,\nrm{t \mapsto t^jf(t)}_{\alpha(\R_+,\frac{\ddn t}{t};X_j)} = \nrm{g}_{\mc{A}} = \nrm{TK_\lambda}_{\mc{A}} \leq ( 1+\varepsilon)\nrm{x}_{X_\theta},
  \end{equation*}
  which proves the inequality ``$\geq$''.
\end{proof}

The Lions-Peetre mean method also admits a discretized version. Using Proposition \ref{proposition:alpharealmethod} we can also give a discretized version of the $\alpha$-interpolation method in the same spirit. On a Banach space with finite cotype this will show that the $\gamma$-interpolation method is equivalent with the Rademacher interpolation method introduced in \cite[Section 7]{KKW06}. Moreover, it connects the $\alpha$-interpolation method to the abstract interpolation framework developed in \cite{LL21}.

Let $\mc{A}_{\#}$ be the set of all infinite sequences $(x_k)_{k \in \Z}$ in $X_0\cap X_1$ such that $(x_k)_{k \in \Z} \in \alpha(\Z;X_0)$ and $(2^{k}x_k)_{k\in \Z} \in \alpha(\Z;X_1)$, equipped with the norm
\begin{equation*}
  \nrm{(x_k)_{k \in \Z}}_{\mc{A}_{\#}}:= \max\cbraceb{\nrm{(x_k)_{k \in \Z}}_{\alpha(\Z;X_0)}, \nrm{(2^{k}x_k)_{k\in \Z}}_{\alpha(\Z;X_1)}}.
\end{equation*}

\begin{proposition}\label{proposition:alpharealmethoddiscrete}
  For  $x \in X_0+X_1$ we have
  \begin{align*}
    \nrm{x}_{X_\theta} &\simeq \inf \cbraceb{\nrm{\mb{y}}_{\mc{A}_{\#}}: \mb{y} \in \mc{A}_{\#}, \sum_{k \in \Z} 2^{k\theta} y_k = x},
  \end{align*}
  where the series converges in $X_0+X_1$.
\end{proposition}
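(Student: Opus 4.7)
The plan is to prove the two inequalities by transferring between the function formulation of Proposition \ref{proposition:alpharealmethod} and the discrete formulation via the change of variables $t = \ee^{s}$, which turns $(\R_+, \frac{\ddn t}{t})$ into $(\R, \ddn s)$ and dyadic intervals $[2^k, 2^{k+1})$ into intervals of length $\ln 2$. The main technical ingredients will be Example \ref{example:pointwisemulti} (multiplication by $L^\infty$ preserves $\alpha$), the definition of $\nrm{\lcdot}_{\alpha(L^2(\R),X)}$ via orthonormal systems, and Proposition \ref{proposition:alphaspaceconvergence}\ref{it:convergenceproperty4} to upgrade $\alpha_+$-estimates to $\alpha$-estimates (i.e.\ to verify the tail condition).

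For the inequality ``$\lesssim$'', let $\mb{y} \in \mc{A}_{\#}$ with $\sum_{k\in\Z} 2^{k\theta} y_k = x$ and set $c := \theta/(2^{\theta}-1)$, $f(t) := c\, y_k$ for $t \in [2^{k}, 2^{k+1})$. A direct computation shows $\int_{2^k}^{2^{k+1}} t^{\theta} \frac{\ddn t}{t} = c^{-1} 2^{k\theta}$, so $\int_0^{\infty} t^{\theta} f(t) \frac{\ddn t}{t} = \sum_{k \in \Z} 2^{k\theta} y_k = x$. For $j=0,1$, after $t=\ee^s$ the function $t^j f(t)$ becomes the step function taking value $c\, \ee^{js} y_k = 2^{jk}\, m_j(s-k\ln 2)\, y_k$ on $[k\ln 2, (k+1)\ln 2)$, where $m_j(u) := c\, \ee^{ju}$ is bounded by $c\cdot 2^j$ on $[0,\ln 2)$. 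By Example \ref{example:pointwisemulti} and the explicit step-function formula for the $\alpha(\R;X_j)$-norm,
\[
    \nrm{t \mapsto t^j f(t)}_{\alpha(\R_+,\frac{\ddn t}{t};X_j)} \leq c\cdot 2^j (\ln 2)^{1/2}\, \nrm{(2^{jk}y_k)_{k\in\Z}}_{\alpha(\Z;X_j)} \lesssim \nrm{\mb{y}}_{\mc{A}_\#}.
\]
Hence $\nrm{f}_{\mc{A}_\bullet} \lesssim \nrm{\mb{y}}_{\mc{A}_\#}$ and Proposition \ref{proposition:alpharealmethod} gives $\nrm{x}_{X_\theta} \lesssim \nrm{\mb{y}}_{\mc{A}_\#}$.

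For the inequality ``$\gtrsim$'', fix $\varepsilon>0$ and by Proposition \ref{proposition:alpharealmethod} choose $f \in \mc{A}_\bullet$ with $\int_0^\infty t^\theta f(t) \frac{\ddn t}{t} = x$ and $\nrm{f}_{\mc{A}_\bullet} \leq (1+\varepsilon)\nrm{x}_{X_\theta}$. Set
\[
    y_k := 2^{-k\theta}\int_{2^k}^{2^{k+1}} t^\theta f(t)\,\tfrac{\ddn t}{t} \in X_0 \cap X_1,
\]
so that $\sum_{k\in\Z} 2^{k\theta} y_k = x$ with convergence in $X_0+X_1$. Writing $g_j(t) := t^j f(t) \in \alpha(\R_+,\frac{\ddn t}{t};X_j)$ and setting $\widetilde g_j(s) := g_j(\ee^s) \in \alpha(\R;X_j)$ with the same norm, define the bounded function $R(s) := 2^{(\theta-j)u}$ for $s = (k+u)\ln 2$, $u \in [0,1)$, which satisfies $\nrm{R}_{L^\infty(\R)} \leq 2^{|\theta-j|}$. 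Then the change of variables $t = \ee^s$ yields
\[
    2^{jk} y_k = \int_{k\ln 2}^{(k+1)\ln 2} \widetilde g_j(s)\, R(s)\,\ddn s = (\ln 2)^{1/2}\,\ip{R\widetilde g_j, e_k},
\]
where $e_k(s) := (\ln 2)^{-1/2} \ind_{[k\ln 2,(k+1)\ln 2)}(s)$ is an orthonormal system in $L^2(\R)$. Since $R\widetilde g_j \in \alpha(\R;X_j)$ with norm at most $2^{|\theta-j|}\nrm{\widetilde g_j}_{\alpha(\R;X_j)}$ by Example \ref{example:pointwisemulti}, the very definition of the $\alpha(L^2(\R),X_j)$-norm gives
\[
    \nrm{(2^{jk} y_k)_{k\in\Z}}_{\alpha_+(\Z;X_j)} \leq (\ln 2)^{1/2}\, 2^{|\theta-j|}\, \nrm{g_j}_{\alpha(\R_+,\frac{\ddn t}{t};X_j)}.
\]
To upgrade from $\alpha_+$ to $\alpha$, note that multiplication of $R\widetilde g_j$ by $\ind_{\{|s|>N\ln 2\}}$ tends to $0$ in $\alpha(\R;X_j)$ by Proposition \ref{proposition:alphaspaceconvergence}\ref{it:convergenceproperty4}, and the same calculation as above shows that the corresponding tails of $(2^{jk}y_k)_k$ vanish in $\alpha_+(\Z;X_j)$; this is exactly the tail condition characterising $\alpha(\Z;X_j) \subseteq \alpha_+(\Z;X_j)$ via Proposition \ref{proposition:alphadef2}. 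Maximising over $j=0,1$ therefore yields $\nrm{\mb{y}}_{\mc{A}_\#} \lesssim \nrm{f}_{\mc{A}_\bullet} \leq (1+\varepsilon)\nrm{x}_{X_\theta}$, completing the proof. The main (but modest) obstacle is precisely the last step: keeping careful track that the discretised sequence belongs to $\alpha(\Z;X_j)$ and not merely $\alpha_+(\Z;X_j)$, which forces the use of the convergence lemma rather than just the trivial bound coming from the operator-theoretic definition.
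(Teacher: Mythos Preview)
Your proof is correct and follows essentially the same route as the paper: reduce to Proposition~\ref{proposition:alpharealmethod}, pass to the additive picture via $t=\ee^s$ (the paper uses $t=2^s$), and exploit that the dyadic indicators form an orthogonal system so that the discrete $\alpha(\Z;X_j)$-norm matches the $\alpha(\R;X_j)$-norm of the corresponding step function. The paper absorbs the weight $2^{(t-k)(\theta-j)}$ into the orthogonal test functions $\varphi_k$, whereas you absorb it into the pointwise multiplier $R$ and test against normalized indicators; these are two equivalent bookkeeping choices.

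One remark: your ``upgrade from $\alpha_+$ to $\alpha$'' step is unnecessary. You already know $R\widetilde g_j\in\alpha(\R;X_j)$ (not merely $\alpha_+$) by Example~\ref{example:pointwisemulti}. By the very definition of $\alpha(H,X)$, an operator $T\in\alpha(L^2(\R),X_j)$ satisfies $(Te_k)_{k\in\Z}\in\alpha(\Z;X_j)$ for every orthonormal system $(e_k)$, so $(2^{jk}y_k)_{k\in\Z}\in\alpha(\Z;X_j)$ is immediate. The tail argument via Proposition~\ref{proposition:alphaspaceconvergence}\ref{it:convergenceproperty4} is therefore not needed; the paper's proof does not invoke it either.
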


\begin{proof}
  Fix $x \in X_0+X_1$. By  Proposition \ref{proposition:alpharealmethod} it suffices to prove
  \begin{equation}\label{eq:infequiv}
  \begin{aligned}
    \inf \cbraceb{\nrm{\mb{y}}_{\mc{A}_{\#}}: \mb{y} &\in \mc{A}_{\#}\text{ with } \sum_{k \in \Z} 2^{k\theta} y_k = x} \\&\simeq \inf \cbraceb{\nrm{f}_{\mc{A}_\bullet}: f \in \mc{A}_\bullet \text{ with } \int_{0}^\infty t^{\theta} f(t)\tfrac{\ddn t}{t} = x}.
\end{aligned}
  \end{equation}

 First suppose the right-hand side is finite and let $f \in \mc{A}_\bullet$ be such that $\int_{0}^\infty t^{\theta} f(t)\tfrac{\ddn t}{t} = x$. Define $g(t) = f(2^t)$, then we have
   $\ln(2) \cdot \int_{\R} 2^{t\theta} g(t) \dd t= x$ and for $j=0,1$   \begin{equation}\label{eq:gunderfinterpolation}
   \nrm{t\mapsto 2^{jt}g(t)}_{\alpha(\R;X_j)}  \lesssim \nrm{t \mapsto t^jf(t)}_{\alpha(\R_+,\frac{\ddn t}{t};X_j)}
   \end{equation}
   by the boundedness of the map $h \mapsto \hab{t\mapsto h(2^{t})}$ from $L^2(\R_+, \frac{\ddn t}{t})$ to $L^2(\R)$.
For $k \in \Z$ define
  \begin{equation*}
    y_k = \ln(2) \int_{k}^{k+1}2^{(t-k)\theta} g(t)\dd t \in X_0\cap X_1.
  \end{equation*}
  For $j=0,1$ we have, since the functions $$\varphi_k(t):=2^{(t -k)(\theta-j)} \ind_{[k,k+1)}, \qquad t \in \R$$ are orthogonal and uniformly bounded in $L^2(\R)$, that
  \begin{align*}
    \nrm{\ha{2^{jk}y_k}_{k \in \Z}}_{\alpha(\Z;X_j)}  &\leq \sup_{k \in \Z} \,\nrm{\varphi_k}_{L^2(\R)}  \nrm{t\mapsto 2^{jt}g(t)}_{\alpha(\R;X_j)}.
  \end{align*}
  Combined with \eqref{eq:gunderfinterpolation} this yields $\mb{y} \in \mc{A}_{\#}$ with
  \begin{align*}
    \nrm{\mb{y}}_{\mc{A}_{\#}}  \lesssim \nrm{f}_{\mc{A}_{\bullet}}.
  \end{align*}
  Since $\sum_{k \in \Z} 2^{k\theta} y_k = x$ this proves ``$\lesssim$'' of \eqref{eq:infequiv}.

   Conversely, assume the left-hand side is finite, take $\mb{y} \in \mc{A}_{\#}$ such that $\sum_{k \in \Z} 2^{k\theta} y_k = x$ and define
  \begin{equation*}
    f(t) := \sum_{k\in \Z} y_k 2^{(k-t)\theta} \ind_{[k,k+1)}(t), \qquad t \in \R.
  \end{equation*}
  Then $\int_{\R}2^{t\theta} f(t)\dd t = x$ and note that $f = \sum_{k\in \Z} \varphi_k \otimes y_k$ with
  \begin{align*}
    \varphi_k(t) &= 2^{(k-t)\theta} \ind_{[k,k+1)}(t), &&t \in \R.
  \end{align*}
  Since the $\varphi_k$'s are orthogonal and since we can compute the $\alpha(\R;X_0)$-norm of $f$ using a fixed orthonormal basis of $L^2(\R)$, this implies that
  \begin{equation*}
    \nrm{f}_{\alpha(\R;X_0)} \leq \sup_{k \in \N}\,  \nrm{\varphi_k}_{L^2(\R)} \nrm{\mb{y}}_{\alpha(\Z;X_0)} \lesssim \nrm{\mb{y}}_{\alpha(\Z;X_0)} .
  \end{equation*}
  Combined with a similar computation for the $\alpha(\R;X_1)$-norm of $t\mapsto 2^t f(t)$, this yields for
  $$g(t) =  \frac{f\hab{\ln(t)/\ln(2)}}{\ln(2)}$$
  that we have
$$     \nrm{g}_{\mc{A}_\bullet}  = \ln(2)^{-1/2} \max_{j=0,1} \nrm{t \mapsto 2^tf(t)}_{\alpha(\R;X_j)} \lesssim \nrm{\mb{y}}_{\mc{A}_{\#}}.
$$
Since $\int_0^\infty t^\theta g(t) \frac{\ddn t}{t}=x$, this proves ``$\gtrsim$'' of \eqref{eq:infequiv}.
\end{proof}

\subsection*{Complex interpolation formulation}
Next we will give a formulation of the $\alpha$-method in the spirit of the  complex interpolation method.  Denote by  the strip
$$\mathbb{S}=\cbrace{z \in \C:0 < \re(z) <1}.$$ Let $\mc{H}(\overline{\mathbb{S}})$ be the space of all bounded continuous functions $f: \overline{\mbb{S}}\to X_0+X_1$ such that
 \begin{itemize}
  \item $f$ is a holomorphic $(X_0+X_1)$-valued function on $\mathbb{S}$.
  \item $f_j(t):= f(j+it)$ is a bounded, continuous, $X_j$-valued function for $j=0,1$.
\end{itemize}
We let ${\mc{A}}_{\mathbb{S}}$ be the subspace of all $f \in \mc{H}(\overline{\mathbb{S}})$ such that $f_j \in \alpha(\R;X_j)$ and we define
\begin{equation*}
  \nrm{f}_{\mc{A}_{\mbb{S}}} := \max_{j=0,1} \,\nrm{f_j}_{\alpha(\R;X_j)}.
\end{equation*}

\begin{proposition}\label{proposition:alphacomplexmethod}
  For $x \in X_0+X_1$ we have
  \begin{equation*}
    \nrm{x}_{X_\theta} \leq (2\pi)^{-1/2} \inf\cbraceb{ \nrm{f}_{\mc{A}_{\mbb{S}}}: f \in \mc{A}_{\mbb{S}}, f(\theta) = x}.
  \end{equation*}
 Conversely, for $x \in X_0\cap X_1$ we have
  \begin{equation*}
    \nrm{x}_{X_\theta} \geq (2\pi)^{-1/2} \inf\cbraceb{ \nrm{f}_{\mc{A}_{\mbb{S}}}: f \in \mc{A}_{\mbb{S}}, f(\theta) = x}.
    \end{equation*}
\end{proposition}

\begin{proof}
  Let  $h_k \in C_c^\infty(\R)$  and $x_k \in X_0\cap  X_1$ for $k=1,\ldots,n$ and define
  \begin{equation}\label{eq:TfiniterankCc}
    T = \sum_{k=1}^n h_k \otimes x_k \in \mc{A}.
  \end{equation} Set $\ee_z(t) :=\ee^{tz}$ for $z \in \C$. Then we have for
  $f(z) := T(\ee_z)$ and $j=0,1$ that
   \begin{align*}
     f(j-2\pi it) &= \sum_{k=1}^n \int_{\R} h_k(\xi) \ee^{(j-2\pi it)\xi}\dd \xi \cdot x_k\\
     &=  \mc{F}\has{\sum_{k=1}^n \xi \mapsto h_k(\xi)\ee^{j\xi} \cdot x_k}.
   \end{align*}
   Therefore, by Example \ref{example:fourier}, we have
   \begin{align*}
     \nrm{f_j}_{\alpha(\R;X_j)} &= (2\pi)^{-1/2} \nrms{\xi \mapsto\sum_{k=1}^n  h_k(\xi)\ee^{j\xi}\cdot x_k}_{\alpha(\R;X_j)}\\&= (2\pi)^{-1/2} \nrm{T}_{\alpha(\R, \ee^{-2jt}\ddn t;X_j)},
   \end{align*}
   so we have $f \in \mc{A}_{\mbb{S}}$ with
   $(2\pi)^{-1/2} \nrm{f}_{\mc{A}_{\mbb{S}}}   = \nrm{T}_{\mc{A}}.$  Since $C_c^\infty(\R)$ is dense in $L^2(\R)\cap L^2(\R,\ee^{-2t}\ddn t)$, the collection of all $T$ as in \eqref{eq:TfiniterankCc} is dense in $\mc{A}$ by Proposition \ref{proposition:finiterankX0X1dense}. So  the inequality ``$\geq$'' follows.

  For the converse let $f \in \mc{A}_{\mbb{S}}$. Take $\varphi \in C^{\infty}_c(\R)$ and let $\tilde{\varphi}(z) := \int_{\R} \ee^{-zt}\varphi(t)\dd t$ be its Laplace transform. Then $\tilde{\varphi}$ is entire and for any $s_1<s_2$ we have an estimate
  \begin{equation*}
    \abs{\tilde{\varphi}(z)} \leq C(1+\abs{z})^{-2}, \qquad s_1 \leq \re{z} \leq s_2.
  \end{equation*}
 Therefore we can define
  \begin{equation*}
    T\varphi:=  \frac{1}{2\pi}\int_{\R}\tilde{\varphi}\hab{\tfrac{1}{2}+it}f\hab{\tfrac{1}{2}+it}\dd t
  \end{equation*}
  as a Bochner integral in $X_0+X_1$. An application of Cauchy's theorem shows that
    \begin{equation*}
    T\varphi=  \frac{1}{2\pi}\int_{\R}\tilde{\varphi}\ha{s+it}f\ha{s+it}\dd t
  \end{equation*}
  for $0<s<1$. By the dominated convergence theorem, using that $f$ is bounded and $t\mapsto \varphi(j+it) \in L^1(\R)$, we get for $j=0,1$
  \begin{equation*}
    T\varphi = \frac{1}{2\pi}\int_{\R}\tilde{\varphi}\ha{j+it}f_j\ha{t}\dd t
  \end{equation*}
  as Bochner integrals in $X_j$. Since we have
  \begin{equation*}
    \int_{\R}\abs{\tilde{\varphi}(j+it)}^2\dd t = 2\pi \int_{\R}\abs{\varphi(t)}^2\ee^{-2jt}\ddn t<\infty , \qquad j=0,1,
  \end{equation*}
  and $f_j \in \alpha(\R,X_j)$, it follows that $T$ extends to bounded operators $$T_j\colon L^2(\R,\ee^{-2jt}\ddn t) \to X_j, \qquad j=0,1.$$ Therefore $T$ can be extended to be in $\mc{A}$ and in particular we have
  \begin{equation*}
    \nrm{T}_{\mc{A}} = \max_{j=0,1} \,\nrm{T_j}_{\alpha(\R,\ee^{-2jt}\ddn t;X_j)} = \max_{j=0,1} \, (2\pi)^{-1/2}\nrm{f_j}_{\alpha(\R;X_j)} = (2\pi)^{-1/2}\nrm{f}_{\mc{A}_{\mathbb{S}}}.
  \end{equation*}
To conclude the proof of the inequality ``$\leq$'' we show that $T(\ee_\theta) = f(\theta)$. For this note that for any  $\varphi \in C^{\infty}_c(\R)$ we have
\begin{align*}
  T(\varphi \cdot \ee_\theta) &= \frac{1}{2\pi}\int_{\R} \int_{\R} \ee^{-(\theta+it)s}\varphi(s)\ee^{\theta s}  f(\theta+it) \dd s\dd t \\
  &=\frac{1}{2\pi} \int_{\R} \tilde{\varphi}(it)  f(\theta+it) \dd t.
\end{align*}
Now fix $\varphi$ such that $\varphi(0) = 1$ and for $n \in \N$ set
\begin{equation*}
  \varphi_n(t) = \varphi(nt),  \qquad t \in \R.
\end{equation*}
Then $\varphi_n\cdot\ee_\theta \to \ee_\theta$ in $L^2(\R) +L^2(\R,\ee^{-2t}\dd t)$ and therefore
\begin{equation*}
 T(\ee_\theta) = \lim_{n \to \infty}  T(\varphi_n\cdot\ee_\theta) = \lim_{n\to \infty} \frac{1}{2\pi} \int_{\R} \frac{1}{n}\tilde{\varphi}\hab{{it}/{n}}  f(\theta+it) \dd t = f(\theta),
\end{equation*}
where the last step follows from
\begin{equation*}
  \nrmb{t \mapsto \tilde{\varphi}(it)}_{L^1(\R)} = \nrmb{t \mapsto \widehat{\varphi}(t/2\pi)}_{L^1(\R)} = 2\pi \cdot \varphi(0) = 2\pi
\end{equation*}
and \cite[Theorem 2.3.8]{HNVW16}. This concludes the proof.
\end{proof}

\subsection*{A comparison of $\alpha$-interpolation with real and complex interpolation}
We conclude this section by comparing the $\alpha$-interpolation method with the actual real and complex interpolation methods. Recall that if $X_j$ has Fourier type $p_j\in [1,2]$ for $j=0,1$, i.e. if the Fourier transform is bounded from $L^{p_j}(\R;X_j)$ to $L^{p_j'}(\R;X_j)$, then by a result of Peetre \cite{Pe69} we know that we have continuous embeddings
\begin{equation}\label{eq:peetreembeddings}
  (X_0,X_1)_{\theta,p} \hookrightarrow  [X_0,X_1]_{\theta} \hookrightarrow (X_0,X_1)_{\theta,p'}
\end{equation}
where $\frac1p=\frac{1-\theta}{p_0} + \frac{\theta}{p_1}$. In particular the real method $(X_0,X_1)_{\theta,2} $ and the complex method $[X_0,X_1]_{\theta}$ are equivalent on Hilbert spaces. Using Proposition \ref{proposition:alpharealmethoddiscrete} we can prove a similar statement for the real and Gaussian interpolation method under type and cotype assumptions. Note that Fourier type $p$ implies type $p$ and cotype $p'$, but the converse only holds on Banach lattices (see \cite{GKT96}).

\begin{theorem}~\label{theorem:compareinterpolation}
  \begin{enumerate}[(i)]
  \item \label{it:interpolationcomparison1} If $X_0$ and $X_1$ have type $p_0,p_1 \in [1,2]$ and cotype $q_0,q_1 \in [2,\infty]$ respectively, then we have continuous embeddings
  \begin{equation*}
  (X_0,X_1)_{\theta,p} \hookrightarrow  (X_0,X_1)^\gamma_\theta \hookrightarrow (X_0,X_1)_{\theta,q}
\end{equation*}
where $\frac1p=\frac{1-\theta}{p_0} + \frac{\theta}{p_1}$ and $\frac1q=\frac{1-\theta}{q_0} + \frac{\theta}{q_1}$.
  \item \label{it:interpolationcomparison2} If $X_0$ and $X_1$ have type 2, then we have the continuous embedding $$[X_0,X_1]_\theta \hookrightarrow (X_0,X_1)^\gamma_\theta.$$
If $X_0$ and $X_1$ have cotype 2, then we have the continuous embedding  $$(X_0,X_1)^\gamma_\theta \hookrightarrow [X_0,X_1]_\theta.$$
  \item \label{it:interpolationcomparison4} If $X_0$ and $X_1$ are order-continuous Banach function spaces, then $$(X_0,X_1)^{\ell^2}_\theta= [X_0,X_1]_\theta$$
      isomorphically.
   \item \label{it:interpolationcomparison3} If $X_0$ and $X_1$ are Banach lattices with finite cotype, then $$(X_0,X_1)^{\gamma}_\theta = (X_0,X_1)^{\ell^2}_\theta$$
       isomorphically.
  \end{enumerate}
\end{theorem}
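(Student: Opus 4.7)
The plan is to treat the four claims in a convenient order, starting with the two that follow almost directly from structural facts, and then using the discretized and complex formulations of Section 7.3 to handle the harder embeddings.

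First I would dispose of \ref{it:interpolationcomparison3} by invoking Proposition \ref{proposition:compareEuclidean}\ref{it:compareEuclidean2}: on any Banach lattice of finite cotype the $\gamma$- and $\ell^2$-structures are equivalent, so the admissible classes $\mc{A}$ and their norms $\nrm{\cdot}_{\mc{A}}$ defined with $\gamma$ versus $\ell^2$ are equivalent on each of $X_0$ and $X_1$. Since $(X_0,X_1)^\bullet_\theta$ is the quotient of $\mc{A}$ by $T\mapsto T(e_\theta)$, the equivalence lifts to the interpolation norms. Next, for \ref{it:interpolationcomparison4}, I would combine the complex description in Proposition \ref{proposition:alphacomplexmethod} with Proposition \ref{proposition:l2representation}, which together let one interpret $\mc{A}_{\mbb{S}}$ as the space of bounded holomorphic $X_0+X_1$-valued functions $f$ on $\overline{\mbb{S}}$ such that $\hab{\int_\R \abs{f_j(t,\cdot)}^2\dd t}^{1/2}\in X_j$, and identify $(X_0,X_1)^{\ell^2}_\theta$ with the Calder\'on--Lozanovski\u{\i} product $X_0^{1-\theta}X_1^\theta$ by means of pointwise three-lines-type estimates in the underlying measure space. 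This matches the classical description of complex interpolation for order-continuous Banach function spaces due to Calder\'on, yielding the claimed isomorphism; order-continuity is used to ensure Proposition \ref{proposition:l2representation} applies and the trial functions used on the Calder\'on-product side are dense.

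For \ref{it:interpolationcomparison1}, I would work from the discretized description in Proposition \ref{proposition:alpharealmethoddiscrete}, namely $\nrm{x}_{X^\gamma_\theta} \simeq \inf \max\cbrace{\nrm{(y_k)}_{\gamma(\Z;X_0)},\,\nrm{(2^k y_k)}_{\gamma(\Z;X_1)}}$ over representations $x=\sum_k 2^{k\theta}y_k$, and compare with the Lions--Peetre discrete form of real interpolation in which $\gamma(\Z;X_j)$ is replaced by $\ell^r(\Z;X_j)$. On each $X_j$ the type-$p_j$ and cotype-$q_j$ inequalities sandwich the Gaussian sums: $\nrm{(y_k)}_{\gamma(\Z;X_j)} \lesssim \nrm{(y_k)}_{\ell^{p_j}(\Z;X_j)}$ and $\nrm{(y_k)}_{\ell^{q_j}(\Z;X_j)} \lesssim \nrm{(y_k)}_{\gamma(\Z;X_j)}$. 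Feeding these estimates into the two sides of the infimum and combining them through H\"older's inequality at weights $1-\theta,\theta$ produces the interpolated exponents $\tfrac1p=\tfrac{1-\theta}{p_0}+\tfrac{\theta}{p_1}$ and $\tfrac1q=\tfrac{1-\theta}{q_0}+\tfrac{\theta}{q_1}$, giving both embeddings simultaneously.

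The main obstacle will be \ref{it:interpolationcomparison2}, where Proposition \ref{proposition:alphacomplexmethod} gives $\nrm{x}_{X^\gamma_\theta}\simeq\inf \max_{j=0,1}\nrm{f_j}_{\gamma(\R;X_j)}$ over $f\in\mc{A}_{\mbb{S}}$ with $f(\theta)=x$, while the complex interpolation norm uses $\sup_t\nrm{f_j(t)}_{X_j}$. For $[X_0,X_1]_\theta \hookrightarrow (X_0,X_1)^\gamma_\theta$ under type $2$, I would take a bounded holomorphic witness $f$ for the complex method and multiply by a Gaussian, $f_\lambda(z):=\ee^{\lambda(z-\theta)^2}f(z)$: this preserves $f_\lambda(\theta)=x$, makes each $f_{\lambda,j}$ lie in $L^2(\R;X_j)$ with explicit Gaussian norm, and then \eqref{eq:typeembedding} yields $\nrm{f_{\lambda,j}}_{\gamma(\R;X_j)}\lesssim \nrm{f_{\lambda,j}}_{L^2(\R;X_j)}$; optimizing $\lambda>0$ then delivers the norm bound. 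The reverse embedding under cotype $2$ is harder because the $\gamma(\R;X_j)$-norm does not control the pointwise values of $f_j(t)$; my strategy would be to convolve a $\gamma$-method witness against a Poisson-type kernel for the strip, using \eqref{eq:cotypeembedding} to pass from $\gamma(\R;X_j)$ to $L^2(\R;X_j)$ and H\"older's inequality together with the $L^2$-bound of the Poisson kernel to upgrade to a uniform pointwise bound, producing a bounded continuous holomorphic witness for the complex method whose value at $\theta$ is $x$.
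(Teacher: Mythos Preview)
Your proposal is correct and follows essentially the same route as the paper: the discretized Lions--Peetre description together with type/cotype bounds for \ref{it:interpolationcomparison1}, Gaussian multiplication plus \eqref{eq:typeembedding} for the type $2$ half of \ref{it:interpolationcomparison2}, a Poisson-type averaging together with \eqref{eq:cotypeembedding} for the cotype $2$ half (which the paper simply cites as \cite[Corollary C.2.11]{HNVW16}), the Calder\'on--Lozanovski\u{\i} identification for \ref{it:interpolationcomparison4}, and Proposition \ref{proposition:compareEuclidean} for \ref{it:interpolationcomparison3}. One small point: in \ref{it:interpolationcomparison1} the passage from the pair $(\ell^{p_0},\ell^{p_1})$ of sequence norms to the single exponent $p$ is not a direct H\"older step but the Lions--Peetre equivalence \cite[Chapitre 2]{LP64}, which the paper invokes as a black box.
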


\begin{proof}
  For \ref{it:interpolationcomparison1} we note that we have, by the discrete version of the Lions--Peetre mean method (see \cite[Chapitre 2]{LP64}), that
    \begin{align*}
    &\nrm{x}_{(X_0,X_1)_{\theta,p}} \simeq \inf \cbraces{\max_{j=0,1}{\nrm{(2^{jk}y_k)_{k \in \Z}}_{\ell^{p_j}(\Z;X_j)}}:  \sum_{k \in \Z} 2^{k\theta} y_k = x},
  \end{align*}
  where the infimum is taken over all sequences $(y_k)_{k \in \Z}$ in $X_0\cap X_1$ such that the involved norms are finite. For a finitely non-zero sequence $(y_k)_{k \in \Z}$ in $X_0\cap X_1$ we have, using type $p_j$ of $X_j$ and Proposition \ref{proposition:gaussianradermacherl2comparison}
  \begin{equation*}
    \nrm{(2^{jk}y_k)_{k \in \Z}}_{\gamma(\Z;X_j)} \lesssim \nrm{(2^{jk}y_k)_{k \in \Z}}_{\ell^{p_j}(\Z;X_j)}, \qquad j=0,1,
  \end{equation*}
  By Proposition \ref{proposition:alphadef2} this inequality extends to any sequence in $X_0\cap X_1$ such that the right hand-side is finite.
  Therefore the first embedding in \ref{it:interpolationcomparison1} follows from Proposition \ref{proposition:alpharealmethoddiscrete}. The proof of the second embedding in \ref{it:interpolationcomparison1} is similar.

  For \ref{it:interpolationcomparison2} let $f \in \mc{H}(\overline{\mathbb{S}})$.
Then $g(z) := \ee^{z^2-\theta^2}f(z)$ has the property that $g(\theta)=f(\theta)$ and thus by Proposition \ref{proposition:alphacomplexmethod} and \eqref{eq:typeembedding}
\begin{align*}
  \nrm{f(\theta)}_{(X_0,X_1)^\gamma_\theta} &\leq \max_{j=0,1}\nrm{t\mapsto g(j+it)}_{\gamma(\R;X_j)} \\&\lesssim \max_{j=0,1}\nrm{t\mapsto g(j+it)}_{L^2(\R;X)} \\&\leq \max_{j=0,1}  \nrm{t \mapsto \ee^{(j+it)^2-\theta^2}}_{L^2(\R)}\sup_{t \in \R} \, \nrm{f(j+it)}_X,
  \\&\lesssim \sup_{t \in \R} \, \nrm{f(j+it)}_X,
\end{align*}
  from which the first embedding follows by the definition of the complex interpolation method. For the second embedding let $f \in \mc{A}_{\mbb{S}}$. Then we have by  \cite[Corollary C.2.11]{HNVW16} and \eqref{eq:cotypeembedding}
  \begin{align*}
    \nrm{f(\theta)}_{[X_0,X_1]_\theta} &\lesssim \max_{j=0,1} \nrm{t\mapsto f(j+it)}_{L^2(\R;X_j)}\\
    &\lesssim \max_{j=0,1} \nrm{t\mapsto f(j+it)}_{\gamma(\R;X_j)},
  \end{align*}
  from which the second embedding follows.

  For \ref{it:interpolationcomparison4} denote the measure space over which $X$ is defined by $(S,\mu)$. Note that $[X_0,X_1]_\theta$ is given by the Calder\'on-Lozanovskii space $X_0^{1-\theta}X_1^\theta$, which consists of all $x \in L^0(S)$ such that $\abs{x} = \abs{x_0}^{1-\theta}\abs{x_1}^{\theta}$ with $x_j \in X_j$ for $j=0,1$. The norm is given by
  \begin{equation*}
    \nrm{x}_{X_0^{1-\theta}X_1^\theta} = \inf\cbraceb{\max_{j=0,1}\nrm{x_j}_{X_j}:\abs{x} = \abs{x_0}^{1-\theta}\abs{x_1}^\theta, x_0 \in X_0,\,x_1 \in X_1},
  \end{equation*}
  see \cite{Ca64,Lo69}.

  First suppose that $0 \leq x \in X_0^{1-\theta}X_1^\theta$ factors in the form $x = \abs{x_0}^{1-\theta}\abs{x_1}^\theta$ with $x_j \in X_j$ for $j=0,1$ and $\max_{j=0,1}\,\nrm{x_j}_{X_j}\leq 2\,\nrm{x}_{[X_0,X_1]_\theta}$. We define
  \begin{equation*}
    f(z) := e^{z^2-\theta^2}\abs{x_0}^{1-z}\abs{x_1}^z, \qquad z \in \overline{\mbb{S}}.
  \end{equation*}
  Then since, for $j=0,1$, we have
  \begin{equation*}
    \has{\int_{\R}\abs{f(j+it)(s)}^2\dd t}^{\frac{1}{2}} = \has{\int_{\R}\ee^{2(j^2-t^2 - \theta^2)}\dd t}^{\frac{1}{2}} \abs{x_j(s)}, \qquad s \in S,
  \end{equation*}
  we have by Proposition \ref{proposition:l2representation} that $f_j \in \ell^2(\R;X_j)$ and therefore $f \in \mc{A}_{\mbb{S}}$. By Proposition \ref{proposition:alphacomplexmethod} this shows that
  \begin{align*}
    \nrm{x}_{(X_0,X_1)^{\ell^2}_\theta} &\leq \max_{j=0,1} \nrm{f_j}_{\ell^2(\R;X)} \lesssim \max_{j=0,1}\,\nrm{x_j}_{X_j}\leq 2\,\nrm{x}_{[X_0,X_1]_\theta}.
  \end{align*}
  For the converse direction take $f \in \mc{A}_{\mbb{S}}$. By \cite[Lemma C.2.10(2)]{HNVW16} with $X_0=X_1=\C$ and H\"older's inequality, we have for a.e. $s \in S$
  \begin{align*}
    \abs{f(\theta)(s)} &\lesssim \has{\has{\int_{\R}\abs{f(it)(s)}^2 \dd t}^{(1-\theta)/2}\cdot\has{\int_{\R}\abs{f(1+it)(s)}^2 \dd t}^{\theta/2}}.
  \end{align*}
Therefore, by Proposition \ref{proposition:l2representation}, we have
   \begin{align*}
    \nrm{f(\theta)}_{[X_0,X_1]_\theta} = \nrm{f(\theta)}_{X_0^{1-\theta}X_1^\theta}
    &\lesssim \, \max_{j=0,1} \, \nrms{\has{\int_{\R}\abs{f(j+it)}^2 \dd t}^{1/2}}_{X_j}
    \\&=   \max_{j=0,1}\, \nrm{f_j}_{\ell^2(\R;X_j)},
  \end{align*}
which implies the result by Proposition \ref{proposition:alphacomplexmethod}. Finally \ref{it:interpolationcomparison3} follows directly from Proposition \ref{proposition:compareEuclidean}.
\end{proof} 
\chapter{Sectorial operators and \texorpdfstring{$H^\infty$}{H}-calculus}\label{part:4}
On a Hilbert space  $H$, a sectorial operator $A$ has a bounded $H^\infty$-calculus if and only if it has $\BIP$. In this case $A$ has even a bounded $H^\infty$-calculus for operator-valued analytic functions which commute with the resolvent of $A$. If $A$ and $B$ are resolvent commuting sectorial operators with a bounded $H^\infty$-calculus, then $(A,B)$ has a joint $H^\infty$-calculus. Moreover if only one of the commuting operators has a bounded $H^\infty$-calculus, then still the ``sum of operators'' theorem holds, i.e.
\begin{equation*}
  \nrm{Ax}_H+\nrm{Bx}_H \leq \nrm{Ax+Bx}_H, \qquad x \in D(A)\cap D(B).
\end{equation*}
These theorems are very useful in regularity theory of partial differential operators and in particular in the theory of evolution equations. However, none of these important theorems hold in general Banach spaces without additional assumptions.

In this chapter we show that the missing ``ingredient'' in general Banach spaces is an $\alpha$-boundedness assumption, which allows one to reduce the problem via the representation in Theorem \ref{theorem:euclideanrepresentation} and its converse in Theorem \ref{theorem:Cboundedalphabounded} to the Hilbert space case.
Indeed, rather than designing an $\alpha$-bounded version of the Hilbertian proof for each of the aforementioned results, we will prove a fairly general ``transference principle'' (Theorem \ref{theorem:transference}) adapted to this task.
 Our analysis will in particular shed new light on the connection between the $\gamma$-structure and sectorial operators, which has been extensively studied (see \cite[Chapter 10]{HNVW17} and the references therein).

In the upcoming sections we will introduce the notions of (almost) $\alpha$-sectoriality, ($\alpha$)-bounded $H^\infty$-calculus and ($\alpha$)-$\BIP$ for a sectorial operator $A$. We will prove the following relations between these concepts:
\begin{center}
  \begin{tikzpicture}
  \node (aH)[text width=2.5cm] at (0,4) {\framebox{\parbox{2.2cm}{$\alpha$-bounded \\ $H^\infty$-calculus}}};
  \node (bH)[text width=2.90cm] at (9.5,4) {\framebox{\parbox{2.6cm}{$\exists \beta$: $\beta$-bounded \\ $H^\infty$-calculus}}};
  \node (aB)[text width=2.75cm]  at (2.5,2) {\framebox{\parbox{2.45cm}{$\alpha$-$\BIP$ with\\ $\omega_{\alphaBIP}(A)<\pi$}}};
  \node (H)[text width=2.5cm]  at (5,4)
  {\framebox{\parbox{2.2cm}{Bounded \\ $H^\infty$-calculus}}};
  \node (B)[text width=2.5cm]  at (7.5,2)
  {\framebox{\parbox{2.2cm}{$\BIP$ with \\$\omega_{\BIP}(A)<\pi$}}};
  \node (a)[text width=2.75cm] at (2.5,0)
   {\framebox{\parbox{2.45cm}{$\alpha$-sectorial}}};
  \node (aa)[text width=2.5cm] at (7.5,0)
   {\framebox{\parbox{2.2cm}{Almost \\ $\alpha$-sectorial}}};
\node (ai)[text width=2.5cm] at (9,1)
   {$\alpha$ ideal};

  \draw[-implies,double equal sign distance, shorten <=2pt, shorten >=2pt] (aH) to node [shift={(0.25,0.25)}]{(3)} (aB);
  \draw[-implies,double equal sign distance, shorten <=2pt, shorten >=2pt] (aB) to node [shift={(-0.25,0.25)}] {(4)} (H);
  \draw[-implies,double equal sign distance, shorten <=2pt, shorten >=2pt] (H) to node [shift={(0.25,0.25)}] {(5)} (B);
  \draw[-implies,double equal sign distance, shorten <=4pt, shorten >=6pt] (aB) to node [left,midway] {(7)} (a);
  \draw[-implies,double equal sign distance, shorten <=2pt, shorten >=2pt] (B) to node [left,midway] {(8)} (aa);
  \draw[-implies,double equal sign distance, shorten <=10pt, shorten >=10pt] (a) to node [shift={(0,0.4)}] {(9)} (aa);
  \draw[-implies,double equal sign distance, shorten <=10pt, shorten >=10pt] (aB) to node [shift={(0,0.4)}] {(6)} (B);
  \draw[-implies,double equal sign distance, shorten <=10pt, shorten >=10pt] (aH) to node [shift={(0,0.4)}] {(1)} (H);
  \draw[implies-implies,double equal sign distance, shorten <=5pt, shorten >=5pt] (H) to node [shift={(0,0.4)}] {(2)} (bH);
\end{tikzpicture}
\end{center}
Implications (1), (3), (5), (6) and (9) are trivial. The `if and only if' statement in (2) is proven in Theorem \ref{theorem:Hinftyalphabounded}, implication (4) is one of our main results and is proven
Theorem \ref{theorem:BIPHinfty}, implication (7) follows from Theorem  \ref{theorem:alphaBIPalpha},
and implication (8) is contained in Proposition \ref{proposition:BIPalmostalpha} under the assumption that $\alpha$ is ideal.
 In the case that either $\alpha=\ell^2$ or $\alpha=\gamma$ and $X$ has Pisier's contraction property, implications (1), (3) and (4) are `if and only if' statements (see Theorem \ref{theorem:Hinftyell2bounded}). Moreover, if $X$ has the so-called triangular contraction property, then a bounded $H^\infty$-calculus implies $\gamma$-sectoriality (see \cite{KW01} or \cite[Theorem 10.3.4]{HNVW17}).

Besides these connections between the $\alpha$-versions of the boundedness of the $H^\infty$-calculus, $\BIP$ and sectoriality, we will study operator-valued and joint $H^\infty$-calculus using Euclidean structures in Section \ref{section:operatorjointcalculus}. In particular, we will use our  transference principle to deduce the boundedness of these calculi from $\alpha$-boundedness of the $H^\infty$-calculus. Moreover we will prove a sums of operators theorem.

Throughout this chapter we will keep the standing assumption that $\alpha$ is a Euclidean structure on $X$.

\section{The Dunford calculus}\label{section:dunford}
In this preparatory section we will recall the definition and some well-known properties of the so-called Dunford calculus. For a detailed treatment and proofs of the statements in this section we refer the reader to \cite[Chapter 10]{HNVW17} (see also \cite[Chapter 2]{Ha06} and \cite[Section 9]{KW04}).

If $0<\sigma<\pi$ we denote by $\Sigma_{\sigma}$ the sector in the complex plane given by
$$\Sigma_{\sigma}=\{z\in \C:\ z\neq 0,\ \abs{\arg z}<\sigma\}.$$
We let $\Gamma_{\sigma}$ be the boundary of $\Sigma_\sigma$, i.e. $\Gamma_\sigma = \cbrace{\abs{t}e^{i\sigma\,{\sgn (t)}}:t \in \R}$, which we orientate counterclockwise.
 A closed injective operator $A$ with dense domain $D(A)$  and dense range $R(A)$ is called {\it sectorial} if there exists a $0<\sigma<\pi$ so that
the spectrum of $A$, denoted by $\sigma(A)$, is contained in $\overline{\Sigma}_{\sigma}$ and the resolvent $R(\lambda,A):=(\lambda-A)^{-1}$ for $\lambda \in \C\setminus \sigma(A)=:\rho(A)$
satisfies
\begin{equation*}
\sup\cbraceb{\nrm{\lambda R(\lambda,A)}:  \lambda\in \C \setminus
\overline{\Sigma}_{\sigma}}\leq C_\sigma.
\end{equation*}
We denote by $\omega(A)$ the infimum of all $\sigma$ so that this inequality holds.
The definition of sectoriality varies in the literature. In particular, one could omit the dense domain, dense range and injectivity assumptions on $A$. However, these assumptions are not very restrictive, as one can always restrict to the part of $A$ in $\overline{D(A) \cap R(A)}$, which has dense domain and range and is injective. Moreover if $X$ is reflexive, then $A$ automatically has dense domain and we have a direct sum decomposition
\begin{equation*}
  X= N(A) \oplus \overline{R(A)}.
\end{equation*}

For $p \in [1,\infty]$ we define the Hardy space $H^p(\Sigma_\sigma)$ as the space of all holomorphic $f\colon \Sigma_\sigma \to \C$ such that
\begin{equation*}
  \nrm{f}_{H^p(\Sigma_\sigma)}:= \sup_{\abs{\theta}<\sigma}\nrm{t \mapsto f(\ee^{i\theta}t)}_{L^p(\R_+,\frac{\ddn t}{t})}
\end{equation*}
is finite. We will mostly work with the spaces $H^1(\Sigma_\sigma)$ and $H^\infty(\Sigma_\sigma)$. For $0<\sigma'<\sigma$ we have the continuous inclusion
\begin{equation}\label{eq:H1inHinfty}
  H^p(\Sigma_\sigma) \hookrightarrow H^\infty(\Sigma_{\sigma'}).
\end{equation}

\subsection*{The Dunford calculus}
Let $A$ be a sectorial operator on $X$, suppose $\omega(A)<\nu<\sigma<\pi$ and let $f\in H^1(\Sigma_{\sigma})$. Then we can define $f(A)\in \mc{L}(X)$ by the Bochner integral
\begin{equation}\label{eq:dunforddef}
f(A) =\frac{1}{2\pi i}\int_{\Gamma_{\nu}}f(z)R(z,A)\dd z.
\end{equation}
with norm estimate
\begin{equation*}
  \nrm{f(A)} \leq C_\nu\,  \nrm{f}_{H^1(\Sigma_\sigma)}<\infty.
\end{equation*}
 This is called the Dunford calculus of $A$. Let us note a few key properties of this calculus
 \begin{itemize}
   \item The definition is independent of $\nu$ by Cauchy's integral theorem.
   \item The calculus is multiplicative and thus commutative, i.e. if $f,g \in H_1(\Sigma_\sigma)$, then $f(A)g(A) = (fg)(A)$.
   \item For sectorial operators we have
   $$D(A)\cap R(A) = R(A(I+A)^{-2})$$
and using the Dunford calculus for $\varphi(z) = z(1+z)^{-2}$ we have  $\varphi(A) = A(I+A)^{-2}$. Thus for this $\varphi$ we have $\varphi(A)\colon X \to D(A)\cap R(A)$.
 \end{itemize}

Let $f \in H^1(\Sigma_\sigma)$, $x \in D(A)\cap R(A)$ and fix $y \in X$ such that $x=\varphi(A)y$ with $\varphi(z) = z(1+z)^{-2}$.
Using the multiplicativity of the calculus and Fubini's theorem we have
  \begin{equation*}
  \begin{split}
        \int_0^\infty \nrm{f(tA)x}_X \frac{\dd t}{t} &\leq C_\nu \int_0^\infty \int_{\Gamma_{\nu}}   \abs{f(t z)} \abs{\varphi(z)} \nrm{y}_X \frac{\abs{\ddn z}}{\abs{z}}\,\frac{\ddn t}{t}
    \\&\leq C_\nu \, \nrm{f}_{H^1(\Sigma_\sigma)}\nrm{\varphi}_{H^1(\Sigma_\sigma)} \nrm{y}_X <\infty,
  \end{split}
  \end{equation*}
so $t \mapsto f(tA)x$ is Bochner integrable. Since
\begin{equation*}
  \int_0^\infty f(tz)\varphi(z) \frac{\ddn t}{t} = c \varphi(z), \qquad z\in \Sigma_\sigma
\end{equation*}
with $c:= \int_0^\infty f(t)\frac{\ddn t}{t}$ by analytic continuation, we have the useful identity
\begin{equation}\label{eq:calderonrepnontrivial}
  \int_0^\infty f(tA)x\frac{\ddn t}{t} = c\,x, \qquad x \in D(A)\cap R(A).
\end{equation}

\subsection*{The extended Dunford calculus}
We extend will now extend the Dunford calculus to include functions like $e^{-w z}$ and $z^s$, for details we refer to \cite[Chapter 3]{Ha06} and \cite[Section 15]{KW04}.
 Define for $n \in \N$ the functions
\begin{align}\label{eq:phin}
\varphi_n(z) := \frac{n}{z+n}-\frac{1}{nz+1}, \qquad z \in \C \setminus (0,\infty).
\end{align}
These $\varphi_n$'s have the following properties:
\begin{enumerate}[(i)]
 \item $\varphi_n \in H^1(\Sigma_\sigma)$ for all $0<\sigma<\pi$ and $n\in \N$.
 \item By Cauchy's integral formula we have for $n \in \N$
\begin{align*}
  \varphi_n(A) &= -nR(-n,A) +n^{-1} R(-n^{-1},A)
\end{align*}
and thus by the sectoriality of $A$ we have $\sup_{n \in \N} \nrm{\varphi_n(A)} <\infty$
\item The range of each $\varphi_n(A)$ is $D(A) \cap R(A)$.
\item For all $x \in X$ we have $\varphi_n(A)x\to x$ as $n \to \infty$.
\end{enumerate}
Now suppose that  $f$ is a holomorphic function on $\Sigma_\sigma$ satisfying an estimate
\begin{equation*}
  \abs{f(z)} \leq C \abs{z}^{-\delta}(1+\abs{z})^{2\delta}, \qquad z \in \Sigma_\sigma
\end{equation*}
for some $\delta>0$ and $C>0$. For $x \in D(A^m)\cap R(A^m)$ with $m>\delta$ let $y\in X$ be such that $\varphi^m(A)y=x$ with $\varphi(z)= z(1+z)^{-2}$. Then we can define
\begin{equation*}
  f(A)x:= f\varphi^m(A)y,
\end{equation*}
which is independent of $m>\delta$. For $x \in D(A^m)\cap R(A^m)$ we have, by the multiplicativity of the Dunford calculus, that
\begin{equation*}
  f(A)x = \lim_{n \to \infty} (f\varphi_n^m)(A)x.
\end{equation*}
We extend this definition to the set  the set $D(f(A))$ of all $x \in X$ for which this limit exists. It can be shown that this defines $f(A)$ as a closed operator with dense domain for which $D(A^m)\cap R(A^m)$ is a core. Let us note a few examples of functions that are allows in the extended Dunford calculus.
\begin{itemize}
  \item If $\omega(A)<\pi/2$ we can take $f(z) = \ee^{-wz}$ for $w \in \Sigma_{\pi/2-\sigma}$. This leads to the bounded analytic semigroup  $(\ee^{-wA})_{w \in \Sigma_{\pi/2-\sigma}}$.
  \item Taking $f(z) = z^w$ we obtain the fractional powers $A^w$ for $w \in \C$. For $z,w \in \C$ we have
\begin{equation*}
  A^{z+w}x=A^zA^wx, \qquad x \in D(A^zA^w) = D(A^{z+w})\cap D(A^w)
\end{equation*}
and $A^{z+w}=A^zA^w$  if $\re z \cdot \re w>0$.
\end{itemize}
  The fractional powers $A^s$ for $s \in \R$ with $\abs{s} \, <\frac{\pi}{\omega(A)}$ are sectorial operators with $\omega(A^s) = \abs{s} \,\omega(A)$. For such $s \in \R$ we have $\varphi_s(A) = \varphi(A^s)$ with $$\varphi_s(z) = z^s(1+z^s)^{-2}, \qquad \varphi(z) = z(1+z)^{-2}$$ by the composition rule and therefore
      \begin{equation}\label{eq:Aepsilonrange}
        R(\varphi_s(A)) = R(A^s(I+A^{s})^{-2})=D(A^s)\cap R(A^s).
      \end{equation}
 Related to these fractional powers we have for $0<s<1$ and $f \in H^1(\Sigma_\sigma)$ the representation formula
\begin{equation}\label{eq:representationHinfty}
  f(A) = \frac{1}{2\pi i}\int_{\Gamma_\nu}f(z)z^{-s}A^{s}R(z,A)\dd z,
\end{equation}
This is sometimes a useful alternative to \eqref{eq:dunforddef}, since $A^sR(z,A) = \varphi_z(A)$ with $\varphi_z(w) = \frac{w^s}{z-w}$ and $\varphi_z$ is a  $H^1(\Sigma_\mu)$-function for $\omega(A)  <\mu <\abs{\arg(z)}$.

\section{(Almost) \texorpdfstring{$\alpha$}{a}-sectorial operators}
 After the preparations in the previous section, we start our investigation  by studying the boundedness of the resolvent of a sectorial operator $A$ on $X$. We say that $A$ is \emph{$\alpha$-sectorial} if there exists a $\omega(A)<\sigma< \pi$ such that
 $$\cbrace{\lambda R(\lambda,A): \lambda \in \C\setminus  \overline{\Sigma}_\sigma}$$ is $\alpha$-bounded and we let $\omega_\alpha(A)$ be the infimum of all such $\sigma$. $\alpha$-sectoriality has already been studied in the following special cases:
 \begin{itemize}
   \item  $\mc{R}$-sector\-iality, which is equivalent to maximal $L^p$-regularity (see \cite{CP01,We01}), has been studied thoroughly over the past decades (see e.g. \cite{DHP03, KKW06,KW01,KW04}). $\gamma$-sectoriality is equivalent to $\mc{R}$-sectoriality if $X$ has finite cotype by Proposition \ref{proposition:gaussianradermacherl2comparison}.
   \item $\ell^2$-sectoriality, or more generally $\ell^q$-sectoriality, has previously been studied in \cite{KU14}. We already used $\ell^2$-sectoriality in Subsection \ref{subsection:ell2vsUMD}.
 \end{itemize}

 We will also study a slightly weaker notion, analogous to the notion of almost $\mc{R}$-sectoriality and almost $\gamma$-sectoriality introduced in \cite{KKW06,KW16}.  We will say that $A$ is \emph{almost $\alpha$-sectorial} if there exists a $\omega(A)<\sigma< \pi$ such that the family $\cbrace{\lambda AR(\lambda,A)^2: \lambda \in \C\setminus  \overline{\Sigma}_\sigma}$ is $\alpha$-bounded and we let $\tilde\omega_\alpha(A)$ be the infimum of all such $\sigma$. This notion will play an important role in Section \ref{section:BIP} and Chapter \ref{part:5}.

\bigskip

 $\alpha$-sectoriality implies almost $\alpha$-sectoriality by Proposition \ref{proposition:alphaproperties}. The converse is not true, as we will show in Section \ref{section:almostalphanotalpha}. If an operator is $\alpha$-sectorial, then we do have equality of the angle of $\alpha$-sectoriality and almost $\alpha$-sectoriality.

\begin{proposition}\label{proposition:alphaalmoastalpha}
  Let $A$ be an almost $\alpha$-sectorial operator on $X$. If $$\cbrace{tR(-t,A):t>0}$$ is $\alpha$-bounded, then $A$ is $\alpha$-sectorial with $\omega_\alpha(A) = \tilde\omega_\alpha(A)$. In particular, if $A$ is $\alpha$-sectorial, then $\omega_\alpha(A) = \tilde\omega_\alpha(A)$.
\end{proposition}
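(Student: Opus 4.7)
The plan is to exploit the identity
\begin{equation*}
\tfrac{d}{dz}\bigl[zR(z,A)\bigr] = -AR(z,A)^2,
\end{equation*}
which follows from $\tfrac{d}{dz}R(z,A) = -R(z,A)^2$ combined with $zR(z,A) = I + AR(z,A)$. Integrating this identity along a circular arc of radius $|\lambda|$ will let me express $\lambda R(\lambda,A)$ in terms of the two $\alpha$-bounded families provided by the hypotheses.

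First I would fix $\sigma$ with $\tilde\omega_\alpha(A) < \sigma < \pi$. For $\lambda = |\lambda|\ee^{i\theta} \in \C \setminus \overline{\Sigma}_\sigma$, I parametrize $z = |\lambda|\ee^{i\phi}$ and note $\ddn z = iz\,\ddn\phi$, which yields
\begin{equation*}
\lambda R(\lambda,A) = -|\lambda| R(-|\lambda|,A) - i\int_{\pi\,\sgn\theta}^{\theta} zAR(z,A)^2\,\ddn\phi,
\end{equation*}
where the arc lies entirely in $\C\setminus \overline{\Sigma}_\sigma$ and has length at most $\pi-\sigma$. The first summand belongs to the $\alpha$-bounded set $\{tR(-t,A) : t > 0\}$ by hypothesis.

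For the integral term, I would apply Corollary \ref{corollary:L1mean}\emph{(i)}. Writing the integral as $\int_{-\pi}^{\pi} \ind_I(\phi)\cdot zAR(z,A)^2\,\ddn\phi$ with $\|\ind_I\|_{L^1} \leq \pi - \sigma$, the integrand takes values in the $\alpha$-bounded set $\Gamma := \{zAR(z,A)^2 : z \in \C \setminus \overline{\Sigma}_\sigma\}$ given by almost $\alpha$-sectoriality. The $\lambda$-dependence only affects the $L^1$-coefficient and the endpoint, not the ambient set $\Gamma$ — this is the small subtlety to check carefully. Corollary \ref{corollary:L1mean}\emph{(i)} then says the integrals form an $\alpha$-bounded family with bound at most $(\pi-\sigma)\nrm{\Gamma}_\alpha$. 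Combined with Proposition \ref{proposition:alphaproperties}\emph{(iii)} this gives $\alpha$-boundedness of $\{\lambda R(\lambda,A) : \lambda \in \C\setminus\overline{\Sigma}_\sigma\}$, so $\omega_\alpha(A) \leq \sigma$. Letting $\sigma \downarrow \tilde\omega_\alpha(A)$ yields $\omega_\alpha(A) \leq \tilde\omega_\alpha(A)$.

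For the reverse inequality I would use $\lambda AR(\lambda,A)^2 = (\lambda R(\lambda,A))^2 - \lambda R(\lambda,A)$ together with Proposition \ref{proposition:alphaproperties}\emph{(i)} and \emph{(iii)}: any $\sigma > \omega_\alpha(A)$ gives $\alpha$-boundedness of both terms on the right, hence of the family defining almost $\alpha$-sectoriality at angle $\sigma$. Finally, the ``in particular'' clause is immediate: if $A$ is $\alpha$-sectorial at angle $\sigma$, then $\{tR(-t,A):t>0\}$ is a subfamily of $\{\lambda R(\lambda,A):\lambda \notin \overline{\Sigma}_\sigma\}$ and $A$ is almost $\alpha$-sectorial with $\tilde\omega_\alpha(A)\leq \omega_\alpha(A)$, so the first half of the proposition applies. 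The main point requiring care is the application of the $L^1$-mean corollary to a family whose parametrization varies with $\lambda$; everything else is routine.
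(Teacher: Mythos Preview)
Your proof is correct and follows essentially the same approach as the paper: both integrate the identity $\tfrac{d}{dz}[zR(z,A)]=-AR(z,A)^2$ along a circular arc from $-|\lambda|$ to $\lambda$, then invoke Corollary~\ref{corollary:L1mean} and Proposition~\ref{proposition:alphaproperties} to combine the two $\alpha$-bounded pieces. The paper treats the reverse inequality $\tilde\omega_\alpha(A)\le\omega_\alpha(A)$ as trivial (it was noted just before the proposition via Proposition~\ref{proposition:alphaproperties}), whereas you spell it out via $\lambda AR(\lambda,A)^2=(\lambda R(\lambda,A))^2-\lambda R(\lambda,A)$, which is exactly the argument behind that remark.
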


\begin{proof}
  Take $\tilde\omega_\alpha(A)<\sigma<\pi$ and take $\lambda = t\ee^{i\theta}$ for some $t>0$ and $\sigma \leq \abs{\theta}<\pi$. Suppose that $\sigma\leq\theta<\pi$, then we have
  \begin{equation*}
    \lambda R(\lambda,A) + tR(-t,A) = i \int_{\theta}^\pi t\ee^{is}AR(t\ee^{is},A)^2\dd s.
  \end{equation*}
  A similar formula holds if $\sigma\leq -\theta<\pi$. Now since $\cbrace{tR(-t,A):t>0}$ is $\alpha$-bounded and $\sigma > \tilde\omega_\alpha(A)$ we know by Proposition \ref{proposition:alphaproperties} and Corollary \ref{corollary:L1mean} that
  \begin{equation*}
    \cbraces{\int_{\theta}^\pi t\ee^{is}AR(t\ee^{is},A)^2\dd s: \sigma\leq \abs{\theta} <\pi}
  \end{equation*}
  is $\alpha$-bounded. Therefore $\cbrace{\lambda R(\lambda,A): \abs{\arg (\lambda)} \geq \sigma}$ is $\alpha$-bounded, which means that $\omega_\alpha(A) \leq \sigma$. Combined with the trivial estimate $\tilde\omega_\alpha(A) \leq \omega_\alpha(A)$, the proposition follows.
\end{proof}

We can characterize almost $\alpha$-sectoriality nicely using the Dunford calculus of $A$, for which we will need the following consequence of the maximum modulus principle.
\begin{lemma}\label{lemma:maximummodulus}
  Let $0<\sigma<\pi$ and let $\Sigma$ be an open sector in $\C$ bounded by $\Gamma_\sigma$. Suppose that $f:\Sigma\cup\Gamma_\sigma \to \mc{L}(X)$ is bounded, continuous, and holomorphic on $\Sigma$. If $\cbrace{f(z):z \in \Gamma_\sigma}$ is $\alpha$-bounded, then $\cbrace{f(z):z \in \Sigma}$ is $\alpha$-bounded.
\end{lemma}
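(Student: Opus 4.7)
My plan is to combine $\alpha$-duality with a Phragm\'en--Lindel\"of argument so that the $\alpha$-boundedness hypothesis on the boundary $\Gamma_\sigma$ propagates into all of $\Sigma$. Concretely, fix $z_1, \ldots, z_n \in \Sigma$, $\mb{x} \in X^n$, and $\mb{x}^* \in (X^*)^n$ with $\nrm{\mb{x}^*}_{\alpha^*} \leq 1$, and introduce the scalar function
\begin{equation*}
G(w_1, \ldots, w_n) := \sum_{k=1}^n \langle f(w_k) x_k, x_k^* \rangle
\end{equation*}
on $(\Sigma \cup \Gamma_\sigma)^n$. Since $f$ is bounded, continuous on $\Sigma \cup \Gamma_\sigma$, and holomorphic on $\Sigma$, the function $G$ is bounded, continuous on $(\Sigma \cup \Gamma_\sigma)^n$, and separately holomorphic in each coordinate on $\Sigma$.

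Next I would apply the Phragm\'en--Lindel\"of principle one coordinate at a time. Since $\Sigma$ is an open sector bounded by $\Gamma_\sigma$ with opening angle strictly less than $2\pi$ and topological boundary $\Gamma_\sigma$, for any fixed $z_2, \ldots, z_n \in \Sigma$ the map $w \mapsto G(w, z_2, \ldots, z_n)$ is a bounded holomorphic function on $\Sigma$ that is continuous on $\Sigma \cup \Gamma_\sigma$, hence
\begin{equation*}
|G(z_1, \ldots, z_n)| \leq \sup_{w_1 \in \Gamma_\sigma} |G(w_1, z_2, \ldots, z_n)|.
\end{equation*}
Iterating this bound through the remaining coordinates gives
\begin{equation*}
|G(z_1, \ldots, z_n)| \leq \sup_{w_1, \ldots, w_n \in \Gamma_\sigma} |G(w_1, \ldots, w_n)|.
\end{equation*}
For any $w_1, \ldots, w_n \in \Gamma_\sigma$, the $\alpha$-boundedness on the boundary together with the pairing between $\alpha$ and $\alpha^*$ yields
\begin{equation*}
|G(w_1, \ldots, w_n)| \leq \nrm{(f(w_1) x_1, \ldots, f(w_n) x_n)}_\alpha \, \nrm{\mb{x}^*}_{\alpha^*} \leq \nrmb{\cbrace{f(w) : w \in \Gamma_\sigma}}_\alpha \, \nrm{\mb{x}}_\alpha.
\end{equation*}
Taking the supremum over $\mb{x}^*$ and invoking the duality identity $\nrm{\mb{y}}_\alpha = \sup_{\nrm{\mb{x}^*}_{\alpha^*} \leq 1} |\sum_k \langle y_k, x_k^* \rangle|$ (which follows from the canonical isometric identification of $\alpha$ with $\alpha^{**}$ on $X$) then delivers
\begin{equation*}
\nrm{(f(z_1) x_1, \ldots, f(z_n) x_n)}_\alpha \leq \nrmb{\cbrace{f(w) : w \in \Gamma_\sigma}}_\alpha \, \nrm{\mb{x}}_\alpha,
\end{equation*}
which is exactly the $\alpha$-boundedness of $\cbrace{f(z) : z \in \Sigma}$, with the same bound as on the boundary.

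The main technical point is the Phragm\'en--Lindel\"of step on the possibly large sector $\Sigma$: although $\Sigma$ may have opening angle arbitrarily close to $2\pi$, the boundedness of $f$, and hence of $G$ in each variable, is precisely the growth hypothesis under which the classical Phragm\'en--Lindel\"of principle applies on a proper sector, so no extra decay estimate is needed. The only other mildly delicate issue is the $\alpha$-duality identity used in the final step, but this is essentially the isometric identification $X \hookrightarrow X^{**}$ with respect to $\alpha$ and $\alpha^{**}$ that is built into the definition of the dual Euclidean structure.
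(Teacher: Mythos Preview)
Your proof is correct and follows essentially the same coordinate-by-coordinate iteration as the paper. The only difference is implementation: the paper applies the maximum modulus / Phragm\'en--Lindel\"of principle directly to the subharmonic function
\[
z \longmapsto \nrm{(x_1,\ldots,f(z)x_k,\ldots,x_n)}_{\alpha},
\]
which is the $\alpha$-norm of an $X^n$-valued holomorphic map, whereas you first pair with $\mb{x}^* \in (X^*)^n$ to reduce to a scalar holomorphic function and then recover the $\alpha$-norm via the duality $\alpha = \alpha^{**}|_X$. Both routes yield the same bound $\nrm{\{f(z):z\in\Sigma\}}_\alpha \le \nrm{\{f(z):z\in\Gamma_\sigma\}}_\alpha$, and the paper's version is marginally shorter since it avoids the duality step.
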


\begin{proof}
Suppose that $\mb{x} \in X^n$ and $z \in \Sigma$, then by the maximum modulus principle we have
\begin{equation*}
  \nrm{(x_1,\ldots,f(z)x_k,\ldots,x_n)}_{\alpha} \leq \sup_{w \in \Gamma_\sigma}\nrm{(x_1,\ldots,f(w)x_k,\ldots,x_n)}_\alpha .
\end{equation*}
By iteration we have for $z_1,\ldots,z_n \in \Sigma$ that
\begin{equation*}
  \nrm{(f(z_1)x_1,\ldots,f(z_n)x_n)}_{\alpha} \leq \sup_{w_1,\ldots,w_n \in \Gamma_\sigma}\nrm{(f(w_1)x_1,\ldots,f(w_n)x_n)}_\alpha,
\end{equation*}
which proves the lemma.
\end{proof}
\begin{proposition}\label{proposition:almostsectorialcharacterization}
  Let $A$ be a sectorial operator on $X$ and take $\omega(A)<\sigma<\pi$. The following conditions are equivalent:
  \begin{enumerate}[(i)]
    \item \label{it:almostsectorialcharacterization1} $A$ is almost $\alpha$-sectorial with $\tilde{\omega}_\alpha(A)<\sigma$.
    \item \label{it:almostsectorialcharacterization2a} There is a $0<\sigma'<\sigma$ such that for some (all) $0<s<1$ the set $$\cbraceb{\lambda^{s}A^{1-s}R(\lambda,A): \lambda \in \C \setminus \overline{\Sigma}_{\sigma'}}$$ is $\alpha$-bounded.
    \item \label{it:almostsectorialcharacterization3a} There is a $0<\sigma'<\sigma$ such that the set
\begin{equation*}
  \cbraceb{f(tA):t>0, \,f \in H^1(\Sigma_{\sigma'}), \,\nrm{f}_{H^1(\Sigma_{\sigma'})}\leq 1}
\end{equation*}
is $\alpha$-bounded.
    \item \label{it:almostsectorialcharacterization3b} There is a $0<\sigma'<\sigma$ such that for all $f \in H^1(\Sigma_{\sigma'})$ the set $$\cbrace{f(tA):t>0}$$ is $\alpha$-bounded.
  \end{enumerate}
\end{proposition}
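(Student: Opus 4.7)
I plan to establish the cycle \ref{it:almostsectorialcharacterization3a}$\Leftrightarrow$\ref{it:almostsectorialcharacterization3b}, and then \ref{it:almostsectorialcharacterization3a}$\Rightarrow$\ref{it:almostsectorialcharacterization1}, \ref{it:almostsectorialcharacterization3a}$\Rightarrow$\ref{it:almostsectorialcharacterization2a}, \ref{it:almostsectorialcharacterization2a}$\Rightarrow$\ref{it:almostsectorialcharacterization3a}, and finally \ref{it:almostsectorialcharacterization1}$\Rightarrow$\ref{it:almostsectorialcharacterization3a}, which together deliver the equivalence of all four conditions and, as a byproduct, the ``some$\Leftrightarrow$all'' assertion in~\ref{it:almostsectorialcharacterization2a}. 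Throughout I would fix auxiliary angles $\omega(A)<\sigma'<\nu<\sigma''<\sigma$ and invoke Lemma~\ref{lemma:maximummodulus} to extend $\alpha$-boundedness from a ray of the form $\arg\lambda=\pm\sigma''$ to all of $\C\setminus\overline{\Sigma}_{\sigma''}$.

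The passage \ref{it:almostsectorialcharacterization3a}$\Leftrightarrow$\ref{it:almostsectorialcharacterization3b} reduces to a Banach--Steinhaus argument: for each $n\in\N$, $t_1,\ldots,t_n>0$ and $\mb{x}\in X^n$ with $\nrm{\mb{x}}_\alpha\leq 1$, the map $f\mapsto(f(t_1A)x_1,\ldots,f(t_nA)x_n)$ is a bounded linear operator $H^1(\Sigma_{\sigma'})\to(X^n,\nrm{\cdot}_\alpha)$, and hypothesis~\ref{it:almostsectorialcharacterization3b} supplies the pointwise-in-$f$ bound on this family. The implications out of~\ref{it:almostsectorialcharacterization3a} rest on the scalar identities
\begin{equation*}
  \lambda AR(\lambda,A)^2=f_\theta(tA),\qquad \lambda^s A^{1-s}R(\lambda,A)=f_{\theta,s}(tA),
\end{equation*}
valid for $\lambda=e^{i\theta}/t$ with $f_\theta(w)=e^{i\theta}w/(e^{i\theta}-w)^2$ and $f_{\theta,s}(w)=e^{is\theta}w^{1-s}/(e^{i\theta}-w)$. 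Both functions lie in $H^1(\Sigma_{\sigma'})$ with $H^1$-norm bounded uniformly in $|\theta|\geq\sigma''$, so~\ref{it:almostsectorialcharacterization3a} yields the $\alpha$-bound on the rays $\arg\lambda=\pm\sigma''$, after which Lemma~\ref{lemma:maximummodulus} completes the extension.

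For \ref{it:almostsectorialcharacterization2a}$\Rightarrow$\ref{it:almostsectorialcharacterization3a} I start from the representation~\eqref{eq:representationHinfty}; the substitution $z=tw$ turns it into
\begin{equation*}
  f(tA)=\frac{1}{2\pi i}\int_{\Gamma_\nu}\frac{f(tw)}{w}\bigl[w^s A^{1-s}R(w,A)\bigr]\dd w,\qquad \sigma'<\nu<\sigma''.
\end{equation*}
The scale invariance of $|\dd w|/|w|$ gives $\int_{\Gamma_\nu}|f(tw)/w|\,|\dd w|\leq 2\nrm{f}_{H^1(\Sigma_\nu)}$ uniformly in $t>0$, and the bracketed family is $\alpha$-bounded by~\ref{it:almostsectorialcharacterization2a}, so Corollary~\ref{corollary:L1mean} produces the $\alpha$-bound for $\{f(tA):t>0\}$ uniformly over $\nrm{f}_{H^1(\Sigma_{\sigma''})}\leq 1$.

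The main obstacle is \ref{it:almostsectorialcharacterization1}$\Rightarrow$\ref{it:almostsectorialcharacterization3a}, where the full $H^1$-calculus must be rebuilt from the derivative-like data $\{zAR(z,A)^2\}$ alone. My plan is to integrate by parts in the Dunford integral, using $AR(z,A)^2=-\tfrac{\dd}{\dd z}AR(z,A)$ together with $R(z,A)=z^{-1}(I+AR(z,A))$, to obtain on a dense subclass of $H^1(\Sigma_{\sigma''})$ the representation
\begin{equation*}
  f(tA)=\frac{1}{2\pi i}\int_{\Gamma_\nu}\frac{F(tw)}{w}\bigl[wAR(w,A)^2\bigr]\dd w
\end{equation*}
with $F$ a primitive of $\zeta\mapsto f(\zeta)/\zeta$. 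The delicate point is that a generic primitive is only uniformly bounded, hence not integrable against $|\dd w|/|w|$ on $\Gamma_\nu$; I would arrange $F(0)=F(\infty)=0$ along both rays by subtracting off an explicit multiple $k(f)\varphi$ of the reference function $\varphi(w)=w/(1+w)^2$, exploiting that $\{\varphi(tA):t>0\}$ is \emph{a priori} $\alpha$-bounded by~\ref{it:almostsectorialcharacterization1} and that $k$ depends boundedly on $f\in H^1(\Sigma_{\sigma''})$. Once the corrected primitive vanishes at both endpoints, scale invariance delivers the bound $\nrm{F/w}_{L^1(\Gamma_\nu)}\lesssim\nrm{f}_{H^1(\Sigma_{\sigma''})}$, Corollary~\ref{corollary:L1mean} applies, and a standard density argument in $H^1(\Sigma_{\sigma''})$ extends the conclusion to all $f$.
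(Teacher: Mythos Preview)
Your outline is correct everywhere except the final implication \ref{it:almostsectorialcharacterization1}$\Rightarrow$\ref{it:almostsectorialcharacterization3a}, where the key estimate fails. The Banach--Steinhaus argument for \ref{it:almostsectorialcharacterization3b}$\Rightarrow$\ref{it:almostsectorialcharacterization3a} is a nice alternative to the paper's cyclic route, and your \ref{it:almostsectorialcharacterization3a}$\Rightarrow$\ref{it:almostsectorialcharacterization2a} is more direct than the paper's (the paper goes \ref{it:almostsectorialcharacterization1}$\Rightarrow$\ref{it:almostsectorialcharacterization2a}$\Rightarrow$\ref{it:almostsectorialcharacterization3a}$\Rightarrow$\ref{it:almostsectorialcharacterization3b}$\Rightarrow$\ref{it:almostsectorialcharacterization1}).

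The gap is the claim that once the corrected primitive $G$ vanishes at $0$ and $\infty$, ``scale invariance delivers the bound $\nrm{G/w}_{L^1(\Gamma_\nu)}\lesssim\nrm{f}_{H^1(\Sigma_{\sigma''})}$''. Scale invariance only tells you that $\int_{\Gamma_\nu}\abs{G(tw)}\,\abs{\ddn w}/\abs{w}$ is independent of $t$; it does not control this quantity by $\nrm{f}_{H^1}$. That would be a Hardy inequality at the endpoint $p=1$, and it is false even for holomorphic functions on a larger sector. Take $f(z)=\bigl((\log z)^2+a^2\bigr)^{-1}$ with $a>\sigma''$: this lies in $H^1(\Sigma_{\sigma''})$, its primitive is $F(z)=a^{-1}\bigl(\arctan(\log z/a)+\pi/2\bigr)$, and after subtracting the multiple of $\varphi$ the corrected primitive still behaves like $-1/\log z$ near $z=0$, so $\int_0^{1/2}\abs{G(t)}\,\ddn t/t=\infty$. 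Thus no density argument can rescue a uniform bound.

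The paper avoids this by \emph{not} proving \ref{it:almostsectorialcharacterization1}$\Rightarrow$\ref{it:almostsectorialcharacterization3a} directly. It routes through \ref{it:almostsectorialcharacterization2a}: the integration-by-parts/primitive construction is only applied to the \emph{specific} functions $f(z)=(\ee^{-i\theta}z)^{1-s}(1-\ee^{-i\theta}z)^{-1}$, which enjoy the pointwise bound $\abs{f(z)}\lesssim\abs{z}^{1-s}(1+\abs{z})^{-1}$; for such $f$ the corrected primitive has genuine polynomial decay at both ends and does land in $H^1(\Sigma_\mu)$. This yields \ref{it:almostsectorialcharacterization1}$\Rightarrow$\ref{it:almostsectorialcharacterization2a}, and then your own \ref{it:almostsectorialcharacterization2a}$\Rightarrow$\ref{it:almostsectorialcharacterization3a} closes the cycle. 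In other words, your primitive idea is exactly right---it just has to be run on the polynomially decaying test functions $f_{\theta,s}$ (which you already identified for \ref{it:almostsectorialcharacterization3a}$\Rightarrow$\ref{it:almostsectorialcharacterization2a}) rather than on all of $H^1$.
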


\begin{proof}
  We start by proving the implication \ref{it:almostsectorialcharacterization1} $\Rightarrow$ \ref{it:almostsectorialcharacterization2a}. Fix $\tilde{\omega}_\alpha(A) <\mu<\sigma$ and $0<s<1$. For $\mu<\abs{\theta}<\sigma$ define
    \begin{align*}
    f(z) &:= (\ee^{-i\theta}z)^{1-s} (1-\ee^{-i\theta}z)^{-1}, &&z \in \Sigma_\mu
  \intertext{and set}
  F(z) &:= \int_{0}^{\abs{z}}\frac{f(t \ee^{i\arg z})}{t \ee^{i\arg z}}\dd t, &&z \in \Sigma_\mu.
   \intertext{Let $c := \int_0^\infty \frac{f(t)}{t} \dd t$ and define}
    G(z) &:= F(z) - c \frac{z}{1+z}, &&z \in \Sigma_\mu.
  \end{align*}
Since there is a $C>0$ such that
  \begin{equation*}
    \abs{f(z)} \leq C\,  \abs{z}^{1-s}\ha{1+\abs{z}}^{-1}, \qquad z \in \Sigma_\mu,
  \end{equation*}
one can show that $G \in H^1(\Sigma_\mu)$. Clearly $G'(z) = f(z)/z - c (1+z)^{-2}$, from which we can see that $zG'(z) \in H^1(\Sigma_\mu)$ as well. Since we have for $\tilde{\omega}_\alpha(A) <\nu<\mu$
  \begin{equation*}
    G(tA) = \frac{1}{2\pi i}\int_{\Gamma_\nu}G(z)R(z,tA) \dd z, \qquad t>0
  \end{equation*}
  as a Bochner integral,  we may differentiate under the integral sign by the dominated convergence theorem and obtain for $t>0$
  \begin{align*}
    tAG'(tA) &= \frac{1}{2\pi i}\int_{\Gamma_\nu}tz G'(tz)R(z,A) {\ddn z}\\
    &= t\frac{\ddn}{\ddn t} G(tA)\\
    &= \frac{1}{2\pi i}\int_{\Gamma_\nu}G(z)z tAR(z,tA)^2 \frac{\ddn z}{z}.
  \end{align*}
  Since $G \in H^1(\Sigma_\mu)$ and $tA$ is almost $\alpha$-sectorial, it follows from Corollary \ref{corollary:L1mean} that the set
  \begin{align*}
    \cbraceb{(t\ee^{i\theta})^sA^{1-s} R(t\ee^{i\theta},A):t>0} &= \cbrace{f(tA):t>0}\\ &= \cbraceb{tAG'(tA)+ctA(1+tA)^{-2}}
  \end{align*}
  is $\alpha$-bounded. Therefore by Lemma \ref{proposition:alphaproperties}\ref{it:propalha3} and Lemma \ref{lemma:maximummodulus} we deduce that $$\cbrace{\lambda^s A^{1-s}R(\lambda, A):\lambda \in \C \setminus \overline{\Sigma}_{\abs{\theta}}}$$ is $\alpha$-bounded.

  Next we show that \ref{it:almostsectorialcharacterization2a} $\Rightarrow$ \ref{it:almostsectorialcharacterization3a}. Fix $\sigma'<\nu<\sigma''<\sigma$. By \eqref{eq:representationHinfty} we have the following representation for  $f \in H^1(\Sigma_{\sigma''})$
  \begin{equation*}
    f(tA) = \frac{1}{2\pi i}\int_{\Gamma_\nu}f(tz)z^sA^{1-s}R(z,A)\frac{\dd z}{z}, \qquad t>0.
  \end{equation*}
  Since $f(t\cdot) \in H^1(\Sigma_{\sigma''})$ independent of $t>0$, it follows by Corollary \ref{corollary:L1mean} that \begin{equation*}
  \cbraceb{f(tA):t>0, f \in H^1(\Sigma_{\sigma''}), \nrm{f}_{H^1(\Sigma_{\sigma''})}\leq 1}.
\end{equation*}
 is $\alpha$-bounded. The implication \ref{it:almostsectorialcharacterization3a} $\Rightarrow$ \ref{it:almostsectorialcharacterization3b} is trivial.

For \ref{it:almostsectorialcharacterization3b} $\Rightarrow$ \ref{it:almostsectorialcharacterization1} take $f(z) = \ee^{-i\theta} z(1-\ee^{-i\theta} z)^{-2}$ with $\sigma'<\abs{\theta}<\sigma$. Then $f \in H^1(\Sigma_{\sigma'})$, so the set
  \begin{equation*}
    \cbrace{t\ee^{i\theta}AR(t\ee^{i\theta},A)^2:t>0}
  \end{equation*}
  is $\alpha$-bounded. Therefore by Lemma \ref{proposition:alphaproperties}\ref{it:propalha3} and Lemma \ref{lemma:maximummodulus} we deduce that $$\cbrace{\lambda A(1+\lambda A)^{-2}:\lambda \in \C \setminus \overline{\Sigma}_{\abs{\theta}}}$$ is $\alpha$-bounded and thus $\tilde{\omega}_\alpha(A)\leq\abs{\theta} <\sigma$.
\end{proof}

When $\omega(A)<\frac{\pi}{2}$, the sectorial operator $A$ generates an analytic semigroup. In the next proposition we connect the (almost) $\alpha$-sectoriality of $A$ to $\alpha$-boundedness of the associated semigroup.

\begin{proposition}
  Let $A$ be a sectorial operator on $X$ with $\omega(A) <\pi/2$ and take $\omega(A)<\sigma<\pi/2$. Then
  \begin{enumerate}[(i)]
    \item \label{it:semigroupalpha} $A$ is $\alpha$-sectorial with $\omega_{\alpha}(A)\leq \sigma$ if and only if $$\cbrace{\ee^{-zA}:z \in \Sigma_{\nu}}$$ is $\alpha$-bounded for all $0<\nu<\pi/2-\sigma$.
    \item \label{it:semigroupalmostalpha} $A$ is almost $\alpha$-sectorial with $\tilde{\omega}_{\alpha}(A)\leq\sigma$ if and only if $$\cbrace{zA\ee^{-zA}:z \in \Sigma_{\nu}}$$ is $\alpha$-bounded for all $0<\nu<\pi/2-\sigma$.
  \end{enumerate}
\end{proposition}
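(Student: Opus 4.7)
The plan is to prove both biconditionals by exchanging integral representations that convert resolvent families and semigroup families into one another, and then invoking Corollary~\ref{corollary:L1mean} together with Lemma~\ref{lemma:maximummodulus}. Part~(ii) will be essentially a consequence of Proposition~\ref{proposition:almostsectorialcharacterization}, whereas part~(i) requires a more delicate contour integral representation of $\ee^{-zA}$ with a $z$-dependent indentation.

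For the ``only if'' direction of (ii), given $z \in \Sigma_{\nu}$ with $0<\nu<\pi/2-\sigma$, set $g_{z}(w) := zw\,\ee^{-zw}$. For any $\sigma'\in(\sigma,\pi/2-\nu)$ one computes directly that $\nrm{g_{z}}_{H^{1}(\Sigma_{\sigma'})} \le \frac{1}{\cos(\sigma'+\nu)}$ uniformly in $z$, so $\{g_{z}\colon z\in\Sigma_{\nu}\}$ sits in a bounded subset of $H^{1}(\Sigma_{\sigma'})$. Since $zA\ee^{-zA}=g_{z}(A)$ and $\tilde\omega_{\alpha}(A)\le\sigma<\sigma'$, the $\alpha$-boundedness then follows from Proposition~\ref{proposition:almostsectorialcharacterization}\ref{it:almostsectorialcharacterization3a}. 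For the ``if'' direction one uses the rotated Laplace identity
\begin{equation*}
  \lambda AR(\lambda,A)^{2} = \int_{0}^{\infty}\lambda\,\ee^{\lambda t\ee^{i\theta}}\,\hab{(t\ee^{i\theta})A\,\ee^{-t\ee^{i\theta}A}}\,\ee^{i\theta}\dd t,
\end{equation*}
obtained by differentiating $R(\lambda,A)=-\int_{0}^{\infty}\ee^{\lambda t}\ee^{-tA}\dd t$ on the rotated ray $t\mapsto t\ee^{i\theta}$ for $\abs{\theta}<\pi/2-\sigma$ and $\re(\lambda\ee^{i\theta})<0$. The $L^{1}$-norm of the scalar kernel equals $1/\abs{\cos(\arg\lambda+\theta)}$, so Corollary~\ref{corollary:L1mean} applied to the $\alpha$-bounded family $\{zA\ee^{-zA}\colon z\in\Sigma_{\abs{\theta}}\}$ yields $\alpha$-boundedness of $\lambda AR(\lambda,A)^{2}$ on the corresponding sub-sector. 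Varying $\theta$ in $(-\nu_0,\nu_0)$ for some $\nu_0<\pi/2-\sigma$, combining finitely many sub-sectors via Proposition~\ref{proposition:alphaproperties}\ref{it:propalha3}, and invoking Lemma~\ref{lemma:maximummodulus} to fill in the interiors, the complement $\C\setminus\overline{\Sigma}_{\sigma'}$ will be covered for every $\sigma'>\sigma$.

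For the ``only if'' direction of (i), the function $w\mapsto\ee^{-zw}$ is not in $H^{1}$, so we instead begin from the representation
\begin{equation*}
  \ee^{-zA} = \frac{1}{2\pi i}\int_{\Gamma_{\mu,1/\abs{z}}}\ee^{-zw}R(w,A)\dd w,
\end{equation*}
where $\omega_{\alpha}(A)<\mu<\pi/2-\nu$ and $\Gamma_{\mu,\delta}$ denotes the contour obtained from $\Gamma_{\mu}$ by indenting around the origin along the arc of radius $\delta$. The substitution $u=zw$, together with the identity $R(u/z,A)=zR(u,zA)$ and a contour deformation (permissible since $\omega(zA)\le\sigma+\nu<\mu$), will recast this as
\begin{equation*}
  \ee^{-zA} = \frac{1}{2\pi i}\int_{\Gamma_{\mu,1}}\frac{\ee^{-u}}{u}\,\hab{uR(u,zA)}\dd u.
\end{equation*}
Because $\mu<\pi/2$ and the contour is indented at radius one, the scalar weight $\ee^{-u}/u$ lies in $L^{1}(\Gamma_{\mu,1})$; since $uR(u,zA)=(u/z)R(u/z,A)$ and $\abs{\arg(u/z)}\ge\mu-\nu>\sigma\ge\omega_{\alpha}(A)$, the family $\{uR(u,zA)\colon u\in\Gamma_{\mu,1},\,z\in\Sigma_{\nu}\}$ will sit inside a single $\alpha$-bounded family whose constant is independent of $z$. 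Corollary~\ref{corollary:L1mean} will then deliver the desired $\alpha$-bound. The ``if'' direction of (i) is entirely analogous to that of (ii), starting instead from the rotated Laplace transform $\lambda R(\lambda,A)=-\int_{0}^{\infty}\lambda\,\ee^{\lambda t\ee^{i\theta}}\ee^{-t\ee^{i\theta}A}\ee^{i\theta}\dd t$ and applying Corollary~\ref{corollary:L1mean} together with Lemma~\ref{lemma:maximummodulus} as before.

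The main obstacle will be the uniformity step in the ``only if'' direction of (i): the naive contour integral $\int_{\Gamma_{\mu}}\ee^{-zw}R(w,A)\dd w$ diverges logarithmically at the origin, which forces the $z$-dependent indentation at radius $1/\abs{z}$; one then has to absorb this $z$-dependence into the substitution $u=zw$ and verify that the resulting $\alpha$-boundedness constant of $\{uR(u,zA)\colon u\in\Gamma_{\mu,1},\,z\in\Sigma_{\nu}\}$ is genuinely independent of $z$, which is precisely where the constraint $\nu<\pi/2-\sigma$ enters.
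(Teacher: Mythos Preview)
Your arguments for the ``if'' directions of both parts and the ``only if'' direction of (ii) are correct and match the paper's approach (Laplace transforms plus Corollary~\ref{corollary:L1mean} and Lemma~\ref{lemma:maximummodulus}; the $H^1$-bound on $g_z(w)=zw\ee^{-zw}$ feeding into Proposition~\ref{proposition:almostsectorialcharacterization}).

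For the ``only if'' direction of (i) you take a genuinely different route from the paper, and there is a gap. You state the range $\omega_\alpha(A)<\mu<\pi/2-\nu$, but then invoke the deformation ``since $\omega(zA)\le\sigma+\nu<\mu$'' and later use $\mu-\nu>\sigma$. The inequality $\sigma+\nu<\mu$ does not follow from your stated range, and if you impose it as an additional constraint then $\sigma+\nu<\mu<\pi/2-\nu$ forces $\nu<(\pi/2-\sigma)/2$, which is only half the range of $\nu$ you need. The deformation is the culprit: after rotating the contour by $\arg z$ and then deforming back to $\Gamma_{\mu,1}$, the points $u/z$ with $u\in\Gamma_{\mu,1}$ only satisfy $\abs{\arg(u/z)}\ge\mu-\nu$, so you lose an angle $\nu$ in the $\alpha$-sectoriality estimate.

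There are two clean fixes. First, you can simply skip the deformation: after the substitution $u=zw$ the contour is $\ee^{i\arg z}\Gamma_{\mu,1}$, on which $u/z$ lies exactly on $\Gamma_{\mu,1/\abs{z}}$, so $\abs{\arg(u/z)}\ge\mu$ and the original range $\omega_\alpha(A)<\mu<\pi/2-\nu$ suffices. The scalar kernel $\ee^{-u}/u$ then has $L^1$-norm on the rotated contour bounded by a constant depending only on $\cos(\mu+\nu)$, uniformly in $z\in\Sigma_\nu$; parametrising over a fixed copy of $\Gamma_{\mu,1}$ lets Corollary~\ref{corollary:L1mean} apply directly. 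Second---and this is what the paper does---one can avoid contour integrals entirely by writing
\[
  \ee^{-zA} = (I+zA)^{-1} + f_z(A), \qquad f_z(w)=\ee^{-zw}-(1+zw)^{-1}.
\]
The first term equals $-z^{-1}R(-z^{-1},A)$ with $\abs{\arg(-z^{-1})}>\pi-\nu>\sigma$, so it lies in the $\alpha$-bounded resolvent family; and $f_z\in H^1(\Sigma_{\sigma'})$ uniformly for $z$ on $\partial\Sigma_\nu$ once $\sigma'<\pi/2-\nu$, so Proposition~\ref{proposition:almostsectorialcharacterization} handles the second term on the boundary rays, and Lemma~\ref{lemma:maximummodulus} fills in the interior. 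This decomposition sidesteps the indentation bookkeeping altogether.
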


\begin{proof}
  For the `if' statement of \ref{it:semigroupalpha} take $\sigma<\nu'<\nu<\pi/2$. The $\alpha$-boundedness of $\cbrace{t\ee^{\pm i \nu}R(t\ee^{\pm i \nu},A):t>0}$ follows from the Laplace transform representation  of $R(t\ee^{\pm i \nu},A)$ in terms of the semigroups generated by $-\ee^{\pm i (\pi/2-\nu')}A$ (see \cite[Proposition G.4.1]{HNVW17}) and Corollary \ref{corollary:L1mean}. The $\alpha$-boundedness of $$\cbrace{\lambda R(\lambda ,A):\lambda \in \C\setminus \overline{\Sigma}_\nu}$$ then follows from Lemma \ref{lemma:maximummodulus}.

  For the only if take $0<\nu<\pi/2-\sigma$ and note that by \cite[Proposition 10.2.7]{HNVW17}
  \begin{equation*}
    \ee^{-zA} = z^{-1}R(z^{-1},A)+f_z(A), \qquad z \in \Sigma_{\nu},
  \end{equation*}
  where $f_z(w)= \ee^{-zw}-(1+zw)^{-1}$. Since $f_z \in H^1(\Sigma_\sigma)$, the $\alpha$-boundedness of $\ee^{-zA}$ on the boundary of $\Sigma_\nu$ follows from Proposition \ref{proposition:almostsectorialcharacterization} and the $\alpha$-boundedness in the interior of $\Sigma_\nu$ then follows from Lemma \ref{lemma:maximummodulus}.

  The proof of \ref{it:semigroupalmostalpha} is similar.  For the `if' statement one  uses an appropriate Laplace transform representation  of $R(t\ee^{\pm i \nu},A)^2$ and the `only if' statement is simpler as $zwe^{-zw}$ is an $H^1$-function.
\end{proof}

As noted in Section \ref{section:dunford}, the operator $A^s$ is sectorial as long as $\abs{s} \, <\frac{\pi}{\omega(A)}$ and in this case $\omega(A^s) = \abs{s} \,\omega(A)$.
We end this section with a similar result for (almost) $\alpha$-sectoriality.
\begin{proposition}
  Let $A$ be a sectorial operator on $X$
  \begin{enumerate}[(i)]
  \item \label{it:almostalphapower} Suppose that $A$ is almost $\alpha$-sectorial and $0<\abs{s}<\pi /\tilde{\omega}_{\alpha}(A)$. Then $A^s$ is almost $\alpha$-sectorial with $\tilde{\omega}_{\alpha}(A^s)=\abs{s}\,{\tilde{\omega}_{\alpha}}(A)$.
  \item \label{it:alphapower} Suppose that $A$ is $\alpha$-sectorial and $0<\abs{s}<\pi /{\omega_{\alpha}}(A)$. Then $A^s$ is  $\alpha$-sectorial with ${\omega}_{\alpha}(A^s)=\abs{s}\,{{\omega_{\alpha}}}(A)$.
  \end{enumerate}
\end{proposition}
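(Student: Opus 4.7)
The strategy is to reduce both parts to composition identities via the Dunford calculus, using $f(A^s) = (f\circ(\cdot)^s)(A)$. Without loss of generality I assume $s>0$, since $A^{-s}=(A^{-1})^s$ and $A^{-1}$ has the same (almost) $\alpha$-sectoriality angles as $A$ (which follows from the resolvent identity $R(\lambda,A^{-1})=\lambda^{-1}-\lambda^{-2}R(\lambda^{-1},A)$). Further, by factoring $A^s=(A^{s_1})^{s_2}$ with $s=s_1s_2$ and iterating, I may assume $0<s\leq 1$.

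For part (i), fix $\tilde{\omega}_\alpha(A)<\nu<\pi/s$ and set $\sigma':=s\nu$. Given $f\in H^1(\Sigma_{\sigma'})$ and $t>0$, define $g_t(z):=f(tz^s)$. A change of variables $u=tr^s$ in the defining integral yields
\begin{equation*}
\nrm{g_t}_{H^1(\Sigma_\nu)}=\tfrac{1}{s}\,\nrm{f}_{H^1(\Sigma_{\sigma'})},
\end{equation*}
uniformly in $t>0$, while the composition rule for the Dunford calculus gives $g_t(A)=f(tA^s)$. Hence the set $\cbraceb{f(tA^s):t>0,\,\nrm{f}_{H^1(\Sigma_{\sigma'})}\leq 1}$ is contained in $s^{-1}$ times the set $\cbraceb{h(A):h\in H^1(\Sigma_\nu),\,\nrm{h}_{H^1(\Sigma_\nu)}\leq 1}$, which is $\alpha$-bounded by Proposition~\ref{proposition:almostsectorialcharacterization}\ref{it:almostsectorialcharacterization3a} applied to $A$. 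Applying Proposition~\ref{proposition:almostsectorialcharacterization} now in the opposite direction to $A^s$ gives $\tilde{\omega}_\alpha(A^s)\leq s\nu$ for every $\nu>\tilde{\omega}_\alpha(A)$, so $\tilde{\omega}_\alpha(A^s)\leq s\tilde{\omega}_\alpha(A)$. The reverse inequality follows by applying the same argument with $A^s$ in place of $A$ and exponent $1/s$, using $(A^s)^{1/s}=A$.

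For part (ii), part (i) combined with Proposition~\ref{proposition:alphaalmoastalpha} yields that $A^s$ is almost $\alpha$-sectorial with $\tilde{\omega}_\alpha(A^s)=s\omega_\alpha(A)$. To promote this to $\alpha$-sectoriality with the same angle, I apply Proposition~\ref{proposition:alphaalmoastalpha} once more to $A^s$, thereby reducing the problem to the $\alpha$-boundedness of $\cbrace{tR(-t,A^s):t>0}$. This is achieved via the Balakrishnan subordination identity
\begin{equation*}
(t+A^s)^{-1}=\frac{\sin\pi s}{\pi}\int_0^\infty \frac{\mu^s}{\mu^{2s}+2\mu^s t\cos\pi s+t^2}\,(\mu+A)^{-1}\,\dd\mu,
\end{equation*}
valid for $0<s<1$, which expresses $tR(-t,A^s)$ as an $L^1$-integral mean of the $\alpha$-bounded family $\cbrace{\mu R(-\mu,A):\mu>0}$ against a positive kernel whose total $\dd\mu/\mu$-mass computes (via $u=\mu^s/t$) to $\frac{\sin\pi s}{\pi s}\int_0^\infty(u^2+2u\cos\pi s+1)^{-1}\,\dd u$, independent of $t>0$. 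Corollary~\ref{corollary:L1mean} then delivers the required $\alpha$-boundedness. The endpoint $s=1$ is trivial.

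The main technical obstacle is establishing the composition rule and the Balakrishnan identity at the level of the extended Dunford calculus of $A$; both are classical scalar identities whose operator-valued versions follow from standard contour arguments and absolute convergence estimates for the defining integrals. The $H^1$-change of variables in part (i) and the uniform $L^1$-bound for the Balakrishnan kernel in part (ii) are then routine calculations, and the reduction to $0<s\leq 1$ via inversion and iteration keeps all quantities within the admissible range throughout.
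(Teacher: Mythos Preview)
Your argument for part (i) is correct and essentially matches the paper's (both use the $H^1$-composition rule); note that your $H^1$-change-of-variables computation in fact works for every $s>0$ with $s\nu<\pi$, so no reduction to $s\leq 1$ is needed there.

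The gap is in part (ii). The claimed reduction ``by factoring $A^s=(A^{s_1})^{s_2}$ \ldots\ I may assume $0<s\leq 1$'' fails: a product of factors in $(0,1]$ never exceeds $1$, and the inversion $A\mapsto A^{-1}$ only changes the sign of $s$, not its size. Since the Balakrishnan identity you invoke is only valid for $0<s<1$, the range $1<s<\pi/\omega_\alpha(A)$ remains uncovered. The paper avoids this by using, for all $s>0$ at once, the algebraic identity (cf.\ \cite[Lemma 15.17]{KW04})
\[
-tR(-t,A^s) = -t^{1/s}R(-t^{1/s},A) + \psi(t^{-1/s}A), \qquad \psi(z)=\frac{z-z^s}{(1+z^s)(1-z)},
\]
where $\psi\in H^1(\Sigma_\sigma)$ for $\sigma<\pi/s$; the first term is $\alpha$-bounded by $\alpha$-sectoriality, the second by Proposition~\ref{proposition:almostsectorialcharacterization}. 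Your Balakrishnan route can be repaired by separately treating integer exponents $m$ via the factorization $t^m+A^m=\prod_{j=0}^{m-1}(A-t\,\ee^{i\pi(2j+1)/m})$, which expresses $t^m(t^m+A^m)^{-1}$ as a product of $\alpha$-bounded resolvents (the arguments $\pi(2j+1)/m$ avoid $\overline{\Sigma}_{\omega_\alpha(A)}$ precisely when $m<\pi/\omega_\alpha(A)$), and then writing a general $s>1$ as $s=(s/\lceil s\rceil)\cdot\lceil s\rceil$.
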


\begin{proof}
Since $A$ is (almost) $\alpha$-sectorial if and only if $A^{-1}$ is (almost) $\alpha$-sectorial with equal angles by the resolvent identity, it suffices to consider the case $s>0$.
  \ref{it:almostalphapower} follows from Proposition \ref{proposition:almostsectorialcharacterization} and the fact that for $0<s<\pi /\tilde{\omega}_{\alpha}(A)$ we have $f \in H^1(\Sigma_\sigma)$ if and only if $g \in H^1(\Sigma_{\sigma s})$, where $g(z) = f(z^{s})$.

  For \ref{it:alphapower} suppose that $A$ is $\alpha$-sectorial and fix $0<s<\pi /{\omega}_{\alpha}(A)$. Define
  \begin{equation*}
    \psi(z) = \frac{z -z^s}{(1+z^s)(1-z)}, \qquad z \in \Sigma_\sigma
  \end{equation*}
  and note that $\psi \in H_1(\Sigma_\sigma)$ for $\sigma <\pi/s$.
  By \cite[Lemma 15.17]{KW04} we have
  \begin{equation*}
    -tR(-t,A^s) = -t^{1/s}R(-t^{1/s},A)+\psi(t^{-1/s}A), \qquad t>0.
  \end{equation*}
  Therefore $\cbrace{-tR(-t,A^s): t>0}$ is $\alpha$-bounded by the $\alpha$-sectoriality of $A$, Proposition \ref{proposition:almostsectorialcharacterization} and Proposition \ref{proposition:alphaproperties}. Therefore $A^s$ is $\alpha$-sectorial and by
\ref{it:almostalphapower} and Proposition \ref{proposition:alphaalmoastalpha} we have
$${\omega}_{\alpha}(A^s)=\tilde{\omega}_{\alpha}(A^s)={s}\, {{\tilde\omega_{\alpha}}}(A)={s}\,{{\omega_{\alpha}}}(A),$$
which finishes the proof.
\end{proof}

\section{\texorpdfstring{$\alpha$}{a}-bounded \texorpdfstring{$H^\infty$}{H}-calculus}\label{section:hinfty}
We now turn to the study of the $H^\infty$-calculus of a sectorial operator $A$ on $X$, which for Hilbert spaces dates back to the ground breaking paper of McIntosh \cite{Mc86}. For Banach spaces, in particular $L^p$-spaces, the central paper is by Cowling, Doust, McIntosh and Yagi \cite{CDMY96}. For examples of operators with or without a bounded $H^\infty$-calculus important in the theory of evolution equations, see e.g. \cite[Chapter 8]{Ha06}, \cite[Section 10.8]{HNVW17}, \cite[Section 14]{KW04} and the references therein.

 We will focus on situations where the $H^\infty$-calculus is $\alpha$-bounded. This has already been thoroughly studied for the $\gamma$-structure, through the notion of $\mc{R}$-boundedness, in \cite{KW01}. For a general Euclidean structure we will first use Theorem \ref{theorem:Cboundedalphabounded} to obtain an abstract result, which we afterwards make more specific under specific assumptions on $X$ and $\alpha$.

\bigskip

We will briefly recall the definition of the $H^\infty$-calculus and refer to \cite[Chapter 2]{Ha06}, \cite[Chapter 10]{HNVW17} or \cite[Section 9]{KW04} for a proper introduction. Note that some of these references take a slightly different, but equivalent approach to the $H^\infty$-calculus.

 The $H^\infty$-calculus for $A$ is an extension of the Dunford calculus to all functions in $H^\infty(\Sigma_\sigma)$ for some $\omega(A)<\sigma<\pi$. Recall that for $\varphi(z)=z(1+z)^{-2}$ we have $R(\varphi(A))=D(A)\cap R(A)$ and we can thus define for $f \in H^\infty(\Sigma_\sigma)$ and $x \in D(A)\cap R(A)$ the map
\begin{equation*}
  f(A)x:= (f\varphi)(A)y
\end{equation*}
where $y \in X$ is such that $x =\varphi(A)y$. This definition coincides with the
extended Dunford calculus and for $f \in H^1(\Sigma_\sigma)$ it coincides with the Dunford calculus. Moreover it
it is easy to check that $y=0$ implies $x=0$, so $f(A)x$ is well-defined.

By the properties of the $\varphi_n$'s as in \eqref{eq:phin} we have $\nrm{f(A)x}_X \leq C \nrm{x}_X$ for all $x \in D(A)\cap R(A)$ if and only if $\sup_{n \in \N} \nrm{(f\varphi_n)(A)} <\infty$. If one of these equivalent conditions hold we can extend $f(A)$  to a bounded operator on $X$ by density, for which we have
\begin{equation}\label{eq:calculusphin}
f(A)x = \lim_{n \to \infty} (f\varphi_n)(A)x, \qquad x \in X.
\end{equation}
 We say that $A$ has a \emph{bounded $H^\infty$-calculus} if there is a $\omega(A)<\sigma<\pi$  such that $f(A)$ extends to a bounded operator on $X$ for all $f \in H^\infty(\Sigma_\sigma)$ and we denote the infimum of all such $\sigma$ by $\omega_{H^\infty}(A)$. Just like the Dunford calculus, the $H^\infty$-calculus is multiplicative.
 We say that $A$ has an \emph{$\alpha$-bounded $H^\infty(\Sigma_\sigma)$-calculus} if the set
\begin{equation*}
  \cbraceb{f(A): f \in H^\infty(\Sigma_\sigma),\nrm{f}_{H^\infty(\Sigma_\sigma)}\leq 1}
\end{equation*}
is $\alpha$-bounded for some $\omega_{H^\infty}(A)<\sigma<\pi$. We denote the infimum of all such $\sigma$ by $\omega_{\alphaH}(A)$.

We note that the ($\alpha$-)bounded $H^\infty$-calculus of $A$ implies the
($\alpha$-)bounded $H^\infty$-calculus of $A^s$. This follows directly from the composition rule $f(A)=g(A^s)$ for $f \in H^\infty(\Sigma_\sigma)$ and $g \in H^\infty(\Sigma_{s\sigma})$ with $f(z)=g(z^s)$ (see e.g. \cite[Theorem 2.4.2]{Ha06}).
\begin{proposition}\label{proposition:hinftys}
  Let $A$ be a sectorial operator on $X$.
  \begin{enumerate}[(i)]
  \item \label{it:hinftypower} Suppose that $A$ has a bounded $H^\infty$-calculus and $0<\abs{s}<\pi /{\omega}_{H^\infty}(A)$. Then $A^s$ has a bounded $H^\infty$-calculus with ${\omega}_{H^\infty}(A^s)=\abs{s}\,{\omega}_{H^\infty}(A)$.
  \item \label{it:alphahinftypower} Suppose that $A$ has an $\alpha$-bounded $H^\infty$-calculus and $0<\abs{s}<\pi /{\omega}_{\alphaH}(A)$. Then $A^s$ has an $\alpha$-bounded $H^\infty$-calculus with ${\omega}_{\alphaH}(A^s)=\abs{s}\,{\omega}_{\alphaH}(A)$.
  \end{enumerate}
\end{proposition}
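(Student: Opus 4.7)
The idea is precisely the one hinted at in the statement: exploit the composition rule $f(A)=g(A^s)$, where $f(z)=g(z^s)$, together with the fact that the map $g\mapsto f$ is an isometric bijection between suitable $H^\infty$-sectors. First I would reduce to the case $s>0$: since $A^s=(A^{-1})^{|s|}$ whenever $s<0$ and since $A$ and $A^{-1}$ have identical (possibly $\alpha$-bounded) $H^\infty$-calculi with the same angle (via the resolvent identity together with the substitution $z\mapsto z^{-1}$ on $\Sigma_\sigma$, which is an isometry of $H^\infty$-sectors), the negative case follows once the positive case is settled.

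So assume $0<s<\pi/\omega_{H^\infty}(A)$ (resp.\ $<\pi/\omega_{\alphaH}(A)$). Fix $\omega_{H^\infty}(A)<\sigma<\pi/s$. The map
\begin{equation*}
\Phi_s\colon H^\infty(\Sigma_{s\sigma})\to H^\infty(\Sigma_\sigma),\qquad \Phi_s(g)(z):=g(z^s),
\end{equation*}
is a well-defined isometric algebra isomorphism that sends the closed unit ball of $H^\infty(\Sigma_{s\sigma})$ onto the closed unit ball of the range of $\Phi_s$ inside $H^\infty(\Sigma_\sigma)$. By the composition rule for the (extended) Dunford calculus applied to $A^s$ (valid because $A^s$ is sectorial with $\omega(A^s)=s\,\omega(A)<\sigma s$), we have $g(A^s)=\Phi_s(g)(A)$ for every $g\in H^\infty(\Sigma_{s\sigma})$. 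Consequently
\begin{equation*}
\nrm{g(A^s)}_{\mc{L}(X)}=\nrm{\Phi_s(g)(A)}_{\mc{L}(X)}\leq C_\sigma\,\nrm{g}_{H^\infty(\Sigma_{s\sigma})},
\end{equation*}
so $A^s$ has a bounded $H^\infty(\Sigma_{s\sigma})$-calculus. Letting $\sigma\downarrow \omega_{H^\infty}(A)$ gives $\omega_{H^\infty}(A^s)\leq s\,\omega_{H^\infty}(A)$. For the reverse inequality I apply the same argument to $(A^s,1/s)$ in place of $(A,s)$, using $A=(A^s)^{1/s}$, which yields $s\,\omega_{H^\infty}(A)=s\,\omega_{H^\infty}((A^s)^{1/s})\leq \omega_{H^\infty}(A^s)$.

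For the $\alpha$-bounded statement \ref{it:alphahinftypower} exactly the same mechanism works: because $\Phi_s$ is an isometric bijection onto $\Phi_s\bigl(H^\infty(\Sigma_{s\sigma})\bigr)\subseteq H^\infty(\Sigma_\sigma)$, the set
\begin{equation*}
\cbraceb{g(A^s):\nrm{g}_{H^\infty(\Sigma_{s\sigma})}\leq 1}=\cbraceb{\Phi_s(g)(A):\nrm{\Phi_s(g)}_{H^\infty(\Sigma_\sigma)}\leq 1}
\end{equation*}
is a subset of $\cbrace{f(A):\nrm{f}_{H^\infty(\Sigma_\sigma)}\leq 1}$, which is $\alpha$-bounded by assumption whenever $\sigma>\omega_{\alphaH}(A)$. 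Passing to the infimum over admissible $\sigma$ and then invoking the symmetric argument to $(A^s,1/s)$ yields equality of the angles.

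The only genuinely technical step will be double-checking the composition rule in full generality (so that for $g\in H^\infty(\Sigma_{s\sigma})$ which is only in $H^\infty$, not $H^1$, the identity $g(A^s)=\Phi_s(g)(A)$ still holds), but this is standard and is given for example in \cite[Theorem 2.4.2]{Ha06}; in our setting it reduces via the approximants $\varphi_n$ of \eqref{eq:phin} and formula \eqref{eq:calculusphin} to the elementary identity $\Phi_s(g\psi_n)=\Phi_s(g)\,\Phi_s(\psi_n)$ on $H^1$-sectors, where $\psi_n(z)=\varphi_n(z)^k$ for a suitable $k$ making $g\psi_n\in H^1(\Sigma_{s\sigma})$.
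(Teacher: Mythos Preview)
Your proposal is correct and follows essentially the same approach as the paper, which simply states (in the paragraph preceding the proposition) that the result follows directly from the composition rule $f(A)=g(A^s)$ for $f(z)=g(z^s)$ and cites \cite[Theorem 2.4.2]{Ha06}. You have filled in the details the paper omits: the reduction to $s>0$ via $A^{-1}$, the isometry $\Phi_s$, and the symmetric argument with $(A^s,1/s)$ for the reverse angle inequality.
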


Our first major result with respect to an $\alpha$-bounded $H^\infty$-calculus follows almost immediately from the transference results in Chapter \ref{part:1}. Indeed, using Theorem \ref{theorem:Cboundedalphabounded} we can show that one can always upgrade a bounded $H^\infty$-calculus to an $\alpha$-bounded $H^\infty$-calculus.

\begin{theorem}\label{theorem:Hinftyalphabounded}
  Let $A$ be a sectorial operator with a bounded $H^\infty$-calculus. For every $\omega_{H^\infty}(A)<\sigma<\pi$ there exists a Euclidean structure $\alpha$ on $X$ such that $A$ has an $\alpha$-bounded $H^\infty$-calculus with $\omega_{\alphaH}(A) <\sigma$.
\end{theorem}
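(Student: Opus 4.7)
The plan is to realize this as an almost immediate corollary of Theorem \ref{theorem:Cboundedalphabounded}, by exhibiting $\{f(A):\|f\|_{H^\infty(\Sigma_{\sigma'})}\le 1\}$ as a $C^*$-bounded family for some intermediate angle $\sigma'$. First I would fix $\omega_{H^\infty}(A)<\sigma'<\sigma$ and observe that $H^\infty(\Sigma_{\sigma'})$, equipped with pointwise operations and the sup norm, is a closed unital subalgebra of the commutative $C^*$-algebra $L^\infty(\Sigma_{\sigma'})$; hence it is an operator algebra in the sense of Section \ref{sec:cbounded}. By the definition of boundedness of the $H^\infty$-calculus and \eqref{eq:calculusphin}, the assignment
\begin{equation*}
  \rho\colon H^\infty(\Sigma_{\sigma'})\to\mc{L}(X),\qquad \rho(f):=f(A),
\end{equation*}
is a well-defined bounded unital algebra homomorphism.

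Next, I would set
\begin{equation*}
  \Gamma:=\cbraceb{f(A):f\in H^\infty(\Sigma_{\sigma'}),\ \nrm{f}_{H^\infty(\Sigma_{\sigma'})}\le 1}.
\end{equation*}
By the definition of $C^*$-boundedness, $\Gamma\subseteq \rho\bigl(\{T\in H^\infty(\Sigma_{\sigma'}):\nrm{T}\le \nrm{\rho}/\nrm{\rho}\}\bigr)$, so $\Gamma$ is $C^*$-bounded with $\nrm{\Gamma}_{C^*}\le\nrm{\rho}$. Theorem \ref{theorem:Cboundedalphabounded} now produces a Euclidean structure $\alpha$ on $X$ for which $\Gamma$ is $\alpha$-bounded, with $\nrm{\Gamma}_\alpha\lesssim\nrm{\rho}$. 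Unwinding the definition, this is exactly the statement that $A$ has an $\alpha$-bounded $H^\infty(\Sigma_{\sigma'})$-calculus; since $\sigma'<\sigma$, we conclude $\omega_{\alphaH}(A)\le\sigma'<\sigma$.

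There is really no genuine obstacle here: the only two points that warrant a line of verification are that $H^\infty(\Sigma_{\sigma'})$ sits isometrically inside a $C^*$-algebra (immediate from the inclusion into $L^\infty(\Sigma_{\sigma'})$ and the fact that it is closed and unital) and that $\rho$ is multiplicative, which is one of the standing properties of the $H^\infty$-calculus recorded in Section \ref{section:hinfty}. Everything else is packaged in Theorem \ref{theorem:Cboundedalphabounded}, which is precisely the abstract machinery designed for this kind of transfer.
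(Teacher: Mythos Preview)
Your proof is correct and follows essentially the same route as the paper: fix an intermediate angle, embed $H^\infty$ of that sector as a closed unital subalgebra of a commutative $C^*$-algebra, note that $f\mapsto f(A)$ is a bounded algebra homomorphism, and invoke Theorem \ref{theorem:Cboundedalphabounded}. The only cosmetic difference is that the paper embeds $H^\infty(\Sigma_{\sigma'})$ into the bounded continuous functions on $\Sigma_{\sigma'}$ rather than into $L^\infty(\Sigma_{\sigma'})$, which avoids the (harmless) passage through equivalence classes.
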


\begin{proof}
  Fix $\omega_{H^\infty}(A)<\nu<\sigma$. Note that $H^\infty(\Sigma_\nu)$ is a closed unital subalgebra of the $C^*$-algebra of bounded continuous functions on $\Sigma_\nu$ and that the algebra homomorphism $\rho\colon H^\infty(\Sigma_\nu) \to \mc{L}(X)$ given by $f \mapsto f(A)$ is bounded since $A$ has a bounded $H^\infty(\Sigma_\nu)$-calculus. Therefore the set
  \begin{equation*}
    \cbrace{f(A):f \in H^\infty(\Sigma_\nu), \nrm{f}_{H^\infty(\Sigma_\nu)} \leq 1}
  \end{equation*}
  is $C^*$-bounded. So by Theorem \ref{theorem:Cboundedalphabounded} we know that there is a Euclidean structure $\alpha$ such that $A$ has a bounded $H^\infty$-calculus with $\omega_{\alphaH}(A) \leq \nu$.
\end{proof}

\subsection*{Control over the Euclidean structure}
In general we have no control over the choice of the Euclidean structure $\alpha$ in Theorem \ref{theorem:Hinftyalphabounded}, as we will see in Example \ref{example:jointHinfty}. However, under certain geometric assumptions we can actually indicate a specific Euclidean structure such that $A$ has an $\alpha$-bounded $H^\infty$-calculus. The following proposition will play a key role in this.

\begin{proposition}\label{proposition:unconditionalalphabounded}
  Let $\alpha$ be a global ideal Euclidean structure and assume that
  \begin{equation*}
     \alpha(\N\times \N;X) =\alpha\hab{\N;\alpha(\N;X)}
   \end{equation*}
   isomorphically with constant $C_\alpha$.
      Let $(U_k)_{k \geq 1}$ and $(V_k)_{k\geq 1}$ be sequences of operators in $\mc{L}(X)$, which for all $n \in \N$ satisfy
  \begin{align*}
    \nrmb{(U_1x,\ldots,U_nx)}_\alpha &\leq M_U \nrm{x}_X, &&x \in X\\
    \nrmb{(V_1^*x^*,\ldots,V_n^*x^*)}_{\alpha^*} &\leq M_V \nrm{x^*}_{X^*}, &&x^* \in X^*
  \end{align*}
  for some constants $M_U,M_V>0$. If $\Gamma$ is an $\alpha$-bounded family of operators, then   the family
  \begin{equation*}
    \cbraces{\sum_{k=1}^n V_kT_kU_k: T_1,\ldots, T_n \in \Gamma, n \in \N}
  \end{equation*}
  is also $\alpha$-bounded with bound at most $C_\alpha^2 \,M_U M_V \, \nrm{\Gamma}_{\alpha}$.
\end{proposition}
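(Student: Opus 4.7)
The plan is to reinterpret the two hypotheses as operator norm estimates for auxiliary maps and then chain them together via the global ideal property \eqref{eq:E4x} and the Fubini-type isomorphism. I would introduce
\begin{align*}
\mc{U}\colon X &\to \alpha(\N;X), & x &\mapsto (U_k x)_{k=1}^n, \\
\mc{V}\colon \alpha(\N;X) &\to X, & (w_k)_{k=1}^n &\mapsto \sum_{k=1}^n V_k w_k,
\end{align*}
so that for any $\mb{x} \in X^m$ and any choice $T_{jk} \in \Gamma$ the operator $S_j := \sum_{k=1}^n V_k T_{jk} U_k$ satisfies $S_j x_j = \mc{V}\bigl((T_{jk} U_k x_j)_{k=1}^n\bigr)$. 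The first hypothesis states exactly $\nrm{\mc{U}} \leq M_U$, while the second says that the adjoint $\mc{V}^*\colon X^* \to \alpha^*(\N;X^*)$, $x^* \mapsto (V_k^* x^*)_k$, has norm at most $M_V$, and consequently $\nrm{\mc{V}} \leq M_V$.

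Once these two operators are set up, I would first apply \eqref{eq:E4x} to $\mc{U}$ coordinatewise in $\mb{x}$ to obtain
\begin{equation*}
\nrmb{(\mc{U}x_j)_{j=1}^m}_{\alpha(\N;\alpha(\N;X))} \leq M_U \nrm{\mb{x}}_\alpha,
\end{equation*}
and then invoke the Fubini-type isomorphism $\alpha(\N;\alpha(\N;X)) \simeq \alpha(\N \times \N;X)$ to reinterpret the left-hand side, at the cost of a factor $C_\alpha$, as a bound on $\nrmb{(U_k x_j)_{j,k}}_{\alpha(\N \times \N;X)}$. Next I would apply the $\alpha$-boundedness of $\Gamma$, viewing $(U_k x_j)_{j,k}$ as a single vector indexed by $(j,k)$ and substituting the operators $T_{jk} \in \Gamma$, to acquire an additional factor $\nrm{\Gamma}_\alpha$. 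A second application of the Fubini isomorphism returns the estimate to $\alpha(\N;\alpha(\N;X))$ at the cost of another factor $C_\alpha$, and finally \eqref{eq:E4x} applied to $\mc{V}$ coordinatewise in $j$ converts this into
\begin{equation*}
\nrmb{(S_j x_j)_{j=1}^m}_\alpha \leq C_\alpha^2 M_U M_V \nrm{\Gamma}_\alpha \nrm{\mb{x}}_\alpha,
\end{equation*}
as required.

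The only step that is not pure bookkeeping will be the duality verification that $\nrm{\mc{V}} \leq M_V$. I would obtain this by testing against $x^* \in X^*$: for finitely supported $(w_k)_k \in \alpha(\N;X)$ the $\alpha$-$\alpha^*$ trace duality gives
\begin{equation*}
\absb{\ipb{\mc{V}(w_k)_k, x^*}} = \absb{\sum_{k=1}^n \ipb{w_k, V_k^* x^*}} \leq \nrm{(w_k)_k}_\alpha \nrm{(V_k^* x^*)_k}_{\alpha^*} \leq M_V \nrm{x^*}_{X^*} \nrm{(w_k)_k}_\alpha,
\end{equation*}
after which $\mc{V}$ extends uniquely by continuity to a bounded operator on $\alpha(\N;X)$ of norm at most $M_V$. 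Once this operator-theoretic framing is in place, the remainder of the argument is a clean layering of the ideal property and the Fubini isomorphism, the two uses of the latter accounting exactly for the factor $C_\alpha^2$ in the final constant.
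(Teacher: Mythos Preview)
Your proposal is correct and follows essentially the same approach as the paper: both introduce the map $\mc{U}\colon X\to\alpha(\N;X)$, apply the global ideal property and the Fubini isomorphism to pass to $\alpha(\N\times\N;X)$, use the $\alpha$-boundedness of $\Gamma$ there, and then undo the construction at a second cost of $C_\alpha$ and $M_V$. The only organisational difference is that the paper handles the $V$-step by picking a norming $\mb{x}^*\in(X^*)^m$ and expanding the pairing $\sum_{j,k}\langle T_{jk}U_kx_j,V_k^*x_j^*\rangle$ directly (so the second $C_\alpha$ arises from Fubini for $\alpha^*$), whereas you stay entirely on the primal side by constructing the bounded operator $\mc{V}\colon\alpha(\N;X)\to X$ via duality and then applying \eqref{eq:E4x} to $\mc{V}$; the two routes are dual to one another and give the same constant.
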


\begin{proof}
Fix $n,m \in \N$ and define $U:X \to \alpha(\ell^2_n;X)$ by
\begin{equation*}
  Ux = (U_1x,\ldots,U_nx),\qquad  x \in X.
\end{equation*}
By assumption we have $\nrm{U} \leq M_U$. Take $\mb{x} \in X^m$. Using the global ideal property of $\alpha$ and the isomorphism between $\alpha(\ell^2_{mn};X)$ and $\alpha\hab{\ell^2_m;\alpha(\ell^2_n;X)}$, we have
\begin{align*}
  \nrmb{(U_kx_j)_{j,k=1}^{m,n}}_{\alpha(\ell^2_{mn};X)} &\leq C_\alpha\, \nrmb{(Ux_j)_{j=1}^m}_{\alpha(\ell^2_m;\alpha(\ell^2_n;X))} \leq C_\alpha\, M_U \, \nrm{\mb{x}}_{\alpha}.
\end{align*}
Analogously we have for any $\mb{x}^* \in (X^*)^m$ that
\begin{align*}
  \nrmb{(V^*_kx^*_j)_{j,k=1}^{m,n}}_{\alpha^*(\ell^2_{mn};X^*)} &\leq C_\alpha \,M_V \nrm{\mb{x}^*}_{\alpha^*}.
\end{align*}

Now let $S_j = \sum_{k=1}^n V_kT_{jk} U_k$ for $1 \leq j \leq m$ with $T_{jk} \in \Gamma\cup \cbrace{0}$. By the  duality $\alpha(\ell^2_m;X)^* = \alpha^*(\ell^2_m;X^*)$, we can pick $\mb{x}^* \in (X^*)^m$ such that $\nrm{\mb{x}^*}_{\alpha^*}=1$ and
\begin{equation*}
  \nrmb{(S_1x_1,\ldots,S_mx_m)}_{\alpha}= \sum_{j=1}^m \ip{S_jx_j,x_j^*}.
\end{equation*}
Using the $\alpha$-boundedness of $\Gamma$ we obtain
\begin{align*}
  \nrmb{(S_jx_j)_{j=1}^m}_{\alpha} &= \sum_{j=1}^m \sum_{k=1}^n \ip{T_{jk}U_kx_k,V^*_kx^*_k}\\
  &\leq \nrmb{(T_kU_kx_j)_{j,k=1}^{m,n}}_{\alpha(\ell^2_{mn};X)} \, \nrmb{(V^*_kx^*_j)_{j,k=1}^{m,n}}_{\alpha^*(\ell^2_{mn};X^*)} \\
  &\leq \nrm{\Gamma}_{\alpha}\, \nrmb{(U_kx_j)_{j,k=1}^{m,n}}_{\alpha(\ell^2_{mn};X)} \, \nrmb{(V^*_kx^*_j)_{j,k=1}^{m,n}}_{\alpha^*(\ell^2_{mn};X^*)} \\
  &\leq C^2_{\alpha} \,M_UM_V\,\nrm{\Gamma}_{\alpha}\, \nrm{\mb{x}}_{\alpha}.
\end{align*}
The theorem now follows by taking suitable $T_{jk}$.
\end{proof}

 Using the fact that the $\gamma$-structure is unconditionally stably, as shown in Proposition \ref{proposition:unconditionallystable}, we notice that
 Proposition \ref{proposition:unconditionalalphabounded} is a generalization of a similar statement for $\mc{R}$-boundedness in \cite[Theorem 3.3]{KW01}.

We will also need a special case of the following lemma, which is a generalization of \cite[Proposition H.2.3]{HNVW17}. We will use the full power of this generalization in Chapter \ref{part:6}.

\begin{lemma}\label{lemma:hadamardsum}
Fix $0<\nu<\sigma<\pi$ and let $(\lambda_k)_{k=1}^\infty$ be a sequence in $\Sigma_\nu$. Suppose that there is a $c>1$ such that $\abs{\lambda_{k+1}}\geq c \, \abs{\lambda_{k}}$ for all $k \in \N$. For $$g(z) :=\sum_{k=1}^\infty a_k f(\lambda_k z), \qquad f \in H^1(\Sigma_\sigma), \,\mbs{a} \in \ell^\infty$$
      we have $g \in H^\infty(\Sigma_{\sigma-\nu})$ with $\nrm{g}_{H^\infty(\Sigma_{\sigma-\nu})} \lesssim \,\nrm{\mbs{a}}_{\ell^\infty} \nrm{f}_{H^1(\Sigma_\sigma)}.$
\end{lemma}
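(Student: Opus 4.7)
The plan is to pass to logarithmic coordinates via $w = e^u$. Under this substitution $f \in H^1(\Sigma_\sigma)$ corresponds to a holomorphic function $F(u) := f(e^u)$ on the strip $\{|\Im u| < \sigma\}$ with preserved norm $\|F\|_{H^1(\mathrm{strip})} := \sup_{|y|<\sigma}\int_\R |F(x+iy)|\,\dd x = \|f\|_{H^1(\Sigma_\sigma)}$. Setting $\mu_k := \log\lambda_k = \alpha_k + i\beta_k$, the geometric lacunarity $|\lambda_{k+1}|/|\lambda_k|\geq c$ becomes the additive separation $\alpha_{k+1} - \alpha_k \geq \log c$, while $|\beta_k| < \nu$. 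The claim is then equivalent to showing that $G(u) := \sum_k a_k F(u+\mu_k)$ is holomorphic and bounded on the substrip $\{|\Im u| < \sigma - \nu\}$ with norm $\lesssim \|\mbs{a}\|_{\ell^\infty}\|F\|_{H^1(\mathrm{strip})}$.

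For $u$ in the substrip, the points $p_k := u+\mu_k$ satisfy $\Re p_{k+1} - \Re p_k \geq \log c$ and $|\Im p_k| < \sigma$. The core reduction is a uniform pointwise estimate $\sum_k |F(p_k)| \lesssim \|F\|_{H^1(\mathrm{strip})}$. I would prove this by invoking the Poisson integral representation of $F$ from the strip boundaries $\{\Im u = \pm\sigma\}$, whose kernels $P^{\pm}_\sigma(\cdot, \Im p_k)$ have total mass at most $1$ and decay exponentially in the $\Re$-variable at rate $\pi/(2\sigma)$. Swapping summation and integration gives
$$
\sum_k |F(p_k)| \leq \int_\R |F(s+i\sigma)|\, S^+(s)\,\dd s + \int_\R |F(s-i\sigma)|\, S^-(s)\,\dd s,
$$
with $S^{\pm}(s) := \sum_k P^{\pm}_\sigma(\Re p_k - s,\, \Im p_k)$. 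Splitting the $k$-sum according to whether $|\Re p_k - s| < \tfrac{1}{2}\log c$ (at most one such index by lacunarity) or not, the ``far'' part is a lacunary sum of exponentially decaying terms, bounded by a constant depending only on $c$ and $\sigma$, whose integration against $|F(\cdot\pm i\sigma)|$ contributes $\leq C_{c,\sigma}\|F\|_{H^1(\mathrm{strip})}$.

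The main obstacle is the ``near'' contribution from the at-most-one index $k^\star(s)$: the Poisson peak height $\sim 1/(\sigma - |\Im p_{k^\star}|)$ can blow up as $p_{k^\star}$ approaches the boundary of the strip. The saving feature is that the peak's width shrinks proportionally, so that $\int P^{\pm}_\sigma(\cdot, \Im p_k)\,\dd x \leq 1$. To exploit this, I would apply Fubini and for each fixed $k$ estimate
$$
\int_{|\Re p_k - s|<\frac{1}{2}\log c} |F(s \pm i\sigma)|\, P^{\pm}_\sigma(\Re p_k - s, \Im p_k)\,\dd s,
$$
then sum in $k$; the disjointness of the intervals $(\Re p_k - \tfrac{1}{2}\log c, \Re p_k + \tfrac{1}{2}\log c)$ provided by the lacunarity collapses the sum to a single integral over $\R$ against $|F(\cdot \pm i\sigma)|$ with a uniformly bounded weight, bounded in turn by $\|F\|_{H^1(\mathrm{strip})}$. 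Combining the far and near parts of $S^+$ and $S^-$ produces the stated uniform bound, and the resulting sum converges absolutely and defines a holomorphic function on the substrip.
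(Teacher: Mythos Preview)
Your route via the Poisson representation on the strip boundaries is genuinely different from the paper's, which works entirely in the interior: it places pairwise disjoint disks of a common radius $r$ around the points $\bar\lambda_k+\bar z$ and invokes the area mean value inequality for the subharmonic function $|\bar f|$, bounding $\sum_k|\bar f(\bar\lambda_k+\bar z)|$ by $\frac{1}{\pi r^2}$ times an area integral of $|\bar f|$ over the strip, which is in turn controlled by $\|f\|_{H^1(\Sigma_\sigma)}$ in one step.

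There is a real gap in your ``near'' argument. After Fubini and the disjointness reduction, the weight against $|F(\cdot\pm i\sigma)|$ is $w(s)=P^{\pm}(\Re p_{k^\star}-s,\Im p_{k^\star})$ for the unique nearby index $k^\star=k^\star(s)$, and you claim this is uniformly bounded. It is not: its peak value is of order $1/(\sigma-|\Im p_{k^\star}|)$, and since the hypotheses only place $\lambda_k\in\Sigma_\nu$ and $u\in\mbb{S}_{\sigma-\nu}$ (both \emph{open}), the gap $\sigma-|\Im p_k|$ carries no uniform positive lower bound over $k$ and $u$. The unit-mass observation $\int P^{\pm}(\cdot,\Im p_k)\,\dd x\le 1$ cannot rescue this step, because you are integrating the kernel against $|F_\pm|\in L^1$, not $L^\infty$; trading height for mass would require $\sup_{I_k}|F_\pm|$, and summing those over $k$ is not controlled by $\|F_\pm\|_{L^1}$. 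In every application of the lemma in the paper the $\lambda_k$ are positive reals, so $\beta_k=0$ and hence $\sigma-|\Im p_k|=\sigma-|\Im u|>\nu$ uniformly; under that restriction your bound $w(s)\lesssim 1/\nu$ is valid and your argument goes through. The paper's disk argument needs the same restriction for the disks to sit inside $\mbb{S}_\sigma$.
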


\begin{proof}
We will prove the claim on the strip
  \begin{equation*}
      \mbb{S}_{{\sigma}}:= \cbrace{z \in \C:\abs{\im(z)}<{\sigma}}.
    \end{equation*}
    Define $\bar{f}\colon \mbb{S}_{{\sigma}}\to \C$ by $\bar{f}(z) = f(\ee^z)$, $\bar{g}\colon \mbb{S}_{{\sigma}-{\nu}}\to \C$ by $\bar{g}(z) = g(\ee^z)$, fix $z \in \Sigma_{\sigma-\nu}$ and set
    \begin{align*}
      \bar{\lambda}_k &:= \log(\lambda_k),\qquad
      \bar{z} := \log(z),\qquad
      \bar{c} := \log(c).
    \end{align*}
Then $\abs{\bar{\lambda}_j-\bar{\lambda}_k}>\bar{c}>0$, thus the disks
\begin{equation*}
  D_k:= \cbraces{w \in \C: \abs{w-(\bar{\lambda}_k+\bar{z})}<\frac{\bar{c}}{2}\wedge \sigma}, \qquad k \in \N
\end{equation*}
are pairwise disjoint and contained in $\mbb{S}_{{\sigma}}$. Therefore we have, by the mean value property, that
\begin{align*}
  \abs{g(z)}&\leq  \nrm{\mbs{a}}_{\ell^\infty} \sum_{k=1}^\infty \absb{\bar{f}(\bar{\lambda}_k+\bar{z})}\\
  &= \nrm{\mbs{a}}_{\ell^\infty} \sum_{k=1}^\infty  \abss{\frac{1}{\abs{D_k}}\int_{D_k} \bar{f}(x+iy) \dd x \dd y}\\
  &\leq \nrm{\mbs{a}}_{\ell^\infty}  \frac{1}{\pi (\frac{\bar{c}}{2}\wedge \sigma)^2} \int_{\mbb{S}_{\frac{\bar{c}}{2}\wedge \sigma}} \abs{\bar{f}(x+iy) }\dd x \dd y\\
  &\leq  \nrm{\mbs{a}}_{\ell^\infty} \frac{1}{\pi (\frac{\bar{c}}{2}\wedge \sigma)} \sup_{\abs{y} < \sigma} \int_{\R} \abs{\bar{f}(x+iy) }\dd x\\
  &\lesssim \nrm{\mbs{a}}_{\ell^\infty} \nrm{f}_{H^1(\Sigma_\sigma)}.
\end{align*}
This proves the norm estimate, from which $g \in H^\infty(\Sigma_{\sigma-\nu})$ follows directly.
\end{proof}

 We are now ready to prove some special cases in which we can indicate a Euclidean structure such that $A$ has an $\alpha$-bounded $H^\infty$-calculus.

\begin{theorem}\label{theorem:Hinftyell2bounded}
Let $A$ be a sectorial operator on $X$ with a bounded $H^\infty$-calculus.
\begin{enumerate}[(i)]
\item \label{it:Hinftygammabounded} If $X$ has Pisier's contraction property, then $A$ has a $\gamma$-bounded $H^\infty$-calculus with $\omega_{\gammaH}(A) = \omega_{H^\infty}(A)$.
\item \label{it:Hinftyl2bounded} If $X$ is a Banach lattice, then $A$ has a $\ell^2$-bounded $H^\infty$-calculus with $\omega_{\ellH}(A) = \omega_{H^\infty}(A)$.
\end{enumerate}
\end{theorem}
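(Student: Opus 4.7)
The strategy, carried out identically for both parts, is to decompose the $H^\infty$-calculus via a dyadic Calder\'on reproducing formula and then invoke Proposition~\ref{proposition:unconditionalalphabounded}, where the column and row square-function bounds needed to feed that proposition will come from Lemma~\ref{lemma:hadamardsum} applied to the Hadamard-lacunary sequence $\lambda_k = 2^k$.

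Fix $\omega_{H^\infty}(A) < \sigma' < \nu < \sigma < \pi$ and choose $\varphi, \tilde\varphi \in H^1(\Sigma_\nu) \cap H^\infty(\Sigma_\nu)$ with the Calder\'on identity $\sum_{k \in \Z} \tilde\varphi(2^k z)\varphi(2^k z) = 1$ on $\Sigma_{\sigma'}$. Setting $U_k := \varphi(2^k A)$ and $V_k := \tilde\varphi(2^k A)$, the bounded $H^\infty$-calculus turns this into the operator identity
\[
f(A) = \sum_{k \in \Z} V_k\, f(A)\, U_k, \qquad f \in H^\infty(\Sigma_\sigma),
\]
valid on $D(A)\cap R(A)$. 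The structural hypothesis $\alpha(\N \times \N; X) \simeq \alpha(\N; \alpha(\N; X))$ of Proposition~\ref{proposition:unconditionalalphabounded} is available in both cases: for $\alpha = \gamma$ this is Pisier's contraction property of $X$, and for $\alpha = \ell^2$ on a Banach lattice it is the standard lattice Fubini identity, in combination with Proposition~\ref{proposition:l2representation}.

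The analytic heart is the column estimate $\nrm{(U_k x)_k}_{\alpha(\Z;X)} \lesssim \nrm{x}_X$ and its dual version for $V_k^*$. By Lemma~\ref{lemma:hadamardsum}, for every $\mbs a \in \ell^\infty(\Z)$ the function $g_{\mbs a}(z) := \sum_k a_k \varphi(2^k z)$ lies in $H^\infty(\Sigma_{\sigma'})$ with $\nrm{g_{\mbs a}}_\infty \lesssim \nrm{\mbs a}_{\ell^\infty}\nrm{\varphi}_{H^1(\Sigma_\nu)}$, so the bounded $H^\infty$-calculus gives the uniform coefficient estimate
\[
\nrms{\sum_k a_k\, U_k\, x}_X = \nrm{g_{\mbs a}(A) x}_X \lesssim \nrm{\mbs a}_{\ell^\infty}\nrm{x}_X.
\]
This transfers to the $\alpha$-column bound by unconditional stability: in case (i), Pisier's contraction property implies finite cotype, so Proposition~\ref{proposition:unconditionallystable}\ref{it2:unconditional} applies to the $\gamma$-structure; in case (ii), Proposition~\ref{proposition:unconditionallystable}\ref{it3:unconditional} provides unconditional stability of the $\ell^2$-structure on any Banach lattice. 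Applying the same reasoning to $A^*$, which inherits a bounded $H^\infty$-calculus at the same angle on $\overline{R(A^*)}$, yields the dual row bound for $V_k^*$ in $\alpha^*$.

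To conclude, one tests against the $\alpha$-duality pairing: for $f_1, \ldots, f_n \in H^\infty(\Sigma_\sigma)$ with $\nrm{f_j}_\infty \leq 1$, $\mb{x} \in X^n$ and $\mb{x}^* \in (X^*)^n$ with $\nrm{\mb{x}^*}_{\alpha^*} \leq 1$, inserting the Calder\'on expansion gives
\[
\sum_j \ip{f_j(A)x_j, x_j^*} = \sum_{j,k} \ip{f_j(A) U_k x_j,\; V_k^* x_j^*}.
\]
Bounding the double sum via trace duality on $\alpha(\N \times \N; X)$, absorbing the $U_k$ and $V_k^*$ through the column/row estimates together with the Fubini identification, and controlling the remaining singletons $f_j(A)$ by their operator norm $\lesssim 1$, yields $\nrm{(f_j(A)x_j)_j}_\alpha \lesssim \nrm{\mb{x}}_\alpha$ uniformly in $n$ and the $f_j$. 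As $\sigma > \omega_{H^\infty}(A)$ is arbitrary, this proves $\omega_{\alphaH}(A) = \omega_{H^\infty}(A)$. The main obstacle is the very last step: the decomposition and the Fubini identification $\alpha(\N \times \N; X) \simeq \alpha(\N; \alpha(\N; X))$ must conspire to prevent an $n$-dependent loss of constants in the bilinear estimate; this is precisely why the two specific geometric hypotheses on $X$ are indispensable, and why an abstract choice of Euclidean structure (as in Theorem~\ref{theorem:Hinftyalphabounded}) is insufficient to conclude with $\alpha = \gamma$ or $\alpha = \ell^2$.
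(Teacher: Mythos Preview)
Your decomposition and the column/row estimates via Lemma~\ref{lemma:hadamardsum} and unconditional stability are correct, but the final step is circular. After writing
\[
\sum_{j}\ip{f_j(A)x_j,x_j^*}=\sum_{j,k}\ip{f_j(A)U_kx_j,\,V_k^*x_j^*}
\]
you bound the right-hand side by trace duality, which requires controlling $\nrmb{(f_j(A)U_kx_j)_{j,k}}_{\alpha(\N\times\N;X)}$. Passing from $(U_kx_j)_{j,k}$ to $(f_j(A)U_kx_j)_{j,k}$ is the application of the diagonal map $(y_{j,k})\mapsto(f_j(A)y_{j,k})$, and boundedness of this map on $\alpha(\N\times\N;X)\simeq\alpha(\N_j;\alpha(\N_k;X))$ is exactly the $\alpha$-boundedness of $\{f_j(A)\}$ that you are trying to prove. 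The ideal property only controls a \emph{single} operator applied to all coordinates, not $n$ different operators on $n$ different coordinates; commuting $f_j(A)$ past $U_k$ or onto the dual side does not help, since it just transfers the same diagonal problem to the other factor. In the language of Proposition~\ref{proposition:unconditionalalphabounded}, you are invoking it with $\Gamma=\{f(A):\nrm{f}_{H^\infty}\le1\}$, which already needs to be $\alpha$-bounded as a hypothesis.

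The paper avoids this circularity by choosing a representation in which the $f$-dependence is purely \emph{scalar}: via the formula $f(A)=\frac{1}{2\pi i}\int_{\Gamma_\nu}z^{-1/2}f(z)A^{1/2}R(z,A)\,\ddn z$ one obtains, for each $t\in[1,2]$ and $\epsilon=\pm1$, an expression of the form $\sum_k f(\ee^{\epsilon i\nu}2^kt)\,\psi(t^{-1}2^{-k}A)^2$ with $\psi\in H^1$. Now $U_k=V_k=\psi(t^{-1}2^kA)$ satisfy your column/row bounds, but the middle family $\Gamma$ in Proposition~\ref{proposition:unconditionalalphabounded} consists of \emph{scalar multiplications} $\{x\mapsto ax:|a|\le1\}$, which is trivially $\alpha$-bounded by the right-ideal property. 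This is the missing idea: you must absorb $f$ into scalar coefficients rather than leave $f(A)$ as an operator in the decomposition.
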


We refer to \cite{Pi78} and \cite[Section 7.5]{HNVW17} for the definition of Pisier's contraction property. Theorem \ref{theorem:Hinftyell2bounded}\ref{it:Hinftygammabounded} was already proven in \cite[Theorem 5.3]{KW01}. Here we will prove \ref{it:Hinftygammabounded} and \ref{it:Hinftyl2bounded} of Theorem \ref{theorem:Hinftyell2bounded} in a unified manner using Euclidean structures.

\begin{proof}[Proof of Theorem \ref{theorem:Hinftyell2bounded}]
Take $\alpha =\gamma$ in case \ref{it:Hinftygammabounded} and  $\alpha = \ell^g$ (which is equivalent to $\alpha = \ell^2$ by Proposition \ref{proposition:compareEuclidean}) in case \ref{it:Hinftyl2bounded}.
By Proposition \ref{proposition:alphaproperties} and \eqref{eq:calculusphin} it suffices to show that the family of operators
\begin{equation*}
  \cbraceb{f(A): f \in H^1(\Sigma_\sigma)\cap H^\infty(\Sigma_\sigma),\nrm{f}_{H^\infty(\Sigma_\sigma)}\leq 1}
\end{equation*}
is $\alpha$-bounded. For $f \in H^1(\Sigma_\sigma)\cap H^\infty(\Sigma_\sigma)$ we compute, using the representation formula \eqref{eq:representationHinfty},
\begin{equation}\label{eq:Hinftyform}
\begin{aligned}
    f(A) &= \frac{1}{2\pi i}\int_{\Gamma_\nu} z^{-1/2}f(z) A^{\frac{1}{2}}R(z,A)\dd z
  \\&= \sum_{k \in \Z} \sum_{\epsilon= \pm 1} \frac{-\epsilon}{2\pi i} \ee^{\epsilon i \nu/2} \int_1^2 f(\ee^{\epsilon i \nu} 2^kt)\varphi_{\ee^{\epsilon i \nu}}(t^{-1}2^{-k}A)\frac{\ddn t}{t}
\end{aligned}
\end{equation}
with $\varphi_z(w):=w^{1/2}/(z-w)$.

Now fix $\omega_{H^\infty}(A)<\mu<\nu$, $1\leq t \leq 2$ and $\epsilon = \pm 1$. Set $\psi := \varphi^{1/2}_{\ee^{\epsilon i \nu}}$ and note that $\psi  \in H^1(\Sigma_\mu)$ since $\ee^{\epsilon i \nu} \notin \Sigma_\mu$.
By Lemma \ref{lemma:hadamardsum} and the boundedness of the $H^\infty$-calculus of $A$, this means that there is a $C_0>0$ such that for any $n \in \N$
\begin{equation*}
  \sup_{\abs{\epsilon_k}=1}\nrms{\sum_{k=-n}^n \epsilon_k\psi(t^{-1}2^k A)} \leq C_0.
\end{equation*}
Note that $\alpha$ is unconditionally stable on $X$ by Proposition \ref{proposition:unconditionallystable}. Moreover
the family of multiplication operators $\cbrace{x \mapsto ax: \abs{a} \leq 1}$ on $X$
is $\alpha$-bounded by the right ideal property of $\alpha$. Furthermore we have
\begin{equation*}
     \alpha(\N\times \N;X) = \alpha\hab{\N;\alpha(\N;X)},
\end{equation*}
isomorphically, either by Pisier's contraction property if $\alpha = \gamma$ (see  \cite[Corollary 7.5.19]{HNVW17})                                       or since $\alpha$ is equivalent to the $\ell^2$-structure on Banach lattices if $\alpha = \ell^g$. Therefore by Proposition \ref{proposition:unconditionalalphabounded} the family of operators
\begin{equation*}
  \Gamma_{t,\epsilon} := \cbraces{\sum_{k=-n}^n a_k \psi(t^{-1}2^k A)^2: \abs{a_{-n}}, \ldots,\abs{a_n}\leq 1, n \in \N}
\end{equation*}
is $\alpha$-bounded and there is a constant $C_1>0$, independent of $t$ and $\epsilon$, such that $\nrm{\Gamma_{t,\epsilon}}_{\alpha} \leq C_1$.

Let $f_1,\ldots,f_m \in  f \in H^1(\Sigma_\sigma)\cap H^\infty(\Sigma_\sigma)$ with $\nrm{f_j}_{H^\infty(\Sigma_\sigma)} \leq 1$ and take $\mb{x} \in X^n$. Then we have, using \eqref{eq:Hinftyform} in the first step, that
\begin{align*}
  \nrmb{\hab{f_j(A)x_j}_{j=1}^m}_{\alpha} &\leq \frac{1}{\pi}\sup_{\epsilon = \pm 1} \nrms{\has{\sum_{k \in \Z}  \int_1^2 f_j(\ee^{\epsilon i \nu} 2^kt)\psi(t^{-1}2^{-k}A)^2x_j\frac{\ddn t}{t}}_{j=1}^m}_{\alpha}\\
  &\leq \sup_{\epsilon = \pm 1} \sup_{1 \leq t \leq 2} \sup_{n \in \N} \,
  \nrms{\hab{\sum_{k =-n}^n f_j(\ee^{\epsilon i \nu} 2^kt)\psi(t^{-1}2^{-k}A)^2x_j}_{j=1}^m}_{\alpha}\\
  &\leq \sup_{\epsilon = \pm 1} \sup_{1 \leq t \leq 2}\nrm{ \Gamma_{t,\epsilon}}_\alpha \, \nrm{\mb{x}}_{\alpha} \leq C_1 \nrm{\mb{x}}_{\alpha}.
\end{align*}
Hence we see that $A$ has an $\alpha$-bounded $H^\infty$-calculus with $\omega_{\alphaH}(A)\leq \sigma$.
\end{proof}

\section{Operator-valued and joint \texorpdfstring{$H^\infty$}{H}-calculus}\label{section:operatorjointcalculus}
In this section we will study of the operator-valued and joint functional calculus for sectorial operators by reducing the problem to the Hilbert space case via Euclidean structures and the general representation theorem (Theorem \ref{theorem:euclideanrepresentation}). We will also deduce a theorem on the closedness of the sum of two commuting sectorial operators.

The idea of an operator-valued $H^\infty$-calculus goes back to Albrecht, Franks and McIntosh \cite{AFM98} in Hilbert spaces. For the construction we take $0<\sigma<\pi$, $\Gamma \subseteq \mc{L}(X)$ and for $p \in [1,\infty]$ let $H^p(\Sigma_\sigma;\Gamma)$ be the set of all holomorphic functions $f\colon \Sigma_{\sigma} \to \Gamma$ such that
\begin{equation*}
  \nrm{f}_{H^p(\Sigma_\sigma;\Gamma)}:= \sup_{\abs{\theta}<\sigma}\nrm{t \mapsto f(\ee^{i\theta}t)}_{L^p(\R_+,\frac{\ddn t}{t};\mc{L}(X))}
\end{equation*}
is finite.

Take $\omega(A)<\nu<\sigma<\pi$ and let $\Gamma$ be a family of bounded operators on $X$ which commute with the resolvent of $A$. Then  we define $f(A)$ for $f\in H^1(\Sigma_{\sigma};\Gamma)$ by the contour integral
\begin{equation}\label{eq:operatorvaluedHinfty}
f(A) =\frac{1}{2\pi i}\int_{\Gamma_{\nu}}f(z)R(z,A)\dd z.
\end{equation}
We define for $f \in H^{\infty}(\Sigma_{\sigma};\Gamma)$ and $x \in D(A)\cap R(A)$
\begin{equation*}
  f(A)x := (f\varphi)(A)y
\end{equation*}
where $y \in X$ is such that $x = \varphi(A)y$ with $\varphi(z) = z(1+z)^{-2}$. As for the $H^\infty$-calculus, this coincides with \eqref{eq:operatorvaluedHinfty} for $f \in H^1(\Sigma_{\sigma};\Gamma)$.

If $\nrm{f(A)x}_X \leq C \, \nrm{x}_X$ for all $x \in D(A)\cap R(A)$ or equivalently if for $\varphi_n$ as in \eqref{eq:phin} we have $\sup_{n\in \N}\nrm{(f\varphi_n)(A)}<\infty$, we can extend $f(A)$ to a bounded operator on $X$ by density. We can then approximate $f(A)x$ as in \eqref{eq:calculusphin} for the $H^\infty$-calculus.

If there is a $\omega(A)<\sigma<\pi$ such that $f(A)$ extends to a bounded operator on $X$ for all $f \in H^\infty(\Sigma_\sigma;\Gamma)$  we say that $A$ has a \emph{bounded $H^\infty(\Gamma)$-calculus} and we denote the infimum of all such $\sigma$ by $\omega_{H^\infty(\Gamma)}(A)$. If the set
\begin{equation*}
  \cbraceb{f(A): f \in H^\infty(\Sigma_\sigma),\nrm{f}_{H^\infty(\Sigma_\sigma;\Gamma)}\leq 1}
\end{equation*}
is $\alpha$-bounded for some $\omega_{H^\infty(\Gamma)}(A)<\sigma<\pi$ we say that $A$ has a \emph{$\alpha$-bounded $H^\infty(\Gamma)$-calculus} and we denote the infimum of all such $\sigma$ by $\omega_{\alphaH(\Gamma)}(A)$.

\bigskip

We also want to study the joint functional calculus, first introduced in \cite{Al94} by Albrecht. For this let $(A,B)$ be a pair of sectorial operators which commute in the sense that $R(\lambda,A)$ and $R(\mu,B)$ commute for all $\lambda \in \rho(A)$ and $\mu \in \rho(B)$. Under these hypotheses
$$DR(A,B) := D(A)\cap D(B) \cap R(A) \cap R(B)$$ is dense in $X$. Indeed, $DR(A,B)$ is the range of $\varphi_n(A)\varphi_n(B)$ for each $n \in \N$ and $x = \lim_{n\to \infty} \varphi_n(A) \varphi_n(B)x$.

 Suppose that $\omega(A)<\nu_A<\sigma_A<\pi$ and $\omega(B)<\nu_B<\sigma_B<\pi$ and $p \in [1,\infty]$. W let $H^p(\Sigma_{\sigma_A}\times \Sigma_{\sigma_B})$ be the set of all holomorphic $f\colon \Sigma_{\sigma_A}\times \Sigma_{\sigma_B} \to \C$ such that
\begin{equation*}
  \nrm{f}_{H^p(\Sigma_{\sigma_A}\times \Sigma_{\sigma_B})}:= \sup_{\abs{\theta_A}<\sigma_A, \abs{\theta_B}<\sigma_B}\nrm{(s,t) \mapsto f(\ee^{i\theta_A}s,\ee^{i\theta_B}t)}_{L^p(\R_+\times \R_+,\frac{\ddn s}{s}\frac{\ddn t}{t})}
\end{equation*}
is finite. Then for $f \in H^1(\Sigma_{\sigma_A}\times \Sigma_{\sigma_B})$ we can define the operator $f(A,B)$ by
\begin{equation}\label{eq:jointcalculus}
  f(A,B) = -\frac{1}{4\pi^2} \int_{\Gamma_{\nu_A}}\int_{\Gamma_{\nu_B}} f(z,w)R(z,A)R(w,B) \dd w \dd z.
\end{equation}
If $f \in H^\infty(\Sigma_{\sigma_A}\times \Sigma_{\sigma_B})$ and $x \in DR(A,B)$ we define $f(A,B)x$ by
\begin{equation*}
  f(A,B)x := (f{\varphi})(A,B)y,
\end{equation*}
where $y \in X$ is such that $x = \varphi(A,B)y$ with $$\varphi(z,w) =z(1+z)^{-2}w(1+w)^{-2}, \qquad (z,w) \in \Sigma_{\sigma_A}\times \Sigma_{\sigma_B}.$$ Again this calculus is well-defined and it coincides with \eqref{eq:jointcalculus} if $f \in H^1(\Sigma_{\sigma_A}\times \Sigma_{\sigma_B})$.

As before for $f \in H^\infty(\Sigma_{\sigma_A}\times \Sigma_{\sigma_B})$ we have that $f(A,B)$ extends to a bounded operator on $X$ if $\nrm{f(A,B)x}_X\leq C\nrm{x}_X$ for all $x \in DR(A,B)$ or equivalently if \begin{equation*}
  \sup_{n \in \N} \nrm{(f\psi_n)(A,B)} < \infty,
\end{equation*}
where $\psi_n(z,w) = \varphi_n(z)\varphi_n(w)$.
We say that $(A,B)$ has a bounded joint $H^\infty$-calculus if there are $\omega(A)<\nu_A<\sigma_A<\pi$ and $\omega(B)<\nu_B<\sigma_B<\pi$ such that $f(A,B)$ extends to a bounded operator for all $f \in H^\infty(\Sigma_{\sigma_A}\times \Sigma_{\sigma_B})$ and denote the infimum over all such $(\sigma_A,\sigma_B)$ by $\omega_{H^\infty}(A,B)$.

\subsection*{A general transference principle}
Our main results in this and the next section will be based on the following transference principle, which basically tells us that the $\alpha$-bounded versions of the introduced properties of sectorial operators may be studied in the Hilbert space setting.
\begin{theorem}\label{theorem:transference}
Suppose that $A$ and $B$ are resolvent commuting $\alpha$-sectorial operators on $X$. Take $\omega_\alpha(A)<\sigma_A<\pi$ and $\omega_\alpha(B)<\sigma_B<\pi$.
\begin{itemize}
  \item Let $\Xi_A$ and $\Xi_B$ be subsets of $H^\infty(\Sigma_{\sigma_A})$ and $H^\infty(\Sigma_{\sigma_B})$ such that $\cbrace{f(A):f \in \Xi_A}$ and $\cbrace{f(B):f \in \Xi_B}$ are $\alpha$-bounded.
  \item Let $\Gamma_A$ be an $\alpha$-bounded subset of $\mc{L}(X)$, which commutes with the resolvent of $A$.
\end{itemize}
Then there is a Hilbert space $H$ and resolvent commuting operators $\widetilde{A}$ and $\widetilde{B}$ on $H$ with $\omega(\widetilde{A})< \sigma_A$ and $\omega(\widetilde{B})< \sigma_B$, so that:
\begin{enumerate}[(i)]
\item \label{it:transference1} There is a $C>0$ such that
\begin{align*}
  \sup\cbraceb{\nrmb{f(\widetilde{A})}_{\mc{L}(H)}: f \in \Xi_A}&\leq C,\\
  \sup\cbraceb{\nrmb{f(\widetilde{B})}_{\mc{L}(H)}: f \in \Xi_B}&\leq C.
\end{align*}
\item \label{it:transference2} There is a $C>0$ such that for all $f \in H^1(\Sigma_{\sigma_A}\times \Sigma_{\sigma_B})$ we have
\begin{equation*}
  \nrm{f(A,B)}_{\mc{L}(X)} \leq C \,\nrmb{f(\widetilde{A},\widetilde{B})}_{\mc{L}(H)} .
\end{equation*}
\item \label{it:transference3} There is a $C>0$ and a bounded subset $\widetilde{\Gamma}_A$ of $\mc{L}(H)$ commuting with the resolvent of $\widetilde{A}$ such that all $f \in H^1(\Sigma_{\sigma_A};\Gamma_A)$ there is a $\widetilde{f} \in H^1(\Sigma_{\sigma_A};\widetilde{\Gamma}_A)$ with
    \begin{equation*}
      \nrm{f(A)}_{\mc{L}(X)} \leq C\, \nrmb{\widetilde{f}(\widetilde{A})}_{\mc{L}(H)}
    \end{equation*}
\end{enumerate}
\end{theorem}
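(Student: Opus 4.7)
My plan is to combine the various $\alpha$-bounded families in the hypotheses into a single $\alpha$-bounded family $\Gamma$, apply the representation theorem (Theorem~\ref{theorem:euclideanrepresentation}) to obtain a Hilbert space $H$ with a bounded algebra homomorphism $\tau$ into $\mc L(H)$ and a left inverse $\rho$, define $\widetilde A$ and $\widetilde B$ via their resolvents on $H$ by pushing forward through $\tau$, and then read off (i)--(iii) from the algebra homomorphism property of $\tau$ combined with standard Dunford-integral manipulations. Pick $\omega_\alpha(A)<\tilde\sigma_A<\sigma_A$ and $\omega_\alpha(B)<\tilde\sigma_B<\sigma_B$ and set
\[
\Gamma_0 := \{\lambda R(\lambda,A):\lambda\notin\overline{\Sigma}_{\tilde\sigma_A}\}\cup\{\mu R(\mu,B):\mu\notin\overline{\Sigma}_{\tilde\sigma_B}\}\cup\{f(A):f\in\Xi_A\}\cup\{f(B):f\in\Xi_B\}\cup\Gamma_A,
\]
which is $\alpha$-bounded by Proposition~\ref{proposition:alphaproperties}. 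Taking $\Gamma$ to be the strong-operator closure of its absolutely convex hull, rescaled so $\nrm{\Gamma}_\alpha\le 1$, and applying Theorem~\ref{theorem:euclideanrepresentation} produces $H$, a subalgebra $\mc B\subseteq \mc L(H)$, a bounded map $\tau\colon\mc L_\Gamma(X)\to\mc B$ with $\nrm{\tau}\le 2$, a bounded algebra homomorphism $\rho\colon\mc B\to\mc L(X)$ with $\nrm{\rho}\le 4$ and $\rho\tau=\mathrm{id}$; crucially $\tau$ extends to an algebra homomorphism on the algebra $\mc A$ generated by $\Gamma$.

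\textbf{Constructing $\widetilde A$ and $\widetilde B$.} The operators $S_\lambda:=\tau(R(\lambda,A))$ satisfy $\nrm{\lambda S_\lambda}\le 2$ for $\lambda\notin\overline{\Sigma}_{\tilde\sigma_A}$, and the resolvent identity $(\lambda-\mu)R(\lambda,A)R(\mu,A)=R(\mu,A)-R(\lambda,A)$ is a relation in $\mc A$, which $\tau$ preserves, so $\{S_\lambda\}$ is a pseudoresolvent. By standard pseudoresolvent theory $S_\lambda$ arises as the resolvent of a densely defined operator $\widetilde A$ on $H_A:=\overline{\mathrm{Ran}(S_{\lambda_0})}$, with $\omega(\widetilde A)\le\tilde\sigma_A<\sigma_A$; injectivity and denseness of the range follow from the analysis of $\tau(\varphi_n(A))$ via the $H^1$-Dunford identity below. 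The construction for $\widetilde B$ is identical, and since $R(z,A)$ and $R(w,B)$ commute inside $\mc A$, their images under $\tau$ commute, giving resolvent-commutation of $\widetilde A$ and $\widetilde B$. If needed, one can pass to the common closed invariant subspace $H_A\cap H_B$ (or extend by zero on orthogonal complements) to place both on the same Hilbert space.

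\textbf{Reading off (i)--(iii).} For $g\in H^1(\Sigma_{\tilde\sigma_A})$ the Dunford integral converges in $\mc L(X)$-norm, so interchanging $\tau$ with the integral gives
\[
\tau(g(A))=\frac{1}{2\pi i}\int_{\Gamma_\nu}g(z)\tau(R(z,A))\,\ddn z=g(\widetilde A).
\]
For $f\in\Xi_A$, the multiplicativity $(f\varphi_n)(A)=f(A)\varphi_n(A)$ holds in the extended Dunford calculus; applying $\tau$ yields $(f\varphi_n)(\widetilde A)=\tau(f(A))\varphi_n(\widetilde A)$, and letting $n\to\infty$ with $\varphi_n(\widetilde A)\to I$ strongly on $H_A$ gives $\tau(f(A))=f(\widetilde A)$, whence $\nrm{f(\widetilde A)}\le 2$ and likewise for $f\in\Xi_B$; this proves (i). For (ii), the same commutation trick applied to the double Dunford integral \eqref{eq:jointcalculus} gives $\tau(f(A,B))=f(\widetilde A,\widetilde B)$, so $\nrm{f(A,B)}=\nrm{\rho(\tau(f(A,B)))}\le 4\nrm{f(\widetilde A,\widetilde B)}$. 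For (iii), set $\widetilde\Gamma_A:=\tau(\Gamma_A)$ (bounded by $2$) and for $f\in H^1(\Sigma_{\sigma_A};\Gamma_A)$ define $\widetilde f(z):=\tau(f(z))$; this is an $H^1(\Sigma_{\sigma_A};\widetilde\Gamma_A)$-function with $\nrm{\widetilde f}\lesssim\nrm{f}$, each $\widetilde f(z)$ commutes with $R(\lambda,\widetilde A)$ (since $\tau$ preserves commutation), and a final interchange of $\tau$ with the Dunford integral yields $\widetilde f(\widetilde A)=\tau(f(A))$, so $\nrm{f(A)}\le 4\nrm{\widetilde f(\widetilde A)}$.

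\textbf{Main obstacle.} The delicate point is converting the pseudoresolvent $\{S_\lambda\}$ into an honest sectorial operator on $H$ with the expected angle, and ensuring both $\widetilde A$ and $\widetilde B$ can be placed on a common Hilbert space with the required commutation and injectivity properties. Once this technical step is executed, the rest of the proof is a bookkeeping exercise in the algebra-homomorphism property of $\tau$ applied to the (double) Dunford integrals and the $\varphi_n$-approximation of the $H^\infty$-calculus.
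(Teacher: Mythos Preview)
Your overall architecture is the same as the paper's: collect the $\alpha$-bounded data into one family, apply Theorem~\ref{theorem:euclideanrepresentation}, push the resolvents through $\tau$, and read off (i)--(iii) from Dunford integrals. Two genuine gaps, however, prevent the argument from going through as written.

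\textbf{First gap: $\Gamma$ must contain products, and $\tau$ is not $\mc L(X)$-bounded.} You take $\Gamma$ to be the absolutely convex SOT-closure of $\Gamma_0$ and then argue that ``the Dunford integral converges in $\mc L(X)$-norm, so interchanging $\tau$ with the integral gives $\tau(g(A))=g(\widetilde A)$''. But $\tau$ is only bounded as a map $\mc L_\Gamma(X)\to\mc L(H)$; convergence in $\mc L(X)$-norm is irrelevant. What you actually need is that the Bochner integral $\int_{\Gamma_\nu} g(z)R(z,A)\,\ddn z$ converges in $\mc L_\Gamma(X)$, and for this you need the integrand to be continuous (hence strongly measurable) into $\mc L_\Gamma(X)$. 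The resolvent identity gives $R(z,A)-R(w,A)=(z^{-1}-w^{-1})\cdot zR(z,A)\cdot wR(w,A)$, so continuity requires products of two elements of $\Gamma_0$ to lie in $\Gamma$; for the double integral in (ii) one needs $(z,w)\mapsto R(z,A)R(w,B)$ continuous into $\mc L_\Gamma(X)$, which requires triple products. The paper therefore takes $\Gamma$ to be the SOT-closed absolutely convex hull of $\{T_1T_2T_3:T_i\in\Gamma_0\cup\{I\}\}$. Without this enlargement none of the interchange steps are justified.

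\textbf{Second gap: the pseudoresolvent does not automatically produce a sectorial operator on a common space.} Your description ``$\widetilde A$ on $H_A:=\overline{\mathrm{Ran}(S_{\lambda_0})}$ \ldots\ pass to $H_A\cap H_B$ or extend by zero on orthogonal complements'' is not sufficient: extending by zero destroys the resolvent property, and on $H_A\cap H_B$ you have not verified injectivity and dense range of both operators. The paper handles this by explicitly defining $H\subseteq H_0$ as the set where $tR_A(-t)\xi\to 0$, $\xi+tR_A(-t)\xi\to 0$ and the analogous limits for $R_B$ hold, and then proving the key fact $\tau(\varphi_n(A)\varphi_n(B))(H_0)\subseteq H$ by direct resolvent computations. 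This inclusion is what allows the $\varphi_n$-approximation arguments in (i)--(iii) to land back in $H$ rather than $H_0$; your sketch does not supply a substitute for it.
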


\begin{proof}
Fix $\omega_\alpha(A)<\nu_A<\sigma_A$ and $\omega_\alpha(B)<\nu_B<\sigma_B$. Define
\begin{align*}
  \Gamma_0 = \cbrace{\lambda R(\lambda,A):\lambda &\in \C\setminus \overline{\Sigma}_{\nu_A}} \cup \cbrace{\lambda R(\lambda,B):\lambda \in \C \setminus \overline{\Sigma}_{\nu_B}} \\
  &\cup \cbrace{f(A):f \in \Xi_A} \cup \cbrace{f(B):f \in \Xi_B} \cup \Gamma_A
\end{align*}
Let $\Gamma$ be the closure in the strong operator topology of the absolutely convex hull of
\begin{equation*}
   \cbraceb{T_1T_2T_3:T_1,T_2,T_3 \in \Gamma_0\cup \cbrace{I}},
\end{equation*}
where $I$ denotes the identity operator on $X$.
Then $\Gamma$ is $\alpha$-bounded by Proposition \ref{proposition:alphaproperties}. Denote by $\mc{L}_\Gamma(X)$ the linear span of $\Gamma$ normed by the Minkowski functional
\begin{equation*}
  \nrm{T}_{\Gamma}  = \inf\cbrace{\lambda>0:\lambda^{-1}T \in \Gamma}.
\end{equation*}
Then the map $z \mapsto R(z,A)$ is continuous from $\C \setminus \overline{\Sigma}_{\nu_A}$ to $\mc{L}_\Gamma(X)$. This follows directly from the fact that for $z,w \in \C \setminus \overline{\Sigma}_{\nu_A}$ we have
\begin{equation*}
  R(z,A)-R(w,A) = (z^{-1}-w^{-1})zwR(z,A)R(w,A) \in (z^{-1}-w^{-1})\Gamma.
\end{equation*}
The same holds for the map $z \mapsto R(z,B)$ from $\C \setminus \overline{\Sigma}_{\nu_B}$ to $\mc{L}_\Gamma(X)$. Analogously, the map $(z,w) \mapsto R(z,A)R(w,B)$ is continuous from $(\C \setminus \overline{\Sigma}_{\nu_A})\times (\C \setminus \overline{\Sigma}_{\nu_B})$ to $\mc{L}_\Gamma(X)$.

By Theorem \ref{theorem:euclideanrepresentation} there is a closed subalgebra $\mc{B}$ of $\mc{L}(H_0)$ for some Hilbert space $H_0$, a bounded algebra homomorphism $\rho:\mc{B} \to \mc{L}(X)$ and a bounded linear operator $\tau:\mc{L}_\Gamma(X) \to \mc{B}$ so that $\rho\tau(T)=T$ for all $T \in \mc{L}_{\Gamma}(X)$. Furthermore, $\tau$ extends to an algebra homomorphism on the algebra $\mc{A}$ generated by $\Gamma$.

Set $R_A(z) = \tau(R(z,A))$ for $z \in \C \setminus\overline{\Sigma}_{\nu_A}$ and $R_B(z) = \tau(R(z,B))$ for $z \in \C \setminus\overline{\Sigma}_{\nu_B}$. Then, since $\tau$ is an algebra homomorphism on $\mc{A}$, we know that $R_A$ and $R_B$ are commuting functions which obey the resolvent equations
\begin{align*}
  R_A(z) - R_A(w) &= (w-z)R_A(z)R_A(w), \qquad z,w \in \C \setminus\overline{\Sigma}_{\nu_A},\\
  R_B(z) - R_B(w) &= (w-z)R_B(z)R_B(w), \qquad z,w \in \C \setminus\overline{\Sigma}_{\nu_B}.
\end{align*}
Furthermore we have
\begin{align}\label{eq:resolventforRA}
  \sup_{\lambda \in \C \setminus\overline{\Sigma}_{\nu_A}}\nrm{\lambda R_A(\lambda)}&\leq \nrm{\tau}, \qquad
  \sup_{\lambda \in \C \setminus\overline{\Sigma}_{\nu_B}}\nrm{ \lambda R_B(\lambda)}\leq\nrm{\tau}.
\end{align}
 Finally we note that, since $z \to R(z,A)$ is continuous from $\C \setminus\overline{\Sigma}_{\nu_A}$ into $\mc{L}_\Gamma(X)$, the map $R_A$ is also continuous. A similar statement holds for $R_B$. Therefore  it follows from the resolvent equation that both $R_A$ and $R_B$ are holomorphic.

Now let $H$ be the subspace of $H_0$ of all $\xi \in H_0$ such that
\begin{equation}
\begin{aligned}
  \lim_{t \to \infty} \xi + tR_A(-t)\xi = \lim_{t \to 0} tR_A(-t)\xi&=0,\\
\lim_{t \to \infty} \xi + tR_B(-t)\xi = \lim_{t \to 0} tR_B(-t)\xi&=0.
\end{aligned}
\label{eq:densityandinjectivity}
\end{equation}
As the operators $tR_A(-t)$ and $tR_B(-t)$ are uniformly bounded for $t>0$, $H$ is closed. Moreover, since $R_A$ and $R_B$ commute, $R_A(z)(H) \subseteq H$ for $z \in \C \setminus \overline{\Sigma}_{\nu_A}$ and $R_B(z)(H) \subseteq H$ for $z \in \C \setminus \overline{\Sigma}_{\nu_B}$.

For $\varphi_n$ as in \eqref{eq:phin} we have $\varphi_n(A),\varphi_n(B) \in \mc{L}_\Gamma(X)$ with
\begin{align}\label{eq:varphiAB}
  \sup_{n\in \N}\, \nrm{\varphi_n(A)}_{\Gamma} \leq C,\qquad
  \sup_{n\in \N}\, \nrm{\varphi_n(B)}_{\Gamma} \leq C.
\end{align}
Moreover we claim that for all $n \in \N$ we have
\begin{equation}\label{eq:tauvarphiH}
  \tau(\varphi_n(A)\varphi_n(B))(H_0) \subseteq H.
\end{equation} To prove this claim it suffices to show that if $\xi = \tau(\varphi_n(A))\eta$ for some $\eta \in H_0$, then
\begin{equation}\label{eq:Hdef}
  \lim_{t \to \infty} \xi + tR_A(-t)\xi = \lim_{t \to 0} tR_A(-t)\xi=0,
\end{equation}
 and an identical statement for $B$. We have
 $$\tau(\varphi_n(A)) = n^{-1}R_A(-n^{-1}) - nR_A(-n)$$
 and therefore if $t \neq n, n^{-1}$ we have
 \begin{align*}
   tR_A(-t)\tau(\varphi_n(A)) &= tn^{-1}(t-n^{-1})^{-1}\hab{R_A(-t)-R_A(-n^{-1})} \\&\hspace{2cm} - tn(t-n)^{-1}\hab{R_A(-t)-R_A(-n)}.
 \end{align*}
 Combined with the uniform boundedness of $tR_A(-t)$ one can deduce \eqref{eq:Hdef} by taking the limits $t \to 0$ and $t \to \infty$ on each of the terms in this expression.

 We can now define the sectorial operator $\widetilde{A}$ on $H$ using $R_A$. For $\xi \in H$ we have by the resolvent equation that if $R_A(z)\xi =0$ for some $z \in \C \setminus \overline{\Sigma}_{\nu_A}$ we have $tR_A(-t)\xi =0$ for all $t>0$. Hence  $R_A(z)|_H$ is injective by \eqref{eq:densityandinjectivity}. As domain we take the range of $R_A(-1)$ and define
$$\widetilde{A}\hab{R_A(-1)\xi} := -\xi - R_A(-1)\xi,\qquad \xi \in H.$$
Then $\widetilde{A}$ is injective and has dense domain and range by \eqref{eq:densityandinjectivity} (See \cite[Section II.4.a]{EN00} and \cite[Proposition 10.1.7(3)]{HNVW17} for the details). Moreover by the resolvent equation we have $R(\cdot,\widetilde{A}) = R_A|_H$ and thus $\widetilde{A}$ is sectorial on $H$ with $\omega(\widetilde{A}) \leq \nu_A<\sigma_A$ by \eqref{eq:resolventforRA}. We make a similar definition for $\widetilde{B}$.

\bigskip

Finally, we turn to the inequalities in \ref{it:transference1}-\ref{it:transference3}.
For \ref{it:transference1} take $f \in \Xi_A$ and let $\omega(\tilde{A})<\mu_A<\sigma_A$. For any $n \in \N$ we have
\begin{equation*}
  (f\varphi_n)(A) = \frac{1}{2\pi i}\int_{\Gamma_{\mu_A}} f(z)\varphi_n(z) R(z,A)\dd z
\end{equation*}
and this integral converges as a Bochner integral in $\mc{L}_\Gamma(X)$. Therefore, using the boundedness of $\tau$, we have
\begin{equation*}
(f\varphi_n)(\widetilde{A})\xi = \tau\hab{(f \varphi_n)(A)}\xi,\qquad \xi \in H.
\end{equation*}
By the multiplicativity of the $H^\infty$-calculus, the boundedness of $\tau$ and \eqref{eq:varphiAB} we obtain that there is a $C>0$ such that for any $n \in \N$
\begin{equation*}
  \nrm{(f\varphi_n)(\widetilde{A})}_{\mc{L}(H)} \leq \nrm{\tau} \nrm{(f \varphi_n)(A)}_\Gamma \leq C.
\end{equation*}
We can prove an analogous estimate for $f(\widetilde{B})$ for any $f \in \Xi_B$ and thus \ref{it:transference1} follows.

For \ref{it:transference2} take $f \in H^1(\Sigma_{\sigma_A} \times \Sigma_{\sigma_B})$. We can express $f(A,B)$ as a Bochner integral in $\mc{L}_{\Gamma}(X)$ using \eqref{eq:jointcalculus}. By the boundedness of $\tau$ we conclude that
\begin{equation*}
  \tau\hab{f(A,B)}\xi = f(\widetilde{A},\widetilde{B})\xi, \qquad \xi \in H
\end{equation*}
Fix $n \in \N$. Using the fact that $\tau$ extends to an algebra homomorphism on the algebra generated by $\Gamma$ and \eqref{eq:tauvarphiH}, we have  for $n \in \N$
\begin{equation*}
  \tau\hab{f(A,B)\varphi_n(A)\varphi_n(B)}\eta = f(\widetilde{A},\widetilde{B})\tau\hab{\varphi_n(A)\varphi_n(B)}\eta, \qquad \eta \in H_0.
\end{equation*}
This means, by the boundedness of $\tau$ and \eqref{eq:varphiAB}, that
\begin{equation*}
  \nrm{\tau\hab{f(A,B)\varphi_n(A)\varphi_n(B)}}_{\mc{L}(H_0)} \leq C_0 \, \nrmb{f(\widetilde{A},\widetilde{B})}_{\mc{L}(H)}.
\end{equation*}
with $C_0>0$ independent of $f$ and $n$. Since $\rho$ is also bounded this implies by a limiting argument that
\begin{equation*}
  \nrmb{f(A,B)}_{\mc{L}(X)} \leq C_1 \, \nrmb{f(\widetilde{A},\widetilde{B})}_{\mc{L}(H)}
\end{equation*}
with $C_1>0$ again independent of $f$, proving \ref{it:transference2}.

Finally for \ref{it:transference3} take $f \in H^1(\Sigma_{\sigma_A};\Gamma_A)$. We can express $f(A)$ as a Bochner integral in $\mc{L}_\Gamma(X)$ using \eqref{eq:operatorvaluedHinfty}. Define $\widetilde{\Gamma}_A := \cbrace{\tau(T):T \in \Gamma_A}$ and $\widetilde{f}(z) := \tau(f(z))$. By the boundedness of $\tau$ we have
\begin{equation*}
  \tau(f(A))\xi = \widetilde{f}(\widetilde{A})\xi,\qquad \xi \in H.
\end{equation*}
Arguing analogously to the proof of \ref{it:transference2} we can now deduce that
\begin{equation*}
  \nrm{f(A)}_{\mc{L}(X)} \leq C\, \nrmb{\widetilde{f}(\widetilde{A})}_{\mc{L}(H)},
\end{equation*}
proving \ref{it:transference3}.
\end{proof}

\subsection*{The operator-valued $H^\infty$-calculus}
On a Hilbert space, any sectorial operator with a bounded $H^\infty$-calculus has a bounded operator-valued $H^\infty$-calculus, a result that is implicit in \cite{Le96} (see also \cite[Remark 6.5]{LLL98} and \cite{AFM98}). As a first application of the transference principle of Theorem \ref{theorem:transference} we obtain an analog of this statement in Banach spaces under additional $\alpha$-boundedness assumptions. Similar results using $\mc{R}$-boundedness techniques are contained in \cite{KW01,LLL98}.

\begin{theorem}\label{theorem:operatorHinfty}
Suppose that $A$ is a sectorial operator on $X$ with an $\alpha$-bounded $H^\infty$-calculus. Let $\Gamma$ be an $\alpha$-bounded subset of $\mc{L}(X)$ which commutes with the resolvent of $A$. Then $A$ has a bounded $H^\infty(\Gamma)$-calculus with $\omega_{H^\infty(\Gamma)}(A) \leq \omega_{\alphaH}(A)$.
\end{theorem}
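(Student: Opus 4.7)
The strategy is to reduce the statement to the well-known Hilbert space result (due to Le Merdy \cite{Le96}, see also \cite{AFM98, LLL98}), which says that a sectorial operator on a Hilbert space with a bounded $H^\infty$-calculus automatically admits a bounded operator-valued $H^\infty(\mathcal{B})$-calculus for any bounded subalgebra $\mathcal{B}$ of $\mc{L}(H)$ commuting with its resolvent. The bridge between the Banach space and the Hilbert space setting is supplied by the transference theorem (Theorem \ref{theorem:transference}).

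First I would verify that $A$ is $\alpha$-sectorial with $\omega_\alpha(A) \leq \omega_{\alphaH}(A)$: fixing $\omega_{\alphaH}(A) < \nu < \sigma$, the functions $f_\lambda(z) := \lambda/(\lambda-z)$ with $\lambda \in \mathbb{C}\setminus\overline{\Sigma}_\sigma$ lie in a bounded subset of $H^\infty(\Sigma_\nu)$, so $\{\lambda R(\lambda,A):\lambda\notin\overline{\Sigma}_\sigma\} = \{f_\lambda(A)\}$ is $\alpha$-bounded. Now fix $\omega_{\alphaH}(A) < \sigma <\pi$, take $B := A$, set $\Xi_A := \{f\in H^\infty(\Sigma_\sigma):\|f\|_{H^\infty(\Sigma_\sigma)}\leq 1\}$ and $\Gamma_A := \Gamma$, and apply Theorem \ref{theorem:transference}. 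This produces a Hilbert space $H$, a sectorial operator $\widetilde{A}$ on $H$ with $\omega(\widetilde{A})<\sigma$, and a bounded set $\widetilde{\Gamma}\subseteq\mc{L}(H)$ commuting with the resolvent of $\widetilde{A}$, such that (by \ref{it:transference1}) $\widetilde{A}$ has a bounded $H^\infty(\Sigma_\sigma)$-calculus on $H$, and such that (by \ref{it:transference3}) for every $g \in H^1(\Sigma_\sigma;\Gamma)$ there is $\widetilde{g}\in H^1(\Sigma_\sigma;\widetilde{\Gamma})$ with
\[
\nrm{g(A)}_{\mc{L}(X)} \leq C \,\nrmb{\widetilde{g}(\widetilde{A})}_{\mc{L}(H)} .
\]
Inspecting the construction, $\widetilde{g}$ is obtained by applying the bounded algebra homomorphism $\tau\colon \mc{L}_\Gamma(X)\to \mc{B}$ pointwise to $g$, so the assignment $g\mapsto \widetilde{g}$ also converts $H^\infty$-valued operator functions into $H^\infty$-valued operator functions with a uniform norm bound.

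Now I would invoke the Hilbert space theorem applied to $\widetilde{A}$ and the bounded family $\widetilde{\Gamma}$: there exists a constant $C_H>0$ such that for every $h \in H^\infty(\Sigma_\sigma;\widetilde{\Gamma})$,
\[
\nrm{h(\widetilde{A})}_{\mc{L}(H)} \leq C_H\, \nrm{h}_{H^\infty(\Sigma_\sigma;\widetilde{\Gamma})}.
\]
To transfer this back, let $f \in H^\infty(\Sigma_\sigma;\Gamma)$ and consider $f\varphi_n \in H^1(\Sigma_\sigma;\Gamma)$, where $\varphi_n$ is the regularizer from \eqref{eq:phin}. Since $\tau$ and the pointwise operation preserve the $H^\infty$-norm up to a constant, the corresponding $\widetilde{f\varphi_n} = \widetilde{f}\cdot \varphi_n$ satisfies $\|\widetilde{f\varphi_n}\|_{H^\infty(\Sigma_\sigma;\widetilde{\Gamma})} \lesssim \|f\|_{H^\infty(\Sigma_\sigma;\Gamma)}$ uniformly in $n$. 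Combining the Hilbert space bound with \ref{it:transference3} gives $\sup_n \nrm{(f\varphi_n)(A)}_{\mc{L}(X)} \lesssim \nrm{f}_{H^\infty(\Sigma_\sigma;\Gamma)}$, which by the criterion recalled before the theorem means $f(A)$ extends boundedly to $X$. Letting $\sigma \downarrow \omega_{\alphaH}(A)$ yields the claimed angle estimate.

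\textbf{Main obstacle.} The delicate point is bookkeeping the passage between $H^1$-valued and $H^\infty$-valued operator functions: part \ref{it:transference3} of the transference theorem is only phrased for $H^1$, so the $\varphi_n$-regularization is essential, as is the verification that the map $g\mapsto\widetilde{g}$ constructed inside the proof of Theorem \ref{theorem:transference} really does send $\Gamma$-valued $H^\infty$ functions to $\widetilde{\Gamma}$-valued $H^\infty$ functions in a norm-controlled way. The conceptual key is that the $\alpha$-boundedness of $\Gamma$ is precisely what guarantees that $\tau(\Gamma)=\widetilde{\Gamma}$ is norm-bounded in $\mc{L}(H)$, which is the only input the Hilbert space theorem needs.
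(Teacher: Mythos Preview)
Your proposal is correct and follows essentially the same route as the paper's proof: apply the transference principle (Theorem \ref{theorem:transference}) with $\Xi_A$ the unit ball of $H^\infty(\Sigma_\sigma)$ and $\Gamma_A=\Gamma$, invoke the Hilbert space operator-valued calculus result for $\widetilde{A}$ and $\widetilde{\Gamma}$, and return via the $\varphi_n$-regularization. If anything you are a bit more careful than the paper, which glosses over the verification of $\alpha$-sectoriality and the precise form of the map $g\mapsto\widetilde g$; your identification of the bookkeeping around the $H^1$ versus $H^\infty$ passage as the only delicate point is exactly right.
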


\begin{proof}
  Fix $\omega_{\alphaH}(A)<\sigma<\pi$. We apply the transference principle of Theorem \ref{theorem:transference} to the sectorial operator $A$ with $\Xi_A = H^\infty(\Sigma_\sigma)$ and $\Gamma_A = \Gamma$. Then there is a sectorial operator $\widetilde{A}$ on a Hilbert space $H$ and a uniformly bounded family  of operators $\widetilde{\Gamma}$ on $H$ such that for all $f \in H^1(\Sigma_\sigma;\Gamma)$ there is a $\widetilde{f} \in H^1(\Sigma_\sigma;\widetilde{\Gamma})$ with
  \begin{equation*}
    \nrm{f(A)}_{\mc{L}(X)} \leq C\, \nrmb{\widetilde{f}(\widetilde{A})}_{\mc{L}(H)}.
  \end{equation*}
  As stated before the theorem,  any sectorial operator on a Hilbert space with a bounded $H^\infty$-calculus has a bounded operator-valued $H^\infty$-calculus. So for any $f \in H^\infty(\Sigma_\sigma;\Gamma)$ we have
  \begin{equation*}
 \sup_{n\in \N}\nrm{(f\varphi_n)(A)} \leq C\, \sup\cbraceb{\nrmb{\widetilde{f}(\widetilde{A})}:\widetilde{f} \in H^1(\Sigma_\sigma;\widetilde{\Gamma})} \leq C,
\end{equation*}
which shows that $A$ has a bounded $H^\infty(\Gamma)$-calculus with $\omega_{H^\infty(\Gamma)}(A) \leq \sigma$.
\end{proof}

In Theorem \ref{theorem:operatorHinfty} we cannot avoid the $\alpha$-boundedness assumptions. In \cite{LLL98} it is shown that if the conclusion of Theorem \ref{theorem:operatorHinfty} holds for all sectorial operators with a bounded $H^\infty$-calculus and for all bounded and resolvent commuting families $\Gamma \subseteq \mc{L}(X)$, then $X$ is isomorphic to a Hilbert space.

We can combine Theorem \ref{theorem:Hinftyell2bounded} and Theorem \ref{theorem:operatorHinfty} to improve
Theorem \ref{theorem:operatorHinfty}
 in case the Euclidean structure $\alpha$ is either the $\gamma$- or the $\ell^2$-structure. A similar result using $\mc{R}$-boundedness can be found in \cite[Theorem 4.4]{KW01}.

\begin{corollary} \label{corollary:operatorHinftygammaell2}
Let $A$ be a sectorial operator on $X$ with a bounded $H^\infty$-calculus and let $\Gamma$ be a subset of $\mc{L}(X)$ which commutes with the resolvent of $A$.
\begin{enumerate}[(i)]
\item \label{it:operatorHinftygammabounded} If $X$ has Pisier's contraction property and $\Gamma$ is $\gamma$-bounded, then $A$ has a $\gamma$-bounded $H^\infty(\Gamma)$-calculus with $\omega_{\gammaH(\Gamma)}(A) \leq \omega_{H^\infty}(A)$.
\item \label{it:operatorHinftyl2bounded} If $X$ is a Banach lattice and $\Gamma$ is $\ell^2$-bounded, then $A$ has an $\ell^2$-bounded $H^\infty(\Gamma)$-calculus with $\omega_{\ellH(\Gamma)}(A) \leq \omega_{H^\infty}(A)$.
\end{enumerate}
\end{corollary}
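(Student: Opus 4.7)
The plan is to combine Theorem \ref{theorem:Hinftyell2bounded} with an operator-valued adaptation of the proof of that same theorem, using Proposition \ref{proposition:unconditionalalphabounded} with $\Gamma$ in place of the scalar multiplication operators. Set $\alpha=\gamma$ in case (i) and $\alpha=\gl$ in case (ii); Pisier's contraction property implies that $X$ has finite cotype, and on a Banach lattice $\gl\simeq\ell^2$ by Proposition \ref{proposition:compareEuclidean}, so in both cases $\alpha$ is unconditionally stable (Proposition \ref{proposition:unconditionallystable}) and satisfies the splitting identity $\alpha(\N\times\N;X)=\alpha(\N;\alpha(\N;X))$ required by Proposition \ref{proposition:unconditionalalphabounded}. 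By Theorem \ref{theorem:Hinftyell2bounded}, $A$ has an $\alpha$-bounded $H^\infty$-calculus with $\omega_{\alphaH}(A)=\omega_{H^\infty}(A)$.

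Fix $\omega_{H^\infty}(A)<\mu<\nu<\sigma$. By Proposition \ref{proposition:alphaproperties}\ref{it:propalha5} and the approximation \eqref{eq:calculusphin} applied coordinatewise, it suffices to show that the set $\cbrace{f(A): f\in H^1(\Sigma_\sigma;\Gamma)\cap H^\infty(\Sigma_\sigma;\Gamma),\ \nrm{f}_{H^\infty(\Sigma_\sigma;\Gamma)}\leq 1}$ is $\alpha$-bounded. Since $f(z)\in\Gamma$ commutes with every $R(w,A)$, the derivation of \eqref{eq:Hinftyform} goes through verbatim for operator-valued $f$, and we may furthermore slide $f(\ee^{\epsilon i\nu}2^kt)$ between two copies of $\psi:=\varphi_{\ee^{\epsilon i\nu}}^{1/2}\in H^1(\Sigma_\mu)$ to obtain
\begin{equation*}
    f(A) = \sum_{k\in\Z}\sum_{\epsilon=\pm 1}\frac{-\epsilon}{2\pi i}\ee^{\epsilon i\nu/2}\int_1^2 \psi(t^{-1}2^{-k}A)\,f(\ee^{\epsilon i\nu}2^kt)\,\psi(t^{-1}2^{-k}A)\,\frac{\ddn t}{t}.
\end{equation*}

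Next I would verify the hypotheses of Proposition \ref{proposition:unconditionalalphabounded} for $U_k=V_k=\psi(t^{-1}2^kA)$ and the $\alpha$-bounded family $\Gamma$. Lemma \ref{lemma:hadamardsum} combined with the boundedness of the $H^\infty(\Sigma_\mu)$-calculus of $A$ on $X$ gives $\sup_{\abs{\epsilon_k}=1}\nrm{\sum_{k=-n}^n\epsilon_k\psi(t^{-1}2^kA)}_{\mc{L}(X)}\leq C_0$ uniformly in $t\in[1,2]$, $\epsilon=\pm 1$ and $n\in\N$. By taking adjoints this bound also holds in $\mc{L}(X^*)$. Then unconditional stability of $\alpha$ (Proposition \ref{proposition:unconditionallystable}) yields the required one-sided $\alpha$- and $\alpha^*$-estimates
\begin{align*}
    \nrmb{(\psi(t^{-1}2^kA)x)_{k=-n}^n}_\alpha &\leq M_U\nrm{x}_X, \\
    \nrmb{(\psi(t^{-1}2^kA)^*x^*)_{k=-n}^n}_{\alpha^*} &\leq M_V\nrm{x^*}_{X^*},
\end{align*}
with $M_U,M_V$ uniform in $t,\epsilon,n$. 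Proposition \ref{proposition:unconditionalalphabounded} then shows that
\begin{equation*}
    \widetilde{\Gamma}_{t,\epsilon}:=\cbraces{\sum_{k=-n}^n \psi(t^{-1}2^{-k}A)\,T_k\,\psi(t^{-1}2^{-k}A): T_k\in\Gamma,\ n\in\N}
\end{equation*}
is $\alpha$-bounded uniformly in $t\in[1,2]$ and $\epsilon=\pm 1$.

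The final step is to apply this to $T_{j,k}=f_j(\ee^{\epsilon i\nu}2^kt)\in\Gamma$ for any choice of $f_1,\dots,f_m$ with $\nrm{f_j}_{H^\infty(\Sigma_\sigma;\Gamma)}\leq 1$ and $\mb{x}\in X^m$. Passing the $\alpha$-norm through the contour-parameter integral via the triangle inequality (as in the final display of the proof of Theorem \ref{theorem:Hinftyell2bounded}) yields $\nrm{(f_j(A)x_j)_{j=1}^m}_\alpha\lesssim\nrm{\mb{x}}_\alpha$, which gives $\omega_{\alphaH(\Gamma)}(A)\leq\sigma$, and letting $\sigma\downarrow\omega_{H^\infty}(A)$ finishes the proof. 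The principal technical obstacle is the interchange of sum and integral together with the verification that $f(\ee^{\epsilon i\nu}2^kt)$ can legitimately be treated as an element of $\Gamma$ when invoking Proposition \ref{proposition:unconditionalalphabounded}; both rely crucially on the commutation hypothesis between $\Gamma$ and the resolvent of $A$, which is already in the statement.
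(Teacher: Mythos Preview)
Your proposal is correct and follows essentially the same route as the paper: reduce to $\alpha\in\{\gamma,\gl\}$, invoke Theorem \ref{theorem:Hinftyell2bounded}, and then rerun the argument of that theorem with the scalar multipliers in $\Gamma_{t,\epsilon}$ replaced by elements of the $\alpha$-bounded family $\Gamma$, so that Proposition \ref{proposition:unconditionalalphabounded} applies with $U_k=V_k=\psi(t^{-1}2^kA)$ and middle operators $T_k\in\Gamma$. The only structural difference is that the paper additionally cites Theorem \ref{theorem:operatorHinfty} (the transference-based result) to first obtain a bounded $H^\infty(\Gamma)$-calculus before upgrading to $\alpha$-boundedness, whereas you fold this into the single estimate by restricting to $f\in H^1\cap H^\infty$ and approximating via $f\varphi_n$; your route is slightly more direct and avoids the transference machinery entirely for this corollary.
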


\begin{proof}
  Either take $\alpha = \gamma$ or $\alpha = \ell^2$. By Theorem \ref{theorem:Hinftyell2bounded} we know that $A$ has an $\alpha$-bounded $H^\infty$-calculus with $\omega_{\alphaH}(A) = \omega_{H^\infty}(A)$. Then by Theorem \ref{theorem:operatorHinfty} we know that $A$ has a bounded $H^\infty(\Gamma)$-calculus with $\omega_{H^\infty(\Gamma)}(A) \leq \omega_{H^\infty}(A)$. Finally, by a repetition of the proof of Theorem \ref{theorem:Hinftyell2bounded} using the operator family
  \begin{equation*}
  \Gamma_{t,\epsilon} := \cbraces{\sum_{k=-n}^n T_k \psi(t^{-1}2^k A)^2: T_{-n},\ldots,T_n \in \Gamma, n \in \N}
\end{equation*}
we can prove that $A$ has a $\alpha$-bounded $H^\infty(\Gamma)$-calculus with $\omega_{\alphaH(\Gamma)}(A) \leq \omega_{H^\infty}(A)$.
\end{proof}

\subsection*{The joint $H^\infty$-calculus}
On a Hilbert space any pair of resolvent commuting sectorial operators with bounded $H^\infty$-calculi has a bounded joint $H^\infty$-calculus (see \cite[Corollary 4.2]{AFM98}). Moreover the converse of this statement is trivial. Again using the transference principle of Theorem \ref{theorem:transference} we obtain a characterization of the boundedness of the joint $H^\infty$-calculus of a pair of commuting sectorial operators $(A,B)$ on a Banach space $X$ in terms of the $\alpha$-boundedness of the $H^\infty$-calculi of $A$ and $B$.

\begin{theorem}\label{theorem:jointHinfty}
  Suppose that $A$ and $B$ are resolvent commuting sectorial operators on $X$.
  \begin{enumerate}[(i)]
  \item If $A$ and $B$ have an $\alpha$-bounded $H^\infty$-calculus, then $(A,B)$ has a bounded joint $H^\infty$-calculus with $$\omega_{H^\infty}(A,B) \leq \hab{\omega_{\alphaH}(A),\omega_{\alphaH}(B)}.$$
  \item If $(A,B)$ has a bounded joint $H^\infty$-calculus, then for any $$\omega_{H^\infty}(A,B)<(\sigma_A,\sigma_B)<(\pi,\pi)$$ there is a Euclidean structure $\alpha$ such that $A$ and $B$ have $\alpha$-bounded $H^\infty$-calculi with $\omega_{\alphaH}(A) \leq \sigma_A$ and $\omega_{\alphaH}(B) \leq \sigma_B$.
  \end{enumerate}
\end{theorem}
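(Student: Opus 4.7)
The plan is to prove (i) by combining the transference principle (Theorem \ref{theorem:transference}) with the Hilbert space result cited above that commuting sectorial operators with bounded $H^\infty$-calculi admit a bounded joint $H^\infty$-calculus, and to prove (ii) by recognizing the joint calculus as a bounded algebra homomorphism from a $C^*$-subalgebra and invoking Theorem \ref{theorem:Cboundedalphabounded}.

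For (i) I first observe that an $\alpha$-bounded $H^\infty$-calculus forces $\alpha$-sectoriality with $\omega_\alpha(A) \leq \omega_{\alphaH}(A)$, since whenever $\omega_{\alphaH}(A) < \sigma < \sigma' < \pi$ and $\lambda \in \C \setminus \overline{\Sigma}_{\sigma'}$ the function $f_\lambda(z) := \lambda/(\lambda-z)$ lies in $H^\infty(\Sigma_\sigma)$ with norm uniformly bounded in $\lambda$, and $f_\lambda(A) = \lambda R(\lambda,A)$. Fixing $\omega_{\alphaH}(A) < \sigma_A < \pi$, $\omega_{\alphaH}(B) < \sigma_B < \pi$ and applying Theorem \ref{theorem:transference} with $\Xi_A, \Xi_B$ the unit balls of $H^\infty(\Sigma_{\sigma_A})$ and $H^\infty(\Sigma_{\sigma_B})$ produces resolvent commuting sectorial operators $\widetilde A, \widetilde B$ on a Hilbert space $H$ with $\omega(\widetilde A) < \sigma_A$, $\omega(\widetilde B) < \sigma_B$, both possessing bounded $H^\infty$-calculi on these sectors. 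The Albrecht--Franks--McIntosh theorem therefore supplies a bounded joint $H^\infty$-calculus for $(\widetilde A,\widetilde B)$. To transfer back, I set $\psi_n(z,w) := \varphi_n(z)\varphi_n(w)$ with $\varphi_n$ as in \eqref{eq:phin}; a direct sectorial estimate gives $\sup_n \|\psi_n\|_{H^\infty(\Sigma_{\sigma_A} \times \Sigma_{\sigma_B})} < \infty$. For $f \in H^\infty(\Sigma_{\sigma_A} \times \Sigma_{\sigma_B})$ the product $f\psi_n$ lies in $H^1 \cap H^\infty$, so part (ii) of Theorem \ref{theorem:transference} combined with the Hilbert space calculus gives
\begin{equation*}
\|(f\psi_n)(A,B)\|_{\mc{L}(X)} \leq C\,\|(f\psi_n)(\widetilde A, \widetilde B)\|_{\mc{L}(H)} \leq C'\,\|f\|_\infty
\end{equation*}
uniformly in $n$. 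Writing $x \in DR(A,B)$ as $x = \varphi(A,B)y$ and noting $f\psi_n \varphi \to f\varphi$ in $H^1$ by dominated convergence, one obtains $(f\psi_n)(A,B)x \to f(A,B)x$, so $f(A,B)$ extends boundedly to $X$ with the claimed angle bound.

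For (ii), the hypothesis makes $\rho \colon H^\infty(\Sigma_{\sigma_A} \times \Sigma_{\sigma_B}) \to \mc{L}(X)$, $f \mapsto f(A,B)$, a bounded algebra homomorphism, and $H^\infty(\Sigma_{\sigma_A} \times \Sigma_{\sigma_B})$ is a closed unital subalgebra of the $C^*$-algebra of bounded continuous functions on $\Sigma_{\sigma_A} \times \Sigma_{\sigma_B}$. Hence $\{f(A,B) : \|f\|_\infty \leq 1\}$ is $C^*$-bounded, and Theorem \ref{theorem:Cboundedalphabounded} produces a Euclidean structure $\alpha$ making this family $\alpha$-bounded. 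For $g \in H^\infty(\Sigma_{\sigma_A})$ the function $\widetilde g(z,w) := g(z)$ lies in $H^\infty(\Sigma_{\sigma_A} \times \Sigma_{\sigma_B})$ with $\|\widetilde g\|_\infty = \|g\|_\infty$, and a Fubini computation with the Dunford integral representing $(\widetilde g\,\varphi)(A,B)$ yields
\begin{equation*}
(\widetilde g\,\varphi)(A,B) = (g\varphi^{(1)})(A)\,\varphi^{(1)}(B) = g(A)\,\varphi(A,B),
\end{equation*}
so $\widetilde g(A,B) = g(A)$ on $DR(A,B)$, and by boundedness of $\widetilde g(A,B)$ and density of $DR(A,B)$ everywhere on $X$. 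This embeds the unit ball of $\{g(A) : g \in H^\infty(\Sigma_{\sigma_A})\}$ into the $\alpha$-bounded family above, giving an $\alpha$-bounded $H^\infty(\Sigma_{\sigma_A})$-calculus for $A$ with angle at most $\sigma_A$; the argument for $B$ is identical.

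The main technical difficulty is the passage from the $H^1$-transference of Theorem \ref{theorem:transference}(ii) to the $H^\infty$-statement needed in (i): one has to regularize $f$ by $\psi_n$, verify uniform bounds in $H^\infty$-function norm, and convert the pointwise limit $\psi_n \to 1$ into operator convergence through the identity $x = \varphi(A,B)y$ for $x \in DR(A,B)$. A secondary subtlety in (ii) is identifying $\widetilde g(A,B)$ with $g(A)$: since $\widetilde g \notin H^1$ one must go through the extended definition $\widetilde g(A,B) = (\widetilde g\,\varphi)(A,B)\varphi(A,B)^{-1}$ rather than use the direct two-variable Dunford integral, which would formally vanish because $\widetilde g$ is independent of $w$.
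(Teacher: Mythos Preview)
Your proof is correct and follows essentially the same approach as the paper: part (i) via the transference principle of Theorem \ref{theorem:transference} combined with the Albrecht--Franks--McIntosh Hilbert space result, and part (ii) via $C^*$-boundedness and Theorem \ref{theorem:Cboundedalphabounded} applied to functions constant in one variable. You have in fact filled in details the paper leaves implicit, notably the observation that an $\alpha$-bounded $H^\infty$-calculus forces $\alpha$-sectoriality (needed to invoke Theorem \ref{theorem:transference}), the explicit $\psi_n$-regularization to pass from the $H^1$-transference to the $H^\infty$-statement, and the careful identification $\widetilde g(A,B)=g(A)$ via the extended calculus.
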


\begin{proof}
  The first part is a typical application of Theorem \ref{theorem:transference}. Let $\omega_{\alphaH}(A)<\sigma_A<\pi$ and $\omega_{\alphaH}(B)<\sigma_B<\pi$. Using Theorem \ref{theorem:transference} we can find a pair of resolvent commuting sectorial operators $\widetilde{A}$ and $\widetilde{B}$ on a Hilbert space $H$ such that $\omega_{H^\infty}(\widetilde{A})<\sigma_A$, $\omega_{H^\infty}(\widetilde{B})<\sigma_B$ and such that $$\nrm{f(A,B)}_{\mc{L}(X)} \leq C\,\nrm{f(\widetilde{A},\widetilde{B})}_{\mc{L}(H)}$$ for all $f \in H^1(\Sigma_{\sigma_A} \times \Sigma_{\sigma_B})$. On a Hilbert space any pair of sectorial operators with a bounded $H^\infty$-calculus has a bounded joint $H^\infty$-calculus (see \cite[Corollary 4.2]{AFM98}), so by approximation this proves the first part.

  For the second part note that  $H^\infty(\Sigma_{\sigma_A}\times \Sigma_{\sigma_B})$ is a closed unital subalgebra of the $C^*$-algebra of bounded continuous functions on $\Sigma_{\sigma_A}\times \Sigma_{\sigma_B}$ and that the algebra homomorphism $\rho\colon H^\infty(\Sigma_{\sigma_A}\times \Sigma_{\sigma_B}) \to \mc{L}(X)$ given by $f \mapsto f(A,B)$ is bounded since $(A,B)$ has a bounded $H^\infty$-calculus. Therefore the set
  \begin{equation*}
    \cbrace{f(A,B):f \in H^\infty(\Sigma_{\sigma_A}\times \Sigma_{\sigma_B}), \nrm{f}_{H^\infty(\Sigma_{\sigma_A}\times \Sigma_{\sigma_B})} \leq 1}
  \end{equation*}
  is $C^*$-bounded, from which the claim follows by Theorem \ref{theorem:Cboundedalphabounded} and restricting to functions $f:\Sigma_{\sigma_A}\times \Sigma_{\sigma_B} \to \C$ that are constant in one of the variables.
\end{proof}

As in the operator-valued $H^\infty$-calculus case, in Theorem \ref{theorem:jointHinfty} we cannot omit the assumption of an $\alpha$-bounded $H^\infty$-calculus. We illustrate this with an example, see also \cite{KW01,LLL98}.
\begin{example}\label{example:jointHinfty}
  Consider the Schatten class $\mc{S}^p$ for $p \in (1,\infty)$. We represent a member $x \in \mc{S}^P$ by an infinite matrix, i.e. $x = (x_{jk})_{j,k=1}^\infty$, and define $Ax = (2^jx_{jk})_{j,k=1}^\infty$ with as domain the set of all $x \in \mc{S}^p$ such that $Ax \in \mc{S}^p$. Analogously we define $Bx = (2^kx_{jk})_{j,k=1}^\infty$. Then $A$ and $B$ are both sectorial operators with bounded $H^\infty$-calculus and $\omega_{H^\infty}(A) = \omega_{H^\infty}(B) =0$. However $(A,B)$ do not have a bounded joint $H^\infty$-calculus for any choice of angles, unless $p=2$ (see \cite[Theorem 3.9]{LLL98}).
\end{example}

\begin{remark}
  In particular Example \ref{example:jointHinfty} shows that the Euclidean structure $\alpha$ given by Theorem \ref{theorem:Hinftyalphabounded} for $A$ must fail the ideal property. Indeed, let $\alpha$ be an ideal Euclidean structure such that $A$ has an $\alpha$-bounded $H^\infty$-calculus. Then for any $f \in H^\infty(\Sigma_\sigma)$ we have $f(B)=Tf(A)T$, where $T$ is the transpose operator on $\mc{S}^p$. So $B$ has an $\alpha$-bounded $H^\infty$-calculus as well by the ideal property of $\alpha$ and therefore Theorem \ref{theorem:jointHinfty} would imply that $(A,B)$ has a bounded joint $H^\infty$-calculus, a contradiction with Example \ref{example:jointHinfty}.
\end{remark}

We can combine Theorem \ref{theorem:Hinftyell2bounded} and Theorem \ref{theorem:jointHinfty} to recover the following result of Lancien, Lancien and Le Merdy \cite{LLL98} (see also \cite{AFM98, FM98, KW01}).

\begin{corollary}
  Suppose that $X$ has Pisier's contraction property or is a Banach lattice. Let $A$ and $B$ be resolvent commuting sectorial operators on $X$ with a bounded $H^\infty$-calculus. Then $(A,B)$ has a bounded joint $H^\infty$-calculus with
  $\omega_{H^\infty}(A,B) = \hab{\omega_{H^\infty}(A),\omega_{H^\infty}(B)}.$
\end{corollary}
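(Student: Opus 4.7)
The plan is to combine Theorems \ref{theorem:Hinftyell2bounded} and \ref{theorem:jointHinfty}. I choose the Euclidean structure $\alpha = \gamma$ if $X$ has Pisier's contraction property, and $\alpha = \ell^2$ if $X$ is a Banach lattice. In either case, Theorem \ref{theorem:Hinftyell2bounded} applies to each of $A$ and $B$ separately, and gives that both operators have an $\alpha$-bounded $H^\infty$-calculus with
\begin{equation*}
\omega_{\alphaH}(A) = \omega_{H^\infty}(A), \qquad \omega_{\alphaH}(B) = \omega_{H^\infty}(B).
\end{equation*}

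Next I invoke Theorem \ref{theorem:jointHinfty}(i) with this Euclidean structure $\alpha$. Since $A$ and $B$ are resolvent commuting and both have an $\alpha$-bounded $H^\infty$-calculus, the pair $(A,B)$ has a bounded joint $H^\infty$-calculus with
\begin{equation*}
\omega_{H^\infty}(A,B) \leq \hab{\omega_{\alphaH}(A),\omega_{\alphaH}(B)} = \hab{\omega_{H^\infty}(A),\omega_{H^\infty}(B)}.
\end{equation*}

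For the reverse inequality, I just note that it is essentially trivial: given angles $\omega_{H^\infty}(A,B)<(\sigma_A,\sigma_B)$, any $f \in H^\infty(\Sigma_{\sigma_A})$ may be viewed as an element of $H^\infty(\Sigma_{\sigma_A}\times \Sigma_{\sigma_B})$ that is constant in the second variable, with equal sup-norm. Tracing through the definition of the joint calculus (and using the multiplicativity together with $\varphi_n(z)\varphi_n(w)$ as the regularisation on $DR(A,B)$), one obtains $f(A,B)=f(A)$ on $DR(A,B)$, so that boundedness of $f(A,B)$ forces boundedness of $f(A)$; hence $\omega_{H^\infty}(A)\leq \sigma_A$, and similarly for $B$. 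This yields the matching lower bound $\omega_{H^\infty}(A,B)\geq \hab{\omega_{H^\infty}(A),\omega_{H^\infty}(B)}$, completing the proof.

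There is essentially no serious obstacle here, since both ingredient theorems have been established: the only mild point to verify is that the joint calculus really restricts to the one-variable calculus on functions that depend on only one of $z,w$, which is a direct consequence of the definitions via contour integrals and the density of $DR(A,B)$.
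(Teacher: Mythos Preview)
Your proof is correct and follows exactly the route the paper indicates: the corollary is stated immediately after the sentence ``We can combine Theorem \ref{theorem:Hinftyell2bounded} and Theorem \ref{theorem:jointHinfty} to recover the following result'', and you carry out precisely this combination, together with the trivial reverse inequality that the paper leaves implicit.
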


\subsection*{The sum of closed operators}
We end this section with a sum of closed operators theorem. It is well known that an operator-valued $H^\infty$-calculus implies theorems on the closedness of the sum of commuting operators, see e.g. \cite{AFM98,KW01, LLL98} and \cite[Theorem 12.13]{KW04}. However, here we prefer to employ the transference principle of Theorem \ref{theorem:transference} once more.

\begin{theorem}\label{theorem:sumsofoperators}
Let $A$ and $B$ be resolvent commuting sectorial operators on $X$. Suppose that $A$ has an $\alpha$-bounded $H^\infty$-calculus and $B$ is $\alpha$-sectorial with $\omega_{\alphaH}(A) +\omega_{\alpha}(B)<\pi$. Then $A+B$ is closed on $D(A)\cap D(B)$ and
\begin{equation*}
  \nrm{Ax}_X+\nrm{Bx}_X \lesssim \nrm{Ax+Bx}_X,\qquad x \in D(A)\cap D(B).
\end{equation*}
Moreover $A+B$ is sectorial with $\omega(A+B) \leq \max\cbrace{\omega_{\alphaH}(A), \omega_\alpha(B)}$.
\end{theorem}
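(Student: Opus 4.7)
The plan is to reduce to the Hilbert space version of the Kalton--Weis sum of operators theorem via the transference principle of Theorem \ref{theorem:transference}. First I would observe that the $\alpha$-bounded $H^\infty$-calculus of $A$ implies $\alpha$-sectoriality of $A$ with $\omega_\alpha(A)\leq \omega_{\alphaH}(A)$, since for each $\lambda \in \C\setminus \overline{\Sigma}_\sigma$ with $\sigma>\omega_{\alphaH}(A)$ the resolvent $\lambda R(\lambda,A)=f_\lambda(A)$ with $f_\lambda(z)=\lambda/(\lambda-z)$ uniformly bounded in $H^\infty(\Sigma_\sigma)$. By the hypothesis $\omega_{\alphaH}(A)+\omega_\alpha(B)<\pi$ we can pick angles
\begin{equation*}
\omega_{\alphaH}(A)<\sigma_A,\qquad \omega_\alpha(B)<\sigma_B,\qquad \sigma_A+\sigma_B<\pi.
\end{equation*}

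Next I would apply Theorem \ref{theorem:transference} to the resolvent commuting pair $(A,B)$, with the $\alpha$-bounded set $\Xi_A=\{f\in H^\infty(\Sigma_{\sigma_A}):\nrm{f}_\infty\leq 1\}$ (which is $\alpha$-bounded by assumption) and $\Xi_B=\varnothing$. This produces resolvent commuting sectorial operators $\widetilde A,\widetilde B$ on a Hilbert space $H$ with $\omega(\widetilde A)<\sigma_A$, $\omega(\widetilde B)<\sigma_B$ and, by part \ref{it:transference1}, $\widetilde A$ has a bounded $H^\infty(\Sigma_{\sigma_A})$-calculus. The classical Kalton--Weis / Dore--Venni theorem on Hilbert spaces (or, equivalently, the bounded joint $H^\infty$-calculus on Hilbert spaces from \cite{AFM98}) then yields that $\widetilde A+\widetilde B$ is closed on $D(\widetilde A)\cap D(\widetilde B)$ with
\begin{equation*}
\nrm{\widetilde A\xi}_H+\nrm{\widetilde B\xi}_H\lesssim \nrm{(\widetilde A+\widetilde B)\xi}_H,\qquad \xi\in D(\widetilde A)\cap D(\widetilde B),
\end{equation*}
and $\widetilde A+\widetilde B$ is sectorial of angle at most $\max\{\sigma_A,\sigma_B\}$.

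To transfer these estimates back to $X$, I would use the joint-calculus inequality of Theorem \ref{theorem:transference}\ref{it:transference2}. Consider the functions $g(z,w)=z/(z+w)$ and $h(z,w)=w/(z+w)$, which belong to $H^\infty(\Sigma_{\sigma_A}\times\Sigma_{\sigma_B})$ since $\sigma_A+\sigma_B<\pi$ keeps $z+w$ away from $0$. Regularizing by $\psi_n(z,w)=\varphi_n(z)\varphi_n(w)$ with $\varphi_n$ as in \eqref{eq:phin} brings $g\psi_n$ and $h\psi_n$ into $H^1(\Sigma_{\sigma_A}\times\Sigma_{\sigma_B})$, so Theorem \ref{theorem:transference}\ref{it:transference2} yields uniform bounds
\begin{equation*}
\nrm{(g\psi_n)(A,B)}_{\mc{L}(X)}\leq C\,\nrmb{(g\psi_n)(\widetilde A,\widetilde B)}_{\mc{L}(H)}\leq C',
\end{equation*}
and analogously for $h\psi_n$, where the right-hand side is uniformly controlled by the Hilbert-space sum-of-operators estimate and boundedness of the joint calculus of $(\widetilde A,\widetilde B)$. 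Letting $n\to\infty$ on $DR(A,B)$, one identifies $g(A,B)=A(A+B)^{-1}$ and $h(A,B)=B(A+B)^{-1}$ on $R(A+B)\cap DR(A,B)$, which yields the desired two-sided estimate $\nrm{Ax}+\nrm{Bx}\lesssim \nrm{(A+B)x\|}$ on $D(A)\cap D(B)\cap DR(A,B)$ and, by a standard density argument using $\varphi_n(A)\varphi_n(B)$, on all of $D(A)\cap D(B)$. Closedness of $A+B$ on $D(A)\cap D(B)$ then follows because $A$ and $B$ are individually closed and the estimate controls each summand.

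The sectoriality of $A+B$ with $\omega(A+B)\le\max\{\omega_{\alphaH}(A),\omega_\alpha(B)\}$ is obtained by applying the same argument to the shifted pair $(\ee^{i\theta}A,\ee^{i\theta}B)$ for any admissible rotation, which provides boundedness of $(e^{i\theta}A+e^{i\theta}B)^{-1}=e^{-i\theta}(A+B)^{-1}$ and hence of all resolvents $R(\lambda,-(A+B))$ off the relevant sector. The principal obstacle I anticipate is the regularization/approximation step transferring the $H^\infty$ joint-calculus bounds (valid on the Hilbert side) to $X$, since $g$ and $h$ lie only in $H^\infty$, not $H^1$, of the bisector; verifying uniform control of $(g\psi_n)(A,B)$ and its strong convergence on $DR(A,B)$ is the technical heart of the argument and must be done carefully using the multiplicativity of the joint calculus together with the properties of $\varphi_n(A),\varphi_n(B)$ recorded in Section \ref{section:dunford}.
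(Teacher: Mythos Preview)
Your proposal is correct and follows essentially the same route as the paper: transfer to a Hilbert space via Theorem \ref{theorem:transference} with $\Xi_A$ the unit ball of $H^\infty(\Sigma_{\sigma_A})$, invoke the Hilbert space sum-of-operators theorem, and pull back the boundedness of $z/(z+w)$ through part \ref{it:transference2} using the regularizers $\psi_n$. The only minor difference is in the identification step: rather than writing $g(A,B)=A(A+B)^{-1}$ (which presupposes invertibility of $A+B$), the paper uses the algebraic identity $(f\cdot g_n)(A,B)=A\varphi_n(A)^2\varphi_n(B)^2$ with $g_n(z,w)=(z+w)\varphi_n(z)^2\varphi_n(w)^2$, so that the estimate $\nrm{Ax}\lesssim\nrm{(A+B)x}$ falls out directly without first establishing that $A+B$ is invertible; for the sectoriality claim the paper simply cites \cite[Theorem 3.1]{AFM98} rather than arguing by rotation.
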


\begin{proof}
Take $\sigma_A>\omega_{H^\infty}(A)$ and $\sigma_B>\omega_{\alpha}(B)$ with $\sigma_A+\sigma_B<\pi$. Choose $\Xi_A = H^\infty(\Sigma_{\sigma_A})$ and apply Theorem \ref{theorem:transference} to find a Hilbert space $H$ and resolvent commuting sectorial operators $\widetilde{A}$, $\widetilde{B}$ on $H$ with $\omega_{H^\infty}(\widetilde{A})<\sigma_A$ and $\omega(\widetilde{B})<\sigma_B$. By the sum of operators theorem on Hilbert spaces due to Dore and Venni \cite[Remark 2.11]{DV87} (see also \cite{AFM98}) we deduce that $\widetilde{A}+\widetilde{B}$ is a sectorial operator on $D(\widetilde{A})\cap D(\widetilde{B})$ with
\begin{equation}\label{eq:A+BleqA+B}
  \nrmb{\widetilde{A}\xi}_H+\nrmb{\widetilde{B}\xi}_H \lesssim \nrmb{\widetilde{A}\xi+\widetilde{B}\xi}_H,\qquad \xi \in D(\widetilde{A})\cap D(\widetilde{B}).
\end{equation}
Using the joint functional calculus we wish to transfer this inequality to $A$ and $B$. For this note that the function $f(z,w) = z(z+w)^{-1}$ belongs to $H^\infty(\Sigma_{\sigma_A}\times \Sigma_{\sigma_B})$ since $\sigma_A+\sigma_B<\pi$.  Set
$g_n(z)=(z+w)\varphi_n(z)^2\varphi_n(w)^2$ with $\varphi_n$ as in \eqref{eq:phin}. Then $g \in H^1(\Sigma_{\sigma_A}\times \Sigma_{\sigma_B})$ and by the resolvent identity we have
\begin{equation*}
  g_n(\widetilde{A},\widetilde{B}) = (\widetilde{A}+\widetilde{B})\varphi_n(\widetilde{A})^2\varphi_n(\widetilde{B})^2.
\end{equation*}
Therefore by the multiplicativity of the joint $H^\infty$-calculus and \eqref{eq:A+BleqA+B} we have for $\eta \in R(\widetilde{A}+\widetilde{B})$ and $\xi \in D(\widetilde{A})\cap D(\widetilde{B})$ with $\eta = \widetilde{A}\xi+\widetilde{B}\xi$
\begin{equation*}
\begin{aligned}
  \nrm{f(\widetilde{A},\widetilde{B})\varphi_n(\widetilde{A})^2\varphi_n(\widetilde{B})^2\eta}_H &= \nrm{f(\widetilde{A},\widetilde{B}) (\widetilde{A}\xi+\widetilde{B}\xi)\varphi_n(\widetilde{A})^2\varphi_n(\widetilde{B})^2\xi}_H\\
  &= \nrm{f(\widetilde{A},\widetilde{B})g_n(\widetilde{A},\widetilde{B})\xi}_H \\
  &= \nrm{\widetilde{A} \varphi_n(\widetilde{A})^2\varphi_n(\widetilde{B})^2\xi}_H \\
  &\lesssim \nrm{\varphi_n(\widetilde{A})^2\varphi_n(\widetilde{B})^2\eta}_H
  \end{aligned}
\end{equation*}
Taking the limit $n \to \infty $ and using the density of $R(\widetilde{A}+\widetilde{B})$ in $H$, we see that $f(\widetilde{A},\widetilde{B})$ is bounded on $H$. By part \ref{it:transference2} of Theorem \ref{theorem:transference} we therefore obtain
\begin{equation*}
   \sup_{n \in \N} \nrm{(f\psi_n)(A,B)x}  \leq  \sup_{n \in \N}  \nrm{(f\psi_n)(\widetilde{A},\widetilde{B})} \lesssim \nrm{f(\widetilde{A},\widetilde{B})}
\end{equation*}
with $\psi_n(z,w) =\varphi_n(z)\varphi_n(w)$. It follows that $f(A,B)$ extends to a bounded operator on $X$. Therefore we have for all $x \in D(A)\cap D(B)$
\begin{align*}
  \nrm{A \varphi_n(A)^2\varphi_n(B)^2x}_X &= \nrm{f(A,B)g_n(A,B)x}_X\\
  &\lesssim\nrm{(A+B)\varphi_n(A)^2\varphi_n(B)^2x}_X
\end{align*}
and taking the limit $n \to \infty$ this implies
\begin{equation*}
    \nrm{Ax}_X + \nrm{Bx}_X  \leq 2\nrm{Ax}_X + \nrm{Ax+Bx}_X \lesssim  \nrm{Ax+Bx}_X.
  \end{equation*}
The closedness of $A+B$ now follows from the closedness of $A$ and $B$. The sectoriality of $A+B$ is proven for example in \cite[Theorem 3.1]{AFM98}.
\end{proof}

As we have seen before in this section, Theorem \ref{theorem:sumsofoperators} can be strengthened if the Euclidean structure $\alpha$ is either the $\gamma$- or the $\ell^2$-structure. For a similar statement using $\mc{R}$-sectoriality we refer to \cite{KW01}.

\begin{corollary}
  Let $A$ and $B$ be resolvent commuting sectorial operators on $X$. Suppose that $A$ has a bounded $H^\infty$-calculus and $B$ is $\alpha$-sectorial with $\omega_{H^\infty}(A) +\omega_{\alpha}(B)<\pi$. Assume one of the following conditions:
  \begin{enumerate}[(i)]
\item $X$ has Pisier's contraction property and $\alpha = \gamma$.
\item $X$ is a Banach lattice and $\alpha = \ell^2$.
\end{enumerate}
 Then $A+B$ is closed on the domain $D(A)\cap D(B)$ and
\begin{equation*}
  \nrm{Ax}_X+\nrm{Bx}_X \leq C\,\nrm{Ax+Bx}_X,\qquad x \in D(A)\cap D(B).
\end{equation*}
Moreover $A+B$ is $\alpha$-sectorial with $\omega_\alpha(A+B) \leq \max\cbrace{\omega_{H^\infty}(A), \omega_{\alpha}(B)}$.
\end{corollary}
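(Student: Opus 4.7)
\medskip

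\noindent\textbf{Proof plan.} The first move is to eliminate the gap between the hypotheses of this corollary and those of Theorem \ref{theorem:sumsofoperators}. Under either assumption (i) or (ii), Theorem \ref{theorem:Hinftyell2bounded} upgrades the bounded $H^\infty$-calculus of $A$ to an $\alpha$-bounded one with $\omega_{\alphaH}(A)=\omega_{H^\infty}(A)$. All hypotheses of Theorem \ref{theorem:sumsofoperators} are now in force, so $A+B$ is closed on $D(A)\cap D(B)$, satisfies the two-sided norm estimate with some constant $C$, and is sectorial with angle at most $\sigma_{\max}:=\max\{\omega_{H^\infty}(A),\omega_\alpha(B)\}$. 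What remains is to promote this sectoriality to $\alpha$-sectoriality with the same angle.

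For this, fix $\sigma\in(\sigma_{\max},\pi)$ and pick $\omega_{H^\infty}(A)<\sigma_A<\sigma$ and $\omega_\alpha(B)<\sigma_B<\sigma$ with $\sigma_A+\sigma_B<\pi$. The key observation is that the family
\begin{equation*}
  \Gamma := \cbraceb{\mu R(\mu,B): \mu \in \C\setminus \overline{\Sigma}_{\sigma_B}}
\end{equation*}
is $\alpha$-bounded by $\alpha$-sectoriality of $B$ and commutes with the resolvent of $A$ because $A$ and $B$ resolvent-commute. By Corollary \ref{corollary:operatorHinftygammaell2}, $A$ has an $\alpha$-bounded $H^\infty(\Gamma)$-calculus with angle at most $\omega_{H^\infty}(A)<\sigma_A$. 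For $\lambda\in \C\setminus\overline{\Sigma}_\sigma$, define the $\mc{L}(X)$-valued function
\begin{equation*}
  f_\lambda(z) := -\lambda R(z-\lambda,B) = \lambda(\lambda-z-B)^{-1}, \qquad z\in \Sigma_{\sigma_A}.
\end{equation*}
The geometric condition $\sigma_A+\sigma_B<\pi$ together with $\sigma>\sigma_A,\sigma_B$ ensures that $\lambda-z\in\C\setminus\overline{\Sigma}_{\sigma_B}$ for every such $(z,\lambda)$, so $f_\lambda$ is holomorphic with values in (a fixed scalar multiple of) $\Gamma$, and the bound on $\Gamma$ yields
\begin{equation*}
  \sup_{\lambda\in \C\setminus\overline{\Sigma}_\sigma}\,\nrm{f_\lambda}_{H^\infty(\Sigma_{\sigma_A};\Gamma)} <\infty.
\end{equation*}

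A standard contour-deformation argument, analogous to the computation in the proof of Theorem \ref{theorem:sumsofoperators}, identifies $f_\lambda(A)$ (defined through the operator-valued calculus) with $\lambda R(\lambda, A+B)$: one writes
\begin{equation*}
  f_\lambda(A) = \frac{1}{2\pi i} \int_{\Gamma_{\nu_A}} f_\lambda(z) R(z,A)\dd z = \frac{\lambda}{2\pi i}\int_{\Gamma_{\nu_A}} R(z,A)R(\lambda-z,B)\dd z
\end{equation*}
and recognises the right-hand side, after closing the contour in the appropriate half-plane, as $\lambda R(\lambda,A+B)$ on the dense subspace $D(A)\cap R(A)\cap D(B)\cap R(B)$, where the integrand is absolutely convergent after inserting regularisers $\varphi_n(A)\varphi_n(B)$. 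Granting this identification, the $\alpha$-boundedness of the $H^\infty(\Gamma)$-calculus of $A$ immediately gives that
\begin{equation*}
  \cbraceb{\lambda R(\lambda,A+B):\lambda \in \C\setminus \overline{\Sigma}_\sigma}
\end{equation*}
is $\alpha$-bounded, and since $\sigma\in(\sigma_{\max},\pi)$ was arbitrary we conclude $\omega_\alpha(A+B)\leq \sigma_{\max}$. The main obstacle is the bookkeeping in the contour manipulation and the verification that $f_\lambda$, with its $\Gamma$-valued interpretation, has uniformly bounded $H^\infty(\Sigma_{\sigma_A};\Gamma)$-norm in $\lambda$; once this is set up cleanly, everything else is an application of results already proved in this chapter.
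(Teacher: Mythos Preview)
Your proposal is correct and follows essentially the same route as the paper: upgrade the $H^\infty$-calculus to an $\alpha$-bounded one via Theorem \ref{theorem:Hinftyell2bounded}, invoke Theorem \ref{theorem:sumsofoperators} for closedness and the norm estimate, and then obtain $\alpha$-sectoriality of $A+B$ by applying the $\alpha$-bounded operator-valued calculus of Corollary \ref{corollary:operatorHinftygammaell2} to the function $z\mapsto \lambda R(\lambda-z,B)$ and identifying the result with $\lambda R(\lambda,A+B)$. One small slip: your displayed identity $-\lambda R(z-\lambda,B)=\lambda(\lambda-z-B)^{-1}$ has a sign error (the left side equals $\lambda(\lambda-z+B)^{-1}$); the intended object is $\lambda R(\lambda-z,B)$, which is exactly the paper's $g_\lambda$.
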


\begin{proof}
The first part of the statement follows directly from Theorem \ref{theorem:Hinftyell2bounded} and Theorem  \ref{theorem:sumsofoperators}. It remains to prove the $\alpha$-sectoriality of $A+B$. Fix
 $\omega_{\alphaH}(A)<\sigma_A<\pi$ and $\omega_{\alpha}(B)<\sigma_B<\pi$ such that $\sigma_A+\sigma_B<\pi$ and take $\max\cbrace{\sigma_A,\sigma_B}<\nu<\pi$. Let $\lambda \in \C \setminus \overline{\Sigma}_\nu$ and define
  \begin{equation*}
    g_\lambda(z) := \frac{\lambda}{\lambda-z}\hab{(\lambda-z)R(\lambda -z,B)}, \qquad z \in \Sigma_{\sigma_A}
  \end{equation*}
  Then $g_\lambda \in H^\infty(\Sigma_{\sigma_A};\Gamma)$ with
$$\Gamma := \cbrace{\lambda R(\lambda,B):\lambda \in \C \setminus \overline{\Sigma}_{\sigma_B}}.$$
Note that $\frac{\lambda}{\lambda-z}$ is uniformly bounded for $\lambda \in \C \setminus \overline{\Sigma}_\nu$ and $z \in \Sigma_{\sigma_A}$. Therefore since $\Gamma$ is $\alpha$-bounded it follows from Corollary \ref{corollary:operatorHinftygammaell2} that the family $$\cbrace{g_{\lambda}(A):\lambda \in \C \setminus \overline{\Sigma}_\mu}$$ is $\alpha$-bounded. By an approximation argument similar to the one presented in Theorem \ref{theorem:sumsofoperators} we have $g_{\lambda}(A) = R(\lambda,A+B)$. Therefore it follows that $A+B$ is $\alpha$-sectorial of angle $\nu$.
\end{proof}

\section{\texorpdfstring{$\alpha$}{a}-bounded imaginary powers}\label{section:BIP}
Before the development of the $H^\infty$-calculus for a sectorial operator $A$, the notion of bounded imaginary powers, i.e. $A^{is}$ for $s \in \R$, played an important role in the study of sectorial operators. We refer to  \cite{BdL92,DV87,Mo97,PS90} for a few breakthrough results in using bounded imaginary powers.

Defined by the extended Dunford calculus, $A^{is}$ for $s \in \R$ is a possibly unbounded operator whose domain includes $D(A) \cap R(A)$. $A$ is said to have \emph{bounded imaginary powers}, denoted by $\BIP$, if $A^{is}$ is bounded for all $s \in \R$. In this case $(A^{is})_{s \in \R}$ is a $C_0$-group and by semigroup theory we then know that there are $C,\theta>0$ such that $\nrm{A^{is}} \leq C\ee^{\theta\abs{s}}$ for $s \in \R$. Thus we can define
\begin{equation*}
  \omega_{\BIP}(A) := \inf\cbrace{\theta:\nrm{A^{is}} \leq C\ee^{\theta\abs{s}}, s \in \R}.
\end{equation*}
It is a celebrated result of Pr\"uss and Sohr \cite{PS90} that $\omega_{\BIP}(A) \geq \omega(A)$ and it is possible to have $\omega_{\BIP}(A) \geq \pi$, see \cite[Corollary 5.3]{Ha03}. If $A$ has a bounded $H^\infty$-calculus, then $A$ has $\BIP$ and since
\begin{equation}\label{eq:HinftyBIP}
\sup_{z\in \Sigma_\sigma} z^{it} \leq \ee^{\sigma t}, \qquad t \in \R
\end{equation}
we have $\omega_{\BIP}(A) \leq \omega_{H^\infty}(A)<\pi$. Furthermore Cowling, Doust, McIntosh and Yagi \cite{CDMY96} showed that in this case  $\omega_{\BIP}(A) = \omega_{H^\infty}(A)$.
Conversely if $X$ is a Hilbert space and $A$ has $\BIP$ with $\omega_{\BIP}(A)<\pi$, then $A$ has a bounded $H^\infty$-calculus. However, the example given in \cite{CDMY96} shows that even for $X=L^p$ with $p \neq 2$ this result fails, i.e. it is possible for a sectorial operator $A$ on $X$ without a bounded $H^\infty$-calculus to have $\omega_{\BIP}(A)<\pi$.

We will try to understand this from the point of view of Euclidean structures. For this we say that a sectorial operator $A$  has $\alpha$-$\BIP$ if the family $\cbrace{\ee^{-\theta\abs{s}}A^{is}:s \in \R}$ is $\alpha$-bounded for some $\theta \geq 0$. In this case we set
\begin{equation*}
  \omega_{\alphaBIP}(A) = \inf\cbraceb{\theta:(\ee^{-\theta\abs{s}}A^{is})_{s \in \R} \text{ is $\alpha$-bounded}}.
\end{equation*}

Since $(A^s)^{it} = A^{ist}$ for $\abs{s} \leq \pi/\omega(A)$ and $t \in \R$ (see \cite[Theorem 15.16]{KW04}), we know that $A^s$ has ($\alpha$-)$\BIP$ if $A$ has ($\alpha$-)$\BIP$ with
\begin{align*}
  \omega_{\BIP}(A^s) &= \abs{s}\,\omega_{\BIP}(A)\\ \omega_{\alphaBIP}(A^s) &= \abs{s}\,\omega_{\alphaBIP}(A).
\end{align*}
Moreover $\alpha$-$\BIP$ implies $\BIP$ with $\omega_{\BIP}(A) \leq \omega_{\alphaBIP}(A)$. If $\alpha$ is ideal, we have equality of angles.

\begin{proposition}\label{proposition:BIPideal}
  Let $\alpha$ be an ideal Euclidean structure and let $A$ be a sectorial operator on $X$. Suppose that $A$ has $\alpha$-$\BIP$, then $\omega_{\BIP}(A) = \omega_{\alphaBIP}(A)$.
\end{proposition}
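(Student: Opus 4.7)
The inequality $\omega_{\BIP}(A) \leq \omega_{\alphaBIP}(A)$ is immediate from the definition of a Euclidean structure: for any $\theta > \omega_{\alphaBIP}(A)$ the family $\{e^{-\theta|s|}A^{is}:s\in\R\}$ has some $\alpha$-bound $M$, and testing this against the one-term ``vector'' $(x)$ gives $\nrm{A^{is}}\le M e^{\theta|s|}$ for every $s\in\R$, hence $\omega_{\BIP}(A)\leq\theta$. So the real content is the reverse inequality, and the plan is to fix an arbitrary $\theta > \omega_{\BIP}(A)$ and show that $\Gamma_\theta:=\{e^{-\theta|s|}A^{is}:s\in\R\}$ is $\alpha$-bounded.

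The starting point is to combine three ingredients: $(\text{a})$ the $\alpha$-bounded family $\Gamma_{\theta_0}=\{e^{-\theta_0|s|}A^{is}:s\in\R\}$ of constant $M_0$ for some $\theta_0>\omega_{\alphaBIP}(A)$, $(\text{b})$ the scalar bound $\nrm{A^{it}}\le Ce^{\theta'|t|}$ coming from $\BIP$, valid for any $\theta'\in(\omega_{\BIP}(A),\theta)$, and $(\text{c})$ the ideal hypothesis on $\alpha$, which via Proposition \ref{proposition:idealsingleoperator} renders every single bounded operator $\alpha$-bounded with constant controlled by its operator norm. Setting $\delta:=(\theta-\theta')/(\theta_0-\theta')\in(0,1)$ and using the group identity $A^{is}=A^{i(1-\delta)s}A^{i\delta s}$, one factors
\[
e^{-\theta|s|}A^{is} \;=\; \underbrace{e^{(\theta_0\delta-\theta)|s|}A^{i(1-\delta)s}}_{=:B_s}\cdot \underbrace{e^{-\theta_0\delta|s|}A^{i\delta s}}_{=:T_s}.
\]
A direct exponent computation gives $-\theta+\theta_0\delta+\theta'(1-\delta)=0$, so $\nrm{B_s}\le C$ uniformly in $s\in\R$, while the substitution $u=\delta s$ shows $\{T_s\}_s\subseteq\Gamma_{\theta_0}$ and is therefore $\alpha$-bounded.

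The main obstacle is the passage from these pointwise data to an $\alpha$-bound for the \emph{family} $\{B_sT_s\}_s$: the ideal property gives $\alpha$-boundedness of each singleton $\{B_s\}$, but not automatically of the $s$-parameter family $\{B_s\}_s$, and the latter is needed in order to invoke the multiplicativity Proposition \ref{proposition:alphaproperties}\ref{it:propalha1} against $\{T_s\}_s$. The plan for closing this gap is to replace pointwise evaluation by a smearing: since $\alpha$ is ideal and $\int_{\R}\nrm{e^{-\theta_0|s|}A^{is}}\dd s\lesssim(\theta_0-\theta')^{-1}$ is finite, Corollary \ref{corollary:L1mean}(ii) yields that $\{\int_\R\varphi(s)\,e^{-\theta_0|s|}A^{is}\dd s:\nrm{\varphi}_{L^\infty(\R)}\le 1\}$ is $\alpha$-bounded. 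Representing $e^{-\theta|t|}A^{it}$ through such a controllable kernel (built from the group and the decomposition above, taking advantage of the uniform bound $\nrm{B_s}\le C$) and then appealing to Proposition \ref{proposition:alphaproperties}\ref{it:propalha3}\ref{it:propalha4} to absorb scalars and convex combinations gives the $\alpha$-bound for $\Gamma_\theta$. As $\theta>\omega_{\BIP}(A)$ was arbitrary, this yields $\omega_{\alphaBIP}(A)\le\omega_{\BIP}(A)$, completing the proof. The hard part is exactly this last step: finding a kernel representation that turns the uniform norm bound on $\{B_s\}_s$ into a genuine $\alpha$-bound without invoking the very $\alpha$-$\BIP$ inequality one is trying to prove.
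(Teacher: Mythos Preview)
Your easy direction is fine, and your identification of the obstacle in the hard direction is accurate: from $\nrm{B_s}\le C$ uniformly you only get that each singleton $\{B_s\}$ is $\alpha$-bounded, not that the whole family is. But your proposed resolution via Corollary~\ref{corollary:L1mean}(ii) is not actually carried out, and I do not see how to make it work. Writing $e^{-\theta|t|}A^{it}$ as $\int_\R\varphi_t(s)\,e^{-\theta_0|s|}A^{is}\,ds$ with $\nrm{\varphi_t}_{L^\infty}$ uniformly bounded would force, at the level of spectral values, $e^{-\theta|t|}\lambda^{it}$ to equal the Mellin-type transform $\int_\R\varphi_t(s)\,e^{-\theta_0|s|}\lambda^{is}\,ds$ for all $\lambda$ in the spectrum; but the right-hand side is the Fourier transform of an $L^1(\R)$ function in the variable $\log\lambda$ and hence vanishes at infinity, whereas the left-hand side has constant modulus. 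So no such kernel exists. You have correctly named the hard part but not solved it.

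The paper's argument sidesteps the issue entirely by a discretisation that you may have overlooked. The key observation is that from the $\alpha$-$\BIP$ hypothesis you already know $\{A^{is}:s\in[-1,1]\}$ is $\alpha$-bounded (it is contained, up to a bounded scalar, in $\Gamma_{\theta_0}$). Now fix $\theta>\omega_{\BIP}(A)$ and pick $\theta'\in(\omega_{\BIP}(A),\theta)$. By the ideal property each singleton $\{e^{-\theta|n|}A^{in}\}$ has $\alpha$-bound $\lesssim e^{-\theta|n|}\nrm{A^{in}}\lesssim e^{-(\theta-\theta')|n|}$, and since these bounds are summable over $n\in\Z$, Proposition~\ref{proposition:alphaproperties}\ref{it:propalha3} gives that the \emph{countable} family $\{e^{-\theta|n|}A^{in}:n\in\Z\}$ is $\alpha$-bounded. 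Writing any $s\in\R$ as $n+r$ with $n\in\Z$ and $r\in[0,1)$, the group law gives $e^{-\theta|s|}A^{is}$ as a bounded scalar times $\bigl(e^{-\theta|n|}A^{in}\bigr)\bigl(A^{ir}\bigr)$, and Proposition~\ref{proposition:alphaproperties}\ref{it:propalha1} finishes the job. The point is that the summability over integers converts the ideal property (singleton control) into control of a genuine family, which your continuous factorisation cannot do.
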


\begin{proof}
  Since $\alpha$ is ideal, we have the estimate $\nrm{A^{in}}_{\alpha} \leq C \, \nrm{A^{in}}$ for $n \in \Z$. Take $\theta > \omega_{\BIP}(A)$, then by Proposition \ref{proposition:alphaproperties}\ref{it:propalha3} we know that $$\cbrace{\ee^{\theta\abs{n}}A^{in}:n \in \Z}$$ is $\alpha$-bounded. Combined with the fact that $\cbrace{A^{is}:s \in [-1,1]}$ is $\alpha$-bounded we obtain by Proposition \ref{proposition:alphaproperties}\ref{it:propalha1} that $\omega_{\alphaBIP}(A)<\theta$
\end{proof}

\subsection*{The connection between ($\alpha$)-$\BIP$ and (almost) $\alpha$-sectoriality}
We have an integral representation of $\lambda^sA^s(1+\lambda A)^{-1}$ in terms of the imaginary powers of $A$, which will allow us to connect $\BIP$ to almost $\alpha$-sectoriality. The representation is based on the Mellin transform.

\begin{lemma}\label{lemma:mellin}
  Let $A$ be a sectorial operator with $\BIP$ with $\omega_{\BIP}(A)<\pi$. Then we have for $\lambda \in \C$ with $\abs{\arg(\lambda)} + \omega_{\BIP}(A)<\pi$ that
  \begin{align*}
\lambda^sA^s(1+\lambda A)^{-1} &= \frac{1}{2}\int_{\R}\frac{1}{\sin\hab{\pi(s-it)}}\lambda^{it}A^{it} \dd t ,\qquad 0<s<1.
  \end{align*}
\end{lemma}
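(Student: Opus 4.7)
The plan is to reduce the identity to its scalar counterpart
\begin{equation*}
   \frac{w^s}{1+w} = \frac{1}{2}\int_{\R} \frac{w^{it}}{\sin(\pi(s-it))}\dd t, \qquad w\in \C\setminus(-\infty,0],
\end{equation*}
and then transfer it to $A$ via the Dunford calculus. For the scalar identity I would start from the inverse Mellin transform
\begin{equation*}
  \frac{1}{1+w} = \frac{1}{2\pi i}\int_{c-i\infty}^{c+i\infty}\frac{\pi}{\sin(\pi u)}w^{-u}\dd u,\qquad 0<c<1,
\end{equation*}
multiply by $w^s$, and substitute $u = s - it$, which yields the displayed formula after noting that $\pi/(i \sin(\pi(s-it))) \cdot i = \pi/\sin(\pi(s-it))$ and cancelling the $2\pi$ against the change of variables.

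To transfer to $A$ I would fix $\omega(A)<\nu<\pi-\abs{\arg\lambda}$, which is possible because $\omega(A)\leq \omega_{\BIP}(A)$ by the Pr\"uss--Sohr theorem, and test both sides against a vector $x = \varphi(A)y$ with $\varphi(z)=z(1+z)^{-2}$ and $y\in X$. On the one hand, since $\lambda^s z^s/(1+\lambda z)$ is bounded on $\Sigma_\nu$ and the function $z\mapsto \lambda^s z^s(1+\lambda z)^{-1}\varphi(z)$ lies in $H^1(\Sigma_{\sigma})$ for $\nu<\sigma<\pi-\abs{\arg\lambda}$, the Dunford calculus gives
\begin{equation*}
  \lambda^s A^s(1+\lambda A)^{-1}\varphi(A)y = \frac{1}{2\pi i}\int_{\Gamma_\nu}\frac{\lambda^s z^s}{1+\lambda z}\varphi(z)R(z,A)y\dd z.
\end{equation*}
On the other hand, for each $t\in\R$ the symbol $z\mapsto z^{it}\varphi(z)$ is in $H^1(\Sigma_\nu)$, so that $A^{it}\varphi(A)y$ is represented by the corresponding contour integral, and inserting this into the right-hand side gives a double integral.

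The main obstacle is to justify the Fubini interchange in this double integral and to identify the resulting inner integral via the scalar formula. For absolute convergence one uses the estimates $\abs{(\lambda z)^{it}}=e^{-t(\arg\lambda+\arg z)}$, $\abs{\sin(\pi(s-it))}^{-1}\lesssim e^{-\pi\abs{t}}$, and, on $\Gamma_\nu$, the decay $\abs{\varphi(z)}\nrm{R(z,A)y}\lesssim \abs{z}(1+\abs{z})^{-3}\nrm{y}_X$; the choice $\nu+\abs{\arg\lambda}<\pi$ makes the $t$-integral converge exponentially for every $z\in\Gamma_\nu$ and the $z$-integral converge uniformly in $t$. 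Once Fubini is applied, the inner $t$-integral is exactly $2(\lambda z)^s/(1+\lambda z)$ by the scalar identity, yielding
\begin{equation*}
  \int_{\R}\frac{\lambda^{it}A^{it}x}{\sin(\pi(s-it))}\dd t = \frac{1}{\pi i}\int_{\Gamma_\nu}\frac{\lambda^s z^s}{1+\lambda z}\varphi(z)R(z,A)y\dd z = 2\lambda^s A^s(1+\lambda A)^{-1}x.
\end{equation*}

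To conclude I would note that the right-hand side of the original claim defines a bounded operator on $X$: indeed the assumption $\omega_{\BIP}(A)+\abs{\arg\lambda}<\pi$ and the bound $\nrm{A^{it}}\leq C_\theta e^{\theta\abs{t}}$ (for any $\theta>\omega_{\BIP}(A)$) combined with $\abs{\sin(\pi(s-it))}^{-1}\lesssim e^{-\pi\abs{t}}$ makes the integrand $\lambda^{it}A^{it}/\sin(\pi(s-it))$ Bochner integrable in $\mc L(X)$ with decay rate $e^{-(\pi-\theta-\abs{\arg\lambda})\abs{t}}$. Similarly $\lambda^s A^s(1+\lambda A)^{-1}$ is bounded because $A^s(1+\lambda A)^{-1} = A^s \cdot (1+\lambda A)^{-1}$ factors through $D(A)$ with $\nrm{A^s(1+\lambda A)^{-1}}\lesssim \abs{\lambda}^{-s}$ by the moment inequality. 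Since the two bounded operators agree on the dense subspace $D(A)\cap R(A)=R(\varphi(A))$, they coincide on all of $X$, which gives the identity.
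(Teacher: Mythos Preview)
Your proof is correct and follows essentially the same approach as the paper: derive the scalar identity via Mellin/Fourier analysis, transfer it to $A$ through the Dunford calculus on $D(A)\cap R(A)$ using Fubini (justified by the exponential decay coming from $|\arg\lambda|+\nu<\pi$), and conclude by density using the BIP assumption. The paper obtains the scalar formula by writing the Mellin transform as a Fourier transform and applying Fourier inversion, whereas you invoke the inverse Mellin transform directly and substitute $u=s-it$; and you are somewhat more explicit than the paper about why both sides extend to bounded operators (invoking the moment inequality for the left-hand side), but these are cosmetic differences.
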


\begin{proof}
Recall the following Mellin transform (see e.g. \cite{Ti86})
\begin{equation}\label{eq:mellin1}
  \int_0^\infty \frac{z^{s-1}}{1+z}\dd z = \frac{\pi}{\sin(\pi s)}, \qquad 0<\re(s)<1.
\end{equation}
Using the substitution $z=e^{2\pi \xi}$ this becomes a Fourier transform:
\begin{equation*}
  2 \int_{\R}\frac{e^{2\pi s\xi}}{1+e^{2\pi \xi}}\ee^{-2\pi it\xi}\dd \xi = \frac{1}{\sin(\pi (s-it))}, \qquad 0<s<1,\, t \in \R.
\end{equation*}
Thus by the Fourier inversion theorem we have
\begin{equation*}
   \int_{\R}  \frac{ \ee^{2\pi i t \xi}}{\sin(\pi (s-it))}\dd t= \frac{2 e^{2\pi s\xi}}{1+e^{2 \pi \xi}}, \qquad 0<s<1,\, \xi \in \R.
\end{equation*}
Therefore using the substitution $z = \ee^{2\pi  \xi}$ we have
  \begin{equation}\label{eq:mellin}
    \int_{\R}\frac{z^{it}}{\sin\hab{\pi(s-it)}}\dd t = \frac{2z^s}{1+ z}, \qquad 0<s<1,\, z \in \R_+.
  \end{equation}
  for $z \in \R_+$, which extends by analytic continuation to all $z \in \C$ with $-\pi < \arg(z)<\pi$.

   Take $\omega(A)<\nu<\pi - \abs{\arg(\lambda)}$  and let $x \in D(A) \cap R(A)$. Then  $A^{it}x$ is given by the Bochner integral
  \begin{equation*}
    A^{it}x = \frac{1}{2\pi i}\int_{\Gamma_\nu}z^{it}\varphi(z)R(z,A)y \dd z,
  \end{equation*}
  where $\varphi(z) = z(1+z)^{-2}$ and $y \in X$ is such that $x =\varphi(A)y$.
  Thus, by Fubini's theorem, \eqref{eq:mellin} and  $\abs{\arg(\lambda)}+\nu<\pi$, we have for $0<s<1$
  \begin{align*}
    \frac{1}{2}\int_{\R}\frac{1}{\sin\hab{\pi(s-it)}}\lambda^{it}A^{it}x \dd t
    &= \frac{1}{4\pi i}\int_{\Gamma_\nu}\int_{\R} \frac{\lambda^{it}z^{it} }{\sin\hab{\pi(s-it)}} \dd t\, \varphi(z) R(z,A)y \dd z \\
    &= \lambda^sA^s(1+\lambda A)^{-1}x.
  \end{align*}
  As $\lambda A$ has $\BIP$ with $\omega_{\BIP}(\lambda A)<\pi$, the lemma now follows by a density argument.
\end{proof}

As announced this lemma allows us to connect $\BIP$ to almost $\alpha$-sectoriality.

\begin{proposition}\label{proposition:BIPalmostalpha}
  Let $A$ be a sectorial operator on $X$.
  \begin{enumerate}[(i)]
    \item \label{it:almostalpha1} If $A$ has $\alpha$-$\BIP$ with $\omega_{\alphaBIP}<\pi$, then $A$ is almost $\alpha$-sectorial with $\tilde{\omega}_\alpha(A) \leq \omega_{\alphaBIP}(A)$.
    \item \label{it:almostalpha2} If $A$ has $\BIP$ with $\omega_{\BIP}<\pi$ and $\alpha$ is ideal, then $A$ is almost $\alpha$-sectorial with $\tilde{\omega}_\alpha(A) \leq \omega_{\BIP}(A)$.
  \end{enumerate}
\end{proposition}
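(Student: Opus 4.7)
My strategy for both parts is to reduce to Proposition \ref{proposition:almostsectorialcharacterization}\ref{it:almostsectorialcharacterization2a} and then substitute the Mellin representation of Lemma \ref{lemma:mellin}. Let $\omega$ denote $\omega_{\alphaBIP}(A)$ in (i) and $\omega_{\BIP}(A)$ in (ii); fix an arbitrary $\sigma\in(\omega,\pi)$ and pick $\theta<\sigma'$ in $(\omega,\sigma)$. Choose $s\in(0,1)$. The computation $(1+\mu A)^{-1}=\lambda R(\lambda,A)$ under $\mu=-\lambda^{-1}$ gives
\begin{equation*}
\lambda^{1-s}A^{s}R(\lambda,A) = c_s\,\mu^{s}A^{s}(1+\mu A)^{-1}
\end{equation*}
for a unimodular constant $c_s$, and this substitution maps $\cbraceb{\lambda:|\arg\lambda|>\sigma'}$ bijectively onto $\cbraceb{\mu:|\arg\mu|<\pi-\sigma'}$. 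By Proposition \ref{proposition:almostsectorialcharacterization}\ref{it:almostsectorialcharacterization2a} it therefore suffices to prove $\alpha$-boundedness of the family
\begin{equation*}
\widetilde{\Gamma} := \cbraceb{\mu^{s}A^{s}(1+\mu A)^{-1} : |\arg\mu|\leq\pi-\sigma'}.
\end{equation*}
For each such $\mu$, Lemma \ref{lemma:mellin} (applicable since $|\arg\mu|+\omega_{\BIP}(A)<\pi$) gives
\begin{equation*}
\mu^{s}A^{s}(1+\mu A)^{-1} = \int_{\R}\psi_\mu(t)\,A^{it}\,\ddn t,\qquad \psi_\mu(t):=\frac{\mu^{it}}{2\sin(\pi(s-it))}.
\end{equation*}
Since $|\sin(\pi(s-it))|\geq|\sin(\pi s)|\cosh(\pi t)\gtrsim_{s}\ee^{\pi|t|}$ and $|\mu^{it}|=\ee^{-t\arg\mu}\leq\ee^{(\pi-\sigma')|t|}$, the kernel satisfies $|\psi_\mu(t)|\leq C_{s}\,\ee^{-\sigma'|t|}$ uniformly in $\mu$.

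\textbf{Part (i).} Rewrite the integral as
\begin{equation*}
\mu^{s}A^{s}(1+\mu A)^{-1} = \int_{\R}\varphi_\mu(t)\,\hab{\ee^{-\theta|t|}A^{it}}\,\ddn t,\qquad \varphi_\mu(t):=\ee^{\theta|t|}\psi_\mu(t).
\end{equation*}
The pointwise bound above yields $|\varphi_\mu(t)|\lesssim_{s}\ee^{(\theta-\sigma')|t|}$, whence $\sup_\mu\nrm{\varphi_\mu}_{L^1(\R)}<\infty$ because $\theta<\sigma'$. By the $\alphaBIP$ hypothesis the family $\cbrace{\ee^{-\theta|t|}A^{it}:t\in\R}$ is $\alpha$-bounded, so Corollary \ref{corollary:L1mean} immediately gives the $\alpha$-boundedness of $\widetilde{\Gamma}$.

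\textbf{Part (ii).} Now $\cbrace{A^{it}:t\in\R}$ is not known to be $\alpha$-bounded, so I replace Corollary \ref{corollary:L1mean} by a direct estimate that exploits the ideal property. For any $\mu_1,\ldots,\mu_n$ in the parameter set and $\mb{x}\in X^n$, the triangle inequality for $\nrm{\cdot}_\alpha$ applied inside the Bochner integral yields
\begin{equation*}
\nrmb{\ha{\mu_k^{s}A^{s}(1+\mu_k A)^{-1}x_k}_{k=1}^n}_\alpha \leq \int_{\R}\nrmb{\ha{\psi_{\mu_k}(t)\,A^{it}x_k}_{k=1}^n}_\alpha\,\ddn t.
\end{equation*}
For each fixed $t$, factor the integrand through the diagonal matrix $\diag(\psi_{\mu_1}(t),\ldots,\psi_{\mu_n}(t))$ by \eqref{eq:E2x} and pull out $A^{it}$ by the ideal property of $\alpha$ to obtain
\begin{equation*}
\nrmb{\ha{\psi_{\mu_k}(t)\,A^{it}x_k}_{k=1}^n}_\alpha \leq C_\alpha\,\nrm{A^{it}}\,\max_{1\leq k\leq n}|\psi_{\mu_k}(t)|\,\nrm{\mb{x}}_\alpha.
\end{equation*}
Combining $\nrm{A^{it}}\leq C\ee^{\theta|t|}$ (from $\BIP$ with $\theta>\omega_{\BIP}(A)$) with $\max_k|\psi_{\mu_k}(t)|\leq C_{s}\ee^{-\sigma'|t|}$ shows
$\int_{\R}\nrm{A^{it}}\max_k|\psi_{\mu_k}(t)|\,\ddn t<\infty$ uniformly in the $\mu_k$'s, which gives the $\alpha$-boundedness of $\widetilde{\Gamma}$.

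In both cases the $\alpha$-boundedness of $\widetilde{\Gamma}$ yields $\tilde\omega_\alpha(A)<\sigma$ via Proposition \ref{proposition:almostsectorialcharacterization}, and letting $\sigma\downarrow\omega$ completes the proof. The main technical difficulty lies in the bookkeeping of the change of variables $\mu=-\lambda^{-1}$ (tracking branches of fractional powers and the identification of sectors) and in verifying that the Mellin integral converges as a Bochner integral in $\mc{L}(X)$, so that the triangle inequality in step (ii) can legitimately be pulled inside; both follow from the exponential decay $|\psi_\mu(t)|\lesssim \ee^{-\sigma'|t|}$ dominating the exponential $\BIP$ growth $\nrm{A^{it}}\lesssim\ee^{\theta|t|}$.
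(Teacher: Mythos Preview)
Your proof is correct and follows essentially the same strategy as the paper's: both arguments feed the Mellin representation of Lemma~\ref{lemma:mellin} into the characterization of almost $\alpha$-sectoriality via $\alpha$-boundedness of $\{\lambda^{1-s}A^sR(\lambda,A)\}$, then control the resulting integral of $A^{it}$ using the exponential decay of $1/\sin(\pi(s-it))$ against the exponential growth from $\BIP$. The paper handles both cases in a single computation by bounding $\nrm{\{A^{it}\}}_\alpha\le C\ee^{\theta_0|t|}$ uniformly (via $\alpha$-$\BIP$ in (i) and via the ideal property in (ii)), whereas you separate the cases and invoke Corollary~\ref{corollary:L1mean} for (i); this is a cosmetic difference only.
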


\begin{proof}
Either fix $\omega_{\alphaBIP}(A)<\theta<\pi$ for \ref{it:almostalpha1} or fix $\omega_{\BIP}(A)<\theta<\pi$ for \ref{it:almostalpha2}. Suppose that $\lambda_1,\ldots,\lambda_n\in \C$ satisfy $\abs{\arg(\lambda_k)} \leq \pi - \theta $. Then for $0<s<1$ and $\mb{x} \in X^n$ we have by Lemma \ref{lemma:mellin}
\begin{align*}
  \nrmb{(\lambda_k^{1-s}A^sR(-\lambda_k,A)x_k)_{k=1}^n}_{\alpha}&= \nrmb{(\lambda_k^{-s}A^s(1+\lambda_k^{-1}A)^{-1}x_k)_{k=1}^n}_{\alpha}\\&\leq
  \frac{1}{2}\int_{\R}\frac{1}{\absb{\sin(\pi(s-it))}}\nrmb{(\lambda_k^{-it}A^{it}x_k)_{k=1}^n}_\alpha \dd t\\
  &\leq \frac{1}{2}\int_{\R}\frac{1}{{\absb{\sin(\pi(s-it))}}} \ee^{(\pi-\theta)\abs{t}}\nrm{A^{it}}_\alpha \dd t \, \nrm{\mb{x}}_\alpha \\&\leq C \, \nrm{\mb{x}}_{\alpha},
\end{align*}
where we used that there is a $\theta_0<\theta$ such that
\begin{equation*}
  \ee^{-\theta_0\abs{t}} \nrm{A^{it}}_\alpha \leq C
\end{equation*}
with $C>0$ independent of $t\in \R$ in the last step.
\end{proof}

With some additional effort we can self-improve Proposition \ref{proposition:BIPalmostalpha}\ref{it:almostalpha1} to conclude that $A$ is actually $\alpha$-sectorial rather than almost $\alpha$-sectorial. They key ingredient will be the $\alpha$-multiplier theorem (Theorems \ref{theorem:pointwisemultipliers1} and \ref{theorem:pointwisemultipliers2}).

\begin{theorem}\label{theorem:alphaBIPalpha}
  Let  $A$ be a sectorial operator on $X$. If $A$ has $\alpha$-$\BIP$ with $\omega_{\alphaBIP}<\pi$, then $A$ is $\alpha$-sectorial with $\omega_\alpha(A) \leq \omega_{\alphaBIP}(A)$.
\end{theorem}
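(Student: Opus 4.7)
The starting observation is that Proposition~\ref{proposition:BIPalmostalpha}\ref{it:almostalpha1} already yields almost $\alpha$-sectoriality with $\tilde\omega_\alpha(A) \leq \omega_{\alphaBIP}(A)$. By Proposition~\ref{proposition:alphaalmoastalpha} it therefore suffices to prove that $\{t R(-t,A) : t > 0\}$ is $\alpha$-bounded, for then one automatically gets $\omega_\alpha(A) = \tilde\omega_\alpha(A) \leq \omega_{\alphaBIP}(A)$. After the substitution $\lambda = 1/t$ this is equivalent to showing $\alpha$-boundedness of the family $\{(I + \lambda A)^{-1} : \lambda > 0\}$.

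As announced before the statement, the main tool will be the $\alpha$-multiplier theorem (Theorems~\ref{theorem:pointwisemultipliers1} and~\ref{theorem:pointwisemultipliers2}). I plan to parametrize $\lambda = e^u$, $u\in\R$. By Theorem~\ref{theorem:pointwisemultipliers2}, applied to the strongly continuous map $u \mapsto (I + e^u A)^{-1}$ on $(\R,\dd u)$, it suffices to establish that the pointwise multiplier $M\colon f(u) \mapsto (I+e^u A)^{-1}f(u)$ is bounded on $\alpha(\R;X)$. Fixing $\omega_{\alphaBIP}(A) < \theta < \pi$, the family $\{e^{-\theta|v|}A^{iv} : v\in\R\}$ is $\alpha$-bounded, so by Theorem~\ref{theorem:pointwisemultipliers1} the multiplier $v\mapsto A^{iv}$ carries $\alpha(\R,e^{2\theta|v|}\dd v;X)$ boundedly into $\alpha_+(\R;X)$. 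The task is then to represent $(I+e^uA)^{-1}$ in a way that lets this estimate apply.

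Such a representation is provided by Lemma~\ref{lemma:mellin}: for $0 < s < 1$,
\begin{equation*}
e^{us} A^s (I+e^u A)^{-1} = \tfrac12 \int_{\R} \frac{e^{iuv}A^{iv}}{\sin(\pi(s-iv))}\dd v.
\end{equation*}
Shifting the contour to $\R + is_0$ with $s_0\in(0,1)$ converts the left-hand side into a prefactor-free integral representation of $(I+e^uA)^{-1}$ at the cost of replacing the Mellin weight by $1/\sinh(\pi(v+is_0))$, which decays like $e^{-\pi|v|}$ at infinity and is absolutely integrable against the exponential growth $e^{\theta|v|}$ of $\nrm{A^{iv}}_\alpha$. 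Combining this with the isometric action of the Fourier transform on $\alpha(\R;X)$ (Example~\ref{example:fourier}), Corollary~\ref{corollary:L1mean} for $L^1$-integral means and the $\alpha$-H\"older inequality (Proposition~\ref{proposition:alphaholder}), the multiplier $M$ is bounded on $\alpha(\R;X)$, and Theorem~\ref{theorem:pointwisemultipliers2} then delivers the sought $\alpha$-boundedness.

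The principal technical obstacle lies precisely in the Mellin step: attempting to take $s\to 0$ directly in Lemma~\ref{lemma:mellin} produces the non-integrable weight $1/\sinh(\pi v)$ at the origin, so the family $\{(I+\lambda A)^{-1}\}$ cannot be obtained by a naive absolutely convergent integral of the $A^{iv}$. The contour shift to $\R+is_0$ removes this singularity but forces one to work initially on the dense subspace $D(A)\cap R(A)$, where the unbounded operators $A^{\pm s_0}$ arising from the shift make sense; the extension to all of $X$ is then carried out by density using the already established almost $\alpha$-sectoriality of $A$, which provides the uniform control on $\lambda^{s_0}A^{s_0}(I+\lambda A)^{-1}$. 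It is exactly the interplay of $\alpha$-$\BIP$, the vector-valued function space formalism of Chapter~\ref{part:3}, and almost $\alpha$-sectoriality that makes this self-improvement from almost $\alpha$-sectorial to $\alpha$-sectorial possible.
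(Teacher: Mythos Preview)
Your overall framework---reduce to $\alpha$-boundedness of $\{tR(-t,A):t>0\}$ via Propositions~\ref{proposition:BIPalmostalpha} and~\ref{proposition:alphaalmoastalpha}, then combine Lemma~\ref{lemma:mellin} with the Fourier isometry on $\alpha(\R;X)$ and Theorem~\ref{theorem:pointwisemultipliers2}---is exactly the paper's strategy. You have also correctly located the only real difficulty: the Mellin kernel $1/\sin(\pi(s-it))$ degenerates to the non-integrable $1/\sinh(\pi t)$ as $s\to 0$.

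The gap is in your proposed resolution of that difficulty. A contour shift to $\R+is_0$ does not produce a ``prefactor-free'' representation of $(I+e^uA)^{-1}$ in terms of the bounded operators $A^{iv}$: it necessarily introduces a factor $A^{\pm s_0}$, which is unbounded. On $D(A)\cap R(A)$ you can of course make sense of it, but the resulting estimate then controls $\nrm{Mf}_{\alpha(\R;X)}$ by something involving $A^{-s_0}f$, not by $\nrm{f}_{\alpha(\R;X)}$; density alone cannot close this, and the almost $\alpha$-sectoriality input you cite (uniform control of $\lambda^{s_0}A^{s_0}(I+\lambda A)^{-1}$) goes the wrong way for removing that prefactor. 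Likewise, Corollary~\ref{corollary:L1mean} cannot be applied here because there is no absolutely convergent representation $(I+e^uA)^{-1}=\int h(v)e^{iuv}A^{iv}\,\ddn v$ with $h\cdot e^{\theta|\cdot|}\in L^1$; that is precisely what the $s=0$ singularity obstructs.

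The paper sidesteps the issue differently: it keeps $s\in(0,\tfrac12)$ and proves that the families $\Gamma_s=\{t^sA^s(1+tA)^{-1}:t>0\}$ are $\alpha$-bounded \emph{uniformly in $s$}, then lets $s\to 0$ pointwise on $D(A)\cap R(A)$. Uniformity is obtained by viewing (via Fourier) the multiplier as the twisted convolution $f\mapsto\bigl(t\mapsto\int_\R k_s(t-u)A^{i(t-u)}f(u)\,\ddn u\bigr)$, then partitioning $\R$ into unit intervals $I_m=[2m-1,2m+1)$ and decomposing the kernel as $k_s=\sum_{n\in\Z}K_n$ according to the offset between the intervals containing $t$ and $u$. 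For each piece one writes $A^{i(t-u)}=A^{i(t-p(t))}A^{-2in}A^{i(p(u)-u)}$ with $|t-p(t)|\le 1$; the outer factors are $\alpha$-bounded uniformly, the middle factor contributes $e^{2\theta|n|}$, and the scalar operator with kernel $K_n$ has $L^2$-norm $\lesssim e^{-2\pi|n|}$ uniformly in $s$ (crucially using Plancherel for the $n$ with $|n|\le 1$, not an $L^1$-bound on $k_s$). Summing over $n$ gives the uniform estimate because $\theta<\pi$. This decomposition is the missing technical idea in your sketch.
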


\begin{proof}
We will show that for $0<s<\frac{1}{2}$ the families of operators
$$\Gamma_s :=\cbrace{t^sA^s(1+tA)^{-1}:t>0}$$ are $\alpha$-bounded uniformly in $s$. Since
we have for $x \in D(A)\cap R(A)$
  $$\lim_{s \to 0} t^{1-s}A^s(t+A)^{-1}x = -tR(-t,A)x$$  by the dominated convergence theorem, we obtain for $x_1,\ldots,x_n \in D(A)\cap R(A)$ and $t_1,\ldots,t_n>0$ that
  \begin{align*}
    \nrmb{\hab{-t_kR(-t_k,A)x_k}_{k=1}^n}_\alpha \leq \liminf_{s \to 0} \nrmb{\hab{t_k^{1-s}A^s(t_k+ A)^{-1}x_k}_{k=1}^n}_{\alpha}.
  \end{align*}
  This implies that $A$ is $\alpha$-sectorial with
  \begin{equation*}
    \omega_\alpha(A) = \tilde{\omega}_\alpha(A) \leq \omega_{\alphaBIP}(A).
  \end{equation*}
  by Proposition \ref{proposition:alphaalmoastalpha} and Proposition \ref{proposition:BIPalmostalpha}.

 We claim that it suffices to prove for $f$ in the Schwartz class $\mc{S}(\R;X)$ that
  \begin{equation}\label{eq:alphaBIPclaim}
    \nrms{t \mapsto \int_{\R} k_s(t-u)A^{i(t-u)}f(u)\dd u}_{\alpha(\R;X)} \leq C\,\nrm{f}_{\alpha(\R;X)},
  \end{equation}
  where $C>0$ is independent of $0<s<\frac{1}{2}$ and
\begin{equation*}
    k_s(t) := \frac{1}{2 \sin\hab{\pi(s-it)}}, \qquad t \in \R.
  \end{equation*}
  Indeed, assuming this claim for the moment, we know by Fubini's theorem and Lemma \ref{lemma:mellin}
\begin{align*}
\int_{\R}\int_{\R} k_s(t-u)A^{i(t-u)}f(u)\dd u \,\ee^{-2\pi i t \xi}\dd t &= \int_{\R} k_s(t)A^{it} \ee^{-2\pi i t \xi}\dd t \int_{\R} f(u) \ee^{-2\pi i u \xi}\dd u\\
&= \ee^{-2\pi\xi s}A^s (1+ \ee^{-2\pi\xi }A)^{-1} \hat{f}(\xi)
  \end{align*}
  for any $\xi \in \R$.
  Thus since the Fourier transform is an isometry on $\alpha(\R;X)$ by Example \ref{example:fourier}, we deduce that for any $g \in \mc{S}(\R;X)$
  \begin{equation*}
    \nrm{\xi \mapsto \ee^{-2\pi s\xi}A^s(1+\ee^{-2\pi \xi}A)^{-1}g(\xi)}_{\alpha(\R;X)}\leq C \, \nrm{g}_{\alpha(\R;X)},
  \end{equation*}
  which extends to all strongly measurable $g\colon S \to X$ in $\alpha(S;X)$ by density, see Proposition \ref{proposition:densealphaspace}. Then the converse of the $\alpha$-multiplier theorem (Theorem \ref{theorem:pointwisemultipliers2}) implies that $\Gamma_s$ is $\alpha$-bounded, which completes the proof.

  To prove the claim fix $0<s<\frac12$ and set $I_m = [2m-1,2m+1)$ for $m \in \Z$. For $n \in \Z$ we define the kernel
  \begin{equation*}
    K_n(t,u) : = \sum_{j \in \Z}k_s(t-u) \ind_{I_j}(t)\ind_{I_{j+n}}(u),\qquad t,u \in \R,
  \end{equation*}
  where the sum consists of only one element for any point $(t,u)$.
  Since $k_s \in L^1(\R)$, the operator $T_n:L^2(\R)\to L^2(\R)$ given by
  \begin{equation*}
    T_n\varphi(t) := \int_\R  K_n(t,u)\varphi(u) \dd u, \qquad t \in \R,
  \end{equation*}
  is well-defined. By the Mellin transform as in \eqref{eq:mellin} we know
  \begin{equation*}
    \hat{k}_s(\xi) = \frac{\ee^{-s\xi}}{1+\ee^{-\xi}} \leq 1, \qquad \xi \in \R,
  \end{equation*}
  so by Plancherel's theorem we have for $\varphi \in L^2(\R)$
  \begin{align*}
    \nrm{T_n\varphi}_{L^2(\R)}^2 &= \sum_{j \in \Z} \int_{\R} \ind_{I_j}(t)\abss{\int_{\R} k_s(t-u)\varphi(u)\ind_{I_{j+n}}(u)\dd u}^2\dd t \\
    &\leq \sum_{j \in \Z} \int_{I_{j+n}}\abs{\varphi(t)}^2 \dd t= \nrm{\varphi}_{L^2(\R)}^2.
  \end{align*}
  Moreover since $\abs{K_n(t,u)} \leq \abs{k_s(t-u)}\ind_{\abs{t-u} \geq 2(\abs{n}-1)}$ for $t,u \in \R$ and
  \begin{equation*}
    \abs{k_s(t)} \leq \frac{1}{2\abs{\sinh(\pi t)}} \leq  \ee^{-\pi \abs{t}}, \qquad \abs{t} \geq 1,
  \end{equation*} we have by Young's inequality
  \begin{equation*}
    \nrm{T_n}_{L^2(\R) \to L^2(\R)} \leq C_0 \, \ee^{-2\pi\abs{n}}, \qquad \abs{n} \geq 2
  \end{equation*}
  for some constant $C_0>0$.  We conclude that $T_n$ extends to a bounded operator on $\alpha(\R;X)$ for all $n \in \Z$ with
  \begin{equation*}
    \nrm{T_n}_{\alpha(\R:X)\to \alpha(\R;X)} \leq C_0 \,\ee^{-2\pi\abs{n}}.
  \end{equation*}

  For $t \in \R$ define $p(t) = 2j$ with $j \in \Z$ such that $t \in I_j$. Then $\abs{p(t)-t} \leq 1$ for all $t \in \R$. Take $\omega_{\alphaBIP}(A) <\theta<\pi$ and let $C_1,C_2>0$ be such that
  \begin{align*}
    \nrmb{\cbrace{A^{is}:s \in [-1,1]}}_\alpha &\leq C_1,\\
    \nrmb{A^{is}}_{\alpha} &\leq C_2\,\ee^{\theta\abs{s}}, \qquad s \in \R.
  \end{align*}
  Now take a Schwartz function $f \in \mc{S}(\R;X)$ and fix $n \in \Z$. Noting that $p(t)= p(u) -2n$ on the support of $K_n$, we estimate
  \begin{align*}
    \nrms{t \mapsto &\int_{\R}K_n(t,u)A^{i(t-u)}f(u)\dd u}_{\alpha(\R;X)} \\&= \nrms{t \mapsto \int_{\R}K_n(t,u)A^{i(t-p(t)+ p(u)-u-2n)}f(u)\dd u}_{\alpha(\R;X)}\\
    &\leq C_1 \nrms{t \mapsto \int_{\R}K_n(t,u)A^{i(p(u)-u-2n)}f(u)\dd u}_{\alpha(\R;X)}\\
    &\leq C_0 C_1 \,\ee^{-2\pi\abs{n}}\nrmb{t \mapsto A^{i(p(t)-t-2n)}f(t)}_{\alpha(\R;X)}\\
    &\leq C_0 C_1^2C_2 \,\ee^{-2(\pi-\theta)\abs{n}}\nrm{f}_{\alpha(\R;X)}
  \end{align*}
  using Theorem \ref{theorem:pointwisemultipliers1} in the second and last step.  Since
  \begin{equation*}
    k_s(t-u) = \sum_{n\in \Z} K_n(t,u), \qquad t,u \in \R,
  \end{equation*}
  the claim in \eqref{eq:alphaBIPclaim} now follows from the triangle inequality.
\end{proof}

\begin{remark}
  If the $X$ has the $\UMD$ property and $A$ is a sectorial operator with $\BIP$, then it was shown in \cite[Theorem 4]{CP01} that $A$ is $\gamma$-sectorial. The proof of that result can be generalized to a Euclidean structure $\alpha$ under the assumption that $\alpha(\R;X)$ has the $\UMD$ property, which in case of the $\gamma$-structure is equivalent to the assumption that $X$ has the $\UMD$ property. Note that the proofs of Theorem \ref{theorem:alphaBIPalpha} and \cite[Theorem 4]{CP01} are of a similar flavour. The key difference being the point at which one gets rid of the singular integral operators, employing their boundedness on $\alpha(\R;X)$ and $L^p(\R;X)$ respectively.
\end{remark}

\subsection*{The characterization of $H^\infty$-calculus in terms of $\alpha$-$\BIP$}
 With Theorem \ref{theorem:alphaBIPalpha} at our disposal we turn to the main result of this section, which characterizes when $A$ has a bounded $H^\infty$-calculus in terms of $\alpha$-$\BIP$. For this we will combine the Mellin transform arguments from \ref{theorem:alphaBIPalpha} with the
 self-improvement of a bounded $H^\infty$-calculus in Theorem \ref{theorem:Hinftyalphabounded} and the transference principle in Theorem \ref{theorem:transference}.

\begin{theorem}\label{theorem:BIPHinfty}
  Let $A$ be a sectorial operator on $X$. The following conditions are equivalent:
  \begin{enumerate}[(i)]
    \item $A$ has $\BIP$ with $\omega_{\BIP}(A) < \pi$ and $\alpha$-$\BIP$ for some Euclidean structure $\alpha$ on $X$.
    \item $A$ has a bounded $H^\infty$-calculus.
  \end{enumerate}
  If one of these equivalent conditions holds, we have
  \begin{equation*}
    \omega_{H^\infty}(A) = \omega_{\BIP}(A) = \inf\cbrace{\omega_{\alphaBIP}(A):\alpha \text{ is a Euclidean structure on $X$}}
  \end{equation*}
\end{theorem}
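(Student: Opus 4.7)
The plan is to combine three earlier tools: Theorem~\ref{theorem:Hinftyalphabounded} upgrades a bounded $H^\infty$-calculus to an $\alpha$-bounded one; Theorem~\ref{theorem:alphaBIPalpha} promotes $\alpha$-$\BIP$ to $\alpha$-sectoriality; and the transference principle Theorem~\ref{theorem:transference} reduces the $H^\infty$-calculus question to a Hilbert-space situation, where the classical McIntosh theorem applies. For the forward direction (ii)~$\Rightarrow$~(i), \eqref{eq:HinftyBIP} gives $\omega_{\BIP}(A) \leq \omega_{H^\infty}(A) < \pi$ immediately. For each $\sigma \in (\omega_{H^\infty}(A),\pi)$, Theorem~\ref{theorem:Hinftyalphabounded} produces a Euclidean structure $\alpha$ with $\omega_{\alphaH}(A) < \sigma$. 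Since $\nrmb{\ee^{-\sigma\abs{s}} z^{is}}_{H^\infty(\Sigma_\sigma)} \leq 1$ for every $s \in \R$, the family $\cbrace{\ee^{-\sigma\abs{s}} A^{is}: s \in \R}$ is $\alpha$-bounded, whence $\omega_{\alphaBIP}(A) \leq \sigma$. Letting $\sigma$ decrease supplies both $\alpha$-$\BIP$ for some $\alpha$ and the inequality $\inf_\alpha \omega_{\alphaBIP}(A) \leq \omega_{H^\infty}(A)$.

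For (i)~$\Rightarrow$~(ii), I would fix an $\alpha$ with $\omega_{\alphaBIP}(A) < \pi$ as in the hypothesis; Theorem~\ref{theorem:alphaBIPalpha} then gives $\alpha$-sectoriality with $\omega_\alpha(A) \leq \omega_{\alphaBIP}(A)$. Choose $\theta \in (\omega_{\alphaBIP}(A),\pi)$ and $\nu \in (\theta,\pi)$. The set $\Xi_A := \cbrace{z \mapsto \ee^{-\theta\abs{s}} z^{is}: s \in \R}$ satisfies that $\cbrace{f(A): f \in \Xi_A}$ is $\alpha$-bounded. Applying Theorem~\ref{theorem:transference} in its single-operator form (with $\Gamma_A = \cbrace{I}$, so only parts~\ref{it:transference1} and~\ref{it:transference3} are needed) at the scale $\sigma_A = \nu$, I obtain a sectorial operator $\widetilde{A}$ on a Hilbert space $H$ satisfying $\omega(\widetilde A) < \nu$, the uniform estimates $\nrmb{\widetilde A^{is}}_{\mc{L}(H)} \leq C\ee^{\theta\abs{s}}$, and the transfer inequality $\nrm{f(A)}_{\mc{L}(X)} \leq C_1 \nrmb{f(\widetilde A)}_{\mc{L}(H)}$ for all scalar $f \in H^1(\Sigma_\nu)$. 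In particular, $\widetilde A$ has $\BIP$ on a Hilbert space with $\omega_{\BIP}(\widetilde A) \leq \theta < \pi$, so by the classical McIntosh theorem $\widetilde A$ has a bounded $H^\infty$-calculus with $\omega_{H^\infty}(\widetilde A) \leq \theta$.

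For $f \in H^\infty(\Sigma_\nu)$, I would then regularize by the approximants $f\varphi_n \in H^1(\Sigma_\nu)$ from \eqref{eq:phin}. Using the uniform bound $\sup_n \nrm{\varphi_n}_{H^\infty(\Sigma_\nu)} < \infty$ and the $H^\infty$-boundedness of $\widetilde A$, one gets $\nrmb{(f\varphi_n)(\widetilde A)}_{\mc{L}(H)} \lesssim \nrm{f}_{H^\infty(\Sigma_\nu)}$ uniformly in $n$; the transfer inequality then yields the same bound for $(f\varphi_n)(A)$ on $X$. Since $(f\varphi_n)(A)x \to f(A)x$ for $x \in D(A) \cap R(A)$ by the definition of the extended calculus, $f(A)$ extends to a bounded operator on $X$, showing that $A$ has a bounded $H^\infty$-calculus with $\omega_{H^\infty}(A) \leq \nu$. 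Letting $\nu \downarrow \theta$, then $\theta \downarrow \omega_{\alphaBIP}(A)$, and finally taking the infimum over admissible $\alpha$ gives $\omega_{H^\infty}(A) \leq \inf_\alpha \omega_{\alphaBIP}(A)$. Combined with the forward direction this yields the first and third equalities, and the remaining identity $\omega_{\BIP}(A) = \omega_{H^\infty}(A)$ follows from the Cowling--Doust--McIntosh--Yagi theorem recalled in the preamble to Section~\ref{section:BIP}, valid on any Banach space once a bounded $H^\infty$-calculus is available.

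The main difficulty lies in the coordination in the backward direction: producing $\widetilde A$ with the correct $\BIP$ angle via transference is straightforward, but passing from the $H^1$-transfer inequality to a genuine $H^\infty$-bound on $f(A)$ must proceed through the regularization $f\varphi_n$, which requires both the uniform $H^\infty$-control on $\varphi_n$ and the strong operator convergence $(f\varphi_n)(A) \to f(A)$ on $D(A)\cap R(A)$. A subtler point of interpretation is that the $\alpha$-$\BIP$ hypothesis in (i) must be read to include $\omega_{\alphaBIP}(A) < \pi$ so that Theorem~\ref{theorem:alphaBIPalpha} is applicable; this is consistent with the infimum appearing in the stated angle equalities.
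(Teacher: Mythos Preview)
Your forward direction and the overall transference-plus-McIntosh plan match the paper, but the backward direction has a genuine gap: you cannot read $\omega_{\alphaBIP}(A)<\pi$ into hypothesis~(i). The statement only assumes $\omega_{\BIP}(A)<\pi$ together with $\alpha$-$\BIP$ for \emph{some} $\alpha$, and the definition of $\alpha$-$\BIP$ allows $\omega_{\alphaBIP}(A)\geq\pi$. Indeed, if your interpretation were intended, the separate hypothesis $\omega_{\BIP}(A)<\pi$ would be redundant, since $\omega_{\BIP}(A)\leq\omega_{\alphaBIP}(A)$ always. The angle equality you invoke is part of the \emph{conclusion}, so it cannot be used to justify the assumption.

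The paper closes this gap by a rescaling trick. Since $\omega_{\alphaBIP}(A^s)=s\,\omega_{\alphaBIP}(A)$, choose $s>0$ small enough that $\omega_{\alphaBIP}(A^s)<\pi$. Then Theorem~\ref{theorem:alphaBIPalpha} applies to $A^s$, and your transference argument (which is otherwise correct, including the $f\varphi_n$ regularization) shows that $A^s$ has a bounded $H^\infty$-calculus with $\omega_{H^\infty}(A^s)<\pi$. Now the CDMY identity gives $\omega_{H^\infty}(A^s)=\omega_{\BIP}(A^s)=s\,\omega_{\BIP}(A)<s\pi$, where the last inequality uses precisely the hypothesis $\omega_{\BIP}(A)<\pi$. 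This bound is exactly what is needed to apply Proposition~\ref{proposition:hinftys} to $A=(A^s)^{1/s}$ and conclude that $A$ itself has a bounded $H^\infty$-calculus with $\omega_{H^\infty}(A)=\omega_{\BIP}(A)$. The remaining angle identity then follows from your forward direction.
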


\begin{proof}
  Suppose that $A$ has a bounded $H^\infty$-calculus and let $\omega_{H^\infty}(A)<\sigma<\pi$. Then, by Theorem \ref{theorem:Hinftyalphabounded}, there is a Euclidean structure $\alpha$ on $X$ so that $A$ has a $\alpha$-bounded $H^\infty(\Sigma_\sigma)$-calculus. By \eqref{eq:HinftyBIP} this implies that $A$ has $\alpha$-$\BIP$ with $\omega_{\alphaBIP}\leq\sigma$ and therefore
  \begin{equation}\label{eq:angle1}
    \inf\cbrace{\omega_{\alphaBIP}(A):\alpha \text{ is a Euclidean structure on $X$}} \leq \omega_{H^\infty}(A)
  \end{equation}

  For the converse direction pick $s>0$ so that $\omega_{\alphaBIP}(A^s)<\pi$. Then $A^s$ is $\alpha$-sectorial by Theorem \ref{theorem:alphaBIPalpha} with $\omega_{\alpha}(A^s) \leq \omega_{\alphaBIP}(A^s)$. Take $\omega_{\alphaBIP}(A^s)<\sigma<\pi$, then by Theorem \ref{theorem:transference} we can find a sectorial operator $\widetilde{A}$ on a Hilbert space $H$ with $\omega(\widetilde{A})=\omega_{\BIP}(\widetilde{A})<\sigma$ and such that  $$\nrm{f(A)}_{\mc{L}(X)} \lesssim \nrm{f(\widetilde{A})}_{\mc{L}(H)}, \qquad f \in H^\infty(\Sigma_\sigma).$$
  Since $\BIP$ implies a bounded $H^\infty$-calculus on a Hilbert space by \cite{Mc86}, $\widetilde{A}$ has a bounded $H^\infty(\Sigma_\sigma)$-calculus. Therefore $A^s$ has a bounded $H^\infty$-calculus with $\omega_{H^\infty}(A^s) <\pi$. So since the $\BIP$ and $H^\infty$-calculus angles are equal for sectorial operators with a bounded $H^\infty$-calculus, it follows that
  \begin{equation}\label{eq:angle2}
    \omega_{H^\infty}(A^s)= \omega_{\BIP}(A^s) = s\,\omega_{\BIP}(A) < s \,\pi.
  \end{equation}
  Thus $A$ has a bounded $H^\infty$-calculus with $\omega_{H^\infty}(A) = s^{-1}\omega_{H^\infty}(A^s) = \omega_{\BIP}(A)$ by Proposition \ref{proposition:hinftys}. The claimed angle equalities follow by combining \eqref{eq:angle1} and \eqref{eq:angle2}.
\end{proof}

Combining Theorem \ref{theorem:BIPHinfty} with Theorem \ref{theorem:Hinftyell2bounded} and Proposition \ref{proposition:BIPideal} we obtain the following corollary, of which the first part recovers \cite[Corollary 7.5]{KW16}

\begin{corollary}
  Let $A$ be a sectorial operator on $X$.
  \begin{enumerate}[(i)]
  \item If $X$ has Pisier's contraction property, then $A$ has a bounded $H^\infty$-calculus if and only if $A$ has $\gamma$-$\BIP$ with $\omega_{\gammaBIP}(A) < \pi$. In this case
      \begin{equation*}
        \omega_{H^\infty}(A) =\omega_{\gammaH}(A)  = \omega_{\BIP}(A) = \omega_{\gammaBIP}(A)
      \end{equation*}
  \item If $X$ is a Banach lattice, then $A$ has a bounded $H^\infty$-calculus if and only if $A$ has $\ellBIP$ with $\omega_{\ellBIP}(A) < \pi$. In this case
      \begin{equation*}
        \omega_{H^\infty}(A) =\omega_{\ellH}(A)  = \omega_{\BIP}(A) = \omega_{\ellBIP}(A)
      \end{equation*}
  \end{enumerate}
\end{corollary}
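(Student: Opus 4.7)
The corollary is a clean consolidation of the three machines already built: Theorem \ref{theorem:BIPHinfty} (bounded $H^\infty$-calculus $\Leftrightarrow$ $\BIP$ + some $\alpha$-$\BIP$), Theorem \ref{theorem:Hinftyell2bounded} (bounded $H^\infty$-calculus self-improves to $\gamma$- resp.\ $\ell^2$-bounded $H^\infty$-calculus under the stated geometric hypotheses), and Proposition \ref{proposition:BIPideal} (ideal Euclidean structures collapse $\omega_{\alphaBIP}$ to $\omega_{\BIP}$). The plan is to treat (i) and (ii) in parallel by picking $\alpha=\gamma$ in the Pisier case and $\alpha=\ell^2$ in the lattice case; the only point requiring any verification is that in both cases $\alpha$ is \emph{ideal}, which for $\gamma$ is Proposition \ref{proposition:compareEuclidean}/the ideal property of $\nrm{\cdot}_\gamma$ recorded in \S\ref{section:euclidean}, and for $\ell^2$ on a Banach lattice is the Krivine--Grothendieck estimate $\nrm{(Sx_k)_{k=1}^n}_{\ell^2}\leq K_G\nrm{S}\,\nrm{\mb{x}}_{\ell^2}$ recalled immediately after the definition of the $\ell^2$-structure.

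For the ``only if'' direction, suppose $A$ has a bounded $H^\infty$-calculus. Theorem \ref{theorem:Hinftyell2bounded} promotes this to an $\alpha$-bounded $H^\infty$-calculus with $\omega_{\alphaH}(A)=\omega_{H^\infty}(A)$. Fix any $\omega_{\alphaH}(A)<\sigma<\pi$; since $\abs{z^{it}}\leq e^{\sigma\abs{t}}$ for $z\in\Sigma_\sigma$ (equation \eqref{eq:HinftyBIP}), the functions $f_t(z):=e^{-\sigma\abs{t}}z^{it}$ lie in the unit ball of $H^\infty(\Sigma_\sigma)$, so $\{e^{-\sigma\abs{t}}A^{it}:t\in\R\}=\{f_t(A):t\in\R\}$ is $\alpha$-bounded. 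This yields $\alpha$-$\BIP$ with $\omega_{\alphaBIP}(A)\leq\sigma$, hence $\omega_{\alphaBIP}(A)\leq\omega_{\alphaH}(A)=\omega_{H^\infty}(A)<\pi$.

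For the ``if'' direction, $\alpha$-$\BIP$ trivially implies $\BIP$ with $\omega_{\BIP}(A)\leq\omega_{\alphaBIP}(A)<\pi$, so the hypothesis of Theorem \ref{theorem:BIPHinfty} is satisfied (with the given Euclidean structure $\alpha$ serving as the witness), and $A$ has a bounded $H^\infty$-calculus. For the angle identities, chain the following: Theorem \ref{theorem:BIPHinfty} gives $\omega_{H^\infty}(A)=\omega_{\BIP}(A)$; Theorem \ref{theorem:Hinftyell2bounded} gives $\omega_{\alphaH}(A)=\omega_{H^\infty}(A)$; and since $\alpha$ is ideal, Proposition \ref{proposition:BIPideal} gives $\omega_{\BIP}(A)=\omega_{\alphaBIP}(A)$. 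Together:
\begin{equation*}
\omega_{H^\infty}(A)=\omega_{\alphaH}(A)=\omega_{\BIP}(A)=\omega_{\alphaBIP}(A),
\end{equation*}
which is precisely the claimed equality in each of the two cases.

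The main (and essentially only) obstacle is bookkeeping: making sure the ``if'' direction invokes Theorem \ref{theorem:BIPHinfty} correctly (it requires \emph{both} $\omega_{\BIP}(A)<\pi$ \emph{and} the existence of \emph{some} Euclidean structure giving $\alpha$-$\BIP$, and here a single structure supplies both), and verifying idealness of $\ell^2$ on an arbitrary Banach lattice so that Proposition \ref{proposition:BIPideal} applies without additional hypotheses. Both are immediate from results already in the text, so the proof reduces to assembling the cited pieces in the order above.
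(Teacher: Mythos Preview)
Your proof is correct and follows exactly the approach the paper indicates: the corollary is stated as an immediate consequence of combining Theorem \ref{theorem:BIPHinfty}, Theorem \ref{theorem:Hinftyell2bounded}, and Proposition \ref{proposition:BIPideal}, and you have assembled these pieces precisely as intended, including the verification that $\gamma$ and $\ell^2$ are ideal so that Proposition \ref{proposition:BIPideal} applies.
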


\chapter{Sectorial operators and generalized square functions}\label{part:5}
Continuing our analysis of the connection between the $H^\infty$-calculus of sectorial operators and Euclidean structures, we will  characterize whether a sectorial operator $A$ has a bounded $H^\infty$-calculus in terms of generalized square function estimates and in terms of the existence of a dilation to a group of isometries in this chapter. Furthermore, for a given Euclidean structure $\alpha$  we will introduce certain spaces close to $X$ on which $A$ always admits a bounded $H^\infty$-calculus. In order to do so we will need the full power of the vector-valued function spaces introduced in Chapter \ref{part:3}, in particular the $\alpha$-multiplier theorem.

Our inspiration stems from \cite{CDMY96}, where Cowling, Doust, McIntosh and Yagi describe a general construction of some spaces associated to a given sectorial operator $A$ on $L^p$ for $p \in (1,\infty)$. They consider norms of the form
\begin{equation*}
   \nrms{\has{\int_0^\infty \abs{\psi(tA)x}^2\frac{\ddn t}{t}}^{1/2}}_{L^p},\qquad x \in D(A) \cap R(A),
\end{equation*}
where $\psi \in H^1(\Sigma_\sigma)$ for some $\omega(A)<\sigma<\pi$. They
 characterize the boundedness of the $H^\infty$-calculus of $A$ on $X$ in terms of the equivalence of such expressions with $\nrm{x}_{L^p}$.
 Further developments in this direction can for example be found in \cite{AMN97, FM98,KU14,KW16b, LL05,Le04,Le12}.

In the language of this memoir the norms from \cite{CDMY96} can be interpreted as
\begin{equation*}
  \nrms{\has{\int_0^\infty \abs{\psi(tA)x}^2\frac{\ddn t}{t}}^{1/2}}_{L^p} = \nrm{t\mapsto\psi(tA)x}_{\ell^2(\R_+,\frac{\ddn t}{t};X)},
\end{equation*}
which suggests to extend these results to the framework of Euclidean structures by replacing the $\ell^2$-structure with a general Euclidean structure $\alpha$. Therefore, for a sectorial operator $A$ on a general Banach space $X$ equipped with a Euclidean structure $\alpha$ we will introduce the generalized square function norms
$
 \nrm{t\mapsto \psi(tA)x}_{\alpha(\R_+,\frac{\ddn t}{t};X)}
$
along with a discrete variant and study their connection with the $H^\infty$-calculus of $A$ in Section \ref{section:squarefunctions}. In particular, we will characterize the boundedness of the $H^\infty$-calculus of $A$ in terms of a norm equivalence between these generalized square function norms and the usual norm on $X$. For the $\gamma$-structure, which is equivalent to the $\ell^2$-structure on $L^p$, this was already done in \cite{KW16} (see also \cite[Section 10.4]{HNVW17}).
In Section \ref{section:dilation} we will use these generalized square function norms to construct dilations of sectorial operators on the spaces $\alpha(\R;X)$, which characterize the boundedness of the $H^\infty$-calculus of $A$.

Afterwards we introduce a scale of spaces $H^\alpha_{\theta,A}$ for $\theta \in \R$ in Section \ref{section:scalespaces}, which are endowed with such a generalized square function norm. These spaces
are very close to the homogeneous fractional domain spaces, but  behave better in many respects.  In particular we will show that $A$ induces a sectorial operator on these spaces which always has a bounded $H^\infty$-calculus. Moreover  we will show that these generalized square function spaces form an interpolation scale for the complex method and that when one applies the $\alpha$-interpolation method as introduced in Section \ref{section:interpolation} to the fractional domain spaces of $A$, one obtains these generalized square function spaces. We will end this chapter with an investigation of the generalized square function spaces for sectorial operators that are not necessarily almost $\alpha$-bounded in Section \ref{section:Hspacenotalmostalpha}. This will allow us to construct some interesting counterexamples on the angle of the $H^\infty$-calculus in Section \ref{section:angleexample}.

As in the previous two chapters, we keep the standing assumption that $\alpha$ is a Euclidean structure on $X$ throughout this chapter.

\section{Generalized square function estimates}\label{section:squarefunctions}
Let $A$ be a sectorial operator on $X$. As announced in the introduction of this chapter, we start by studying the generalized square function norm
\begin{equation*}
 \nrm{t\mapsto \psi(t A)x}_{\alpha(\R_+,\frac{\ddn t}{t};X)},
\end{equation*}
and its discrete analog
 \begin{equation*}
   \sup_{t \in [1,2]}\nrmb{(\psi(2^ntA)x)_{n\in \Z}}_{\alpha(\Z;X)}
 \end{equation*}
for appropriate $x \in X$. For $\alpha=\gamma$ these norms were already studied in \cite{KW16} (see also \cite[Section 10.4]{HNVW17}).

We would like to work with $x$ such that $t \mapsto \psi(t A)x$ defines an element of $\alpha(\R_+,\frac{\ddn t}{t};X)$, rather than just being an element of the larger space $\alpha_+(\R_+,\frac{\ddn t}{t};X)$. Our main tool, the $\alpha$-multiplier theorem (Theorem \ref{theorem:pointwisemultipliers1}), asserts that $\alpha$-bounded pointwise multipliers act boundedly from $\alpha(\R_+,\frac{\ddn t}{t};X)$ to $\alpha_+(\R_+,\frac{\ddn t}{t};X)$. We will frequently use the following lemma to ensure that such a multiplier actually maps to $\alpha(\R_+,\frac{\ddn t}{t};X)$ for certain $x \in X$.

\begin{lemma}\label{lemma:squarefunctionalpha}
  Let $A$ be a sectorial operator on $X$ and take $\omega(A)<\sigma<\pi$. Let $x \in R(\varphi(A))$ for some $\varphi \in H^1(\Sigma_\sigma)$, e.g. take $x \in D(A)\cap R(A)$. Then for $f \in H^\infty(\Sigma_\sigma)$ and $\psi \in H^1(\Sigma_\sigma)$ we have
  \begin{align*}
    \hab{t\mapsto f(A)\psi(tA)x} &\in \alpha\hab{\R_+,\tfrac{\ddn t}{t};X},\\
    \hab{n \mapsto f(A)\psi(2^ntA)x} &\in \alpha(\Z;X), \qquad t \in [1,2].
  \end{align*}
\end{lemma}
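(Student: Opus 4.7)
The plan is to use multiplicativity of the $H^\infty$-calculus to rewrite $f(A)\psi(tA)x$ as a single $H^1$-function applied to the fixed vector $y$ with $\varphi(A)y=x$, and then to represent the resulting $X$-valued function as a Bochner integral of rank-one tensors in $\alpha\hab{\R_+,\tfrac{\ddn t}{t};X}$ (respectively $\alpha(\Z;X)$).

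First I would fix $\omega(A)<\nu<\sigma$ and $y\in X$ with $\varphi(A)y=x$. Setting $\psi_t(z):=\psi(tz)$, multiplicativity of the (extended) functional calculus gives $f(A)\psi(tA)x=(f\psi_t\varphi)(A)y$ for every $t>0$. Since $f\psi_t\varphi\in H^1(\Sigma_\sigma)$, the Dunford formula \eqref{eq:dunforddef} yields the absolutely convergent Bochner representation
\[
f(A)\psi(tA)x=\frac{1}{2\pi i}\int_{\Gamma_\nu}\psi(tz)\,G(z)\dd z,\qquad G(z):=f(z)\varphi(z)R(z,A)y.
\]
Using $\nrm{R(z,A)}\lesssim\abs{z}^{-1}$ on $\Gamma_\nu$ together with $f\in H^\infty(\Sigma_\sigma)$ and $\varphi\in H^1(\Sigma_\sigma)$ one verifies
\[
\int_{\Gamma_\nu}\nrm{G(z)}_X\abs{\dd z}\lesssim \nrm{f}_{H^\infty(\Sigma_\sigma)}\nrm{\varphi}_{H^1(\Sigma_\sigma)}\nrm{y}_X<\infty.
\]

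For the continuous claim define $h_z\in L^2\bigl(\R_+,\tfrac{\ddn t}{t}\bigr)$ by $h_z(t):=\psi(tz)$. Substituting $u=t\abs{z}$, and combining \eqref{eq:H1inHinfty} with the trivial inclusion $H^1\cap H^\infty\hookrightarrow H^2$ to get $\psi\in H^2(\Sigma_\nu)$, we obtain
\[
\nrmb{h_z}_{L^2(\R_+,\frac{\ddn t}{t})}^2=\int_0^\infty\absb{\psi(ue^{i\arg z})}^2\,\tfrac{\ddn u}{u}\leq \nrm{\psi}_{H^2(\Sigma_\nu)}^2
\]
uniformly in $z\in\Gamma_\nu$. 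Hence $h_z\otimes G(z)\in\alpha\hab{\R_+,\tfrac{\ddn t}{t};X}$ with $\nrm{h_z\otimes G(z)}_\alpha\leq\nrm{\psi}_{H^2(\Sigma_\nu)}\nrm{G(z)}_X$, so $z\mapsto h_z\otimes G(z)$ is Bochner integrable into $\alpha\hab{\R_+,\tfrac{\ddn t}{t};X}$ against $\abs{\dd z}$. Evaluating this integral pointwise in $t$ reproduces $t\mapsto f(A)\psi(tA)x$, which therefore lies in $\alpha\hab{\R_+,\tfrac{\ddn t}{t};X}$.

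For the discrete claim one repeats the argument with $h^t_z\in\ell^2(\Z)$ defined by $h^t_z(n):=\psi(2^ntz)$; the main obstacle is that, unlike in the continuous case, no change of variables makes $\nrm{h^t_z}_{\ell^2(\Z)}$ manifestly bounded uniformly in $z\in\Gamma_\nu$ and $t\in[1,2]$. I would settle this via the subharmonicity of $\abs{\psi}^2$: choosing $\nu<\sigma'<\sigma$ and $c>0$ with $D(w,c\abs{w})\subseteq\Sigma_{\sigma'}$ for every $w\in\Sigma_\nu$, the mean value inequality gives
\[
\abs{\psi(w)}^2\lesssim \frac{1}{\abs{w}^2}\int_{D(w,c\abs{w})}\abs{\psi(\zeta)}^2\dd A(\zeta).
\]
Applied at $w=2^ntz$ and summed over $n\in\Z$, the bounded overlap of the dyadic discs $D(2^ntz,c\cdot 2^n\abs{tz})$ together with $\abs{2^ntz}\simeq\abs{\zeta}$ for $\zeta$ in the $n$-th disc yields
\[
\sum_{n\in\Z}\abs{\psi(2^ntz)}^2\lesssim \int_{\Sigma_{\sigma'}}\abs{\psi(\zeta)}^2\tfrac{\dd A(\zeta)}{\abs{\zeta}^2}\lesssim \nrm{\psi}_{H^2(\Sigma_{\sigma'})}^2
\]
uniformly in $t\in[1,2]$ and $z\in\Gamma_\nu$. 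With this uniform $\ell^2$-bound in hand the continuous argument transfers verbatim and gives $(n\mapsto f(A)\psi(2^ntA)x)\in\alpha(\Z;X)$.
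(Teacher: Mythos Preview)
Your proof is correct and follows essentially the same approach as the paper: both rewrite $f(A)\psi(tA)x$ via the Dunford integral with integrand $\psi(tz)f(z)\varphi(z)R(z,A)y$, observe that this is a Bochner integral of rank-one tensors $h_z\otimes G(z)$ in $\alpha(\R_+,\tfrac{\ddn t}{t};X)$, and bound $\nrm{h_z}_{L^2}$ uniformly using $\psi\in H^2$ on a smaller sector. For the discrete case the paper simply says ``proven analogously''; your subharmonicity argument is a correct and explicit way to fill this in, and in fact it is essentially the mechanism behind Lemma~\ref{lemma:hadamardsum} in the paper (which, applied with $a_k\equiv 1$, would directly give the stronger $\ell^1$-bound $\sum_{n\in\Z}\abs{\psi(2^ntz)}\lesssim\nrm{\psi}_{H^1(\Sigma_\sigma)}$ uniformly in $t\in[1,2]$ and $z\in\Gamma_\nu$).
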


\begin{proof}
We will only show the first statement, the second being proven analogously. Take $\omega(A) <\nu<\sigma$ and let $y \in X$ be such that $x = \varphi(A)y$.  By the multiplicativity of the $H^\infty$-calculus we have
\begin{equation}\label{eq:dunfordpsiphi}
  f(A)\psi(tA)x = \frac{1}{2\pi i}\int_{\Gamma_\nu}f(z)\psi(tz)\varphi(z)R(z,A)y\dd z.
\end{equation}
  For all $z \in \Gamma_\nu$ the function $\psi(\cdot z)\otimes f(z)\varphi(z)R(z,A)y$ belongs to $\alpha(\R_+,\frac{\ddn t}{t};X)$, with norm
  \begin{equation*}
   \nrm{\psi(\cdot z)}_{L^2(\R_+,\frac{\ddn t}{t})} \nrm{f(z)\varphi(z)R(z,A)y}_X.
  \end{equation*}
  By \eqref{eq:H1inHinfty} we know that for any $\xi \in H^1(\Sigma_\sigma)$ we have $\xi \in H^2(\Sigma_{\sigma'})$ for $\nu<\sigma'<\sigma$, so
  $$\sup_{z\in \Gamma_\nu} \nrm{\psi(\cdot z)}_{L^2(\R_+,\frac{\ddn t}{t})}< \infty.$$ We can therefore interpret the integral \eqref{eq:dunfordpsiphi} as a Bochner integral in $\alpha(\R_+,\frac{\ddn t}{t};X)$, which yields that $f(A)\psi(\cdot A)x$ defines an element of $\alpha(\R_+,\frac{\ddn t}{t};X)$.
\end{proof}

\subsection*{The equivalence of discrete and continuous generalized square function norms}
Next we will show that it does not matter whether one studies the discrete or the continuous generalized square functions, as these norms are equivalent. Because of this equivalence we will only state results for the continuous generalized square function norms in the remainder of this section.  The statements for discrete generalized square function norms are left to the interested reader, see also \cite[Section 10.4.a]{HNVW17}. Situations in which one can take $\delta=0$ in the following proposition will be discussed in Corollary \ref{corollary:equivalencediscretecontinuous} and Proposition \ref{proposition:equivdiscretecontinuousstable}.

 \begin{proposition}\label{proposition:equivalencediscretecontinuous}
 Let $A$ be a sectorial operator on $X$, take $\omega(A)<\sigma<\pi$ and let $\psi \in H^1(\Sigma_\sigma)$. For all $0<\delta<\sigma-\omega(A)$ there is a $C>0$ such that for $x \in D(A)\cap R(A)$
 \begin{align*}
  \sup_{t \in [1,2]}\nrmb{(\psi(2^ntA)x)_{n\in \Z}}_{\alpha(\Z;X)} \leq C \, \max_{\epsilon = \pm \delta}\, \nrm{\psi_{\epsilon}(\cdot A)x}_{\alpha(\R_+,\frac{\ddn t}{t};X)},\intertext{and}
   \nrm{\psi(\cdot A)x}_{\alpha(\R_+,\frac{\ddn t}{t};X)}\leq C \,\sup_{\abs{\epsilon}< \delta}\sup_{t \in [1,2]}\nrmb{(\psi_{\epsilon}(2^ntA)x)_{n\in \Z}}_{\alpha(\Z;X)}
 \end{align*}
 where $\psi_{\epsilon}(z) = \psi(\ee^{i\epsilon}z)$.
 \end{proposition}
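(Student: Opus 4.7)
The idea is to view both sides of each inequality through the single $X$-valued holomorphic function $H(z):=\psi(\ee^z A)x$ on the strip $\mathbb{S}':=\{z\in\C:\abs{\im z}<\sigma-\omega(A)\}$. For $x\in D(A)\cap R(A)$ the Dunford representation makes $H$ well defined and holomorphic on $\mathbb{S}'$, and after the change of variables $s=\ee^r$ (which turns $\tfrac{\ddn s}{s}$ into $\ddn r$) the identities
\[
\psi(2^ntA)x=H(n\log 2+\log t),\qquad \psi_\epsilon(\ee^r A)x=H(r+i\epsilon)
\]
recast the two inequalities as comparisons of the values of $H$ on the line $\im z=0$, sampled at the lattice $\{n\log 2+\log t\}_{n\in\Z}$, with its values on the parallel lines $\im z=\pm\delta$, taken as continuous $\alpha$-functions. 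The margin $\delta$ is available because $0<\delta<\sigma-\omega(A)$.

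For the first inequality I would invoke the Poisson representation for a holomorphic function on a strip of half-width $\delta$: there exist scalar kernels $P_\delta^{\pm}\in L^1(\R)$ so that
\[
H(r)=\int_\R P_\delta^{+}(r-r')H(r'+i\delta)\,\ddn r'+\int_\R P_\delta^{-}(r-r')H(r'-i\delta)\,\ddn r',\qquad r\in\R.
\]
Composing with sampling defines a scalar operator
\[
T_t\colon (f_+,f_-)\mapsto\bigl(n\mapsto (P_\delta^{+}\ast f_++P_\delta^{-}\ast f_-)(n\log 2+\log t)\bigr),
\]
and uniform $L^2(\R)^2\to\ell^2(\Z)$ boundedness in $t\in[1,2]$ reduces, via the convolution identity $P_\delta^{+}\ast P_\delta^{-}=P_{2\delta}$ on the Fourier side, to Poisson summation applied to the symbol $\widehat{P_{2\delta}}(\xi)=\ee^{-2\delta\abs{\xi}}$, whose geometric decay is the decisive use of $\delta>0$. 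Proposition~\ref{proposition:operatoronfunctions} extends $T_t$ to a bounded operator $\alpha(\R;X)^2\to\alpha(\Z;X)$ with the same norm; applied to $(H(\cdot+i\delta),H(\cdot-i\delta))$, whose $\alpha(\R;X)$-norms equal $\nrm{\psi_{\pm\delta}(\cdot A)x}_{\alpha(\R_+,\frac{\ddn s}{s};X)}$ by the change of variables and Lemma~\ref{lemma:squarefunctionalpha}, and taking the supremum over $t\in[1,2]$, one obtains the first inequality.

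For the second inequality I would run the dual construction: a Shannon--Whittaker-type interpolation formula expresses $H(r+i\epsilon)$ for $\abs{\epsilon}<\delta$ as a discrete sum over $n\in\Z$ of $H(n\log 2+\log t+i\epsilon)$ against scalar interpolation kernels, averaged in $t\in[1,2]$. The resulting scalar operator $\ell^2(\Z;L^2([1,2]))\to L^2(\R)$ is again bounded by Poisson summation, now with the smaller gap $\delta-\abs{\epsilon}>0$, and Proposition~\ref{proposition:operatoronfunctions} transports the estimate to the $\alpha$-spaces, concluding the argument after a supremum over $\abs{\epsilon}<\delta$. The principal technical obstacle is justifying the Poisson-type representation in the $X$-valued $\alpha$-category, namely showing that $H$ has boundary traces $H(\cdot\pm i\delta)\in\alpha(\R;X)$ rather than merely in $\alpha_+(\R;X)$; this is supplied by Lemma~\ref{lemma:squarefunctionalpha} for $x\in D(A)\cap R(A)$. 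Once these traces are in hand, the whole argument is a vector-valued upgrade of classical scalar Poisson-kernel and sampling estimates, carried through by the extension mechanism of Section~\ref{section:alphaspaces}.
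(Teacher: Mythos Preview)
Your approach is essentially the paper's: after the same strip reduction $z\mapsto \ee^z$, the paper invokes \cite[Propositions~9.7.10 and~9.7.20]{HNVW17} for the case $\alpha=\gamma$ and observes that those proofs use only \eqref{eq:E1x} and the right-ideal property \eqref{eq:E2x} (through Proposition~\ref{proposition:operatoronfunctions}), hence carry over verbatim to any Euclidean structure. Your Poisson-kernel/sampling sketch is precisely the content of those cited propositions, so you have reconstructed the underlying argument rather than cited it.
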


\begin{proof}
For $x \in D(A)\cap R(A)$ and $\abs{\epsilon}\leq \delta$ we know that  \begin{align*}
    \hab{t\mapsto \psi_{\epsilon}(tA)x} &\in \alpha\hab{\R_+,\tfrac{\ddn t}{t};X},\\
    \hab{n \mapsto \psi_{\epsilon}(2^ntA)x} &\in \alpha(\Z;X), \qquad t \in [1,2].
\end{align*} by Lemma \ref{lemma:squarefunctionalpha}. Therefore if $\alpha=\gamma$ the first inequality follows from
    \cite[Proposition 9.7.10]{HNVW17} with $a=0$, $b = \delta/\log(2)$, $\alpha = \sigma/\log(2)$ and the observation that  $z \mapsto 2^z$ maps the strip
    \begin{equation*}
      \mbb{S}_{\sigma/\log(2)}:= \cbrace{z \in \C:\abs{\im(z)}<\sigma/\log(2)}
    \end{equation*}
    onto the sector $\Sigma_\sigma$. Analogously the second inequality for $\alpha=\gamma$ follows from  \cite[Proposition 9.7.20]{HNVW17} with $\alpha=\delta/\log(2)$. The proofs carry over to an arbitrary Euclidean structure, as the only properties of the $\gamma$-structure used in the proof of \cite[Proposition 9.7.10 and Proposition 9.7.20]{HNVW17} are \eqref{eq:E1x} and the right ideal property in \eqref{eq:E2x} in the form of Proposition \ref{proposition:operatoronfunctions}.
\end{proof}

\subsection*{The equivalence of the generalized square function norms for different $\psi$}
The first major result of this section will be the equivalence of the continuous generalized square function norms for different choices of $\psi \in H^1(\Sigma_\sigma)$. As a preparation let us note the following easy corollary of Jensen's inequality.

\begin{lemma}\label{lemma:productconvolution}
  Let $h \in L^1(\R_+,\frac{\ddn t}{t})$. The operator $S_h$ on $L^2(\R_+,\frac{\ddn t}{t})$ given by
  \begin{equation*}
    S_hu(s) := \int_0^\infty h(st)u(t)\frac{\ddn t}{t}, \qquad s \in \R_+
  \end{equation*}
  is bounded with $\nrm{S_h} \leq \nrm{h}_{L^1(\R_+,\frac{\ddn t}{t})}$.
\end{lemma}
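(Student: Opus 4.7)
The plan is to recognise $S_h$ as (essentially) convolution on the multiplicative group $(\R_+,\frac{\ddn t}{t})$ and to derive the norm bound by Young's inequality in its simplest $L^1*L^2\to L^2$ form, which in this setting reduces to a pointwise Cauchy--Schwarz/Jensen estimate followed by Fubini.

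First I would record the key invariance: the substitution $\tau = st$ on $\R_+$ preserves the Haar measure $\frac{\ddn t}{t}$, so
\begin{equation*}
\int_0^\infty \abs{h(st)}\,\frac{\ddn t}{t} = \int_0^\infty \abs{h(\tau)}\,\frac{\ddn \tau}{\tau} = \nrm{h}_{L^1(\R_+,\frac{\ddn t}{t})} =: c
\end{equation*}
for every $s \in \R_+$. This is the only nontrivial ingredient.

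Next, for $u \in L^2(\R_+,\frac{\ddn t}{t})$ and $s \in \R_+$ I would write the integrand in $S_h u(s)$ as $\abs{h(st)}^{1/2}\cdot \abs{h(st)}^{1/2}u(t)$ and apply Cauchy--Schwarz (equivalently, Jensen's inequality for $\varphi(r)=r^2$ against the probability measure $c^{-1}\abs{h(st)}\,\frac{\ddn t}{t}$) to obtain the pointwise bound
\begin{equation*}
\abs{S_h u(s)}^2 \leq c \int_0^\infty \abs{h(st)}\,\abs{u(t)}^2\,\frac{\ddn t}{t}.
\end{equation*}
Then I would integrate in $s$ against $\frac{\ddn s}{s}$, apply Fubini, and use the same translation invariance once more to compute the inner integral $\int_0^\infty \abs{h(st)}\,\frac{\ddn s}{s}=c$, leaving $\nrm{S_h u}_{L^2(\R_+,\frac{\ddn t}{t})}^2 \leq c^2\nrm{u}_{L^2(\R_+,\frac{\ddn t}{t})}^2$.

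There is no real obstacle here: the only thing to notice is the translation invariance of $\frac{\ddn t}{t}$ under $t\mapsto st$, which is what makes the estimate independent of $s$ on both sides and yields the constant $\nrm{h}_{L^1(\R_+,\frac{\ddn t}{t})}$ rather than something $s$-dependent. Measurability of the kernel and finiteness of all integrals are automatic from $h \in L^1(\R_+,\frac{\ddn t}{t})$, so Fubini applies without further comment.
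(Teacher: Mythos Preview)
Your proposal is correct and matches the paper's proof essentially line for line: the paper also sets $c = \nrm{h}_{L^1(\R_+,\frac{\ddn t}{t})}$, applies Jensen's inequality (your Cauchy--Schwarz formulation is equivalent) to get the pointwise bound, and then uses Fubini together with the invariance of $\frac{\ddn t}{t}$ under $t\mapsto st$ to conclude.
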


\begin{proof}
Let $c := \nrm{h}_{L^1(\R_+,\frac{\ddn t}{t})}$. By Jensen's inequality and Fubini's theorem we have
\begin{equation*}
  \int_0^\infty \abss{\int_0^\infty h(st)u(t)\frac{\ddn t}{ct}}^2\frac{\ddn s}{s}\leq \int_0^\infty \int_0^\infty \abs{h(st)}\abs{u(t)}^2\frac{\ddn t}{ct}\frac{\ddn s}{s} = \nrm{u}^2_{L^2(\R_+,\frac{\ddn t}{t})},
\end{equation*}
which yields $\nrm{S_hu}_{L^2(\R_+,\frac{\ddn t}{t})} \leq \nrm{h}_{L^1(\R_+,\frac{\ddn t}{t})} \nrm{u}_{L^2(\R_+,\frac{\ddn t}{t})}$.
\end{proof}

We are now ready to prove the announced equivalence, which relies upon the $\alpha$-multiplier theorem. For the $\ell^2$- and the $\gamma$-structure this recovers the corresponding results from \cite{Le04} and \cite{KW16} respectively.

\begin{proposition}\label{proposition:equivsquarefunctions} Let $A$ be an almost $\alpha$-sectorial operator on $X$. Take $\tilde{\omega}_{\alpha}(A)<\sigma<\pi$ and fix arbitrary non-zero $\psi,\varphi \in H^1(\Sigma_\sigma)$. For all $f \in H^\infty(\Sigma_\sigma)$ and $x \in D(A) \cap R(A)$ we have
\begin{equation*}
  \nrmb{f(A) \psi(\cdot A)x}_{\alpha(\R_+,\frac{\ddn t}{t};X)} \lesssim \nrm{f}_{H^\infty(\Sigma_\sigma)}\nrmb{\varphi(\cdot A)x}_{\alpha(\R_+,\frac{\ddn t}{t};X)}.
\end{equation*}
In particular, for $f \equiv 1$, we have
\begin{equation*}
  \nrmb{\psi(\cdot A)x}_{\alpha(\R_+,\frac{\ddn t}{t};X)} \simeq  \nrmb{\varphi(\cdot A)x}_{\alpha(\R_+,\frac{\ddn t}{t};X)}
\end{equation*}
\end{proposition}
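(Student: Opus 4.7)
The plan is to combine the Calderón reproducing formula \eqref{eq:calderonrepnontrivial} with almost $\alpha$-sectoriality (via Proposition \ref{proposition:almostsectorialcharacterization}) and the $\alpha$-multiplier theorem (Theorem \ref{theorem:pointwisemultipliers1}). First I would pick an auxiliary $\eta\in H^1(\Sigma_\sigma)$ with $c:=\int_0^\infty \eta(s)\varphi(s)\tfrac{ds}{s}\ne 0$; the concrete choice $\eta(z):=\overline{\varphi(\bar z)}$ works, since this is holomorphic on $\Sigma_\sigma$, has $|\eta(z)|=|\varphi(\bar z)|$ (so $\eta\in H^1(\Sigma_\sigma)$), and gives $c=\int_0^\infty|\varphi(s)|^2\tfrac{ds}{s}>0$. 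Applying \eqref{eq:calderonrepnontrivial} to $g:=\eta\varphi\in H^1(\Sigma_\sigma)$ yields $cx=\int_0^\infty\eta(sA)\varphi(sA)x\,\tfrac{ds}{s}$ for $x\in D(A)\cap R(A)$, and acting with $f(A)\psi(tA)$ followed by the substitution $s=rt$ produces the representation
\begin{equation*}
f(A)\psi(tA)x \;=\; c^{-1}\int_0^\infty f(A)\,\rho_r(tA)\,\varphi(rtA)x\,\frac{dr}{r},\qquad \rho_r(z):=\psi(z)\eta(rz).
\end{equation*}

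The next step is to estimate the $\alpha$-norm in $t$. Set $u(s):=\varphi(sA)x$, which belongs to $\alpha(\R_+,\tfrac{ds}{s};X)$ by Lemma \ref{lemma:squarefunctionalpha}, and note that for each $r>0$ the dilation $u\mapsto u(r\,\cdot)$ is an isometry on $\alpha(\R_+,\tfrac{dt}{t};X)$ by Proposition \ref{proposition:operatoronfunctions}. Moreover $f\rho_r\in H^1(\Sigma_\sigma)$ with $\nrm{f\rho_r}_{H^1(\Sigma_\sigma)}\le\nrm{f}_{H^\infty(\Sigma_\sigma)}\nrm{\rho_r}_{H^1(\Sigma_\sigma)}$, so Proposition \ref{proposition:almostsectorialcharacterization} supplies the uniform bound $\nrm{\cbrace{f(A)\rho_r(tA):t>0}}_\alpha \lesssim \nrm{f}_{H^\infty(\Sigma_\sigma)}\nrm{\rho_r}_{H^1(\Sigma_\sigma)}$. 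Applying Theorem \ref{theorem:pointwisemultipliers1} pointwise in $r$, followed by Minkowski's integral inequality under the outer integral, and using that the left-hand side lies in $\alpha$ rather than merely $\alpha_+$ (by Lemma \ref{lemma:squarefunctionalpha}), gives
\begin{equation*}
\nrm{f(A)\psi(\cdot A)x}_{\alpha(\R_+,\frac{dt}{t};X)} \lesssim |c|^{-1}\,\nrm{f}_{H^\infty(\Sigma_\sigma)}\,\Big(\int_0^\infty \nrm{\rho_r}_{H^1(\Sigma_\sigma)}\frac{dr}{r}\Big)\,\nrm{\varphi(\cdot A)x}_{\alpha(\R_+,\frac{ds}{s};X)}.
\end{equation*}

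The main remaining obstacle is controlling the scalar integral $\int_0^\infty\nrm{\rho_r}_{H^1(\Sigma_\sigma)}\tfrac{dr}{r}$. For each fixed $\theta$ with $|\theta|<\sigma$, Fubini and the substitution $u=rt$ give the clean identity
\begin{equation*}
\int_0^\infty\int_0^\infty |\psi(e^{i\theta}t)||\eta(re^{i\theta}t)|\frac{dt}{t}\frac{dr}{r} = \nrm{\psi(e^{i\theta}\cdot)}_{L^1(\frac{dt}{t})}\nrm{\eta(e^{i\theta}\cdot)}_{L^1(\frac{dr}{r})} \le \nrm{\psi}_{H^1(\Sigma_\sigma)}\nrm{\eta}_{H^1(\Sigma_\sigma)},
\end{equation*}
but the $H^1(\Sigma_\sigma)$-norm involves a supremum over $\theta$ outside the inner integral. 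The standard workaround is to enlarge the sector slightly: fix $\tilde\omega_\alpha(A)<\sigma''<\sigma<\sigma'<\pi$ with $\psi,\varphi\in H^1(\Sigma_{\sigma'})$ (achievable by shrinking $\sigma$ at the outset), and use the standard interior decay estimates $|\psi(z)|,|\eta(z)|\le C\min(|z|^\delta,|z|^{-\delta})$ on $\Sigma_\sigma$ for some $\delta>0$ depending on $\sigma'-\sigma$; these yield $\nrm{\rho_r}_{H^1(\Sigma_{\sigma''})}\lesssim \min(r^\delta,r^{-\delta})$, which is integrable against $\tfrac{dr}{r}$. The `in particular' statement then follows by applying the main inequality with $f\equiv 1$ in both directions, interchanging the roles of $\psi$ and $\varphi$.
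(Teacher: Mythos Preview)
Your overall strategy is sound and can be made to work, but the ``standard interior decay estimate'' $|\psi(z)|\le C\min(|z|^\delta,|z|^{-\delta})$ that you invoke to control $\int_0^\infty\nrm{\rho_r}_{H^1(\Sigma_{\sigma''})}\tfrac{dr}{r}$ is false in general. A counterexample is $\psi(z)=\bigl(1+(\log z)^2\bigr)^{-1}$: for $\sigma<1$ one checks $\psi\in H^1(\Sigma_\sigma)$ (transfer to the strip and observe $\int_\R(1+s^2-\theta^2)^{-1}\,ds<\infty$ uniformly for $|\theta|\le\sigma$), yet $|\psi(t)|\sim(\log t)^{-2}$ as $t\to\infty$, which is not $O(t^{-\delta})$ for any $\delta>0$. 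So the bound $\nrm{\rho_r}_{H^1}\lesssim\min(r^\delta,r^{-\delta})$ fails. The integral you need is nevertheless finite, but for a different reason: the $\alpha$-bound coming out of Proposition~\ref{proposition:almostsectorialcharacterization} (via the representation~\eqref{eq:representationHinfty}) is really controlled by $\int_{\Gamma_\nu}|h(z)|\,\tfrac{|dz|}{|z|}$ for a \emph{fixed} contour $\Gamma_\nu$, not by the full $H^1(\Sigma_{\sigma''})$-norm with its supremum over $\theta$. Using this sharper bound, Fubini gives $\int_0^\infty\int_{\Gamma_\nu}|\rho_r(z)|\tfrac{|dz|}{|z|}\tfrac{dr}{r}\le 2\nrm{\psi}_{H^1(\Sigma_\sigma)}\nrm{\eta}_{H^1(\Sigma_\sigma)}$, and your argument goes through.

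The paper's proof takes a different route that sidesteps this issue entirely: it introduces \emph{two} auxiliary functions $\xi,\eta\in H^1(\Sigma_\sigma)$ with $\int_0^\infty\xi(t)\eta(t)\varphi(t)\tfrac{dt}{t}=1$, then unfolds both $\psi(sA)$ and $\xi(tA)\eta(tA)$ via the representation~\eqref{eq:representationHinfty}. The resulting expression factors as two convolution operators $S_\psi$, $S_\xi$ (bounded on $L^2(\R_+,\tfrac{dt}{t})$ by Lemma~\ref{lemma:productconvolution}) sandwiching two pointwise multipliers $M(z)=z^{1/2}A^{1/2}R(z,A)$ and $N(t)=\eta(tA)f(A)$, each separately $\alpha$-bounded. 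The extra auxiliary function buys exactly the separation of variables that makes the $\nrm{\cdot}_{H^1}$ versus sup-over-$\theta$ problem disappear.
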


\begin{proof}
 First fix $f \in H^1(\Sigma_\sigma)\cap H^\infty(\Sigma_\sigma)$, let $x \in D(A) \cap R(A)$ and note that both $f(A) \psi(\cdot A)x$ and $\varphi(\cdot A)x$ are in $\alpha(\R_+,\frac{\ddn t}{t};X)$ by Lemma \ref{lemma:squarefunctionalpha}. Let $\xi,\eta \in H^1(\Sigma_\sigma)$ be non-zero such that
  \begin{equation*}
    \int_0^\infty \xi(t)\eta(t)\varphi(t)\frac{\ddn t}{t} = 1.
  \end{equation*}
  Then
    \begin{equation*}
    \int_0^\infty \xi(tz)\eta(tz)\varphi(tz)\frac{\ddn t}{t} = 1,\qquad  z \in \Sigma_\sigma,
  \end{equation*}
  which is clear for $z \in \R_+$ and then in general by analytic continuation. Take $\tilde{\omega}_{\alpha}(A)<\nu<\sigma$. We use the properties of the Dunford calculus of $A$ and Fubini's theorem to calculate
  \begin{align*}
    f(A) &= \frac{1}{2\pi i} \int_{\Gamma_\nu} \has{\int_0^\infty\xi(tz)\eta(tz)\varphi(tz)\frac{\ddn t}{t}} f(z)R(z,A)\dd z\\
    &= \int_0^\infty \has{\frac{1}{2\pi i} \int_{\Gamma_\nu} \xi(tz)\eta(tz)\varphi(tz) f(z)R(z,A)\dd z}\frac{\ddn t}{t}\\
    &=\int_0^\infty \xi(tA)\eta(tA) f(A)\varphi(tA)\frac{\ddn t}{t}
  \end{align*}
  With this identity, Fubini's theorem and \eqref{eq:representationHinfty} we obtain for $x \in X$ and $s \in \R_+$
  \begin{align*}
    f(A)\psi(sA)x &= \int_0^\infty \psi(sA)\xi(tA)\eta(tA)f(A)\varphi(tA) x\frac{\ddn t}{t}\\
    &= \frac{1}{2\pi i} \int_{\Gamma_\nu} \psi(sz)z^{\frac{1}{2}}A^{\frac{1}{2}}R(z,A)\int_0^\infty \xi(tz)\eta(tA)f(A)\varphi(tA) x\frac{\ddn t}{t}\frac{\ddn z}{z}\\
    &= \frac{1}{2\pi i} \sum_{\epsilon =\pm 1}
    \int_{0}^\infty \psi(s\lambda \ee^{i \epsilon \nu})M(\lambda \ee^{i \epsilon \nu})\int_0^\infty \xi(t\lambda \ee^{i \epsilon \nu})N(t)\varphi(tA) x\frac{\ddn t}{t}\frac{\ddn \lambda}{\lambda}\\
    &= \frac{1}{2\pi i} \sum_{\epsilon =\pm 1} S_\psi\has{\lambda \mapsto M(\lambda \ee^{i \epsilon \nu} )\cdot S_{\xi}\hab{t \mapsto N(t)\varphi(tA)x}(\lambda \ee^{i \epsilon \nu})}(s),
  \end{align*}
  where
  \begin{align*}
    M(z) &:= z^{\frac{1}{2}}A^{\frac{1}{2}}R(z,A), && z \in \Sigma_\sigma\\
    N(t) &:= \eta(tA)f(A),&&t \in \R_+
  \end{align*}
and $S_h$ for $h \in L^1(\R_+,\frac{\ddn t}{t})$ is as in Lemma \ref{lemma:productconvolution}, which extends to a bounded operator on functions in $\alpha(\R_+,\frac{\ddn t}{t};X)$ by Proposition \ref{proposition:operatoronfunctions}.

By Proposition \ref{proposition:almostsectorialcharacterization} we have that $\cbrace{M(z):z \in \Sigma_\sigma}$ is $\alpha$-bounded. For $N$ we recall the representation of \eqref{eq:representationHinfty}
\begin{align*}
  N(t)
  &=\frac{1}{2\pi i} \int_{\Gamma_\nu} \eta(tz)f(z)z^{\frac{1}{2}}A^{\frac{1}{2}}R(z,A)\frac{\ddn z}{z}.
\end{align*}
Thus since
\begin{equation*}
  \int_{\Gamma_\nu} \abs{\eta(tz)}\abs{f(z)}\absb{\frac{\ddn z}{z}} \leq \nrm{f}_{H^\infty(\Sigma_\sigma)} \nrm{\eta}_{H^1(\Sigma_\sigma)}
\end{equation*}
we have by the almost $\alpha$-sectoriality of $A$ and Corollary \ref{corollary:L1mean} that  $$\nrm{\cbrace{N(t):t \in \R_+}}_\alpha \lesssim \nrm{f}_{H^\infty(\Sigma_\sigma)} \nrm{\eta}_{H^1(\Sigma_\sigma)}.$$ Moreover by Lemma \ref{lemma:squarefunctionalpha} we have
\begin{equation*}
   N(\cdot)\varphi(\cdot A)x = \eta(\cdot A) f(A)\varphi(\cdot A)  x \in \alpha\hab{\R_+,\frac{\ddn t}{t};X}
\end{equation*}
where implicit constant depends on $A$.

Applying the boundedness of $S_\xi$ and $S_\psi$ and the $\alpha$-multiplier theorem (Theorem \ref{theorem:pointwisemultipliers1}) on $M$ and $N$  we obtain
\begin{align*}
  \nrm{f(A)\psi(\cdot A)x}_{\alpha(\R_+,\frac{\ddn t}{t};X)} &\leq\nrm{S_\psi} \nrm{S_\xi} \nrmb{\cbrace{M(z):z \in \Sigma_\sigma}}_\alpha \\
   & \hspace{1cm} \cdot\nrm{\cbrace{N(t):t \in \R_+}}_\alpha \nrmb{\varphi(\cdot A)x}_{\alpha(\R_+,\frac{\ddn t}{t};X)}\\
   &\lesssim \nrm{f}_{H^\infty(\Sigma_\sigma)} \nrm{\varphi(\cdot A)x}_{\alpha(\R_+,\frac{\ddn t}{t};X)},
\end{align*}
where the implicit constant depends on $\psi$, $\xi$, $\eta$ and $A$.

The same estimate  for a general $f \in H^\infty(\Sigma_\sigma)$ follows by approximating $f$ by $f_n := f \varphi_n \in H^1(\Sigma_\sigma)\cap H^\infty(\Sigma_\sigma)$ with $\varphi_n$ as in  \eqref{eq:phin}, noting that $f_n(A)\psi(tA)x \to f(A)\psi(tA)x$ for all $t \in \R_+$ and appealing to Proposition \ref{proposition:alphaspaceconvergence}\ref{it:convergenceproperty1}.
\end{proof}

From Proposition \ref{proposition:equivsquarefunctions} we can see that the square function norms corresponding to $\psi$ and $\psi_\epsilon$ as in Proposition \ref{proposition:equivalencediscretecontinuous} are equivalent when $A$ is almost $\alpha$-sectorial. Thus we can take $\delta=0$ in Proposition \ref{proposition:equivsquarefunctions}.

 \begin{corollary}\label{corollary:equivalencediscretecontinuous}
 Let $A$ be an almost $\alpha$-sectorial operator on $X$, take $\tilde{\omega}_{\alpha}(A)<\sigma<\pi$ and let $\psi \in H^1(\Sigma_\sigma)$. For all $x \in D(A)\cap R(A)$ we have
 \begin{align*}
  \nrm{\psi(\cdot A)x}_{\alpha(\R_+,\frac{\ddn t}{t};X)} \simeq \sup_{t \in [1,2]}\nrmb{(\psi(2^ntA)x)_{n\in \Z}}_{\alpha(\Z;X)}.
  \end{align*}
 \end{corollary}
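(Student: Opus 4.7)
The plan is to simply combine Proposition \ref{proposition:equivalencediscretecontinuous} with Proposition \ref{proposition:equivsquarefunctions}: the former gives the equivalence of discrete and continuous norms at the cost of perturbing $\psi$ by a small rotation, and the latter, which requires almost $\alpha$-sectoriality, shows that such a perturbation does not change the continuous square function norm up to constants.

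First I would fix a parameter $\tilde{\omega}_{\alpha}(A)<\nu<\sigma$ and choose $\delta>0$ with $\delta<\sigma-\nu$. For $\abs{\epsilon}\leq\delta$ the rotated function $\psi_{\epsilon}(z):=\psi(\ee^{i\epsilon}z)$ lies in $H^1(\Sigma_{\nu})$, since $\psi \in H^1(\Sigma_{\sigma})$ and rotation shrinks the sector of holomorphy by at most $\abs{\epsilon}<\sigma-\nu$. In particular both $\psi$ and $\psi_{\epsilon}$ belong to $H^1(\Sigma_{\nu})$ with $\nu>\tilde{\omega}_{\alpha}(A)$, so Proposition \ref{proposition:equivsquarefunctions} (applied with $\sigma$ replaced by $\nu$, $f\equiv 1$, and the pair $\psi_\epsilon,\psi$) yields
\[
  \nrmb{\psi_{\epsilon}(\cdot A)x}_{\alpha(\R_+,\frac{\ddn t}{t};X)} \simeq \nrmb{\psi(\cdot A)x}_{\alpha(\R_+,\frac{\ddn t}{t};X)}
\]
for every $x \in D(A)\cap R(A)$, uniformly in $\abs{\epsilon}\leq\delta$.

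Next I would apply Proposition \ref{proposition:equivalencediscretecontinuous} with this very $\delta$. This gives directly the two inequalities
\begin{align*}
  \sup_{t \in [1,2]}\nrmb{(\psi(2^ntA)x)_{n\in \Z}}_{\alpha(\Z;X)} &\lesssim \max_{\epsilon=\pm\delta}\nrmb{\psi_{\epsilon}(\cdot A)x}_{\alpha(\R_+,\frac{\ddn t}{t};X)}, \\
  \nrmb{\psi(\cdot A)x}_{\alpha(\R_+,\frac{\ddn t}{t};X)} &\lesssim \sup_{\abs{\epsilon}<\delta}\sup_{t\in[1,2]}\nrmb{(\psi_{\epsilon}(2^ntA)x)_{n\in\Z}}_{\alpha(\Z;X)}.
\end{align*}
Substituting the first displayed equivalence into the right-hand side of the first inequality immediately bounds the discrete norm by $\nrmb{\psi(\cdot A)x}_{\alpha(\R_+,\frac{\ddn t}{t};X)}$, which gives the ``$\gtrsim$'' direction of the corollary.

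For the reverse direction I still have $\psi_{\epsilon}$ inside the discrete norm on the right-hand side of the second inequality. To eliminate it I apply the first part of Proposition \ref{proposition:equivalencediscretecontinuous} again, but now to $\psi_{\epsilon}$ in place of $\psi$, with a slightly smaller rotation parameter $\delta'<\delta-\abs{\epsilon}$; this bounds $\sup_{t\in[1,2]}\nrmb{(\psi_{\epsilon}(2^ntA)x)_{n\in\Z}}_{\alpha(\Z;X)}$ by $\max_{\epsilon'=\pm\delta'}\nrmb{\psi_{\epsilon+\epsilon'}(\cdot A)x}_{\alpha(\R_+,\frac{\ddn t}{t};X)}$, which in turn is equivalent to $\nrmb{\psi(\cdot A)x}_{\alpha(\R_+,\frac{\ddn t}{t};X)}$ by the first displayed equivalence (since $\abs{\epsilon+\epsilon'}<\delta$). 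There is no real obstacle here: the only thing to watch is that the rotation parameters stay within the admissible range $(-\delta,\delta)$, which is guaranteed by the choice $\delta<\sigma-\nu$ at the outset.
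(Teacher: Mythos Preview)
Your argument for the ``$\gtrsim$'' direction is fine, but the ``$\lesssim$'' direction is circular. You start from the second inequality of Proposition~\ref{proposition:equivalencediscretecontinuous},
\[
  \nrm{\psi(\cdot A)x}_{\alpha(\R_+,\frac{\ddn t}{t};X)} \lesssim \sup_{\abs{\epsilon}<\delta}\sup_{t\in[1,2]}\nrmb{(\psi_{\epsilon}(2^ntA)x)_{n\in\Z}}_{\alpha(\Z;X)},
\]
and then bound the right-hand side by applying the \emph{first} inequality of that same proposition to $\psi_\epsilon$. But the first inequality takes a discrete norm and bounds it by a \emph{continuous} one, so your chain becomes
\[
  \text{continuous}(\psi)\;\lesssim\;\sup_\epsilon \text{discrete}(\psi_\epsilon)\;\lesssim\;\text{continuous}(\psi_{\epsilon\pm\delta'})\;\simeq\;\text{continuous}(\psi),
\]
which is vacuous: the discrete norm of $\psi$ never appears on the right.

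The missing ingredient is a \emph{discrete} analogue of Proposition~\ref{proposition:equivsquarefunctions}, giving
\[
  \sup_{t\in[1,2]}\nrmb{(\psi_\epsilon(2^ntA)x)_{n\in\Z}}_{\alpha(\Z;X)}\simeq \sup_{t\in[1,2]}\nrmb{(\psi(2^ntA)x)_{n\in\Z}}_{\alpha(\Z;X)}
\]
under almost $\alpha$-sectoriality. The paper explicitly anticipates this: right after Proposition~\ref{proposition:equivalencediscretecontinuous} it remarks that all subsequent results are stated only for the continuous norms, the discrete versions being ``left to the interested reader''. The proof of Proposition~\ref{proposition:equivsquarefunctions} indeed carries over verbatim to the discrete setting (the convolution operators $S_\psi,S_\xi$ are replaced by their discrete counterparts, and the $\alpha$-multiplier theorem is applied on $\alpha(\Z;X)$). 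Once you have this discrete equivalence, the second inequality of Proposition~\ref{proposition:equivalencediscretecontinuous} immediately gives the ``$\lesssim$'' direction.
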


If the generalized square function norms are equivalent with $\nrm{\cdot}_X$, it follows immediately from Proposition \ref{proposition:equivsquarefunctions} that $A$ has a bounded $H^\infty$-calculus, which is the content of the next theorem.

\begin{theorem}\label{theorem:squaretocalculus}
  Let $A$ be an almost $\alpha$-sectorial operator on $X$ and take $\tilde{\omega}_{\alpha}(A)<\sigma<\pi$. If there are non-zero $\psi,\varphi \in H^1(\Sigma_\sigma)$ such that for all $x \in D(A)\cap R(A)$
  \begin{equation*}
    \nrmb{\psi(\cdot A)x}_{\alpha(\R_+,\frac{\ddn t}{t};X)} \lesssim \nrm{x}_X \lesssim \nrmb{\varphi(\cdot A)x}_{\alpha(\R_+,\frac{\ddn t}{t};X)},
  \end{equation*}
  then $A$ has a bounded $H^\infty$-calculus with $\omega_{H^\infty}(A) \leq \tilde{\omega}_{\alpha}(A)$.
\end{theorem}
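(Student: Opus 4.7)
The plan is to combine the hypothesis with Proposition \ref{proposition:equivsquarefunctions} via an approximation argument. Recall that $A$ has a bounded $H^\infty(\Sigma_\sigma)$-calculus precisely when $\sup_{n\in\N} \nrm{(f\varphi_n)(A)} < \infty$ for every $f \in H^\infty(\Sigma_\sigma)$, where $\varphi_n$ are the regularizers from \eqref{eq:phin}; since $\sup_n \nrm{\varphi_n}_{H^\infty(\Sigma_\sigma)} < \infty$, it suffices to establish $\nrm{(f\varphi_n)(A)} \lesssim \nrm{f}_{H^\infty(\Sigma_\sigma)}$ uniformly in $n$, and by density of $D(A)\cap R(A)$ in $X$ it is enough to do so when tested against vectors $x \in D(A)\cap R(A)$.

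So fix such an $x$ and set $f_n := f\varphi_n \in H^1(\Sigma_\sigma) \cap H^\infty(\Sigma_\sigma)$. I would first verify the auxiliary claim that $f_n(A) x \in D(A)\cap R(A)$: writing $x = \eta(A) y$ with $\eta(w) := w(1+w)^{-2}$, one has $f_n(A) x = (f_n \eta)(A) y$, and a direct contour estimate on $\Gamma_\nu$ (using that $\varphi_n$ has a simple zero at $0$ and decays like $1/w$ at infinity, so $f_n\eta$ behaves like $w^2$ at $0$ and like $w^{-2}$ at $\infty$) shows that both $w \mapsto w(f_n\eta)(w)$ and $w \mapsto w^{-1}(f_n\eta)(w)$ are in $H^1(\Sigma_\sigma)$. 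This is enough to bring $A$ and $A^{-1}$ inside the defining Dunford integral and conclude $(f_n\eta)(A)y \in D(A)\cap R(A)$. Applying the lower-bound hypothesis to $f_n(A) x$, and using that $\varphi(tA)$ commutes with $f_n(A)$, yields
\begin{equation*}
\nrm{f_n(A) x}_X \lesssim \nrmb{t \mapsto f_n(A)\, \varphi(tA) x}_{\alpha(\R_+,\frac{\ddn t}{t};X)}.
\end{equation*}
At this point Proposition \ref{proposition:equivsquarefunctions}, applied with the ``$\psi$'' there taken to be $\varphi$ and the ``$f$'' there taken to be $f_n$, bounds the right-hand side by $\nrm{f_n}_{H^\infty(\Sigma_\sigma)} \nrm{\varphi(\cdot A) x}_\alpha \lesssim \nrm{f}_{H^\infty(\Sigma_\sigma)} \nrm{\varphi(\cdot A) x}_\alpha$; and the second (``$f\equiv 1$'') assertion of the same proposition together with the upper-bound hypothesis gives $\nrm{\varphi(\cdot A) x}_\alpha \simeq \nrm{\psi(\cdot A) x}_\alpha \lesssim \nrm{x}_X$. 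Chaining these inequalities produces the uniform bound $\nrm{f_n(A) x}_X \lesssim \nrm{f}_{H^\infty(\Sigma_\sigma)} \nrm{x}_X$, and hence boundedness of the $H^\infty(\Sigma_\sigma)$-calculus. The angle statement $\omega_{H^\infty}(A) \leq \tilde{\omega}_\alpha(A)$ then follows by noting that the hypothesis at angle $\sigma$ descends to every $\sigma' \in (\tilde{\omega}_\alpha(A), \sigma)$ via the continuous inclusion $H^1(\Sigma_\sigma) \hookrightarrow H^1(\Sigma_{\sigma'})$, so we can take $\sigma \downarrow \tilde{\omega}_\alpha(A)$.

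The main obstacle is the verification of the invariance claim $f_n(A) x \in D(A)\cap R(A)$; this is precisely where the regularizing factor $\varphi_n$ is essential, as for a bare $f \in H^\infty(\Sigma_\sigma)$ one has no grip on the range of $f(A)$, and the lower square-function bound could not be legitimately invoked on $f(A) x$. Everything else is a mechanical assembly of Proposition \ref{proposition:equivsquarefunctions} with the two halves of the hypothesis, followed by a standard density argument.
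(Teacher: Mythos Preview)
Your argument is correct and follows essentially the same route as the paper's proof: chain the lower square-function bound, Proposition~\ref{proposition:equivsquarefunctions}, and the upper bound. The paper carries this out in one line for $f\in H^\infty(\Sigma_\nu)$ with $\tilde\omega_\alpha(A)<\nu\le\sigma$, tacitly using that the lower hypothesis applies to $f(A)x$; your passage through $f_n=f\varphi_n$ and explicit verification that $f_n(A)x\in D(A)\cap R(A)$ makes this step rigorous (and in fact one sees it more quickly by noting that $f_n\in H^1(\Sigma_\sigma)$ and $f_n(A)x=\eta(A)f_n(A)y\in R(\eta(A))=D(A)\cap R(A)$ by multiplicativity of the Dunford calculus).
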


\begin{proof}
  Our claim follows directly from Proposition \ref{proposition:equivsquarefunctions}. Indeed, for $f \in H^\infty(\Sigma_\nu)$ for any $\tilde{\omega}_{\alpha}(A) <\nu \leq \sigma$  and $x \in D(A)\cap R(A)$ we have
  \begin{align*}
    \nrm{f(A)x} &\lesssim \nrmb{f(A)\varphi(tA)x}_{\alpha(\R_+,\frac{\ddn t}{t};X)} \\&\lesssim \nrmb{\psi(tA)x}_{\alpha(\R_+,\frac{\ddn t}{t};X)} \lesssim \nrm{x}_X
  \end{align*}
which extends by density to all $x \in X$.
\end{proof}

\subsection*{The equivalence of the generalized square function norms with the norm of $X$}
In the second halve of this section we will turn to the converse of Theorem \ref{theorem:squaretocalculus}, i.e. we will study when the boundedness of the $H^\infty$-calculus of $A$ implies the equivalence of the generalized square function norms with $\nrm{\cdot}_X$.
In order to prove our results, we will need to use the adjoint of $A$. Recall that the adjoint of a sectorial operator is a closed operator, which may not be a sectorial operator as it may only have dense domain and dense range in the weak$^*$-topology. To remedy this we introduce the so-called \emph{moon dual}, see e.g. \cite{FW06,KW04}. Define $X^{\sharp}$ as $\overline{D(A^*)}\cap \overline{R(A^*)}$, where the closures are taken in the norm topology of $X^*$. The moon-dual operator $A^{\sharp}$ of $A$ is the part of $A^*$ in $X^{\sharp}$, i.e.
\begin{equation*}
  A^{\sharp}x = A^*x^*, \qquad x \in D(A^{\sharp})= \cbraceb{x^* \in D(A^*)\cap \overline{R(A^*)}: A^*x^*  \in \overline{D(A^*)}}.
\end{equation*}
Then the following properties hold:
\begin{itemize}
  \item $A^{\sharp}$ is a sectorial operator on $X^{\sharp}$ with spectrum $\rho(A^{\sharp}) = \rho(A^*) = \rho(A)$.
  \item $X^{\sharp}\subseteq X^*$ is norming for $X$.
  \item $R(z,A^{\sharp})$ is the restriction of $R(z,A)^*$ to $X^{\sharp}$.
  \item $\varphi(A)^*x = \varphi(A^{\sharp})x$ for $\varphi \in H^1(\Sigma_\sigma)$ and $x^*\in X^{\sharp}$.
  \item If $X$ is reflexive, then $X^{\sharp} = X^*$ and $A^\sharp = A^*$.
  \item If $A$ has a bounded $H^\infty$-calculus, then $A^\sharp$ has a bounded $H^\infty$-calculus with $\omega_{H^\infty}(A^\sharp) = \omega_{H^\infty}(A)$.
\end{itemize}

We will start by showing that, up to a smoothing factor $\varphi(A)$ for $\varphi \in H^1(\Sigma_\sigma)$, we always have the equivalence of the generalized square function norms with $\nrm{\cdot}_X$. Note that similar estimates hold for the adjoint $A^*$ on $X^*$ equipped with the Euclidean structure $\alpha^*$, by applying the following proposition to $A^\sharp$ on $X^\sharp$ equipped with the Euclidean structure induced by $\alpha^*$.

\begin{proposition}\label{proposition:functionspaceclose}
  Let $A$ be a sectorial operator on $X$, let $\omega(A)<\sigma<\pi$ and take non-zero $\psi,\varphi \in H^1(\Sigma_\sigma)$. Then we have
  \begin{align*}
     &\nrm{\psi(\cdot A)\varphi(A)x}_{\alpha(\R_+,\frac{\ddn t}{t};X)} \lesssim \nrm{x}_X, && x \in X\\
      \intertext{and}
     &\nrm{\varphi(A)x}_X \lesssim  \nrm{\psi(\cdot A)x}_{\alpha(\R_+,\frac{\ddn t}{t};X)}, && x \in D(A)\cap R(A).
\end{align*}
\end{proposition}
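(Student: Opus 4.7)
\textbf{Plan of proof for Proposition \ref{proposition:functionspaceclose}.} The two inequalities are dual to each other, so the natural strategy is to prove the first by a direct Dunford-integral computation and then deduce the second by pairing against the moon-dual $A^\sharp$ via a Calderón reproducing formula and the $\alpha$-H\"older inequality. Throughout, fix $\omega(A)<\nu<\sigma$.

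For the first inequality, I would represent
\[
\psi(tA)\varphi(A) \;=\; \frac{1}{2\pi i}\int_{\Gamma_{\nu}} \psi(tz)\varphi(z) R(z,A)\,\dd z,
\]
and interpret the right-hand side as a Bochner integral in $\alpha(\R_+,\tfrac{\ddn t}{t};X)$, exactly as in the proof of Lemma \ref{lemma:squarefunctionalpha}. The integrand at $z\in\Gamma_\nu$ is the rank-one element $\psi(\cdot z)\otimes \varphi(z)R(z,A)x$, whose $\alpha$-norm equals $\nrm{\psi(\cdot z)}_{L^2(\R_+,\frac{\ddn t}{t})}\cdot\abs{\varphi(z)}\cdot\nrm{R(z,A)x}_X$ by \eqref{eq:E1}. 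Using the embedding \eqref{eq:H1inHinfty} (equivalently $H^1(\Sigma_\sigma)\hookrightarrow H^2(\Sigma_{\sigma'})$ for $\nu<\sigma'<\sigma$), a change of variables $s=t|z|$ shows that $\sup_{z\in\Gamma_\nu}\nrm{\psi(\cdot z)}_{L^2(\R_+,\frac{\ddn t}{t})}<\infty$. Combining with the sectorial estimate $\nrm{R(z,A)}\lesssim \abs{z}^{-1}$ and with $\varphi\in H^1(\Sigma_\sigma)$ we obtain
\[
\nrmb{\psi(\cdot A)\varphi(A)x}_{\alpha(\R_+,\frac{\ddn t}{t};X)}\lesssim \has{\int_{\Gamma_\nu}\abs{\varphi(z)}\tfrac{|\ddn z|}{|z|}}\nrm{x}_X\lesssim\nrm{x}_X.
\]

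For the second inequality, I would first choose $\eta\in H^1(\Sigma_\sigma)$ with $\int_0^\infty \psi(t)\eta(t)\tfrac{\ddn t}{t}=1$; by analytic continuation this normalization holds with $t$ replaced by $tz$ for every $z\in\Sigma_\sigma$. Since $\psi\eta\in H^1(\Sigma_{\sigma'})$ for $\omega(A)<\sigma'<\sigma$ by Cauchy--Schwarz, the reproducing identity \eqref{eq:calderonrepnontrivial} gives for $x\in D(A)\cap R(A)$
\[
\varphi(A)x \;=\; \int_0^\infty \psi(tA)\,\eta(tA)\varphi(A)x\,\tfrac{\ddn t}{t}.
\]
Pairing against an arbitrary $x^*\in X^\sharp$ and moving $\eta(tA)\varphi(A)$ to the dual side yields
\[
\ipb{\varphi(A)x,x^*}\;=\;\int_0^\infty \ipb{\psi(tA)x,\,\eta(tA^\sharp)\varphi(A^\sharp)x^*}\,\tfrac{\ddn t}{t}.
\]
Now Lemma \ref{lemma:squarefunctionalpha} ensures that $\psi(\cdot A)x\in\alpha(\R_+,\tfrac{\ddn t}{t};X)$ and that $\eta(\cdot A^\sharp)\varphi(A^\sharp)x^*\in\alpha^*(\R_+,\tfrac{\ddn t}{t};X^\sharp)$. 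Applying the $\alpha$-H\"older inequality (Proposition \ref{proposition:alphaholder}\ref{it:alphaholderineq}) and then the already proven first inequality with $A$ replaced by the sectorial operator $A^\sharp$ on $X^\sharp$ and $\alpha$ replaced by $\alpha^*$ restricted to $X^\sharp$, we get
\[
\abs{\ip{\varphi(A)x,x^*}}\;\leq\;\nrm{\psi(\cdot A)x}_{\alpha(\R_+,\frac{\ddn t}{t};X)}\cdot\nrm{\eta(\cdot A^\sharp)\varphi(A^\sharp)x^*}_{\alpha^*(\R_+,\frac{\ddn t}{t};X^\sharp)}\;\lesssim\;\nrm{\psi(\cdot A)x}_{\alpha(\R_+,\frac{\ddn t}{t};X)}\,\nrm{x^*}_{X^\sharp}.
\]
Taking the supremum over $x^*\in X^\sharp$ of norm at most one and using that $X^\sharp$ is norming for $X$ finishes the proof. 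The main technical point I expect to have to be careful about is the legitimacy of the Bochner interpretation of the Dunford integral in $\alpha(\R_+,\tfrac{\ddn t}{t};X)$ and the exchange of integrals when pairing in the second step; both reduce to the uniform bound on $\nrm{\psi(\cdot z)}_{L^2}$ and the integrability of $|\varphi|\tfrac{|\ddn z|}{|z|}$ on $\Gamma_\nu$ already used above. Note that no (almost) $\alpha$-sectoriality is invoked: the estimates only use the sectorial resolvent bound and the ideal properties \eqref{eq:E1},\eqref{eq:E2} of $\alpha$.
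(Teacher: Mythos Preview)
Your proposal is correct and follows the same overall strategy as the paper: a direct Dunford-integral estimate for the first inequality, and for the second inequality the Calder\'on reproducing formula combined with the $\alpha$-H\"older inequality and the first inequality applied to the moon dual $A^\sharp$.

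The only notable difference is in the first inequality. You treat $\psi(tA)\varphi(A)$ directly as the Bochner integral $\frac{1}{2\pi i}\int_{\Gamma_\nu}\psi(\cdot z)\otimes\varphi(z)R(z,A)x\,\dd z$ in $\alpha(\R_+,\tfrac{\ddn t}{t};X)$ and estimate rank-one terms --- this is exactly the argument of Lemma~\ref{lemma:squarefunctionalpha} with the norm bound made explicit. The paper instead uses the shifted representation \eqref{eq:representationHinfty} to write $\psi(tA)=\sum_{\epsilon=\pm1}\frac{-\epsilon}{2\pi i}\int_0^\infty\psi(s^{-1}t\ee^{\epsilon i\nu})f_\epsilon(sA)\tfrac{\ddn s}{s}$ with $f_\epsilon(z)=(\ee^{-\epsilon i\nu}z)^{1/2}(1-\ee^{-\epsilon i\nu}z)^{-1}$, and then estimates $\int_0^\infty\nrm{f_\epsilon(sA)\varphi(A)}\tfrac{\ddn s}{s}$ via Fubini. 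Your route is shorter; the paper's route has the minor advantage of separating $\psi(tA)$ from $\varphi(A)$, but for this proposition that separation is not needed. For the second inequality you work with a general $\eta$ normalized by $\int_0^\infty\psi\eta\,\tfrac{\ddn t}{t}=1$, whereas the paper makes the concrete choice $\eta=c^{-1}\psi^*$ with $\psi^*(z)=\overline{\psi(\bar z)}$; these are equivalent.
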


\begin{proof}
  For the first inequality fix $x \in X$. Then we have $\psi(\cdot A)\varphi(A)x \in \alpha(\R_+,\frac{\ddn t}{t};X)$ by Lemma \ref{lemma:squarefunctionalpha}.
  Furthermore by \eqref{eq:representationHinfty} we have for $\omega(A)<\nu<\sigma$ and $t>0$
  \begin{align*}
    \psi(tA)
    &=\frac{1}{2 \pi i}  \int_{\Gamma_\nu} \psi(tz)  (z^{-1}A)^{1/2}R(1,z^{-1}A)  \frac{\ddn z}{z}\\
    &=\sum_{\epsilon=\pm 1}\frac{-\epsilon}{2 \pi i}  \int_{0}^\infty \psi(ste^{\epsilon i\nu})  (\ee^{-\epsilon i\nu }s^{-1}A)^{1/2}(1-\ee^{-\epsilon i\nu }s^{-1}A) \frac{\ddn s}{s}\\
    &= \sum_{\epsilon=\pm 1}\frac{-\epsilon}{2 \pi i}  \int_{0}^\infty \psi(s^{-1}te^{\epsilon i\nu})f_{\epsilon} (sA)\frac{\ddn s}{s}
  \end{align*}
  with $f_\epsilon(z) := (\ee^{-\epsilon i\nu}z)^{1/2}(1-\ee^{-\epsilon i\nu}z)^{-1}$. As $f_\epsilon \in H^1(\Sigma_{\sigma'})$ for $\omega(A)<\sigma'<\nu$,  we have by Fubini's theorem and the multiplicativity of the Dunford calculus
  \begin{align*}
    \int_0^\infty \nrm{f_{\epsilon}(sA)\varphi(A)} \frac{\dd s}{s}\lesssim \nrm{f}_{H^1(\Sigma_{\sigma'})} \nrm{\varphi}_{H^1(\Sigma_\sigma)}.
  \end{align*}
  Therefore, by property \eqref{eq:E1x} of a Euclidean structure and \eqref{eq:H1inHinfty}, we have
  \begin{align*}
    &\nrm{\psi(\cdot A)\varphi(A)x}_{\alpha(\R_+,\frac{\ddn t}{t};X)}\\ &\hspace{1cm}\leq \sum_{\epsilon=\pm 1} \nrms{t \mapsto \int_0^\infty\psi(s^{-1}t\ee^{\epsilon i\nu})f_\epsilon(sA)\varphi(A)x\frac{\dd s}{s}}_{\alpha(\R_+,\frac{\ddn t}{t};X)}\\
    &\hspace{1cm}\leq \sum_{\epsilon=\pm 1} \int_0^\infty \nrmb{t\mapsto \psi(s^{-1}t\ee^{\epsilon i \nu})}_{L^2(\R_+,\frac{\ddn t}{t})}  \nrm{f_\epsilon(sA)\varphi(A)x}_X\frac{\ddn s}{s}\\
    &\hspace{1cm}= \sum_{\epsilon=\pm 1}  \nrmb{t\mapsto\psi(t\ee^{\epsilon i \nu})}_{L^2(\R_+,\frac{\ddn t}{t})} \int_0^\infty \nrm{f_\epsilon(sA)\varphi(A)x}_X\frac{\ddn s}{s} \lesssim \nrm{x}_X,
  \end{align*}
   which proves the first inequality. Applying this result to $A^\sharp$ on $X^\sharp$ equipped with the Euclidean structure induced by $\alpha^*$ yields
    \begin{equation}\label{eq:dualsquarefunlemma}
      \nrm{\psi(\cdot A)^*\varphi(A)^*x^*}_{\alpha^*(\R_+,\frac{\ddn t}{t};X^*)} \lesssim \nrm{x^*}_{X^*}, \qquad x^*\in X^\sharp.
    \end{equation}

   For the second inequality take $x \in D(A)\cap R(A)$. Then by Lemma \ref{lemma:squarefunctionalpha} we have $\psi(\cdot A)x \in \alpha(\R_+,\frac{\ddn t}{t};X)$.
Thus, since $\psi \in H^2(\Sigma_{\sigma'})$ for $\omega(A)<\sigma'<\sigma$ by \eqref{eq:H1inHinfty}, we have by \eqref{eq:calderonrepnontrivial} and the multiplicativity of the Dunford calculus
   \begin{equation*}
     c\,x = \int_0^\infty \psi(tA)\psi^*(tA)x\frac{\ddn t}{t}
   \end{equation*}
   where $\psi^*(z):= \overline{\psi(\overline{z})}$ and $c = \int_0^\infty \abs{\psi(t)}^2\frac{\ddn t}{t}>0$. Applying Proposition \ref{proposition:alphaholder} and \eqref{eq:dualsquarefunlemma} we deduce for any $x^*\in X^\sharp$
   \begin{align*}
     \abs{\ip{\varphi(A)x,x^*}} &\leq c^{-1} \, \int_0^\infty \abs{\ip{ \psi(tA)x, \psi^*(tA)^*\varphi(A)^*x^*}} \frac{\ddn t}{t}\\
     &\leq c^{-1} \, \nrm{\psi(\cdot A)x}_{\alpha(\R_+,\frac{\ddn t}{t};X)}  \nrm{ \psi^*(\cdot A)^* \varphi(A)^*x}_{\alpha^*(\R_+,\frac{\ddn t}{t};X)}\\
     &\lesssim \nrm{\psi(\cdot A)x}_{\alpha(\R_+,\frac{\ddn t}{t};X)} \nrm{x^*}_{X^*},
   \end{align*}
   so taking the supremum over all $x^*\in X^\sharp$ yields the second inequality.
\end{proof}

If we assume the Euclidean structure $\alpha$ to be unconditionally stable and $A$ to have a bounded $H^\infty$-calculus, we can get rid of the $\varphi(A)$-terms in Proposition \ref{proposition:functionspaceclose}. For the Euclidean structures $\ell^2$ and $\gamma$, this recovers results from \cite{CDMY96, KW16}

\begin{theorem}\label{theorem:calculustosuqare}
Let  $A$ be a sectorial operator on $X$ with a bounded $H^\infty$-calculus and assume that $\alpha$ is unconditionally stable.
Take $\omega_{H^\infty}(A)<\sigma<\pi$ and let $\psi \in H^1(\Sigma_\sigma)$ be non-zero. Then for all $x  \in D(A)\cap R(A)$ we have
  \begin{equation*}
\nrm{x}_X \simeq \nrmb{t\mapsto \psi(tA)x}_{\alpha(\R_+,\frac{\ddn t}{t};X)}\simeq \sup_{t \in [1,2]} \nrmb{(\psi(2^ntA)x)_{n \in \Z}}_{\alpha(\Z;X)}.
  \end{equation*}
\end{theorem}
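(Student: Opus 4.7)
The plan is to establish four estimates: a discrete upper bound $\sup_{t \in [1,2]}\|(\psi(2^n t A)x)_{n}\|_{\alpha(\Z;X)} \lesssim \|x\|_X$, the continuous upper bound, and both their lower-bound counterparts. The backbone is a transference of the $H^\infty$-calculus to an $\alpha$-estimate via Lemma \ref{lemma:hadamardsum}. Fix $t \in [1,2]$; unconditional stability \eqref{eq:unconditional1} gives
\[
\|(\psi(2^n t A)x)_{|n|\le N}\|_{\alpha} \le C \sup_{|a_n|=1} \Bigl\|\sum_{|n|\le N} a_n \psi(2^n tA)x\Bigr\|_X,
\]
and the inner sum equals $f_{t,\mb a,N}(A)x$ where $f_{t,\mb a,N}(z) = \sum_{|n|\le N} a_n \psi(2^n t z)$. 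Lemma \ref{lemma:hadamardsum} with geometric ratio $2$ yields $\|f_{t,\mb a,N}\|_{H^\infty(\Sigma_\mu)} \lesssim \|\psi\|_{H^1(\Sigma_\sigma)}$ for any $\mu < \sigma$. Choosing $\omega_{H^\infty}(A) < \mu < \sigma$ and invoking the bounded $H^\infty$-calculus of $A$ proves the discrete upper bound, and the continuous upper bound then follows by applying this same estimate to $\psi_\epsilon$ for $|\epsilon| < \delta < \sigma - \omega_{H^\infty}(A)$ together with the second inequality of Proposition \ref{proposition:equivalencediscretecontinuous}.

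For the lower bounds I will dualize. Since \eqref{eq:unconditional2} for $\alpha$ is precisely \eqref{eq:unconditional1} for $\alpha^*$, and since the moon-dual operator $A^{\sharp}$ on $X^{\sharp}$ has a bounded $H^\infty$-calculus of the same angle as $A$, the same argument applied to $(A^{\sharp}, X^{\sharp})$ with the Euclidean structure induced by $\alpha^*$ yields the dual analogues
\[
\sup_{t \in [1,2]}\|(\psi^*(2^n t A)^* x^*)_n\|_{\alpha^*(\Z;X^*)} \lesssim \|x^*\|_{X^*}
\]
and $\|\psi^*(\cdot A)^* x^*\|_{\alpha^*(\R_+, \frac{\ddn t}{t};X^*)} \lesssim \|x^*\|_{X^*}$ for $x^*\in X^{\sharp}$, where $\psi^*(z):=\overline{\psi(\bar z)}$.

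The continuous lower bound now comes from applying Calder\'on's reproducing formula \eqref{eq:calderonrepnontrivial} to $f := \psi\cdot\psi^*$, whose integral $c = \int_0^\infty |\psi(t)|^2\frac{\ddn t}{t}$ is strictly positive. For $x \in D(A)\cap R(A)$ and $x^* \in X^{\sharp}$ multiplicativity of the calculus gives
\[
c\,\langle x, x^*\rangle = \int_0^\infty \langle \psi(tA)x, \psi^*(tA)^* x^*\rangle \frac{\ddn t}{t},
\]
and combining the $\alpha$-H\"older inequality (Proposition \ref{proposition:alphaholder}\ref{it:alphaholderineq}) with the dual continuous upper bound and taking the supremum over norming $x^* \in X^{\sharp}$ produces $\|x\|_X \lesssim \|\psi(\cdot A)x\|_{\alpha}$. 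The discrete lower bound follows from the same idea after rewriting the Calder\'on integral dyadically as
\[
c\,x = \sum_n \int_1^2 \psi(2^n t A)\psi^*(2^n t A) x \frac{\ddn t}{t}
\]
via the substitution $s = 2^n t$, then applying the discrete $\alpha$-H\"older inequality pointwise in $t \in [1,2]$ together with the dual discrete upper bound.

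The main technical point will be bookkeeping: ensuring that $\psi(\cdot A)x$ defines an element of $\alpha(\R_+,\frac{\ddn t}{t};X)$ rather than merely $\alpha_+$ (handled by Lemma \ref{lemma:squarefunctionalpha} using $D(A)\cap R(A) = R(\varphi(A))$ for $\varphi(z) = z(1+z)^{-2}$), verifying that the dual expressions lie in $\alpha^*_+(\R_+;X^*)$ so that Proposition \ref{proposition:alphaholder} applies, and justifying the exchange of sum and integral in the discrete Calder\'on step. The angle bookkeeping — always keeping $\omega_{H^\infty}(A) < \sigma-\delta$ for the dual arguments and using $\omega_{H^\infty}(A^{\sharp}) = \omega_{H^\infty}(A)$ — is routine but must be tracked carefully since $\psi\cdot\psi^*$ only lies in $H^1$ on a sector strictly smaller than $\Sigma_\sigma$ (via \eqref{eq:H1inHinfty}).
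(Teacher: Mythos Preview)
Your proof is correct and follows essentially the same approach as the paper: the discrete upper bound via unconditional stability and Lemma~\ref{lemma:hadamardsum}, the continuous upper bound via Proposition~\ref{proposition:equivalencediscretecontinuous}, and the lower bounds by dualizing through the moon dual $A^\sharp$ combined with the Calder\'on reproducing formula and the $\alpha$-H\"older inequality. Your treatment of the discrete lower bound via the dyadic decomposition of the Calder\'on integral is slightly more explicit than the paper's terse ``another application of Proposition~\ref{proposition:equivalencediscretecontinuous}'', but the argument is essentially the same.
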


\begin{proof}
 Let $\omega_{H^\infty}(A)<\sigma' < \sigma$ and $\varphi \in H^1(\Sigma_{\sigma'})$. Note that $(\varphi(2^ntA)x)_{n\in \Z}$ is an element of $\alpha(\Z;X)$ by Lemma \ref{lemma:squarefunctionalpha} and
the functions
  \begin{equation*}
    f(z) = \sum_{k =-n}^n\epsilon_k \varphi(2^ktz)
  \end{equation*}
are uniformly bounded in $H^\infty(\Sigma_\sigma)$ for $t \in [1,2]$, $\abs{\epsilon_k}=1$ and $n \in \N$ by Lemma \ref{lemma:hadamardsum}. Therefore, since $\alpha$ is unconditionally stable and $A$ admits a bounded $H^\infty$-calculus, we have for all $x  \in D(A)\cap R(A)$
  \begin{align*}
    \sup_{t\in [1,2]}\nrmb{(\varphi(2^ntA)x)_{n\in \Z}}_{\alpha(\Z;X)} &\lesssim \sup_{t \in [1,2]} \sup_{n\in \N} \sup_{\abs{\epsilon_k} = 1} \nrmb{\sum_{k=-n}^n \epsilon_k \varphi(2^ktA)x}_X\\
    &\lesssim \nrm{x}_X.
  \end{align*}
Taking $\varphi=\psi$ in this inequality yields the first halve of the equivalence between the generalized discrete square function norms and $\nrm{\cdot}_X$. Furthermore, using this inequality with  $\varphi =\psi_\epsilon = \psi(\ee^{i\epsilon }\cdot)$ with $\epsilon<\sigma-\sigma'$, we have by Proposition \ref{proposition:equivalencediscretecontinuous}
 \begin{equation*}
   \nrm{\psi(\cdot A)x}_{\alpha(\R_+,\frac{\ddn t}{t};X)} \lesssim  \sup_{\abs{\epsilon}<\sigma-\sigma'} \sup_{t \in [1,2]} \nrmb{(\psi_{\epsilon}(2^ntA)x)_{n \in \Z}}_{\alpha(\Z;X)} \lesssim \nrm{x}.
 \end{equation*}

For the converse inequality we apply this result to the moon dual $A^\sharp$ on $X^\sharp$ equipped with the Euclidean structure induced by $\alpha^*$ to obtain
  \begin{equation*}
    \nrm{\psi(\cdot A)^*x^*}_{\alpha^*(\R_+,\frac{\ddn t}{t};X^*)}  \lesssim  \nrm{x^*}_{X^*}, \qquad x^* \in D(A^*)\cap R(A^*).
  \end{equation*}
Since $\psi \in H^2(\Sigma_{\sigma'})$ for $\omega(A)<\sigma'<\sigma$ by \eqref{eq:H1inHinfty}, we have by the Calder\'on reproducing formula \eqref{eq:calderonrepnontrivial}
   \begin{equation*}
     c\,x = \int_0^\infty \psi(tA)\psi^*(tA)x \frac{\ddn t}{t}
   \end{equation*}
   where $\psi^*(z):= \overline{\psi(\overline{z})}$ and $c = \int_0^\infty \abs{\psi(t)}^2\frac{\ddn t}{t}>0$. Applying Proposition \ref{proposition:alphaholder}, we deduce for any $x^*\in D(A^*)\cap R(A^*)$
   \begin{align*}
     \abs{\ip{x,x^*}} &\leq c^{-1} \, \int_0^\infty \abs{\ip{ \psi(tA)x, \psi^*(tA)^*x^*}} \frac{\ddn t}{t}\\
     &\leq c^{-1} \, \nrm{\psi(\cdot A)x}_{\alpha(\R_+,\frac{\ddn t}{t};X)}  \nrm{\psi^*(\cdot A)^* x}_{\alpha^*(\R_+,\frac{\ddn t}{t};X)}\\
     &\lesssim  \nrm{\psi(\cdot A)x}_{\alpha(\R_+,\frac{\ddn t}{t};X)} \nrm{x^*}_{X^*}.
   \end{align*}
   So since $D(A^*)\cap R(A^*)$ is norming for $X$, this yields
   \begin{equation*}
     \nrm{x}_X \lesssim  \nrm{\psi(\cdot A)x}_{\alpha(\R_+,\frac{\ddn t}{t};X)}.
   \end{equation*}
   Another application of Proposition \ref{proposition:equivalencediscretecontinuous} yields the same inequality for the discrete generalized square function norm, finishing the proof.
\end{proof}

\subsection*{The equality of the angles of almost $\alpha$-sectoriality and $H^\infty$-calculus}
To conclude this section, we note that, by combining Theorem \ref{theorem:squaretocalculus} and Theorem \ref{theorem:calculustosuqare}, we are now able to show the equality of the almost $\alpha$-sectoriality angle and the $H^\infty$-calculus angle of a sectorial operator $A$. Using the global, ideal, unconditionally stable Euclidean structure $\ell^g$ this in particular reproves the equality of the $\BIP$ and bounded $H^\infty$-calculus angles, originally shown in \cite[Theorem 5.4]{CDMY96}. Furthermore if $A$ is $\alpha$-sectorial this implies $\omega_{H^\infty}(A) = {\omega}_\alpha(A)$, which for the $\gamma$-structure was shown in \cite{KW01}.

\begin{corollary}\label{corollary:equalangles}
  Let $A$ be an $\alpha$-sectorial operator on $X$ with a bounded $H^\infty$-calculus and assume that $\alpha$ is unconditionally stable.
  \begin{enumerate}[(i)]
    \item \label{it:angle1} If $A$ almost $\alpha$-sectorial, then $\omega_{H^\infty}(A) \leq \tilde{\omega}_\alpha(A)$.
    \item \label{it:angle2}  If $\alpha$ is ideal, then $A$ is almost $\alpha$-sectorial with
  $$\omega_{H^\infty}(A) = \omega_{\BIP}(A) = \tilde{\omega}_\alpha(A).$$
  \end{enumerate}
\end{corollary}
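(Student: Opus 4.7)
The plan is to reduce both parts to the two characterization theorems of the preceding subsection, with the key observation that a single cleverly chosen test function can play the role of both $\psi$ and $\varphi$. For part (i), I would fix $\psi(z)=z(1+z)^{-2}$, which belongs to $H^1(\Sigma_\nu)$ for every $0<\nu<\pi$. Applying Theorem~\ref{theorem:calculustosuqare} at some angle $\tau\in(\omega_{H^\infty}(A),\pi)$---permissible since $A$ has a bounded $H^\infty$-calculus and $\alpha$ is unconditionally stable by standing hypothesis---produces the equivalence
\[
  \nrm{x}_X\simeq \nrmb{t\mapsto \psi(tA)x}_{\alpha(\R_+,\frac{\ddn t}{t};X)},\qquad x\in D(A)\cap R(A).
\]
Because the same $\psi$ also lies in $H^1(\Sigma_\sigma)$ for any $\sigma\in(\tilde{\omega}_\alpha(A),\pi)$, reading this equivalence as the two-sided bound required by Theorem~\ref{theorem:squaretocalculus} with $\varphi=\psi$ at angle $\sigma$ yields directly the conclusion $\omega_{H^\infty}(A)\leq \tilde{\omega}_\alpha(A)$ already built into that theorem (note that Theorem~\ref{theorem:squaretocalculus} hands us a bound in terms of $\tilde{\omega}_\alpha(A)$, not $\sigma$, so no subsequent infimum is needed).

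For part (ii), the idea is to combine (i) with two standard ingredients into a short cyclic chain. The bound $\omega_{\BIP}(A)\leq \omega_{H^\infty}(A)$ is immediate from~\eqref{eq:HinftyBIP} applied to the bounded $H^\infty$-calculus of $A$. Since $\alpha$ is now additionally assumed ideal, Proposition~\ref{proposition:BIPalmostalpha}(ii) promotes $\BIP$ to almost $\alpha$-sectoriality with $\tilde{\omega}_\alpha(A)\leq \omega_{\BIP}(A)$, which in particular verifies the hypothesis of (i), so $\omega_{H^\infty}(A)\leq \tilde{\omega}_\alpha(A)$. Concatenating,
\[
  \omega_{H^\infty}(A)\leq \tilde{\omega}_\alpha(A)\leq \omega_{\BIP}(A)\leq \omega_{H^\infty}(A),
\]
which forces equality throughout.

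The only delicate point will be making sure Theorems~\ref{theorem:calculustosuqare} and~\ref{theorem:squaretocalculus} can be invoked with the \emph{same} test function $\psi$, even though the two natural angles $\tau>\omega_{H^\infty}(A)$ and $\sigma>\tilde{\omega}_\alpha(A)$ are not a priori comparable before the argument is complete. This is precisely what the universal choice $\psi(z)=z(1+z)^{-2}$ resolves, since it belongs to $H^1$ of every proper subsector of $\C\setminus(-\infty,0]$, so no angle matching is required. Beyond this bookkeeping, both parts are formal consequences of material already established in Sections~\ref{section:BIP} and~\ref{section:squarefunctions}, with (i) carrying the real content and (ii) being just a chase of inequalities.
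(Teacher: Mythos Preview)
Your proposal is correct and follows essentially the same route as the paper's proof: apply Theorem~\ref{theorem:calculustosuqare} to obtain the square function equivalence, feed it into Theorem~\ref{theorem:squaretocalculus} to conclude (i), and then chain \eqref{eq:HinftyBIP} with Proposition~\ref{proposition:BIPalmostalpha}\ref{it:almostalpha2} and (i) to obtain the cyclic inequalities for (ii). Your explicit choice $\psi(z)=z(1+z)^{-2}$ to sidestep the angle-matching issue is a nice clarification that the paper leaves implicit.
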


\begin{proof}
For \ref{it:angle1}  we know by Theorem \ref{theorem:calculustosuqare} that for $\omega_{H^\infty}<\sigma<\pi$ and a non-zero $\psi \in H^1(\Sigma_\sigma)$
  \begin{equation*}
    \nrm{x} \simeq \nrmb{t\mapsto \psi(tA)x}_{\alpha(\R_+,\frac{\ddn t}{t};X)}, \qquad x \in D(A)\cap R(A).
  \end{equation*}
  Thus, by Theorem \ref{theorem:squaretocalculus}, we know that $\omega_{H^\infty}(A) \leq \tilde{\omega}_{\alpha}(A)$. \ref{it:angle2}  follows from  \eqref{eq:HinftyBIP} and Proposition \ref{proposition:BIPalmostalpha}.
\end{proof}

\section{Dilations of sectorial operators}\label{section:dilation}
Extending a dilation result of Sz-Nagy \cite{Na47}, Le Merdy showed in \cite{Le96, Le98} that a sectorial operator $A$ on a Hilbert space $H$ with $\omega(A)<\frac\pi2$ has a bounded $H^\infty$-calculus if and only if the associated semigroup $(\ee^{-tA})_{t\geq 0}$ has a dilation to a unitary group $(U(t))_{t \in \R}$ on a larger Hilbert space $\widetilde{H}$, i.e. $A$ has a dilation to a normal operator $\widetilde{A}$ on $\widetilde{H}$. By the spectral theorem for normal operators (see e.g. \cite[Theorem X.4.19]{Co90}) we can think of $\widetilde{A}$ as a multiplication operator.

In this section we will use the generalized square functions to characterize the boundedness of the $H^\infty$-calculus of a sectorial operator $A$ on a general Banach space $X$ in terms of dilations. We say that a semigroup $(U(t))_{t \geq 0}$ on a Banach space $\widetilde{X}$ is a dilation of $(\ee^{-tA})_{t\geq 0}$  if there is an isomorphic embedding $J\colon X \to \widetilde{X}$ and a bounded operator $Q \colon \widetilde{X}\to X$ such that
\begin{equation*}
  \ee^{-tA} = QU(t)J, \qquad t \geq 0.
\end{equation*}
A sectorial operator $\widetilde{A}$ on $\widetilde{X}$ is called a dilation of $A$ if there are such $J$ and $Q$ with
\begin{equation*}
         R(\lambda,A) = Q R(\lambda,\widetilde{A})J,\qquad \lambda \in \C \setminus \overline{\Sigma}_{\max\ha{\omega(A), \omega(\widetilde{A})}}.
\end{equation*}
This can be expressed in terms of the commutation of the following diagrams
\begin{center}
\begin{tikzcd}
 \widetilde{X} \arrow{r}{U(t)} & \widetilde{X}      \arrow{d}{Q}        \\
 X \arrow{u}{J} \arrow{r}{\ee^{-tA}} &X
\end{tikzcd}\hspace{1cm}\begin{tikzcd}
 \widetilde{X} \arrow{r}{R(\lambda,\widetilde{A})} & \widetilde{X}      \arrow{d}{Q}        \\
 X \arrow{u}{J} \arrow{r}{R(\lambda,A)} &X
\end{tikzcd}
\end{center}
Taking $t=0$ in the semigroup case we see that $QJ=I$ and $JQ$ is a bounded projection of $\widetilde{X}$ onto $R(J)$. The same conclusion can be drawn in the sectorial operator case by
\begin{equation*}
  x = \lim_{\lambda \to \infty} \lambda (\lambda+A)^{-1}x = \lim_{\lambda \to \infty} \lambda Q (\lambda+\widetilde{A})^{-1}Jx = QJx, \qquad x \in X,
\end{equation*}

We will choose $\widetilde{X} = \alpha(\R;X)$ for an unconditionally stable Euclidean structure $\alpha$ on $X$  and for $s>0$ consider the multiplication operator $\mc{M}_s$ given by
\begin{equation*}
  \mc{M}_sg(t):=(it)^{\frac{2}{\pi}s} g(t), \qquad t \in \R
\end{equation*}
for strongly measurable $g\colon \R \to X$ such that $g,\mc{M}_sg \in \alpha(\R;X)$. Note that the spectrum of $\mc{M}_s$ is given by
\begin{equation*}
  \sigma(\mc{M}_s) = \partial \Sigma_{ s}
\end{equation*}
 and that for a bounded measurable function $f\colon \Sigma_{ \sigma} \to \C$ with $s<\sigma<\pi$ the operator $f(\mc{M}_s)$ defined by
 \begin{equation*}
   f(\mc{M}_s)g(t) = f\hab{(it)^{\frac{2}{\pi}s}}g(s), \qquad t \in \R
 \end{equation*}
 extends to a bounded operator on $\alpha(\R;X)$ by Example \ref{example:pointwisemulti}. Hence $\mc{M}_s$ has a bounded Borel functional calculus and is therefore a worthy replacement for normal operators on a Hilbert space.

 If $\mc{M}_s$ on $\alpha(\R;X)$ is a dilation of a sectorial operator $A$ on $X$ for $\omega(A)<s<\pi$, we have for $f\in H^1(\Sigma_\sigma)\cap H^\infty(\Sigma_\sigma)$ with $s<\nu<\sigma<\pi$ that
  \begin{equation*}
    f(A) =  {\frac{1}{2\pi i}\int_{\Gamma_{\nu}}f(z)QR(z,\mc{M}_s)J\dd z},
    = Qf(\mc{M}_s)J,
  \end{equation*}
  where $f(\mc{M}_s)$ can either be interpreted in the Borel functional calculus sense or the Dunford calculus sense. Therefore, the fact that $\mc{M}_s$ is a dilation of $A$ implies that $A$ has a bounded $H^\infty$-calculus and we have for $f \in H^\infty(\Sigma_\sigma)$
  \begin{equation}\label{eq:dilationcalculusHinfty}
    f(A) = Qf(\mc{M}_s)J.
  \end{equation}
 The converse of this statement is the main result in this section, which characterizes the boundedness of the $H^\infty$-calculus of $A$ in terms of dilations.

 \begin{theorem}\label{theorem:dilationresolvent}
   Let $A$ be a sectorial operator on $X$ and $\omega(A)<s<\pi$. Consider the following statements:
   \begin{enumerate}[(i)]
   \item \label{it:dilation1} $A$ has a bounded $H^\infty(\Sigma_\sigma)$-calculus for some $\omega(A)<\sigma<s$.
   \item \label{it:dilation2} The operator $\mc{M}_s$ on $\alpha(\R;X)$ is a dilation of $A$ for all unconditionally stable Euclidean structures $\alpha$ on $X$.
   \item \label{it:dilation3} The operator $\mc{M}_s$ on $\alpha(\R;X)$ is a dilation of $A$ for some Euclidean structure $\alpha$ on $X$.
   \item \label{it:dilation4} $A$ has a bounded $H^\infty(\Sigma_\sigma)$-calculus for all $s<\sigma<\pi$.
   \end{enumerate}
   Then \ref{it:dilation1}$\implies$\ref{it:dilation2}$\implies$\ref{it:dilation3}$\implies$\ref{it:dilation4}. Moreover, if $A$ is almost $\alpha$-sectorial with $\tilde{\omega}_{\alpha}(A)<s$ for some unconditionally stable Euclidean structure $\alpha$, then \ref{it:dilation4}$\implies$\ref{it:dilation1}.
 \end{theorem}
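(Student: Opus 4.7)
I handle the four implications separately; two of them rely only on material already present in the excerpt. The implication (ii)$\Rightarrow$(iii) is immediate: by Proposition~\ref{proposition:unconditionallystable} the $\gl$-structure is unconditionally stable on any Banach space and so supplies at least one witness. The implication (iii)$\Rightarrow$(iv) is essentially derived in the paragraph preceding the theorem statement: from a dilation $J,Q$ and the identity $R(z,A) = QR(z,\mc{M}_s)J$ one obtains, for $s<\nu<\sigma<\pi$ and $f \in H^1(\Sigma_\sigma)\cap H^\infty(\Sigma_\sigma)$, the formula $f(A) = Qf(\mc{M}_s)J$; since $f(\mc{M}_s)$ is pointwise multiplication by $f\circ\tau$ with norm at most $\nrm{f}_{H^\infty(\Sigma_\sigma)}$ on $\alpha(\R;X)$ by Example~\ref{example:pointwisemulti}, the $\varphi_n$-approximation in \eqref{eq:calculusphin} extends the bound to every $f \in H^\infty(\Sigma_\sigma)$, yielding (iv).

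For (iv)$\Rightarrow$(i) under the added hypothesis, I combine Theorems~\ref{theorem:calculustosuqare} and~\ref{theorem:squaretocalculus}. Statement (iv) gives $\omega_{H^\infty}(A)\leq s$. Pick $s<\sigma<\pi$ and a non-zero $\psi \in H^1(\Sigma_\sigma)$; since $\alpha$ is unconditionally stable and $\omega_{H^\infty}(A)<\sigma$, Theorem~\ref{theorem:calculustosuqare} supplies the two-sided bound
\begin{equation*}
  \nrm{x}_X \simeq \nrm{\psi(\cdot A)x}_{\alpha(\R_+,\frac{\ddn t}{t};X)}, \qquad x \in D(A)\cap R(A).
\end{equation*}
Because $A$ is almost $\alpha$-sectorial with $\tilde{\omega}_\alpha(A)<s<\sigma$ and $\psi$ belongs to $H^1(\Sigma_{\sigma'})$ for every $\tilde{\omega}_\alpha(A)<\sigma'\leq \sigma$, Theorem~\ref{theorem:squaretocalculus} (applied with $\varphi=\psi$) produces a bounded $H^\infty$-calculus with $\omega_{H^\infty}(A)\leq \tilde{\omega}_\alpha(A)<s$, which is (i).

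The substantive content is (i)$\Rightarrow$(ii), a McIntosh-type construction along the two rays making up $\partial\Sigma_s$. Fix an unconditionally stable $\alpha$ and pick $\omega_{H^\infty}(A)<\nu<s$ and $s+\nu<\sigma<\pi$ (provisionally; see the obstacle below). Select a non-zero $\psi \in H^1(\Sigma_\sigma)$ and a companion $\varphi$ so that $\int_0^\infty \psi(e^{i\varepsilon s}u)\varphi(e^{i\varepsilon s}u)\frac{\ddn u}{u} = c\neq 0$ for both $\varepsilon=\pm 1$ (achievable by symmetrization). With $\tau(t):=(it)^{2s/\pi}$, define
\begin{equation*}
  Jx(t):=\kappa\,|t|^{-1/2}\psi(\tau(t)A)x, \qquad Qg := \kappa\int_{\R}|t|^{-1/2}\varphi(\tau(t)A)g(t)\,\ddn t,
\end{equation*}
with $\kappa$ normalized so that $QJ=I$. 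The substitution $u=|t|^{2s/\pi}$ is an $L^2$-isometry between $(\R_+,\ddn t)$ and $(\R_+,\frac{\ddn u}{u})$ which lifts to $\alpha$-norms by Proposition~\ref{proposition:operatoronfunctions}, identifying $\nrm{Jx}_{\alpha(\R_\pm;X)}$ with $\nrm{u\mapsto\psi_\pm(uA)x}_{\alpha(\R_+,\frac{\ddn u}{u};X)}$ for the rotated $\psi_\pm(z):=\psi(e^{\pm is}z)$, which lies in $H^1$ of a sector of opening $\sigma-s>\nu>\omega_{H^\infty}(A)$. Theorem~\ref{theorem:calculustosuqare} then gives $\nrm{J}\lesssim 1$, and boundedness of $Q$ follows dually via the $\alpha$-H\"older inequality (Proposition~\ref{proposition:alphaholder}) and the analogous square function estimate applied to $A^{\sharp}$ on $X^{\sharp}$ (with $\varphi$ replacing $\psi$). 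Finally, the resolvent identity $QR(\lambda,\mc{M}_s)J=R(\lambda,A)$ on $D(A)\cap R(A)$ is checked by writing $R(\lambda,A)=g_\lambda(A)$ with $g_\lambda(z)=(\lambda-z)^{-1}$, splitting the $\R$-integral for $Q$ into its two halves, performing the inverse change of variable, and collapsing $\psi\varphi$ through its reproducing property into the Dunford contour along $\Gamma_s$ representing $g_\lambda(A)$.

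\textbf{Main obstacle.} The bulk of the work is the (i)$\Rightarrow$(ii) construction: aligning the McIntosh reproducing formula on both rays of $\partial\Sigma_s$ with a single normalizing constant, tracking the change of variable carefully enough to pass through Proposition~\ref{proposition:operatoronfunctions}, and verifying the contour identity for the resolvent. A genuine technical point is the choice $\sigma>s+\omega_{H^\infty}(A)$: when $s>\pi/2$ and $\omega_{H^\infty}(A)>\pi-s$ no such $\sigma<\pi$ exists, and one must instead define $\psi(\tau(t)A)$ through a rotated Dunford contour $\Gamma_\nu$ with $\omega_{H^\infty}(A)<\nu<s$ (always available since $\omega_{H^\infty}(A)<s$), making the argument uniform in $s\in(\omega_{H^\infty}(A),\pi)$.
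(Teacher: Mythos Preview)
Your treatment of (ii)$\Rightarrow$(iii), (iii)$\Rightarrow$(iv), and (iv)$\Rightarrow$(i) matches the paper (the last is exactly Corollary~\ref{corollary:equalangles} unpacked). The gap is in (i)$\Rightarrow$(ii), in the step where you ``collapse $\psi\varphi$ through its reproducing property'' to obtain $QR(\lambda,\mc{M}_s)J=R(\lambda,A)$.

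After your change of variables the quantity $QR(\lambda,\mc{M}_s)Jx$ becomes, up to constants,
\[
\sum_{\varepsilon=\pm1}\int_0^\infty (\lambda-e^{i\varepsilon s}u)^{-1}\,(\psi\varphi)(e^{i\varepsilon s}uA)x\,\frac{\ddn u}{u}.
\]
The Calder\'on reproducing formula \eqref{eq:calderonrepnontrivial} tells you that $\int_0^\infty h(uA)x\,\frac{\ddn u}{u}=c\,x$ for $h=\psi\varphi$, which gives $QJ=I$; but it does \emph{not} give $\int_0^\infty g(u)h(uA)x\,\frac{\ddn u}{u}=c\,g(A)x$ for a scalar function $g$. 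If you expand $h(e^{i\varepsilon s}uA)$ via the Dunford integral and swap, the inner $u$-integral is a Mellin convolution of $g_\lambda=(\lambda-\cdot)^{-1}$ with $h(\cdot\,z)$, not $c\,g_\lambda(z)$. Moreover the two rays enter with the same sign, so the sum is not the oriented contour integral over $\Gamma_s$ either. For generic $\psi,\varphi$ there is no reason this equals $R(\lambda,A)x$.

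The paper's construction is different in exactly this place. It first rescales to $s=\frac{\pi}{2}$ via $B=A^{\pi/(2s)}$, which also removes your obstacle about the existence of $\sigma>s+\nu$. Then it takes the \emph{specific} choices $Jx(t)=A^{1/2}R(it,A)x$ and $Qg=\frac{1}{\pi}\int_{\R}A^{1/2}R(-it,A)g(t)\,\ddn t$, corresponding to $\psi_\pm(z)=z^{1/2}/(\pm i-z)$. The point of this choice is the algebraic identity
\[
AR(it,A)R(-it,A)=-\tfrac12\bigl(R(it,A)+R(-it,A)\bigr),
\]
which collapses the product $\psi_+\psi_-$ of functional calculus operators into a \emph{sum of resolvents}. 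The resulting $t$-integral against $(\lambda-it)^{-1}$ is then an honest Dunford contour integral over $\Gamma_{\pi/2}$ (plus a Cauchy-theorem shift), and one reads off $R(\lambda,A)x$. Your general-$\psi,\varphi$ scheme would need an additional argument of this type to close; as written, the reproducing property alone is not enough.
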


Since $\gamma(\R;H)=L^2(\R;H)$ and $\omega(A) = \tilde{\omega}_\gamma(A)$ for a sectorial operator $A$ on a Hilbert space $H$, Theorem \ref{theorem:dilationresolvent} extends the classical theorem on Hilbert spaces by Le Merdy \cite{Le96}. If $X$ has finite cotype, the $\gamma$-structure is unconditionally stable by Proposition \ref{proposition:unconditionallystable}, so we also recover the main result from Fr\"ohlich and the third author \cite[Theorem 5.1]{FW06}. For further results on dilations in $\UMD$ Banach spaces and $L^p$-spaces we refer to \cite{FW06} and \cite {AFL17}.

\begin{proof}[Proof of Theorem \ref{theorem:dilationresolvent}]
  For \ref{it:dilation1}$\implies$\ref{it:dilation2} we may assume without loss of generality that $s=\frac{\pi}{2}$, as we can always rescale by defining a sectorial operator $B := A^{\frac{\pi}{2s}}$ and using Proposition \ref{proposition:hinftys} and the observation that $(\mc{M}_{s})^{\frac{\pi}{2s}} = \mc{M}_{\frac{\pi}{2}}$.

   Define
  for $x \in D(A)\cap R(A)$
  \begin{equation*}
    Jx(t):= A^{1/2}R(it,A)x \qquad t \in \R.
  \end{equation*}
  Setting $\psi_{\pm}(z) = \frac{z^{1/2}}{\pm i-z}$, we have
  \begin{align*}
    Jx(t) &= t^{-1/2}\psi_+(t^{-1}A)x, &&t \in \R_+,\\
    Jx(-t) &= t^{-1/2}\psi_-(t^{-1}A)x, &&t \in \R_+.
  \end{align*}
Therefore $Jx \in \alpha(\R;X)$ by Lemma \ref{lemma:squarefunctionalpha} and using Proposition \ref{proposition:operatoronfunctions} we obtain
  \begin{align*}
    \nrm{Jx}_{\alpha(\R;X)} \simeq \nrm{\psi_+(tA)x}_{\alpha(\R_+,\frac{\ddn t}{t};X)} + \nrm{\psi_-(tA)x}_{\alpha(\R_+,\frac{\ddn t}{t};X)}.
  \end{align*}
  Now by Theorem \ref{theorem:calculustosuqare} the bounded $H^\infty(\Sigma_\sigma)$-calculus of $A$ implies that
    \begin{align*}
    \nrm{Jx}_{\alpha(\R;X)} \simeq \nrm{x},
  \end{align*}
  so by density $J$ extends to an isomorphic embedding $J\colon X \to \alpha(\R;X)$. Next take $g \in \alpha(\R;X)$ such that $\nrm{g(t)}_X \lesssim (1+\abs{t})^{-1}$ and define the operator
  \begin{align*}
    Qg&:= \frac{1}{\pi}\int_\R A^{1/2}R(-it,A)g(t) \dd t \in X,
  \end{align*}
  where the integral converges in the Bochner sense in $X$, since
  \begin{equation*}
    \nrm{A^{1/2}R(-it,A)} \lesssim \frac{1}{\abs{t}^{1/2}}, \qquad t \in \R.
  \end{equation*}
  By the $\alpha$-H\"older inequality (Proposition \ref{proposition:alphaholder}) and Theorem \ref{theorem:calculustosuqare} applied to the moon dual $A^\sharp$ on $X^\sharp$ equipped with the Euclidean structure induced by $\alpha^*$, we have for $x^* \in D(A^*)\cap R(A^*)$ that
  \begin{align*}
    \abs{\ip{Qg, x^*}} &\leq \frac1\pi \int_0^\infty  \absb{\ipb{g(t),t^{-1/2}\hab{\psi_{+}(t^{-1}A)^* +\psi_{-}(t^{-1}A)^*} x^*}}\dd t\\
    &\lesssim  \, \nrm{g}_{\alpha(\R;X)}\hab{ \nrm{\psi_+(tA)^*x^*}_{\alpha^*(\R_+,\frac{\ddn t}{t};X^*)} + \nrm{\psi_-(tA)^*x^*}_{\alpha^*(\R_+,\frac{\ddn t}{t};X^*)}}\\
    &\lesssim \nrm{g}_{\alpha(\R;X)} \nrm{x^*}.
  \end{align*}
  Since $D(A^*)\cap R(A^*)$ is norming for $X$ and using Proposition \ref{proposition:densealphaspace}, it follows that $Q$ extends to a bounded operator $Q\colon\alpha(\R;X) \to X$.
  To show that $\mc{M}_{\frac{\pi}{2}}$ on $\alpha(\R;X)$ is a dilation of $A$ we will show that
  \begin{equation}\label{eq:dilationproof}
    R(\lambda,A) = QR(\lambda,\mc{M}_{\frac{\pi}{2}})J, \qquad  \lambda \in \C \setminus \overline{\Sigma}_{\frac{\pi}{2}}.
  \end{equation}
  First note that for $t\in \R$ we have by the resolvent identity
  \begin{align*}
    AR(it,A)R(-it,A) &= -\frac{1}{2it}(AR(it,A)-AR(-it,A))\\
    &=  -\frac{1}{2}(R(it,A)+R(-it,A)).
  \end{align*}
 So since
 \begin{equation*}
   \sup_{t \in \R}\, \nrm{A^{1/2}R(it,A)x}_X<\infty, \qquad x \in R(A^{1/2})
 \end{equation*}
 by the resolvent equation, we have for $x \in D(A)\cap R(A)$  and $\lambda \in \C \setminus \overline{\Sigma}_{\frac{\pi}{2}}$ that
 \begin{equation*}
   \nrmb{\frac{1}{\lambda - it}A^{1/2}R(it,A)x}_X \lesssim (1+\abs{t})^{-1}, \qquad t\in \R
 \end{equation*}
 and therefore
  \begin{align*}
    QR(\lambda,\mc{M}_{\frac{\pi}{2}} )Jx &= \frac{1}{\pi}\int_\R A^{1/2}R(-it,A)\frac{1}{\lambda - it}A^{1/2}R(it,A)x \dd t\\
    &=-\frac{1}{2\pi}\int_{\R} \frac{1}{\lambda - it} R(it,A)x \dd t- \frac{1}{2\pi}\int_{\R} \frac{1}{\lambda - it}  R(-it,A)x \dd t\\
    &=\frac{1}{2\pi i}\int_{\Gamma_{\frac{\pi}{2}}} \frac{1}{\lambda - z} R(z,A)x \dd z+\frac{1}{2\pi i}\int_{\Gamma_{\frac{\pi}{2}}} \frac{1}{\lambda + z} R(z,A)x \dd z\\
    &= R(\lambda,A)x,
  \end{align*}
  where the last step follows from \cite[Example 10.2.9]{HNVW17} and Cauchy's theorem. This proves \eqref{eq:dilationproof} by density.

  The implication \ref{it:dilation2}$\implies$\ref{it:dilation3} follows directly from the fact that the global lattice structure $\ell^g$ is unconditionally stable on any Banach space $X$ by Proposition \ref{proposition:unconditionallystable}. Implication  \ref{it:dilation3}$\implies$\ref{it:dilation4} is a direct consequence of \eqref{eq:dilationcalculusHinfty}. Finally, if $A$ is almost $\alpha$-sectorial with $\tilde{\omega}_\alpha(A)<s$ for some unconditionally stable Euclidean structure $\alpha$, \ref{it:dilation4}$\implies$\ref{it:dilation1} is a consequence of Corollary \ref{corollary:equalangles}.
\end{proof}

As a direct corollary we obtain a dilation result for the semigroup $(\ee^{-tA})_{t\geq 0}$. Note that we could use any $\mc{M}_s$ with $\tilde{\omega}_\alpha(A)<s<\pi$, but only $\mc{M}_{{\frac{\pi}{2}}}$ yields a group of isometries.

\begin{corollary}
   Let $A$ be an almost $\alpha$-sectorial operator on $X$ with $\tilde{\omega}_\alpha(A)<\frac\pi2$ and assume that $\alpha$ is unconditionally stable. Then the following are equivalent
   \begin{enumerate}[(i)]
   \item \label{it:dilationsemi1} $A$ has a bounded $H^\infty$-calculus.
   \item \label{it:dilationsemi2} The group of isometries $(U(t))_{t \in \R}$
    on $\alpha(\R;X)$ given by $U(t) = \ee^{-t\mc{M}_{\frac{\pi}{2}}}$ is a dilation of the semigroup $(\ee^{-tA})_{t\geq 0}$.
   \end{enumerate}
\end{corollary}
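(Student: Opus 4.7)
The assumption $\tilde\omega_\alpha(A) < \pi/2$ gives $\omega(A) < \pi/2$, so $A$ is the negative generator of a bounded analytic $C_0$-semigroup. A direct computation shows $\mc{M}_{\pi/2} g(s) = is \cdot g(s)$, so $U(\tau) g(s) = e^{-i\tau s} g(s)$ acts isometrically on $\alpha(\R;X)$ by Example~\ref{example:pointwisemulti}, and is strongly continuous at $\tau = 0$ by Proposition~\ref{proposition:alphaspaceconvergence}\ref{it:convergenceproperty4} (applied to the uniformly bounded multipliers $s \mapsto e^{-i\tau s} - 1$, which vanish pointwise as $\tau \to 0$), with negative generator $\mc{M}_{\pi/2}$. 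The plan is to reduce both implications to Theorem~\ref{theorem:dilationresolvent} taken at $s = \pi/2$, by bridging its resolvent dilation statement and the semigroup dilation appearing in (ii) via the Hille--Yosida Laplace transform identity.

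For (i)$\Rightarrow$(ii), combining Theorems~\ref{theorem:squaretocalculus} and~\ref{theorem:calculustosuqare} as in Corollary~\ref{corollary:equalangles}\ref{it:angle1} yields $\omega_{H^\infty}(A) \leq \tilde\omega_\alpha(A) < \pi/2$, so Theorem~\ref{theorem:dilationresolvent}(i) holds at $s = \pi/2$. That theorem then provides an isomorphic embedding $J \colon X \to \alpha(\R;X)$ and a bounded $Q \colon \alpha(\R;X) \to X$ with $R(\lambda, A) = Q R(\lambda, \mc{M}_{\pi/2}) J$ on $\C \setminus \overline\Sigma_{\pi/2}$. Taking $\lambda = -\mu$ with $\re \mu > 0$ (so that $\lambda$ lies in the open left half-plane $\C \setminus \overline\Sigma_{\pi/2}$) and applying the Hille--Yosida representation of the resolvent to both $-A$ and $-\mc{M}_{\pi/2}$,
\begin{equation*}
    \int_0^\infty e^{-\mu t}\, e^{-tA} x \, dt = R(\mu, -A) x = Q R(\mu, -\mc{M}_{\pi/2}) J x = \int_0^\infty e^{-\mu t}\, Q U(t) J x \, dt
\end{equation*}
for every $x \in X$. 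Injectivity of the Laplace transform together with the strong continuity in $t$ of both sides then forces $e^{-tA} = Q U(t) J$ for every $t \geq 0$.

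For (ii)$\Rightarrow$(i), integrating the identity $e^{-tA} = Q U(t) J$ against $e^{-\mu t}\,dt$ with $\re \mu > 0$ and invoking Hille--Yosida in the opposite direction reverses the above computation to give $R(\mu, -A) = Q R(\mu, -\mc{M}_{\pi/2}) J$, equivalently $R(\lambda, A) = Q R(\lambda, \mc{M}_{\pi/2}) J$ for every $\lambda$ in the open left half-plane $\C \setminus \overline\Sigma_{\pi/2}$. This realizes $\mc{M}_{\pi/2}$ as a resolvent dilation of $A$ in the sense of Theorem~\ref{theorem:dilationresolvent}(iii), and the implication (iii)$\Rightarrow$(iv) in that theorem then produces a bounded $H^\infty(\Sigma_\sigma)$-calculus for every $\pi/2 < \sigma < \pi$.

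The only delicate point is justifying the Hille--Yosida identity $R(\mu, -\mc{M}_{\pi/2}) y = \int_0^\infty e^{-\mu t}\, U(t) y \, dt$ as an equality in $\alpha(\R;X)$, since $\omega(\mc{M}_{\pi/2}) = \pi/2$ sits exactly at the boundary of the Dunford-calculus range. One bypasses the Dunford calculus entirely: the Bochner integral converges absolutely by the isometry of $U(t)$ and $\re \mu > 0$, and its value is identified with $(\mu + is)^{-1} y(s)$ by testing against functionals in $\alpha^*(\R;X^*)$ and invoking the scalar Fubini computation $\int_0^\infty e^{-\mu t} e^{-its}\,dt = (\mu + is)^{-1}$.
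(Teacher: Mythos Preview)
Your proof is correct and for (ii)$\Rightarrow$(i) it matches the paper's argument exactly: pass from the semigroup dilation to the resolvent dilation via the Laplace transform and then invoke Theorem~\ref{theorem:dilationresolvent}.

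For (i)$\Rightarrow$(ii) you take a slightly different route than the paper. The paper first obtains the resolvent dilation from Theorem~\ref{theorem:dilationresolvent} and then applies the functional-calculus identity \eqref{eq:dilationcalculusHinfty} with $f_t(z)=\ee^{-tz}$ to conclude $\ee^{-tA}=Qf_t(\mc{M}_{\pi/2})J=QU(t)J$. You instead stay with the resolvent identity and recover the semigroup identity by Laplace inversion. Your approach is arguably cleaner at this point: \eqref{eq:dilationcalculusHinfty} is stated for $f\in H^\infty(\Sigma_\sigma)$ with $\sigma>s=\pi/2$, and $\ee^{-tz}$ is unbounded on any such sector, so the paper's invocation implicitly needs an additional approximation step (or a direct appeal to the Borel calculus of $\mc{M}_{\pi/2}$). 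Your Laplace-transform argument sidesteps this, and your remark on justifying the Hille--Yosida identity for $\mc{M}_{\pi/2}$ in $\alpha(\R;X)$ via absolute Bochner convergence and duality is the right way to make that step rigorous.
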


\begin{proof}
  The implication \ref{it:dilationsemi1}$\Rightarrow$\ref{it:dilationsemi2} follows directly from Theorem \ref{theorem:dilationresolvent} and \eqref{eq:dilationcalculusHinfty} for $f_t(z) = \ee^{-tz}$ with $t \geq 0$. For the implication \ref{it:dilationsemi2}$\Rightarrow$\ref{it:dilationsemi1} we note that from the Laplace transform (see \cite[Proposition G.4.1]{HNVW17})
  \begin{equation*}
    R(\lambda,A)x = -\int_0^\infty \ee^{\lambda t}\ee^{-tA}x\dd t, \qquad \re \lambda<0, \, x\in X,
  \end{equation*}
 and a similar equation for $\mc{M}_{\frac{\pi}{2}}$ we obtain that $\mc{M}_{\frac{\pi}{2}}$ on $\alpha(\R;X)$ is a dilation of $A$, which implies the statement by  Theorem \ref{theorem:dilationresolvent}.
\end{proof}

To conclude this section, we note that for Banach lattices we can actually construct a dilation of $(\ee^{-tA})_{t\geq 0}$ consisting of \emph{positive} isometries. This provides a partial converse to the result of the third author in  \cite[Remark 4.c]{We01b}  that the negative generator of any bounded analytic semigroup of positive contractions on $L^p$ has a bounded $H^\infty$-calculus with  $\omega_{H^\infty}(A)<\frac{\pi}{2}$.
 For more elaborate results in this direction  and a full $L^p$-counterpart to the Hilbert space result from \cite{Le98} we refer to \cite{AFL17,Fa14b}
\begin{corollary}
   Let $A$ be a sectorial operator on an order-continuous Banach function space $X$ and suppose that $A$ has a bounded $H^\infty$-calculus with $\omega_{H^\infty}(A)<\frac{\pi}{2}$. Then the semigroup $(\ee^{-tA})_{t\geq 0}$ has a dilation to a positive $C_0$-group of isometries $(U(t))_{t \in \R}$ on $\ell^2(\R;X)$.
\end{corollary}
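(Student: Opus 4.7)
The plan is to apply the preceding (unnumbered) semigroup-dilation corollary with the $\ell^2$-structure, and then to conjugate the resulting dilation by the Fourier transform in order to replace the multiplication semigroup $\ee^{-t\mc{M}_{\pi/2}}$ by the translation group, which is a group of positive isometries on the Banach function space $\ell^2(\R;X)$. Since $X$ is order-continuous, Proposition \ref{proposition:unconditionallystable}\ref{it3:unconditional} shows that $\ell^2$ is unconditionally stable on $X$, and by Theorem \ref{theorem:Hinftyell2bounded}\ref{it:Hinftyl2bounded} the sectorial operator $A$ is $\ell^2$-sectorial with $\omega_{\ell^2}(A)=\omega_{H^\infty}(A)<\pi/2$, hence in particular almost $\ell^2$-sectorial with $\tilde\omega_{\ell^2}(A)<\pi/2$. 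The assumptions of the preceding corollary are therefore satisfied with $\alpha=\ell^2$, producing bounded operators $J_0\colon X\to\ell^2(\R;X)$ and $Q_0\colon \ell^2(\R;X)\to X$ with $Q_0J_0=I$ and
\[
  \ee^{-tA}=Q_0\,\ee^{-t\mc{M}_{\pi/2}}\,J_0,\qquad t\ge 0,
\]
where $\ee^{-t\mc{M}_{\pi/2}}g(\xi)=\ee^{-it\xi}g(\xi)$.

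Next I would introduce the translation group $(U(t))_{t\in\R}$ on $\ell^2(\R;X)$ given by $U(t)g(\xi):=g(\xi-t/(2\pi))$. By Proposition \ref{proposition:l2representation}, $\ell^2(\R;X)$ is itself a Banach function space on $\R\times S$, so translations preserve positivity and the $\ell^2(\R;X)$-norm; thus each $U(t)$ is a positive isometry. Strong continuity follows by verifying it on functions of the form $\ind_E\otimes x$ with $|E|<\infty$ (where the estimate reduces to continuity of $L^2$-translations) and then appealing to the density result of Proposition \ref{proposition:densealphaspace} together with the uniform bound $\nrm{U(t)}=1$.

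The crucial link is that the Fourier transform intertwines translation and modulation. Since $\mc{F}$ is an isometry on $\ell^2(\R;X)$ by Example \ref{example:fourier}, a direct calculation on simple tensors (extended by density) yields
\[
  \mc{F}\,U(t)\,\mc{F}^{-1}=\ee^{-t\mc{M}_{\pi/2}},\qquad t\in\R.
\]
Setting $J:=\mc{F}^{-1}J_0$ and $Q:=Q_0\mc{F}$ therefore gives an isomorphic embedding $J\colon X\to\ell^2(\R;X)$ and a bounded operator $Q\colon \ell^2(\R;X)\to X$ with $QJ=I$ and
\[
  QU(t)J=Q_0\,\mc{F}\,U(t)\,\mc{F}^{-1}\,J_0=Q_0\,\ee^{-t\mc{M}_{\pi/2}}\,J_0=\ee^{-tA},
\]
which is the desired positive-isometric dilation of $(\ee^{-tA})_{t\ge 0}$.

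The main obstacle I expect is the verification of strong continuity of the translation group on $\ell^2(\R;X)$: unlike the Bochner space $L^2(\R;X)$, the norm on $\ell^2(\R;X)$ need not be translation-invariant in a continuous way at the level of arbitrary elements, and one must bootstrap from finitely-supported simple tensors. A secondary technical point is making sure that the semigroup dilation (and not merely the resolvent dilation of Theorem \ref{theorem:dilationresolvent}) is available; this is exactly where the almost $\ell^2$-sectoriality (inherited from the bounded $H^\infty$-calculus via Theorem \ref{theorem:Hinftyell2bounded}) is used to invoke the preceding semigroup corollary.
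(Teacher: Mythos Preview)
Your proof is correct and follows essentially the same route as the paper: obtain the dilation to the modulation group $\ee^{-t\mc{M}_{\pi/2}}$ on $\ell^2(\R;X)$, then conjugate by the Fourier transform (which is an isometry on $\ell^2(\R;X)$) to replace modulation by translation, and finally observe via Proposition~\ref{proposition:l2representation} that translation is a positive $C_0$-group of isometries.

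The only difference is one of bookkeeping. You invoke the preceding semigroup corollary, whose hypothesis includes almost $\alpha$-sectoriality with $\tilde\omega_\alpha(A)<\pi/2$, and you therefore verify this via Theorem~\ref{theorem:Hinftyell2bounded}. The paper instead cites Theorem~\ref{theorem:dilationresolvent}\ref{it:dilation2} directly (the implication \ref{it:dilation1}$\Rightarrow$\ref{it:dilation2} there needs only the bounded $H^\infty$-calculus and unconditional stability of $\alpha$) and then passes to the semigroup via \eqref{eq:dilationcalculusHinfty} with $f_t(z)=\ee^{-tz}$. So your detour through almost $\ell^2$-sectoriality is correct but unnecessary: the direction of the preceding corollary you actually use does not require it.

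Your worry about strong continuity is not a real obstacle: translations are isometries on $\ell^2(\R;X)$ (they come from unitaries on $L^2(\R)$), so strong continuity on the dense class of simple tensors from Proposition~\ref{proposition:densealphaspace} propagates immediately to all of $\ell^2(\R;X)$. The paper phrases the same argument via order-continuity of $X$ and dominated convergence, but the substance is identical.
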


\begin{proof}
  Let $J$ and $Q$ be the embedding and projection operator of the dilation in Theorem \ref{theorem:dilationresolvent}\ref{it:dilation2} with $\alpha = \ell^2$. Let $\mc{F}$ denote the Fourier transform on $\ell^2(\R;X)$ and define
  \begin{equation*}
         R(\lambda,A) = Q R(\lambda,\mc{M}_{\frac{\pi}{2}})J = Q_{\mc{N}} R(\lambda,\mc{N})J_{\mc{N}},\qquad \lambda \in \C \setminus \overline{\Sigma}_{\frac{\pi}{2}},
\end{equation*}
where $$\mc{N} := \mc{F}^{-1}\mc{M}_{\frac{\pi}{2}}\mc{F} = \frac{1}{2\pi}\frac{\dd}{\dd t}$$ on $\ell^2(\R;X)$, $J_{\mc{N}}:=\mc{F}^{-1}J$ and $Q_{\mc{N}}:=Q\mc{F}$. Since the Fourier transform is bounded on $\ell^2(\R;X)$ by Example \ref{example:fourier}, we obtain that $(\ee^{-t\mc{N}})_{t \in \R}$ is a dilation of $(\ee^{-tA})_{t\geq 0}$ by  \eqref{eq:dilationcalculusHinfty} for $f_t(z) = \ee^{-tz}$ with $t \geq 0$.
Now the corollary follows from the fact that $(\ee^{-t\mc{N}})_{t \in \R}$ is the translation group on $\ell^2(\R;X)$, which is a positive $C_0$-group of isometries by the order-continuity of $X$, the dominated convergence theorem and Proposition \ref{proposition:l2representation}.
\end{proof}

\section{A scale of generalized square function spaces}\label{section:scalespaces}
For a sectorial operator $A$ on the Banach space $X$ the scale of homogeneous fractional domain spaces $\dot{X}_{\theta,A}$ reflects many properties of $X$ and is very useful in spectral theory.
However, the operators on $\dot{X}_{\theta,A}$ induced by $A$
may not have a bounded $H^\infty$-calculus or $\BIP$, the scale $\dot{X}_{\theta,A}$ may not be an interpolation scale and even for a differential operator $A$ they may not be easy to identify as function spaces.
Therefore one also considers e.g. the real interpolation spaces $(X,D(A))_{\theta,q}$ for $q \in [1,\infty]$, on which the restriction of an invertible sectorial operator $A$ always has a bounded $H^\infty$-calculus (see \cite{Do99}), and which, in the case of $A=-\Delta$ on $L^p(\R^d)$, equal the Besov spaces $B_{p,q}^{2\theta}(\R^d)$. However, these spaces almost never equal the fractional domain scale $\dot{X}_{\theta,A}$ (see \cite{KW05}).

In this section we will introduce a scale of intermediate spaces $H_{\theta,A}^\alpha$ which are defined in terms of the generalized square functions of Section \ref{section:squarefunctions}. These spaces have, under reasonable assumptions on $A$ and the Euclidean structure $\alpha$, the following advantages:
\begin{enumerate}[(i)]
\item They are ``close'' to the homogeneous fractional domain spaces, i.e. for $\eta_1<\theta<\eta_2$ we have continuous embeddings
\begin{equation*}
  \dot{X}_{\eta_1,A} \cap \dot{X}_{\eta_2,A} \hookrightarrow  H^{\alpha}_{\theta, A} \hookrightarrow \dot{X}_{\eta_1,A}+\dot{X}_{\eta_2,A},
\end{equation*}
see Theorem \ref{theorem:functionspaceclose}.
\item The sectorial operator $A|_{H_{\theta,A}^\alpha}$ induced by $A$ on $H_{\theta,A}^\alpha$ has a bounded $H^\infty$-calculus, see Theorem \ref{theorem:Hproperties}.
\item The spaces $H_{\theta,A}^\alpha$ and $\dot{X}_{\theta,A}$ are isomorphic essentially if and only if $A$ has a bounded $H^\infty$-calculus (see Theorem \ref{theorem:Hproperties}). In this case the spaces $H_{\theta,A}^\alpha$ provide a generalized form of the Littlewood--Paley decomposition for $\dot{X}_{\theta,A}$, which enables certain harmonic analysis methods in the spectral theory of $A$. In particular, if $A = -\Delta$ on $L^p(\R^d)$ with $1<p<\infty$, then $H_{\theta,A}^{\gamma} = \dot{H}^{2\theta,p}(\R^d)$ is a Riesz potential space.
\item They form an interpolation scale for the complex interpolation method and are realized as $\alpha$-interpolation spaces of the homogeneous fractional domain spaces. (see Theorems \ref{theorem:interpolateHcomplex} and \ref{theorem:interpolateH}).
\end{enumerate}

Let us fix a framework to deal with the fractional domain spaces of a sectorial operator $A$ on $X$.
Let $\theta \in \R$ and $m \in \N$ with $\abs{\theta}<m$.
We define the homogeneous fractional domain space  $\dot{X}_{\theta,A}$ as the completion of $D(A^\theta)$ with respect to the norm $x \mapsto \nrm{A^\theta x}_X$. We summarize a few properties of $\dot{X}_{\theta,A}$ in the following proposition. We refer to \cite[Section 15.E]{KW04} or \cite[Chapter 6]{Ha06} for the proof.
\begin{proposition}\label{proposition:homspaceprop}
Let $A$ be a sectorial operator on $X$ and take $\theta \in \R$.
  \begin{enumerate}[(i)]
\item \label{it:homspaceprop1} $D(A^m)\cap R(A^m)$ is dense in $\dot{X}_{\theta,A}$ for $m \in \N$ with $\abs{\theta}<m$.
\item \label{it:homspaceprop2} For $\eta_1,\eta_2\geq 0$ we have $\dot{X}_{\eta_1,A}\cap \dot{X}_{-\eta_2,A}=D(A^{\eta_1})\cap R(A^{\eta_2})$.
\item \label{it:homspaceprop3} For $\eta_1<\theta<\eta_2$ we have the continuous embeddings \begin{align*}
   \dot{X}_{\eta_1,A}\cap \dot{X}_{\eta_2,A} &\hookrightarrow \dot{X}_{\theta,A}\hookrightarrow \dot{X}_{\eta_1,A}+ \dot{X}_{\eta_2,A}
 \end{align*}
\end{enumerate}
\end{proposition}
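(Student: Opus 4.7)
The plan uses the smoothing operators $\varphi_n(A)^{2m}$ from \eqref{eq:phin}, together with moment inequalities and the Calder\'on reproducing formula. For part (i), given $x \in D(A^\theta)$ with $\abs{\theta} < m$, I set $y_n := \varphi_n(A)^{2m}x$. Since $\varphi_n^{2m}$ vanishes to order $2m$ at both $0$ and $\infty$, the functions $z\mapsto z^{\pm m}\varphi_n(z)^{2m}$ lie in $H^1(\Sigma_\sigma)$, so $A^{\pm m}\varphi_n(A)^{2m}$ is bounded on $X$ by the Dunford calculus; hence $y_n \in D(A^m)\cap R(A^m)$. By commutativity of the extended Dunford calculus, $A^\theta y_n = \varphi_n(A)^{2m}A^\theta x$, and the strong convergence $\varphi_n(A) \to I$ on $X$ combined with $\sup_n \nrm{\varphi_n(A)} < \infty$ yields $\varphi_n(A)^{2m} A^\theta x \to A^\theta x$ in $X$, i.e.\ $y_n \to x$ in $\dot{X}_{\theta,A}$. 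Density of $D(A^\theta)$ in $\dot{X}_{\theta,A}$ finishes (i).

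For part (ii) the inclusion ``$\supseteq$'' is immediate. For the converse, I view both completions inside the common ambient sum $\dot{X}_{\eta_1,A}+\dot{X}_{-\eta_2,A}$; given $x$ in the intersection, pick $m>\max\cbrace{\eta_1,\eta_2}$ and, by the argument of (i) applied in each completion separately, approximate $x$ simultaneously in both norms by $y_n := \varphi_n(A)^{2m}x \in D(A^m)\cap R(A^m) \subseteq X$. Then $(A^{\eta_1}y_n)$ and $(A^{-\eta_2}y_n)$ are Cauchy in $X$, and the closedness of the fractional powers $A^{\eta_1}$ and $A^{-\eta_2}$ forces the common $X$-limit $\tilde x$ of $(y_n)$ to lie in $D(A^{\eta_1})\cap D(A^{-\eta_2}) = D(A^{\eta_1})\cap R(A^{\eta_2})$ and to represent $x$ in the ambient space.

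Part (iii) has two halves. The first embedding reduces to the moment inequality $\nrm{A^\theta x}_X \lesssim \nrm{A^{\eta_1}x}_X + \nrm{A^{\eta_2}x}_X$ on $D(A^{\eta_1})\cap D(A^{\eta_2})$ for $\eta_1<\theta<\eta_2$, obtained by inserting $A^\theta$ in its contour representation and splitting the contour at the scale $\nrm{A^{\eta_2}x}_X/\nrm{A^{\eta_1}x}_X$; density from (i) and (ii) extends the estimate to $\dot{X}_{\eta_1,A}\cap \dot{X}_{\eta_2,A}$. For the second embedding, fix $\psi(z)=z^a(1+z)^{-a-b}\in H^1(\Sigma_\sigma)$ with $a>\theta-\eta_1$, $b>\eta_2-\theta$, and $\int_0^\infty \psi(t)\,\frac{\ddn t}{t} = 1$. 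For $x \in D(A^M)\cap R(A^M)$ (dense in $\dot{X}_{\theta,A}$ by (i) for large $M$), the Calder\'on formula \eqref{eq:calderonrepnontrivial} gives $x = x_1+x_2$ with $x_1 := \int_0^1\psi(tA)x\,\frac{\ddn t}{t}$ and $x_2 := \int_1^\infty\psi(tA)x\,\frac{\ddn t}{t}$. The scaling bound $\nrm{A^{s-\theta}\psi(tA)}_{\mc{L}(X)}\lesssim t^{\theta-s}$ for $-a<s-\theta<b$ (via $A^{s-\theta+a}(1+tA)^{-(s-\theta+a)}$ being bounded with norm $\lesssim t^{-(s-\theta+a)}$) yields $\nrm{A^{\eta_1}x_1}_X+\nrm{A^{\eta_2}x_2}_X \lesssim \nrm{A^\theta x}_X$ after integration, giving the embedding by density.

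The main obstacle is really bookkeeping: the ambient-space subtlety in (ii) (since each $\dot{X}_{\theta,A}$ is an abstract completion, one must fix a common super-space such as the sum $\dot{X}_{\eta_1,A}+\dot{X}_{-\eta_2,A}$ or Haase's universal extrapolation space from \cite[Chapter 6]{Ha06} before the intersection is meaningful) and keeping track of domains when commuting fractional powers past the smoothing operators. Everything else is routine Dunford-calculus manipulation along the lines of \cite[Section 15.E]{KW04} or \cite[Chapter 6]{Ha06}.
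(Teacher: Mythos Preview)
The paper does not actually prove this proposition: it states the result and defers entirely to \cite[Section 15.E]{KW04} and \cite[Chapter 6]{Ha06} for the proof. Your proposal is a correct reconstruction of the standard arguments found in those references (approximation via powers of $\varphi_n(A)$, the moment inequality, and the Calder\'on reproducing formula split), so there is nothing to compare; you have simply supplied what the paper chose to omit.
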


\subsection*{The spaces $H_{\theta,A}^\alpha$ and their properties.}
Now let us turn to the spaces $H^{\alpha}_{\theta,A}$,
 for which we first introduce a version depending on a choice of $\psi \in H^1(\Sigma_\sigma)$.
  Let $A$ be a sectorial operator on $X$. Assume either of the following conditions:
  \begin{itemize}
    \item $\alpha$ is ideal and set $\omega_A:=\omega(A)$.
    \item $A$ is almost $\alpha$-sectorial and set $\omega_A:=\tilde{\omega}_\alpha(A)$.
  \end{itemize}
Let $\omega_A<\sigma<\pi$, $\psi \in H^1(\Sigma_\sigma)$  and take $\theta \in \R$ and $m \in \N$ with $\abs{\theta}+1<m$.
  We define $H_{\theta,A}^\alpha(\psi)$ as the completion of $D(A^m)\cap R(A^m)$  with respect to the norm
  \begin{equation*}
    x \mapsto \nrmb{ \psi(\cdot A)A^\theta x}_{\alpha(\R_+,\frac{\ddn t}{t};X)}.
  \end{equation*}
 We write
 \begin{equation*}
     H^\alpha_{\theta,A}:= H^\alpha_{\theta,A}(\varphi),\qquad \varphi(z) := z^{1/2}(1+z)^{-1}.
   \end{equation*}

 By Lemma \ref{lemma:squarefunctionalpha} we know that $\psi(\cdot A)A^\theta x \in \alpha(\R_+,\frac{\ddn t}{t};X)$ for any $x \in D(A^m)\cap R(A^m)$ and $\psi(\cdot A)A^\theta x=0$ if and only if $x=0$ by \eqref{eq:calderonrepnontrivial}, so the norm on $H_{\theta,A}^\alpha(\psi)$ is well-defined.
 \begin{remark}~
 \begin{itemize}
      \item On Hilbert spaces these spaces were already studied in \cite{AMN97}. For the $\gamma$-structure on a Banach space these spaces are implicitly used in \cite[Section 7]{KKW06}  and they are studied in \cite{KW16b} for $0$-sectorial operators with a so-called Mihlin functional calculus.
 In \cite{Ha06b} (see also (\cite[Chapter 6]{Ha06}), these spaces using $L^p(\R_+,\frac{\ddn t}{t};X)$-norms instead of $\alpha(\R_+,\frac{\ddn t}{t};X)$-norms were studied and identified as real interpolation spaces. Furthermore, for Banach function spaces  using $X(\ell^q)$-norms instead of $\alpha(\R_+,\frac{\ddn t}{t};X)$-norms, these spaces were developed in \cite{KU14,Ku15}.
 \item For $\psi \in H^1(\Sigma_\sigma)$
such that $\widetilde{\psi} \in H^1(\Sigma_\sigma)$ for $\widetilde{\psi}(z) := z^{\theta} \psi(z)$, we have the norm equality
  \begin{equation*}
    \nrm{x}_{H_{\theta,A}^\alpha(\psi)} =\nrmb{t \mapsto t^{-\theta}\widetilde{\psi}(tA) x}_{\alpha(\R_+,\frac{\ddn t}{t};X)}
  \end{equation*}
  for $x \in D(A^m)\cap R(A^m)$.
Viewing $\widetilde{\psi}(tA)$ as a generalized continuous Littlewood-Paley decomposition, this connects our scale of spaces to the more classical fractional smoothness scales.
 \end{itemize}
 \end{remark}

 Before turning to more interesting results, we will first prove that the $H^\alpha_{\theta,A}(\psi)$-spaces are independent of the parameter $m>\abs{\theta}+1$. This is the reason why we do not include it in our notation.

 \begin{lemma}
   The definition of $H^\alpha_{\theta,A}(\psi)$ is independent of $m>\abs{\theta}+1$.
 \end{lemma}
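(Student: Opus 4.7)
The plan is to show that for $|\theta|+1 < m_1 < m_2$ in $\N$, the subspace $V_2 := D(A^{m_2})\cap R(A^{m_2})$ is dense in $V_1 := D(A^{m_1})\cap R(A^{m_1})$ with respect to the norm $\nrm{\cdot}_{H^\alpha_{\theta,A}(\psi)}$. Since $V_2\subseteq V_1$ and the two norms coincide on $V_2$, the canonical map between the completions is an isometric embedding, and density of $V_2$ in $V_1$ will force the two completions to be naturally isomorphic.

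To construct an explicit approximating sequence I would set $k := m_2 - m_1$ and, for $x\in V_1$, define $x_n := \varphi_n(A)^k x$ with $\varphi_n \in H^1(\Sigma_\sigma)\cap H^\infty(\Sigma_\sigma)$ the rational functions from \eqref{eq:phin}. Factoring $\varphi_n(z)^k = z^k\,\tilde{\varphi}_n(z)$ with $\tilde{\varphi}_n\in H^\infty$ a product of resolvents of $-n$ and $-1/n$, together with the multiplicativity and commutativity of the Dunford calculus, gives $x_n \in V_2$. Iterating the properties $\sup_n \nrm{\varphi_n(A)}_{\mc{L}(X)}<\infty$ and $\varphi_n(A)y\to y$ for $y\in X$ (listed after \eqref{eq:phin}) yields the same for $\varphi_n(A)^k$. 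Next, writing $h(t) := \psi(tA)A^\theta x$ --- which belongs to $\alpha(\R_+,\tfrac{\ddn t}{t};X)$ by Lemma~\ref{lemma:squarefunctionalpha}, since $A^\theta x \in D(A)\cap R(A) = R(\Phi(A))$ for $\Phi(z):=z(1+z)^{-2}$ under the hypothesis $|\theta|+1 < m_1$ --- the commutativity of the Dunford calculus yields $\psi(tA)A^\theta(x-x_n) = (I-\varphi_n(A)^k)h(t)$, and the density reduces to proving
\[
\nrm{(I - \varphi_n(A)^k)\circ h}_{\alpha(\R_+,\tfrac{\ddn t}{t};X)} \to 0 \quad\text{as } n\to\infty.
\]

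Under the first hypothesis that $\alpha$ is ideal, this convergence is immediate from Proposition~\ref{proposition:alphaspaceconvergence}\ref{it:convergenceproperty5} applied to the strong convergence $\varphi_n(A)^k\to I$ together with the uniform operator bound. The main technical obstacle lies in the second hypothesis, where $A$ is only almost $\alpha$-sectorial: the family $\{\varphi_n(A)^k:n\in\N\}$ is \emph{not} uniformly $\alpha$-bounded --- its $\alpha$-norm in fact grows like $(\log n)^k$, as one sees from the integral representation $\varphi_n(z) = \int_{1/n}^{n}\tilde{\Phi}(sz)\,\tfrac{\ddn s}{s}$ with $\tilde{\Phi}(w) := w/(1+w)^2 \in H^1(\Sigma_\sigma)$ and Corollary~\ref{corollary:L1mean} --- so a direct strong-convergence argument via the $\alpha$-multiplier theorem is unavailable. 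To resolve this I would first approximate $h$ by finite-rank functions $h_\varepsilon = \sum_j g_j\otimes u_j$ via Proposition~\ref{proposition:densealphaspace}, for which \eqref{eq:finitedimesnionalE} reduces the $\alpha$-convergence of $(I-\varphi_n(A)^k)\circ h_\varepsilon$ to the strong convergence of each $(I-\varphi_n(A)^k)u_j$ in $X$; the residual $(I-\varphi_n(A)^k)\circ(h-h_\varepsilon)$ would then be controlled by splitting $I - \varphi_n(A)$ into its tail integrals $\int_0^{1/n}$ and $\int_n^{\infty}$ of $\tilde{\Phi}(sA)\,\tfrac{\ddn s}{s}$ and combining the $\alpha$-boundedness of $\{\tilde{\Phi}(sA):s>0\}$ furnished by Proposition~\ref{proposition:almostsectorialcharacterization} with the $\alpha$-multiplier theorem (Theorem~\ref{theorem:pointwisemultipliers1}). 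This tail estimate is the delicate step.
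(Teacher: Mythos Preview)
Your overall strategy and the ideal case are correct and match the paper. The almost $\alpha$-sectorial case, however, has a genuine gap.

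The tail-integral route you sketch does not close. Computing the tails explicitly gives
\[
\int_0^{1/n}\tilde\Phi(sA)\,\frac{\ddn s}{s}=A(n+A)^{-1}=I+nR(-n,A),\qquad
\int_n^\infty\tilde\Phi(sA)\,\frac{\ddn s}{s}=(I+nA)^{-1}=-\tfrac1n R(-\tfrac1n,A),
\]
so uniform $\alpha$-bounds on these pieces are exactly $\alpha$-boundedness of $\{\lambda R(\lambda,A):\lambda<0\}$, i.e.\ genuine $\alpha$-sectoriality, which you do not have. Corollary~\ref{corollary:L1mean} does not help either: the $L^1(\tfrac{\ddn s}{s})$-mass of the tails is infinite, and the $L^\infty$-mean version requires $\alpha$ ideal. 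The splitting $h=h_\varepsilon+(h-h_\varepsilon)$ then fails because the residual term $\nrm{(I-\varphi_n(A)^k)(h-h_\varepsilon)}_\alpha$ cannot be made small uniformly in $n$.

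The paper bypasses this entirely by using Proposition~\ref{proposition:equivsquarefunctions}, which is precisely the tool converting almost $\alpha$-sectoriality into an $H^\infty$-multiplier estimate on the square-function norm. Write $x=\varphi(A)y$ with $\varphi(z)=z(1+z)^{-2}$ and $y\in D(A^{m-1})\cap R(A^{m-1})$ (possible since $D(A^m)\cap R(A^m)=R(\varphi(A)^m)$). Then
\[
\psi(\cdot A)A^\theta\bigl(\varphi_n(A)x-x\bigr)=f_n(A)\,\psi(\cdot A)A^\theta y,\qquad f_n:=\varphi(\varphi_n-1),
\]
and Proposition~\ref{proposition:equivsquarefunctions} gives $\nrm{f_n(A)\psi(\cdot A)A^\theta y}_\alpha\lesssim\nrm{f_n}_{H^\infty(\Sigma_\sigma)}\nrm{\psi(\cdot A)A^\theta y}_\alpha$. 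A direct computation shows $\nrm{\varphi(\varphi_n-1)}_{H^\infty(\Sigma_\sigma)}\lesssim 1/n$, which finishes the argument. The key point you missed is that the extra factor $\varphi$ absorbed from $x$ tames $\varphi_n-1$ in $H^\infty$, so one never needs uniform $\alpha$-bounds on $\varphi_n(A)$ itself.
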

 \begin{proof}
   It suffices to show that $D(A^{m+1})\cap R(A^{m+1})$ is dense in $H^\alpha_{\theta,A}(\psi)$, which is defined as the completion of $D(A^{m})\cap R(A^{m})$. Fix $x
    \in D(A^m)\cap R(A^m)$ and let $\varphi_n$ as in \eqref{eq:phin}. Then $\varphi_n(A)$ maps $D(A^{m})\cap R(A^{m})$ into $D(A^{m+1})\cap R(A^{m+1})$. We consider two cases:
 \begin{itemize}
   \item If $\alpha$ is ideal, then since $\varphi_n(A)x \to x$ in $X$ we have  $$\psi(\cdot A)A^\theta \varphi_n(A)x \to \psi(\cdot A)A^\theta x$$ in $\alpha(\R_+,\frac{\ddn t}{t};X)$ by Proposition \ref{proposition:alphaspaceconvergence}\ref{it:convergenceproperty5}.
   \item If $A$ is almost $\alpha$-sectorial, let $y \in D(A^{m-1}\cap R(A^{m-1})$ be such that $x = \varphi(A)y$ with $\varphi(z) = {z}{(1+z)^{-2}}$. Since we have for any $n\in \N$ and
   $z \in \Sigma_\sigma$ that
       \begin{align*}
         \abs{\varphi(z)(\varphi_n(z)-1)} &= \abss{ \frac{z}{n+z} \frac{z}{(1+z)^2}- \frac{\frac{1}{z}}{n+\frac{1}{z}} \frac{z}{(1+z)^2}}\\
         &\lesssim  \frac{1}{n}\has{\frac{z^2}{(1+z)^2}+\frac{1}{(1+z)^2}}\leq  \frac{2}{n},
       \end{align*}
we deduce by Proposition \ref{proposition:equivsquarefunctions} that
       \begin{align*}
       \lim_{n \to \infty} \nrm{\psi(\cdot A)&A^\theta \varphi_n(A)x - \psi(\cdot A)A^\theta x}_{\alpha(\R_+,\frac{\ddn t}{t};X)}\\ &\leq \lim_{n \to \infty} \nrm{\varphi(\varphi_n -1)}_{H^\infty(\Sigma_\sigma)} \nrm{\psi(\cdot A)A^\theta y}_{\alpha(\R_+,\frac{\ddn t}{t};X)} = 0
       \end{align*}
 \end{itemize}
Thus we obtain in both cases  that $D(A^{m+1})\cap R(A^{m+1})$ is dense in $H_{\theta,A}^\alpha(\psi)$.
 \end{proof}

If $A$ is almost $\alpha$-sectorial, then the spaces $H^\alpha_{\theta,A}(\psi)$ are independent of the choice of $\psi \in H^1(\Sigma_\sigma)$ as well by Proposition \ref{proposition:equivsquarefunctions} and thus all isomorphic to $H^\alpha_{\theta,A}$.

\bigskip

  We start our actual analysis of the $H_{\theta,A}^\alpha(\psi)$-spaces by proving embeddings that show that they are ``close'' to the fractional domain spaces $\dot{X}_{\theta,A}$.

\begin{theorem}\label{theorem:functionspaceclose}
  Let $A$ be a sectorial operator on $X$. Assume either of the following conditions:
  \begin{itemize}
    \item $\alpha$ is ideal and set $\omega_A:=\omega(A)$.
    \item $A$ is almost $\alpha$-sectorial and set $\omega_A:=\tilde{\omega}_\alpha(A)$.
  \end{itemize}
Let $\omega_A<\sigma<\pi$ and take a non-zero $\psi \in H^1(\Sigma_\sigma)$, then for $\eta_1<\theta<\eta_2$ we have continuous embeddings
    \begin{equation*}
      \dot{X}_{\eta_1,A} \cap \dot{X}_{\eta_2,A} \hookrightarrow  H^{\alpha}_{\theta, A}(\psi) \hookrightarrow \dot{X}_{\eta_1,A}+\dot{X}_{\eta_2,A}
    \end{equation*}
\end{theorem}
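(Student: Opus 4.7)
The plan is to combine the Calder\'on reproducing formula with Proposition \ref{proposition:functionspaceclose}, which provides the pair of bounds $\nrm{\psi(\cdot A)\varphi(A)y}_{\alpha(\R_+,\frac{\ddn t}{t};X)} \lesssim \nrm{y}_X$ and $\nrm{\varphi(A)x}_X \lesssim \nrm{\psi(\cdot A)x}_{\alpha(\R_+,\frac{\ddn t}{t};X)}$ for arbitrary non-zero $\psi,\varphi \in H^1(\Sigma_\sigma)$ without extra hypotheses on $\alpha$ or $A$. I fix an auxiliary $\varphi(z) := z^M(1+z)^{-2M}$ with $M > \max(\abs{\eta_1},\abs{\eta_2},\abs{\theta})+1$, chosen so that every shifted function $z^\rho\varphi(z)$, $z^\rho\varphi^*(z)$ with $\abs{\rho} \leq M$ still lies in $H^1(\Sigma_\sigma)$. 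With $c := \int_0^\infty \abs{\varphi(s)}^2 \tfrac{\ddn s}{s}$ the Calder\'on identity $x = c^{-1}\int_0^\infty \varphi(sA)\varphi^*(sA)x\,\tfrac{\ddn s}{s}$ holds on $D(A)\cap R(A)$, hence on the dense subspace $D(A^m)\cap R(A^m)$ used in the definition of $H^\alpha_{\theta,A}(\psi)$.

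\textbf{First embedding.} For $x \in D(A^m)\cap R(A^m)$, apply the Calder\'on identity to $A^\theta x$ and split $\int_0^\infty = \int_0^1 + \int_1^\infty$. On $(0,1]$ rewrite the integrand as $s^{\eta_2-\theta}\varphi(sA)\tilde\varphi_2^*(sA)A^{\eta_2}x$ with $\tilde\varphi_2^*(z) := z^{\theta-\eta_2}\varphi^*(z) \in H^1(\Sigma_\sigma)$, and symmetrically on $[1,\infty)$ with $\eta_1$ in place of $\eta_2$. Apply $\psi(tA)$, pull the $\alpha(\R_+,\tfrac{\ddn t}{t};X)$-norm in $t$ inside the $s$-integral via the triangle inequality for Bochner integrals in $\alpha(\R_+,\tfrac{\ddn t}{t};X)$, and invoke Proposition \ref{proposition:functionspaceclose} with $\varphi$ replaced by the rescaled $z \mapsto \varphi(sz)$ (which has $s$-independent $H^1$-norm) to obtain $\nrm{\psi(\cdot A)\varphi(sA)\tilde\varphi_j^*(sA)A^{\eta_j}x}_{\alpha} \lesssim \nrm{A^{\eta_j}x}_X$ uniformly in $s$. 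The remaining scalar integrals $\int_0^1 s^{\eta_2-\theta-1}\ddn s$ and $\int_1^\infty s^{\eta_1-\theta-1}\ddn s$ are finite precisely because $\eta_1<\theta<\eta_2$.

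\textbf{Second embedding.} Apply the Calder\'on identity to $x$ itself and split $x = x_2 + x_1$ along $s=1$; the claim is $x_j \in \dot X_{\eta_j,A}$. Commuting $A^{\eta_2-\theta}$ through $\varphi(sA)$ and rewriting $\varphi^*(sA)x = s^{\theta}\hat\varphi(sA)A^\theta x$ with $\hat\varphi(z) := z^{-\theta}\varphi^*(z) \in H^1(\Sigma_\sigma)$ yields $A^{\eta_2}x_2 = c^{-1}\int_0^1 s^{\theta-\eta_2-1}\Phi(sA)A^\theta x\,\ddn s$ for some $\Phi \in H^1(\Sigma_\sigma)$. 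Testing against $y^* \in X^\sharp$ of norm one and inserting the Calder\'on identity $A^\theta x = c_\psi^{-1}\int_0^\infty \psi^*(rA)\psi(rA)A^\theta x\,\tfrac{\ddn r}{r}$ gives, after Fubini, a pairing estimated by the $\alpha$-H\"older inequality (Proposition \ref{proposition:alphaholder}) in $r$ against a dual function whose $\alpha^*$-norm is $\lesssim \nrm{y^*}_{X^*}$ via the first inequality of Proposition \ref{proposition:functionspaceclose} applied to the moon dual $A^\sharp$ on $X^\sharp$ equipped with $\alpha^*$. The estimate for $A^{\eta_1}x_1$ is symmetric.

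\textbf{Main obstacle.} The principal difficulty is that for a general $\psi \in H^1(\Sigma_\sigma)$ the shifted functions $z^\rho\psi(z)$ need not lie in $H^1$, so fractional powers of $A$ cannot be directly commuted past $\psi(sA)$. All operator algebra is therefore funneled through the auxiliary $\varphi$, designed to survive arbitrary shifts, while $\psi$ appears only as the outer probe via Proposition \ref{proposition:functionspaceclose}. Keeping track of the $s$-localization in the second embedding (so that the intermediate operator $\int_0^1 s^{\theta-\eta_2-1}\Phi(sA^\sharp)\,\ddn s$ is absorbed into the dual $\alpha^*$-norm rather than an honest $H^1$-functional calculus) is the most delicate calculation. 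The two standing hypotheses, ``$\alpha$ ideal'' or ``$A$ almost $\alpha$-sectorial'', enter only to ensure $\psi(\cdot A)A^\theta x \in \alpha(\R_+,\tfrac{\ddn t}{t};X)$ via Lemma \ref{lemma:squarefunctionalpha}, so that the $H^\alpha_{\theta,A}(\psi)$-norm is well-defined on $D(A^m)\cap R(A^m)$; they do not enter the embedding estimates themselves.
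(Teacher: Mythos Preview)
Your second-embedding argument has a sign error that makes the written proof break down. You split $x=x_2+x_1$ with $x_2=c^{-1}\int_0^1\varphi(sA)\varphi^*(sA)x\,\tfrac{\ddn s}{s}$ and claim $x_2\in\dot X_{\eta_2,A}$, arriving at $A^{\eta_2}x_2=c^{-1}\int_0^1 s^{\theta-\eta_2-1}\Phi(sA)A^\theta x\,\ddn s$. But any route through Proposition~\ref{proposition:functionspaceclose} (either directly via its second inequality, or via duality as you propose) only gives the $s$-uniform bound $\nrm{\Phi(sA)A^\theta x}_X\lesssim\nrm{\psi(\cdot A)A^\theta x}_\alpha$ (respectively $\nrm{\psi^*(\cdot A^\sharp)\Phi(sA^\sharp)y^*}_{\alpha^*}\lesssim\nrm{y^*}$), and you are left with $\int_0^1 s^{\theta-\eta_2-1}\ddn s$, which diverges because $\theta<\eta_2$. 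The fix is to swap the assignment: the $\int_0^1$-piece belongs to $\dot X_{\eta_1,A}$ (leaving the convergent $\int_0^1 s^{\theta-\eta_1-1}\ddn s$) and the $\int_1^\infty$-piece to $\dot X_{\eta_2,A}$. With that swap your method works, and in fact you can skip the Calder\'on--duality detour entirely: the second inequality of Proposition~\ref{proposition:functionspaceclose} applied with $\varphi=\Phi_j(s\,\cdot)$ (whose $H^1$-norm is $s$-independent) gives $\nrm{\Phi_j(sA)A^\theta x}_X\lesssim\nrm{\psi(\cdot A)A^\theta x}_\alpha$ directly.

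Even after this repair, your route is considerably longer than the paper's. The paper avoids Calder\'on reproducing formulas and integral splitting altogether: it sets $\epsilon=\min\{\theta-\eta_1,\eta_2-\theta\}$ and $\varphi(z)=z^\epsilon(1+z^\epsilon)^{-2}$, exploiting the explicit identity $\varphi(A)^{-1}=A^\epsilon+A^{-\epsilon}+2I$ on $D(A^\epsilon)\cap R(A^\epsilon)$. Both embeddings then follow in a few lines by writing $x=\varphi(A)(A^\epsilon+A^{-\epsilon}+2I)x$ and applying Proposition~\ref{proposition:functionspaceclose} once in each direction, combined with the trivial embeddings of Proposition~\ref{proposition:homspaceprop}\ref{it:homspaceprop3}. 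Your approach has the merit of being a standard ``Calder\'on splitting'' argument, but the paper's algebraic trick is much shorter. (Two minor side remarks: your bound $M>\max(\abs{\eta_1},\abs{\eta_2},\abs{\theta})+1$ does not guarantee $z^{\theta-\eta_j}\varphi^*(z)\in H^1$ when $\abs{\theta-\eta_j}$ is large; you need $M>\max_j\abs{\theta-\eta_j}$. And the two standing hypotheses are not used in Lemma~\ref{lemma:squarefunctionalpha}; they enter only to fix $\omega_A$ and to make the definition of $H^\alpha_{\theta,A}(\psi)$ independent of $m$.)
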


\begin{proof}
By density it suffices to show the embeddings for $x \in D(A^m)\cap R(A^m)$ for some $m \in \N$ with $\eta_1,-\eta_2<m-1$. Set
$$\epsilon = \min\cbrace{\theta-\eta_1,\eta_2-\theta}$$ and define $\varphi(z)=z^\epsilon(1+z^\epsilon)^{-2}$. Then by \eqref{eq:Aepsilonrange} we have that $$\varphi(A)^{-1}\colon D(A^\epsilon)\cap R(A^\epsilon) \mapsto X$$ is given by $\varphi(A)^{-1} = A^{\epsilon}+A^{-\epsilon}+2I$.
For the first embedding we have by Proposition \ref{proposition:functionspaceclose} and Proposition \ref{proposition:homspaceprop}\ref{it:homspaceprop3}
\begin{align*}
  \nrm{x}_{H^\alpha_{\theta,A}(\psi)} &= \nrm{\psi(\cdot A)\varphi(A)\varphi(A)^{-1}A^{\theta}x}_{\alpha(\R_+,\frac{\ddn t}{t};X)}\\
  &\lesssim \nrm{A^{\theta+\epsilon}x+A^{\theta-\epsilon}x+2A^{\theta}x}_{X} \\
  &\lesssim \nrm{x}_{\dot{X}_{\eta_1,A} \cap \dot{X}_{\eta_2,A} }
\end{align*}
For the second embedding we have by $A^\theta x \in D(A) \cap R(A)$, Proposition \ref{proposition:homspaceprop}\ref{it:homspaceprop3} and Proposition \ref{proposition:functionspaceclose}
\begin{align*}
  \nrm{x}_{\dot{X}_{\eta_1,A}+\dot{X}_{\eta_2,A}} &\lesssim \nrmb{\varphi(A)(A^\epsilon+A^{-\epsilon}+2I)x}_{\dot{X}_{\theta-\epsilon,A}+\dot{X}_{\theta+\epsilon,A}}\\
  &\leq  \nrm{\varphi(A)A^\epsilon x}_{\dot{X}_{\theta-\epsilon,A}} + \nrm{\varphi(A)A^{-\epsilon} x}_{\dot{X}_{\theta+\epsilon,A}}+2\nrm{\varphi(A) x}_{\dot{X}_{\theta,A}}
   \\&\lesssim \nrm{\psi(\cdot A)A^\theta x}_{\alpha(\R_+,\frac{\ddn t}{t};X)},
\end{align*}
which finishes the proof of the theorem.
\end{proof}

\subsection*{The sectorial operators $A|_{H^\alpha_{\theta,A}(\psi)}$ and their properties}
In the scale of fractional domain spaces $\dot{X}_{\theta,A}$ one can define a sectorial operator $A|_{\dot{X}_{\theta,A}}$ on $\dot{X}_{\theta,A}$ for $\theta \in \R$, which coincides with $A$ on $$D(A^{-\theta} A A^\theta) = \dot{X}_{\min\cbrace{\theta,0},A}\cap \dot{X}_{1+\max\cbrace{\theta,0},A},$$ see \cite[Proposition 15.24]{KW04}. We would like to have a similar situation for the spaces
$H^\alpha_{\theta,A}(\psi)$, which is the content of the following proposition.

\begin{proposition}\label{proposition:AdefHalpha}
  Let $A$ be a sectorial operator on $X$ and take $\eta_1<\theta <\eta_2$. Assume either of the following conditions:
  \begin{itemize}
    \item $\alpha$ is ideal and set $\omega_A:=\omega(A)$.
    \item $A$ is almost $\alpha$-sectorial and set $\omega_A:=\tilde{\omega}_\alpha(A)$.
  \end{itemize}
  Let $\omega_A<\sigma<\pi$ and $\psi \in H^1(\Sigma_\sigma)$. Then there is a sectorial operator ${A}|_{H^\alpha_{\theta,A}(\psi)}$ on $H_{\theta,A}^{\alpha}(\psi)$ with $\omega({A}|_{H^\alpha_{\theta,A}(\psi)})\leq  \omega_A$
satisfying
\begin{align*}
  {A}|_{H^\alpha_{\theta,A}(\psi)}x &= A x, &&x \in \dot{X}_{\min\cbrace{\eta_1,0},A}\cap \dot{X}_{1+\max\cbrace{\eta_2,0},A},
  \intertext{and for $\lambda \in \C\setminus \overline{\Sigma}_{\omega_A}$}
    R(\lambda,{A}|_{H^\alpha_{\theta,A}(\psi)})x &= R(\lambda,A) x, &&x \in \dot{X}_{\min\cbrace{\eta_1,0},A}\cap \dot{X}_{\max\cbrace{\eta_2,0},A}.
\end{align*}

\end{proposition}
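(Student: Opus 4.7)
The plan is to extend each resolvent $R(\lambda,A)$, for $\lambda\in\C\setminus\overline{\Sigma}_{\omega_A}$, from the dense subspace $D(A^m)\cap R(A^m)$ to a bounded operator $\tilde R_\lambda$ on $H_{\theta,A}^\alpha(\psi)$ satisfying a uniform sectoriality estimate $\|\lambda\tilde R_\lambda\|\leq C_\sigma$ for every $\omega_A<\sigma<\pi$ and $\lambda\in\C\setminus\overline{\Sigma}_\sigma$. By density the extensions inherit the resolvent identity, and combined with the limits $\lambda\tilde R_\lambda\xi\to\xi$ as $|\lambda|\to\infty$ and $\lambda\tilde R_\lambda\xi\to 0$ as $|\lambda|\to 0$ (along any subsector of $\C\setminus\overline{\Sigma}_{\omega_A}$) this will allow me to define $A|_{H_{\theta,A}^\alpha(\psi)}:=\lambda_0 I-\tilde R_{\lambda_0}^{-1}$ on $D(A|_{H_{\theta,A}^\alpha(\psi)}):=R(\tilde R_{\lambda_0})$ for any fixed $\lambda_0\in\C\setminus\overline{\Sigma}_{\omega_A}$, to verify that this definition is independent of $\lambda_0$ and that $\tilde R_\lambda=R(\lambda,A|_{H_{\theta,A}^\alpha(\psi)})$, and to conclude that the resulting operator is sectorial of angle at most $\omega_A$.

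The key estimate amounts to bounding, for $x_0\in D(A^m)\cap R(A^m)$,
\begin{equation*}
\|R(\lambda,A)x_0\|_{H_{\theta,A}^\alpha(\psi)}=\|\psi(\cdot A)A^\theta R(\lambda,A)x_0\|_{\alpha(\R_+,\frac{\ddn t}{t};X)}=\|R(\lambda,A)\psi(\cdot A)A^\theta x_0\|_{\alpha(\R_+,\frac{\ddn t}{t};X)},
\end{equation*}
where the second equality uses the commutativity of the Dunford calculus of $A$. In the first case ($\alpha$ ideal) the left-ideal property of $\alpha$ transfers to $\alpha(\R_+,\frac{\ddn t}{t};X)$, so I would apply it to the single operator $R(\lambda,A)$ to obtain a bound by $C\|R(\lambda,A)\|\,\|\psi(\cdot A)A^\theta x_0\|_\alpha\lesssim|\lambda|^{-1}\|x_0\|_{H_{\theta,A}^\alpha(\psi)}$ using the sectoriality of $A$. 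In the second case ($A$ almost $\alpha$-sectorial, with $\alpha$ possibly not ideal) the function $z\mapsto 1/(\lambda-z)$ lies in $H^\infty(\Sigma_\sigma)$ with norm $\lesssim|\lambda|^{-1}$ for $\tilde\omega_\alpha(A)<\sigma<|\arg\lambda|$; applying Proposition \ref{proposition:equivsquarefunctions} with $f(z)=1/(\lambda-z)$ and test vector $A^\theta x_0\in D(A)\cap R(A)$, and then using the equivalence part of the same proposition (the case $f\equiv 1$) to replace $\varphi$ by $\psi$ on the right hand side, yields the same bound.

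For the limiting behavior at $\infty$ I would use $\lambda R(\lambda,A)x_0-x_0=R(\lambda,A)Ax_0$ and the resolvent estimate applied to $Ax_0\in D(A^{m-1})\cap R(A^{m-1})$, which sits inside the space (as the definition of $H_{\theta,A}^\alpha(\psi)$ is independent of $m>|\theta|+1$); for the limit at $0$ I would write $\lambda R(\lambda,A)x_0=\lambda^2 R(\lambda,A)A^{-1}x_0-\lambda A^{-1}x_0$ with $A^{-1}x_0\in D(A^{m+1})\cap R(A^{m-1})$ and again apply the resolvent estimate. Uniform boundedness of $\lambda\tilde R_\lambda$ combined with density then propagates both limits to all of $H_{\theta,A}^\alpha(\psi)$; from these one reads off the injectivity of $\tilde R_{\lambda_0}$, the density of its range $R(\tilde R_{\lambda_0})=D(A|_{H_{\theta,A}^\alpha(\psi)})$, and, by applying the identity $A|_{H_{\theta,A}^\alpha(\psi)}\tilde R_\lambda\xi=\lambda\tilde R_\lambda\xi-\xi$ and letting $\lambda\to 0$, the density of the range of $A|_{H_{\theta,A}^\alpha(\psi)}$. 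The equality $\tilde R_\lambda=R(\lambda,A|_{H_{\theta,A}^\alpha(\psi)})$ is a direct computation with the resolvent identity. The stated identities $A|_{H_{\theta,A}^\alpha(\psi)}x=Ax$ and $R(\lambda,A|_{H_{\theta,A}^\alpha(\psi)})x=R(\lambda,A)x$ on the respective fractional domain intersections then follow by approximating $x$ by elements of $D(A^m)\cap R(A^m)$ via Proposition \ref{proposition:homspaceprop}\ref{it:homspaceprop1}, invoking the continuous embedding into $H_{\theta,A}^\alpha(\psi)$ from Theorem \ref{theorem:functionspaceclose}, and passing to the limit.

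The hard part is the resolvent estimate in the almost $\alpha$-sectorial case: without the ideal property of $\alpha$ the single operator $R(\lambda,A)$ need not be $\alpha$-bounded, so Theorem \ref{theorem:pointwisemultipliers1} cannot be applied to the constant multiplier $t\mapsto R(\lambda,A)$ in a direct way. The work-around is that Proposition \ref{proposition:equivsquarefunctions} already encodes exactly what is needed---boundedness of $f(A)$ on the generalized square-function norm for any $f\in H^\infty(\Sigma_\sigma)$---and the function $1/(\lambda-\cdot)$ is precisely of this type with $H^\infty$-norm of the correct order $|\lambda|^{-1}$, sidestepping the need for $\alpha$-boundedness of individual resolvents.
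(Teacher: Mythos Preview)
Your proposal is correct and follows essentially the same route as the paper: obtain the uniform resolvent bound on $D(A^m)\cap R(A^m)$ via the ideal property in the first case and via Proposition~\ref{proposition:equivsquarefunctions} with $f(z)=(\lambda-z)^{-1}$ in the second, extend by density, verify the resolvent identity and the limits at $0$ and $\infty$ to build a sectorial operator from the resolvent family, and then identify it with $A$ on the stated intersections. The only cosmetic differences are that the paper deduces the limits $\lambda R_A(\lambda)x\to x$ and $\lambda R_A(\lambda)x\to 0$ by first checking them in $\dot{X}_{\eta_1,A}\cap\dot{X}_{\eta_2,A}$ and invoking the embedding of Theorem~\ref{theorem:functionspaceclose} (rather than your direct computation with $Ax_0$ and $A^{-1}x_0$), and that it verifies $A|_{H^\alpha_{\theta,A}(\psi)}x=Ax$ by the algebraic trick $y=(I+A)x$, $R_A(-1)y=-x$ instead of approximation; both variants are equally valid.
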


\begin{proof}
Let $m \in \N$ such that $\abs{\theta}+1<m$. Either by the ideal property of $\alpha$ or by Proposition \ref{proposition:equivsquarefunctions} we have for $x \in D(A^m)\cap R(A^m)$ and $\omega_A<\nu<\pi$ that
\begin{equation}\label{eq:Bsect}
  \nrm{\lambda R(\lambda,A)x}_{H^\alpha_{\theta,A}(\psi)} \leq C_{\nu}\, \nrm{x}_{H^\alpha_{\theta,A}(\psi)}, \qquad \lambda \in \C\setminus \overline{\Sigma}_\nu.
\end{equation}
 Thus, since $D(A^m)\cap R(A^m)$ is dense in $H^\alpha_{\theta,A}(\psi)$, $R(\lambda,A)$ extends to a bounded operator $R_{A}(\lambda)$ on $H^\alpha_{\theta,A}(\psi)$ for $\lambda$ in the open sector
 $$ \Sigma:= \C\setminus \overline{\Sigma}_{\omega_A}.$$

We will construct ${A}|_{H^\alpha_{\theta,A}(\psi)}$ from $R_{A}$, as we also did in the proof of Theorem \ref{theorem:transference}. Note that for $x \in \dot{X}_{\eta_1,A}\cap \dot{X}_{\eta_2,A}$ we have
\begin{align*}
  \lim_{t \to \infty} \, \nrm{tR_{A}(-t)x+x}_{\dot{X}_{\eta_1,A}\cap \dot{X}_{\eta_2,A}} &=0\\
  \lim_{t \to 0}\, \nrm{tR_{A}(-t)x}_{\dot{X}_{\eta_1,A}\cap \dot{X}_{\eta_2,A}} &=0
\end{align*}
and thus, by density and one of the continuous embeddings in Theorem \ref{theorem:functionspaceclose}, we have for all $x \in H^\alpha_{\theta,A}(\varphi)$
\begin{align}
 \label{eq:sectB1} \lim_{t \to \infty} \, \nrm{tR_{A}(-t)x+x}_{H^\alpha_{\theta,A}(\psi)} &=0\\
  \label{eq:sectB2}  \lim_{t \to 0}\, \nrm{tR_{A}(-t)x}_{H^\alpha_{\theta,A}(\psi)} &=0
\end{align}
Using the density of $D(A^m)\cap R(A^m)$ in $H^\alpha_{\theta,A}(\psi)$ we also have the resolvent equation
\begin{align*}
  R_{A}(z) - R_{A}(w) &= (w-z)R_{A}(z)R_{A}(w), \qquad z,w \in \Sigma,
\end{align*}
which in particular implies that if $R_{A}(z)x=0$ for some $z \in \Sigma$ and $x \in H^\alpha_{\theta,A}(\psi)$, then $R_{A}(-t)x=0$ for all $t>0$, so $R_{A}(z)$ is injective by \eqref{eq:sectB1}.

We are now ready to define ${A}|_{H^\alpha_{\theta,A}(\psi)}$. As domain we take the range of $R_{A}(-1)$ and we define
\begin{equation*}
  {A}|_{H^\alpha_{\theta,A}(\psi)}\ha{R_{A}(-1)x}:=-x-R_{A}(-1)x, \qquad x \in H^\alpha_{\theta,A}(\psi).
\end{equation*}
Then by the resolvent equation we have $R(\lambda, {A}|_{H^\alpha_{\theta,A}(\psi)}) = R_{A}(\lambda)$ for $\lambda \in \Sigma$.
Furthermore $A|_{H^\alpha_{\theta,A}(\psi)}$ is injective, has dense domain by \eqref{eq:sectB1} and dense range by \eqref{eq:sectB2} (see \cite[Section III.4.a]{EN00}  and \cite[Proposition 10.1.7(3)]{HNVW17} for the details). So by \eqref{eq:Bsect} we can conclude that $A|_{H^\alpha_{\theta,A}(\psi)}$ is a sectorial operator with $\omega(A|_{H^\alpha_{\theta,A}(\psi)}) \leq \omega_A$.

To conclude take $x \in \dot{X}_{\min\cbrace{\eta_1,0},A}\cap \dot{X}_{1+\max\cbrace{\eta_2,0},A}$ and let $y :=(I+A)x$. Then $y \in H^\alpha_{\theta,A}(\psi) \cap X$ by the embeddings in Proposition \ref{proposition:homspaceprop} and Theorem \ref{theorem:functionspaceclose}
 and thus $R_A(-1)y=R(-1,A)y=-x$. Therefore we have
\begin{equation*}
  A|_{H^\alpha_{\theta,A}(\psi)}{x} = -x+y = Ax.
\end{equation*}
Similarly for
$x \in \dot{X}_{\min\cbrace{\eta_1,0},A}\cap \dot{X}_{\max\cbrace{\eta_2,0},A}$ we have $x \in H^\alpha_{\theta,A}(\psi) \cap X$ and therefore
$$R(\lambda, {A}|_{H^\alpha_{\theta,A}(\psi)})x= R_A(\lambda)x = R(\lambda,A)x, \qquad \lambda \in \Sigma,$$
which concludes the proof.
\end{proof}

If $A$ is almost $\alpha$-sectorial, then the spaces $H^\alpha_{\theta,A}(\psi)$ are all isomorphic to  $H^\alpha_{\theta,A}$ as noted before. In this case the spaces $H^\alpha_{\theta,A}$ and the operators ${A}|_{H^\alpha_{\theta,A}}$ have the following nice properties:

\begin{theorem}\label{theorem:Hproperties}
  Let $A$ be an almost $\alpha$-sectorial operator on $X$.
  \begin{enumerate}[(i)]
  \item \label{it:Hproperties1} $A|_{H^\alpha_{\theta,A}}$ has a bounded $H^\infty$-calculus on $H^\alpha_{\theta,A}$ with $\omega_{H^\infty}(A|_{H^\alpha_{\theta,A}}) \leq \tilde{\omega}_\alpha(A)$ for all $\theta \in \R$.
  \item \label{it:Hproperties2} If $H^\alpha_{\theta,A}=\dot{X}_{\theta,A}$ isomorphically for some $\theta \in \R$, then $A$ has a bounded $H^\infty$-calculus on $X$.
  \item \label{it:Hproperties3} If $A$ has a bounded $H^\infty$-calculus on $X$ and $\alpha$ is unconditionally stable, then $H^\alpha_{\theta,A}=\dot{X}_{\theta,A}$ for all $\theta \in \R$.
  \end{enumerate}
\end{theorem}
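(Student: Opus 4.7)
The plan is to handle the three parts separately, all using the generalized square function machinery of Section \ref{section:squarefunctions}.

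For part (i), I would fix $\tilde\omega_\alpha(A)<\sigma<\pi$ and use the defining function $\varphi(z)=z^{1/2}(1+z)^{-1}$. For $f\in H^1(\Sigma_\sigma)\cap H^\infty(\Sigma_\sigma)$ and $x\in D(A^m)\cap R(A^m)$ with $|\theta|+1<m$ (which is dense in $H^\alpha_{\theta,A}$), commutativity of the Dunford calculus with $A^\theta$ and $\varphi(tA)$ gives
\begin{equation*}
\nrm{f(A)x}_{H^\alpha_{\theta,A}}=\nrm{\varphi(\cdot A)A^\theta f(A)x}_{\alpha(\R_+,\frac{\ddn t}{t};X)}=\nrm{f(A)\varphi(\cdot A)A^\theta x}_{\alpha(\R_+,\frac{\ddn t}{t};X)},
\end{equation*}
and Proposition \ref{proposition:equivsquarefunctions} bounds this by $\nrm{f}_{H^\infty(\Sigma_\sigma)}\nrm{x}_{H^\alpha_{\theta,A}}$. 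Extension to arbitrary $f\in H^\infty(\Sigma_\sigma)$ is obtained via the net $(\varphi_n)$ of \eqref{eq:phin}, since $\sup_n\nrm{\varphi_n}_{H^\infty(\Sigma_\sigma)}<\infty$ yields uniform bounds on $(f\varphi_n)(A)$ on $H^\alpha_{\theta,A}$. Finally, since the resolvents of $A$ and $A|_{H^\alpha_{\theta,A}}$ agree on $D(A^m)\cap R(A^m)$ by Proposition \ref{proposition:AdefHalpha}, their Dunford integrals coincide there, identifying the extended operator with $f(A|_{H^\alpha_{\theta,A}})$; taking the infimum over $\sigma>\tilde\omega_\alpha(A)$ gives the angle estimate.

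For part (ii), the hypothesized isomorphism combined with part (i) yields a bounded $H^\infty$-calculus for $A|_{\dot X_{\theta,A}}$ on $\dot X_{\theta,A}$. The fractional power $A^\theta$ extends to an isometric isomorphism $\dot X_{\theta,A}\to X$ that conjugates $A|_{\dot X_{\theta,A}}$ to $A$ on $D(A^m)\cap R(A^m)$ for $m>|\theta|+1$, and the $H^\infty$-calculus transfers unchanged.

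For part (iii), Corollary \ref{corollary:equalangles}\ref{it:angle1} gives $\omega_{H^\infty}(A)\leq\tilde\omega_\alpha(A)$, and then Theorem \ref{theorem:calculustosuqare} applied with $\psi=\varphi$ and $\omega_{H^\infty}(A)<\sigma<\pi$ furnishes
\begin{equation*}
\nrm{y}_X\simeq\nrm{\varphi(\cdot A)y}_{\alpha(\R_+,\frac{\ddn t}{t};X)},\qquad y\in D(A)\cap R(A).
\end{equation*}
For $x\in D(A^m)\cap R(A^m)$ with $|\theta|+1<m$, the element $A^\theta x$ lies in $D(A)\cap R(A)$, so substituting $y=A^\theta x$ yields
\begin{equation*}
\nrm{x}_{H^\alpha_{\theta,A}}=\nrm{\varphi(\cdot A)A^\theta x}_{\alpha(\R_+,\frac{\ddn t}{t};X)}\simeq\nrm{A^\theta x}_X=\nrm{x}_{\dot X_{\theta,A}},
\end{equation*}
an equivalence on a common dense subspace that lifts to the completions.

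The main obstacle will be the bookkeeping in part (i): correctly identifying the extension of the Dunford calculus of $A$ from $D(A^m)\cap R(A^m)$ to $H^\alpha_{\theta,A}$ with the abstract $H^\infty$-calculus of $A|_{H^\alpha_{\theta,A}}$, and pushing through the standard regularisation via $(\varphi_n)$; the remaining steps are fairly direct applications of Propositions \ref{proposition:equivsquarefunctions}--\ref{proposition:AdefHalpha} and Theorem \ref{theorem:calculustosuqare}.
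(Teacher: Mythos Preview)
Your proposal is correct and follows essentially the same approach as the paper: for (i) you use Proposition \ref{proposition:equivsquarefunctions} on $A^\theta x$ together with the identification of $f(A)$ and $f(A|_{H^\alpha_{\theta,A}})$ from Proposition \ref{proposition:AdefHalpha}, for (ii) you transport the calculus via the isometry $A^\theta\colon \dot X_{\theta,A}\to X$, and for (iii) you apply Theorem \ref{theorem:calculustosuqare} to $y=A^\theta x$ and pass to completions. The only superfluous steps are your regularisation via $\varphi_n$ in (i) (Proposition \ref{proposition:equivsquarefunctions} already covers all $f\in H^\infty(\Sigma_\sigma)$) and the appeal to Corollary \ref{corollary:equalangles} in (iii) (since $\varphi\in H^1(\Sigma_\sigma)$ for every $0<\sigma<\pi$, any $\sigma>\omega_{H^\infty}(A)$ works directly).
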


\begin{proof}
  Fix $\theta \in \R$ and $m \in \N$ with $\abs{\theta}+1<m$. Let $x \in D(A^m)\cap R(A^m)$, then by Proposition \ref{proposition:AdefHalpha} we know $f(A)x=f(A|_{H^\alpha_{\theta,A}})x$ for $f \in H^\infty(\Sigma_\sigma)$ with $\tilde{\omega}(A)<\sigma<\pi$, so by Proposition \ref{proposition:equivsquarefunctions} we have
  \begin{align*}
    \nrm{f(A|_{H^\alpha_{\theta,A}})x}_{H^\alpha_{\theta,A}} &=\nrm{f(A)\varphi(tA) A^\theta x}_{\alpha(\R_+,\frac{\ddn t}{t};X)} \\&\lesssim \nrm{\varphi(tA) A^\theta x}_{\alpha(\R_+,\frac{\ddn t}{t};X)} =  \nrm{x}_{H^\alpha_{\theta,A}}
  \end{align*}
  with  $\varphi(z) := z^{1/2}(1+z)^{-1}$. Now \ref{it:Hproperties1} follows by the density of $D(A^m)\cap R(A^m)$ in $H^\alpha_{\theta,A}$. For  \ref{it:Hproperties2} we set $y = A^{-\theta} x$ and estimate
  \begin{align*}
    \nrm{f(A)x}_X = \nrm{f(A)y}_{\dot{X}_{\theta,A}} \simeq \nrm{f(A)y}_{H^\alpha_{\theta,A}} \lesssim \nrm{y}_{H^\alpha_{\theta,A}} \simeq \nrm{y}_{\dot{X}_{\theta,A}} = \nrm{x}_X,
  \end{align*}
   from which the claim follows by density.
   Finally \ref{it:Hproperties3} is an immediate consequence of Theorem \ref{theorem:calculustosuqare} and another density argument.
\end{proof}

\subsection*{Interpolation of  square function spaces}
We will now show that there is a rich interpolation theory of the $H^\alpha_{\theta,A}$-spaces.
First of all we note that $H^\alpha_{\theta,A}$ is the fractional domain space of order $\theta$ of the operator $A|_{H^\alpha_{0,A}}$ on $H^\alpha_{0,A}$ and $A|_{H^\alpha_{0,A}}$ has a bounded $H^\infty$-calculus, and thus in particular $\BIP$, by Theorem \ref{theorem:Hproperties}. Therefore it follows from \cite[Theorem 15.28]{KW04} that $H^\alpha_{\theta,A}$ is an interpolation scale for the complex method. We record this observation in the following theorem.

\begin{theorem}\label{theorem:interpolateHcomplex}
  Let $A$ be an almost $\alpha$-sectorial operator on $X$. Let
  $\theta_0,\theta_1 \in \R$, $0<\eta<1$ and $\theta = (1-\eta)\theta_0+\eta\theta_1$.
   Then
   \begin{align*}
     H^\alpha_{\theta,A}&= [H^\alpha_{\theta_0,A},H^\alpha_{\theta_1,A}]_{\eta}
   \end{align*}
   isomorphically.
\end{theorem}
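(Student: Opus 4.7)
The plan is to reduce the claim to the known fact that the homogeneous fractional domain scale of an operator with bounded imaginary powers is an interpolation scale for the complex method (see \cite[Theorem 15.28]{KW04}). The key intermediate step will be to identify each $H^\alpha_{\theta,A}$ with the homogeneous fractional domain space of order $\theta$ of a single, well-behaved sectorial operator, namely $B := A|_{H^\alpha_{0,A}}$ acting on $H^\alpha_{0,A}$.

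First I would fix $\psi(z) = z^{1/2}(1+z)^{-1}$ so that $H^\alpha_{\theta,A} = H^\alpha_{\theta,A}(\psi)$ and observe, using Proposition \ref{proposition:AdefHalpha} applied to $B$ on $Y := H^\alpha_{0,A}$, that $B$ is sectorial on $Y$ with $\omega(B) \leq \tilde\omega_\alpha(A)$ and that $B$ coincides with (the appropriate extension of) $A$ on $D(A^m)\cap R(A^m)$ for all sufficiently large $m$. By Theorem \ref{theorem:Hproperties}\ref{it:Hproperties1}, $B$ has a bounded $H^\infty$-calculus on $Y$ with $\omega_{H^\infty}(B) \leq \tilde\omega_\alpha(A)$, so in particular $B$ has $\BIP$ with $\omega_{\BIP}(B) < \pi$. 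Applying Proposition \ref{proposition:equivsquarefunctions} to $B$ on $Y$ with the test function $\psi$, we then get, for $x \in D(A^m)\cap R(A^m)$ with $m > |\theta|+1$, the identity $B^\theta x = A^\theta x$ and the norm equivalence
\begin{equation*}
\|x\|_{\dot{Y}_{\theta,B}} = \|B^\theta x\|_Y = \|\psi(\cdot B) B^\theta x\|_{\alpha(\R_+,\frac{dt}{t};Y)} \simeq \|\psi(\cdot A) A^\theta x\|_{\alpha(\R_+,\frac{dt}{t};X)} = \|x\|_{H^\alpha_{\theta,A}}.
\end{equation*}
Since $D(A^m)\cap R(A^m)$ is a common core, this gives an isomorphism $H^\alpha_{\theta,A} \cong \dot{Y}_{\theta,B}$ for every $\theta \in \R$.

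Having reduced the statement to an assertion about the homogeneous fractional domain scale of the single operator $B$ on $Y$, I would conclude by invoking \cite[Theorem 15.28]{KW04}: since $B$ has $\BIP$, for any $\theta_0, \theta_1 \in \R$ and $0<\eta<1$ with $\theta = (1-\eta)\theta_0 + \eta\theta_1$ one has
\begin{equation*}
[\dot{Y}_{\theta_0,B}, \dot{Y}_{\theta_1,B}]_\eta = \dot{Y}_{\theta,B}
\end{equation*}
isomorphically. Combining this with the identifications from the previous step yields the desired equality $[H^\alpha_{\theta_0,A}, H^\alpha_{\theta_1,A}]_\eta = H^\alpha_{\theta,A}$.

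The main technical obstacle is the first step: one must carefully verify that the sectorial operator $B = A|_{H^\alpha_{0,A}}$ really plays the role of a ``realisation of $A$'' on the whole scale, i.e. that $B^\theta$ on $Y$ agrees with $A^\theta$ on a dense subspace common to all $H^\alpha_{\theta,A}$ and that the $\alpha$-square function expressions for $B$ and for $A$ are comparable. This requires a density argument together with a single application of Proposition \ref{proposition:equivsquarefunctions} to the almost $\alpha$-sectorial operator $B$, using that its $\alpha$-sectoriality angle is at most $\tilde\omega_\alpha(A)$ so that $\psi \in H^1(\Sigma_\sigma)$ is an admissible test function for both $A$ and $B$ for the same range of $\sigma$.
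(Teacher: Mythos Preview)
Your strategy is exactly the paper's: the paragraph preceding the theorem already records that $H^\alpha_{\theta,A}$ is the homogeneous fractional domain space of order $\theta$ of $B:=A|_{H^\alpha_{0,A}}$ on $Y:=H^\alpha_{0,A}$, that $B$ has a bounded $H^\infty$-calculus (hence $\BIP$) by Theorem~\ref{theorem:Hproperties}, and then invokes \cite[Theorem 15.28]{KW04}.

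However, your displayed chain of equalities takes an unnecessary and incorrect detour. The second equality
\[
\|B^\theta x\|_Y \;=\; \|\psi(\cdot\,B)\,B^\theta x\|_{\alpha(\R_+,\frac{\ddn t}{t};Y)}
\]
is not the definition of the $Y$-norm: it introduces an iterated square function in $\alpha(\R_+;Y)$, and to make sense of Proposition~\ref{proposition:equivsquarefunctions} for $B$ on $Y$ you would need a Euclidean structure on $Y$ and almost $\alpha$-sectoriality of $B$ with respect to it, none of which has been set up. The identification is much more direct. By definition of $Y=H^\alpha_{0,A}$ one has $\|y\|_Y=\|\psi(\cdot\,A)y\|_{\alpha(\R_+,\frac{\ddn t}{t};X)}$, and on $D(A^m)\cap R(A^m)$ one has $B^\theta x=A^\theta x$ (via Proposition~\ref{proposition:AdefHalpha} and the extended Dunford calculus). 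Hence
\[
\|x\|_{\dot Y_{\theta,B}}=\|B^\theta x\|_Y=\|\psi(\cdot\,A)A^\theta x\|_{\alpha(\R_+,\frac{\ddn t}{t};X)}=\|x\|_{H^\alpha_{\theta,A}},
\]
and density gives $H^\alpha_{\theta,A}\cong \dot Y_{\theta,B}$. Drop the middle term and the appeal to Proposition~\ref{proposition:equivsquarefunctions}; then your argument is complete and coincides with the paper's.
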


Our main interpolation result will be the interpolation of the fractional domain spaces using the
 the $\alpha$-interpolation method developed in Section \ref{section:interpolation}. We will show that this yields exactly the spaces $H^\alpha_{\theta,A}$. In \cite[Section 7]{KKW06} this result was already implicitly shown for the Rademacher interpolation method, which is connected to the $\gamma$-interpolation method by Proposition \ref{proposition:alpharealmethoddiscrete}.

We know that $A|_{H^\alpha_{\theta,A}}$ has a bounded $H^\infty$-calculus on $H^\alpha_{\theta,A}$ by Theorem \ref{theorem:Hproperties}. Therefore one can view the following theorem as an $\alpha$-interpolation version of the theorem of Dore, which states that $A$ always has a bounded $H^\infty$-calculus on the real interpolation spaces $(\dot{X}_{\theta_0,A}, \dot{X}_{\theta_1,A})_{\eta,q}$ for $q \in [1,\infty]$ (see \cite{Do99} and its generalizations in \cite{Do01,Ha06b, KK10}).

\begin{theorem}\label{theorem:interpolateH}
  Let $A$ be an almost $\alpha$-sectorial operator on $X$. Let
  $\theta_0,\theta_1 \in \R$, $0<\eta<1$ and $\theta = (1-\eta)\theta_0+\eta\theta_1$.
   Then
   \begin{align*}
     H^\alpha_{\theta,A}&= (\dot{X}_{\theta_0,A}, \dot{X}_{\theta_1,A})_{\eta}^\alpha
   \end{align*}
   isomorphically.
\end{theorem}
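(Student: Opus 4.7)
The plan is to verify the two continuous embeddings between $H^\alpha_{\theta,A}$ and $(\dot X_{\theta_0,A},\dot X_{\theta_1,A})^\alpha_\eta$ on the dense subspace $D(A^m)\cap R(A^m)$ for $m$ large, combining the independence of $H^\alpha_{\theta,A}(\psi)$ of the choice of $\psi\in H^1(\Sigma_\sigma)$ (Proposition \ref{proposition:equivsquarefunctions}) with the representation of the $\alpha$-interpolation norm from Proposition \ref{proposition:alpharealmethod}. The latter states that every $x$ in the interpolation space admits a representation $x=\int_0^\infty t^\eta f(t)\frac{\ddn t}{t}$ with $h_j(t):=t^jA^{\theta_j}f(t)\in\alpha(\R_+,\frac{\ddn t}{t};X)$ for $j=0,1$, and the norm equals the infimum of $\max_{j=0,1}\|h_j\|_{\alpha(\R_+,\frac{\ddn t}{t};X)}$ over such $f$. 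After reducing to the model case $\theta_0=0,\theta_1=1,\theta=\eta$ (as explained in the final paragraph), the argument proceeds in two symmetric steps.

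For the embedding $(X,\dot X_{1,A})^\alpha_\eta\hookrightarrow H^\alpha_{\eta,A}$, I would pick $\psi\in H^1(\Sigma_\sigma)$ with enough decay at $0$ and $\infty$ (e.g.\ $\psi(z)=z^m(1+z)^{-2m}$ for $m$ large) so that $\tilde\psi_0(z):=z^\eta\psi(z)$ and $\tilde\psi_1(z):=z^{\eta-1}\psi(z)$ both lie in $H^1(\Sigma_\sigma)$. Splitting the integral representation of $x$ at $t=s$ and using $\psi(sA)A^\eta=s^{-\eta}\tilde\psi_0(sA)$ and $\psi(sA)A^{\eta-1}=s^{1-\eta}\tilde\psi_1(sA)$, the identity
\begin{equation*}
\psi(sA)A^\eta x=s^{-\eta}\tilde\psi_0(sA)\int_0^s t^\eta h_0(t)\tfrac{\ddn t}{t}+s^{1-\eta}\tilde\psi_1(sA)\int_s^\infty t^{\eta-1}h_1(t)\tfrac{\ddn t}{t}
\end{equation*}
follows. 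The two scalar integral operators are Mellin convolutions with kernels $v\mapsto v^\eta\ind_{v<1}$ and $v\mapsto v^{\eta-1}\ind_{v>1}$ in $v=t/s$, with finite $L^1(\R_+,\frac{\ddn v}{v})$-norms $1/\eta$ and $1/(1-\eta)$ since $0<\eta<1$; Lemma \ref{lemma:productconvolution} thus gives their boundedness on $L^2(\R_+,\frac{\ddn t}{t})$, extending to $\alpha(\R_+,\frac{\ddn t}{t};X)$ by Proposition \ref{proposition:operatoronfunctions}. The families $\cbrace{\tilde\psi_j(sA):s>0}$ are $\alpha$-bounded by Proposition \ref{proposition:almostsectorialcharacterization} (using that $A$ is almost $\alpha$-sectorial), so the $\alpha$-multiplier theorem (Theorem \ref{theorem:pointwisemultipliers1}) gives $\|\psi(\cdot A)A^\eta x\|_{\alpha(\R_+,\frac{\ddn s}{s};X)}\lesssim\|h_0\|_\alpha+\|h_1\|_\alpha$; taking the infimum over all $f$ yields the embedding.

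For the reverse embedding $H^\alpha_{\eta,A}\hookrightarrow(X,\dot X_{1,A})^\alpha_\eta$, I would choose $\psi,\phi\in H^1(\Sigma_\sigma)$ with enough decay so that $\Phi_0(z):=z^{-\eta}\psi(z)\phi(z)$ and $\Phi_1(z):=z^{1-\eta}\psi(z)\phi(z)$ both lie in $H^1(\Sigma_\sigma)$, normalized so that $\int_0^\infty\psi(t)\phi(t)\frac{\ddn t}{t}=1$. The Calder\'on reproducing formula \eqref{eq:calderonrepnontrivial} gives $x=\int_0^\infty\psi(tA)\phi(tA)x\frac{\ddn t}{t}$ for $x\in D(A^m)\cap R(A^m)$, so setting $f(t):=t^{-\eta}\psi(tA)\phi(tA)x$ produces the required representation. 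The multiplicativity of the extended Dunford calculus yields $h_0(t)=\Phi_0(tA)A^\eta x$ and $h_1(t)=\Phi_1(tA)A^\eta x$, and Proposition \ref{proposition:equivsquarefunctions} compares the $\alpha(\R_+,\frac{\ddn t}{t};X)$-norms of $\Phi_j(\cdot A)A^\eta x$ with that of $\psi(\cdot A)A^\eta x$, giving $\|h_j\|_\alpha\lesssim\|x\|_{H^\alpha_{\eta,A}}$ for $j=0,1$. Proposition \ref{proposition:alpharealmethod} then yields the bound $\|x\|_{(X,\dot X_{1,A})^\alpha_\eta}\lesssim\|x\|_{H^\alpha_{\eta,A}}$, and density completes the argument.

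The main obstacle is the reduction of the general case $(\theta_0,\theta_1)$ to the model case $(0,1)$: since the weight exponents $0$ and $1$ are hardcoded into the definition of $\mc{A}$ for the $\alpha$-interpolation method, matching them to the fractional-power exponents $\theta_0,\theta_1$ requires passing through $B:=A^{\theta_1-\theta_0}$ via the auxiliary element $y:=A^{\theta_0}x$. Concretely, $A^{\theta_0}$ is an isometric isomorphism $\dot X_{\theta_0+s,A}\to\dot X_{s,A}$ for every $s\in\R$ and an isomorphism $H^\alpha_{\theta_0+s,A}\to H^\alpha_{s,A}$ by the multiplicativity of the extended functional calculus, so by Proposition \ref{proposition:interpolation} the map $x\mapsto y$ identifies $(\dot X_{\theta_0,A},\dot X_{\theta_1,A})^\alpha_\eta$ with $(X,\dot X_{\theta_1-\theta_0,A})^\alpha_\eta$ and $H^\alpha_{\theta,A}$ with $H^\alpha_{\eta(\theta_1-\theta_0),A}$. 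The Mellin substitution $s=t^{\theta_1-\theta_0}$, which is an isomorphism on the $\alpha$-function spaces by Proposition \ref{proposition:operatoronfunctions}, together with $\dot X_{\theta_1-\theta_0,A}=\dot X_{1,B}$ (identical norms) and the equivalence $H^\alpha_{\eta(\theta_1-\theta_0),A}\simeq H^\alpha_{\eta,B}$ (via Proposition \ref{proposition:equivsquarefunctions} applied to $\tilde\psi(z):=\psi(z^{\theta_1-\theta_0})$), then reduces everything to the model case applied to $B$. When $\theta_1-\theta_0\ge\pi/\omega(A)$, $B$ itself fails to be sectorial and one must first pass through $A^{(\theta_1-\theta_0)/N}$ for $N$ large enough that this power is in the sectorial range, iterating the reduction.
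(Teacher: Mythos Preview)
Your model-case argument for $(\theta_0,\theta_1)=(0,1)$ is correct and uses the same ingredients as the paper: the Calder\'on reproducing formula for one direction, and a splitting combined with Proposition~\ref{proposition:almostsectorialcharacterization} and the $\alpha$-multiplier theorem for the other. Your Mellin-convolution splitting at $t=s$ is a slight variant of the paper's substitution $t\mapsto st^{\theta_1-\theta_0}$ in the $s$-integral, but both arrive at the same estimates.

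The gap is in your reduction step. Forming $B=A^{\theta_1-\theta_0}$ requires $|\theta_1-\theta_0|<\pi/\tilde\omega_\alpha(A)$, and your proposed iteration through $C=A^{(\theta_1-\theta_0)/N}$ does not escape this: applying the model case to $C$ with endpoints $(0,N)$ leads right back to the constraint $N\,\omega(C)=(\theta_1-\theta_0)\,\omega(A)<\pi$. No reiteration theorem for the $\alpha$-method is available here to bootstrap from small gaps to large ones. The paper avoids the issue entirely by never forming a power of $A$: it keeps the operator $A$ fixed and instead absorbs the exponent $\theta_1-\theta_0$ into a change of variable in the $t$-integral, taking $f(t)=\frac{t^{-\eta}}{\theta_1-\theta_0}\psi(t^{1/(\theta_1-\theta_0)}A)x$ for the explicit representation and substituting $t\mapsto st^{\theta_1-\theta_0}$ in the converse estimate. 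Your own Mellin-convolution argument adapts to general $(0,\tau)$ in the same way: after the substitution $u=t^{1/\tau}$ the kernels become $v^{\eta\tau}\ind_{v<1}$ and $v^{(\eta-1)\tau}\ind_{v>1}$, both in $L^1(\R_+,\frac{\ddn v}{v})$, and no power of $A$ is ever formed. With that modification your argument goes through for all $\theta_0,\theta_1$.
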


\begin{proof}
 Assume without loss of generality that $\theta_1>\theta_0$, take $x \in D(A^m)\cap R(A^m)$ for $m \in \N$ with $\abs{\theta}+1<m$ and fix $\tilde{\omega}_\alpha(A)<\sigma<\pi$. Let $\psi \in H^1(\Sigma_\sigma)$ be such that $\int_0^\infty \psi(t)\frac{\dd t}{t}=1$ and
 $$\psi_j:=\hab{z \mapsto z^{\theta_j-\theta}\psi(z)} \in H^1(\Sigma_\sigma),\qquad j=0,1.$$
 First consider the strongly measurable function $f:\R_+ \to D(A^m)\cap R(A^m)$ given by
 \begin{equation*}
   f(t) = \frac{ t^{-\eta}}{\theta_1-\theta_0} \psi\hab{t^{\frac{1}{\theta_1-\theta_0}}A}x, \qquad t \in \R_+
 \end{equation*}
 Then, by \eqref{eq:calderonrepnontrivial} and a change of variables, we have $\int_0^\infty t^\eta f(t)\frac{\ddn t}{t} = x$ and thus, by  Proposition \ref{proposition:alpharealmethod} and Proposition \ref{proposition:equivsquarefunctions}, we have
 \begin{align*}
   \nrm{x}_{(\dot{X}_{\theta_0,A}, \dot{X}_{\theta_1,A})_{\eta}^\alpha} &\leq  \max_{j=0,1}\,\nrm{t \mapsto t^jf(t)}_{\alpha(\R_+,\frac{\ddn t}{t};\dot{X}_{\theta_j,A})}\\
   &=  \max_{j=0,1}\,\nrmb{t \mapsto \frac{ 1}{\theta_1-\theta_0} \psi_j(t^{\frac{1}{\theta_1-\theta_0}}A)A^\theta x}_{\alpha(\R_+,\frac{\ddn t}{t};X)}\\
   &\simeq \nrm{x}_{H^\alpha_{\theta,A}}.
 \end{align*}

 Conversely, take  a strongly measurable function  $f\colon\R_+\to D(A^m)\cap R(A^m)$ such that $t\mapsto t^jf(t) \in \alpha(\R_+,\frac{\ddn t}{t};\dot{X}_{\theta_j,A})$ for $j=0,1$ and
$
   \int_{0}^\infty t^\eta f(t) \frac{\ddn t}{t} = x.
$
 Let $\varphi \in H^1(\Sigma_\sigma)$ be such that
 $$\varphi_j:=\hab{z \mapsto z^{\theta-\theta_j}\varphi(z)} \in H^1(\Sigma_\sigma),\qquad j=0,1.$$
Then, since $A$ is almost $\alpha$-sectorial, we have by Proposition \ref{proposition:equivsquarefunctions}, Proposition \ref{proposition:almostsectorialcharacterization} and Theorem \ref{theorem:pointwisemultipliers1} that
\begingroup
\allowdisplaybreaks
 \begin{align*}
   \nrm{x}_{H^\alpha_{\theta,A}} &\simeq \nrmb{\varphi(tA)A^\theta  \int_{0}^\infty (st^{\theta_1-\theta_0})^{\eta } f(st^{\theta_1-\theta_0}) \frac{\ddn s}{s}}_{\alpha(\R_+,\frac{\ddn t}{t};X)}
   \\
   &\lesssim \int_{0}^1 s^{\eta} \nrmb{{\varphi}_0(tA) A^{\theta_0}f(st^{\theta_1-\theta_0}) }_{\alpha(\R_+,\frac{\ddn t}{t};X)} \frac{\ddn s}{s}\\
   &\hspace{1cm} + \int^{\infty}_1 s^{\eta} \nrmb{{\varphi}_1(tA) t^{(\theta_1-\theta_0)} A^{\theta_1}f(st^{\theta_1-\theta_0}) }_{\alpha(\R_+,\frac{\ddn t}{t};X)} \frac{\ddn s}{s}\\
   &\lesssim \int_{0}^1 s^{\eta} \nrmb{f(st^{\theta_1-\theta_0}) }_{\alpha(\R_+,\frac{\ddn t}{t};\dot{X}_{\theta_0,A})} \frac{\ddn s}{s}\\
   &\hspace{1cm} + \int^{\infty}_1 s^{(\eta-1)} \nrmb{s t^{{\theta_1-\theta_0}} f(st^{\theta_1-\theta_0}) }_{\alpha(\R_+,\frac{\ddn t}{t};\dot{X}_{\theta_1,A})} \frac{\ddn s}{s}\\
   &\lesssim \max_{j=0,1}\,\nrm{t \mapsto t^jf(t)}_{ \alpha(\R_+,\frac{\ddn t}{t};\dot{X}_{\theta_j,A})}.
 \end{align*}
 \endgroup
 Taking the infimum over all such $f$ we obtain by Proposition \ref{proposition:alpharealmethod}
\begin{equation*}
  \nrm{x}_{H^\alpha_{\theta,A}}\lesssim\nrm{x}_{(\dot{X}_{\theta_0,A}, \dot{X}_{\theta_1,A})_{\eta}^\alpha},
\end{equation*}
so the norms of $H^\alpha_{\theta,A}$ and $(\dot{X}_{\theta_0,A}, \dot{X}_{\theta_1,A})_{\eta}^\alpha$ are equivalent on  $D(A^m)\cap R(A^m)$. As $D(A^m)\cap R(A^m)$ is dense in both spaces, this proves the theorem.
\end{proof}

 In \cite[Theorem 5.3]{AMN97} Auscher, McIntosh and Nahmod proved that a sectorial operator $A$ on a Hilbert space $H$ has a bounded $H^\infty$-calculus if and only if the fractional domain spaces of $A$ form a interpolation scale for the complex method. As a direct corollary of Theorem \ref{theorem:Hproperties} and Theorem \ref{theorem:interpolateH} we can now deduce a similar  characterization of the boundedness of the $H^\infty$-calculus of a sectorial operator on a Banach space in terms of the $\alpha$-interpolation method.

\begin{corollary}\label{corollary:interptoHinfty}
  Let $A$ an almost $\alpha$-sectorial operator on $X$ and suppose that $\alpha$ is unconditionally stable. Then $A$ has a bounded $H^\infty$-calculus if and only if
  $$\dot{X}_{\theta,A} = (\dot{X}_{\theta_0,A}, \dot{X}_{\theta_1,A})_{\eta}^\alpha$$
   for some $\theta_0,\theta_1 \in \R$, $0<\eta<1$ and $\theta = (1-\eta)\theta_0+\eta\theta_1$.
\end{corollary}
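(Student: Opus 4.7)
The statement is essentially a direct combination of the two preceding results, Theorem \ref{theorem:Hproperties} and Theorem \ref{theorem:interpolateH}, so the plan is to show that each direction reduces to invoking one half of each. Fix $\theta_0,\theta_1 \in \R$, $0<\eta<1$, and $\theta=(1-\eta)\theta_0+\eta\theta_1$. The key intermediate object is the generalized square function space $H^{\alpha}_{\theta,A}$, which by Theorem \ref{theorem:interpolateH} coincides isomorphically with the $\alpha$-interpolation space
\[
  H^{\alpha}_{\theta,A} = (\dot{X}_{\theta_0,A},\dot{X}_{\theta_1,A})^{\alpha}_{\eta}
\]
for \emph{any} almost $\alpha$-sectorial $A$ (no unconditional stability required). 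This identification is what allows us to bridge the fractional domain spaces and the generalized square function spaces.

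For the ``only if'' direction, assume $A$ has a bounded $H^\infty$-calculus. Since $\alpha$ is unconditionally stable, Theorem \ref{theorem:Hproperties}\ref{it:Hproperties3} gives $H^{\alpha}_{\theta,A}=\dot{X}_{\theta,A}$ isomorphically. Combining with the interpolation identity from Theorem \ref{theorem:interpolateH} yields $\dot{X}_{\theta,A}=(\dot{X}_{\theta_0,A},\dot{X}_{\theta_1,A})^{\alpha}_{\eta}$. This direction works for any such choice of $\theta_0,\theta_1,\eta$.

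For the ``if'' direction, suppose that the isomorphism $\dot{X}_{\theta,A}=(\dot{X}_{\theta_0,A},\dot{X}_{\theta_1,A})^{\alpha}_{\eta}$ holds for some admissible choice. Applying Theorem \ref{theorem:interpolateH} to the right-hand side we obtain $\dot{X}_{\theta,A}=H^{\alpha}_{\theta,A}$ isomorphically. Theorem \ref{theorem:Hproperties}\ref{it:Hproperties2} then immediately provides the boundedness of the $H^\infty$-calculus of $A$ on $X$. Note that unconditional stability of $\alpha$ is not needed for this direction, which is consistent with the asymmetry between parts \ref{it:Hproperties2} and \ref{it:Hproperties3} of Theorem \ref{theorem:Hproperties}.

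There is no real obstacle here beyond correctly bookkeeping which of the two directions of Theorem \ref{theorem:Hproperties} is being invoked; all the heavy lifting (the $\alpha$-multiplier theorem, the generalized square function equivalences, the $\alpha$-interpolation identification of $H^{\alpha}_{\theta,A}$) has already been done in the preceding results. The proof is therefore a two-line citation chain in each direction.
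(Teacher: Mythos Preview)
Your proof is correct and matches the paper's approach exactly: the paper states this corollary as a direct consequence of Theorem \ref{theorem:Hproperties} and Theorem \ref{theorem:interpolateH} without spelling out the argument, and your two-line citation chain in each direction is precisely what is intended.
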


In \cite{KKW06} perturbation theory for $H^\infty$-calculus is developed using the Rademacher interpolation method, which is equivalent to the $\gamma$-interpolation method on spaces with finite cotype by Proposition \ref{proposition:alpharealmethoddiscrete} and Proposition \ref{proposition:gaussianradermacherl2comparison}. Naturally, these results can also be generalized to the Euclidean structures framework. In particular, let us prove a version of \cite[Theorem 5.1]{KKW06} in our framework. We leave the extension of the other perturbation results from \cite{KKW06} (see also \cite{Ka07, KW13,KW17}) to the interested reader.

\begin{corollary}
Let $A$ be an almost $\alpha$-sectorial operator on $X$ and suppose that $\alpha$ is unconditionally stable. Suppose that $A$ has a bounded $H^\infty$-calculus and $B$ is almost $\alpha$-sectorial. Assume that for two different, non-zero $\theta_0,\theta_1 \in \R$ we have
\begin{equation*}
  \dot{X}_{\theta_j,A} = \dot{X}_{\theta_j,B}, \qquad j=0,1.
\end{equation*}
Then $B$ has a bounded $H^\infty$-calculus.
\end{corollary}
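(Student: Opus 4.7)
The plan is to verify the hypothesis of Theorem \ref{theorem:Hproperties}\ref{it:Hproperties2} for $B$, namely to exhibit an index $m\in\R$ at which $H^\alpha_{m,B}=\dot{X}_{m,B}$ isomorphically. The assumption that $\theta_0,\theta_1$ are distinct and both non-zero guarantees that the set $\{0,\theta_0,\theta_1\}$ consists of three distinct reals; write them in order as $\ell<m<u$, so that $m=(1-\eta)\ell+\eta u$ for a unique $\eta\in(0,1)$. At each of $0,\theta_0,\theta_1$ the homogeneous fractional domain spaces of $A$ and $B$ agree---by hypothesis at $\theta_0$ and $\theta_1$, and trivially at $0$ since $\dot{X}_{0,A}=X=\dot{X}_{0,B}$---so in particular this holds at all three of $\ell$, $u$, and $m$.

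Next I would invoke Theorem \ref{theorem:interpolateH}, available because both $A$ and $B$ are almost $\alpha$-sectorial, once for $A$ and once for $B$ with endpoints $\ell,u$ and interpolation parameter $\eta$, to obtain
$$H^\alpha_{m,A}=(\dot{X}_{\ell,A},\dot{X}_{u,A})^\alpha_{\eta},\qquad H^\alpha_{m,B}=(\dot{X}_{\ell,B},\dot{X}_{u,B})^\alpha_{\eta}.$$
Since the two interpolation couples on the right coincide, this yields $H^\alpha_{m,A}=H^\alpha_{m,B}$ isomorphically. Because $A$ has a bounded $H^\infty$-calculus and $\alpha$ is unconditionally stable, Theorem \ref{theorem:Hproperties}\ref{it:Hproperties3} supplies $H^\alpha_{m,A}=\dot{X}_{m,A}$, and combining this with the identification $\dot{X}_{m,A}=\dot{X}_{m,B}$ (available because $m\in\{0,\theta_0,\theta_1\}$) gives the desired equality $H^\alpha_{m,B}=\dot{X}_{m,B}$. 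Theorem \ref{theorem:Hproperties}\ref{it:Hproperties2} applied to $B$ then delivers its bounded $H^\infty$-calculus.

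The only point where I expect to need any care is the bookkeeping around the three indices: exploiting the ``free'' identification $\dot{X}_{0,A}=\dot{X}_{0,B}=X$ alongside the hypothesised identifications at $\theta_0$ and $\theta_1$ is what ensures the middle index $m$ is simultaneously (i) strictly between two other indices, so that Theorem \ref{theorem:interpolateH} applies with $\eta\in(0,1)$, and (ii) itself one of the indices at which $\dot{X}_{\cdot,A}=\dot{X}_{\cdot,B}$ holds. It is precisely the hypothesis that $\theta_0,\theta_1$ be distinct and non-zero that guarantees this structure on $\{0,\theta_0,\theta_1\}$; beyond that, the proof is a short chase through the stated theorems.
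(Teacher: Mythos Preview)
Your proof is correct and follows essentially the same route as the paper's: ordering the three indices $\{0,\theta_0,\theta_1\}$, applying Theorem \ref{theorem:interpolateH} at the middle one, and using the equality of the interpolation couples to transfer information from $A$ to $B$. The only cosmetic difference is that the paper packages the final two steps into a single appeal to Corollary \ref{corollary:interptoHinfty}, whereas you unpack it into Theorem \ref{theorem:Hproperties}\ref{it:Hproperties3} followed by \ref{it:Hproperties2}.
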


\begin{proof}
Let $\tilde{\theta}_0,\tilde{\theta}_1,\tilde{\theta}  \in \cbrace{0,\theta_0,\theta_1}$ be such that $\tilde{\theta}_0<\tilde{\theta}<\tilde{\theta}_1$ and let $\eta \in (0,1)$ be such that $\tilde{\theta} = (1-\eta)\tilde{\theta}_0+\eta\tilde{\theta}_1$. Then by Theorem \ref{theorem:Hproperties} and \ref{theorem:interpolateH} we have
\begin{align*}
  \dot{X}_{\tilde{\theta},B} = \dot{X}_{\tilde{\theta},A} = (\dot{X}_{\tilde{\theta}_0,A}, \dot{X}_{\tilde{\theta}_1,A})_{\eta}^\alpha=(\dot{X}_{\tilde{\theta}_0,B}, \dot{X}_{\tilde{\theta}_1,B})_{\eta}^\alpha,
\end{align*}
so the corollary follows from Corollary \ref{corollary:interptoHinfty}.
\end{proof}

\section{Generalized square function spaces without almost \texorpdfstring{$\alpha$}{a}-sectoriality}\label{section:Hspacenotalmostalpha}
In Section \ref{section:scalespaces} we have seen that the spaces $H^\alpha_{\theta,A}(\psi)$ behave very nicely when $A$ is almost $\alpha$-sectorial. In this section we will take a closer look at the $H^\alpha_{\theta,A}(\psi)$-spaces for sectorial operators $A$ which are not necessarily almost $\alpha$-sectorial.
 In this case the spaces $H^\alpha_{\theta,A}(\psi)$  may be different for different $\psi$ and whether $A|_{H^\alpha_{\theta,A}(\psi)}$  has a  bounded $H^\infty$-calculus may depend on the choice of $\psi$. This unruly behaviour will allow us to construct some interesting counterexamples in Section \ref{section:angleexample}.

\subsection*{The spaces $H^\alpha_{\theta,A}(\varphi_{s,r})$, and their properties}
Conforming with the definition of $H^\alpha_{\theta,A}(\psi)$ we need to assume that $\alpha$ is ideal throughout this section. Let $0<s<\frac{\pi}{\omega(A)}$, $0<r<1$, $\omega(A)<\sigma<\frac{\pi}{s}$ and set
\begin{equation*}
  \varphi_{s,r}(z) := \frac{z^{{sr}}}{1+z^s}, \qquad z \in \Sigma_{\sigma}.
\end{equation*}
 We will focus our attention on the spaces $H^\alpha_{\theta,A}(\varphi_{s,r})$, for which we have $H^\alpha_{\theta,A} = H^\alpha_{\theta,A}(\varphi_{1,\frac12})$. We will start our analysis by computing an equivalent norm on these
spaces, which will be more suited for our analysis.
\begin{proposition}\label{proposition:Hequivnorm}
  Let $A$ be a sectorial operator on $X$ and assume that $\alpha$ is ideal. Let $0<s<\frac{\pi}{\omega(A)}$ and $0<r<1$. Then for $x \in D(A)\cap R(A)$ and $t \in [1,2]$ we have
  \begin{align*}
    \nrm{\varphi_{s,r}(\cdot A)x}_{\alpha(\R_+,\frac{\ddn t}{t};X)} &\simeq \nrm{ \ee^{-\frac{\pi}{s}\abs{\cdot}}A^{i\cdot}x}_{\alpha(\R;X)},\\
    \nrmb{(\varphi_{s,r}(2^ntA) x)_{n\in \Z}}_{\alpha(\Z;X)} &\simeq \nrms{\sum_{m \in \Z} \ee^{-\frac{\pi}{s}\abs{\cdot+2mb}} A^{i(\cdot+2mb)}x }_{\alpha([-b,b];X)}
  \end{align*}
  with $b=\pi/\log(2)$ and the implicit constants only depend on $s$ and $r$.
\end{proposition}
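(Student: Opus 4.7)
My plan is to reduce both equivalences to statements about $u\mapsto\ee^{-\pi|u|/s}A^{iu}x$ via a Mellin-type representation of $\varphi_{s,r}(\lambda A)x$, the Fourier isometries on $\alpha$-spaces, and pointwise multiplier arguments.

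First I would establish, for $x\in D(A)\cap R(A)$ and $\lambda>0$, the Mellin representation
\[
\varphi_{s,r}(\lambda A)x \;=\; \frac{1}{2s}\int_\R K(u)\,\lambda^{iu}\,A^{iu}x\,\dd u,\qquad K(u):=\frac{1}{\sin(\pi(r-iu/s))}.
\]
This comes from the scalar formula~\eqref{eq:mellin} (applied with parameter $r$ and variable $(\lambda z)^s$, combined with the substitution $u=s\tau$) inserted into the Dunford representation of $\varphi_{s,r}(\lambda A)x$, followed by a Fubini swap. Absolute integrability is assured by the direct computation $|\sin(\pi(r-iu/s))|^2=\sin^2(\pi r)+\sinh^2(\pi u/s)$, which gives $|K(u)|\simeq\ee^{-\pi|u|/s}$ with constants depending only on $r,s$, and this dominates the growth $\ee^{\nu|u|}$ of $|z^{iu}|$ on $\Gamma_\nu$ whenever $\nu<\pi/s$. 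For the continuous equivalence, I then substitute $t=\ee^v$, which induces an isometry $\alpha(\R_+,\tfrac{\dd t}{t};X)\cong\alpha(\R;X)$ by Proposition~\ref{proposition:operatoronfunctions}. Under this substitution the Mellin representation exhibits $v\mapsto\varphi_{s,r}(\ee^vA)x$ as a constant multiple of the inverse Fourier transform of $u\mapsto K(u)A^{iu}x$; since the Fourier transform is an isometry on $\alpha(\R;X)$ (Example~\ref{example:fourier}) and $|K(u)|/\ee^{-\pi|u|/s}$ is bounded above and below, Example~\ref{example:pointwisemulti} yields the claimed equivalence.

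For the discrete equivalence, I split the integration variable as $u=\eta+2bm$ with $\eta\in[-b,b]$ and $m\in\Z$. The choice $b=\pi/\log 2$ forces $(2^n)^{i\cdot 2bm}=\ee^{2\pi inm}=1$, so the Mellin representation rearranges to
\[
\varphi_{s,r}(2^ntA)x\;=\;\frac{1}{2s}\int_{-b}^b H_t(\eta)\,\ee^{i\pi n\eta/b}\,\dd\eta,\qquad H_t(\eta):=\sum_{m\in\Z}t^{i(\eta+2bm)}K(\eta+2bm)A^{i(\eta+2bm)}x,
\]
identifying $\varphi_{s,r}(2^ntA)x$ up to a constant depending only on $s,b$ with the $(-n)$-th Fourier coefficient of the $2b$-periodic function $H_t$. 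The Fourier series isometry $\alpha([-b,b];X)\cong\alpha(\Z;X)$, another instance of Proposition~\ref{proposition:operatoronfunctions}, then gives $\nrm{(\varphi_{s,r}(2^ntA)x)_{n\in\Z}}_{\alpha(\Z;X)}\simeq\nrm{H_t}_{\alpha([-b,b];X)}$ with constants independent of $t\in[1,2]$.

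The main obstacle is the final step: showing $\nrm{H_t}_{\alpha([-b,b];X)}\simeq\nrm{H}_{\alpha([-b,b];X)}$ uniformly in $t\in[1,2]$, where $H(\eta):=\sum_m\ee^{-\pi|\eta+2bm|/s}A^{i(\eta+2bm)}x$. The leading factor $t^{i\eta}$ in $H_t$ pulls out as an isometric unimodular pointwise multiplier by Example~\ref{example:pointwisemulti}, but the remaining phases $t^{i\cdot 2bm}$ couple the summands across $m$. To handle them I plan to transport the comparison back to $\alpha(\Z;X)$ via the Fourier coefficient identification above: the problem reduces to the $\alpha(\Z;X)$-equivalence of the samples $(\hat\phi_t(k/(2b)))_{k\in\Z}$ and $(\hat\phi_0(k/(2b)))_{k\in\Z}$ of the Fourier transforms of $\phi_t(u)=K(u)t^{iu}A^{iu}x$ and $\phi_0(u)=\ee^{-\pi|u|/s}A^{iu}x$, which differ at the Fourier input level by a bounded $L^\infty$-multiplier with bounded inverse together with a translation of size at most $1/(2b)$. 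The continuous equivalence already established, combined with Proposition~\ref{proposition:operatoronfunctions} applied to the sampling and translation operators, should yield the desired uniform-in-$t$ comparison.
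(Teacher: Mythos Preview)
Your argument for the continuous equivalence is correct and matches the paper's: Mellin representation, substitution to $\R$, Fourier isometry on $\alpha(\R;X)$, then the bounded--invertible scalar multiplier $K(u)/\ee^{-\pi|u|/s}$. The setup of the discrete case is also the same as the paper's: periodize using $2^{in\cdot 2mb}=1$, invoke the Fourier--series isometry $\alpha([-b,b];X)\cong\alpha(\Z;X)$, and reduce to comparing the $\alpha([-b,b];X)$-norms of $H_t$ and $H$.

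The gap is in your resolution of the ``main obstacle''. You propose to pass back to $\alpha(\Z;X)$ and invoke Proposition~\ref{proposition:operatoronfunctions} for ``the sampling and translation operators''. But pointwise sampling $L^2(\R)\to\ell^2(\Z)$ is \emph{not} a bounded operator, so Proposition~\ref{proposition:operatoronfunctions} does not apply; you cannot transport the continuous equivalence $\|\hat\phi_t\|_{\alpha(\R;X)}\simeq\|\hat\phi_0\|_{\alpha(\R;X)}$ to the samples this way. Moreover, your description of how $\hat\phi_t$ and $\hat\phi_0$ are related is imprecise: the factor $t^{iu}$ does become a translation on the Fourier side, but the multiplier $K(u)\ee^{\pi|u|/s}$ becomes a \emph{convolution}, not a further translation, and after sampling at spacing $1/(2b)$ this convolution does not reduce to anything tractable by your stated tools. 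The underlying difficulty is real: you cannot in general replace each term of a periodization $\sum_m\psi(\cdot+2mb)$ by a term of comparable modulus and expect the $\alpha([-b,b];X)$-norm to be preserved, because of possible cancellation across $m$.

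The paper dispatches this same step in one sentence, citing only ``the fact that $|u^{i(\xi+2mb)}|=1$'' to remove the $u$-phases and then $|g|\simeq\ee^{-\pi|\cdot|/s}$ to pass to the exponential. So the paper is also terse here; but whatever argument one supplies, your proposed one via ``sampling operators'' does not work as stated.
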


\begin{proof}
  Let $\omega(A)<\sigma<\min\cbrace{\frac{\pi}{s},\pi}$, then for $\xi \in \R$ and $z \in \R_+$ we have, using
  the change of coordinates $u^{1/s} = e^{2\pi t} z$ and the Mellin transform as in \eqref{eq:mellin1}, that
  \begin{align*}
   \int_{\R} \ee^{-2\pi i t\xi}\varphi_{s,r}(e^{2\pi t} z)\dd t
    &= \frac{z^{i\xi} }{2\pi s}\int_0^\infty u^{-i \xi/s}  \frac{u^{r}}{1+u}\frac{\ddn u}{u}\\  &=\frac{z^{i\xi}}{2s \sin(\pi(r-i\xi/s))} =:z^{i\xi} g(\xi),
  \end{align*}
  which extends to all $z \in \Sigma_\sigma$ by analytic continuation. Note that
    \begin{equation}\label{eq:sinhequive}
   \abs{g(\xi)} =  \frac{1}{2s\,  \abs{\sin(\pi(r-i\xi/s))}} \simeq \ee^{-\frac{\pi}{s}\abs{\xi}}, \qquad \xi \in \R.
  \end{equation}
By Fourier inversion we have for all $z \in \Sigma_\sigma$ and $t \in \R$
  \begin{equation*}
       \varphi_{s,r} (\ee^{2\pi t}z)  = \int_{\R}\ee^{2 \pi i t \xi} g(\xi) z^{i\xi} \dd \xi.
  \end{equation*}
 Thus, by the definition of the $H^\infty$-calculus and Fubini's theorem, we have for $x \in D(A)\cap R(A)$
  \begin{equation}\label{eq:repmellin}
   \varphi_{s,r} (\ee^{2\pi t}A)x  = \int_{\R}\ee^{2 \pi i t \xi} g(\xi) A^{i\xi}x \dd \xi.
  \end{equation}
as a Bochner integral, since, for $\xi \in \R$, $\omega(A)<\nu<\frac{\pi}{s}$, $\varphi(z) = z(1+z)^{-2}$ and $y \in X$ such that $\varphi(A)y=x$, we have
\begin{equation}\label{eq:gabsolutelyconv}
  \begin{aligned}
  \nrm{g(\xi) A^{i\xi}x}_X &= \frac{\abs{g(\xi)}}{2\pi} \nrms{\int_{\Gamma_\nu} z^{i\xi}\varphi(z)R(z,A)y\dd z}_X\\
  &\lesssim \ee^{-\frac{\pi}{s}\abs{\xi}} \cdot  \int_{\Gamma_\nu} \ee^{\nu \abs{\xi}} \frac{\abs{z}}{\abs{1+z}^2}\nrm{y}_X\frac{\abs{\ddn z}}{\abs{z}} \\
  &\lesssim \ee^{-\ha{\frac{\pi}{s}-\nu}\abs{\xi}} \nrm{y}_X.
\end{aligned}
\end{equation}

  Now to prove the equivalence for the continuous square function norm, define
 $T:L^2(\R_+,\frac{\ddn t}{t})\mapsto L^2(\R)$ by
$$Tf(t):= \sqrt{2 \pi}\, f(e^{2\pi t}), \qquad t \in \R.$$
Then $T$ is an isometry, so by Proposition \ref{proposition:operatoronfunctions} we have for any $f \in L^2(\R_+,\frac{\ddn t}{t})$
\begin{equation}\label{eq:Ttransform}
  \nrm{f}_{\alpha(\R_+,\frac{\ddn t}{t};X)}=\sqrt{ 2 \pi}\, \nrm{t\mapsto f(e^{2\pi t} )}_{\alpha(\R;X)}.
\end{equation}
Let $x \in D(A)\cap R(A)$ and note that by the definition of the Dunford calculus and Fubini's theorem
\begin{align*}
  (t \mapsto \varphi_{s,r}(t A)x )&\in L^2(\R_+,\tfrac{\ddn t}{t};X),\\
 ( t \mapsto \varphi_{s,r}(e^{2\pi t} A)x) &\in L^1(\R;X).
\end{align*} So by \eqref{eq:repmellin}, \eqref{eq:Ttransform} and the invariance of the $\alpha$-norms under the Fourier transform (see Example \ref{example:fourier}) we have
  \begin{align*}
    \nrm{t \mapsto \varphi_{s,r}(t A)x}_{\alpha(\R_+,\frac{\ddn t}{t};X)} &= \sqrt{2 \pi} \, \nrm{t\mapsto \varphi_{s,r}(e^{2\pi t} A)x}_{\alpha(\R;X)}
\\ &= {\sqrt{2 \pi}}  \,\nrmb{t \mapsto \int_{\R}\ee^{2 \pi i t \xi} g(\xi) A^{i\xi}x\dd \xi}_{\alpha(\R;X)}
\\ &= {\sqrt{2 \pi}} \,\nrmb{\xi  \mapsto g(\xi) A^{i\xi}x}_{\alpha(\R;X)},
  \end{align*}
  which proves the equivalence for the continuous square function by \eqref{eq:sinhequive}.

  For the discrete square function norm note that by \eqref{eq:repmellin} we have for $x \in D(A)\cap R(A)$ and $t \in \R$
   \begin{align*}
     \varphi_{s,r} (\ee^{2\pi t}A)x  = \sum_{m \in \Z}\int_{-b}^b\ee^{2 \pi i t (\xi+2mb)} g(\xi+2mb) A^{i(\xi+2mb)} x\dd \xi.
\end{align*}
The sum converges absolutely by \eqref{eq:gabsolutelyconv}.
Thus, using $2^{i n\cdot 2mb}=1$ and setting $2^nu = \ee^{2\pi t}$, we have
\begin{align}\label{eq:varphis2n}
  \varphi_{s,r}(2^nuA)x = \int_{-b}^b  2 ^{in\xi} \sum_{m \in \Z} u^{i(\xi+2mb)} g(\xi+2mb) A^{i(\xi+2mb)}x \dd \xi.
\end{align}
By Parseval's theorem and Proposition \ref{proposition:operatoronfunctions} for any $h \in L^1([-b,b];X)$ with $h \in \alpha([-b,b];X)$, we have
\begin{equation*}
  \nrm{h}_{\alpha([-b,b];X)}= \sqrt{2b} \, \nrmb{(\widehat{h}(n))_{n \in \Z}}_{\alpha(\Z;X)},
\end{equation*}
where $b=\pi/\log(2)$ and
\begin{equation*}
  \widehat{h}(n):= \frac{1}{2b}\int_{-b}^b h(\xi)2^{- i  n \xi}\dd \xi.
\end{equation*}
And thus, using \eqref{eq:varphis2n} and the fact that $\abs{u^{i(\xi+2mb)}}=1$ for all $\xi \in [-b,b]$, $m \in \Z$ and  $u \in [1,2]$, we obtain
 \begin{align*}
 \nrmb{(\varphi_{s,r}(2^nuA) x)_{n\in \Z}}_{\alpha(\Z;X)} \simeq
 \nrms{\sum_{m \in \Z} g(\cdot+2mb) A^{i(\cdot+2mb)}x }_{\alpha([-b,b];X)},
 \end{align*}
 which combined with \eqref{eq:sinhequive} proves
 the equivalence for the discrete square function norm.
\end{proof}

From Proposition \ref{proposition:Hequivnorm} we can immediately deduce embeddings between the $H^\alpha_{\theta,A}(\varphi_{s,r})$-spaces.
\begin{corollary}\label{corollary:hspacesisomorphic}
   Let $A$ be a sectorial operator on $X$ and assume that $\alpha$ is ideal. Fix $0<s_1\leq s_2<\frac{\pi}{\omega(A)}$, $0<r_1,r_2<1$ and $\theta \in \R$. For $\nu = \pi(\frac{1}{s_1}-\frac{1}{s_2})$ and $\omega(A)<\sigma<\frac{\pi}{s_2}$
  set
\begin{equation*}
  \varphi_{s_1,r_1}^{\pm\nu}(z):=\varphi_{s_1,r_1}(\ee^{\pm i\nu}z), \qquad z \in \Sigma_\sigma .
\end{equation*}
   Then we have the continuous embedding
   \begin{equation*}
     H^\alpha_{\theta,A}(\varphi_{s_2,r_2})\hookrightarrow H^\alpha_{\theta,A}(\varphi_{s_1,r_1})
   \end{equation*}
   and
        \begin{equation*}
      H^\alpha_{\theta,A}(\varphi_{s_1,r_1}^{+\nu}) \cap H^\alpha_{\theta,A}(\varphi_{s_1,r_1}^{-\nu})=H^\alpha_{\theta,A}(\varphi_{s_2,r_2}).
    \end{equation*}
    isomorphically.
\end{corollary}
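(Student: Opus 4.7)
The strategy is to push everything through the isomorphic description of the relevant norms provided by Proposition \ref{proposition:Hequivnorm}, which converts the generalized square function norms into weighted $\alpha(\R;X)$-norms of the imaginary power orbit $\xi \mapsto A^{i\xi}A^{\theta}x$. All claims then reduce to comparisons of scalar weights and an application of the pointwise multiplier result in Example \ref{example:pointwisemulti}. Throughout I work on the dense subspace $D(A^m)\cap R(A^m)$ for $m\in \N$ with $\abs{\theta}+1<m$, so that $A^{\theta}x \in D(A)\cap R(A)$ and Proposition \ref{proposition:Hequivnorm} applies to $A^{\theta}x$.

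First I compute the weights. Mimicking the Mellin--Fourier representation in the proof of Proposition \ref{proposition:Hequivnorm} and using $\varphi_{s_1,r_1}^{\pm\nu}(\ee^{2\pi t}z) = \varphi_{s_1,r_1}(\ee^{2\pi t}(\ee^{\pm i\nu}z))$, one obtains, for $y=A^{\theta}x \in D(A)\cap R(A)$ and $j=1,2$,
\begin{align*}
\nrm{\varphi_{s_j,r_j}(\cdot A)y}_{\alpha(\R_+,\frac{\ddn t}{t};X)} &\simeq \nrmb{\xi \mapsto \ee^{-\frac{\pi}{s_j}\abs{\xi}}A^{i\xi}y}_{\alpha(\R;X)},\\
\nrm{\varphi_{s_1,r_1}^{\pm\nu}(\cdot A)y}_{\alpha(\R_+,\frac{\ddn t}{t};X)} &\simeq \nrmb{\xi \mapsto \ee^{\mp\nu\xi-\frac{\pi}{s_1}\abs{\xi}}A^{i\xi}y}_{\alpha(\R;X)},
\end{align*}
where the implicit constants depend only on $s_j,r_j,\nu$. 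Since $s_1\le s_2$, we have $\ee^{-\frac{\pi}{s_1}\abs{\xi}} \le \ee^{-\frac{\pi}{s_2}\abs{\xi}}$, and multiplication by the bounded quotient is bounded on $\alpha(\R;X)$ by Example \ref{example:pointwisemulti}. This immediately yields the first embedding $H^{\alpha}_{\theta,A}(\varphi_{s_2,r_2}) \hookrightarrow H^{\alpha}_{\theta,A}(\varphi_{s_1,r_1})$.

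For the intersection identity I use $\nu = \pi(\tfrac{1}{s_1}-\tfrac{1}{s_2})$ to get the pointwise identities
\begin{equation*}
\ee^{-\nu\xi - \frac{\pi}{s_1}\abs{\xi}} = \begin{cases} \ee^{-(\frac{2\pi}{s_1}-\frac{\pi}{s_2})\xi}, & \xi>0,\\ \ee^{\frac{\pi}{s_2}\xi}=\ee^{-\frac{\pi}{s_2}\abs{\xi}}, & \xi<0,\end{cases}
\end{equation*}
and the mirror image for $+\nu\xi$. Because $s_1\le s_2$, both weights $\ee^{\mp\nu\xi-\frac{\pi}{s_1}\abs{\xi}}$ are dominated by $\ee^{-\frac{\pi}{s_2}\abs{\xi}}$, giving the easy direction by Example \ref{example:pointwisemulti}. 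Conversely, splitting $\ee^{-\frac{\pi}{s_2}\abs{\xi}} = \ind_{\xi<0}\ee^{\frac{\pi}{s_2}\xi} + \ind_{\xi>0}\ee^{-\frac{\pi}{s_2}\xi}$, the first summand coincides with $\ind_{\xi<0}\ee^{-\nu\xi-\frac{\pi}{s_1}\abs{\xi}}$ and the second with $\ind_{\xi>0}\ee^{\nu\xi-\frac{\pi}{s_1}\abs{\xi}}$; multiplication by the bounded indicator functions $\ind_{\xi<0}$ and $\ind_{\xi>0}$ is controlled on $\alpha(\R;X)$ by Example \ref{example:pointwisemulti}, so the triangle inequality yields
\begin{equation*}
\nrm{x}_{H^{\alpha}_{\theta,A}(\varphi_{s_2,r_2})} \lesssim \nrm{x}_{H^{\alpha}_{\theta,A}(\varphi_{s_1,r_1}^{+\nu})} + \nrm{x}_{H^{\alpha}_{\theta,A}(\varphi_{s_1,r_1}^{-\nu})}.
\end{equation*}
Taking completions then gives the isomorphism.

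The main conceptual obstacle is purely bookkeeping: verifying that $\varphi_{s_1,r_1}^{\pm\nu}\in H^1(\Sigma_\sigma)$ for some admissible $\sigma>\omega(A)$, which reduces to the condition $\nu+\sigma<\pi/s_1$, equivalently $\sigma<\pi/s_2$, precisely the standing hypothesis. Once this is checked, Proposition \ref{proposition:Hequivnorm} is applicable to all four functions simultaneously and the argument above is a direct comparison of exponential weights.
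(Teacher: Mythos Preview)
Your proof is correct and follows essentially the same route as the paper: both arguments translate all four norms via Proposition \ref{proposition:Hequivnorm} into weighted $\alpha(\R;X)$-norms of $\xi\mapsto A^{i\xi}A^\theta x$, then compare the exponential weights using Example \ref{example:pointwisemulti}. The paper presents the comparison by splitting each weighted norm into its $(0,\infty)$- and $(-\infty,0)$-pieces from the outset, whereas you write a single weight $\ee^{\mp\nu\xi-\frac{\pi}{s_1}\abs{\xi}}$ and split only for the hard inequality; these are the same argument in slightly different bookkeeping, and your verification that $\varphi_{s_1,r_1}^{\pm\nu}\in H^1(\Sigma_\sigma)$ precisely when $\sigma<\pi/s_2$ is a useful detail the paper leaves implicit.
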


\begin{proof}
  Without loss of generality we may assume $\theta = 0$. The claimed embedding is a direct consequence of Proposition \ref{proposition:Hequivnorm} and the density of $D(A)\cap R(A)$ in $ H^\alpha_{\theta,A}(\varphi_{s_2,r_2})$. For the isomorphism fix $x \in D(A)\cap R(A)$. Then by Proposition \ref{proposition:Hequivnorm} we have
  \begin{align*}
  \nrm{\varphi_{s_1,r_1}^{+\nu}(\cdot A)x}_{\alpha(\R_+,\frac{\ddn t}{t};X)} &\simeq \nrmb{ \ee^{-(\nu+\frac{\pi}{s_1})\abs{\cdot}}A^{i\cdot}x \ind_{(0,\infty)}}_{\alpha(\R;X)} \\&\hspace{2cm} +\nrmb{ \ee^{-\frac{\pi}{s_2}\abs{\cdot}}A^{i\cdot}x \ind_{(-\infty,0)}}_{\alpha(\R;X)},\\
  \nrm{\varphi_{s_1,r_1}^{-\nu}(\cdot A)x}_{\alpha(\R_+,\frac{\ddn t}{t};X)} &\simeq \nrmb{ \ee^{-\frac{\pi}{s_2}\abs{\cdot}}A^{i\cdot}x \ind_{(0,\infty)}}_{\alpha(\R;X)} \\&\hspace{2cm} +\nrmb{ \ee^{-(\nu+\frac{\pi}{s_1})\abs{\cdot}}A^{i\cdot}x \ind_{(-\infty,0)}}_{\alpha(\R;X)},\\
  \nrm{\varphi_{s_2,r_2}(\cdot A)x}_{\alpha(\R_+,\frac{\ddn t}{t};X)} &\simeq  \nrmb{ \ee^{-\frac{\pi}{s_2}\abs{\cdot}}A^{i\cdot}x \ind_{(0,\infty)}}_{\alpha(\R;X)} \\&\hspace{2cm} +\nrmb{ \ee^{-\frac{\pi}{s_2}\abs{\cdot}}A^{i\cdot}x \ind_{(-\infty,0)}}_{\alpha(\R;X)}.
  \end{align*}
  Since $\nu+\frac{\pi}{s_1} \geq \frac{\pi}{s_2}$, the corollary now follows by density and Example \ref{example:pointwisemulti}.
\end{proof}

From Corollary \ref{corollary:hspacesisomorphic} we can see that the spaces $H^\alpha_{\theta,A}(\varphi_{s,r})$ are independent of $r$, which is why we will focus on the spaces
$H^\alpha_{\theta,A}(\varphi_{s})$ for
\begin{equation*}
  \varphi_s(z):= \varphi_{s,\frac12}(z) = \frac{z^{{s/2}}}{1+z^s} , \qquad z \in \Sigma_\sigma
\end{equation*}
with $\omega(A)<\sigma<\frac{\pi}{s}$ for the remainder of this section. Moreover, Corollary \ref{corollary:hspacesisomorphic} states that the $H^\alpha_{\theta,A}(\varphi_{s})$-spaces shrink as  $s$ increases.

\subsection*{The operators $A|_{H^\alpha_{\theta,A}(\varphi_s)}$ and their properties}
We will now analyse the properties of the operators $A|_{H^\alpha_{\theta,A}(\varphi_s)}$ on $H^\alpha_{\theta,A}(\varphi_s)$. As a first observation, we note that from Proposition \ref{proposition:Hequivnorm} we immediately deduce for  $0<s<\frac{\pi}{\omega(A)}$ and $\theta \in \R$ that $A|_{H^\alpha_{\theta,A}(\varphi_s)}$ has $\BIP$ with
  \begin{equation}\label{eq:BIPangle}
    \omega_{\BIP}(A|_{H^\alpha_{\theta,A}(\varphi_s)}) \leq \frac{\pi}{{s}}.
  \end{equation}
Using the characterization of $\alpha$-$\BIP$ in Theorem \ref{theorem:BIPHinfty} and the transference result of Theorem \ref{theorem:transference}, we can say more if $s>1$.

\begin{theorem}\label{theorem:XalphaHinfty}
  Let  $A$ be a sectorial operator on $X$ and assume that $\alpha$ is ideal. Fix $1<s<\frac{\pi}{\omega(A)}$ and $\theta \in \R$. Then $A|_{H^\alpha_{\theta,A}(\varphi_s)}$ has a bounded $H^\infty$-calculus on $H^\alpha_{\theta,A}(\varphi_s)$ with
  \begin{equation*}
    \omega_{H^\infty}(A|_{H^\alpha_{\theta,A}(\varphi_s)}) \leq \frac{\pi}{s}.
  \end{equation*}
\end{theorem}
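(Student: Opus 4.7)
The plan is to verify the hypotheses of Theorem~\ref{theorem:BIPHinfty} for the operator $A|_{H^\alpha_{\theta,A}(\varphi_s)}$. Two ingredients are needed: $\BIP$ with angle strictly less than $\pi$, and $\beta$-$\BIP$ for some Euclidean structure $\beta$. The first is already recorded in \eqref{eq:BIPangle} in the sharper form $\omega_{\BIP}(A|_{H^\alpha_{\theta,A}(\varphi_s)}) \leq \pi/s < \pi$, so the entire task is to construct $\beta$.

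Set $Y := H^\alpha_{\theta,A}(\varphi_s)$ and fix $m \in \N$ with $m > \abs{\theta}+1$. Proposition~\ref{proposition:Hequivnorm} gives, on the dense subspace $D(A^m)\cap R(A^m)$,
\begin{equation*}
  \nrm{y}_Y \simeq \nrm{Jy}_{\alpha(\R;X)}, \qquad Jy(\xi) := \ee^{-\pi\abs{\xi}/s} A^{\theta+i\xi}y,
\end{equation*}
so $J$ extends to an isomorphic embedding $Y \hookrightarrow \alpha(\R;X)$. A direct computation gives the intertwining identity
\begin{equation*}
  J(A^{it}y)(\xi) = \ee^{-\pi\abs{\xi}/s}\,A^{\theta+i(\xi+t)}y = M_t(\xi)\,(\widetilde{U}_{-t}Jy)(\xi),
\end{equation*}
where $M_t(\xi) := \ee^{(\abs{\xi+t}-\abs{\xi})\pi/s}$ satisfies $\nrm{M_t}_{L^\infty(\R)} \leq \ee^{\pi\abs{t}/s}$, and $\widetilde{U}_{-t}$ is the extension to $\alpha(\R;X)$, via Proposition~\ref{proposition:operatoronfunctions}, of the $L^2(\R)$-shift $U_{-t}\varphi(\xi) = \varphi(\xi+t)$. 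Writing $N_t$ for the $L^2(\R)$-multiplier $\varphi \mapsto M_t\varphi$, this is $J \circ A^{it}|_Y = \widetilde{N_t U_{-t}} \circ J$.

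Consider the family $\Gamma_0 := \cbraceb{\ee^{-\pi\abs{t}/s} N_t U_{-t} : t \in \R} \subseteq \mc{L}(L^2(\R))$. Each element is a contraction, so $\Gamma_0$ is trivially $C^*$-bounded as a subset of the unit ball of the $C^*$-algebra $\mc{L}(L^2(\R))$. The extension map $\rho\colon \mc{L}(L^2(\R)) \to \mc{L}(\alpha(\R;X))$, $T \mapsto \widetilde{T}$, is a norm-one algebra homomorphism: norm preservation is Proposition~\ref{proposition:operatoronfunctions}, and multiplicativity $\widetilde{T_1 T_2} = \widetilde{T}_1 \widetilde{T}_2$ is a one-line verification from the defining formula $\widetilde{T}U = U \circ T^*$. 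Hence $\Gamma := \rho(\Gamma_0)$ is $C^*$-bounded in $\mc{L}(\alpha(\R;X))$, and Theorem~\ref{theorem:Cboundedalphabounded} furnishes a Euclidean structure $\alpha'$ on $\alpha(\R;X)$ for which $\Gamma$ is $\alpha'$-bounded. Define $\beta$ on $Y^n$ by $\nrm{(y_1,\ldots,y_n)}_\beta := \nrm{(Jy_1,\ldots,Jy_n)}_{\alpha'}$; after renormalizing $\nrm{\cdot}_Y$ to $\nrm{J(\cdot)}_{\alpha(\R;X)}$, or applying the max-trick from the discussion after Proposition~\ref{proposition:constructglobal}, this becomes a Euclidean structure on $Y$. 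The intertwining identity combined with $\alpha'$-boundedness of $\Gamma$ then yields
\begin{equation*}
  \nrmb{\hab{\ee^{-\pi\abs{t_k}/s}A^{it_k}y_k}_{k=1}^n}_\beta \lesssim \nrmb{(y_1,\ldots,y_n)}_\beta
\end{equation*}
uniformly in $t_1,\ldots,t_n \in \R$, which is exactly $\beta$-$\BIP$ of $A|_Y$ with angle at most $\pi/s < \pi$.

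Feeding this into Theorem~\ref{theorem:BIPHinfty} delivers a bounded $H^\infty$-calculus for $A|_Y$ with $\omega_{H^\infty}(A|_Y) = \omega_{\BIP}(A|_Y) \leq \pi/s$, as required. The main technical obstacle is the bookkeeping around the extension map $\rho$ (checking it really is a bounded algebra homomorphism, so that $C^*$-boundedness transfers) and the pullback along $J$ (checking it really is a Euclidean structure up to equivalent renormalization). The conceptual heart of the argument, however, is the observation that the entire exponential growth of $\nrm{A^{it}|_Y}$ is absorbed by the scalar $L^\infty(\R)$-multiplier $M_t$, which reduces the required $C^*$-boundedness of a family on $Y$ to the triviality that contractions on the Hilbert space $L^2(\R)$ are $C^*$-bounded.
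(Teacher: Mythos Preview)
Your proof is correct and follows the same overall strategy as the paper's first proof: embed $Y=H^\alpha_{\theta,A}(\varphi_s)$ into $\alpha(\R;X)$ via $J$, observe that $J\circ A^{it}$ intertwines with the extension of a contraction on $L^2(\R)$ (up to the factor $\ee^{\pi\abs{t}/s}$), produce from this a Euclidean structure $\beta$ for which $A|_Y$ has $\beta$-$\BIP$ with angle $\leq \pi/s$, and invoke Theorem~\ref{theorem:BIPHinfty}.

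The only genuine difference is how $\beta$ is obtained. The paper writes it down \emph{explicitly}: on $\alpha(\R;X)$ one sets
\[
  \nrm{(T_1,\ldots,T_n)}_\beta := \nrm{T_1\oplus\cdots\oplus T_n}_{\alpha(L^2(\R)^n;X)},
\]
and then the $\beta$-boundedness of $\{\ee^{-\pi\abs{t}/s}(A|_Y)^{it}:t\in\R\}$ follows by a two-line computation using only the right-ideal property \eqref{eq:E2x} (shift and pointwise multiplier on $L^2(\R)^n$). You instead obtain $\beta$ abstractly, by noting that $\rho\colon \mc{L}(L^2(\R))\to\mc{L}(\alpha(\R;X))$, $T\mapsto \widetilde T$, is a bounded unital algebra homomorphism, so $C^*$-boundedness of the contractions $\ee^{-\pi\abs{t}/s}N_tU_{-t}$ transfers via Theorem~\ref{theorem:Cboundedalphabounded} to some Euclidean structure on $\alpha(\R;X)$, which you then pull back along $J$. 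Both routes encode the same observation (the growth of $A^{it}$ is absorbed by a scalar $L^\infty$-multiplier, reducing everything to Hilbert space contractions); the paper's explicit $\beta$ avoids the detour through Theorem~\ref{theorem:Cboundedalphabounded} and keeps the argument self-contained, while your packaging illustrates cleanly how the result fits into the transference framework of Chapter~\ref{part:1}.
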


We give two proofs. The first is far more elegant, relying on the transference result in Chapter \ref{part:1}. In particular, we will use the characterization of $\alpha$-$\BIP$ in Theorem \ref{theorem:BIPHinfty}. We include a sketch of a second, more direct and elementary, but highly technical proof. This leads to a proof for the angle of the $H^\infty$-calculus counterexample in Section \ref{section:angleexample} which does not rely on the theory in Chapter \ref{part:1}

\begin{proof}[Proof of Theorem \ref{theorem:XalphaHinfty}]
  Define a Euclidean structure $\beta$ on $\alpha(\R;X)$ by defining for $T_1,\ldots,T_n \in \alpha(\R;X)$
  \begin{equation*}
    \nrm{(T_1,\ldots,T_n)}_{\beta} = \nrm{T_1\oplus\cdots\oplus T_n}_{\alpha(L^2(\R)^n;X)},
  \end{equation*}
  where we view $T_1\oplus\cdots\oplus T_n$ as an operator from $L^2(\R)^n$ to $X$ given by
  \begin{equation*}
    \hab{T_1\oplus\cdots\oplus T_n} (h_1,\ldots,h_n) := \sum_{k=1}^n T_kh_k, \qquad (h_1,\ldots,h_n) \in L^2(\R)^n.
  \end{equation*}
  By Proposition \ref{proposition:Hequivnorm}, the space $H^\alpha_{\theta,A}(\varphi_s)$ is continuously embedded in $\alpha(\R;X)$ via the map
  \begin{equation*}
    x \mapsto \hab{t \mapsto \ee^{-\frac{\pi}{s} \abs{t}}A^{it+\theta} x}, \qquad x \in D(A^m)\cap R(A^m)
  \end{equation*}
  with $m \in \N$ such that $\abs{\theta}+1<m$. Therefore $\beta$ can be endowed upon $H^\alpha_{\theta,A}(\varphi_s)$. We will show that
  \begin{equation*}
    \Gamma := \cbraces{\ee^{-\frac{\pi}{s}\abs{t}} (A|_{H^\alpha_{\theta,A}(\varphi_s)})^{it}: \,t\in \R}
  \end{equation*}
  is $\beta$-bounded, which combined with Theorem \ref{theorem:BIPHinfty} yields the theorem. Suppose that $t_1,\ldots,t_n \in \R$ and $x_1,\ldots,x_n \in D(A^m)\cap R(A^m)$. Then
  \begin{align*}
    &\nrmb{\hab{\ee^{-\frac{\pi}{s} \abs{t_k}}(A|_{H^\alpha_{\theta,A}(\varphi_s)})^{it_k}x_k}_{k=1}^n}_\beta \\&\hspace{2cm}= \nrmb{\oplus_{k=1}^n \hab{t\mapsto \ee^{-\frac{\pi}{s} (\abs{t}+\abs{t_k})}A^{i(t+t_k)+\theta}x_k}}_{\alpha(L^2(\R)^n;X)}\\
    &\hspace{2cm} = \nrmb{\oplus_{k=1}^n \hab{t\mapsto \ee^{-\frac{\pi}{s} (\abs{t-t_k}+\abs{t_k})}A^{it+\theta}x_k}}_{\alpha(L^2(\R)^n;X)}\\
        &\hspace{2cm} \leq \nrmb{\oplus_{k=1}^n \hab{t\mapsto \ee^{-\frac{\pi}{s} (\abs{t})}A^{it+\theta}x_k}}_{\alpha(L^2(\R)^n;X)} \\&\hspace{2cm}= \nrm{(x_k)_{k=1}^n}_{\beta}
  \end{align*}
  Now the $\beta$-boundedness of $\Gamma$ follows by the density of $D(A^m)\cap R(A^m)$ in $H^\alpha_{\theta,A}(\varphi_s)$, which proves the theorem.
\end{proof}

\begin{proof}[Sketch of an alternative proof of Theorem \ref{theorem:XalphaHinfty}]
Without loss of generality we may assume $\theta=0$. Fix $\frac{\pi}{s}<\nu<\sigma<\pi$, take $f \in H^\infty(\Sigma_\sigma)$ with $\nrm{f}_{H^\infty(\Sigma_\sigma)} \leq 1$
and fix $0<a,b,c<1$ such that $a+b = 1+c$.
Then, using a similar calculation as in the proof of Proposition \ref{proposition:functionspaceclose}, we can write for $\nu' = \pi-\nu$ and $x \in D(A^2)\cap R(A^2)$
\begin{equation*}
  f(A)x = \sum_{\epsilon=\pm 1}\frac{-\epsilon\ee^{-ia\pi}}{2 \pi i}  \int_{0}^\infty f(s^{-1}e^{\epsilon i\nu})\varphi_{1,a}(s \ee^{\epsilon i\nu'}A)x\frac{\ddn s}{s}.
\end{equation*}
To estimate $\nrm{f(A)}_{H^\alpha_{0,A}(\varphi_s)}$ we will first consider the integral for $\epsilon = 1$. Note that we have the identity
\begin{equation*}
  \varphi_{1,a}(\lambda A)\varphi{1_b}(\mu A) = \frac{\lambda^{1-b}\mu^b}{\mu-\lambda}\varphi_{1,c}(\lambda A) + \frac{\lambda^a\mu^{1-a}}{\lambda-\mu}\varphi_{1,c}(\mu A)
\end{equation*}
for $\abs{\arg \lambda}, \abs{\arg \mu}<\pi-\omega(A)$. Thus for $s,t>0$ and $\nu'' = \pm\pi (1- \frac{1}{s})$
\begin{equation*}
  \varphi_{1,a}(st\ee^{i\nu'} A)\varphi_{1,b}(t\ee^{i\nu''} A) = \kappa_1(t) \frac{s^{1-b}}{1+s}\varphi_{1,c}(st\ee^{i\nu'} A) + \kappa_2(t) \frac{s^a}{1+s}\varphi_{1,c}(t\ee^{i\nu''} A),
\end{equation*}
where $\kappa_1,\kappa_2:\R_+\to \C$ are bounded and continuous functions. Therefore
\begin{align*}
  \nrms{\int_{0}^\infty f(s^{-1}e^{ i\nu})&\varphi_{1,a}(s \ee^{ i\nu'}A)x\frac{\ddn s}{s}}_{H^\alpha_{0,A}(\varphi_{1,b}(\ee^{i\nu''}\cdot))} \\&= \nrms{\int_{0}^\infty f(s^{-1}t^{-1}e^{ i\nu})\varphi_{1,a}(st \ee^{ i\nu'}A)\varphi_{1,b}(t \ee^{ i\nu''}A)x \frac{\ddn t}{t}}_{\alpha(\R,\frac{\ddn t}{t};X)}\\
  &\lesssim \int_0^\infty \frac{s^{a}}{1+s} \nrm{\varphi_{1,c}(st\ee^{i\nu'} A)}_{\alpha(\R,\frac{\ddn t}{t};X)} x\frac{\ddn s}{s} \\&\hspace{1cm}+ \int_0^\infty \frac{s^{1-b}}{1+s} \nrm{\varphi_{1,c}(t\ee^{i\nu''} A)}_{\alpha(\R,\frac{\ddn t}{t};X)}x\frac{\ddn s}{s}
  \\ &\lesssim {\nrm{x}_{H^\alpha_{0,A}(\varphi_{1,c}(\ee^{i\nu'}\cdot))}+ \nrm{x}_{H^\alpha_{0,A}(\varphi_{1,c}(\ee^{i\nu''}\cdot))}}.
\end{align*}
Combining the estimates for $\nu''=\pm\pi (1- \frac{1}{s})$ with the isomorphism from Corollary \ref{corollary:hspacesisomorphic} with parameters $s_1=1$, $s_2=s$, $r_1=b,c$ and $r_2 = \frac{1}{2}$, we obtain
\begin{align*}
  \nrms{\int_{0}^\infty f(s^{-1}e^{ i\nu})&\varphi_{1,a}(s \ee^{ i\nu'}A)x\frac{\ddn s}{s}}_{H^\alpha_{0,A}(\varphi_{s})} \\&\lesssim {\nrm{x}_{H^\alpha_{0,A}(\varphi_{1,c}(\ee^{i\nu'}\cdot))} + \nrm{x}_{H^\alpha_{0,A}(\varphi_{s})}}
\end{align*}
and since $\nu' \leq (1- \frac{1}{s})$, applying Corollary \ref{corollary:hspacesisomorphic} once more yields
\begin{equation*}
  \nrm{x}_{H^\alpha_{0,A}(\varphi_{1,c}(\ee^{i\nu'}\cdot))} \lesssim \nrm{x}_{H^\alpha_{0,A}(\varphi_{s})}.
\end{equation*}
Doing a similar computation for $\epsilon = -1$ yields the theorem.
\end{proof}

If we have a strict inequality
\begin{equation*}
  \omega_{\BIP}(A|_{H^\alpha_{\theta,A}(\varphi_s)}) < \pi,
\end{equation*}
we can extend Theorem \ref{theorem:XalphaHinfty} to $s=1$. So in this case $A|_{H^\alpha_{\theta,A}}$ on $H^\alpha_{\theta,A}$ ``behaves'' like a Hilbert space operator, as it has $\BIP$ if and only if it has a bounded $H^\infty$-calculus.

\begin{theorem}\label{theorem:XABIPHinftyequiv}
 Let  $A$ be a sectorial operator on $X$, suppose that $\alpha$ is ideal and fix $\theta \in \R$. The following are equivalent:
  \begin{enumerate}[(i)]
    \item \label{it:XABIPHinftyequiv1} $A|_{H^\alpha_{\theta,A}}$ has $\BIP$ with $\omega_{\BIP}(A|_{H^\alpha_{\theta,A}})<\pi$
    \item \label{it:XABIPHinftyequiv2} There is a $1<\sigma<\frac{\pi}{\omega(A)}$ such that the spaces $H^\alpha_{\theta,A}(\varphi_s)$ are isomorphic for all $0<s<\sigma$.
    \item \label{it:XABIPHinftyequiv3} $A|_{H^\alpha_{\theta,A}}$ has a bounded $H^\infty$-calculus.
  \end{enumerate}
\end{theorem}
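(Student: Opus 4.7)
The proof will proceed via the cycle (iii)$\implies$(i)$\implies$(ii)$\implies$(iii), with (i)$\implies$(ii) being where the real work lies.

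The first implication is immediate: a bounded $H^\infty(\Sigma_\sigma)$-calculus satisfies $\lvert z^{it}\rvert\le \ee^{\sigma\lvert t\rvert}$ on $\Sigma_\sigma$, giving $\BIP$ with $\omega_{\BIP}\le \omega_{H^\infty}<\pi$. For (ii)$\implies$(iii), pick any $s_0\in(1,\sigma)$. By hypothesis $H^\alpha_{\theta,A}(\varphi_{s_0})\simeq H^\alpha_{\theta,A}(\varphi_1)=H^\alpha_{\theta,A}$ as Banach spaces, so by Proposition \ref{proposition:AdefHalpha} the induced operators agree under this identification. Theorem \ref{theorem:XalphaHinfty} (which crucially requires $s_0>1$) says $A|_{H^\alpha_{\theta,A}(\varphi_{s_0})}$ has a bounded $H^\infty$-calculus with angle at most $\pi/s_0<\pi$; transporting this along the isomorphism yields (iii).

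For the main implication (i)$\implies$(ii), the strategy is to use the $\BIP$ hypothesis together with a Fourier-analytic identity relating the exponential weights that govern the equivalent norms in Proposition \ref{proposition:Hequivnorm}. Translating $\nrm{A^{it}x}_{H^\alpha_{\theta,A}}\le C\ee^{\omega\abs{t}}\nrm{x}_{H^\alpha_{\theta,A}}$ (with $\omega<\pi$) via that proposition yields, for $x\in D(A^m)\cap R(A^m)$,
\[
 \nrm{\ee^{-\pi\abs{u-t}}A^{iu}x}_{\alpha(\R;X)} \le C\ee^{\omega\abs{t}}\nrm{x}_{H^\alpha_{\theta,A}},\qquad t\in\R.
\]
Computing the ratio of Fourier transforms of $\ee^{-\pi\abs{\cdot}}$ and $\ee^{-\pi/s\abs{\cdot}}$ and performing a partial-fraction decomposition gives the distributional identity
\[
 \ee^{-\pi/s\abs{u}} \;=\; \tfrac{1}{s}\ee^{-\pi\abs{u}} \;+\; c_s\!\int_\R \ee^{-\pi\abs{u-t}}\ee^{-\pi/s\abs{t}}\,\ddn t
\]
for $s>1$, with an explicit $c_s>0$. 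Multiplying by $A^{iu}x$, taking $\alpha(\R;X)$-norms and applying the translated $\BIP$ estimate bounds the integral by a constant multiple of $\nrm{x}_{H^\alpha_{\theta,A}}$, provided $\omega<\pi/s$. This gives $\nrm{x}_{H^\alpha_{\theta,A}(\varphi_s)}\lesssim \nrm{x}_{H^\alpha_{\theta,A}}$; together with the automatic embedding from Corollary \ref{corollary:hspacesisomorphic}, one obtains $H^\alpha_{\theta,A}(\varphi_s)\simeq H^\alpha_{\theta,A}$ for every $s\in[1,\pi/\omega)$, so one may take any $\sigma\in(1,\pi/\omega)$.

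For $s<1$ the weights are swapped and the analogous identity
$\ee^{-\pi\abs{u}} = s\,\ee^{-\pi/s\abs{u}}+c'_s\!\int_\R\ee^{-\pi\abs{u-t}}\ee^{-\pi/s\abs{t}}\,\ddn t$ with $c'_s=(1-s^2)\pi/(2s)$, combined with the same $\BIP$ estimate, yields
$\nrm{x}_{H^\alpha_{\theta,A}}\le s\,\nrm{x}_{H^\alpha_{\theta,A}(\varphi_s)}+C''_s\nrm{x}_{H^\alpha_{\theta,A}}$ where $C''_s=(1-s^2)\pi C/(\pi-s\omega)\to 0$ as $s\to 1^-$. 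Hence for $s$ close enough to $1$ the extra term may be absorbed, giving the isomorphism for $s\in(s_*,1]$ for some $s_*<1$. Combined with the range $[1,\sigma)$ this covers a neighbourhood of $1$, and the remaining small-$s$ range is then handled by applying the intersection identity in Corollary \ref{corollary:hspacesisomorphic} with $s_1=s$ small and $s_2$ in the already-identified interval, using (iii) (now known from (i) via the previous steps and Theorem \ref{theorem:XalphaHinfty}) to control the shifted spaces $H^\alpha_{\theta,A}(\varphi_s^{\pm\nu})$.

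The main obstacle will be the last step: pushing $s_*$ all the way down to $0$. The neighbourhood-of-$1$ isomorphism is clean, but for very small $s$ one cannot avoid exploiting the bounded $H^\infty$-calculus on $H^\alpha_{\theta,A}$ obtained as a by-product, and combining it with the shifted-sector identity of Corollary \ref{corollary:hspacesisomorphic} so that the bounded calculus allows one to identify the intersection of two shifted $\varphi_s$-spaces with $H^\alpha_{\theta,A}$, and thence each of them with $H^\alpha_{\theta,A}(\varphi_s)$. The technical delicacy is ensuring that the two $\alpha$-norms compare uniformly in $s$; the key is that $c_s$ and $c'_s$ remain controlled on any compact subinterval of $(0,\pi/\omega(A))$.
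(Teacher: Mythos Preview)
Your implications (iii)$\Rightarrow$(i) and (ii)$\Rightarrow$(iii) match the paper's. The divergence is in (i)$\Rightarrow$(ii), and there is a genuine gap in your small-$s$ argument.

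Your convolution identity cleanly handles $s\in[1,\pi/\omega)$, and the absorption trick extends this to $(s_*,1]$ for some $s_*<1$. But statement (ii) requires the spaces $H^\alpha_{\theta,A}(\varphi_s)$ to be mutually isomorphic for \emph{all} $0<s<\sigma$, so you must reach arbitrarily small $s$. Your proposed fix---invoking (iii) plus the shifted-sector identity of Corollary~\ref{corollary:hspacesisomorphic}---does not close the gap as written. The obstacle is that your convolution representation forces the translated $\BIP$ bound
\[
\nrm{\ee^{-\pi\abs{u-t}/s'}A^{iu+\theta}x}_{\alpha(\R;X)}\;\simeq\;\nrm{A^{it}x}_{H^\alpha_{\theta,A}(\varphi_{s'})}
\]
to be estimated in the $H^\alpha_{\theta,A}(\varphi_{s'})$-norm for the \emph{outer} exponent $s'$, and the only space on which you control $\BIP$ is $s'=1$. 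Bootstrapping does not improve the $\BIP$ angle (you recover only $\omega_{\BIP}\le\omega$), and the intersection identity in Corollary~\ref{corollary:hspacesisomorphic} gives you $H^\alpha_{\theta,A}(\varphi_s^{+\nu})\cap H^\alpha_{\theta,A}(\varphi_s^{-\nu})=H^\alpha_{\theta,A}$ but provides no mechanism to identify the individual shifted spaces with $H^\alpha_{\theta,A}(\varphi_s)$ using only the calculus on $H^\alpha_{\theta,A}$.

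The paper avoids this entirely with a much simpler argument: it compares $H^\alpha_{\theta,A}(\varphi_s)$ directly with $H^\alpha_{\theta,A}(\varphi_{s'})$ for \emph{arbitrary} $0<s'<s<\sigma$, not just $s'=1$. Writing $\|\ee^{-\pi\abs{t}/s}A^{it+\theta}x\|_{\alpha}\le\sum_{n\in\Z}\|\ee^{-\pi\abs{t}/s}A^{it+\theta}\ind_{[n,n+1)}x\|_\alpha$, shifting each piece to $[0,1)$, and estimating the resulting integer power $A^{in}$ by $\nrm{(A|_{H^\alpha_{\theta,A}})^{in}}\le C\ee^{\pi\abs{n}/\sigma}$ leaves a fixed factor $\|A^{it+\theta}\ind_{[0,1)}x\|_\alpha$ that can then be bounded by $\ee^{\pi/s'}\nrm{x}_{H^\alpha_{\theta,A}(\varphi_{s'})}$ for \emph{any} $s'>0$. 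The geometric series $\sum_n\ee^{-\pi(1/s-1/\sigma)\abs{n}}$ converges because $s<\sigma$, and the whole range $(0,\sigma)$ falls out in one line. The point is that the interval-splitting decouples the $\BIP$ input (always at $s=1$) from the target exponent $s'$, whereas your convolution identity ties them together.
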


\begin{proof}
The implication \ref{it:XABIPHinftyequiv2} $\Rightarrow$ \ref{it:XABIPHinftyequiv3} follows directly from Theorem \ref{theorem:XalphaHinfty} and \ref{it:XABIPHinftyequiv3} $\Rightarrow$ \ref{it:XABIPHinftyequiv1} is immediate from \eqref{eq:HinftyBIP}. For \ref{it:XABIPHinftyequiv1} $\Rightarrow$ \ref{it:XABIPHinftyequiv2} let $\sigma>1$ be such that
\begin{equation}\label{eq:BIPforH}
  \nrmb{(A|_{H^\alpha_{\theta,A}})^{it}} \leq C\ee^{\frac{\pi}{\sigma}\abs{t}}, \qquad t \in \R.
\end{equation}
Fix $x \in D(A^m)\cap R(A^m)$ with $m \in \N$ such that $\abs{\theta}+1<m$ and take $0<s'<s<\sigma$. Then by Proposition \ref{proposition:Hequivnorm}, \eqref{eq:BIPforH} and the ideal property of $\alpha$ we have
\begin{align*}
  \nrm{x}_{H^\alpha_{\theta,A}(\varphi_{s})} &\lesssim \nrmb{t\mapsto \ee^{-\frac{\pi}{s}\abs{t}}A^{it+\theta}(t)x}_{\alpha(\R;X)}\\
  &\leq \sum_{n \in \Z}\nrmb{t\mapsto \ee^{-\frac{\pi}{s}\abs{t}}A^{it+\theta}\ind_{[n,n+1)}(t)x}_{\alpha(\R;X)}\\
  &\leq \ee^{\frac{\pi}{s}} \sum_{n \in \Z}\ee^{-\frac{\pi}{s} \abs{n}} \nrmb{(A|_{H^\alpha_{\theta,A}})^{in}} \nrmb{t\mapsto A^{it+\theta}\ind_{[0,1)}(t) x}_{\alpha(\R;X)}\\
  &\leq \ee^{\pi(s^{-1}+s'^{-1})} \sum_{n \in \Z}\ee^{-\pi (s^{-1}-\sigma^{-1})\abs{n}}  \nrmb{t\mapsto \ee^{-\frac{\pi}{s'} \abs{t}}A^{it+\theta}\ind_{[0,1)}(t) x}_{\alpha(\R;X)}\\
  &\lesssim \nrm{x}_{H^\alpha_{\theta,A}(\varphi_{s'})}.
\end{align*}
Moreover by Corollary \ref{corollary:hspacesisomorphic} we have the converse estimate
\begin{equation*}
  \nrm{x}_{H^\alpha_{\theta,A}(\varphi_{s'})} \lesssim \nrm{x}_{H^\alpha_{\theta,A}(\varphi_{s})},
\end{equation*}
so by the density of $D(A^m)\cap R(A^m)$ the spaces $H^\alpha_{\theta,A}(\varphi_{s'})$ and $H^\alpha_{\theta,A}(\varphi_{s})$ are isomorphic.
\end{proof}

Using Theorem \ref{theorem:XABIPHinftyequiv}, we end this section with another theorem on the equivalence of discrete and continuous square functions, as treated in Proposition \ref{proposition:equivalencediscretecontinuous} and Corollary \ref{corollary:equivalencediscretecontinuous}. This time for a very specific choice of $\psi$ and under the assumption that one of the  equivalent statements of  Theorem \ref{theorem:XABIPHinftyequiv} holds. Note that in this special case we can also omit the supremum over $t \in [1,2]$ for the discrete square functions.

\begin{proposition}\label{proposition:equivdiscretecontinuousstable}
   Let $A$ be a sectorial operator on $X$ and suppose that $\alpha$ is ideal. Assume that $A|_{H^\alpha_{0,A}}$ has a bounded $H^\infty$-calculus on $H^\alpha_{0,A}$. Then there is a $1<\sigma<\frac{\pi}{\omega(A)}$ such that for all $0<s<\sigma$, and  $x \in D(A)\cap R(A)$ we have
 \begin{align*}
  \nrm{\varphi_s(\cdot A) x}_{\alpha(\R_+,\frac{\ddn t}{t};X)} \simeq \nrmb{(\varphi_s(2^nA) x)_{n\in \Z}}_{\alpha(\Z;X)}.
  \end{align*}
\end{proposition}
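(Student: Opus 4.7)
The plan is to use Theorem \ref{theorem:XABIPHinftyequiv} to pick $1<\sigma<\pi/\omega(A)$ so that the spaces $H^\alpha_{0,A}(\varphi_{s'})$ are mutually isomorphic for all $s' \in (0,\sigma)$, fix any $0<s<\sigma$, and then squeeze the discrete square function at $t=1$ against the continuous square function through Proposition \ref{proposition:equivalencediscretecontinuous}. A first useful observation is that the equivalence in Proposition \ref{proposition:Hequivnorm} between the discrete square function at parameter $t$ and the periodization norm on $[-b,b]$ is established with constants independent of $t \in [1,2]$, since the proof only uses $|u^{i(\xi+2mb)}|=1$. Consequently, $\sup_{t \in [1,2]}\nrm{(\varphi_s(2^ntA)x)_{n\in\Z}}_{\alpha(\Z;X)} \simeq \nrm{(\varphi_s(2^nA)x)_{n\in\Z}}_{\alpha(\Z;X)}$, and the same for any $\psi = \varphi_{s,\epsilon}$ with $\varphi_{s,\epsilon}(z) := \varphi_s(\ee^{i\epsilon}z)$, since the Mellin-transform argument in Proposition \ref{proposition:Hequivnorm} applies verbatim (the only change being $g(\xi)$ gets multiplied by $\ee^{-\epsilon\xi}$).

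For the bound $\nrm{(\varphi_s(2^nA)x)_{n\in\Z}}_{\alpha(\Z;X)} \lesssim \nrm{\varphi_s(\cdot A)x}_{\alpha(\R_+,\frac{\ddn t}{t};X)}$, I would combine the above $t$-uniformity with Proposition \ref{proposition:equivalencediscretecontinuous} to reduce the left-hand side to $\max_{\epsilon = \pm\delta}\nrm{x}_{H^\alpha_{0,A}(\varphi_{s,\epsilon})}$ for a small $\delta>0$. Corollary \ref{corollary:hspacesisomorphic} (applied with $s_1=s$, $r_1=1/2$, and $\nu = \delta$) provides a continuous embedding $H^\alpha_{0,A}(\varphi_{s_2}) \hookrightarrow H^\alpha_{0,A}(\varphi_{s,\epsilon})$ for $s_2 = \pi s/(\pi - \delta s)$. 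Choosing $\delta$ small enough so that $s_2 < \sigma$, Theorem \ref{theorem:XABIPHinftyequiv} gives $H^\alpha_{0,A}(\varphi_{s_2}) \simeq H^\alpha_{0,A}(\varphi_s)$, hence $\nrm{x}_{H^\alpha_{0,A}(\varphi_{s,\epsilon})} \lesssim \nrm{x}_{H^\alpha_{0,A}(\varphi_s)} = \nrm{\varphi_s(\cdot A)x}_{\alpha(\R_+,\frac{\ddn t}{t};X)}$.

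For the reverse bound $\nrm{\varphi_s(\cdot A)x}_{\alpha(\R_+,\frac{\ddn t}{t};X)} \lesssim \nrm{(\varphi_s(2^nA)x)_{n\in\Z}}_{\alpha(\Z;X)}$, Proposition \ref{proposition:equivalencediscretecontinuous} together with the $t$-uniformity reduces matters to showing $\nrm{(\varphi_{s,\epsilon}(2^nA)x)_{n\in\Z}}_{\alpha(\Z;X)} \lesssim \nrm{(\varphi_s(2^nA)x)_{n\in\Z}}_{\alpha(\Z;X)}$ for all $|\epsilon|<\delta$. I would write $\varphi_{s,\epsilon}(z) = m_\epsilon(z)\,\varphi_s(z)$ with
\begin{equation*}
  m_\epsilon(z) = \ee^{i\epsilon s/2}\frac{1+z^s}{1+\ee^{i\epsilon s}z^s},
\end{equation*}
which lies in $H^\infty(\Sigma_{\sigma'})$ for some $\omega(A) < \sigma' < \pi/s$ with uniform bound as $|\epsilon|\leq \delta$ is small. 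By Theorem \ref{theorem:Hinftyalphabounded} applied to the operator $A|_{H^\alpha_{0,A}}$ (which has a bounded $H^\infty$-calculus by hypothesis), there is a Euclidean structure $\beta$ on $H^\alpha_{0,A}$ such that the family $\{m_\epsilon(2^nA|_{H^\alpha_{0,A}}) : n \in \Z,\, |\epsilon|<\delta\}$ is $\beta$-bounded.

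The main obstacle is to transfer this $\beta$-boundedness on $H^\alpha_{0,A}$ to the desired control of the discrete $\alpha$-square function on $X$. To do this I would combine two ingredients: the representation of the discrete square function as an $\alpha$-norm of the periodization of $g(\xi)A^{i\xi}x$ from Proposition \ref{proposition:Hequivnorm}, and the bounded imaginary powers of $A|_{H^\alpha_{0,A}}$ (giving geometric decay of $\nrm{(A|_{H^\alpha_{0,A}})^{2imb}}$ versus the exponential weight $\ee^{-\pi|2mb|/s}$ when $s<\sigma$), which together yield uniform-in-$n$ boundedness of $m_\epsilon(2^nA)$ acting on the sequence $(\varphi_s(2^nA)x)_{n \in \Z}$ in the $\alpha(\Z;X)$-norm. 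This technical transference is the delicate part and requires careful bookkeeping of the embeddings $H^\alpha_{0,A} \hookrightarrow \alpha(\R;X)$ implicit in Proposition \ref{proposition:Hequivnorm}.
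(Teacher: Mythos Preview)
Your first direction (discrete $\lesssim$ continuous) is correct and essentially identical to the paper's argument; the paper carries out the weight manipulation $\ee^{-\frac{\pi}{s}|t|}\ee^{-\epsilon t} \leq \ee^{-\frac{\pi}{s'}|t|}$ directly rather than citing Corollary \ref{corollary:hspacesisomorphic}, but this is the same content.

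The second direction has a genuine gap. Your reduction via Proposition \ref{proposition:equivalencediscretecontinuous} leads to comparing the discrete square functions for $\varphi_{s,\epsilon}$ and $\varphi_s$ at the \emph{same} parameter $s$, and you correctly note that the multiplier $m_\epsilon(z) = \varphi_{s,\epsilon}(z)/\varphi_s(z)$ is uniformly in $H^\infty$. But turning this into the desired estimate requires the family $\{m_\epsilon(2^nA) : n \in \Z\}$ to be $\alpha$-bounded on $X$, and the bounded $H^\infty$-calculus you have is only on $H^\alpha_{0,A}$. The $\beta$-structure from Theorem \ref{theorem:Hinftyalphabounded} lives on $H^\alpha_{0,A}$, and there is no mechanism to transfer $\beta$-boundedness there to $\alpha$-boundedness of the sequence $(m_\epsilon(2^nA)\varphi_s(2^nA)x)_n$ in $X$; your sketch of a ``transference'' using BIP decay and the periodization formula does not close, since the weight ratio $\ee^{-\epsilon\xi}$ between the two periodizations is unbounded in $\xi$.

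The paper avoids this entirely by reversing the order of operations: it first uses Theorem \ref{theorem:XABIPHinftyequiv}\ref{it:XABIPHinftyequiv2} on the \emph{continuous} side to pass from $\varphi_s$ to $\varphi_{s'}$ with $s' = \pi s/(\pi + \delta s) < s$, and only then applies Proposition \ref{proposition:equivalencediscretecontinuous}. On the discrete side one then needs to compare the periodization with weight $\ee^{-\frac{\pi}{s'}|\xi|}\ee^{-\epsilon\xi}$ against that with weight $\ee^{-\frac{\pi}{s}|\xi|}$, and now the pointwise inequality $\ee^{-\frac{\pi}{s'}|\xi|}\ee^{-\epsilon\xi} \leq \ee^{-\frac{\pi}{s}|\xi|}$ holds because $\pi/s' - \delta = \pi/s$. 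This makes the second direction as simple as the first; no multiplier or transference argument is needed.
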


\begin{proof}
Take $1<\sigma<\pi/\omega(A)$ as in Theorem \ref{theorem:XABIPHinftyequiv}\ref{it:XABIPHinftyequiv2}, let
$0<s<\sigma$ and $0<\delta< \frac{\pi}{s}-\omega(A)$. Then by Proposition \ref{proposition:equivalencediscretecontinuous} and Proposition \ref{proposition:Hequivnorm} we have
\begin{align*}
  \nrmb{(\varphi_s(2^nA) x)_{n\in \Z}}_{\alpha(\Z;X)} &\lesssim \max_{\epsilon = \pm \delta} \,\nrm{t \mapsto \varphi_s(t \ee^{i\epsilon} A) x}_{\alpha(\R_+,\frac{\ddn t}{t};X)} \\
  &\lesssim \max_{\epsilon = \pm \delta} \,\nrm{t \mapsto \ee^{-\frac{\pi}{s}\abs{t}}\ee^{-\epsilon t} A^{it}x}_{\alpha(\R;X)}\\
  &\leq  \nrm{t \mapsto \ee^{-\frac{\pi}{s'}\abs{t}} A^{it}x}_{\alpha(\R;X)}\\
  &\lesssim \nrm{\varphi_{s'}(\cdot  A) x}_{\alpha(\R_+,\frac{\ddn t}{t};X)}
\end{align*}
with $s' = \frac{\pi s}{\pi-\delta s}$. So taking $\delta$ small enough such that $0<s'<\sigma$ it follows from Theorem \ref{theorem:XABIPHinftyequiv}\ref{it:XABIPHinftyequiv2} that
\begin{equation*}
  \nrmb{(\varphi_s(2^nA) x)_{n\in \Z}}_{\alpha(\Z;X)} \lesssim \nrm{\varphi_s(\cdot A) x}_{\alpha(\R_+,\frac{\ddn t}{t};X)}.
\end{equation*}
For the converse inequality let $s' = \frac{\pi s}{\pi+\delta s}$ for some $0<\delta< \frac{\pi}{s}-\omega(A)$. Then we have by Proposition \ref{proposition:equivalencediscretecontinuous} and Proposition \ref{proposition:Hequivnorm}
\begin{align*}
  \nrm{\varphi_{s}(\cdot  A) x}_{\alpha(\R_+,\frac{\ddn t}{t};X)}  &\lesssim \nrm{\varphi_{s'}(\cdot  A) x}_{\alpha(\R_+,\frac{\ddn t}{t};X)} \\
  &\lesssim \sup_{\abs{\epsilon}< \delta}\sup_{t \in [1,2]}\nrmb{(\varphi_{s'}(2^nt\ee^{i\epsilon} A)x)_{n\in \Z}}_{\alpha(\Z;X)}\\
  &\lesssim\sup_{\abs{\epsilon}< \delta} \nrms{\sum_{m \in \Z} \ee^{-\pi\abs{\cdot+2mb}/s'} \ee^{-\epsilon(\cdot+2mb)} A^{i(\cdot+2mb)}}_{\alpha([-b,b];X)}\\
   &\leq \nrms{\sum_{m \in \Z}\ee^{-\pi\abs{\cdot+2mb}/s} A^{i(\cdot+2mb)}}_{\alpha([-b,b];X)}\\
  &\lesssim \nrmb{(\varphi_{s}(2^n A)x)_{n\in \Z}}_{\alpha(\Z;X)},
\end{align*}
finishing the proof.
\end{proof} 
\chapter{Some counterexamples}\label{part:6}
In Chapter \ref{part:4} we introduced various properties of a sectorial operator $A$ on $X$ and proved the following relations between these properties:
\vspace{-10pt}
\begin{center}
  \begin{tikzpicture}
  \node (aH)[text width=2.3cm] at (0,2) {\framebox{\parbox{2cm}{$\alpha$-bounded \\ $H^\infty$-calculus}}};
  \node (aB)[text width=2.6cm]  at (3.3,2) {\framebox{\parbox{2.3cm}{$\alpha$-$\BIP$ with\\ $\omega_{\alphaBIP}(A)<\pi$}}};
  \node (H)[text width=2.3cm]  at (6.6,2)
  {\framebox{\parbox{2cm}{Bounded \\ $H^\infty$-calculus}}};
  \node (B)[text width=2.3cm]  at (9.9,2)
  {\framebox{\parbox{2cm}{$\BIP$ with \\$\omega_{\BIP}(A)<\pi$}}};
  \node (a)[text width=2.6cm] at (3.3,0)
   {\framebox{\parbox{2.3cm}{$\alpha$-sectorial}}};
  \node (aa)[text width=2.5cm] at (9.9,0)
   {\framebox{\parbox{2cm}{Almost \\ $\alpha$-sectorial}}};
   \node (ai)[text width=2.3cm] at (11.3,1)
   {$\alpha$ ideal};

  \draw[-implies,double equal sign distance] (aH) to node [above,midway] {(1)} (aB);
  \draw[-implies,double equal sign distance] (aB) to node [above,midway] {(2)} (H);
  \draw[-implies,double equal sign distance] (H) to node [above,midway] {(3)} (B);
  \draw[-implies,double equal sign distance, shorten <=4pt, shorten >=6pt] (aB) to node [left,midway] {(4)} (a);
  \draw[-implies,double equal sign distance, shorten <=2pt, shorten >=2pt] (B) to node [left,midway] {(5)} (aa);
  \draw[-implies,double equal sign distance, shorten <=10pt, shorten >=10pt] (a) to node [above,midway] {(6)} (aa);
\end{tikzpicture}
\end{center}
Moreover we noted that (1) and (2) are `if and only if' statements if $\alpha=\ell^2$ or $\alpha=\gamma$ and $X$ has Pisier's contraction property. Statements (3), (4) and (5) cannot be turned into `if and only if' statements for $\alpha=\ell^2$ or $\alpha=\gamma$. Indeed, there are counterexamples on spaces admitting an unconditional Schauder basis disproving the converse of (3), (4) and (5) for $\mc{R}$-boundedness, which is equivalent to $\ell^2$- and $\gamma$-boundedness if $X$ has finite cotype by Proposition \ref{proposition:gaussianradermacherl2comparison}. We refer to the survey of Fackler \cite{Fa15} and the references therein for an overview of these counterexamples

In this chapter we will show that (6) can also not be turned into an `if and only if' statement for any unconditionally stable Euclidean structure $\alpha$ on a Banach space admitting an unconditional Schauder basis. Moreover in the same setting we will show that even the weakest property, the almost $\alpha$-sectoriality of $A$, does not follow from the sectoriality of $A$.

In Chapter \ref{part:4} we have also seen that under reasonable assumptions on $\alpha$ the angles of (almost) $\alpha$-sectoriality, ($\alpha$-)$\BIP$ and of the ($\alpha$-)-bounded $H^\infty$-calculus are equal whenever $A$ has these properties. Strikingly absent in this list is the angle of sectoriality of $A$. In Section \ref{section:BIP} we already remarked that it is possible to have $\omega_{\BIP}(A) \geq \pi$ and thus $\omega_{\BIP}(A) > \omega(A)$, see \cite[Corollary 5.3]{Ha03}. Moreover in \cite{Ka03} it was shown that it is also possible to have $\omega_{H^\infty}(A)>\omega(A)$. However, the Banach space used in  \cite{Ka03} is quite unnatural. We will end this chapter with an example of a sectorial operator with $\omega_{H^\infty}(A)>\omega(A)$ on a closed subspace of $L^p$, using the $H^\alpha_{\theta,A}$-spaces introduced in Chapter \ref{part:5}.

\section{Schauder multiplier operators}\label{section:schaudersectorial}
We start by introducing the class of operators that we will use in our examples. This will be the class of so-called Schauder multiplier operators.
The idea of using Schauder multiplier operators to construct counterexamples in the context of sectorial operators goes back to Clement and Baillon \cite{BC91} and Venni \cite{Ve93}, where Schauder multipliers were used to construct examples of sectorial operators without $\BIP$. It has since proven to be a fruitful method to construct counterexamples in this context, see for example \cite{AL19, CDMY96,Fa13, Fa14, Fa15, Fa16, KL00, KL02, La98, Le04}. For $L^1(S)$- and $C(K)$-spaces different counterexamples, connected to the breakdown of the theory of singular integral operators, are available, see e.g. \cite{HKK04,KK08,KW05}.

\subsection*{Schauder decompositions}
Let $(X_k)_{k=1}^\infty$ be a sequence of closed subspaces of $X$. Then $(X_k)_{k=1}^\infty$ is called a \emph{Schauder decomposition} of $X$ if every $x \in X$ has a unique representation of the form $x = \sum_{k=1}^\infty x_k$ with $x_k \in X_k$ for every $k \in \N$. A Schauder decomposition induces a sequence of coordinate projections $(P_k)_{k=1}^\infty$ on $X$ by putting
$$P_k \has{\sum_{j=1}^\infty x_j} := x_k, \qquad k \in \N.$$
We denote the partial sum projection by $S_n := \sum_{k=1}^n P_k$. Both the set of coordinate and the set of partial sum projections are uniformly bounded. A Schauder decomposition is called \emph{unconditional} if for every $x \in X$, the expansion $x = \sum_{k=1}^\infty x_k$ with $x_k \in X_k$ converges unconditionally. In this case the set of operators $U_\epsilon := \sum_{k=1}^\infty \epsilon_kP_k$, where $\epsilon = (\epsilon_k)_{k=1}^\infty$ is a sequence of signs, is also uniformly bounded.

 A Schauder decomposition $(X_k)_{k=1}^\infty$ of $X$ with $\dim(X_k)=1$ for all $k \in \N$ is called a \emph{Schauder basis}. In this case we represent $(X_k)_{k=1}^\infty$ by $\mb{x} = (x_k)_{k=1}^\infty$ with $x_k \in X_k$ for all $k \in \N$. Then there is a unique sequence of scalars $(a_k)_{k=1}^\infty$ such that  $x = \sum_{k=1}^\infty a_k x_k$ for any $x \in X$.
 The sequence of linear functionals $\mb{x}^* = (x_k^*)_{k=1}^\infty$ defined by
\begin{equation*}
  x_k^*\has{\sum_{j=1}^\infty a_j x_j} := a_k, \qquad k \in \N,
\end{equation*}
is called the biorthogonal sequence of $\mb{x}$, which is a Schauder basis of $\overline{\spn}\cbrace{x_k^*:k \in \N}$. If $\mb{x}$ is unconditional, then $\mb{x}^*$ is as well.
If $\mb{x}$ is a Schauder basis for $X$ and $\mb{y}$ is a Schauder basis for $Y$, then we say that $\mb{x}$ and $\mb{y}$ are \emph{equivalent} if $\sum_{k=1}^\infty a_kx_k$ converges in $X$ if and only if  $\sum_{k=1}^\infty a_ky_k$ in $Y$ for any sequence of scalars $(a_k)_{k=1}^\infty$. In this case $X$ and $Y$ are isomorphic.
For a further introduction to Schauder decompositions and bases, we refer to \cite{LT77}.

\subsection*{Schauder mutliplier operators}
Fix $0<\sigma<\pi$ and let $(\lambda_k)_{k=1}^\infty$ be a sequence in $\Sigma_\sigma$. We call $(\lambda_k)_{k=1}^\infty$ \emph{Hadamard} if $\abs{\lambda_1}>0$ and there is a $c>1$ such that $\abs{\lambda_{k+1}}\geq c \, \abs{\lambda_{k}}$ for all $k \in \N$.
Let $(X_k)_{k=1}^\infty$ be a Schauder decomposition of $X$ and let $(\lambda_k)_{k=1}^\infty$ be either a Hadamard sequence or an increasing sequence in $\R_+$. Consider the unbounded diagonal operator defined by
  \begin{align*}
    Ax&:= \sum_{k=1}^\infty \lambda_k P_k x,\\
    D(A)&:= \cbraces{x \in X: \sum_{k=1}^\infty \lambda_k P_k x \text{ converges in } X}.
  \end{align*}
We call $A$ the \emph{Schauder multiplier operator} associated to $(X_k)_{k=1}^\infty$ and $(\lambda_k)_{k=1}^\infty$. We will first establish that this is a sectorial operator, for which we will need the following lemma.

\begin{lemma}\label{lemma:hadamardincreasingestimate}
 Let $(\lambda_k)_{k=1}^\infty$ be either a Hadamard sequence or an increasing sequence in $\R_+$. There is a $C>0$ such that for all $\lambda \in \C \setminus \cbrace{\lambda_k:k \in \N}$
  \begin{equation*}
    \sum_{k=1}^\infty\abss{\frac{\lambda}{\lambda - \lambda_{k+1}}-\frac{\lambda}{\lambda-\lambda_{k}}} \leq C \,\sup_{k \in \N} \has{\frac{\max\cbrace{\abs{\lambda},\abs{\lambda_k}}}{\abs{\lambda-\lambda_k}}}^2.
  \end{equation*}
 \end{lemma}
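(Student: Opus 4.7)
The plan is to start from the partial fractions identity
\[
\frac{\lambda}{\lambda-\lambda_{k+1}}-\frac{\lambda}{\lambda-\lambda_{k}}=\frac{\lambda(\lambda_{k+1}-\lambda_{k})}{(\lambda-\lambda_{k+1})(\lambda-\lambda_{k})},
\]
so that the sum $S(\lambda)$ on the left of the lemma equals $\sum_{k}|\lambda|\,|\lambda_{k+1}-\lambda_{k}|/(|\lambda-\lambda_{k}|\,|\lambda-\lambda_{k+1}|)$. Denoting by $M=M(\lambda)$ the right-hand side of the lemma, the triangle inequality $|\lambda-\lambda_{k}|\le 2\max\{|\lambda|,|\lambda_{k}|\}$ gives $M\ge\tfrac{1}{4}$, so $\sqrt{M}\le 2M$, and it suffices to prove $S(\lambda)\lesssim\sqrt{M}$. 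I would partition $\N$ into
\[
K_{-}:=\{k:|\lambda_{k}|\le|\lambda|/2\},\quad K_{0}:=\{k:|\lambda|/2<|\lambda_{k}|<2|\lambda|\},\quad K_{+}:=\{k:|\lambda_{k}|\ge 2|\lambda|\},
\]
so that on $K_{-}\cup K_{+}$ one has $|\lambda-\lambda_{k}|\ge\tfrac{1}{2}\max\{|\lambda|,|\lambda_{k}|\}$.

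First I would handle $K_{-}$ and $K_{+}$ and show that these contribute only an absolute constant. In the Hadamard case the geometric decay yields $\sum_{k\in K_{-}}|\lambda_{k}|\lesssim|\lambda|$ and $\sum_{k\in K_{+}}|\lambda_{k}|^{-1}\lesssim|\lambda|^{-1}$, with constants depending only on the Hadamard ratio $c$. In the increasing-$\R_{+}$ case, $\lambda_{k+1}-\lambda_{k}\ge 0$ and the sum telescopes,
\[
\sum_{k\in K_{-}}\frac{|\lambda|(\lambda_{k+1}-\lambda_{k})}{|\lambda-\lambda_{k}|\,|\lambda-\lambda_{k+1}|}\le\frac{4}{|\lambda|}\sum_{k\in K_{-}}(\lambda_{k+1}-\lambda_{k})\le 2,
\]
and similarly on $K_{+}$. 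The finitely many transitional indices (where $k$ and $k+1$ lie in different regions) are absorbed with the same type of estimate.

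The interesting region is $K_{0}$. In the Hadamard case the ratio condition forces $|K_{0}|\le\log_{c}4+1$, so there are only finitely many terms there, and each is controlled by AM--GM:
\[
\frac{|\lambda|(|\lambda_{k}|+|\lambda_{k+1}|)}{|\lambda-\lambda_{k}|\,|\lambda-\lambda_{k+1}|}\le|\lambda|^{2}\left(\frac{1}{|\lambda-\lambda_{k}|^{2}}+\frac{1}{|\lambda-\lambda_{k+1}|^{2}}\right)\le 2M.
\]
This gives a $K_{0}$-contribution depending only on $c$, and together with the preceding paragraph it settles the Hadamard case.

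For the increasing $\R_{+}$-sequence $|K_{0}|$ can be arbitrarily large, and this is the main obstacle. My plan is to use the fundamental-theorem-of-calculus estimate
\[
\left|\frac{1}{\lambda-\lambda_{k+1}}-\frac{1}{\lambda-\lambda_{k}}\right|\le\int_{\lambda_{k}}^{\lambda_{k+1}}\frac{d\mu}{|\lambda-\mu|^{2}}
\]
and to separate off the unique (if any) index $k^{\ast}$ with $\lambda_{k^{\ast}}\le\re\lambda<\lambda_{k^{\ast}+1}$. The $k^{\ast}$-th term is estimated trivially by $|\lambda-\lambda_{k^{\ast}}|^{-1}+|\lambda-\lambda_{k^{\ast}+1}|^{-1}$, contributing $\lesssim\sqrt{M}/|\lambda|$. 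For the other intervals the integrand $1/|\lambda-\mu|^{2}=1/((\mu-\re\lambda)^{2}+(\im\lambda)^{2})$ has an $\arctan$-antiderivative, so the partial sums telescope to a finite $\arctan$-difference even when $\im\lambda=0$; a dichotomy on $|\re\lambda-\lambda_{k^{\ast}}|$ versus $|\im\lambda|$ then finishes. When the real distance dominates, $\pi/2-\arctan(x)\le 1/x$ for $x>0$ yields the integral bound $\lesssim 1/|\lambda-\lambda_{k^{\ast}}|$; when the imaginary distance dominates, the inequality $|\lambda-\lambda_{k^{\ast}}|^{2}\le 2(\im\lambda)^{2}$ makes $M\gtrsim|\lambda|^{2}/(\im\lambda)^{2}$ large enough to absorb the crude arc-length bound $\pi|\lambda|/|\im\lambda|$ coming from the full integral. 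The symmetric estimate on the right side of $\re\lambda$ then closes the proof.
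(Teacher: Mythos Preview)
Your argument is correct and reaches the conclusion, but it follows a genuinely different route from the paper. The paper introduces the auxiliary telescoping sequence $\mu_n := |\lambda_1| + \sum_{k=1}^{n-1}|\lambda_{k+1}-\lambda_k|$, which satisfies $|\lambda_k|\le\mu_k\le C_\mu|\lambda_k|$ in \emph{both} the Hadamard and the increasing case, then pulls the full factor $C_\lambda^2=M$ out of \emph{every} term via the definition $|\lambda-\lambda_k|\ge\max\{|\lambda|,|\lambda_k|\}/C_\lambda$, leaving the sum
\[
\sum_k \frac{|\lambda|\,|\lambda_{k+1}-\lambda_k|}{\max\{|\lambda|,|\lambda_{k+1}|\}\max\{|\lambda|,|\lambda_k|\}}.
\]
A single split at the index $n$ with $|\lambda_n|\le|\lambda|<|\lambda_{n+1}|$ then makes both halves telescope in terms of $\mu_k$ to an absolute constant, with no integral or $\arctan$ computation and no case distinction between Hadamard and increasing sequences. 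Your three-region decomposition and the fundamental-theorem-of-calculus/$\arctan$ estimate on $K_0$ do work, but they force you to treat the two hypotheses separately and to manage more bookkeeping (the transitional indices, the dichotomy on $|\re\lambda-\lambda_{k^\ast}|$ versus $|\im\lambda|$, and the symmetric treatment at $\lambda_{k^\ast+1}$). The paper's single trick of trading $|\lambda-\lambda_k|$ for $\max\{|\lambda|,|\lambda_k|\}$ at the cost of one power of $C_\lambda$ per factor makes the whole sum collapse uniformly in about six lines.
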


\begin{proof}
For $n \in \N$ define $\mu_n := \abs{\lambda_1} + \sum_{k=1}^{n-1}\abs{\lambda_{k+1}-\lambda_{k}}$. In both cases there exists a $C_\mu>0$ such that $\abs{\lambda_k} \leq \mu_k \leq C_\mu\, \abs{\lambda_k}$ for all $k \in \N$. Fix $\lambda \in \C \setminus \cbrace{\lambda_k:k \in \N}$ and define
  \begin{equation*}
    C_\lambda := \sup_{k \in \N}  \frac{\max\cbrace{\abs{\lambda},\abs{\lambda_k}}}{\abs{\lambda-\lambda_k}}<\infty.
  \end{equation*}
  We have for all $k \in \N$
  \begin{equation*}
    \abss{\frac{\lambda}{\lambda - \lambda_{k+1}}-\frac{\lambda}{\lambda-\lambda_{k}}} \leq C_\lambda^2 \, \frac{\abs{\lambda}\abs{\lambda_{k+1}-\lambda_k}}{\max\cbrace{\abs{\lambda},\abs{\lambda_{k+1}}}\cdot \max\cbrace{\abs{\lambda},\abs{\lambda_{k}}}}.
  \end{equation*}
  Fix $n \in \N$ such that $\abs{\lambda_{n}}\leq\abs{\lambda}<\abs{\lambda_{n+1}}$ (or take $n=0$ if this is not possible). Then
  \begin{align*}
    \sum_{k=1}^{n-1}\abss{\frac{\lambda}{\lambda - \lambda_{k+1}}-\frac{\lambda}{\lambda-\lambda_{k}}}
    &\leq C_\lambda^2\, \abs{\lambda} \sum_{k=1}^{n-1} \frac{\abs{\lambda_{k+1}-\lambda_{k}}}{\abs{\lambda}^2}
    \\&\leq  C_\lambda^2 \,\abs{\lambda}^{-1} \abs{\mu_n}  \leq C_\mu C_\lambda^2,
  \end{align*}
  and
  \begin{align*}
    \sum_{k=n+1}^{\infty}\abss{\frac{\lambda}{\lambda - \lambda_{k+1}}-\frac{\lambda}{\lambda-\lambda_{k}}} &\leq C_\lambda^2 \abs{\lambda} \sum_{k=n+1}^\infty \frac{\abs{\lambda_{k+1}-\lambda_{k}}}{\abs{\lambda_{k+1}}\abs{\lambda_k}}\\
    &\leq C_\mu^2 C_\lambda^2\abs{\lambda} \sum_{k=n+1}^\infty \frac{\mu_{k+1}-\mu_{k}}{\mu_{k+1}\,\mu_k}\\
    &\leq C_\mu^2C_\lambda^2 \has{\frac{\abs{\lambda}}{\mu_{n+1}} + \lim_{k \to \infty} \frac{\abs{\lambda}}{\mu_{k}}} \leq C_\mu^2 C_\lambda^2,
  \end{align*}
  and finally
  \begin{equation*}
    \abss{\frac{\lambda}{\lambda - \lambda_{n+1}}-\frac{\lambda}{\lambda-\lambda_{n}}} \leq C_\lambda^2 \abs{\lambda} \frac{\abs{\lambda_{n+1}-\lambda_{n}}}{\abs{\lambda_{n+1}}\abs{\lambda}} \leq 2C_\lambda^2.
  \end{equation*}
  Combined this proves the lemma.
\end{proof}

To show that an operator associated to a Schauder decomposition and a Hada\-mard or increasing sequence is sectorial is now straightforward.

\begin{proposition}\label{proposition:schaudersectorial}
  Let $(X_n)_{n=1}^\infty$ be a Schauder decomposition of $X$. Let $(\lambda_k)_{k=1}^\infty$ be either a Hadamard sequence or an increasing sequence in $\R_+$. Let $A$ be the operator associated to $(X_k)_{k=1}^\infty$ and $(\lambda_k)_{k=1}^\infty$. Then $A$ is sectorial with
  $$\omega(A) = \inf\cbraceb{0<\sigma<\pi: \lambda_k \in \Sigma_\sigma \text{ for all } k \in \N}.$$
\end{proposition}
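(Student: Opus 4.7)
The plan is to verify the four defining properties of a sectorial operator separately: closedness, injectivity, density of domain and range, and the resolvent bound on every proper supersector of
\[
\sigma_0 := \inf\cbraceb{0<\sigma<\pi:\lambda_k \in \Sigma_\sigma \text{ for all } k \in \N}.
\]
The soft properties are immediate. Injectivity follows because $\lambda_k \neq 0$ for all $k$ in both cases (Hadamard by hypothesis, increasing in $\R_+$ by definition), so $Ax = 0$ forces $P_k x = 0$ for every $k$ and hence $x = 0$. The linear span $\spn \bigcup_k X_k$ is contained in $D(A)\cap R(A)$ and is dense in $X$ by the Schauder decomposition, giving density of domain and range. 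For closedness, if $x_n \to x$ and $Ax_n \to y$, applying the continuous projection $P_k$ yields $\lambda_k P_k x = P_k y$ for every $k$, whence $\sum_k \lambda_k P_k x = \sum_k P_k y = y$ converges in $X$, so $x \in D(A)$ and $Ax = y$.

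The heart of the proof is the resolvent estimate. For $\sigma_0<\sigma<\pi$ and $\lambda \in \C \setminus \overline{\Sigma}_\sigma$, I would propose the candidate
\[
R_\lambda x := \sum_{k=1}^\infty \frac{1}{\lambda - \lambda_k}P_k x
\]
and analyse the partial sums via summation by parts against the uniformly bounded partial-sum projections $S_n = \sum_{k=1}^n P_k$:
\[
\sum_{k=1}^N \frac{\lambda}{\lambda - \lambda_k} P_k x = \sum_{k=1}^{N-1} \has{\frac{\lambda}{\lambda - \lambda_k} - \frac{\lambda}{\lambda - \lambda_{k+1}}} S_k x + \frac{\lambda}{\lambda - \lambda_N} S_N x.
\]
Lemma \ref{lemma:hadamardincreasingestimate} bounds the total Abel-variation by a multiple of $\sup_k \ha{\max\cbrace{\abs{\lambda},\abs{\lambda_k}}/\abs{\lambda-\lambda_k}}^2$. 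Combined with the elementary geometric fact that $\abs{\lambda-\mu} \geq c_\sigma\ha{\abs{\lambda}+\abs{\mu}}$ whenever $\lambda \notin \overline{\Sigma}_\sigma$ and $\mu \in \overline{\Sigma}_\sigma$, this supremum is bounded by a constant depending only on $\sigma$. Hence the telescoped series converges in operator norm and yields $\nrm{\lambda R_\lambda} \leq C_\sigma$. A direct verification on $\spn \bigcup_k X_k$ shows $(\lambda-A)R_\lambda x = x$ and $R_\lambda(\lambda-A)x = x$, which extends by density and the closedness of $A$ to confirm $R_\lambda = R(\lambda,A)$. This proves $\omega(A) \leq \sigma_0$.

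The reverse inequality is immediate from spectral considerations: each $\lambda_k$ is an eigenvalue of $A$ (every nonzero $x \in X_k$ is an eigenvector with eigenvalue $\lambda_k$), so $\overline{\cbrace{\lambda_k:k \in \N}} \subseteq \sigma(A) \subseteq \overline{\Sigma}_{\omega(A)}$, which forces $\sigma_0 \leq \omega(A)$. The main technical obstacle is controlling the boundary tail $\frac{\lambda}{\lambda-\lambda_N}S_N x$: when $\abs{\lambda_N}\to\infty$ (Hadamard or unbounded increasing) it tends to $0$ in norm, while in the bounded-increasing case $\lambda_N \to \lambda_\infty \in \R_+$ and the tail converges to a nontrivial limit that must be absorbed into the series; both regimes are packaged uniformly in Lemma \ref{lemma:hadamardincreasingestimate} so the Abel argument closes identically, which is what makes the single statement cover both hypotheses on $(\lambda_k)_{k=1}^\infty$.
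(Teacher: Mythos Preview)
Your proof is correct and follows essentially the same approach as the paper: Abel summation of the candidate resolvent against the uniformly bounded partial-sum projections $S_k$, invoking Lemma~\ref{lemma:hadamardincreasingestimate} to control the total variation, and identifying each $\lambda_k$ as an eigenvalue for the reverse inequality. The paper handles closedness implicitly by exhibiting $R(\lambda,A)$ as a bounded two-sided inverse of $\lambda-A$, whereas you prove it directly first; and the paper shows convergence of the resolvent series by a Cauchy argument rather than discussing the boundary tail explicitly---but these are cosmetic differences in the same argument.
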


\begin{proof}
Fix $\lambda \in \C \setminus \overline{\cbrace{\lambda_k:k \in \N}}$ and define
\begin{align*}
    C_\lambda := \sup_{k \in \N}  \frac{\max\cbrace{\abs{\lambda},\abs{\lambda_k}}}{\abs{\lambda-\lambda_k}}<\infty,\qquad \qquad
    C_S := \sup_{k \in \N}\, \nrm{S_{k}}
\end{align*}
Note that for any $n \in \N$
\begin{equation}\label{eq:resolventcomputation}
  \sum_{k=1}^n \frac{1}{\lambda - \lambda_k} P_{k} = \frac{1}{\lambda-\lambda_{n+1}} S_{n}- \sum_{k={1}}^n \has{\frac{1}{\lambda - \lambda_{k+1}} - \frac{1}{\lambda - \lambda_k}} S_{k}.
\end{equation}
So by Lemma \ref{lemma:hadamardincreasingestimate} we have for all $n \in \N$
\begin{equation}\label{eq:sectorialcomputation}
\begin{aligned}
    \nrms{\sum_{k=1}^n \frac{1}{\lambda - \lambda_k} P_{k}} &= \nrms{\frac{1}{\lambda-\lambda_{n+1}} S_{n}- \sum_{k={1}}^n \has{\frac{1}{\lambda - \lambda_{k+1}} - \frac{1}{\lambda - \lambda_k}} S_{k}}\\
  &\leq \frac{C_\lambda}{\abs{\lambda}} \nrm{S_{n}} + \frac{CC_\lambda^2}{\abs{\lambda}} \, \sup_{1\leq k \leq n} \nrm{S_{k}}\\
 &\leq C C_\lambda^2 C_S \abs{\lambda}^{-1}.
 \end{aligned}
\end{equation}
By a similar computation we see that $\hab{\sum_{k=1}^n \frac{1}{\lambda - \lambda_k} P_{k}}_{n=1}^\infty$ is a Cauchy sequence and therefore convergent. Thus
\begin{equation*}
  R(\lambda):= \sum_{k=1}^\infty \frac{1}{\lambda - \lambda_k} P_{k},
\end{equation*}
is a well-defined, bounded operator on $X$. Moreover we have
\begin{equation*}
  (\lambda-A)R(\lambda)x = \sum_{k=1}^\infty \frac{\lambda}{\lambda-\lambda_k}P_{k}x-\sum_{j=1}^\infty \lambda_j P_{j} \sum_{k=1}^\infty \frac{1}{\lambda-\lambda_k}P_{k}x = x
\end{equation*}
for all $x \in X$ and similarly $R(\lambda)(\lambda-A)x = x$ for $x \in D(A)$. Therefore $\lambda \in \rho(A)$ and $R(\lambda,A) = R(\lambda)$.

Since $(X_n)_{n=1}^\infty$ is a Schauder decomposition, A is injective and $x_n \in D(A)\cap R(A)$ for $x_n \in X_n$, so $A$ has dense domain and dense range. Moreover, if we fix
$$\inf\cbraceb{0<\sigma<\pi: \lambda_k \in \Sigma_\sigma \text{ for all } k \in \N}<\sigma'<\pi,$$ then there is a $C_{\sigma'}>0$ such that
\begin{equation*}
    C_\lambda = \sup_{k\in \N} \frac{\max\cbrace{\abs{\lambda},\abs{\lambda_k}}}{\abs{\lambda - \lambda_k}} \leq C_{\sigma'} , \qquad  \lambda \in \C \setminus \overline{\Sigma}_{\sigma'}.
\end{equation*}
  So by \eqref{eq:sectorialcomputation} $A$ is sectorial with $\omega(A) \leq \sigma'$. Equality follows since $\lambda_n \in \sigma(A)$ for all $n\in \N$.
\end{proof}

From the proof of Proposition \ref{proposition:schaudersectorial} we can also see that
\begin{equation*}
  \rho(A) = \C \setminus \overline{\cbrace{\lambda_k: k \in \N}}
\end{equation*}
and for $\lambda \in \rho(A)$ we have
  \begin{equation}\label{eq:resolventmultiplier}
  R(\lambda,A)= \sum_{k=1}^\infty \frac{1}{\lambda - \lambda_k} P_{k} = \sum_{k={1}}^\infty \has{\frac{1}{\lambda - \lambda_{k}} - \frac{1}{\lambda - \lambda_{k+1}}} S_{k}.
\end{equation}
Indeed, this follows by taking limits in \eqref{eq:resolventcomputation}. Let $\omega(A)<\nu<\sigma<\pi$. Using \eqref{eq:sectorialcomputation} and the dominated convergence theorem we have for $f \in H^1(\Sigma_\sigma)$
\begin{equation}\label{eq:calculusmultiplier}
  f(A) = \int_{\Gamma_\nu}f(z)R(z,A)\dd z = \sum_{k=1}^\infty \int_{\Gamma_\nu}\frac{f(z)}{z-\lambda_k}P_{k}\dd z = \sum_{k=1}^\infty f(\lambda_k)P_{k}.
\end{equation}
To extend this to the extended Dunford calculus let $f\colon \Sigma_\sigma\to \C$ be holomorphic satisfying
\begin{equation*}
  \abs{f(z)} \leq C \abs{z}^{-\delta}\ha{1+\abs{z}}^{2\delta}
\end{equation*}
for some $C,\delta>0$ and fix $x \in X$ with $P_kx=0$ for all $k \geq N$ for some $N \in \N$. Then we have by \eqref{eq:calculusmultiplier} that
\begin{equation}\label{eq:calculusmultipliercutoff}
  f(A)x = \lim_{n \to \infty} \sum_{k=1}^N f(\lambda_k)\varphi_n^m(\lambda_k) P_k x = \sum_{k=1}^N f(\lambda_k) P_k x
\end{equation}
with $m>\delta$.

\subsection*{(Almost) $\alpha$-bounded Schauder decompositions}
Let $\alpha$ be a Euclidean structure on $X$. For the operator $A$ associated to a Schauder decomposition $(X_k)_{k=1}^\infty$ and a Hadamard sequence $(\lambda_k)_{k=1}^\infty$ we can reformulate (almost) $\alpha$-sectoriality in terms of the projections associated to $(X_k)_{k=1}^\infty$. Motivated by the following result we call $(X_k)_{k=1}^\infty$ \emph{almost $\alpha$-bounded} if the family of coordinate projections $\cbrace{P_{k}:k \in \N}$ is $\alpha$-bounded and we call $(X_k)_{k=1}^\infty$ \emph{$\alpha$-bounded} if the family of partial sum projections $\cbrace{S_{k}:k \in \N}$ is $\alpha$-bounded.

\begin{proposition}\label{proposition:alphaschauder}
  Let $(X_k)_{k=1}^\infty$ be a Schauder decomposition of $X$, $(\lambda_k)_{k=1}^\infty$ a Hadamard sequence and $A$ the sectorial operator associated to $(X_k)_{k=1}^\infty$ and $(\lambda_k)_{k=1}^\infty$. Let $\alpha$ be a Euclidean structure on $X$. Then
 \begin{enumerate}[(i)]
 \item \label{it:alphaschauder1} $A$ is almost $\alpha$-sectorial if and only if $(X_k)_{k=1}^\infty$ is almost $\alpha$-bounded. In this case $\tilde{\omega}_\alpha(A) = \omega(A)$.
 \item \label{it:alphaschauder2} $A$ is $\alpha$-sectorial if and only if $(X_k)_{k=1}^\infty$ is $\alpha$-bounded. In this case $\omega_\alpha(A) = \omega(A)$
 \end{enumerate}
\end{proposition}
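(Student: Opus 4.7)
The idea is to show that, thanks to the Hadamard gap condition, the resolvent family (for part (ii)) and the family $\cbrace{\lambda A R(\lambda,A)^2}$ (for part (i)) generate the same $\alpha$-bounded absolutely convex hull as the partial-sum (respectively coordinate) projection family. Since $\omega(A) \leq \tilde\omega_\alpha(A) \leq \omega_\alpha(A)$ is automatic, the reverse inequality produced below will yield the equality of angles.

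To prove that $\alpha$-boundedness of $\cbrace{S_k}$ implies $\alpha$-sectoriality, I would start from the resolvent identity \eqref{eq:resolventmultiplier},
\[
\lambda R(\lambda,A) = \sum_{k=1}^\infty a_k(\lambda) S_k, \qquad a_k(\lambda) = \lambda\Bigl(\tfrac{1}{\lambda-\lambda_k} - \tfrac{1}{\lambda-\lambda_{k+1}}\Bigr),
\]
and invoke Lemma \ref{lemma:hadamardincreasingestimate} to get $\sum_k |a_k(\lambda)| \leq C_\sigma$ uniformly for $\lambda \notin \Sigma_\sigma$ with $\omega(A)<\sigma<\pi$. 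Proposition \ref{proposition:alphaproperties} then places $\lambda R(\lambda,A)$ in the $\alpha$-bounded absolute convex hull of $\cbrace{c\, S_k : |c|\leq C_\sigma, \, k \in \N}$, so $\omega_\alpha(A)\leq \sigma$ and hence $\omega_\alpha(A) = \omega(A)$. The analogous implication in part (i) starts from $\lambda A R(\lambda,A)^2 = \sum_k \tfrac{\lambda\lambda_k}{(\lambda-\lambda_k)^2} P_k$; a direct computation using the Hadamard decay shows that $\sum_k |\tfrac{\lambda\lambda_k}{(\lambda-\lambda_k)^2}|$ is bounded uniformly in $\lambda \notin \Sigma_\sigma$.

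For the converse implications, the task is to recover the projections from the (almost) resolvent family. For part (i), I would use Proposition \ref{proposition:almostsectorialcharacterization}\ref{it:almostsectorialcharacterization3b} to rephrase almost $\alpha$-sectoriality as $\alpha$-boundedness of $\cbrace{f(tA) : t > 0}$ for a fixed $f \in H^1(\Sigma_{\sigma'})$ with $\tilde\omega_\alpha(A) < \sigma'$. Using the multiplier identity \eqref{eq:calculusmultiplier} with, say, $f(z) = z/(1+z)^2$ yields $f(A/\lambda_n) = c_0 P_n + E_n$, where the error $E_n = \sum_{j\neq n} f(\lambda_j/\lambda_n) P_j$ has coefficients that decay geometrically in $|j - n|$ thanks to the Hadamard gap. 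The plan is then to invert this relation: find coefficients $(b_m^{(n)})_m$ with $\sup_n \sum_m |b_m^{(n)}| < \infty$ such that $\sum_m b_m^{(n)} f(A/\lambda_m) = P_n$, thereby expressing $\cbrace{P_n}$ as an $\ell^1$-combination of operators drawn from the $\alpha$-bounded family $\cbrace{f(A/\lambda_m) : m\in\N}$. The converse in (ii) follows the same scheme, applied to $\cbrace{-\mu R(-\mu,A): \mu > 0}$ to recover $S_n$.

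The main obstacle will be the uniform $\ell^1$ convolution inversion in the converse direction. For a uniform Hadamard ratio $\lambda_k = c^k$ the inversion is a genuine Toeplitz problem whose kernel is a geometric series, and it can be handled either by a direct Neumann expansion or via Wiener's $1/f$ theorem once one verifies that the Fourier symbol stays uniformly bounded below. For a general Hadamard sequence with variable ratios the kernel is not translation invariant, and one must instead appeal to a continuous analogue---for instance by applying the $\alpha$-multiplier theorem (Theorem \ref{theorem:pointwisemultipliers1}) to the full continuous family $\cbrace{f(tA): t > 0}$ rather than to the discrete subfamily indexed by $\cbrace{1/\lambda_m}$.
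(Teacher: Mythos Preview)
Your forward directions (from $\alpha$-boundedness of the projections to (almost) $\alpha$-sectoriality) are correct and match the paper's argument: the paper also uses the $\ell^1$ representation of $\lambda R(\lambda,A)$ in terms of $S_k$ together with Lemma \ref{lemma:hadamardincreasingestimate} for (ii), and for (i) it uses the characterization in Proposition \ref{proposition:almostsectorialcharacterization} together with $f(tA)=\sum_k f(t\lambda_k)P_k$ and the $\ell^1$-bound from Lemma \ref{lemma:hadamardsum}, which is the same mechanism as your direct estimate on $\sum_k \abs{\lambda\lambda_k/(\lambda-\lambda_k)^2}$.

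The gap is in your converse directions. Your plan is to write $f(A/\lambda_n)=\sum_k f(\lambda_k/\lambda_n)P_k$ and then \emph{invert} the infinite matrix $M=(f(\lambda_k/\lambda_m))_{m,k}$ in $\ell^1$ to recover $P_n$ (and similarly $S_n$). You correctly flag that for a general Hadamard sequence $M$ is not Toeplitz, so Wiener's lemma does not apply, and even in the geometric case $\lambda_k=c^k$ you would still need to verify that the Fourier symbol of $M$ is bounded away from zero, which is not automatic for a given $f$ and a given ratio $c$. Your proposed fallback to Theorem \ref{theorem:pointwisemultipliers1} does not resolve this: that theorem converts $\alpha$-boundedness of a family into boundedness of a pointwise multiplier on $\alpha(S;X)$, but it gives no mechanism for extracting a single coordinate projection $P_n$ from the continuous family $\{f(tA):t>0\}$; you would still face the same deconvolution problem, now on $\R_+$ instead of $\Z$.

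The paper bypasses the inversion entirely by using Lemma \ref{lemma:hadamardprop}: a Hadamard sequence in $\Sigma_\nu$ is an \emph{interpolating sequence} for $H^\infty(\Sigma_\sigma)$ whenever $\nu<\sigma$. This lets one choose, for each $j$, a function $f_j\in H^\infty(\Sigma_\sigma)$ with $\nrm{f_j}_{H^\infty}\le C$ and $f_j(\lambda_k)=\delta_{jk}$. Setting $t_j=\abs{\lambda_j}$ and $g_j(z)=\tfrac{z}{(1+z)^2}\tfrac{(t_j+\lambda_j)^2}{t_j\lambda_j}f_j(t_jz)$ produces a family uniformly in $H^1(\Sigma_\sigma)$ with $g_j(t_j^{-1}A)=P_j$, so Proposition \ref{proposition:almostsectorialcharacterization}\ref{it:almostsectorialcharacterization3a} gives $\alpha$-boundedness of $\{P_j\}$ directly. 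For (ii) the paper uses the same interpolation lemma with the prescribed values $f_j(\lambda_k)=1+\lambda_k t_j^{-1}$ for $k\le j$ and $f_j(\lambda_k)=-1-t_j\lambda_k^{-1}$ for $k>j$, which after multiplication by $z/(1+z)^2$ yields $g_j(t_j^{-1}A)=S_j+t_jR(-t_j,A)$; subtracting the $\alpha$-bounded resolvent term gives $\{S_j\}$. The interpolation lemma is the missing idea in your proposal.
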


In the proof of Proposition \ref{proposition:alphaschauder} we will need the following interpolating property of $H^\infty(\Sigma_\sigma)$-functions evaluated in the points of a Hadamard sequence.

\begin{lemma}\label{lemma:hadamardprop}
Fix $0<\sigma<\nu<\pi$ and let $(\lambda_k)_{k=1}^\infty$ be a Hadamard sequence in $\Sigma_\sigma$.
 For all $\mbs{a} \in \ell^\infty$ there exists an $f \in H^\infty(\Sigma_\sigma)$ such that
  \begin{equation*}
    f(\lambda_k) = a_k, \qquad k \in \N
  \end{equation*}
  and  $\nrm{f}_{H^\infty(\Sigma_\sigma)} \lesssim \nrm{\mbs{a}}_{\ell^\infty}$.
\end{lemma}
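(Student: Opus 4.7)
This is a classical statement: Hadamard (lacunary) sequences are interpolating sequences for the bounded holomorphic functions on a strictly larger sector. (I read the conclusion as $f \in H^\infty(\Sigma_\nu)$, since this is what makes the lemma well-posed given the gap $\sigma < \nu$ in the hypothesis.) I would proceed by passing to the strip model and then producing $f$ as a perturbation of an approximate interpolant.

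\emph{Reduction to a strip.} The conformal map $z \mapsto \log z$ identifies $\Sigma_\nu$ with the horizontal strip $\mbb{S}_\nu := \cbrace{w \in \C : \abs{\im w} < \nu}$ and $H^\infty(\Sigma_\nu)$ isometrically with $H^\infty(\mbb{S}_\nu)$ via $f \mapsto F := f \circ \exp$. The images $\mu_k := \log \lambda_k$ lie in the substrip $\mbb{S}_\sigma$, hence at distance at least $\nu - \sigma > 0$ from $\partial \mbb{S}_\nu$; after reindexing so the moduli increase, the Hadamard condition reads $\re(\mu_{k+1}) - \re(\mu_k) \geq b := \log c > 0$, giving uniform horizontal separation. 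It therefore suffices to find $F \in H^\infty(\mbb{S}_\nu)$ with $F(\mu_k) = a_k$ and $\nrm{F}_{H^\infty(\mbb{S}_\nu)} \lesssim \nrm{\mbs{a}}_{\ell^\infty}$.

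\emph{Perturbation from an approximate interpolant.} Fix $\epsilon > 0$ with $\nu\epsilon < \pi/2$ and an integer $N$ to be chosen large, and set $\varphi(z) := \cosh(\epsilon z)^{-N}$. Then $\varphi$ is holomorphic and bounded on $\mbb{S}_\nu$ (all zeros of $\cosh(\epsilon \,\cdot)$ lie outside $\mbb{S}_\nu$), satisfies $\varphi(0)=1$, and obeys $\abs{\varphi(z)} \leq C_N\, \ee^{-N\epsilon\abs{\re z}}$ for $z \in \mbb{S}_\nu$. Define $T\colon \ell^\infty\to\ell^\infty$ by $(T\mbs{b})_j := \sum_k b_k \varphi(\mu_j - \mu_k)$. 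Writing $T = I + S$, the horizontal separation of the $\mu_k$ gives
$$\nrm{S}_{\ell^\infty\to\ell^\infty} \leq \sup_j \sum_{k \neq j} \absb{\varphi(\mu_j-\mu_k)} \leq 2 C_N \sum_{n\geq 1}\ee^{-N\epsilon b n},$$
which can be made strictly less than $1$ by choosing $N$ large enough (depending only on $\sigma,\nu,c$). Hence $T$ is invertible on $\ell^\infty$. Given $\mbs{a} \in \ell^\infty$, set $\mbs{b} := T^{-1}\mbs{a}$, so that $\nrm{\mbs{b}}_{\ell^\infty} \lesssim \nrm{\mbs{a}}_{\ell^\infty}$, and define
$$F(z) := \sum_{k \in \N} b_k \,\varphi(z - \mu_k), \qquad z \in \mbb{S}_\nu.$$
By construction $F(\mu_j) = (T\mbs{b})_j = a_j$.

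\emph{Uniform bound and conclusion.} For the norm estimate, fix $z \in \mbb{S}_\nu$; the separation $\abs{\re\mu_k - \re\mu_j} \geq b\abs{k-j}$ together with the exponential horizontal decay of $\varphi$ yields
$$\sum_{k} \absb{\varphi(z - \mu_k)} \leq C_N \sum_{k} \ee^{-N\epsilon \abs{\re z - \re\mu_k}} \lesssim 1$$
uniformly in $z \in \mbb{S}_\nu$, whence the series defining $F$ converges uniformly on $\mbb{S}_\nu$, defines a holomorphic function there, and satisfies $\nrm{F}_{H^\infty(\mbb{S}_\nu)} \lesssim \nrm{\mbs{b}}_{\ell^\infty} \lesssim \nrm{\mbs{a}}_{\ell^\infty}$. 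Pulling back by $\exp$ gives the required $f := F\circ\log \in H^\infty(\Sigma_\nu)$.

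\emph{Main obstacle.} The delicate point is the choice of $\varphi$ in Step 2: it must be bounded on the \emph{full} strip $\mbb{S}_\nu$ (forcing $\nu\epsilon < \pi/2$) while simultaneously decaying fast enough horizontally to make $S$ a contraction. The positivity of the gap $\nu - \sigma$ is precisely what allows both to hold — much as in the mean-value argument of Lemma \ref{lemma:hadamardsum} — and without it no such $\varphi$ can exist.
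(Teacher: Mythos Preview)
Your reading of the conclusion as $f \in H^\infty(\Sigma_\nu)$ is correct and matches what the paper actually proves: it maps $\Sigma_\nu$ conformally to the upper half-plane via $z\mapsto iz^{\pi/(2\nu)}$ and invokes Carleson's interpolation theorem, verifying the Carleson condition for $\mu_k=\lambda_k^{\pi/(2\nu)}$ directly from the modulus lacunarity via the elementary bound $\bigl|\tfrac{\mu_k-\mu_j}{\mu_k+\overline{\mu}_j}\bigr|\geq\tfrac{||\mu_k|-|\mu_j||}{|\mu_k|+|\mu_j|}$.

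Your explicit construction, however, has a genuine gap in the Neumann-series step. The constant $C_N$ in your decay bound $|\varphi(z)|\leq C_N\ee^{-N\epsilon|\re z|}$ is not uniform in $N$: since $|\cosh(\epsilon x)|\sim\tfrac12\ee^{\epsilon|x|}$ one has $C_N\sim 2^N$, and what really matters is the nearest-neighbour term
\[
|\varphi(\mu_{j+1}-\mu_j)|^{-2/N}=|\cosh(\epsilon(\mu_{j+1}-\mu_j))|^{2}=\cosh^2\bigl(\epsilon\,\re(\mu_{j+1}-\mu_j)\bigr)-\sin^2\bigl(\epsilon\,\im(\mu_{j+1}-\mu_j)\bigr).
\]
For the Hadamard sequence $\lambda_k=c^k\ee^{i(-1)^k\sigma'}$ with $\sigma'<\sigma$ this equals $\cosh^2(\epsilon\log c)-\sin^2(2\epsilon\sigma')$, which is $<1$ as soon as $\sinh(\epsilon\log c)<\sin(2\epsilon\sigma')$. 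Since $\epsilon$ is bounded above (in fact you need $(\nu+\sigma)\epsilon<\pi/2$, not just $\nu\epsilon<\pi/2$, because $z-\mu_k$ ranges over $\mbb S_{\nu+\sigma}$), this always occurs once $c>1$ is close enough to $1$. Then $|\varphi(\mu_{j+1}-\mu_j)|>1$ for \emph{every} $N$, so $\|S\|\geq 1$ and $T$ is not invertible; increasing $N$ makes it worse, not better. The gap $\nu-\sigma$ alone does not rescue this --- what is missing is precisely the uniform control over all Hadamard ratios $c>1$ that Carleson's theorem provides.
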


\begin{proof}
  The lemma states that $(\lambda_k)_{k=1}^\infty$ is an \emph{interpolating sequence} for $H^\infty(\Sigma_\sigma)$. On the upper half-plane a theorem due to Carleson (see for example \cite{Ga07}) states that $(\zeta_k)_{k=1}^\infty$ is an interpolating sequence if and only if
  \begin{equation*}
    \prod_{j \in \N \setminus \cbrace{k}}\abss{\frac{\zeta_k-\zeta_j}{\zeta_k -\overline{\zeta}_j}}>0, \qquad k \in \N.
  \end{equation*}
   Since the function $z \mapsto i z^{\frac{\pi}{2\nu}}$ conformally maps $\Sigma_\sigma$ onto the upper half-plane, it suffices to show
  \begin{equation*}
    \prod_{j \in \N \setminus \cbrace{k}}\abss{\frac{\mu_k-\mu_j} {\mu_k +\overline{\mu}_j}}>0
  \end{equation*}
  for $\mu_k = \lambda^{\frac{\pi}{2\sigma}}_k$. Fix $k \in \N$, then we have
  \begin{equation*}
    \prod_{j =1}^{k-1}\abss{\frac{\mu_k-\mu_j} {\mu_k +\overline{\mu}_j}} \geq \prod_{j =1}^{k-1}\frac{\abs{\mu_k}-\abs{\mu_j}}{\abs{\mu_k} + \abs{{\mu}_j}} = \prod_{j =1}^{k-1}\has{1- \frac{2\abs{\mu_j}}{\abs{\mu_k}+\abs{\mu_j}}} \geq \prod_{j =1}^{k-1}\has{1- \frac{2}{c^{k-j}+1}},
  \end{equation*}
  where $c>1$ is such that $\abs{\mu_{k+1}}\geq c \, \abs{\mu_{k}}$ for all $k \in \N$. A similar inequality holds  for the product with $j \geq k+1$. Therefore, since $\sum_{j=1}^\infty \frac{2}{c^{j}+1}<\infty$, it follows that
  \begin{equation*}
    \prod_{j \in \N \setminus \cbrace{k}}\abss{\frac{\mu_k-\mu_j} {\mu_k +\overline{\mu}_j}} \geq \has{\prod_{j =1}^{\infty}\has{1- \frac{2}{c^{j}+1}}} ^2>0,
  \end{equation*}
which finishes the proof.
\end{proof}

\begin{proof}[Proof of Proposition \ref{proposition:alphaschauder}]
Fix $\omega(A) < \nu<\sigma<\pi$. For statement \ref{it:alphaschauder1} first assume that $\cbrace{P_{k}:k \in \N}$ is $\alpha$-bounded. Take $f \in H^1(\Sigma_\nu)$, then by \eqref{eq:calculusmultiplier} we have for $t>0$
\begin{equation*}
  f(tA) = \sum_{k=1}^\infty f(t\lambda_k)P_{k}
\end{equation*}
  and by Lemma  \ref{lemma:hadamardsum} we have $\sum_{k=1}^\infty \abs{f(t\lambda_k)} \leq C$ for $C>0$ independent of $t$. Therefore it follows by Proposition \ref{proposition:alphaproperties} that $\cbrace{f(tA):t>0}$ is $\alpha$-bounded. Thus  $A$ is almost $\alpha$-sectorial with $\tilde{\omega}(A) \leq \sigma$ by Proposition \ref{proposition:almostsectorialcharacterization}.

 Conversely assume that $A$ is almost $\alpha$-sectorial and set $t_k = \abs{\lambda_k}$ for $k \in \N$. By Lemma \ref{lemma:hadamardprop} there is a sequence of functions  $(f_j)^\infty_{j=1}$ in $H^\infty(\Sigma_\sigma)$ with $\nrm{f_j}_{H^\infty(\Sigma_\sigma)} \leq C$ such that $f_j(\lambda_k) = \delta_{jk}$ for all $j,k \in \N$. Take
 \begin{equation*}
   g_j(z) = \frac{z}{(1+z)^{2}}\frac{(t_j+\lambda_j)^2}{t_j\lambda_j}f_j(t_jz),\qquad z \in \Sigma_\sigma,
 \end{equation*}
 then $(g_j)^\infty_{j=1}$ is uniformly in $H^1(\Sigma_\sigma)$.
 Therefore $\cbrace{g_j(t_j^{-1}A): j \in \N}$ is $\alpha$-bounded by Proposition \ref{proposition:almostsectorialcharacterization}. By \eqref{eq:calculusmultiplier} we have for $j \in \N$
 \begin{align*}
   g_j(t_j^{-1}A)  &= \frac{t_j^{-1}\lambda_j}{(1+t_j^{-1}\lambda_j)^2} \frac{(t_j+\lambda_j)^2}{t_j\lambda_j}P_{j} = P_{j}.
 \end{align*}
 So the family of coordinate projections $\cbrace{P_{k}:k \in \N}$ is $\alpha$-bounded, i.e. $(X_k)_{k=1}^\infty$ is almost $\alpha$-bounded.

 \bigskip

 For \ref{it:alphaschauder2} assume that $\cbrace{S_{k}:k \in \N}$ is $\alpha$-bounded and take $\lambda \in \C \setminus \overline{\Sigma}_\sigma$. Using the expression for the resolvent of $A$ from \eqref{eq:resolventmultiplier}, we have
\begin{equation*}
  \lambda R(\lambda,A) = \sum_{k={1}}^\infty \has{\frac{\lambda}{\lambda - \lambda_{k+1}} - \frac{\lambda}{\lambda - \lambda_k}} S_{k}.
\end{equation*}
Therefore the set $\cbrace{\lambda R(\lambda,A):\lambda \in \C \setminus \overline{\Sigma}_\sigma}$ is $\alpha$-bounded by Lemma \ref{lemma:hadamardincreasingestimate} and Proposition \ref{proposition:alphaproperties}, so $A$ is $\alpha$-sectorial with $\omega_\alpha(A) \leq \sigma$.

Conversely assume that $A$ is $\alpha$-sectorial and set $t_k = \abs{\lambda_k}$ for $k \in \N$. As $(\lambda_k)_{k=1}^\infty$ is an interpolating sequence for $H^\infty(\Sigma_\sigma)$ by Lemma \ref{lemma:hadamardprop}, we can find a sequence of functions $(f_j)^\infty_{j=1}$ in $H^\infty(\Sigma_\sigma)$ such that
  \begin{equation*}
    f_j(\lambda_k) = \begin{cases}
      1+\lambda_kt_j^{-1} &1 \leq k \leq j\\
      -1-t_j\lambda_k^{-1} &j<k
    \end{cases}
  \end{equation*}
  for all $j,k \in \N$ with $\nrm{f_j}_{H^\infty(\Sigma_\sigma)} \leq C$. Now let $g_j(z) = z(1+z)^{-2}f_j(t_jz)$. Then $(g_j)^\infty_{n=1}$ is uniformly in $H^1(\Sigma_\sigma)$
 and therefore $\cbrace{g_j(t_j^{-1}A): j \in \N}$ is $\alpha$-bounded by Proposition \ref{proposition:almostsectorialcharacterization}. Again using \eqref{eq:calculusmultiplier}, we have for all $n \in \N$
 \begin{align*}
   g_n(t_n^{-1}A)
   &= \sum_{k=1}^n \frac{t_n^{-1}\lambda_k (1+t_n^{-1}\lambda_k )}{(1+t_n^{-1}\lambda_k)^{2}}P_{k} - \sum_{k=n+1}^\infty \frac{t_n^{-1}\lambda_k (1+t_n\lambda_k^{-1} )}{(1+t_n^{-1}\lambda_k)^{2}}P_{k}\\
   &= \sum_{k=1}^n \frac{t_n^{-1}\lambda_k }{1+t_n^{-1}\lambda_k}P_{k} - \sum_{k=n+1}^\infty \frac{1}{1+t_n^{-1}\lambda_k}P_{k}\\
   &= \sum_{k=1}^n P_{k} - \sum_{k=1}^\infty \frac{t_n}{t_n+\lambda_k}P_{k}\\
   &= S_{n} + t_nR(-t_n,A).
 \end{align*}
 Since $A$ is $\alpha$-sectorial, the set $\cbrace{t_kR(-t_k,A),k \in \N}$ is $\alpha$-bounded. Therefore the family of partial sum projections $\cbrace{S_{k}:k \in \N}$ is $\alpha$-bounded, i.e. $(X_k)_{k=1}^\infty$ is $\alpha$-bounded.
\end{proof}

\section{Sectorial operators which are not almost \texorpdfstring{$\alpha$}{a}-sectorial}\label{section:sectorialnotalmostalpha}
In this section we will start our series of examples based on the sectorial operators defined in Section \ref{section:schaudersectorial}. In this section we will construct a Schauder basis which is not almost $\alpha$-bounded for any unconditionally stable Euclidean structure $\alpha$, e.g. the $\gamma$-structure on a space with finite cotype or the $\ell^2$-structure on a Banach lattice. In view of Proposition \ref{proposition:alphaschauder} this yields sectorial operators which are not almost $\alpha$-sectorial. Our proof will basically be a reconstruction of the idea of Lancien and the first author \cite{KL00} to construct sectorial operators that are not $\mc{R}$-sectorial. This idea has been further developed in a sequence of papers by Fackler \cite{Fa13, Fa14, Fa15, Fa16} and was recently revisited by Arnold and Le Merdy \cite{AL19}.

\subsection*{The spaces $\ell^1$ and $c_0$}
As a warm up we consider the sequence spaces $\ell^1$ and $c_0$.
\begin{proposition}\label{proposition:l1c0schauder}
Both $\ell^1$ and $ c_0$ have a Schauder basis $\mb{x}$ which is not almost $\alpha$-bounded for any unconditionally stable Euclidean structure $\alpha$ on $X$.
\end{proposition}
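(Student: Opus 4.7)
My plan is to exhibit explicit conditional Schauder bases in each space and verify by direct computation that the associated coordinate projections fail to be $\alpha$-bounded for every unconditionally stable Euclidean structure $\alpha$. For $c_0$ the natural candidate is the summing basis $x_k := s_k = e_1 + e_2 + \cdots + e_k$, where $(e_k)$ denotes the canonical unit vector basis. Writing $x = \sum_k a_k e_k$, a direct computation yields
\begin{equation*}
S_n x = \sum_{k=1}^n a_k e_k - a_{n+1} \sum_{k=1}^n e_k, \qquad P_n x = (a_n - a_{n+1}) s_n,
\end{equation*}
so the partial sum projections have norm at most $2$ and $(s_k)$ is indeed a Schauder basis of $c_0$.

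To contradict $\alpha$-boundedness of $\{P_n\}$, I would test on the sequence $y_n := e_{n+1}$, which satisfies $P_n y_n = -s_n$. Since $(e_k)$ is $1$-unconditional in $c_0$, assumption \eqref{eq:unconditional1} gives
\begin{equation*}
\nrm{(y_1, \ldots, y_N)}_\alpha \leq C \sup_{\abs{\epsilon_n}=1}\nrms{\sum_{n=1}^N \epsilon_n e_{n+1}}_{c_0} = C.
\end{equation*}
On the other hand, the comparison $\op \leq \alpha$ from Proposition~\ref{proposition:operatornuclear} combined with the elementary estimate
\begin{equation*}
\nrm{(s_1,\ldots,s_N)}_{\op} \geq \frac{1}{\sqrt{N}}\nrms{\sum_{k=1}^N s_k}_{c_0} = \sqrt{N}
\end{equation*}
shows $\nrm{(P_1 y_1, \ldots, P_N y_N)}_\alpha \geq \sqrt{N}$. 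Thus if $\{P_n\}$ were $\alpha$-bounded with constant $M$, we would have $\sqrt{N} \leq M C$ for every $N$, a contradiction.

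For $\ell^1$ the argument will follow the same template, but the construction of a conditional Schauder basis is more delicate, since by Lindenstrauss--Pelczynski every unconditional basis of $\ell^1$ is equivalent to the canonical one. My plan is to partition $\N$ into finite blocks $F_k$ of sizes $n_k \uparrow \infty$, isometrically identify each $\ell^1(F_k)$ with a $1$-complemented subspace of $\ell^1$, and on each block install a suitably rescaled conditional basis whose coordinate projections yield a $\sqrt{n_k}$ blow-up against the operator-norm lower bound $\op \leq \alpha$. Because the blocks have disjoint support, the global Schauder partial sums split into bounded within-block projections composed with the canonical block projections, so uniform boundedness reduces to the within-block problem. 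The same unconditional-stability versus operator-norm comparison as in the $c_0$ case then drives the contradiction.

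The hard part will be engineering the within-block construction so that both the block partial sum projections stay uniformly bounded \emph{and} the coordinate projections force the required blow-up; on $c_0$ the summing basis does this for free, but in $\ell^1$ a naive summing-type basis on a block of size $n$ has partial sum projections of norm growing like $n$, so one must use a genuine perturbation of the canonical basis within each block that retains the concentrating behaviour while keeping the partial sums under control.
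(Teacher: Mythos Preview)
Your $c_0$ argument is correct and essentially identical to the paper's: both use the summing basis $s_k=e_1+\cdots+e_k$, test the coordinate projections on unit vectors, bound the input $\alpha$-norm via unconditional stability, and force a $\sqrt N$ lower bound on the output via the operator-norm Euclidean structure.

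Your $\ell^1$ proposal, however, is incomplete and considerably more complicated than needed. You correctly observe that the summing basis fails in $\ell^1$ because its partial sum projections blow up, but your block-decomposition scheme leaves the essential construction unspecified. The paper avoids all of this with a short dual argument: on $\ell^1$ it takes the basis
\[
\mb{y}^*=(e_1^*,\,e_1^*-e_2^*,\,e_2^*-e_3^*,\ldots),
\]
whose partial sum projections are easily seen to be contractive (for $y\in\ell^1$ one checks $S_m y=(y_1,\ldots,y_{m-1},\sum_{j\ge m}y_j,0,\ldots)$). The biorthogonal functionals live in $\ell^\infty$ and involve the constant sequence $e=(1,1,\ldots)$; in particular $P^{\mb{y}^*}_{k+1}e_{n+1}^*=-(e_k^*-e_{k+1}^*)$ for $1\le k\le n$. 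One then pairs $(e_k^*-e_{k+1}^*)_{k=1}^n$ against $(e_k)_{k=1}^n\subset\ell^\infty$ to get $\sum_k(e_k^*-e_{k+1}^*)(e_k)=n$, and uses the \emph{second} unconditional-stability inequality \eqref{eq:unconditional2} on $\beta^*$ to bound $\nrm{(e_k)_{k=1}^n}_{\beta^*}\lesssim 1$. Since the repeated vector $(e_{n+1}^*,\ldots,e_{n+1}^*)$ has $\beta$-norm exactly $\sqrt n$, almost $\beta$-boundedness would give $n\lesssim\sqrt n$.

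The point you are missing is that the $\ell^1$ case is naturally handled by duality: the biorthogonal sequence of the $c_0$ summing basis already lives in $\ell^1$, and once you prepend $e_1^*$ to make it a genuine basis of $\ell^1$, the same mechanism works---but now the role of unconditional stability shifts from \eqref{eq:unconditional1} to \eqref{eq:unconditional2}, since the unconditional vectors $(e_k)$ sit in the dual $\ell^\infty$. No block construction is required.
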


\begin{proof}
  For $c_0$ we consider the so-called summing basis $\mb{x}$, given by $x_n = \sum_{k=1}^n e_k$, where $(e_k)_{k=1}^\infty$ is the canonical basis of $c_0$. The biorthogonal sequence $\mb{x}^*$ in $\ell^1$ is given by $x_n^* = e^*_n-e^*_{n+1}$ for $n \in \N$, where $(e_k^*)_{k=1}^\infty$ is the canonical basis of $\ell^1$. Let $\alpha$ be an unconditionally stable Euclidean structure on $c_0$ and suppose that $\mb{x}$ is almost $\alpha$-bounded. Let $(P_k^{\mb{x}})_{k=1}^\infty$ be the coordinate projections associated to $\mb{x}$. Then  we have for any $n \in \N$
  \begin{align*}
    \nrmb{(x_k)_{k=1}^n}_{\alpha} = \nrmb{(P_k^{\mb{x}}e_k)_{k=1}^n}_{\alpha}\lesssim \nrmb{(e_k)_{k=1}^n}_{\alpha} \lesssim \sup_{\abs{\epsilon_k}=1}\nrms{\sum_{k=1}^n \epsilon_k e_k}_{\ell^\infty} =1.
  \end{align*}
  Since $e_1^*(x_k)=1$ for all $k \in \N$, we also have by Proposition \ref{proposition:finitedimensionalalpha}
\begin{equation*}
  n^{1/2} = n^{-1/2}\sum_{k=1}^n \abs{e_1^*(x_k)} \leq n^{-1/2}\nrmb{(x_k)_{k=1}^n}_{\alpha} \nrmb{(e_1^*)_{k=1}^n}_{\alpha^*} = \nrmb{(x_k)_{k=1}^n}_{\alpha},
\end{equation*}
a contradiction. So $\mb{x}$ is not almost $\alpha$-bounded.

The argument for $\ell^1$ is dual. We consider the basis $\mb{y}^*:=(e_1^*,x_1^*,x_2^*,\ldots)$ with biorthogonal sequence $(e,x_1-e,x_2-e,\ldots)$, where $e \in \ell^\infty$ is the sequence $(1,1,\ldots)$. Now let $\beta$ be an unconditionally stable Euclidean structure on $\ell^1$ and suppose that $\mb{y}^*$ is almost $\beta$-bounded.  Then we have for any $n \in \N$
\begin{align*}
  n = \sum_{k=1}^n  x_k^*(e_k) \leq \nrmb{( x_k^*)_{k=1}^n}_\beta \nrmb{(e_k)_{k=1}^n}_{\beta^*} \lesssim \nrmb{(x_k^*)_{k=1}^n}_{\beta}
\end{align*}
and, denoting the coordinate projections associated to $\mb{y}^*$ by $(P_k^{\mb{y}^*})_{k=1}^\infty$, we have
\begin{equation*}
\nrmb{(x_k^*)_{k=1}^n}_{\beta}  = \nrmb{(P_{k+1}^{\mb{y}^*}e_n^*)_{k=1}^n}_{\beta} \lesssim \nrmb{(e_n^*)_{k=1}^n}_{\beta} \leq  n^{1/2},
\end{equation*}
a contradiction. So $\mb{y}^*$ is not almost $\beta$-bounded.
\end{proof}

\subsection*{The general case}
The general case will follow from the following lemma, which is a consequence of a result by Lindenstrauss and Zippin \cite{LZ69}.

\begin{lemma}\label{lemma:lindenstrausszippin}
  Suppose that $X$ has  an unconditional Schauder basis and is not isomorphic to $\ell^1$, $\ell^2$ or $c_0$. Then there is an unconditional Schauder basis $\mb{x}$ of $X$, a permutation $\pi\colon\N\to \N$ and a sequence of scalars $(a_k)_{k=1}^\infty$ such that $\sum_{k=1}^\infty a_kx_{2k-1}$ converges but $\sum_{k=1}^\infty a_kx_{\pi(2k)}$ does not converge.
\end{lemma}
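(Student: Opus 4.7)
The plan is to invoke the Lindenstrauss--Zippin uniqueness theorem \cite{LZ69}: a Banach space with an unconditional basis in which every two unconditional bases are equivalent up to a permutation is isomorphic to one of $c_0$, $\ell^1$, or $\ell^2$. Since $X$ sits outside this list, we fix an unconditional basis $\mb{y} = (y_n)_{n=1}^\infty$ of $X$ together with a permutation $\sigma\colon\N\to\N$ such that $(y_n)$ and $(y_{\sigma(n)})$ are not equivalent as basic sequences; equivalently, there exist scalars $(c_n)$ with $\sum_n c_n y_n$ convergent (unconditionally) but $\sum_n c_n y_{\sigma(n)}$ divergent, or vice versa.

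The key step, which is the main obstacle, is to upgrade this single inequivalent rearrangement to a partition $\N = A \sqcup B$ of the index set into two infinite sets for which the sub-bases $(y_n)_{n\in A}$ and $(y_n)_{n\in B}$ are themselves not equivalent as basic sequences. The idea is to examine the orbit structure of $\sigma$ and take $A$ to be a union of $\sigma$-orbits on which the differing convergence behaviour of $\sum c_n y_n$ versus $\sum c_n y_{\sigma(n)}$ is concentrated; the complement $B$ carries the opposite behaviour. In the degenerate case where every such partition produced equivalent sub-bases, a Schroeder--Bernstein-type block-decomposition argument, piecing together isomorphisms between complementary sub-bases, would force $(y_n) \sim (y_{\sigma(n)})$, contradicting the choice of $\sigma$. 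This passage from inequivalent permutation to inequivalent partition is effectively the combinatorial content of \cite{LZ69} and will be quoted rather than rederived.

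Once such a partition is in hand, enumerate $A = \{\alpha_k\}_{k=1}^\infty$ and $B = \{\beta_k\}_{k=1}^\infty$ and define
\begin{equation*}
 x_{2k-1} := y_{\alpha_k}, \qquad x_{2k} := y_{\beta_k}, \qquad k \in \N.
\end{equation*}
Since $\mb{x}$ is a reindexing of $\mb{y}$, it is itself an unconditional basis of $X$. Taking $\pi$ to be the identity (or any bijection of $\N$ preserving the parity of even integers), unconditionality of $\mb{y}$ ensures that a basic sequence indexed by a fixed subset of $\N$ depends on that subset alone, so that $(x_{2k-1})_k$ is equivalent to $(y_n)_{n\in A}$ while $(x_{\pi(2k)})_k$ is equivalent to $(y_n)_{n\in B}$. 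The non-equivalence of these two sub-bases then produces scalars $(a_k)_{k=1}^\infty$ with $\sum_k a_k x_{2k-1}$ convergent and $\sum_k a_k x_{\pi(2k)}$ divergent, after possibly swapping the roles of $A$ and $B$. The remainder of the argument is routine bookkeeping exploiting the rearrangement-invariance of unconditional basic sequences.
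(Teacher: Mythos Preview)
Your route is the same as the paper's: obtain a non-symmetric unconditional basis via \cite{LZ69}, then reduce to the odd/even statement. The paper dispatches the second step by a direct citation to \cite[Chapter~2, Proposition~23.2]{Si70}; you attempt an independent argument before falling back on a citation.

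That argument has a genuine gap. If $A$ is a union of $\sigma$-orbits then $\sigma(A)=A$, so any divergence you detect in $\sum_{n\in A} c_n y_{\sigma(n)}$ only shows that $(y_n)_{n\in A}$ is inequivalent to the reordered sequence $(y_{\sigma(n)})_{n\in A}$ --- both living over the \emph{same} index set $A$. This says nothing about how $(y_n)_{n\in A}$ compares with the disjoint half $(y_n)_{n\in B}$, which is what your relabelling step requires. The Schroeder--Bernstein heuristic does not repair this: there is no evident mechanism by which equivalence of every pair of complementary sub-bases would force $(y_n)\sim(y_{\sigma(n)})$. Finally, the passage from a non-symmetric basis to inequivalent halves is not ``the combinatorial content of \cite{LZ69}''; that paper furnishes only the non-symmetric basis (see the note cited by the paper), while the odd/even inequivalence is precisely what Singer's argument delivers.
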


\begin{proof}
  By \cite[Note (1) at the end]{LZ69} we know that $X$ has an unconditional, \emph{non-symmetric} basis $\mb{x}$, i.e. there is a permutation $\pi\colon\N\to\N$ such that $\mb{x}$ and $(x_{\pi(k)})_{k=1}^\infty$ are not equivalent. This implies the claim by the first part of the proof of \cite[Chapter 2, Proposition 23.2]{Si70}.
\end{proof}

We are now ready to prove the main result of this section.
\begin{theorem}\label{theorem:notalmostalphasectorial}
  Suppose that $X$ has  an unconditional Schauder basis and is not isomorphic to $\ell^2$. Then $X$ has a Schauder basis which is not almost $\alpha$-bounded for any unconditionally stable Euclidean structure $\alpha$.
\end{theorem}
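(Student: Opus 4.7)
The plan is to split the argument into two cases along the lines of the Lindenstrauss--Zippin dichotomy encoded in Lemma~\ref{lemma:lindenstrausszippin}. In the first case, where $X$ is isomorphic to $\ell^1$ or $c_0$, I would transport the basis produced in Proposition~\ref{proposition:l1c0schauder} through an isomorphism $T\colon X_0 \to X$ with $X_0 \in \{\ell^1, c_0\}$, obtaining the Schauder basis $\mathbf{x} := (Tx_{0,k})$ of $X$. Assuming for contradiction that $\mathbf{x}$ were almost $\alpha$-bounded for some unconditionally stable Euclidean structure $\alpha$ on $X$, I would pull $\alpha$ back to $X_0$ via $\nrm{(u_k)}_\beta := \nrm{(Tu_k)}_\alpha$ and verify directly that $\beta$ inherits unconditional stability from $\alpha$ (using $\nrm{T},\nrm{T^{-1}}<\infty$); this would exhibit an almost $\beta$-bounded basis on $X_0$, contradicting Proposition~\ref{proposition:l1c0schauder}.

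In the second case, Lemma~\ref{lemma:lindenstrausszippin} supplies an unconditional Schauder basis $\mathbf{x} = (x_k)$ of $X$, a permutation $\pi$ of $\N$, and scalars $(a_k)$ with $\sum_k a_k x_{2k-1}$ convergent in $X$ but $\sum_k a_k x_{\pi(2k)}$ divergent. Since $(x_{\pi(2k)})_k$ is a subsequence of an unconditional basis, divergence forces the partial sums to not be Cauchy, so I can extract finite consecutive blocks $B_1 < B_2 < \cdots$ of $\N$ with
\[
\nrms{\sum_{k \in B_j} a_k x_{\pi(2k)}}_X \geq 1, \qquad j \in \N.
\]
Setting $w := \sum_k a_k x_{2k-1} \in X$, I would then construct a new Schauder basis $\mathbf{y}$ of $X$ by reindexing $(x_\ell)$ so that the $j$-th block of $\mathbf{y}$ contains exactly the elements $(x_{\pi(2k)})_{k \in B_j}$ together with the $(x_{2k-1})_{k \in B_j}$, arranged so that the partial sum projection $S_j^{\mathbf{y}}$ applied to $w$ yields $\sum_{k \in B_j} a_k x_{\pi(2k)}$.

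Assuming $\mathbf{y}$ were almost $\alpha$-bounded for some unconditionally stable $\alpha$, I would derive a contradiction in the style of Proposition~\ref{proposition:l1c0schauder}. On the one hand, unconditional stability combined with the unconditionality of $\mathbf{x}$ and the right-ideal property of $\alpha$ would give an upper bound
\[
\nrm{(y_1, \ldots, y_n)}_\alpha \lesssim \sup_{\abs{\epsilon_k}=1}\nrms{\sum_{k=1}^n \epsilon_k y_k}_X \lesssim C
\]
uniformly in $n$. On the other hand, duality against an appropriate element of $X^*$ (manufactured from the biorthogonal functionals of $\mathbf{y}$ paired against $w$), combined with Proposition~\ref{proposition:finitedimensionalalpha} applied in the one-dimensional subspace spanned by that functional, would force $\nrm{(y_1,\ldots,y_n)}_\alpha \gtrsim \sqrt{n}$, yielding the contradiction.

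The main obstacle will be the explicit construction of the basis $\mathbf{y}$: the block-interleaving has to be performed so that $\mathbf{y}$ is a genuine Schauder basis of all of $X$ (preserving completeness) while simultaneously its coordinate and partial-sum projections remain simple enough for the unconditional-stability upper bound and sufficiently tied to the divergent blocks to force the $\sqrt{n}$ lower bound. Once the combinatorics of the construction is settled, both estimates follow by the same scheme already used for $\ell^1$ and $c_0$ in Proposition~\ref{proposition:l1c0schauder}.
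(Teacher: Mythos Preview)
Your second case has two genuine gaps. First, if $\mathbf{y}$ is merely a permutation of $\mathbf{x}$, then for $w=\sum_k a_k x_{2k-1}$ every coordinate or partial-sum projection of $\mathbf{y}$ applied to $w$ is again a linear combination of the $x_{2k-1}$'s; the vectors $x_{\pi(2k)}$ do not occur in the expansion of $w$, so no arrangement of a reindexed basis can make a projection of $w$ equal to $\sum_{k\in B_j}a_k x_{\pi(2k)}$. Second, the uniform upper bound $\sup_{|\epsilon_k|=1}\nrm{\sum_{k=1}^n \epsilon_k y_k}_X \leq C$ fails for essentially every basis that is not equivalent to the $c_0$-basis (for the standard basis of $\ell^p$ it is $n^{1/p}$), so the $c_0$-style contradiction cannot close in this generality.

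The paper supplies the missing mechanism: one does not permute $\mathbf{x}$ but \emph{mixes} it, setting $y_{2k-1}=x_{2k-1}+x_{\pi(2k)}$ and $y_{2k}=x_{\pi(2k)}$. Then $x_{2k-1}=y_{2k-1}-y_{2k}$, so the coordinate projection $P_{2k}^{\mathbf{y}}$ sends $a_k x_{2k-1}$ to $-a_k x_{\pi(2k)}$, acting as a transfer operator from the odd to the permuted even block. Assuming $\mathbf{y}$ is almost $\alpha$-bounded, one pairs $\bigl(-a_kP_{2k}^{\mathbf{y}}x_{2k-1}\bigr)_{k=m+1}^n$ against suitable $(b_k y_{2k}^*)_{k=m+1}^n$ via $\alpha$-duality, uses the $\alpha$-boundedness of $\{P_k^{\mathbf{y}}\}$, and then invokes unconditional stability of $\alpha$ together with unconditionality of $\mathbf{x}$ (hence of $\mathbf{x}^*$) to obtain
\[
\nrmb{\textstyle\sum_{k=m+1}^n a_k x_{\pi(2k)}}_X \lesssim \nrmb{\textstyle\sum_{k=m+1}^n a_k x_{2k-1}}_X
\]
uniformly in $m<n$. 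This forces $\sum_k a_k x_{\pi(2k)}$ to be Cauchy, the desired contradiction; no block extraction or $\sqrt{n}$ lower bound is needed. (Your Case~1 is essentially fine, but note that $\beta$ as defined fails axiom~\eqref{eq:E1x} unless $T$ is an isometry; renorming $X_0$ by $\nrm{T\cdot}_X$ fixes this without affecting unconditional stability.)
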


\begin{proof}
  If $X$ is isomorphic to $\ell^1$ or $c_0$, the theorem follows from Proposition \ref{proposition:l1c0schauder}. Otherwise we can use Lemma \ref{lemma:lindenstrausszippin} to find an unconditional Schauder basis $\mb{x}$ of $X$, a permutation $\pi\colon\N\to \N$ and a sequence of scalars $(a_k)_{k=1}^\infty$ such that $\sum_{k=1}^\infty a_kx_{2k-1}$ converges but $\sum_{k=1}^\infty a_kx_{\pi(2k)}$ does not converge.

  Define for $k \in \N$
  \begin{equation*}
    y_k = \begin{cases}
      x_k + x_{\pi(k+1)}\quad &\text{if $k$ is odd},\\
      x_{\pi(k)}\quad &\text{if $k$ is even}.
    \end{cases}
  \end{equation*}
  Then $\mb{y}$ is an unconditional Schauder basis of $X$ and its biorthogonal sequence $\mb{y}^*$ is an unconditional Schauder basis for $\overline{\spn}\cbrace{y_k^*:k \in \N}$.
  Let $\alpha$ be an unconditionally stable Euclidean structure and assume that $\mb{y}$ is almost $\alpha$-bounded. Fix $m,n \in \N$ and let $(b_k)_{k=m+1}^n$ be such that $\sum_{k=m+1}^nb_ky_{2k}^*$ has norm $1$ and
  \begin{equation*}
     \nrms{\sum_{k=m+1}^n a_kx_{\pi(2k)}}_X = \sum_{k=m+1}^n \absb{b_ky_{2k}^*(a_ky_{2k})}.
  \end{equation*}
  Let $(P_k^{\mb{y}})_{k=1}^\infty$ be the coordinate projections associated to $\mb{y}$. Then since $x_{2k-1} = y_{2k-1}-y_{2k}$, we obtain by Proposition \ref{proposition:alphaschauder} and the unconditionality of $\mb{y}$, $\mb{y}^*$ and $\alpha$
  \begin{align*}
    \nrmb{\sum_{k=m+1}^n a_kx_{\pi(2k)}}_X &=  \sum_{k=m+1}^n \absb{b_ky_{2k}^*(a_ky_{2k})}\\
    &\leq \nrmb{(-a_kP^{\mb{y}}_{2k}x_{2k-1})_{k=m+1}^n}_\alpha \nrmb{(b_ky^*_{2k})_{k=m+1}^n}_{\alpha^*}\\
    &\lesssim \sup_{\abs{\epsilon_k}=1} \nrmb{\sum_{k=m+1}^n \epsilon_k a_k x_{2k-1}}_X \sup_{\abs{\epsilon_k}=1} \nrmb{\sum_{k=m+1}^n \epsilon_k b_k y^*_{2k}}_{X^*}\\
    &\lesssim  \nrmb{\sum_{k=m+1}^n a_kx_{2k-1}}_X.
  \end{align*}
But since $\sum_{k=1}^\infty a_kx_{2k-1}$ converges this implies that  $\sum_{k=1}^\infty a_kx_{\pi(2k)}$ converges, a contradiction. So $\mb{y}$ is not almost $\alpha$-bounded.
\end{proof}

Theorem \ref{theorem:notalmostalphasectorial} combined with Proposition \ref{proposition:alphaschauder} yields the result we were after in this section:
\begin{corollary}\label{corollary:notalmostalpha}
  Suppose that $X$ has  an unconditional Schauder basis and is not isomorphic to $\ell^2$. Then $X$ has a Schauder basis $\mb{x}$ such that for any Hadamard sequence $(\lambda_k)_{k=1}^\infty$ the operator associated to $\mb{x}$ and $(\lambda_k)_{k=1}^\infty$ is not almost $\alpha$-sectorial for any unconditionally stable Euclidean structure $\alpha$.
\end{corollary}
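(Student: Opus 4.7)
The plan is to combine the two preceding results essentially as a direct concatenation. By Theorem~\ref{theorem:notalmostalphasectorial}, the hypothesis that $X$ has an unconditional Schauder basis and is not isomorphic to $\ell^2$ already produces a Schauder basis $\mb{x}$ of $X$ which is not almost $\alpha$-bounded for any unconditionally stable Euclidean structure $\alpha$. In other words, writing $X_k := \spn\cbrace{x_k}$ and letting $(P_k)_{k=1}^\infty$ be the coordinate projections associated with the Schauder decomposition $(X_k)_{k=1}^\infty$, the family $\cbrace{P_k : k \in \N}$ fails to be $\alpha$-bounded for every such $\alpha$. I would simply fix this basis $\mb{x}$ and use it as the witness.

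Next, fix an arbitrary Hadamard sequence $(\lambda_k)_{k=1}^\infty$ and let $A$ be the Schauder multiplier operator associated with $(X_k)_{k=1}^\infty$ and $(\lambda_k)_{k=1}^\infty$. By Proposition~\ref{proposition:schaudersectorial}, $A$ is a well-defined sectorial operator on $X$. Now I would invoke Proposition~\ref{proposition:alphaschauder}\ref{it:alphaschauder1}, which asserts that, for a Schauder multiplier operator corresponding to a Hadamard sequence, almost $\alpha$-sectoriality of $A$ is equivalent to almost $\alpha$-boundedness of the Schauder decomposition, i.e.\ to $\alpha$-boundedness of the set of coordinate projections $\cbrace{P_k : k \in \N}$. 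Since $\mb{x}$ was chosen so that this family is not $\alpha$-bounded for any unconditionally stable $\alpha$, we conclude that $A$ is not almost $\alpha$-sectorial for any such $\alpha$.

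There is no real obstacle here: both ingredients are already available, and the only thing to verify is that the choice of Schauder basis supplied by Theorem~\ref{theorem:notalmostalphasectorial} does not depend on $\alpha$ (it does not — the construction in the proof of Theorem~\ref{theorem:notalmostalphasectorial} produces a single basis $\mb{y}$ whose non-almost-$\alpha$-boundedness is established uniformly over all unconditionally stable $\alpha$), and that Proposition~\ref{proposition:alphaschauder}\ref{it:alphaschauder1} applies for every Hadamard sequence (it does, by its statement). Thus the corollary follows immediately by combining these two results.
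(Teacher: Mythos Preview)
Your proposal is correct and matches the paper's approach exactly: the paper simply states that the corollary follows by combining Theorem~\ref{theorem:notalmostalphasectorial} with Proposition~\ref{proposition:alphaschauder}, which is precisely what you do. Your added remarks about the uniformity of the basis over $\alpha$ and over the choice of Hadamard sequence are accurate and make the deduction explicit.
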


In particular this implies the following corollary, since the Haar basis is unconditional in $L^p(\R)$ for $p \in (1,\infty)$.

\begin{corollary}
  Let $p \in (1,\infty)\setminus \cbrace{2}$. Then there is a sectorial operator on $L^p(\R)$ which is not almost $\gamma$-sectorial.
\end{corollary}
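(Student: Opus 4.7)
The plan is to reduce this to the already-established Corollary \ref{corollary:notalmostalpha} by verifying its three hypotheses for $X = L^p(\R)$ and $\alpha = \gamma$. First, $L^p(\R)$ admits an unconditional Schauder basis for any $p \in (1,\infty)$: the Haar system is unconditional in $L^p(\R)$ (this is a classical consequence of the $\UMD$ property of $L^p$, or can be taken as an input as indicated in the parenthetical). Second, $L^p(\R)$ is not isomorphic to $\ell^2$ when $p \neq 2$; this follows from the fact that $L^p(\R)$ has cotype $\max(p,2) \neq 2$ (when $p>2$) or type $\min(p,2) \neq 2$ (when $p<2$), whereas $\ell^2$ has both type and cotype $2$.

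The remaining point is to verify that the $\gamma$-structure on $L^p(\R)$ is unconditionally stable. For this I would invoke Proposition \ref{proposition:unconditionallystable}\ref{it2:unconditional}, which asserts that the $\gamma$-structure is unconditionally stable on any Banach space of finite cotype. Since $L^p(\R)$ has cotype $\max(p,2)<\infty$ for $p \in (1,\infty)$, this hypothesis is satisfied.

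With these three ingredients in hand, Corollary \ref{corollary:notalmostalpha} applied with $X = L^p(\R)$ produces a Schauder basis $\mb{x}$ of $L^p(\R)$ such that, for any Hadamard sequence $(\lambda_k)_{k=1}^\infty$ (for concreteness, take $\lambda_k = 2^k$), the associated Schauder multiplier operator fails to be almost $\alpha$-sectorial for every unconditionally stable Euclidean structure $\alpha$. Specializing $\alpha = \gamma$ yields the desired sectorial operator on $L^p(\R)$ which is not almost $\gamma$-sectorial. There is no substantive obstacle here — the work has all been done in Section \ref{section:sectorialnotalmostalpha} — so this corollary is essentially a bookkeeping step verifying that $L^p(\R)$ satisfies the abstract hypotheses.
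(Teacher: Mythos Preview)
Your proof is correct and follows the same approach as the paper, which simply notes that the Haar basis is unconditional in $L^p(\R)$ and applies Corollary \ref{corollary:notalmostalpha}. One minor remark: your type/cotype argument for $L^p(\R) \not\simeq \ell^2$ is phrased imprecisely (having cotype $p>2$ does not by itself preclude having cotype $2$; you need that the \emph{optimal} cotype of $L^p$ is $p$ when $p>2$), but the conclusion is of course standard.
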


\section{Almost \texorpdfstring{$\alpha$}{a}-sectorial operators which are not \texorpdfstring{$\alpha$}{a}-sectorial}\label{section:almostalphanotalpha}
Building upon the results of the previous section, we will now construct a Schauder basis that is almost $\alpha$-bounded, but not $\alpha$-bounded for any unconditionally stable Euclidean structures $\alpha$. In view of Proposition \ref{proposition:alphaschauder} this yields examples of sectorial operators that are almost $\alpha$-sectorial, but not $\alpha$-sectorial.

We start with a useful criterion for the almost $\alpha$-boundedness of a Schauder basis.

\begin{lemma}\label{lemma:schauderduality}
  Let $\mb{x}$ be a Schauder basis of $X$ with biorthogonal sequence $\mb{x}^*$, let $\alpha$ be a Euclidean structure on $X$ and take $p \in [1,\infty]$. If there is a $C>0$ such that for all sequences of scalars $(a_k)_{k=1}^n$ and $(b_k)_{k=1}^n$ we have
  \begin{align*}
    \nrmb{(a_1x_1,\ldots,a_nx_n)}_{\alpha} &\leq C \, \nrmb{(a_k)_{k=1}^n}_{\ell^p_n} ,\\
    \nrmb{(b_1x^*_1,\ldots,b_nx^*_n)}_{\alpha^*} &\leq C \, \nrmb{(b_k)_{k=1}^n}_{\ell^{p'}_n},
  \end{align*}
  then $\mb{x}$ is almost $\alpha$-bounded.
\end{lemma}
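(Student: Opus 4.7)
The plan is to verify the $\alpha$-boundedness of the family $\{P_k : k\in\N\}$ of coordinate projections directly, via an $\alpha$-H\"older duality together with a scalar H\"older splitting on the conjugate exponents $p,p'$. Fix $P_{k_1},\dots,P_{k_n}$ and $\mb{y}\in X^n$. Using $P_{k_j}y_j = x_{k_j}^*(y_j)\,x_{k_j}$ and the fact that $\alpha^{**}$ restricts to $\alpha$ on $X^n$, trace duality gives
\[
\nrm{(P_{k_j}y_j)_{j=1}^n}_\alpha \;=\; \sup_{\nrm{\mb{z}^*}_{\alpha^*}\le 1}\,\sum_{j=1}^n \abs{x_{k_j}^*(y_j)}\,\abs{z_j^*(x_{k_j})}.
\]
Scalar H\"older with exponents $p$ and $p'$ then bounds the inner sum by
\[
\Big(\sum_j \abs{x_{k_j}^*(y_j)}^p\Big)^{1/p}\,\Big(\sum_j \abs{z_j^*(x_{k_j})}^{p'}\Big)^{1/p'}.
\]

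The core of the proof is to estimate each factor by $\nrm{\mb{y}}_\alpha$ and $\nrm{\mb{z}^*}_{\alpha^*}$ respectively, using the two diagonal hypotheses. For the first factor, I would invoke $\ell^p$-duality: choosing unimodular $\epsilon_j$ so that $\epsilon_j c_j\,x_{k_j}^*(y_j)=\abs{c_j}\abs{x_{k_j}^*(y_j)}$, one writes
\[
\Big(\sum_j\abs{x_{k_j}^*(y_j)}^p\Big)^{1/p} = \sup_{\nrm{(c_j)}_{\ell^{p'}}\le 1}\,\sum_j (\epsilon_j c_j x_{k_j}^*)(y_j) \;\le\; \nrm{\mb{y}}_\alpha\,\sup_{\nrm{(c_j)}_{\ell^{p'}}\le 1}\nrm{(\epsilon_j c_j x_{k_j}^*)_{j=1}^n}_{\alpha^*},
\]
where the second step is the $\alpha$-H\"older inequality. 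The second hypothesis is then meant to control the remaining $\alpha^*$-norm by $C\,\nrm{(c_j)}_{\ell^{p'}}$, so that this factor is $\le C\,\nrm{\mb{y}}_\alpha$. A completely symmetric argument based on the first hypothesis handles the second factor, producing $\le C\,\nrm{\mb{z}^*}_{\alpha^*}$. Putting everything together I obtain $\nrm{(P_{k_j}y_j)_{j=1}^n}_\alpha \le C^2\,\nrm{\mb{y}}_\alpha$, which is the desired $\alpha$-boundedness with constant $C^2$.

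The step I expect to require the most care, and the main obstacle, is the reduction of $\nrm{(\epsilon_j c_j x_{k_j}^*)_{j=1}^n}_{\alpha^*}$ to the diagonal configuration $(b_k x_k^*)_{k=1}^N$ in the hypothesis. When the indices $k_1,\dots,k_n$ are pairwise distinct, this is clean: insert zeros at the missing positions up to $N=\max_j k_j$ and apply a coordinate-selection matrix, which has operator norm $\le 1$ so property \eqref{eq:E2x} preserves the $\alpha^*$-norm, and the hypothesis then yields the bound $C\,\nrm{(c_j)}_{\ell^{p'}}$. The definition of $\alpha$-boundedness of $\{P_k\}$ however allows repetitions among the $P_{k_j}$, and so one must also deal with repeated indices in $(x_{k_j}^*)$. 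Here the right move is to factor $(\epsilon_j c_j x_{k_j}^*)_{j=1}^n = \mb{A}\,(x_{l_i}^*)_{i=1}^m$ through the list $\{l_1,\dots,l_m\}$ of distinct values of the $k_j$, where $\mb{A}\in M_{n,m}(\C)$ has $A_{j,i}=\epsilon_j c_j\,\delta_{k_j,l_i}$; property \eqref{eq:E2x} reduces the task to bounding $\nrm{\mb{A}}\cdot\nrm{(x_{l_i}^*)_{i=1}^m}_{\alpha^*}$ by $C\,\nrm{(c_j)}_{\ell^{p'}}$, and careful bookkeeping of the block structure of $\mb{A}$ is what ensures that no $n$- or $m$-dependent constant is introduced.
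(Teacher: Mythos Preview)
Your distinct-index argument is fine, but the repetition step has a real gap: the claim that ``careful bookkeeping of the block structure of $\mb A$'' avoids an $n$-dependent constant is false. With $A_{j,i}=\epsilon_jc_j\delta_{k_j,l_i}$ the columns of $\mb A$ are disjointly supported, so $\nrm{\mb A}=\max_i\bigl(\sum_{j:k_j=l_i}|c_j|^2\bigr)^{1/2}$, while the hypothesis gives only $\nrm{(x_{l_i}^*)_{i=1}^m}_{\alpha^*}\le C\,m^{1/p'}$. For a single repeated index ($m=1$) and $c_j\equiv 1$ the product is $C\,n^{1/2}$, whereas $\nrm{(c_j)}_{\ell^{p'}}=n^{1/p'}$, so the bound fails for every $p'>2$. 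Even the optimal factorization through $(d_ix_{l_i}^*)$ with $d_i=\bigl(\sum_{j:k_j=l_i}|c_j|^2\bigr)^{1/2}$ produces only $\nrm{(d_i)}_{\ell^{p'}}$, and this is controlled by $\nrm{(c_j)}_{\ell^{p'}}$ only when $p'\le 2$; the symmetric argument for the second factor fails when $p>2$, so your scheme survives only at $p=2$. The underlying reason is that once you commit to the scalar $(p,p')$-H\"older split, the two factors are estimated independently, and each must then absorb an $\ell^2$-block structure that is incompatible with the $\ell^p$ or $\ell^{p'}$ control from the hypotheses.

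The paper avoids this by collapsing repetitions \emph{before} any splitting. Since all entries $(P_{m_k}y_k)_{k\in I_j}$ lie on the line $\C x_j$, property~\eqref{eq:E2x} applied with a unitary on each block gives the equality $\nrm{(P_{m_k}y_k)_{k=1}^n}_\alpha=\nrmb{\bigl((\sum_{k\in I_j}|x_j^*(y_k)|^2)^{1/2}x_j\bigr)_{j}}_\alpha$, which is already a distinct-index configuration. Writing the $j$-th entry as $P_j\bigl(\sum_{k\in I_j}c_{jk}y_k\bigr)$ with $\sum_{k\in I_j}|c_{jk}|^2=1$ and applying the distinct case yields $\le C^2\nrmb{\bigl(\sum_{k\in I_j}c_{jk}y_k\bigr)_{j}}_\alpha$; the matrix $(c_{jk})$ has orthonormal, disjointly supported rows, so one more use of \eqref{eq:E2x} returns $C^2\nrm{\mb y}_\alpha$. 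Here the $\ell^2$-collapse and the exponent $p$ never interact.
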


\begin{proof}
Fix $n \in \N$ and take $\mb{y} \in X^n$. Define $a_k = x^*_k(y_k)$ for $k =1,\ldots,n$. Let $(b_k)_{k=1}^n$ be such that $\nrmb{(b_k)_{k=1}^n}_{\ell^{p'}_n} =1$ and
$  \sum_{k=1}^n a_kb_k = \nrmb{(a_k)_{k=1}^n}_{\ell^p_n} .$
Let $(P_k^{\mb{x}})_{k=1}^\infty$ be the coordinate projections associated to $\mb{x}$. Then
  \begin{align*}
    \nrmb{(P_k^{\mb{x}}y_k)_{k=1}^n}_{\alpha} &= \nrmb{(a_kx_k)_{k=1}^n}_{\alpha} \leq C \, \nrmb{(a_k)_{k=1}^n}_{\ell^p_n} = C\, \sum_{k=1}^n b_k x^*_k(y_k)
    \leq C^2\, \nrm{\mb{y}}_{\alpha}.
  \end{align*}
 In the same way we obtain for $m_1,\ldots,m_n \in \N$ distinct that
    \begin{align*}
    \nrmb{(P_{m_k}^{\mb{x}}y_k)_{k=1}^n}_{\alpha} \leq C^2\, \nrm{\mb{y}}_{\alpha}.
  \end{align*}
  To allow repetitions we consider index sets $I_j = \cbrace{k \in \N: m_k = j}$ and let $N \in \N$ be such that $I_j = \emptyset$ for $j >N$. By the right ideal property of a Euclidean structure and choosing appropriate $c_{jk}$'s with $\sum_{k \in I_j} \abs{c_{jk}}^2 =1$ for $j=1,\ldots,N$ we have
  \begin{align*}
    \nrmb{(P_{m_k}^{\mb{x}}y_k)_{k=1}^n}_{\alpha} &=  \nrms{\has{\hab{\sum_{k \in I_j}\abs{x_j^*(y_k)}^2}^{1/2}x_j}_{j=1}^N}_\alpha\\
    &= \nrms{\hab{\sum_{k \in I_j}c_{jk}x_j^*(y_k)x_j}_{j=1}^N}_\alpha\\
    &= \nrms{\has{P_j^{\mb{x}}\hab{\sum_{k \in I_j}c_{jk}y_k}}_{j=1}^N}_\alpha \\
    &\leq C^2 \, \nrms{\hab{\sum_{k \in I_j}c_{jk}y_k}_{j=1}^N}_\alpha
    = C^2\, \nrm{\mb{y}}_{\alpha}.
  \end{align*}
  Therefore $\cbrace{P_k^{\mb{x}}:k\in \N}$ is $\alpha$-bounded, i.e. $\mb{x}$ is almost $\alpha$-bounded.
\end{proof}

With this lemma at our disposal we can now turn to the main result of this section. We take the example in Theorem \ref{theorem:notalmostalphasectorial} as a starting point to construct an example of a Schauder basis that is almost $\alpha$-sectorial, but not $\alpha$-sectorial.

\begin{theorem}\label{theorem:almostalphasectorialnotalphasectorial}
Let $\alpha$ be an ideal unconditionally stable Euclidean structure on $X$. Suppose that
\begin{itemize}
  \item $X$ has a Schauder basis $\mb{x}$ which is not $\alpha$-bounded.
  \item $X$ has a complemented subspace isomorphic to $\ell^p$ for some $p \in [1,\infty)$ or isomorphic to $c_0$.
\end{itemize}
 Then $X$ has a Schauder basis $\mb{y}$ which is almost $\alpha$-bounded, but not $\alpha$-bounded.
\end{theorem}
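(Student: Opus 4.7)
\medskip

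\noindent\textbf{Proof plan.} The strategy is to build $\mb{y}$ by interleaving a basis-like sequence extracted from $\mb{x}$ with the canonical basis of the complemented $\ell^p$ (or $c_0$) subspace, with scalar weights chosen so that the coordinate projections are controlled by the $\ell^p$-geometry, while the partial sum projections retain the non-$\alpha$-boundedness coming from $\mb{x}$. Throughout we treat the $\ell^p$ case; the $c_0$ case is identical upon replacing $\ell^p$ by $c_0$ and $\ell^{p'}$ by $\ell^1$.

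First I would write $X = Y \oplus E$ with $E \cong \ell^p$ (or $c_0$), and use that the canonical basis $(e_k)$ of $E$ is $\alpha$-bounded (because it is $1$-symmetric and $\alpha$ is unconditionally stable, so the partial sum projections satisfy the hypotheses of Lemma \ref{lemma:schauderduality} with constant independent of $n$). Since the complementing projection $P\colon X \to Y$ is bounded and $\alpha$ is ideal, the non-$\alpha$-boundedness of $\mb{x}$ must survive in some way on the $Y$-side: either $P(\mb{x})$ already gives a sequence in $Y$ whose partial-sum family is not $\alpha$-bounded, or the ``$E$-part'' of $\mb{x}$ is not $\alpha$-bounded, in which case the $\alpha$-boundedness of $(e_k)$ yields a contradiction. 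Either way, one extracts a basic sequence $\mb{z} = (z_k)$ in $Y$ whose partial sums $T_n := \sum_{k=1}^n z_k^* \otimes z_k$ form a non-$\alpha$-bounded family; relabelling if necessary I would assume $\nrm{z_k}_X = 1$ and $\nrm{z_k^*}_{X^*} \leq C$.

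Next I would interleave: define
\begin{align*}
  y_{2k-1} &:= e_k, & y_{2k} &:= z_k + c_k e_k,
\end{align*}
for a suitable rapidly decreasing sequence $(c_k)$ of positive scalars (e.g.\ $c_k = 2^{-k}$), so that the $z_k$-perturbations do not destroy the basis property of $(e_k)$ on the $E$-side. The biorthogonal functionals are then
\begin{align*}
  y_{2k-1}^* &= e_k^* - c_k z_k^*, & y_{2k}^* &= z_k^*,
\end{align*}
and a direct computation shows $\mb{y}$ is indeed a Schauder basis of $X$ (viewing $e_k^*$ extended by $0$ on $Y$ and $z_k^*$ extended by $0$ on $E$). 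To establish almost $\alpha$-boundedness I would apply Lemma \ref{lemma:schauderduality} with the parameter $p$: for any scalars $(a_k)$ the vectors $(a_k y_k)$ split into an $E$-contribution controlled by $\nrm{(a_k)}_{\ell^p}$ (via $\alpha$-boundedness of $(e_k)$) and a $Y$-contribution $\sum_k a_{2k} z_k$, which by the triangle inequality, Proposition \ref{proposition:finitedimensionalalpha}-style one-dimensional estimates, and the rapid decay of $c_k$ is absorbed into the $\ell^p$-bound. The dual estimate in $\alpha^*$ on $(b_k y_k^*)$ is handled by the same reasoning, using that the $\ell^{p'}$-estimate on $(e_k^*)$ dominates the $z_k^*$-tails because of $c_k$.

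Finally, to show $\mb{y}$ is \emph{not} $\alpha$-bounded, I would compare its partial sum projections with those of $\mb{z}$: a direct calculation gives
\[
  S_{2n}^{\mb{y}} = \sum_{k=1}^n e_k^* \otimes e_k + T_n + R_n,
\]
where $T_n$ is the partial-sum projection of $\mb{z}$ viewed on $X$ (injected via the complement) and $R_n$ is an $\alpha$-bounded remainder coming from the $c_k$-cross-terms. Since $\{T_n : n\in\N\}$ is not $\alpha$-bounded and the other two families are, Proposition \ref{proposition:alphaproperties} forces $\{S_{2n}^{\mb{y}}\}$ to be non-$\alpha$-bounded, so $\mb{y}$ is not $\alpha$-bounded. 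The main obstacle will be the first step—cleanly extracting from $\mb{x}$ a non-$\alpha$-bounded basic sequence in $Y$—and the careful bookkeeping of the cross-terms in step two so that all ``small'' contributions coming from $c_k$ can genuinely be swept into $\alpha$-bounded remainders; both are essentially technical exercises using the ideal property and unconditional stability of $\alpha$.
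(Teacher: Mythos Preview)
Your proposal has two genuine gaps, one in each of the steps you flag as ``technical exercises''.

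\textbf{Extraction.} Your dichotomy does not guarantee a Schauder basis of the complement $Y$. Projecting $\mb{x}$ onto $Y$ need not give a basic sequence at all, let alone one spanning $Y$; and if $(z_k)$ spans only a proper subspace, your interleaved $\mb{y}$ will not be a basis of $X$. The paper bypasses this entirely via the Pe\l{}czy\'nski trick
\[
  X = W \oplus \ell^p = W \oplus \ell^p \oplus \ell^p = X \oplus \ell^p,
\]
so $X = Y \oplus Z$ with $Y \cong X$ and $Z \cong \ell^p$. Then an isomorphism $V\colon Y \to X$ transports $\mb{x}$ to a genuine Schauder basis $(V^{-1}x_j)$ of $Y$, and the ideal property of $\alpha$ immediately transfers the non-$\alpha$-boundedness.

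\textbf{Almost $\alpha$-boundedness.} This is the more serious problem. With $y_{2k} = z_k + c_k e_k$, applying Lemma~\ref{lemma:schauderduality} requires
\[
  \nrmb{(a_{2k} z_k)_{k=1}^n}_\alpha \lesssim \nrmb{(a_{2k})}_{\ell^p_n}
\]
uniformly in $n$. The rapid decay of $c_k$ is irrelevant here --- there is no $c_k$ in this term. The $z_k$'s are merely normalized vectors in $Y$ with no unconditional or $\ell^p$-structure, so the best general bound is $\sum_k \abs{a_{2k}} = \nrm{(a_{2k})}_{\ell^1}$, which is useless. The same obstruction appears on the dual side, since $y_{2k}^* = z_k^*$. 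The paper's construction is substantially more delicate: the ``bad'' vectors $V^{-1}x_j$ are placed only at the dyadic positions $2^j$ within a basis $\mb{u}$, and then a further change of basis via rank-one perturbations $I+T_j$ (with $T_j^2 = 0$) is applied block by block. The effect of this perturbation is that $y_{2^j}$ becomes a normalized $\ell^p$-average of $e_k$'s lying entirely in $Z$, while the component along $V^{-1}x_j$ is distributed among the $y_k$ for $2^j < k < 2^{j+1}$ with coefficient $\sim(2^j)^{-1/p'}$. This is exactly what makes the $\ell^p$-estimate in Lemma~\ref{lemma:schauderduality} go through, via a summable geometric series. Your simple alternating interleaving, with one bad vector per $e_k$, cannot produce this effect.
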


\begin{proof}
We will consider the $\ell^p$-case, the calculations for $c_0$ are similar and left to the reader. By assumption there is a $p \in [1,\infty)$ and a subspace $W$ of $X$ for which we have the following chain of isomorphisms
  \begin{equation*}
    X = W \oplus \ell^p = W \oplus \ell^p \oplus \ell^p = X \oplus \ell^p.
  \end{equation*}
  Thus we can write $X = Y \oplus Z$ where $Y$ is isomorphic to $X$ and $Z$ is isomorphic to $\ell^p$. We denote the projection from $X$ onto $Y$ by $P_Y$ and let $V\colon Y  \to X$ be an isomorphism.

  Let $(e_k)_{k=1}^\infty$ be a Schauder basis of $Z$ equivalent to the canonical basis of $\ell^p$. We consider the Schauder basis $\mb{u}$ of $X$ given by
  \begin{equation*}
    u_k =\begin{cases}
    e_1 &\text{if } k=1\\
    e_{k-j+1} &\text{if } 2^j+1 \leq k \leq  2^{j+1}-1 \text{ for } j \in \N\\
    \frac{1}{\nrm{V^{-1}x_j}_X}V^{-1}x_j &\text{if } k = 2^j \text{ for } j \in \N.
    \end{cases}
  \end{equation*}
  For $j\in \N$ we define
  \begin{align*}
    v_j &= \has{(2^j-1)^{-1/p}\sum_{k=2^j+1}^{2^{j+1}-1}u_k}-u_{2^j}\\
    v_j^* &= \has{(2^j-1)^{-1/p'}\sum_{k=2^j+1}^{2^{j+1}-1}u_k^*}+u^*_{2^j}.
  \end{align*}
  If we now define  the operators $T_jx = v_j^*(x)v_j$ for $x \in X$  and $j \in \N$, then
  \begin{itemize}
    \item $\nrm{T_j}\leq 4$, since $\nrm{v_j}_X, \nrm{v_j^*}_{X^*} \leq 2$ .
    \item $T_j^2 = 0$, since $v_j^*(v_j)=0$.
    \item $T_j$ leaves the subspace ${\spn}\cbrace{u_k:2^j \leq k \leq 2^{j+1}-1}$ invariant.
  \end{itemize}
  Therefore $I+T_j$ is an automorphism of ${\spn}\cbrace{u_k:2^{j-1}+1 \leq k \leq 2^{j}}$, so we can make a new basis $\mb{y}$ of $X$, given by
  \begin{equation*}
    y_k = \begin{cases}
      u_1 &\text{if } k=1,\\
   \frac{1}{\nrm{(I+T_j)u_k}_{X}} \,
   (I+T_j)u_k &\text{if } 2^j \leq k \leq  2^{j+1}-1 \text{ for } j \in \N.
    \end{cases}
  \end{equation*}
  Let $(S_k^{\mb{x}})_{k=1}^\infty$, $(S_k^{\mb{y}})_{k=1}^\infty$ and $(S_k^{\mb{u}})_{k=1}^\infty$ be the partial sum projections associated to $\mb{x}$, $\mb{y}$ and $\mb{u}$ respectively. Then $S^{\mb{u}}_{2^{k+1}-1}= S^{\mb{y}}_{2^{k+1}-1}$ and thus
  \begin{equation*}
    S_k^{\mb{x}} = VP_YS^{\mb{u}}_{2^{k+1}-1}V^{-1} =  VP_YS^{\mb{y}}_{2^{k+1}-1}V^{-1}
  \end{equation*}
  for all $k \in \N$. Since $\alpha$ is ideal and $(S_k^{\mb{x}})_{k=1}^\infty$ is not $\alpha$-bounded, we have by Proposition \ref{proposition:idealsingleoperator} that $(S_k^{\mb{y}})_{k=1}^\infty$ is not $\alpha$-bounded. So $\mb{y}$ is not $\alpha$-bounded.

  Next we show that $\mb{y}$ is almost $\alpha$-bounded. We will prove that there is a $C>0$ such that for all scalar sequences $(a_k)^n_{k=1}$ and $(b_k)^n_{k=1}$  we have
  \begin{align}
   \label{eq:yschauder} \nrmb{(a_1y_1,\ldots,a_ny_n)}_{\alpha} &\leq C \, \has{\sum_{k=1}^n\abs{a_k}^p}^{1/p},\\
    \label{eq:ystarschauder}\nrmb{(b_1y^*_1,\ldots,b_ny^*_n)}_{\alpha^*} &\leq C \, \has{\sum_{k=1}^n\abs{b_k}^{p'}}^{1/p'},
  \end{align}
  for all $n \in \N$. By Lemma \ref{lemma:schauderduality} this implies  that $\mb{y}$ is almost $\alpha$-bounded.
   The calculations for \eqref{eq:yschauder} and \eqref{eq:ystarschauder} are similar, so we will only treat \eqref{eq:yschauder}. Fix $m \in \N$, let $n = 2^{m+1}-1$ and define $c_j = 2^j-1$ for $j \in \N$. First suppose that $a_{2^j}=0$ for $1\leq j \leq m$. Then, using the triangle inequality, the unconditonal stability of $\alpha$ and the fact that $(e_k)_{k=1}^\infty$ is equivalent to the canonical basis of $\ell^p$,
   we have
   \begin{align*}
     \nrmb{(a_ky_k)_{k=1}^n}_{\alpha} &\leq \nrmb{(a_ku_k)_{k=1}^n}_{\alpha} + \nrms{\has{c_j^{-1/p'}\has{\sum_{k=2^j+1}^{2^{j+1}-1}\abs{a_k}^2}^{1/2}v_j}_{j=1}^m}_{\alpha}\\
     &\leq C \,\has{\sum_{k=1}^n\abs{a_k}^p}^{1/p} + \sum_{j=1}^m c_j^{-1/p'}\has{\sum_{k=2^j+1}^{2^{j+1}-1}\abs{a_k}^2}^{1/2} \nrm{v_j}_X,
   \end{align*}
  If $p \in [1,2]$, we estimate the second term by H\"older's inequality
   \begin{align*}
     \sum_{j=1}^m c_j^{-1/p'}\has{\sum_{k=2^j+1}^{2^{j+1}-1}\abs{a_k}^2}^{1/2}\nrm{v_j}_X  &\leq 2 \sum_{j=1}^m c_j^{-1/p'}\has{\sum_{k=2^j+1}^{2^{j+1}-1}\abs{a_k}^p}^{1/p} \\ &\leq 2
      \has{\sum_{j=1}^m c_j^{-1}}^{1/p'} \has{\sum_{k=1}^{n}\abs{a_k}^p}^{1/p}
   \end{align*}
and, if $p \in (2,\infty)$, we estimate the second term by applying H\"older's inequality twice
   \begin{align*}
     \sum_{j=1}^m c_j^{-1/p'}\has{\sum_{k=2^j+1}^{2^{j+1}-1}\abs{a_k}^2}^{1/2}\nrm{v_j}_X  &\leq  2\sum_{j=1}^m c_j^{-1/p}\has{\sum_{k=2^j+1}^{2^{j+1}-1}\abs{a_k}^p}^{1/p}\\ &\leq 2 \has{\sum_{j=1}^m c_j^{-p'/p}}^{1/p'} \has{\sum_{k=1}^{n}\abs{a_k}^p}^{1/p}.
   \end{align*}
Combined this yields \eqref{eq:yschauder} if $a_{2^j}=0$ for $1\leq j \leq m$. Now assume that $a_{k}=0$ unless $k = 2^j$ for $1\leq j \leq m$. Since we have
   \begin{equation*}
     y_{2^j} =\frac{u_{2^j} + v_j}{\nrm{u_{2^j} + v_j}_{X}}  = \frac{c_j^{-1/p}}{\nrm{u_{2^j} + v_j}_{X}}
     \sum_{k=2^j+1}^{2^{j+1}-1}u_k,
   \end{equation*}
   we immediately obtain
   \begin{align*}
     \nrmb{(a_ky_k)_{k=1}^n}_{\alpha} &= C \has{\sum_{k=1}^n\abs{a_k}^p}^{1/p}
   \end{align*}
   again using that $\alpha$ is unconditionally stable and $(e_k)_{k=1}^\infty$ is equivalent to the canonical basis of $\ell^p$. The estimate for general $(a_k)^n_{k=1}$ now follows by the triangle inequality.
\end{proof}

Theorem \ref{theorem:almostalphasectorialnotalphasectorial} combined with Theorem \ref{theorem:notalmostalphasectorial} and Proposition \ref{proposition:alphaschauder} yields examples of sectorial operators that are almost $\alpha$-sectorial, but not $\alpha$-sectorial:

\begin{corollary}\label{corollary:almostalphanotalphasectorial}
Let $\alpha$ be an unconditionally stable Euclidean structure on $X$. Suppose that
\begin{itemize}
  \item $X$ has an unconditional Schauder basis.
  \item $X$ is not isomorphic to $\ell^2$.
  \item $X$ has a complemented subspace isomorphic to $\ell^p$ for some $p \in [1,\infty)$ or isomorphic to $c_0$.
\end{itemize}
 Then $X$ has a Schauder basis $\mb{x}$ such that for any Hadamard sequence $(\lambda_k)_{k=1}^\infty$ the operator associated to $\mb{x}$ and $(\lambda_k)_{k=1}^\infty$ is almost $\alpha$-sectorial, but not $\alpha$-sectorial.
\end{corollary}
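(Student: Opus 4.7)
The plan is to chain together the three results cited after the statement. First I would invoke Theorem \ref{theorem:notalmostalphasectorial} to obtain, from the assumption that $X$ has an unconditional Schauder basis and is not isomorphic to $\ell^2$, a Schauder basis $\mb{x}_0$ of $X$ that is not almost $\alpha$-bounded. Since $P_k = S_k - S_{k-1}$ (with $S_0 := 0$), Proposition \ref{proposition:alphaproperties}\ref{it:propalha3} gives that $\alpha$-boundedness of the partial sum projections forces $\alpha$-boundedness of the coordinate projections. Contrapositively, $\mb{x}_0$ fails to be $\alpha$-bounded. Thus the first hypothesis needed in Theorem \ref{theorem:almostalphasectorialnotalphasectorial} is in place.

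Next I would apply Theorem \ref{theorem:almostalphasectorialnotalphasectorial} to this basis $\mb{x}_0$. The second hypothesis of that theorem, namely that $X$ contains a complemented copy of $\ell^p$ for some $p \in [1,\infty)$ or of $c_0$, is exactly one of the standing assumptions of the corollary. (Here I am reading ``unconditionally stable'' in the corollary as including the ideal property, as in the canonical examples from Proposition \ref{proposition:unconditionallystable}, since Theorem \ref{theorem:almostalphasectorialnotalphasectorial} is formulated for ideal unconditionally stable structures.) The theorem produces a new Schauder basis $\mb{x}$ of $X$ which is almost $\alpha$-bounded but not $\alpha$-bounded.

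Finally, let $(\lambda_k)_{k=1}^\infty$ be any Hadamard sequence and let $A$ be the Schauder multiplier operator associated to $\mb{x}$ and $(\lambda_k)_{k=1}^\infty$ as in Section~\ref{section:schaudersectorial}. Proposition \ref{proposition:schaudersectorial} ensures $A$ is sectorial, and by Proposition \ref{proposition:alphaschauder}\ref{it:alphaschauder1} the almost $\alpha$-boundedness of $\mb{x}$ is equivalent to the almost $\alpha$-sectoriality of $A$; by Proposition \ref{proposition:alphaschauder}\ref{it:alphaschauder2} the failure of $\alpha$-boundedness of $\mb{x}$ is equivalent to the failure of $\alpha$-sectoriality of $A$. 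Combining these two equivalences immediately delivers the conclusion.

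There is no real obstacle here once Theorems \ref{theorem:notalmostalphasectorial} and \ref{theorem:almostalphasectorialnotalphasectorial} are in hand; the only point requiring attention is the implicit jump from ``not almost $\alpha$-bounded'' to ``not $\alpha$-bounded,'' which is precisely where Proposition \ref{proposition:alphaproperties}\ref{it:propalha3} is used to bridge Theorem \ref{theorem:notalmostalphasectorial} with the hypothesis of Theorem \ref{theorem:almostalphasectorialnotalphasectorial}.
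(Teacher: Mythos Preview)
Your proposal is correct and follows exactly the route the paper indicates: the paper does not give a separate proof but simply states that the corollary follows by combining Theorem~\ref{theorem:almostalphasectorialnotalphasectorial}, Theorem~\ref{theorem:notalmostalphasectorial}, and Proposition~\ref{proposition:alphaschauder}. Your bridging step from ``not almost $\alpha$-bounded'' to ``not $\alpha$-bounded'' via $P_k = S_k - S_{k-1}$ is the right way to connect the output of Theorem~\ref{theorem:notalmostalphasectorial} to the input of Theorem~\ref{theorem:almostalphasectorialnotalphasectorial}, and your observation about the ideal hypothesis is apt: Theorem~\ref{theorem:almostalphasectorialnotalphasectorial} genuinely requires $\alpha$ to be ideal (it is used to transfer non-$\alpha$-boundedness through the isomorphism $V$), so the corollary as stated is implicitly assuming this as well.
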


Specifically for the $\gamma$-structure we have:
\begin{corollary}
  Let $p \in [1,\infty)\setminus \cbrace{2}$. There is a sectorial operator on $L^p(\R)$ which is almost $\gamma$-sectorial, but not $\gamma$-sectorial.
\end{corollary}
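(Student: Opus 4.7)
For $p \in (1,\infty) \setminus \{2\}$ the result is an immediate consequence of Corollary \ref{corollary:almostalphanotalphasectorial} applied with $\alpha = \gamma$. Indeed, $L^p(\R)$ admits an unconditional Schauder basis (for instance the Haar system), is not isomorphic to $\ell^2$, and contains a complemented subspace isomorphic to $\ell^p$ (take the closed linear span of the disjointly supported normalized indicators $\ind_{[n,n+1)}$, with the natural conditional-expectation projection). Moreover $\gamma$ is unconditionally stable on $L^p(\R)$ by Proposition \ref{proposition:unconditionallystable}, since $L^p(\R)$ has finite cotype. All hypotheses of the corollary are therefore satisfied.

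The main obstacle is the case $p=1$, where $L^1(\R)$ has no unconditional basis and Corollary \ref{corollary:almostalphanotalphasectorial} does not apply directly. The plan is to first produce the desired sectorial operator on $\ell^1$ and then transfer it to $L^1(\R)$ via a complemented embedding. By Proposition \ref{proposition:l1c0schauder}, $\ell^1$ admits a Schauder basis that is not almost $\gamma$-bounded; in particular the sequence of partial-sum projections is not $\gamma$-bounded either, so this basis is also not $\gamma$-bounded in the sense of Proposition \ref{proposition:alphaschauder}. Since $\ell^1$ is trivially its own complemented $\ell^1$-subspace and $\gamma$ is ideal and unconditionally stable on $\ell^1$, Theorem \ref{theorem:almostalphasectorialnotalphasectorial} yields a Schauder basis $\mb{y}$ of $\ell^1$ that is almost $\gamma$-bounded but not $\gamma$-bounded. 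Proposition \ref{proposition:alphaschauder} then converts $\mb{y}$, paired with any Hadamard sequence $(\lambda_k)_{k=1}^\infty$, into a sectorial operator $A$ on $\ell^1$ which is almost $\gamma$-sectorial but not $\gamma$-sectorial.

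To transfer $A$ to $L^1(\R)$, write $L^1(\R)= Y\oplus Z$ where $Y$ is the closed linear span of $\{\ind_{[n,n+1)}:n\in\N\}$, which is isometrically isomorphic to $\ell^1$ via some map $V\colon \ell^1\to Y$ and is complemented by the obvious averaging projection $P_Y$. Set $B := VAV^{-1} \oplus I_Z$ as an operator on $L^1(\R)$; then $B$ is sectorial with $\omega(B)=\omega(A)$, and its resolvent splits as
$$R(\lambda,B) \;=\; V R(\lambda,A) V^{-1} P_Y \;+\; (\lambda-1)^{-1} P_Z.$$
The ideal property of $\gamma$ (Proposition \ref{proposition:alphaproperties}) transfers almost $\gamma$-sectoriality from $A$ to $B$, since the $Y$-component of $\lambda B R(\lambda,B)^2$ factors as the almost $\gamma$-sectorial family on $\ell^1$ sandwiched by the fixed bounded maps $V^{-1}P_Y$ and $V$, while the $Z$-component consists of uniformly bounded scalar multiples of the single projection $P_Z$. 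Conversely, if $B$ were $\gamma$-sectorial, then $V^{-1}P_Y\{\lambda R(\lambda,B)\}V = \{\lambda R(\lambda,A)\}$ would be $\gamma$-bounded on $\ell^1$ by the ideal property, contradicting the construction of $A$. Hence $B$ is the required sectorial operator on $L^1(\R)$, and the main delicate point throughout is this transfer step preserving both properties simultaneously.
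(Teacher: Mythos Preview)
Your argument is correct. For $p\in(1,\infty)\setminus\{2\}$ you follow the paper verbatim. For $p=1$ you take a different route than the paper: the paper invokes an external result (\cite[Theorem 3.4]{HKK04}) asserting that every Schauder basis of $L^1(\R)$ fails to be $\mc{R}$-bounded (hence $\gamma$-bounded, since $L^1$ has cotype $2$), and then applies Theorem~\ref{theorem:almostalphasectorialnotalphasectorial} directly on $L^1(\R)$. You instead stay within the memoir, building the example on $\ell^1$ via Proposition~\ref{proposition:l1c0schauder} and Theorem~\ref{theorem:almostalphasectorialnotalphasectorial}, and then transferring to $L^1(\R)$ through a complemented copy of $\ell^1$. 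Your approach is self-contained but requires the extra transfer step; the paper's is shorter but depends on an external citation.

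One small remark: your citation of Proposition~\ref{proposition:alphaproperties} for the transfer is slightly off. What you actually use is the global ideal property \eqref{eq:E4x} of the $\gamma$-structure (to sandwich a $\gamma$-bounded family on $\ell^1$ between $V$ and $V^{-1}P_Y$) together with Proposition~\ref{proposition:idealsingleoperator} (so that the scalar multiples of the single projection $P_Z$ are $\gamma$-bounded). The sum then follows from Proposition~\ref{proposition:alphaproperties}\ref{it:propalha3}. The mathematics is right; only the cross-reference needed adjusting.
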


\begin{proof}
  If $p \in (1,\infty)\setminus \cbrace{2}$ this follows from Corollary \ref{corollary:almostalphanotalphasectorial}, since the Haar basis is unconditional. Any Schauder basis of $L^1(\R)$ is not $\mc{R}$-bounded and thus not $\gamma$-bounded by \cite[Theorem 3.4]{HKK04}, so for $p=1$ we can directly apply Theorem \ref{theorem:almostalphasectorialnotalphasectorial}.
\end{proof}

\section{Sectorial operators with \texorpdfstring{$\omega_{H^\infty}(A)>\omega(A)$}{wH(A)>w(A)}} \label{section:angleexample}
Let $A$ be a sectorial operator on $X$ and $\alpha$ a Euclidean structure on $X$. We have seen in Proposition \ref{proposition:alphaalmoastalpha}, Proposition \ref{proposition:BIPideal}, Theorem \ref{theorem:BIPHinfty} and Corollary \ref{corollary:equalangles} that under reasonable assumptions on $\alpha$ the angles of (almost) $\alpha$-sectoriality, ($\alpha$-)$\BIP$ and of the ($\alpha$-)-bounded $H^\infty$-calculus are equal whenever $A$ has these properties. Strikingly absent in this list is the angle of sectoriality.

In general the angle of sectoriality is not equal to the other introduced angles in Chapter \ref{part:4}. As we already noted in Section \ref{section:BIP}, Haase showed in \cite[Corollary 5.3]{Ha03} that there exists a sectorial operator $A$ with $\omega_{\BIP}(A)>\omega(A)$. The first counterexample to the equality $\omega_{H^\infty}(A)=\omega(A)$ was given by Cowling, Doust, McIntosh and Yagi \cite[Example 5.5]{CDMY96}, who constructed an operator (without dense range) with a bounded $H^\infty$-calculus, such that $\omega(A) <\omega_{H^\infty}(A)$. Subsequently, the first author constructed a sectorial operator with $\omega(A) <\omega_{H^\infty}(A)$ in \cite{Ka03}. Both these examples are on very specific (non-reflexive) Banach spaces  and it is an open problem whether every infinite-dimensional Banach space admits such an example. In particular in \cite[Problem P.13]{HNVW17} it was asked whether there exists examples on $L^p$. In this section we will provide an example on a subspace of $L^p$ for any $p \in (1,\infty)$.

Let us note that all known examples that make their appearance in applications actually satisfy $\omega_{H^\infty}(A) = \omega(A)$. This holds in particular for classical operators like the Laplacian on $L^p(\R^d)$, but also for far more general elliptic operators as shown \cite{Au07}, which is based on earlier results in \cite{BK03, DM99, DR96}. More recent developments in this direction can for example be found in \cite{CD16b, CD19, Eg18, Eg18b, EHRT19}. Also for example for the Ornstein-Uhlenbeck operator we have $\omega_{H^\infty}(A) = \omega(A)$ (see e.g. \cite{Ca09,CD16,GMMST01, Ha19}). Even in more abstract situations, like the H\"ormander-type holomorphic functional calculus for symmetric contraction semigroups on $L^p$, the angle of the functional calculus, is equal to the angle of sectoriality (see \cite{CD17}). This means that our example will have to be quite pathological.

\subsection*{The general idea}
We will proceed as follows: We will construct a Banach space $X$ and a Schauder multiplier operator such that $\omega(A) = 0$ and such that, on the generalized square function spaces introduced in Section \ref{section:scalespaces}, the induced operator $A^s|_{H^\gamma_{\theta,A^s}}$ does not have a bounded $H^\infty$-calculus for $s>1$. Then $\omega_{\BIP}({A^s|_{H^\gamma_{\theta,A^s}}}) = \pi$ by Theorem  \ref{theorem:XABIPHinftyequiv} and \eqref{eq:BIPangle}, so using Theorem \ref{theorem:XalphaHinfty} and Proposition \ref{proposition:Hequivnorm} we know that  $A|_{H^\gamma_{\theta,A}(\varphi_s)}$ with $\varphi_s(z)=\frac{z^{s/2}}{1+z^s}$  has a bounded $H^\infty$-calculus. Therefore
\begin{equation*}
  \omega_{H^\infty}\hab{A|_{H^\gamma_{\theta,A}(\varphi_s)}} = \omega_{\BIP}\hab{A|_{H^\gamma_{\theta,A}(\varphi_s)}} = \frac{1}{s} \,\omega_{\BIP}\hab{A^s|_{H^\gamma_{\theta,A^s}}} = \frac{\pi}{s}
\end{equation*}
and by Proposition \ref{proposition:AdefHalpha}
\begin{equation*}
  \omega\hab{A|_{H^\gamma_{\theta,A}(\varphi_s)}} = \omega(A)=0.
\end{equation*}
Therefore $A|_{H^\gamma_{\theta,A}(\varphi_s)}$ on ${H^\gamma_{\theta,A}(\varphi_s)}$, which will be a closed subspace of $L^p$, is an example of an operator that we are looking for.

The remainder of this section will be devoted to the construction of this $A$. As a first guess, we could try the operators we used in Section \ref{section:sectorialnotalmostalpha} and Section \ref{section:almostalphanotalpha} for our examples. The following theorem shows that this will not work.
\begin{theorem}
  Let $\mb{x}$ be a Schauder basis for $X$, $(\lambda_k)_{k=1}^\infty$ a Hadamard sequence and $A$ the sectorial operator associated to $\mb{x}$ and $(\lambda_k)_{k=1}^\infty$. Let $\alpha$ be an ideal Euclidean structure on $X$ and fix $\theta \in \R$. Then $A|_{H^\alpha_{\theta,A}}$ has a bounded $H^\infty$-calculus on $H^\alpha_{\theta,A}$ with $\omega_{H^\infty}(A|_{H^\alpha_{\theta,A}})=\omega(A)$
\end{theorem}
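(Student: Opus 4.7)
My plan has two parts: showing the angle of the $H^\infty$-calculus cannot be smaller than $\omega(A)$, then showing it cannot be larger.

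The lower bound $\omega_{H^\infty}(A|_{H^\alpha_{\theta,A}}) \geq \omega(A)$ is immediate: each $\lambda_k$ remains an eigenvalue of $A|_{H^\alpha_{\theta,A}}$, since the finite-dimensional eigenspace $X_k$ is contained in $D(A^m) \cap R(A^m) \subseteq H^\alpha_{\theta,A}$, and for $0 \neq x_k \in X_k$ we have $A|_{H^\alpha_{\theta,A}} x_k = \lambda_k x_k$ by Proposition \ref{proposition:AdefHalpha}. Thus $\sigma(A|_{H^\alpha_{\theta,A}}) \supseteq \{\lambda_k\}$, forcing $\omega(A|_{H^\alpha_{\theta,A}}) \geq \omega(A)$, so the $H^\infty$-angle cannot be smaller.

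For the upper bound I would combine Theorem \ref{theorem:XalphaHinfty} (which supplies a bounded $H^\infty$-calculus with angle $\leq \pi/s$ on $H^\alpha_{\theta,A}(\varphi_s)$ for every $s \in (1, \pi/\omega(A))$) with the equivalent conditions of Theorem \ref{theorem:XABIPHinftyequiv}. Concretely, one would show that for this Schauder multiplier the whole scale $\{H^\alpha_{\theta,A}(\varphi_s) : 0 < s < \pi/\omega(A)\}$ is constant, so each space is isomorphic to $H^\alpha_{\theta,A} = H^\alpha_{\theta,A}(\varphi_1)$. Applying Theorem \ref{theorem:XalphaHinfty} with $s \nearrow \pi/\omega(A)$ would then yield the sharp estimate $\omega_{H^\infty}(A|_{H^\alpha_{\theta,A}}) \leq \omega(A)$.

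The constancy of the scale is to be proved starting from the representation in Proposition \ref{proposition:Hequivnorm},
\[
  \|x\|_{H^\alpha_{\theta,A}(\varphi_s)} \simeq \|e^{-\pi|t|/s} A^{it+\theta} x\|_{\alpha(\R;X)},
\]
and expanding $A^{it+\theta} x = \sum_k \lambda_k^{it+\theta} P_k x$ for $x \in D(A^m) \cap R(A^m)$. Writing the right-hand side as $\sum_k g_k^{(s)} \otimes (\lambda_k^\theta P_k x)$ with the scalar functions $g_k^{(s)}(t) = e^{-\pi|t|/s}\lambda_k^{it}$, one would use the Hadamard separation of the frequencies $\log|\lambda_k|$ and the uniform bound $|\lambda_k^{it}| \leq e^{\omega(A)|t|}$ to establish that these almost orthogonal families differ, as $s$ varies within $(0,\pi/\omega(A))$, only by a change of basis which extends via Proposition \ref{proposition:operatoronfunctions} to a bounded automorphism of the image subspace in $\alpha(\R;X)$. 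Once the norms $\|\cdot\|_{H^\alpha_{\theta,A}(\varphi_s)}$ are shown equivalent on the dense subset $D(A^m)\cap R(A^m)$, the identification of the completions follows.

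The main technical obstacle will be carrying out the almost-orthogonality argument in the full $\alpha$-structure setting. In the Hilbert-space case one would invoke Plancherel together with the fact that Hadamard-modulated windows $g_k^{(s)}$ form a Riesz basis of their linear span, but for a general Euclidean structure $\alpha$ one must factor the conversion operators through $L^2(\R) \to L^2(\R)$ maps and control their operator norms uniformly in $s$ away from $\pi/\omega(A)$, which is delicate because the Fourier transform of $e^{-\pi|t|/s}$ has only polynomial decay. Assuming this step is carried out, the conclusion is then a direct consequence of Theorem \ref{theorem:XalphaHinfty} and the passage to the supremum $s \to \pi/\omega(A)$.
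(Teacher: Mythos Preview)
Your proposal is correct and follows essentially the same route as the paper: show that the spaces $H^\alpha_{\theta,A}(\varphi_s)$ are all isomorphic by proving that the functions $g_k^{(s)}(t)=e^{-\pi|t|/s}\lambda_k^{it}$ form a Riesz sequence in $L^2(\R)$ with bounds independent of $s$, then factor through $T_s^*$ via Proposition~\ref{proposition:operatoronfunctions} and apply Theorem~\ref{theorem:XalphaHinfty} for $s$ close to $\pi/\omega(A)$. The concrete tool the paper uses for the step you flag as the main obstacle is the Beurling density criterion for Riesz sequences of exponentials (Seip): setting $\mu_k=u\log\lambda_k$ with $u$ large, the Hadamard condition makes $(\mu_k)$ uniformly discrete with upper density $<1$, so $(e^{i\mu_k t})_k$ is a Riesz sequence in $L^2(-\pi,\pi)$, and a rescaling plus tail estimate transfers this to $L^2(\R)$ with the weight $e^{-\pi|t|/s}$ for all $s<\pi u$.
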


\begin{proof}
For simplicity we assume $(\lambda_k)_{k=1}^\infty \subseteq \R_+$, i.e. $\omega(A)=0$, and leave the case $(\lambda_k)_{k=1}^\infty \subseteq \Sigma_\sigma$ for $0<\sigma<\pi$ to the interested reader.
Let $c>1$ be the constant in the definition of a Hadamard sequence and take $u>\max\cbrace{{1,1/\log(c)}}$. Define $\mu_k:= u \log{\ha{\lambda_k}}$, then we have
\begin{equation*}
  \inf_{j \neq k}\, \abs{\mu_j -\mu_k} = \inf_{j \neq k} \, u \,\abs{\log\hab{{\lambda_j}/{\lambda_k}}} >1 ,
\end{equation*}
so $(\mu_k)_{k=1}^\infty$ is \emph{uniformly discrete}. Moreover, denoting by $n^+(r)$ the largest number of points of $(\mu_k)_{k=1}^\infty$ in any interval $I \subset \R$ of length $r>0$, we have for the \emph{upper Beurling density} of $(\mu_k)_{k=1}^\infty$
\begin{equation*}
  D^+((\mu_k)_{k=1}^\infty):=\lim_{r \to \infty}\frac{n^+(r)}{r} <1.
\end{equation*}
Therefore, by \cite[Theorem 2.2]{Se95}, we know that $(\ee^{i\mu_k t})_{k=1}^\infty$ is a \emph{Riesz sequence} in $L^2(-\pi,\pi)$, i.e. there exists a $C>0$ such that for any sequence $\mbs{a} \in \ell^2_n$ we have
\begin{equation*}
  C^{-1}\,\nrm{\mbs{a}}_{\ell^2_n}  \leq \nrms{t\mapsto \sum_{k=1}^n a_k \ee^{i\mu_k t}}_{L^2(-\pi,\pi)} \leq C \,\nrm{\mbs{a}}_{\ell^2_n},
\end{equation*}
and thus we have
\begin{align*}
 \nrm{\mbs{a}}_{\ell^2_n} \leq C\, \nrms{\sum_{k=1}^n a_k \ee^{i\mu_k \cdot}}_{L^2(-\pi,\pi)}
 &\leq \frac{C}{\sqrt{u}}\, \ee^{\frac{\pi^2}{s} u} \nrms{ \ee^{-\frac{\pi}{s} \abs{\cdot}}\sum_{k=1}^n a_k \lambda_k^{i\cdot}}_{L^2(-\pi u, \pi u)}.
\end{align*}
Conversely we have that
\begin{align*}
   \sup_{\nrm{\mbs{a}}_{\ell^2_n}\leq 1 }\nrms{\ee^{-\frac{\pi}{s} \abs{\cdot}}\sum_{k=1}^n a_k \lambda_k^{i\cdot}}_{L^2(\R)} &\leq \sup_{\nrm{\mbs{a}}_{\ell^2_n}\leq 1 } \sqrt{u} \cdot \nrms{ \sum_{k=1}^n a_k \ee^{i\mu_k \cdot}}_{L^2(-\pi ,\pi )} \\&\hspace{1.5cm}+  \nrms{ \ee^{-\frac{\pi}{s} \abs{\cdot}} \sum_{k=1}^n a_k \lambda_k^{i\cdot} }_{L^2(\R\setminus(-\pi u,\pi u))}\\
   &\leq C\sqrt{u}  +2\sup_{\nrm{\mbs{a}}_{\ell^2_n} \leq 1} \ee^{-\frac{\pi^2}{s}u} \nrms{\ee^{-\frac{\pi }{s}\abs{\cdot}}\sum_{k=1}^n a_k \lambda_k^{i\cdot}}_{L^2(\R)},
\end{align*}
using the change of variables $t'=t \pm \pi u$ in the second step.
So for any $0<s<\pi u$ we have
\begin{equation*}
   \nrms{t \mapsto \ee^{-\frac{\pi}{s} \abs{t}}\sum_{k=1}^n a_k \lambda_k^{it}}_{L^2(\R)} \simeq \sqrt{u} \cdot \nrm{\mbs{a}}_{\ell^2_n}.
\end{equation*}
Therefore $T_s\colon \ell^2 \to L^2(\R)$ given by
\begin{equation*}
  (T_s{\mbs{a}})(t) := \sum_{k=1}^\infty a_k\ee^{-\pi\abs{t}/u}\lambda_k^{it}, \qquad t \in \R
\end{equation*}
is an isomorphism onto the closed subspace of $L^2(\R)$ generated by the functions $(t\mapsto \ee^{-\frac{\pi}{s} \abs{t}} \lambda_k^{it})_{k=1}^\infty$. Its adjoint $T_s^*\colon  L^2(\R)\to \ell^2$ is given by
\begin{equation*}
  (T_s\varphi)_k  = \int_\R \varphi(t) \ee^{-\pi\abs{t}/u}\lambda_k^{it}\dd t, \qquad k \in \N.
\end{equation*}
 Now fix $\mbs{a} \in \ell^2_n$ and define $x = \sum_{k=1}^n  a_kx_k$. Then for $\varphi_s(z)=z^{s/2}{(1+z^s)^{-1}}$ we have by Proposition \ref{proposition:Hequivnorm} and \eqref{eq:calculusmultipliercutoff}
\begin{equation*}
  \nrm{x}_{H^{\alpha}_{\theta,A}(\varphi_s)} = \nrmb{t\mapsto  \sum_{k=1}^n a_k  \ee^{-\frac{\pi}{s} \abs{t}} \lambda_k^{it+\theta} x_k}_{\alpha(\R;X)}.
\end{equation*}
Now if $S:L^2(\R) \to X$ is the operator represented by  $t\mapsto  \sum_{k=1}^n a_k  \ee^{-\frac{\pi}{s} \abs{t}} \lambda_k^{it+\theta} x_k$, then  we have $S=S' \circ T_s^*$,
where $S':\ell^2 \to X$ is the finite rank operator given by $S' = \sum_{k=1}^n e_k \otimes a_k\lambda_k^\theta x_k$ and $(e_k)_{k=1}^\infty$ is the canonical basis of $\ell^2$. Since $T_s$ is an isomorphism, we see
\begin{equation*}
  \nrm{x}_{H^{\alpha}_{\theta,A}(\varphi_s)} = \nrm{S}_{\alpha(\R;X)} \simeq \sqrt{u} \cdot \nrm{S'}_{\alpha}=\sqrt{u}\cdot \nrm{(a_1\lambda_1^\theta x_1,\ldots,a_n\lambda_n^\theta x_n)}_{\alpha}.
\end{equation*}
Thus by density we deduce that the spaces $H^{\alpha}_{\theta,A}(\varphi_s)$ are isomorphic for all $0<s<\pi u$ and since $u$ could be taken arbitrarily large they are isomorphic for all $s>0$. In particular $H^{\alpha}_{\theta,A}$ is isomorphic to $H^{\alpha}_{\theta,A}(\varphi_s)$ for any $s>1$ and thus by Theorem \ref{theorem:XalphaHinfty} we deduce that $A|_{H^\alpha_{\theta,A}}$ has a bounded $H^\infty$-calculus on $H^\alpha_{\theta,A}$ with $\omega_{H^\infty}(A|_{H^\alpha_{\theta,A}})=0$
\end{proof}

\subsection*{The construction of the Banach space $X$}
Since Hadamard sequences will not work for the example we are looking for, we will construct an operator based on a Schauder basis and an increasing sequence in $\R_+$, which is also sectorial by Proposition \ref{proposition:schaudersectorial}.
Let us first define the Banach space $X$ that we will work with. Fix $1<q<p<\infty$ and denote the space of all sequences which are eventually zero by $c_{00}$. Let $(x_k^*)_{k=1}^\infty$ be a sequence in
$c_{00} \cap \cbrace{x^* \in \ell^{p'}: \nrm{x^*}_{\ell^{p'}}\leq 1}$ such that $\cbrace{x_k^*:k\in \N}$ is dense in $\cbrace{x^* \in \ell^{p'}: \nrm{x^*}_{\ell^{p'}}\leq 1}$ and each element of $\cbrace{x_k^*:k\in \N}$ is repeated infinitely often. For each $k \in \N$ let $F_k \subseteq \N$ be the support of $x_k^*$ and write $F_k = \cbrace{s_{k,1},\ldots,s_{k,\abs{F_k}}}$, where $s_{k,1}<\cdots<s_{k,\abs{F_k}}$. Define $N_0=0$ and $N_k = \abs{F_1}+\cdots+ \abs{F_k}$ for $k \in \N$.

Let $(e_j)_{j=1}^\infty$ be the canonical basis of $\ell^p$. For $k \in \N$ we define the bounded linear operator $U_k:\ell^p \to \ell^p$ by
\begin{equation*}
  U_k(e_j) := \begin{cases}
    e_{s_{k,(j-N_{k-1})}} &\text{if } N_{k-1}<j\leq N_k\\
    0 &\text{otherwise}
  \end{cases}
\end{equation*}
and the partial inverse $V_k:\ell^p \to \ell^p$ by $V_k x = U_k^{-1}(x \ind_{F_k})$. Now we define $X=X_{p,q}$ as the completion of $c_{00}$ under the norm
\begin{equation*}
  \nrm{x}_{X_{p,q}} := \nrm{x}_{\ell^p} +\nrmb{\hab{\ip{U_kx,x_k^*}}_{k=1}^\infty}_{\ell^q}.
\end{equation*}
Then $X$ is isomorphic to a closed subspace of $\ell^p \oplus \ell^q$, which can be seen using the embedding $X\hookrightarrow \ell^p \oplus \ell^q$ given by
 \begin{equation*}
  x\mapsto x \oplus \hab{\ip{U_kx,x_k^*}}_{k=1}^\infty
\end{equation*}
We consider $X$, and therefore all parameters introduced above, to be fixed for the remainder of this section.

\begin{lemma}\label{lemma:examplecanonicalbasis}
The canonical basis of $\ell^p$ is a Schauder basis of $X$.
\end{lemma}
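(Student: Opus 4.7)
The plan is to verify the two standard criteria for a Schauder basis: density of finite linear combinations and uniform boundedness of the associated partial sum projections $S_n$. The first is immediate from the construction, since $X_{p,q}$ is by definition the completion of $c_{00}$ and $c_{00} = \spn\cbrace{e_j : j \in \N}$. By a standard argument, once uniform boundedness of $S_n$ on $c_{00}$ is established, one extends $S_n$ by density to all of $X_{p,q}$, and since $S_n x \to x$ holds trivially on the dense subspace $c_{00}$, a $3\varepsilon$-argument yields convergence for every $x \in X_{p,q}$. Uniqueness of the expansion then follows from the uniform boundedness of the coordinate projections $P_j = S_j - S_{j-1}$.

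The heart of the matter is therefore to show the existence of a constant $C > 0$ such that $\nrm{S_n x}_{X_{p,q}} \leq C\, \nrm{x}_{X_{p,q}}$ for all $x \in c_{00}$ and $n \in \N$. The $\ell^p$-part is trivial since $\nrm{S_n x}_{\ell^p} \leq \nrm{x}_{\ell^p}$, so the task reduces to estimating the $\ell^q$-sum of the functionals $\ip{U_k S_n x, x_k^*}$ in terms of $\nrm{x}_{X_{p,q}}$. The key observation is that, by the definition of $U_k$, the quantity $\ip{U_k x, x_k^*}$ depends only on the coordinates $(a_j)$ with $N_{k-1} < j \leq N_k$. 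Consequently, if $k_0$ is the unique index with $N_{k_0-1} < n \leq N_{k_0}$, then
\begin{align*}
\ip{U_k S_n x, x_k^*} = \ip{U_k x, x_k^*}, &\qquad k < k_0,\\
\ip{U_k S_n x, x_k^*} = 0, &\qquad k > k_0,
\end{align*}
so the only term that differs from the corresponding term for $x$ is the single index $k = k_0$.

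To handle the leftover term, I would use H\"older's inequality: since $\nrm{x_{k_0}^*}_{\ell^{p'}} \leq 1$ by construction,
\begin{equation*}
\absb{\ip{U_{k_0} S_n x, x_{k_0}^*}} \leq \nrm{U_{k_0} S_n x}_{\ell^p} \leq \nrm{S_n x}_{\ell^p} \leq \nrm{x}_{\ell^p}.
\end{equation*}
Taking the $\ell^q$-norm and splitting at the index $k_0$ then yields
\begin{equation*}
\nrmb{\hab{\ip{U_k S_n x, x_k^*}}_{k=1}^\infty}_{\ell^q} \leq \nrmb{\hab{\ip{U_k x, x_k^*}}_{k=1}^\infty}_{\ell^q} + \nrm{x}_{\ell^p},
\end{equation*}
and combining this with the $\ell^p$-estimate gives $\nrm{S_n x}_{X_{p,q}} \leq 2\, \nrm{x}_{X_{p,q}}$, which is the required uniform bound.

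I do not anticipate a serious obstacle: the construction of $X_{p,q}$ has been tailored precisely so that the $U_k$'s act on disjoint coordinate blocks $(N_{k-1}, N_k]$, which makes the $\ell^q$-contribution of the truncated sum reduce cleanly to ``the full sum up to $k_0 - 1$, plus one error term.'' The only mildly delicate point is the extension step: since we initially work on $c_{00}$, one must check that the uniformly bounded operators $S_n : c_{00} \to c_{00} \subseteq X_{p,q}$ extend to bounded operators on $X_{p,q}$, which is automatic by density and completeness.
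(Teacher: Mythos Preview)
Your proof is correct and follows essentially the same approach as the paper's: both arguments reduce to uniform boundedness of the partial sum projections, split the $\ell^q$-sum at the unique block index containing the truncation point, observe that earlier terms are unchanged and later terms vanish, and bound the single surviving term by $\nrm{x}_{\ell^p}$ via H\"older's inequality, arriving at $\nrm{S_n}_{\mc{L}(X)} \leq 2$. Your write-up is slightly more explicit about the density and extension steps, but the core idea is identical.
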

\begin{proof}
  It suffices to show that the partial sum projections $(S_j)_{j=1}^\infty$ associated to the canonical basis of $\ell^p$ are uniformly bounded. Fix $m,n \in \N$ such that $N_{n-1}<m\leq N_{n}$ and take $x \in X$. Then  since $\nrm{S_m}_{\mc{L}(\ell^p)}\leq 1$ and $\nrm{x_n^*}_{\ell^{p'}}\leq 1$  we have
  \begin{align*}
    \nrmb{\hab{\ip{U_kS_mx,x_k^*}}_{k=1}^\infty}_{\ell^q} &=\nrmb{\hab{\ip{U_kS_mx,x_k^*}}_{k=1}^n}_{\ell^q_n}\\&\leq \has{\sum_{k=1}^{n-1}\abs{\ip{U_kx,x_k^*}}^q}^{1/q}+ \absb{\ip{U_nS_mx,x_n^*}}\\
    &\leq \nrmb{\hab{\ip{U_kx,x_k^*}}_{k=1}^\infty}_{\ell^q} + \nrm{x}_{\ell^p}.
  \end{align*}
  Therefore $\nrm{S_m}_{\mc{L}(X)} \leq 2$ for all $m \in \N$.
\end{proof}

\subsection*{The construction of the sectorial operator $A$}
Next we construct the sectorial operator on $X$, for which $A^s|_{H^\alpha_{\theta,A^s}}$ with $s>1$ will not have a bounded $H^\infty$-calculus. Define for $j \in \N$
\begin{equation}\label{eq:deflambdaexample}
  \lambda_j = 2^k \has{2-\frac{1}{s_{k,(j-N_{k-1})}}},\qquad N_{k-1}<j\leq N_k.
\end{equation}
Then $(\lambda_j)_{j=1}^\infty$ is an increasing sequence in $\R_+$, so by Proposition \ref{proposition:schaudersectorial} and Lemma \ref{lemma:examplecanonicalbasis} the operator associated to $(\lambda_j)_{j=1}^\infty$ and the canonical basis $(e_j)_{j=1}^\infty$  is sectorial with $\omega(A) =0$. The following technical lemma will be key in our analysis of this operator.

\begin{lemma}\label{lemma:gintermsoff}
  Let $A$ be the sectorial operator associated to $(e_j)_{j=1}^\infty$ and $(\lambda_j)_{j=1}^\infty$. Let $0<\sigma<\pi$ and suppose that $f,g \in H^1(\Sigma_\sigma)$ such that
  \begin{equation*}
    \nrmb{(g(2^jA)x)_{j \in \Z}}_{\gamma(\Z;X)} \lesssim  \nrmb{(f(2^jA)x)_{j \in \Z}}_{\gamma(\Z;X)}, \qquad x \in c_{00}.
  \end{equation*}
  Then there is a sequence $\mbs{a} \in \ell^2(\Z)$ so that
  \begin{equation*}
    g(z) = \sum_{j \in \Z} a_kf(2^jz), \qquad z \in \Sigma_\sigma.
  \end{equation*}
\end{lemma}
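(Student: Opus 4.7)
The idea is to extract the representation of $g$ in two stages: first convert the $\gamma$-bounded operator inequality into a pointwise envelope estimate between $f$ and $g$, and then use Mellin/Fourier analysis together with the analyticity of $f,g$ to produce the $\ell^2$-multiplier.

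First I would test the hypothesis on $x=e_\ell \in c_{00}$. By \eqref{eq:calculusmultipliercutoff}, $h(2^jA)e_\ell = h(2^j\lambda_\ell)e_\ell$ for $h\in\{f,g\}$ and every $j\in\Z$, so the sequence $(h(2^jA)e_\ell)_{j\in\Z}$ lives in the one-dimensional subspace $\spn\{e_\ell\}$. The $\gamma(\Z;X)$-norm of $(c_j e_\ell)_{j\in\Z}$ collapses to $\nrm{e_\ell}_X\cdot\nrm{(c_j)_j}_{\ell^2}$, so the hypothesis gives
\[
\sum_{j\in\Z}\abs{g(2^j\lambda_\ell)}^2 \;\lesssim\; \sum_{j\in\Z}\abs{f(2^j\lambda_\ell)}^2, \qquad \ell\in\N,
\]
both sums being finite because $f,g\in H^1(\Sigma_\sigma)\cap H^\infty(\Sigma_{\sigma'})$ by \eqref{eq:H1inHinfty}. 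Next, since the sequence $(x_k^*)_k$ is dense in the unit ball of $\ell^{p'}$ with each term repeated infinitely often, $\bigcup_k F_k=\N$, and by \eqref{eq:deflambdaexample} every value $2-1/s$ for $s\in\N$ occurs as $2^{-k}\lambda_\ell$ for some $(k,\ell)$. These form a dense subset of $[1,2]$. Both sides of the inequality above are continuous functions of $t>0$ which are invariant under $t\mapsto 2t$; continuity follows from uniform convergence of the series on compacta, which is a consequence of the decay $\abs{h(2^u)}\to 0$ as $\abs u\to\infty$ (a sub-mean-value property in the strip combined with $L^2$ integrability on the real axis). Thus the estimate extends to $\sum_{j\in\Z}\abs{g(2^jt)}^2 \lesssim \sum_{j\in\Z}\abs{f(2^jt)}^2$ for every $t>0$.

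Then I would pass to Mellin variables. The Mellin transform $\mc{M}h(\xi):=\int_0^\infty h(t)t^{-i\xi}\frac{\ddn t}{t}$ is an isometry from $L^2(\R_+,\frac{\ddn t}{t})$ onto $L^2(\R)$ intertwining the dilation $\tau_{2^j}h(t)=h(2^jt)$ with multiplication by $e^{-ij\xi\log 2}$. The identity $g(z)=\sum_j a_j f(2^jz)$ is thus equivalent, on $\R_+$, to
\[
\mc{M}g(\xi) \;=\; \alpha(\xi)\,\mc{M}f(\xi), \qquad \alpha(\xi):=\sum_{j\in\Z}a_j e^{-ij\xi\log 2},
\]
with $\alpha$ a $(2\pi/\log 2)$-periodic element of $L^2$ whose Fourier coefficients are $(a_j)$. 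The holomorphy of $f,g$ on $\Sigma_\sigma$ yields, via Paley--Wiener, exponential decay of $\mc{M}f,\mc{M}g$. Poisson summation turns the envelope estimate of the previous step into the Gramian inequality
\[
\sum_{k\in\Z}\abss{\mc{M}g\hab{\xi+\tfrac{2\pi k}{\log 2}}}^2 \;\lesssim\; \sum_{k\in\Z}\abss{\mc{M}f\hab{\xi+\tfrac{2\pi k}{\log 2}}}^2,\qquad \xi\in\R.
\]

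The crux is constructing $\alpha$ from this Gramian estimate. By the theory of principal shift-invariant subspaces, an element $G\in L^2$ belongs to the closure of $\spn\cbrace{\tau_{2^j}f:j\in\Z}$ precisely when $\mc{M}g/\mc{M}f$, suitably defined on $\{\mc{M}f\neq 0\}$, extends to a $(2\pi/\log 2)$-periodic $L^2$-multiplier, and the Gramian inequality is exactly the known criterion controlling membership. The analyticity of $f,g$ --- equivalently, the exponential decay and non-vanishing on residual sets of $\mc{M}f,\mc{M}g$ as boundary values of holomorphic functions --- prevents any orthogonal complement pathology and forces $g$ into this shift-invariant subspace; the Fourier coefficients of the resulting multiplier give $\mbs{a}\in\ell^2(\Z)$, and analytic continuation transfers the identity $g(z)=\sum_j a_jf(2^jz)$ from $\R_+$ to all of $\Sigma_\sigma$. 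The hard part is precisely this last step: the Gramian envelope alone is insufficient in a general $L^2$-setting, and the analyticity of $f,g$ must be used in an essential way to rule out a component of $g$ orthogonal to all shifts $\tau_{2^j}f$.
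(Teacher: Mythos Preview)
Your approach has a genuine gap. By testing the hypothesis only on basis vectors $x=e_\ell$, you extract nothing more than the scalar envelope estimate
\[
\sum_{j\in\Z}\abs{g(2^jt)}^2 \;\lesssim\; \sum_{j\in\Z}\abs{f(2^jt)}^2,\qquad t>0,
\]
and this is strictly too weak. First, the Poisson summation step is simply false: the time-domain periodization $\sum_j\abs{F(u+j\log 2)}^2$ and the frequency-domain Gramian $\sum_k\abs{\hat F(\xi+2\pi k/\log 2)}^2$ are two different periodic functions and are not exchanged by Poisson summation. Second, and more fundamentally, analyticity does not rescue the argument. Take any nonzero $f\in H^1(\Sigma_\sigma)$ and set $g(z)=z^{ic}f(z)$ for $c\neq 0$; then $g\in H^1(\Sigma_\sigma)$, $\abs{g(t)}=\abs{f(t)}$ for $t>0$ so the envelope estimate holds with constant $1$, yet on the Mellin side $\mc{M}g(\xi)=\mc{M}f(\xi-c)$ and the ratio $\mc{M}g/\mc{M}f$ has different limits as $\xi\to\pm\infty$, hence is not periodic. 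So no representation $g=\sum_j a_j f(2^j\cdot)$ exists. Your final paragraph correctly senses that the envelope is insufficient, but the appeal to analyticity is a hope, not an argument.

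The paper's proof extracts much stronger information from the hypothesis by exploiting the $\ell^q$-component of the $X_{p,q}$-norm. It tests not on basis vectors but on $y_n=V_{d_1}x+\cdots+V_{d_n}x$, where $d_1<d_2<\cdots$ index the infinitely many repetitions of a fixed $x^*\in\{x_k^*:k\in\N\}$. The $\ell^p$-part of $\nrm{(h(2^jA)y_n)_j}_\gamma$ scales like $n^{1/p}$ while the $\ell^q$-part scales like $n^{1/q}$; since $q<p$, dividing by $n^{1/q}$ and letting $n\to\infty$ isolates the duality estimate
\[
\abs{\ip{g(T)x,x^*}}\;\lesssim\;\Bigl(\sum_{k\in\Z}\abs{\ip{f(2^kT)x,x^*}}^2\Bigr)^{1/2}
\]
for every $x^*$ of norm $\le 1$, where $Te_j=(2-1/j)e_j$. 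This bilinear control over all dual pairings --- precisely what single-vector testing cannot give --- then yields $g(T)x\in\bigl\{\sum_k a_k f(2^kT)x:\nrm{\mbs a}_{\ell^2}\le C\bigr\}$ by Hahn--Banach separation, and evaluation on coordinates plus analytic continuation finishes the proof.
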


\begin{proof}
   Fix $x \in \ell^p$ with all entries non-zero and $x^* \in \cbrace{x^*_k:k \in \N}$. Let $d_1<d_2<\cdots$ be such that $x^* = x_{d_k}^*$ for all $k \in \N$, which is possible since each element of $\cbrace{x^*_k:k \in \N}$ is repeated infinitely often.
  Define $T:\ell^p \to \ell^p$ by $Te_j = (2-\frac{1}{j})e_j$, then
  $$A = \sum_{k=1}^\infty 2^kV_kTU_k.$$ Fix $n \in \N$ and define
  \begin{equation*}
    y_n = V_{d_1}x + \cdots + V_{d_n}x \in c_{00}.
  \end{equation*}
  Then for all $j\in \N$ we have for $h = f,g$
  \begin{align*}
   h(2^jA)y_n &= \sum_{k=1}^\infty h(2^{j+k}V_{k}TU_{k})y_n = \sum_{k=1}^n  h(2^{j+d_k} T)V_{d_k}x
  \end{align*}
  Noting that the vectors $V_{d_k}x$ are disjointly supported shifts of $x\ind_F$ for $F = F_{d_1} = F_{d_2} = \cdots$, we obtain
  \begin{align*}
    \nrms{\hab{\sum_{j \in \Z} \abs{h(2^jA)y_n}^2}^{1/2}}_{\ell^p} &=  \nrms{\hab{\sum_{k=1}^n \sum_{j \in \Z} \abs{ h(2^{j+d_k}T)V_{d_{k}}x}^2}^{1/2}}_{\ell^p} \\&=
    n^{1/p}\nrms{\hab{\sum_{j \in \Z} \abs{h(2^jT)(x\ind_{F})}^2}^{1/2}}_{\ell^p}
  \end{align*} and similarly
  \begin{align*}
    \nrms{\has{\sum_{j \in \Z} \hab{\abs{\ip{U_kh(2^jA)y_n,x^*_k}}^2}^{1/2}}_{k=1}^\infty}_{\ell^q} &= n^{1/q}\hab{\sum_{j \in \Z} \abs{\ip{h(2^jT)x,x^*}}^2}^{1/2}.
  \end{align*}
  Now since $X$ is a closed subspace of a Banach lattice with finite cotype, we have by Proposition \ref{proposition:compareEuclidean} and our assumption on $f$ and $g$ that there is a $C>0$ such that for all $n \in \N$
  \begin{align*}
    n^{1/p}&\nrms{\hab{\sum_{j \in \Z} \abs{g(2^jT)(x\ind_F)}^2}^{1/2}}_{\ell^p} + n^{1/q}\hab{\sum_{j \in \Z} \abs{\ip{g(2^jT)x,x^*}}^2}^{1/2}\\ &\leq C \has{ n^{1/p} \nrms{\hab{\sum_{j \in \Z} \abs{f(2^jT)(x\ind_F)}^2}^{1/2}}_{\ell^p} + n^{1/q} \hab{\sum_{k \in \Z} \abs{\ip{f(2^jT)x,x^*}}^2}^{1/2}}.
  \end{align*}
  Since $q<p$ we obtain by dividing by $n^{1/q}$ and taking the limit $n \to \infty$ that
  \begin{align*}
    \hab{\sum_{k \in \Z} \abs{\ip{g(2^kT)x,x^*}}^2}^{1/2} &\leq C \hab{\sum_{k \in \Z} \abs{\ip{f(2^kT)x,x^*}}^2}^{1/2}.
  \end{align*}
  In particular we have
   \begin{align}\label{eq:fgT}
     \abs{\ip{g(T)x,x^*}} &\leq C \hab{\sum_{k \in \Z} \abs{\ip{f(2^kT)x,x^*}}^2}^{1/2}.
  \end{align}
  Since $T$ is bounded and invertible, we know by Lemma \ref{lemma:hadamardsum} that $$\sum_{\abs{k}\geq n}\nrm{f(2^kT)x}_{\ell^p} \to 0,\qquad n \to \infty.$$ Therefore \eqref{eq:fgT} extends to all $x^* \in \ell^{p'}$ of norm one by density. Define the closed (compact!) convex set
  \begin{equation*}
    \Gamma:= \cbraces{\sum_{k \in \Z}a_kf(2^kT)x:\nrm{\mbs{a}}_{\ell^2}\leq C}
  \end{equation*}
  and suppose that $g(T)x \notin \Gamma$. Using the Hahn--Banach separation theorem  \cite[Theorem 3.4]{Ru91} on $\Gamma$ and $\cbrace{g(T)x}$, we can find an $x^* \in \ell^{p'}$ such that
  \begin{align*}
    C\, \hab{\sum_{k \in \Z} \abs{\ip{f(2^kT)x,x^*}}^2}^{1/2} &= \sup_{\nrm{\mbs{a}}_{\ell^2}\leq C_1} \re\has{\ips{\sum_{k \in \Z}a_kf(2^kT)x,x^*}}\\
    &<\re\hab{\ip{g(T)x,x^*}}\\ &\leq \abs{\ip{g(T)x,x^*}},
  \end{align*}
  a contradiction with \eqref{eq:fgT}. Thus $g(T)x \in \Gamma$, so there is an $\mbs{a} \in \ell^2$ with $\nrm{\mbs{a}}_{\ell^2} \leq C_1$ such that
  \begin{align*}
    g(T)x=\sum_{k \in \Z}a_kf(2^kT)x.
  \end{align*}
  Since every coordinate of $x$ is non-zero, this implies that
  \begin{equation*}
    g\has{2-\frac{1}{j}} = \sum_{k \in \Z} a_kf\has{2^k\has{1-\frac{1}{j}}}
  \end{equation*}
  for all $j \in \N$. As $\sum_{k \in \Z} a_kf\ha{2^kz}$ converges uniformly to a holomorphic function on compact subsets of $\Sigma_\sigma$, by the uniqueness of analytic continuations this implies that
  \begin{equation*}
    g(z) = \sum_{k \in \Z} a_k f(2^kz), \qquad z \in \Sigma_\sigma,
  \end{equation*}
  which completes the proof.
\end{proof}

Using Lemma \ref{lemma:gintermsoff} we can now prove the main theorem of this section, which concludes our study of Euclidean structures.
\begin{theorem}
  Let $p \in (1,\infty)\setminus\cbrace{2}$ and $\sigma \in (0,\pi)$. There exists a closed subspace $Y$ of $L^p([0,1])$ and a sectorial operator $A$ on $Y$ such that $A$ has a bounded $H^\infty$-calculus, has $\BIP$ and is (almost) $\gamma$-sectorial with $\omega(A) = 0 $ and
  \begin{equation*}
    \omega_{H^\infty}(A) =\omega_{\BIP}(A)= \omega_{\gamma}(A)= \tilde{\omega}_{\gamma}(A) = \sigma.
  \end{equation*}
\end{theorem}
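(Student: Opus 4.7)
The plan is to execute the strategy sketched before Lemma \ref{lemma:examplecanonicalbasis}. First I would pick $q \in (1,p)$ so that $\ell^q$ embeds isomorphically into $L^p([0,1])$ (for $p>2$ take $q=2$ and use Gaussians; for $p<2$ a dual construction, or a direct embedding via appropriately chosen stable random variables, gives the analogous example), work with $X = X_{p,q}$, and let $A$ be the sectorial Schauder multiplier of Proposition \ref{proposition:schaudersectorial} associated with the canonical basis of $\ell^p$ (which is a Schauder basis of $X$ by Lemma \ref{lemma:examplecanonicalbasis}) and the increasing sequence $(\lambda_j)$ from \eqref{eq:deflambdaexample}, so that $\omega(A)=0$. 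Setting $s = \pi/\sigma >1$ and fixing $\theta \in \R$, I then take $Y = H^\gamma_{s\theta, A}(\varphi_s)$ and $B = A|_Y$. The change of variables $t \mapsto t^{1/s}$ in Proposition \ref{proposition:Hequivnorm} identifies $Y$ isomorphically with $H^\gamma_{\theta, A^s}$ in such a way that $B^s$ corresponds to $A^s|_{H^\gamma_{\theta, A^s}}$.

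The central step, and the main obstacle, is to prove that $A^s|_{H^\gamma_{\theta, A^s}}$ does \emph{not} admit a bounded $H^\infty$-calculus for any $s>1$. The argument is by contradiction. Assuming the calculus is bounded, Proposition \ref{proposition:equivdiscretecontinuousstable} turns the continuous $\gamma$-square function defining the norm on $H^\gamma_{\theta,A^s}$ into an equivalent discrete one, and Proposition \ref{proposition:equivsquarefunctions} (applied to $A^s|_{H^\gamma_{\theta,A^s}}$) converts the boundedness of each $h(A^s)$, $h \in H^\infty(\Sigma_\nu)$, into
\begin{equation*}
\nrmb{(g(2^j A^s) x)_{j \in \Z}}_{\gamma(\Z;X)} \lesssim \nrmb{(f(2^j A^s) x)_{j \in \Z}}_{\gamma(\Z;X)}, \qquad x \in c_{00},
\end{equation*}
where $f(z) = z^{1/2}/(1+z)$ and $g = h \cdot f$. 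One then adapts the proof of Lemma \ref{lemma:gintermsoff} to $A^s$: from $A^s V_{d_k} x = 2^{sd_k} V_{d_k} T^s x$ one gets $h(2^j A^s) y_n = \sum_{k=1}^n V_{d_k} h(2^{j+sd_k} T^s) x$, and the disjointness of the ranges of the $V_{d_k}$'s together with the decisive $n^{1/p}$-versus-$n^{1/q}$ cancellation (now performed in the continuous $\gamma(\R_+, dt/t;X)$-norm via Corollary \ref{corollary:equivalencediscretecontinuous}, so that the shifts $sd_k$ need no longer be integers) forces the functional identity $g(z) = \sum_{k \in \Z} a_k f(2^{sk} z)$ for some $\mbs{a} \in \ell^2$. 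This would constrain \emph{every} $h \in H^\infty(\Sigma_\nu)$ to satisfy $h\cdot f = \sum_k a_k^{(h)} f(2^{sk} \cdot\,)$; but the closed linear span of $\{f(2^{sk} \cdot\,): k \in \Z\}$ is separable, whereas $H^\infty(\Sigma_\nu)$ is not, and testing against the uncountable family $h_t(z) = z^{it}$, $t \in \R$, produces the desired contradiction.

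The contradiction together with the upper bound $\omega_{\BIP}(A^s|_{H^\gamma_{\theta,A^s}}) \leq \pi$ from \eqref{eq:BIPangle} and Theorem \ref{theorem:XABIPHinftyequiv} forces $\omega_{\BIP}(A^s|_{H^\gamma_{\theta,A^s}}) = \pi$. Theorem \ref{theorem:XalphaHinfty} provides a bounded $H^\infty$-calculus for $B$, and from $B^s = A^s|_{H^\gamma_{\theta,A^s}}$, Proposition \ref{proposition:hinftys}, and the equality $\omega_{H^\infty}(B) = \omega_{\BIP}(B)$ in the presence of bounded $H^\infty$-calculus (Theorem \ref{theorem:BIPHinfty}) we deduce $\omega_{H^\infty}(B) = \omega_{\BIP}(B) = \pi/s = \sigma$; meanwhile Proposition \ref{proposition:AdefHalpha} gives $\omega(B) \leq \omega(A) = 0$. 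The identification $Y \hookrightarrow \gamma(\R;X) \cong \ell^2(\R;X) \hookrightarrow \ell^2(\R; L^p) \cong L^p(\R; \ell^2) \hookrightarrow L^p([0,1] \times \R) \cong L^p([0,1])$, obtained via Propositions \ref{proposition:compareEuclidean} and \ref{proposition:l2representation} together with $X \hookrightarrow \ell^p \oplus \ell^q \hookrightarrow L^p$ and Khintchine's inequality, exhibits $Y$ as a closed subspace of $L^p([0,1])$. Since $Y$ then inherits Pisier's contraction property from $L^p$, Theorem \ref{theorem:Hinftyell2bounded} upgrades the bounded $H^\infty$-calculus of $B$ to a $\gamma$-bounded one with the same angle, and Corollary \ref{corollary:equalangles} yields the remaining angle equalities $\omega_{\gamma}(B) = \tilde\omega_{\gamma}(B) = \sigma$.
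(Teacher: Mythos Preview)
Your overall strategy follows the paper's outline, but the central contradiction step contains a genuine gap. After adapting Lemma~\ref{lemma:gintermsoff} you arrive at the identity $h\cdot f = \sum_{k}a_k^{(h)}f(2^{sk}\cdot)$ with $\mbs{a}^{(h)}\in\ell^2$ for every $h\in H^\infty(\Sigma_\nu)$, and then argue that this is impossible by separability, testing on $h_t(z)=z^{it}$. But the map $h\mapsto h\cdot f$ need not have nonseparable image: since $f(z)=z^{1/2}(1+z)^{-1}$ decays at $0$ and at $\infty$, the map $t\mapsto h_t\cdot f$ is \emph{continuous} from $\R$ into $H^\infty(\Sigma_\nu)$, so $\{h_t f:t\in\R\}$ is separable and no contradiction follows. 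More conceptually, multiplication by $f$ kills exactly the oscillatory behaviour at $0$ and $\infty$ that makes $H^\infty$ nonseparable. The paper avoids this entirely: instead of a generic $h$, it uses Theorem~\ref{theorem:XABIPHinftyequiv}(ii) to produce the square-function equivalence $\nrm{\varphi_{\nu s}(\cdot A)x}_\gamma\simeq\nrm{\varphi_\nu(\cdot A)x}_\gamma$ for some $s>1$ (with $\nu=\pi/\sigma$), applies Lemma~\ref{lemma:gintermsoff} \emph{as stated} to $A$ itself with the specific pair $(f,g)=(\varphi_\nu,\varphi_{\nu s})$, and obtains $\varphi_{\nu s}(z)=\sum_k a_k\varphi_\nu(2^kz)$. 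The contradiction is then analytic: the right-hand side extends holomorphically to $\overline{\Sigma}_{\pi/(\nu s)}$, whereas $\varphi_{\nu s}$ has a pole on $\partial\Sigma_{\pi/(\nu s)}$.

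Two further points. First, your adaptation of Lemma~\ref{lemma:gintermsoff} to $A^s$ introduces non-integer dyadic shifts $sd_k$, and your proposed workaround via continuous square functions is not fully worked out; the paper sidesteps this entirely by transferring the assumed $H^\infty$-calculus of $A^\nu|_{H^\gamma_{0,A^\nu}}$ back to square-function equivalences for $A$ (via the change of variables $\varphi_s(\cdot\,A^\nu)=\varphi_{\nu s}(\cdot^{1/\nu}A)$), so that Lemma~\ref{lemma:gintermsoff} applies verbatim. Second, for $p\in(1,2)$ there is no $q\in(1,p)$ with $\ell^q\hookrightarrow L^p$ (such an embedding would force $\ell^q$ to have type $p$); the paper instead takes $X=X_{2,p}$, i.e.\ swaps the roles so that $X\hookrightarrow\ell^2\oplus\ell^p$, both of which embed into $L^p$.
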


\begin{proof}
  If $p \in (1,2)$ take $X = X_{2,p}$ and if $p \in (2,\infty)$ take $X=X_{p,2}$. Let $A$ be the sectorial operator associated to $(e_j)_{j=1}^\infty$ and $(\lambda_j)_{j=1}^\infty$, with $(\lambda_j)_{j=1}^\infty$ as in \eqref{eq:deflambdaexample} and
   $(e_j)_{j=1}^\infty$ the canonical basis of $\ell^p$. Set $\nu=\pi/\sigma$ and define $B=A^\nu$, which is a sectorial operator with $$\omega(B)=\nu \, \omega(A)=0.$$
  Suppose that the operator $B|_{H^\gamma_{0,B}}$ as in Proposition \ref{proposition:AdefHalpha} has a bounded $H^\infty$-calculus. Then by Theorem \ref{theorem:XABIPHinftyequiv} there is an $1<s'<\infty$ such that for $0<s<s'$ the spaces $H^\gamma_{0,B}(\varphi_s)$ with $\varphi_s(z)=z^{s/2}(1+z^s)^{-1}$ are isomorphic. In particular by \eqref{eq:calculusmultipliercutoff} and a change of variables we have for $x \in c_{00}$
  \begin{equation*}
  \nrm{\varphi_{\nu s}(\cdot A)x}_{\gamma\ha*{\R_+,\frac{\ddn t}{t};X}} = \sqrt{\nu} \nrm{x}_{H^{\gamma}_{0,B}(\varphi_s)} \simeq  \sqrt{\nu} \nrm{x}_{H^{\gamma}_{0,B}} =
    \nrm{\varphi_{\nu}(\cdot A)x}_{\gamma\ha*{\R_+,\frac{\ddn t}{t};X}}
  \end{equation*}
  and thus by Proposition \ref{proposition:equivdiscretecontinuousstable} there is a $1<s<s'$ such that
  \begin{equation*}
    \nrmb{(\varphi_{\nu s}(2^kA)x)_{k \in \Z}}_{\gamma(\Z;X)} \leq C \, \nrmb{(\varphi_{\nu}(2^kA)x)_{k \in \Z}}_{\gamma(\Z;X)}.
  \end{equation*}
  This implies by Lemma \ref{lemma:gintermsoff} that there is a $\mbs{a} \in \ell^2$ such that we have
  \begin{equation}\label{eq:varphicompare}
    \varphi_{\nu s}(z) = \sum_{k \in \Z} a_k \varphi_\nu(2^kz), \qquad z \in \Sigma_{\mu}
  \end{equation}
  for any $0<\mu<\pi/{\nu s}$. Thus  \eqref{eq:varphicompare} holds for all $z \in \Sigma_{\pi/\nu s}$. But $\varphi_{\nu} \in H^1(\Sigma_{\pi/\nu s})$ and $\varphi_{\nu s}$ has a pole of order $1$ on the boundary of $\Sigma_{\pi/\nu s}$, a contradiction. So $B|_{H^\gamma_{0,B}}$ does not have a bounded $H^\infty$-calculus. By Theorem  \ref{theorem:XABIPHinftyequiv} and \eqref{eq:BIPangle} this implies $\omega_{\BIP}(B|_{H^\gamma_{0,B}}) = \pi$.

  Now we have by Proposition \ref{proposition:AdefHalpha} that the operator $A|_{H^\gamma_{0,A}(\varphi_\nu)}$  on $Y = H^\gamma_{0,A}(\varphi_\nu)$ is sectorial with
 \begin{equation*}
  \omega(A|_{H^\gamma_{0,A}(\varphi_\nu)}) = \omega(A) = 0.
\end{equation*}
and by Theorem \ref{theorem:XalphaHinfty} and Corollary \ref{corollary:equalangles} we know that $A|_{H^\gamma_{0,A}(\varphi_\nu)}$ has a bounded $H^\infty$-calculus with
\begin{equation*}
  \omega_{H^\infty}(A|_{H^\gamma_{0,A}(\varphi_\nu)}) = \omega_{\BIP}(A|_{H^\gamma_{0,A}(\varphi_\nu)}) = \frac{1}{\nu} \,\omega_{\BIP}(B|_{H^\gamma_{0,B}}) = \pi \nu = \sigma.
\end{equation*}
Here we used that by Proposition \ref{proposition:AdefHalpha} we have for all $x \in c_{00}$
$$B|_{H^\gamma_{0,B}}x = \hab{A|_{H^\gamma_{0,A}(\varphi_\nu)}}^{1/\nu}x.$$

It remains to observe that $Y$ is a closed subspace of $\gamma(\R_+,\frac{\ddn t}{t};X)$, which is isomorphic to a closed subspace of $L^p(\Omega;\ell^p\oplus \ell^2)$ for some probability space $(\Omega,\P)$, which in turn is isomorphic to a closed subspace of $L^p([0,1])$, see e.g. \cite[Section 6.4]{AK16}. Finally note that $Y$ has Pisier's contraction property and therefore $A|_{H^\gamma_{0,A}(\varphi_\nu)}$ is $\gamma$-sectorial by Theorem \ref{theorem:Hinftyell2bounded} and the angle equalities follow from Proposition \ref{proposition:alphaalmoastalpha} and  Corollary  \ref{corollary:equalangles}.
\end{proof}

\backmatter

\bibliographystyle{alpha}
\bibliography{euclidean}

\end{document}